\setlist{noitemsep,nolistsep,leftmargin=1.7em}
\DeclareFontFamily{U}{mathx}{\hyphenchar\font45}
\DeclareFontShape{U}{mathx}{m}{n}{
      <5> <6> <7> <8> <9> <10>
      <10.95> <12> <14.4> <17.28> <20.74> <24.88>
      mathx10
      }{}
\DeclareSymbolFont{mathx}{U}{mathx}{m}{n}
\DeclareMathSymbol{\bigtimes}{1}{mathx}{"91}
\def\s{\mathfrak{s}}
\def\ii{\mathfrak{i}}
\def\M{\mathfrak{M}}
\def\emptyset{{\centernot\ocircle}}
\definecolor{darkred}{rgb}{0.7,0.1,0.1}
\definecolor{darkblue}{rgb}{0.1,0.1,0.8}
\definecolor{darkgreen}{rgb}{0.1,0.7,0.1}
\def\restr{\mathord{\upharpoonright}}
\def\reg{\mathord{\mathrm{reg}}}
\providecommand{\figures}{false}
{ \ifthenelse{\equal{\figures}{false}} {#1}{\[ {\rm Figure \ missing !} \]} }{}
\def\id{\mathrm{id}}
\def\CH{\mathcal{H}}
\def\CP{\mathcal{P}}
\def\CG{\mathcal{G}}
\def\CJ{\mathcal{J}}
\def\CA{\mathcal{A}}
\def\CE{\mathcal{E}}
\def\CC{\mathcal{C}}
\def\CQ{\mathcal{Q}}
\def\CB{\mathcal{B}}
\def\CM{\mathcal{M}}
\def\CT{\mathcal{T}}
\def\RR{\mathfrak{R}}
\def\K{\mathfrak{K}}
\def\C{\mathfrak{C}}
\def\Labe{\mathfrak{e}}
\def\Labn{\mathfrak{n}}
\def\Labo{\mathfrak{o}}
\def\Labhom{\mathfrak{t}}
\def\noise{\mathfrak{l}}
\def\Lab{\mathfrak{L}}
\def\Adm{\mathfrak{A}}
\def\Deltam{\Delta^{\!-}}
\def\Deltamm{\Delta^{\!-}}
\def\Deltap{\Delta^{\!+}}
\def\Deltapp{\Delta^{\!+}}
\def\${|\!|\!|}
\def\DeltaM{\Delta^{\!M}}
\def\DeltaMg{\Delta^{\!M_g}}
\def\br{\penalty0}
\def\proj{\mathfrak{p}}
\def\inj{\mathfrak{i}}
\def\scal#1{{\langle#1\rangle}}
\def\Trees{\mathfrak{T}}
\def\Primitive{\mathfrak{P}}
\def\Forests{\mathfrak{F}}
\def\Units{\mathfrak{U}}
\def\Tra{\mathfrak{F}}
\def\CS{\mathcal{S}}
\def\CR{\mathcal{R}}
\def\CF{\mathcal{F}}
\def\dd{\mathfrak d}
\def\Vec{\mathop{\mathrm{Vec}}}
\def\hPeps{\rlap{$\hat{\phantom{\PPi}}$}\PPi^{(\eps)}}
\def\hPPi{\rlap{$\hat{\phantom{\PPi}}$}\PPi}
\newenvironment{DIFnomarkup}{}{} % see man latexdiff
\newcommand{\gep}{\varepsilon}
\newtheorem{assumption}{Assumption}
\newtheorem{example}[lemma]{Example}
\newfont{\indic}{bbmss12}
\def\un#1{\hbox{{\indic 1}$_{#1}$}}
\def\ex{\mathrm{ex}}
\def\PPi{\boldsymbol{\Pi}}
\colorlet{symbols}{blue!90!black}
\colorlet{testcolor}{green!60!black}
\colorlet{connection}{red!30!black}
\def\symbol#1{\textcolor{symbols}{#1}}
\def\symbol#1{\textcolor{symbols}{#1}}
\tikzset{
root/.style={circle,fill=black!50,inner sep=0pt, minimum size=3mm},
        circ/.style={circle,fill=white,draw=black,very thin,inner sep=.5pt, minimum size=1.2mm},
        round1/.style={fill=white,outer sep = 0,inner sep=2pt,rounded corners=1mm,draw,text=black,thin,minimum size=1.2mm},
          circ1/.style={circle,fill=red!10,draw=red,very thin,inner sep=.5pt, minimum size=1.2mm},
        rect/.style={fill=white,outer sep = 0,inner sep=2pt,rectangle,draw,text=black,thin,minimum size=1.2mm},
        rect1/.style={fill=white,outer sep = 0,inner sep=2pt,rectangle,draw,text=black,thin,minimum size=1.2mm},
        round2/.style={fill=red!10,outer sep = 0,inner sep=2pt,rounded corners=1mm,draw,text=black,thin,minimum size=1.2mm},
       round3/.style={fill=blue!10,outer sep = 0,inner sep=2pt,rounded corners=1mm,draw,text=black,thin,minimum size=1.2mm}, 
        rect2/.style={fill=black!10,outer sep = 0,inner sep=2pt,rectangle,draw,text=black,thin,minimum size=1.2mm},
        dot/.style={circle,fill=black,inner sep=0pt, minimum size=1.2mm},
        dotred/.style={circle,fill=black!50,inner sep=0pt, minimum size=2mm},
        var/.style={circle,fill=black!10,draw=black,inner sep=0pt, minimum size=3mm},
        kernel/.style={semithick,shorten >=2pt,shorten <=2pt},
         diag/.style={thin,shorten >=4pt,shorten <=4pt},
        kernel1/.style={thick},
        kernels/.style={snake=zigzag,shorten >=2pt,shorten <=2pt,segment amplitude=1pt,segment length=4pt,line before snake=2pt,line after snake=5pt,},
		kernels1/.style={snake=zigzag,segment amplitude=0.5pt,segment length=2pt},
		rho1/.style={densely dotted,semithick},
        rho/.style={densely dashed,semithick,shorten >=2pt,shorten <=2pt},
           testfcn/.style={dotted,semithick,shorten >=2pt,shorten <=2pt},
           visible/.style={draw, circle, fill, inner sep=0.25ex},
        renorm/.style={shape=circle,fill=white,inner sep=1pt},
        labl/.style={shape=rectangle,fill=white,inner sep=1pt},
        xic/.style={very thin,circle,fill=symbols,draw=black,inner sep=0pt,minimum size=1.2mm},
        xi/.style={very thin,circle,fill=blue!10,draw=black,inner sep=0pt,minimum size=1.2mm},
	xib/.style={very thin,circle,fill=blue!10,draw=black,inner sep=0pt,minimum size=1.6mm},
	xie/.style={very thin,circle,fill=green!50!black,draw=black,inner sep=0pt,minimum size=1mm},
	xid/.style={very thin,circle,fill=symbols,draw=black,inner sep=0pt,minimum size=1.6mm},
	edgetype/.style={very thin,circle,draw=black,inner sep=0pt,minimum size=5mm},
	nodetype/.style={very thick,circle,draw=black,inner sep=0pt,minimum size=5mm},
	kernels2/.style={very thick,draw=connection,segment length=12pt},
clean/.style={thin,circle,fill=black,inner sep=0pt,minimum size=1mm},	not/.style={thin,circle,fill=symbols,draw=connection,fill=connection,inner sep=0pt,minimum size=0.8mm},
	>=stealth,
        }
\tikzset{ individus/.style={scale=0.40,draw,circle,thick,fill=black!10},
 individu/.style={scale=0.40,draw,circle,thick,fill=black!50},       } 
\def\DeclareSymbol#1#2#3{\expandafter\gdef\csname MH@symb@#1\endcsname{\tikz[baseline=#2,scale=0.15,draw=symbols]{#3}}\expandafter\gdef\csname MH@symb@#1s\endcsname{\scalebox{0.7}{\tikz[baseline=#2,scale=0.15,draw=symbols]{#3}}}}
\def\<#1>{\csname MH@symb@#1\endcsname}
 \def\1{\mathbf{\symbol{1}}}
 \def\un#1{\hbox{{\indic 1}$_{#1}$}}
\def\one{\mathbf{1}}
\def\eps{\varepsilon}
\def\gep{\varepsilon}
\def\MHmakebox#1#2#3{\tikz[baseline=#3,line width=#1,cross/.style={path picture={ 
  \draw[black](path picture bounding box.south) -- (path picture bounding box.north) (path picture bounding box.west) -- (path picture bounding box.east);
}}]{\draw[white] (0,0) rectangle (#2,#2);
\draw[black,cross] (0.1em,0.1em) rectangle (#2-0.1em,#2-0.1em);}}
 \def\IXitwo{\begin{tikzpicture}[scale=0.15,baseline=0.1cm]
        \node at (0,0)  [dot,label=below:$  $] (root) {};
         \node at (1,2)  [circ] (right) {\mbox{\tiny $ $}};
         \node at (-1,2)  [circ] (left) {\mbox{\tiny $ $}};
            \draw[kernel1] (right) to
     node [sloped,below] {\small }     (root); \draw[kernel1] (left) to
     node [sloped,below] {\small }     (root);
     \end{tikzpicture}}
     \def\XIXitwo{\begin{tikzpicture}[scale=0.15,baseline=0.1cm]
        \node at (0,0)  [circ,label=below:$  $] (root) {\mbox{\tiny $ i $}};
         \node at (1,2)  [circ] (right) {\mbox{\tiny $ $}};
         \node at (-1,2)  [circ] (left) {\mbox{\tiny $ $}};
            \draw[kernel1] (right) to
     node [sloped,below] {\small }     (root); \draw[kernel1] (left) to
     node [sloped,below] {\small }     (root);
     \end{tikzpicture}}
      \def\IXitwoIIXithree{\begin{tikzpicture}[scale=0.15,baseline=0.1cm]
        \node at (0,0)  [dot,label=below:$  $] (root) {};
         \node at (2,2)  [circ] (right) {\mbox{\tiny $ $}};
         \node at (0,3)  [dot] (center) {};
          \node at (0,6)  [circ] (centerc) {\mbox{\tiny $ $}};
          \node at (-2,5)  [circ] (centerl) {\mbox{\tiny $ $}};
          \node at (2,5)  [circ] (centerr) {\mbox{\tiny $ $}};
         \node at (-2,2)  [circ] (left) {\mbox{\tiny $ $}};
            \draw[kernel1] (right) to
     node [sloped,below] {\small }     (root); \draw[kernel1] (left) to
     node [sloped,below] {\small }     (root);
     \draw[kernel1] (center) to
     node [sloped,below] {\small }     (root);
     \draw[kernel1] (centerc) to
     node [sloped,below] {\small }     (center);
      \draw[kernel1] (centerr) to
     node [sloped,below] {\small }     (center);
      \draw[kernel1] (centerl) to
     node [sloped,below] {\small }     (center);
     \end{tikzpicture}}
\DeclareMathOperator*{\bplus}{
\tikzexternaldisable
\mathchoice{\MHmakebox{.17ex}{1.5em}{0.48em}}{\MHmakebox{.12ex}{1.1em}{0.28em}}{}{}
\tikzexternalenable
}
\def\hattimes{\mathbin{\hat\otimes}}
\DeclareMathAlphabet{\mathpzc}{OT1}{pzc}{m}{it}
\def\eps{\varepsilon}
\def\gep{\varepsilon}
\def\BB{\mathfrak{B}}
\def\TT{\mathscr{T}}
\def\MM{\mathscr{M}}
\def\JJ{\mathscr{J}}
\def\Deltap{\Delta^{\!+}}
\def\Deltam{\Delta^{\!-}}
\def\hotimes{\mathbin{\hat\otimes}}
\def\PPi{\boldsymbol{\Pi}}
\def\Vec{{\mathrm{Vec}}}
\def\id{\mathrm{id}}
\def\XX{\mathbb X}
\def\bi{{\mathrm{bi}}}
\def\simnot{\stackrel{\vbox to 0.15em{\hbox{\kern0.07em$^\circ$}}}{\sim}}
\begin{document}

\title{Algebraic renormalisation of regularity structures}
\author{Y. Bruned$^1$, M. Hairer$^1$, L. Zambotti$^2$}
\institute{Imperial College London\\
\email{y.bruned@imperial.ac.uk, m.hairer@imperial.ac.uk} \and Laboratoire de Probabilit\'es Statistique et Mod\'elisation, Sorbonne Université, Paris\\
\email{lorenzo.zambotti@upmc.fr}}

\maketitle

\begin{abstract}
We give a systematic description of a canonical renormalisation procedure of stochastic PDEs 
containing nonlinearities involving generalised functions. This theory is based on the construction of a 
new class of regularity structures  
which comes with an explicit and elegant description of a subgroup of their group of automorphisms. This subgroup is
sufficiently large to be able to implement a version of the BPHZ renormalisation prescription in this context. 
This is in stark contrast to previous works where one considered regularity structures 
with a much smaller group of automorphisms, which lead to a much more indirect and convoluted construction
of a renormalisation group acting on the corresponding space of admissible models by continuous
transformations. 

Our construction is based on bialgebras of decorated coloured forests in cointeraction. More precisely, we have two Hopf algebras in cointeraction, coacting jointly on a vector space which represents the generalised functions of the theory. Two twisted antipodes play a fundamental role in the construction and provide a variant of the algebraic Birkhoff factorisation that arises naturally in perturbative quantum field theory.
\end{abstract}

\setcounter{tocdepth}{2}
\tableofcontents

\section{Introduction}

In a series of celebrated papers \cite{Chen54, Chen57, Chen58, Chen71} Kuo-Tsai Chen discovered that, for any
finite alphabet $A$, 
the family of iterated integrals of a smooth path $x:\R_+\to\R^A$ has a number of interesting 
algebraic properties. 
Writing $\CT = T(\R^A)$ for the tensor algebra on $\R^A$, which we identify with the space spanned
by all finite words $\{(a_1\cdots a_n)\}_{n \ge 0}$ with letters in $A$,  
we define the family of functionals $\XX_{s,t}$ on $\CT$ inductively by
\[
\XX_{s,t}()\eqdef 1, \qquad \XX_{s,t}(a_1\cdots a_{n})\eqdef 
\int_s^t \XX_{s,u}(a_1\cdots a_{n-1}) \, \dot{x}_{a_n}(u) \, d u
\]
where $0\leq s\leq t$. Chen showed that this family yields for fixed 
$s,t$ a character on $\CT$ endowed with the {\it shuffle product} $\shuffle$, namely
\begin{equ}[e:shuffle]
\XX_{s,t}(v\shuffle w) = \XX_{s,t}(v) \, \XX_{s,t}(w),
\end{equ}
 which furthermore satisfies the {\it flow relation}
\[
( \XX_{s,r}\otimes \XX_{r,t}) \Delta \tau = \XX_{s,t}\tau, \qquad s\leq r\leq t,
\]
where $\Delta: \CT\to \CT\otimes \CT$ is the {\it deconcatenation coproduct}
\[
\Delta (a_1\cdots a_n) = \sum_{k=0}^n (a_{1}\cdots a_{k}) \otimes (a_{k+1}\cdots a_{n})\;.
\]
In other words, we have a function $(s,t)\mapsto \XX_{s,t}\in \CT^*$ which takes values in the 
characters on the algebra $(\CT,\shuffle)$ and satisfies the {\it Chen relation}
\begin{equ}[e:chen]
\XX_{s,r}\star \XX_{r,t}=\XX_{s,t}, \qquad s\leq r\leq t,
\end{equ}
where $\star$ is the product dual to $\Delta$. Note that $\CT$, endowed with the shuffle product 
and the deconcatenation coproduct, is a Hopf algebra.

These two remarkable properties do not depend explicitly on the differentiability of the path $(x_t)_{t\geq 0}$. They can therefore serve as an important tool if one wants to consider non-smooth paths and still build a consistent calculus.
This intuition was at the heart of Terry Lyons' definition \cite{Lyons98} of a {\it geometric rough path} as a function $(s,t)\mapsto \XX_{s,t}\in \CT^*$ satisfying the two algebraic properties above and with a controlled modulus of continuity, for instance of H\"older type
\begin{equ}[e:holder]
| \XX_{s,t}(a_1 \cdots a_{n})|\leq C |t-s|^{n\gamma},
\end{equ}
with some fixed $\gamma>0$
(although the original definition involved rather a $p$-variation norm, which is 
natural in this context since it is invariant under reparametrisation of the path $x$, 
just like the definition of $\XX$). 
Lyons realised that this setting would allow to build a robust theory of integration and of associated differential equations. For instance, in the case of stochastic differential equations of Stratonovich type
\[
dX_t = \sigma(X_t) \circ dW_t\;,
\]
with $W:\R_+\to\R^d$ a $d$-dimensional Brownian motion and $\sigma:\R^d\to \R^d\otimes\R^d$ smooth, one can build rough paths $\XX$ and ${\mathbb W}$ over $X$, respectively $W$, such that the map ${\mathbb W}\mapsto\XX$ is \textit{continuous}, while in general the map $W\mapsto X$ is simply measurable.

The It\^o stochastic integration was included in Lyons' theory although it can not be described in terms of geometric rough paths. A few years later Massimiliano Gubinelli \cite{Gubinelli2010693} introduced the concept of a {\it branched rough path} as a function $(s,t)\mapsto \XX_{s,t}\in \CH^*$ taking values in the characters of an algebra $(\CH,\cdot)$ of rooted forests, satisfying the analogue of the Chen relation \eqref{e:chen} with respect to the Grossman-Larsson $\star$-product, dual of the {\it Connes-Kreimer coproduct}, and with a regularity condition
\begin{equ}[e:holder2]
| \XX_{s,t}(\tau)|\leq C |t-s|^{|\tau|\gamma}
\end{equ}
where $|\tau|$ counts the number of nodes in the forest $\tau$ and $\gamma>0$ is fixed. Again, this framework allows for a robust theory of integration and differential equations driven by branched rough paths. Moreover $\CH$, endowed with the forest product and Connes-Kreimer coproduct, turns out to be a Hopf algebra.

The theory of {\it regularity structures} \cite{reg}, due to the second named author of this paper, 
arose from the desire to apply the above ideas to (stochastic) partial differential equations (SPDEs)
involving non-linearities of (random) space-time distributions. 
Prominent examples are the KPZ equation \cite{KPZ,Peter,2015arXiv150803877G}, the $\Phi^4$ 
stochastic quantization equation
\cite{Jona,AlbRock91,DPD2,reg,CatCh,Antti}, the continuous parabolic Anderson model \cite{Cyril2,Cyril3,Gub}, 
and the stochastic Navier-Stokes equations \cite{DPD,Zhu}. 

One apparent obstacle to the application of the rough paths framework to such SPDEs is that one would like 
to allow for the analogue of the map $s\mapsto\XX_{s,t}\tau$ to be a space-time distribution for some $\tau\in\CH$. However, 
the algebraic relations discussed above involve {\it products} of such quantities, which are in general ill-defined. 
One of the main ideas of \cite{reg} was to replace the Hopf-algebra structure with a comodule structure: instead of 
a single space $\CH$, we have two spaces $(\CT,\CT_+)$ and a coaction $\Deltap:\CT\to\CT\otimes\CT_+$
such that $\CT$ is a right comodule over the Hopf algebra $\CT_+$. 
In this way, elements in the dual space $\CT^*$ of $\CT$ are used to encode the distributional 
objects which are needed in the theory, 
while elements of $\CT_+^*$ encode continuous functions. Note that $\CT$ 
admits neither a product nor a coproduct in general.

However, the comodule structure allows to define the analogue of a rough path as a \textit{pair}: 
consider a distribution-valued continuous function 
\[
\R^d \ni y \mapsto \Pi_y\in \CT^* \otimes \CD'(\R^d)\;,
\]
as well as a continuous function
\[
\R^d\times\R^d\ni(x,y)\mapsto \gamma_{xy}\in\CT_+^*.
\]
The analogue of the Chen relation \eqref{e:chen} is then given by
\begin{equ}[e:chen2]
\gamma_{xy}\star \gamma_{yz}=\gamma_{xz}\;, \qquad 
\Pi_y \star \gamma_{yz} =  \Pi_z\;, 
\end{equ}
where the first $\star$-product is  the convolution product on $\CT_+^*$, while the
second $\star$-product is given by the dual of the coaction $\Deltap$.
This structure guarantees that all relevant expressions will be linear in the $\Pi_y$, so we never
need to multiply distributions. To compare this expression to \eqref{e:chen}, think of
$(\Pi_y \tau)(\cdot) \in \CD'(\R^d)$ for $\tau \in \CT$ as being the analogue of
$z \mapsto \XX_{z,y}(\tau)$.
Note that the algebraic conditions \eqref{e:chen2} are not enough to provide a useful object: analytic conditions analogous to \eqref{e:holder2} play an essential role in the analytical aspects of the theory. 
Once a \textit{model} $\XX = (\Pi,\gamma)$ has been constructed, it plays a role analogous to that of a 
rough path and allows to construct a robust solution theory for a class of rough (partial) differential equations.

In various specific situations, the theory 
yields a \textit{canonical lift} of any smoothened realisation of the driving noise 
for the stochastic PDE under consideration to a model $\XX^\eps$. 
Another major difference with what one sees in the rough paths setting is the following 
phenomenon: if we remove the regularisation as $\eps\to 0$, neither the canonical model 
$\XX^\eps$ nor the solution to the regularised equation converge in general to a limit. 
This is a structural problem which reflects again the fact that some products are intrinsically 
ill-defined. 

This is where {\it renormalisation} enters the game. It was already recognised in \cite{reg} that one should find a group $\RR$ of transformations on the space of models and elements $M_\eps$ in $\RR$ in such a way that,
when applying $M_\eps$ to the canonical lift $\XX^\eps$, the resulting
sequence of models converges to a limit. Then the theory essentially provides a
\textit{black box}, allowing to build maximal solutions for the stochastic PDE in question. 

One aspect of the theory developed in \cite{reg} that is far from satisfactory is that while one has
in principle a characterisation of $\RR$, this characterisation is very
indirect. The methodology pursued so far has been to first make an educated guess for a sufficiently 
large family of renormalisation maps, then verify \textit{by hand} that these do indeed
belong to $\RR$ and finally show, again \textit{by hand}, that the renormalised models converge to a limit.
Since these steps did not rely on any general theory, they 
had to be performed separately for each new class of stochastic PDEs.

The main aim of the present article is to define an algebraic framework 
allowing to build regularity structures which, on the one hand, extend the ones built in \cite{reg} 
and, on the other hand, admit sufficiently many automorphisms (in the sense of 
\cite[Def.~2.28]{reg}) to cover the renormalisation procedures of all subcritical stochastic PDEs that
have been studied to date.

Moreover our construction is not restricted to the Gaussian setting and applies to {\it any} choice of the driving noise with minimal integrability conditions. In particular this allows to recover all the renormalisation procedures
used so far in applications of the theory
\cite{reg,wong,woKP,2015arXiv150701237H,2016arXiv160204570H,2016arXiv160105724S}. 
It reaches however far beyond this and shows that the BPHZ renormalisation procedure belongs to the
renormalisation group of the regularity structure associated to \textit{any} class of subcritical 
semilinear stochastic PDEs. In particular, this is the case for the generalised KPZ equation which 
is the most natural stochastic evolution on loop space and is (formally!) given in local coordinates by
\begin{equ}[e:chris]
\d_t u^\alpha = \d_x^2 u^\alpha + \Gamma^\alpha_{\beta\gamma}(u) \d_x u^\beta \d_x u^\gamma 
+ \sigma_i^\alpha(u)\,\xi_i\;,
\end{equ}
where the $\xi_i$ are independent space-time white noises, $\Gamma^\alpha_{\beta\gamma}$ are the
Christoffel symbols of the underlying manifold, and the $\sigma_i$ are a collection of vector fields with
the property that $\sum_i L_{\sigma_i}^2 = \Delta$, where $L_{\sigma}$ is the Lie derivative in the direction
of $\sigma$ and $\Delta$ is the Laplace-Beltrami operator.
Another example is given by the stochastic sine-Gordon equation \cite{2014arXiv1409.5724H} close to the
Kosterlitz-Thouless transition. In both of these examples, the relevant group describing the renormalisation procedures
is of very large dimension (about 100 in the first example and arbitrarily large in the second one), so that
the verification ``by hand'' that it does indeed belong to the ``renormalisation group'' as done
for example in \cite{reg,wong}, would be impractical.

In order to describe the renormalisation procedure of SPDEs we introduce a new construction of an associated regularity structure, that will
be called {\it extended} since it contains a new parameter which was not present in \cite{reg}, the {\it extended decoration}. As above, this yields spaces $(\CT^\ex,\CT^\ex_+)$, such that $\CT_+^\ex$ is a Hopf algebra and $\CT^\ex$
a right comodule over $\CT^\ex_+$. 
The renormalisation procedure of distributions coded by $\CT^\ex$ is then described by another Hopf algebra 
$\CT_-^\ex$ and coactions 
$\Deltam_\ex:\CT^\ex\to\CT_-^\ex\otimes\CT^\ex$ and $\Deltam_\ex:\CT^\ex_+\to\CT_-^\ex\otimes\CT_+^\ex$ turning
both $\CT^\ex$ and  $\CT_+^\ex$ into left comodules over $\CT_-^\ex$. This construction is, crucially, 
compatible with the comodule structure of $\CT^\ex$ over $\CT_+^\ex$ in the sense that 
$\Deltam_\ex$ and $\Deltap_\ex$ are in {\it cointeraction} in 
the terminology of \cite{2016arXiv160508310F}, see formulae \eqref{e:intertwine}-\eqref{e:propWanted1} and
Remark \ref{cointera} below. 
Once this structure is obtained, we can define {\it renormalised models} as follows: given a functional $g:\CT_-^\ex\to\R$
and a model $\XX = (\Pi,\gamma)$, we construct a new model $\XX^g$ by setting
\begin{equ}
\gamma_{z\bar z}^g = (g \otimes \gamma_{z\bar z})\Deltam_\ex\;,\qquad 
\Pi_z^g  = (g \otimes \Pi_z)\Deltam_\ex\;.
\end{equ}
The cointeraction property then guarantees that  $\XX^g$ satisfies again the generalised Chen relation \eqref{e:chen2}. 
Furthermore, the action of $\CT_-^\ex$ on $\CT^\ex$ and $\CT_+^\ex$ is such that, crucially, the associated 
analytical conditions automatically hold as well.

All the coproducts and coactions mentioned above are a priori different operators, but we describe them in a unified framework as special cases of a contraction / extraction operation of subforests, as arising
in the BPHZ renormalisation procedure / forest formula \cite{BP,Hepp,Zimmermann,FMRS85}. 
It is interesting to remark that the structure described in this article
is an extension of that previously described in \cite{CHV,MR2657947,MR2803804} in the context of 
the analysis of B-series for numerical ODE solvers, which is itself an extension 
of the Connes-Kreimer Hopf algebra of rooted trees \cite{CK,CKI} arising in
the abovementioned forest formula in perturbative QFT. It is also closely related to incidence Hopf algebras associated to 
families of posets \cite{MR914660,schmitt99}. 

There are however a number of substantial differences with respect to the existing literature. First we propose a new approach based on coloured forests; for instance we shall consider operations like
\[
\begin{tikzpicture}[scale=0.15,baseline=0.15cm]
        \node at (0,0)  [dot,blue] (root) {};
         \node at (-7,6)  [dot ] (leftll) {};
          \node at (-5,4)  [dot,red,label=left:] (leftl) {};
          \node at (-3,6)  [dot] (leftlr) {};
          \node at (-5,6)  [dot] (leftlc) {};
      \node at (-3,2)  [dot,color=red] (left) {};
         \node at (-1,4)  [dot,red] (leftr) {};
         \node at (1,4)  [dot,blue,label=above:   ] (rightl) {};
          \node at (0,6)  [dot,red ] (rightll) {};
           \node at (2,6)  [dot,blue] (rightlr) {};
           \node at (5,4)  [dot,blue,label=left:] (rightr) {};
            \node at (4,6)  [dot,blue] (rightrl) {};
        \node at (3,2) [dot,blue] (right) {};
         \node at (6,6)  [dot ] (rightrr) {};
        
        \draw[kernel1] (left) to node [sloped,below] {\small } (root); ;
        \draw[kernel1,red] (leftl) to
     node [sloped,below] {\small }     (left);
     \draw[kernel1] (leftlr) to
     node [sloped,below] {\small }     (leftl); 
     \draw[kernel1] (leftll) to
     node [sloped,below] {\small }     (leftl);
     \draw[kernel1,red] (leftr) to
     node [sloped,below] {\small }     (left);  
        \draw[kernel1,blue] (right) to
     node [sloped,below] {\small }     (root);
      \draw[kernel1] (leftlc) to
     node [sloped,below] {\small }     (leftl); 
     \draw[kernel1,blue] (rightr) to
     node [sloped,below] {\small }     (right);
     \draw[kernel1] (rightrr) to
     node [sloped,below] {\small }     (rightr);
     \draw[kernel1,blue] (rightrl) to
     node [sloped,below] {\small }     (rightr);
     \draw[kernel1,blue] (rightl) to
     node [sloped,below] {\small }     (right);
     \draw[kernel1,blue] (rightlr) to
     node [sloped,below] {\small }     (rightl);
     \draw[kernel1] (rightll) to
     node [sloped,below] {\small }     (rightl);
     \end{tikzpicture} 
     \longrightarrow \quad
     \begin{tikzpicture}[scale=0.15,baseline=0.15cm]
          \node at (-5,2)  [dot,red] (leftl) {};
          \node at (-3,4)  [dot] (leftlr) {};
          \node at (-5,4)  [dot] (leftlc) {};
      \node at (-3,0)  [dot,color=red] (left) {};
         \node at (-1,2)  [dot,red] (leftr) {};
         \node at (1,0)  [dot,red] (o) {};
         \node at (4,0)  [dot] (o) {};
        
        \draw[kernel1,red] (leftl) to
     node [sloped,below] {\small }     (left);
     \draw[kernel1] (leftlr) to
     node [sloped,below] {\small }     (leftl); 
     \draw[kernel1,red] (leftr) to
     node [sloped,below] {\small }     (left);  
      \draw[kernel1] (leftlc) to
     node [sloped,below] {\small }     (leftl); 
     \end{tikzpicture} 
 \     \otimes   
\begin{tikzpicture}[scale=0.15,baseline=0.15cm]
        \node at (0,0)  [dot,blue] (root) {};
         \node at (-7,6)  [dot] (leftll) {};
          \node at (-5,4)  [dot,red] (leftl) {};
          \node at (-3,6)  [dot,red] (leftlr) {};
          \node at (-5,6)  [dot,red] (leftlc) {};
      \node at (-3,2)  [dot,color=red] (left) {};
         \node at (-1,4)  [dot,red] (leftr) {};
         \node at (1,4)  [dot,blue ] (rightl) {};
          \node at (0,6)  [dot,red] (rightll) {};
           \node at (2,6)  [dot,blue] (rightlr) {};
           \node at (5,4)  [dot,blue] (rightr) {};
            \node at (4,6)  [dot,blue] (rightrl) {};
        \node at (3,2) [dot,blue] (right) {};
         \node at (6,6)  [dot,red] (rightrr) {};
        
        \draw[kernel1] (left) to node [sloped,below] {\small } (root); ;
        \draw[kernel1,red] (leftl) to
     node [sloped,below] {\small }     (left);
     \draw[kernel1,red] (leftlr) to
     node [sloped,below] {\small }     (leftl); 
     \draw[kernel1] (leftll) to
     node [sloped,below] {\small }     (leftl);
     \draw[kernel1,red] (leftr) to
     node [sloped,below] {\small }     (left);  
        \draw[kernel1,blue] (right) to
     node [sloped,below] {\small }     (root);
      \draw[kernel1,red] (leftlc) to
     node [sloped,below] {\small }     (leftl); 
     \draw[kernel1,blue] (rightr) to
     node [sloped,below] {\small }     (right);
     \draw[kernel1] (rightrr) to
     node [sloped,below] {\small }     (rightr);
     \draw[kernel1,blue] (rightrl) to
     node [sloped,below] {\small }     (rightr);
     \draw[kernel1,blue] (rightl) to
     node [sloped,below] {\small }     (right);
     \draw[kernel1,blue] (rightlr) to
     node [sloped,below] {\small }     (rightl);
     \draw[kernel1] (rightll) to
     node [sloped,below] {\small }     (rightl);
     \end{tikzpicture}
 \longrightarrow \quad 
     \begin{tikzpicture}[scale=0.15,baseline=0.15cm]
          \node at (-2.5,2)  [dot] (leftlc) {};
          \node at (-.8,2)  [dot] (leftlcc) {};
      \node at (-2.5,0)  [dot,color=red] (left) {};
         \node at (0,0)  [dot,red] (o) {};
         \node at (2.5,0)  [dot] (o) {};
        
      \draw[kernel1] (leftlc) to
     node [sloped,below] {\small }     (left); 
     \draw[kernel1] (leftlcc) to
     node [sloped,below] {\small }     (left); 
     \end{tikzpicture} 
 \     \otimes   
      \begin{tikzpicture}[scale=0.15,baseline=0.15cm]
        \node at (0,0)  [dot,blue] (root) {};
         \node at (-2,4)  [dot ] (leftll) {};
       \node at (-2,2)  [dot,color=red] (left) {};
          \node at (0,2)  [dot,red ] (rightll) {};
         \node at (2,2)  [dot,red ] (rightrr) {};
        
        \draw[kernel1] (left) to node [sloped,below] {\small } (root); ;
     \draw[kernel1] (leftll) to
     node [sloped,below] {\small }     (left);
     \draw[kernel1] (rightrr) to
     node [sloped,below] {\small }     (root);
     \draw[kernel1] (rightll) to
     node [sloped,below] {\small }     (root);
     \end{tikzpicture} 
\]
of colouring, extraction and contraction of subforests. Further, the abovementioned articles 
deal with {\it two} 
spaces in cointeraction, analogous to our Hopf algebras $\CT^\ex_-$ and $\CT^\ex_+$, while our third space $\CT^\ex$ is the crucial 
ingredient which allows for distributions in the analytical part of the theory. Indeed, one of the main 
novelties of regularity structures is that they allow to study random distributional objects in a pathwise sense rather 
than through Feynman path integrals / correlation functions and the space $\CT^\ex$ encodes the fundamental bricks of 
this construction. 
Another important difference is that the structure described here does not consist of simple 
trees / forests, but they are decorated with multiindices
on both their edges and their vertices. These decorations are \textit{not inert} but
transform in a non-trivial way under our coproducts, interacting with other operations like the contraction of sub-forests and the computation of suitable gradings.

In this article, Taylor sums play a very important role, just as in the BPHZ renormalisation 
procedure, and they appear in the coactions of both $\CT^\ex_-$ (the {\it renormalisation}) and $\CT^\ex_+$ (the {\it recentering}). 
In both operations, the group elements used to perform such operations are constructed with the help of a {\it twisted antipode}, providing a variant of the algebraic Birkhoff factorisation that was previously shown to arise naturally
in the context of perturbative quantum field theory, see for
example \cite{Kreimer,CK,CKI,CKII,Birkhoff1,Birkhoff2}. 

In general, the context for a twisted antipode / Birkhoff factorisation is that of
a group $G$ acting on some vector space $A$ which comes with a valuation.
Given an element of $A$, one then wants to renormalise it by acting
on it with a suitable element of $G$ in such a way that its valuation vanishes. In the context of
dimensional regularisation, elements of $A$ assign to each Feynman diagram a Laurent series
in a regularisation parameter $\eps$, and the valuation extracts the pole part of this series.
In our case, the space $A$ consists of stationary random linear maps $\PPi\colon \CT^\ex \to \CC^\infty$
and we have \textit{two} actions on it, by the group of 
characters $\CG^\ex_\pm$ of $\CT^\ex_\pm$, corresponding to two different valuations. 
The {\it renormalisation group} $\CG^\ex_-$ is associated to the valuation that extracts
the value of $\E (\PPi \tau)(0)$ for every homogeneous element $\tau \in \CT^\ex$ of negative degree.
The {\it structure group} $\CG^\ex_+$ on the other hand is associated to the valuations
that extract the values $(\PPi \tau)(x)$ for all homogeneous elements $\tau \in \CT^\ex$ of positive degree.

We show in particular that the twisted antipode related to the action of $\CG^\ex_+$ is intimately 
related to the algebraic properties of Taylor remainders. Also in this respect, regularity structures provide a far-reaching generalisation of 
rough paths, expanding Massimiliano Gubinelli's investigation of the algebraic and analytic properties of increments 
of functions of a real variable achieved in the theory of {\it controlled rough paths} \cite{Gubinelli200486}.

\subsection{A general renormalisation scheme for SPDEs}

Regularity Structures (RS) have been introduced \cite{reg} in order to solve singular SPDEs of the form
\[
\partial_t u = \Delta u +F(u,\nabla u,\xi)
\]
where $u=u(t,x)$ with $t\geq 0$ and $x\in\R^d$, $\xi$ is a random space-time Schwartz distribution
(typically stationary and approximately scaling-invariant at small scales) driving the equation 
and the non-linear term $F(u,\nabla u,\xi)$ 
contains some products of distributions which are not well-defined by classical 
analytic methods. We write this equation in the customary mild formulation
\begin{equation}\label{singularSPDE}
u = G*(F(u,\nabla u,\xi))
\end{equation}
where $G$ is the heat kernel and we suppose for simplicity that $u(0,\cdot)=0$.  

If we regularise the noise $\xi$ by means of a family of smooth mollifiers $(\rho^\varepsilon)_{\varepsilon>0}$, setting $\xi^\varepsilon:=\rho^\gep*\xi$, then 
the regularised PDE 
\[
u^\gep = G*(F(u^\gep,\nabla u^\gep,\xi^\gep))
\]
is well-posed under suitable assumptions on $F$. However, if we want to remove the regularisation by letting $\gep\to 0$, we do not know whether $u^\gep$ converges. The 
problem is that $\xi^\gep\to\xi$ in a space of distributions with negative (say) Sobolev
regularity, and in such spaces the solution map $\xi^\gep\mapsto u^\gep$ is not continuous.

The theory of RS allows to solve this problem for a class of equations, called {\it 
subcritical}. The general approach is as in Rough Paths (RP): the discontinuous 
solution map 
\[
\CD'(\R^d)\ni\xi^\gep\mapsto u^\gep\in \CD'(\R^d)
\] 
is factorised as the composition of two maps:
\[
\CD'(\R^d)\ni\xi^\gep\mapsto {\mathbb X}^\gep\in\MM,
\qquad {\mathbb X}^\gep\mapsto u^\gep=:\Phi({\mathbb X}^\gep)\in\CD'(\R^d), 
\]
where $(\MM,{\rm d})$ is a metric space that we call the {\it space
of models}. The main point is that the map $\Phi:\MM\to\CD'(\R^d)$
 can be chosen in such a way that its is  {\it continuous}, even though $\MM$ is sufficiently
 large to allow for elements exhibiting a local scaling behaviour compatible with that of $\xi$. 
Of course this means that $\xi^\gep\mapsto {\mathbb X}^\gep$ is discontinuous in general. 
In RP, the analogue of the model 
${\mathbb X}^\gep$ is the lift of the driving noise as a rough path, the map $\Phi$ is called 
the It\^o-Lyons map, and its
continuity (due to T.\ Lyons \cite{Lyons98}) is the cornerstone of the theory. The construction of $\Phi:\MM
\to\CD'(\R^d)$ in the general context of subcritical SPDEs is one of the main results of \cite{reg}. 

The construction of $\Phi$, although a very powerful tool, does not solve alone the 
aforementioned problem, since it turns out that the most natural choice of ${\mathbb X}^\gep$, which we call the canonical model, does in general {\it 
not} converge as we remove the regularisation by letting $\gep\to 0$. It is necessary to modify, namely {\it 
renormalise}, the model ${\mathbb X}^\gep$ in order to obtain a family $\hat {\mathbb X}^\gep$ which 
does converge in $\MM$ as $\gep\to 0$ to a limiting
model $\hat{\mathbb X}$. The continuity of $\Phi$ then implies that 
$\hat u^\gep:=\Phi(\hat {\mathbb X}^\gep)$ converges to some limit $\hat u:=\Phi(\hat {\mathbb X})$, which 
we call the {\it renormalised solution} to our equation, see Figure~\ref{figZ}. 
A very important fact is that $\hat u^\gep$ is itself the solution of a {\it
renormalised equation}, which differs from the original equation only by the presence of 
additional {\it local counterterms}, the form of which can be derived explicitly from the
starting SPDE, see \cite{BCCH}. 

\begin{figure}
\centering
\begin{tikzpicture}[scale=0.8]
	\fill[gray!10, draw=black] (3,0) to[out=95, in=0] (0,2) to[out=180, in=85] (-3,0) .. controls (-3,-2) and (-1,0) .. (0,-1) to[bend right, out=0, in=-120] (1,-2) to[bend right, in=-130] cycle;
	\draw (-3,-4) -- (3,-4);
	\node[visible, label={below:$\xi$}] at (-1,-4) {}; 
	\node[visible, label={below:$\xi^\varepsilon$}] at (1,-4) {}; 
	\draw[dashed] (-1,-4) -- (-1,1);
	\node[visible, label={right:$\hat{\mathbb{X}}$}] at (-1,1) {};
	\draw[dashed] (1,-4) -- (1,1);
	\node[visible, label={right:$\hat{\mathbb{X}}^\varepsilon$}] at (1,1) {};
	\node[visible, label={right:$\mathbb{X}^\varepsilon$}] at (1,-1) {};
	\draw (5,-4) -- (11,-4);
%	\node[visible, label={below:$\hat u=\Phi(\hat\mathbb{X})$}] at (6,-4) {}; 
	\node[visible, label={below:$\hat u$}] at (6,-4) {}; 
	\node[visible, label={below:$u^\varepsilon$}] at (9,-4) {}; 
	\node[visible, label={below:$\hat u^\varepsilon$}] at (7.5,-4) {}; 
	\draw[->] (3.5,0) to[bend left] node[midway, above right] {$\Phi$} (8,-3); 
	\draw[->] (1.1,-4.7) to[bend right] (8.7,-4.7);
	\draw[->] (5,-2.5) arc[radius=0.5cm, start angle=0, delta angle=-340];
	\node[label={below:$\CD'({\mathbb R}^d)$}] at (-2.5,-3.8) {};
	\node[label={below:$\CD'({\mathbb R}^d)$}] at (10.5,-3.8) {};
	\node[label={below:$\MM$}] at (-2.7,2) {};
\end{tikzpicture}
\caption{In this figure we show the factorisation of the map $\xi^\gep\mapsto u^\gep$ into
$\xi^\gep\mapsto{\mathbb X}^\gep\mapsto \Phi({\mathbb X}^\gep)=u^\gep$. We also see that in the space of models $\MM$ we have several possible lifts of $\xi^\gep\in{\mathcal 
S}'(\R^d)$, e.g. the canonical model ${\mathbb X}^\gep$ and the renormalised model $\hat 
{\mathbb X}^\gep$; it is the latter that converges to a model $\hat{\mathbb X}$, thus providing a lift of $\xi$. Note that $\hat u^\gep=\Phi(\hat{\mathbb X}^\gep)$ and $\hat u=\Phi(\hat{\mathbb X})$. 
}\label{figZ}
\end{figure}

The transformation ${\mathbb X}^\gep\mapsto\hat{\mathbb X}^\gep$ is described by the 
so-called {\it renormalisation group}. The main aim of this paper is to provide a general
construction of the space of models $\MM$ together with a group of automorphisms   
$\CG_-\ni S:\MM\to\MM$ which allows to describe the renormalised model 
$\hat{\mathbb X}^\gep=S_\gep{\mathbb X}^\gep$ for an appropriate choice of $S_\gep\in 
\CG_-$.

Starting with the $\phi^4_3$ equation and the Parabolic Anderson Model in \cite{reg}, 
several equations have already been successfully renormalised with regularity structures
\cite{CDM,Cyril3,Cyril2,wong,woKP,2014arXiv1409.5724H,2015arXiv150701237H,2016arXiv160105724S}. In all these cases, the construction of the renormalised model and
its convergence as the regularisation is removed are based on {\it ad hoc} arguments
which have to be adapted to each equation. 
The present article, together with the companion ``analytical'' article \cite{Ajay} and the 
work \cite{BCCH}, complete the general 
theory initiated in \cite{reg} by proving that virtually every\footnote{There are some exceptions that can arise 
when one of the driving noises is less regular than white noise. For example, a canonical solution theory for SDEs
driven by fractional Brownian motion can only be given for $H > {1\over 4}$, even though these equations are subcritical for every $H>0$. See in particular the assumptions of \cite[Thm~2.14]{Ajay}.} subcritical equation driven by a stationary 
noise satisfying some natural bounds on its cumulants can be successfully renormalised by means of the following scheme:
\begin{itemize}
\item {\it Algebraic step}: Construction of the space of models $(\MM,{\rm d})$ and 
renormalisation of the canonical model $\MM\ni{\mathbb X}^\gep\mapsto\hat{\mathbb X}^\gep\in\MM$, this article.
\item {\it Analytic step}: Continuity of the solution map $\Phi:\MM\to\CD'(\R^d)$, \cite{reg}.
\item {\it Probabilistic step}: Convergence in probability of the renormalised model $\hat{\mathbb X}^\gep$ to $\hat{\mathbb X}$ in $(\MM,{\rm d})$, \cite{Ajay}.
\item {\it Second algebraic step}: Identification of $\Phi(\hat{\mathbb X}^\gep)$
with the classical solution map for an equation with local counterterms, \cite{BCCH}.
\end{itemize}
%We obtain in this way a {renormalised solution} $\hat u:=\Phi(\hat{\mathbb X})=\lim_{\gep\to 0}\Phi(\hat{\mathbb X}^\gep)$, which is also the unique solution of a fixed point problem and has a physical interpretation as
%a limit of classical solutions for a modified equation.
We stress that this procedure works for very general noises, far beyond the Gaussian case.

\subsection{Overview of results}

We now describe in more detail the main results of this paper. Let us start from the notion
of a {\it subcritical rule}. A {\it rule}, introduced in Definition~\ref{def:rule} below, 
is a formalisation of the notion of a ``class of systems of stochastic PDEs''.
More precisely, given any system of equations of the type \eqref{singularSPDE}, there is a natural way
of assigning to it a rule (see Section~\ref{sec:SPDERules} for an example), which keeps track of which
monomials (of the solution, its derivatives, and the driving noise) appear on the right hand side for each component. 
The notion of a {\it subcritical} rule, see Definition~\ref{def:subcritical}, translates to this general 
context the notion of
subcriticality of equations which was given more informally in \cite[Assumption 8.3]{reg}.

Suppose now that we have fixed a subcritical rule. The first aim is to construct an associated 
space of models 
$\MM^\ex$. The superscript `$\ex$' stands for {\it extended} and is used to distinguish this 
space from the {\it restricted} space of models $\MM$, see Definition~\ref{def:modelspace2}, which
is closer to the original construction of \cite{reg}. 
The space $\MM^\ex$ extends $\MM$ in the sense that there is a canonical 
continuous injection $\MM \hookrightarrow \MM^\ex$, see Theorem~\ref{theo:models}. 
The reason for considering this larger space is that it admits a large group $\CG_-^\ex$ of
automorphisms in the sense of \cite[Def.~2.28]{reg} which can be described in an explicit way.
Our renormalisation procedure then makes use of a suitable subgroup
$\CG_- \subset \CG_-^\ex$ which leaves $\MM$ invariant.
The reason why we do not describe its action on $\MM$ directly is that although it acts by continuous
transformations, it no longer acts by automorphisms, making it much more difficult to describe
without going through $\MM^\ex$.

To define $\MM^\ex$, we construct a regularity structure $(\CT^\ex,\CG_+^\ex)$ in the 
sense of \cite[Def.~2.1]{reg}. This is done in Section~\ref{sec5}, see in 
particular Definitions~\ref{def:CT}-\ref{prop:Hopf-} and Proposition~\ref{TTex}.  
The corresponding {\it structure group} $\CG_+^\ex$ is constructed as the character group of a Hopf 
algebra $\CT_+^\ex$, see \eqref{eq:Hopf}, Proposition~\ref{prop:Hopf+} and Definition~\ref{def:charpm}. 
The vector space $\CT^\ex$ is a right-comodule over $\CT_+^\ex$, namely 
there are linear operators
\[
\Deltap_\ex :\CT^\ex\to \CT^\ex\otimes \CT^\ex_+, \qquad \Deltap_\ex :\CT^\ex_+\to \CT^\ex_+\otimes \CT^\ex_+,
\]
such that the identity
\begin{equation}\label{coass}
(\id\otimes\Deltap_\ex)\Deltap_\ex=(\Deltap_\ex\otimes\id)\Deltap_\ex\;,
\end{equation}
holds both between operators on $\CT^\ex$ and on $\CT_+^\ex$.
The fact that the two operators have the same name but act on different spaces should
not generate confusion since the domain is usually clear from context. When it isn't, as in 
\eqref{coass}, then the identity is assumed by convention to hold for all possible meaningful 
interpretations. 
%Then, there is a right action
%of $\CG_+^\ex$ on the dual space $(\CT^\ex)^*$ of $\CT^\ex$, expressed by
%\[
%h f:=(h\otimes f)\Deltap_\ex \tau, \qquad h\in(\CT^\ex)^*, \ \tau\in \CT^\ex, \, f\in\CG_+^\ex.
%\]

Next, the {\it renormalisation group} $\CG_-^\ex$ is defined as the character group of the 
Hopf algebra $\CT_-^\ex$, see \eqref{eq:Hopf}, Proposition~\ref{prop:Hopf-} and Definition 
\ref{def:charpm}. The vector spaces $\CT^\ex$ and $\CT^\ex_+$ are both left-comodules over 
$\CT_-^\ex$, so that $\CG_-^\ex$ acts on the left on $\CT^\ex$ and on $\CT^\ex_+$. Again, this means that
we have operators
\[
\Deltam_\ex:\CH\to\CT^\ex_-\otimes\CH, \qquad \CH\in\{\CT^\ex,\CT^\ex_+,\CT^\ex_-\}
\]
such that
\[
(\id\otimes\Deltam_\ex)\Deltam_\ex=(\Deltam_\ex\otimes\id)\Deltam_\ex.
\]
The action of $\CG_-^\ex$ on the corresponding dual spaces is given by
\[
(g h)(\tau):=(g\otimes h)\Deltam_\ex\tau, \qquad h\in\CH^*, \, \tau\in\CH, \,  g\in\CG_-^\ex\;.
\]
Crucially, these separate actions satisfy a compatibility condition which can be
expressed as a {\it cointeraction} property, see \eqref{e:propWanted1} in Theorem 
\ref{CTpmexHopf}, which implies the following relation between the two actions above:
\begin{equation}\label{elltaug}
g(h f)=(g h)(g f), \qquad h\in\CH^*, \,  g\in\CG_-^\ex, \, f\in\CG_+^\ex, \, \CH\in\{\CT^\ex,\CT^\ex_+\},
\end{equation}
see Proposition~\ref{left-right} and \eqref{e:semidirect}. This result is the algebraic linchpin of Theorem~\ref{theo:algebra}, where we construct the action of $\CG_-^\ex$ on the space $\MM^\ex$
of models. 

The next step is the construction of the space of {\it smooth} models of the regularity structure 
$(\CT^\ex,\CG_+^\ex)$. This is done in Definition~\ref{def:model}, where we 
follow \cite[Def.~2.17]{reg}, with the additional constraint that we consider
smooth objects. Indeed, we are interested in the canonical model associated to a 
(regularised) smooth noise, constructed in Proposition~\ref{prop:model} and Remark 
\ref{rem:canonical}, and in its renormalised versions, namely its orbit under the 
action of $\CG_-^\ex$, see Theorem~\ref{theo:algebra}. 

Finally, we restrict our attention to a class of models which are {\it random}, {\it
stationary} and have suitable integrability properties, see Definition~\ref{stationary}. 
In this case, we can define a particular
{\it deterministic} element of $\CG_-^\ex$ that gives rise to what we call the {\it BPHZ} renormalisation,
by analogy with the corresponding construction arising in perturbative QFT 
\cite{BP,Hepp,Zimmermann,FMRS85}, see Theorem~\ref{main:renormalisation}. 
We show that the BPHZ construction yields the {\it unique} element 
of $\CG_-^\ex$ such that the associated renormalised model yields a {\it centered} family 
of stochastic processes on the {\it finite} family of elements in $\CT^\ex$ with negative degree.
This is the {\it algebraic} step of the renormalisation procedure.

This is the point where the companion analytical paper \cite{Ajay} starts, and then goes
on to prove that the BPHZ renormalised model does converge in the metric ${\rm d}$ on $\MM$, thus achieving the {\it probabilistic} step mentioned above and thereby completing 
the renormalisation procedure. 

The BPHZ functional is expressed explicitly in terms of an interesting map that
we call {\it negative twisted antipode} by analogy to \cite{1126-6708-1999-09-024}, see Proposition 
\ref{prop:twisted-} and \eqref{e:defHeps}. There is also a {\it positive twisted 
antipode}, see Proposition~\ref{prop:twisted+}, which plays a similarly important role
in \eqref{e:defModel1}. The main point is that these twisted antipodes encode 
in the compact formulae \eqref{e:defModel1}
and \eqref{e:defHeps} a number of nontrivial computations.

How are these spaces and operators defined? Since the analytic theory of \cite{reg} is based on
{\it generalised Taylor expansions} of solutions, the vector space $\CT^\ex$ is generated
by a basis which codes the relevant generalised Taylor monomials, which are defined
iteratively once a rule (i.e.\ a system of equations) is fixed. Definitions~\ref{def:conform}, 
\ref{def_space} and~\ref{def:CT} ensure that $\CT^\ex$ is sufficiently rich to allow 
one to rewrite \eqref{singularSPDE} as a fixed point problem in a space of functions
with values in our regularity structure. Moreover $\CT^\ex$ must also be invariant under the actions 
of $\CG_{\pm}^\ex$. This is the aim of the construction in Sections~\ref{sec2}, 
\ref{sec3} and~\ref{sec4}, that we want now to describe.

The spaces which are constructed in Section~\ref{sec5} depend on the choice of a number
of parameters, like the dimension of the coordinate space, the leading 
differential operator in the equation (the Laplacian being just one of many possible
choices), the non-linearity, the noise. In the previous sections we have built {\it 
universal} objects with nice algebraic properties which depend on none of these choices, but for the dimension of the 
space, namely an (arbitrary) integer number $d$ fixed once for all.

The spaces $\CT^\ex$, $\CT^\ex_+$ and $\CT^\ex_-$ are obtained by considering repeatedly
suitable {\it subsets} and suitable {\it quotients} of two initial spaces, called $\Tra_1$ 
and $\Tra_2$ and defined in and after Definition~\ref{def:admspecific}; more precisely, 
$\Tra_1$ is the ancestor of $\CT^\ex$ and $\CT^\ex_-$, while $\Tra_2$ is the ancestor of 
$\CT^\ex_+$. In Section~\ref{sec4} we represent these spaces as linearly generated by a 
collection of decorated forests, on which we can define suitable algebraic operations like
a product and a coproduct, which are later inherited by $\CT^\ex$, $\CT^\ex_+$ and 
$\CT^\ex_-$ (through other intermediary spaces which are called $\CH_\circ$, $\CH_1$ and 
$\hat\CH_2$). An important difference between $\CT^\ex_-$ and $\CT^\ex_+$ is that the
former is linearly generated by a family of forests, while the latter is linearly generated by a 
family of trees; this difference extends to the algebra structure: $\CT^\ex_-$ is endowed with
a {\it forest product} which corresponds to the disjoint union, while $\CT^\ex_+$ is endowed
with a {\it tree product} whereby one considers a disjoint union and then identifies the roots. 

The content of Section~\ref{sec4} is based on a specific definition of the spaces $\Tra_1$ 
and $\Tra_2$. In Sections~\ref{sec2} and~\ref{sec3} however we present a number of results
on a family of spaces $(\Tra_i)_{i\in I}$ with $I\subset\N$, which are supposed to satisfy
a few assumptions; Section~\ref{sec4} is therefore only a particular example of a more
general theory, which is outlined in Sections~\ref{sec2} and~\ref{sec3}. In this general 
setting we consider spaces $\Tra_i$ of decorated forests, and vector spaces 
$\scal{\Tra_i}$ of infinite series of such forests. Such series are not arbitrary but 
adapted to a grading, see Section~\ref{sec:bigraded}; this is needed since our abstract 
coproducts of Definition~\ref{def:maps} contain infinite series and might be ill-defined 
if were to work on arbitrary formal series.

The family of spaces $(\Tra_i)_{i\in I}$ are introduced in Definition~\ref{def:tra_i}
on the basis of families of admissible forests $\Adm_i$, $i\in I$. If $(\Adm_i)_{i\in I}$ 
satisfy Assumptions~\ref{ass:1}, \ref{ass:2}, \ref{ass:3}, \ref{ass:4}, \ref{ass:5} and 
\ref{ass:6}, then the coproducts $\Delta_i$ of Definition~\ref{def:maps} are coassociative 
and moreover $\Delta_i$ and $\Delta_j$ for $i<j$ are in {\it cointeraction}, see 
\eqref{prop:doublecoass}. As already mentioned, the cointeraction property is the 
algebraic formula behind the fundamental relation \eqref{elltaug} between the actions of 
$\CG^\ex_+$ and $\CG^\ex_-$ on $\CT^\ex_+$.
Appendix~\ref{sec:diagrams} contains a summary of the relations between the most important spaces
appearing in this article, while Appendix~\ref{sec:index} contains a symbolic index.

\subsection*{Acknowledgements}

{\small
We are very grateful to Christian Brouder, Ajay Chandra, Alessandra Frabetti, 
Dominique Manchon and Kurusch Ebrahimi-Fard for many interesting discussions and pointers to the literature.
MH gratefully acknowledges support by the Leverhulme Trust and by an ERC consolidator grant, project 615897 (Critical). LZ gratefully acknowledges support by the Institut Universitaire de France and the project of the Agence Nationale de la Recherche ANR-15-CE40-0020-01 grant LSD. The authors thank the organisers and the participants of a workshop
held in Bergen in April 2017, where the results of this paper were presented and discussed in detail.
}

\section{Rooted forests and bigraded spaces}

\label{sec2}

Given a finite set $S$ and a map $\ell \colon S \to \N$, we write
\begin{equ}
\ell! \eqdef \prod_{x \in S} \ell(x)!\;,
\end{equ}
and we define the corresponding binomial coefficients accordingly.
Note that if $\ell_1$ and $\ell_2$ have disjoint supports, then 
$(\ell_1 + \ell_2)! = \ell_1!\,\ell_2!$. Given a map 
$\pi\colon S \to \bar S$, we also define
$\pi_\star \ell \colon \bar S \to \N$ by $\pi_\star \ell(x) = \sum_{y \in \pi^{-1}(x)} \ell(y)$. 

For $k,\ell:S \to \N$ we define
\[
\binom{k}{\ell}\eqdef \prod_{x \in S}\binom{k(x)}{\ell(x)}\;,
\]
with the convention
$\binom{k}{\ell} = 0$ unless $0\le \ell\leq k$, which will be used
throughout the paper.
With these definitions at hand, one has
the following slight reformulation of the classical Chu-Vandermonde
identity.

\begin{lemma}[Chu-Vandermonde]
For every $k \colon S \to \N$, one has the identity
\begin{equ}
\sum_{\ell\,:\, \pi_\star \ell}\binom{k}{\ell} = \binom{\pi_\star k}{\pi_\star \ell}\;,
\end{equ}
where the sum runs over all possible choices of 
$\ell$ such that $\pi_\star \ell$ is fixed.\qed
\end{lemma}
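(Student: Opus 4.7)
The plan is to reduce the identity to the classical multinomial Chu--Vandermonde formula by partitioning $S$ into the fibers of $\pi$. The key observation is that the constraint ``$\pi_\star \ell$ is fixed'' decouples across fibers: it amounts to prescribing, for each $x \in \bar S$ separately, the value $m_x \eqdef \pi_\star \ell(x) = \sum_{y \in \pi^{-1}(x)} \ell(y)$.

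First I would expand both the summand and the desired right-hand side as products over appropriate index sets using the definition $\binom{k}{\ell} = \prod_{y \in S} \binom{k(y)}{\ell(y)}$ and $\binom{\pi_\star k}{\pi_\star \ell} = \prod_{x \in \bar S} \binom{\pi_\star k(x)}{\pi_\star \ell(x)}$. Then, grouping the product according to the partition $S = \bigsqcup_{x \in \bar S} \pi^{-1}(x)$, the left-hand side becomes
\[
\sum_{\ell} \prod_{x \in \bar S} \prod_{y \in \pi^{-1}(x)} \binom{k(y)}{\ell(y)}\;,
\]
where the sum is over $\ell$ with $\pi_\star \ell$ prescribed. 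Since this last constraint only couples $\ell(y)$ for $y$ lying in the same fiber, the sum factorises as
\[
\prod_{x \in \bar S}\; \sum_{\substack{\ell_x \colon \pi^{-1}(x) \to \N \\ \sum_y \ell_x(y) = m_x}} \prod_{y \in \pi^{-1}(x)} \binom{k(y)}{\ell_x(y)}\;.
\]

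For each fixed $x \in \bar S$, the inner sum is precisely the multinomial form of the classical Chu--Vandermonde identity applied to the finite set $\pi^{-1}(x)$, which evaluates to $\binom{\sum_{y \in \pi^{-1}(x)} k(y)}{m_x} = \binom{\pi_\star k(x)}{\pi_\star \ell(x)}$. Taking the product over $x \in \bar S$ yields exactly $\binom{\pi_\star k}{\pi_\star \ell}$, completing the proof.

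There is no real obstacle here: the argument is purely combinatorial and the only point that requires a moment's care is checking that the convention $\binom{k}{\ell} = 0$ unless $0 \le \ell \le k$ is compatible with the reduction, i.e.\ that terms with some $\ell(y) > k(y)$ vanish on both sides and do not cause issues when factorising over fibers. This is automatic since the product over $y$ of binomial coefficients vanishes as soon as any individual factor does.
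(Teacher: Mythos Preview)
Your argument is correct: factorising over the fibers of $\pi$ and applying the classical Chu--Vandermonde identity on each fiber is exactly the right reduction, and the handling of the convention $\binom{k}{\ell} = 0$ unless $0 \le \ell \le k$ is fine. The paper itself provides no proof at all---it simply states the lemma as ``a slight reformulation of the classical Chu--Vandermonde identity'' and marks it with \qed---so there is nothing further to compare.
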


\begin{remark}
These notations are also consistent with the case where the maps $k$ and $\ell$ are multi-index valued
under the natural identification of a map $S \to \N^d$ with a map 
$S \times \{1,\ldots,\infty\} \to \N$
given by $\ell(x)_i \leftrightarrow \ell(x,i)$. 
\end{remark}

\subsection{Rooted trees and forests}
\label{sec:rooted}

Recall that a rooted tree $T$ is a finite tree (a finite connected simple graph 
without cycles) with a distinguished vertex, $\rho=\rho_T$, called {\it the root}. 
Vertices of $T$, also called nodes, are denoted by $N=N_T$ and edges by $E=E_T\subset N^2$.
Since we want our trees to be rooted, they need to have at least one node,
so that we do not allow for trees with $N_T = \emptyset$. We do however allow
for the \textit{trivial} tree consisting of an empty edge set and a vertex set with only
one element. This tree will play a special role in the sequel and will be denoted by $\bullet$.
We will always assume that our trees are combinatorial meaning that there is no particular order 
imposed on edges leaving any given vertex.

Given a rooted tree $T$, we also endow $N_T$ with the partial order $\le$ where $w \le v$ if and only
if $w$ is on the unique path connecting $v$ to the root, and we orient edges in $E_T$ 
so that if $(x,y) = (x \to y) \in E_T$, then $x \le y$. In this way, we can always view a tree
as a directed graph.

Two rooted trees $T$ and $T'$ are \textit{isomorphic} if there exists a bijection
$\iota \colon E_T \to E_{T'}$ which is coherent
in the sense that there exists a bijection $\iota_N \colon N_T \to N_{T'}$ such that
$\iota(x,y) = (\iota_N(x),\iota_N(y))$ for any edge $(x,y) \in e$ and such that the
roots are mapped onto each other. 

We say that a rooted tree is \textit{typed} if it is furthermore endowed
with a function $\Labhom\colon E_T \to \Lab$, where $\Lab$\label{lab} is some finite set of types.
We think of $\Lab$ as being fixed once and for all and will sometimes omit to mention it
in the sequel. In particular, we will never make explicit the dependence on the choice
of $\Lab$ in our notations.  Two typed trees $(T,\Labhom)$ and $(T',\Labhom')$
are isomorphic if $T$ and $T'$ are isomorphic and $\Labhom$ is
pushed onto $\Labhom'$ by the corresponding isomorphism $\iota$ in the sense 
that $\Labhom'\circ \iota= \Labhom$.

Similarly to a tree, a \textit{forest} $F$ is a finite simple graph 
(again with nodes $N_F$ and edges $E_F \subset N_F^2$) 
without cycles. A forest $F$ is \textit{rooted}
if every connected component $T$ of $F$ is a rooted tree with root $\rho_T$. 
As above, we will consider forests that are typed in the sense that they are endowed with
a map $\Labhom\colon E_F \to \Lab$, and we consider the same notion of isomorphism between
typed forests as for typed trees. Note that while a tree is non-empty by definition, a forest
can be empty. We denote the empty forest by either $\one$ or $\emptyset$.

Given a typed forest $F$, a subforest $A \subset F$ consists of subsets $E_A \subset E_F$ and $N_A \subset N_F$ such that if $(x,y) \in E_A$ then $\{x,y\}\subset N_A$. Types in $A$ are inherited from $F$. A connected component of $A$ is a tree whose root is defined to be the minimal node in the partial order inherited from $F$.
We say that subforests $A$ and $B$ are disjoint, and write $A \cap B = \emptyset$, if
one has $N_A \cap N_B = \emptyset$ (which also implies that $E_A\cap E_B = \emptyset$).
Given two typed forests $F,G$, we write $F\sqcup G$ for the typed forest obtained by
taking the disjoint union (as graphs) of the two forests $F$ and $G$ and adjoining to it
the natural typing inherited from $F$ and $G$.
If furthermore $A \subset F$ and $B \subset G$ are subforests,
then we write $A \sqcup B$ for the corresponding subforest of 
$F \sqcup G$.

We fix once and for all an integer $d\geq 1$, dimension of the parameter-space $\R^d$. We also denote by $\Z(\Lab)$ the free abelian group generated by $\Lab$.

\subsection{Coloured and decorated forests}

Given a typed forest $F$, we want now to consider families of {\it disjoint subforests} of $F$, denoted 
by $(\hat F_i, i>0)$. It is convenient for us to code this family with a single function 
$\hat F:E_F \sqcup N_F \to \N$ as given by the next definition.

\begin{definition}\label{def:colour}
A {\it coloured forest} is a pair $(F,\hat F)$ such that
\begin{enumerate}
\item  $F = (E_F,N_F,\Labhom)$ is a typed rooted forest
\item $\hat F \colon E_F \sqcup N_F \to \N$ 
is such that if $\hat F(e) \neq 0$ for $e=(x,y) \in E_F$ then $\hat F(x) = \hat F(y) = \hat F(e)$.
\end{enumerate}
We say that $\hat F$ is a {\it colouring} of $F$. For $i > 0$, we define the subforest of $F$
\[
\hat F_i = (\hat E_i, \hat N_i), \qquad \hat E_i =\hat F^{-1}(i)\cap E_F, \quad \hat N_i =\hat F^{-1}(i)\cap N_F,
\]
as well as  $\hat E = \bigcup_{i > 0} \hat E_i$. We denote by $\mathfrak{C}$ the set of coloured forests.
\end{definition}

The condition on $\hat F$ guarantees that every $\hat F_i$ is indeed a subforest of $F$
for $i>0$ and 
that they are all disjoint. On the other hand, $\hat F^{-1}(0)$ is not supposed to have 
any particular structure and $0$ is not counted as a colour.
\begin{example}\label{ex:colours0}
This is an example of a forest with two colours: red  for $ 1 $ and blue for $ 2 $ (and black for $0$)
\begin{equ}
 (F,\hat F) =   
\begin{tikzpicture}[scale=0.2,baseline=0.2cm]
        \node at (0,0)  [dot,blue,label={[label distance=-0.2em]below: \scriptsize  $ \rho_{A_2} $}] (root) {};
         \node at (-7,6)  [dot] (leftll) {};
          \node at (-5,4)  [dot,red,label=left:] (leftl) {};
          \node at (-3,6)  [dot,red] (leftlr) {};
          \node at (-5,6)  [dot] (leftlc) {};
      \node at (-3,2)  [dot,color=red,label={[label distance=-0.2em]below: \scriptsize $ \rho_{A_1} $ }] (left) {};
         \node at (-1,4)  [dot,red] (leftr) {};
         \node at (1,4)  [dot,blue,label=above:   ] (rightl) {};
          \node at (0,6)  [dot] (rightll) {};
           \node at (2,6)  [dot,blue] (rightlr) {};
           \node at (5,4)  [dot,blue,label=left:] (rightr) {};
            \node at (4,6)  [dot,blue] (rightrl) {};
        \node at (3,2) [dot,blue] (right) {};
         \node at (6,6)  [dot] (rightrr) {};
        
        \draw[kernel1] (left) to node [sloped,below] {\small } (root); ;
        \draw[kernel1,red] (leftl) to
     node [sloped,below] {\small }     (left);
     \draw[kernel1,red] (leftlr) to
     node [sloped,below] {\small }     (leftl); 
     \draw[kernel1] (leftll) to
     node [sloped,below] {\small }     (leftl);
     \draw[kernel1,red] (leftr) to
     node [sloped,below] {\small }     (left);  
        \draw[kernel1,blue] (right) to
     node [sloped,below] {\small }     (root);
      \draw[kernel1] (leftlc) to
     node [sloped,below] {\small }     (leftl); 
     \draw[kernel1,blue] (rightr) to
     node [sloped,below] {\small }     (right);
     \draw[kernel1] (rightrr) to
     node [sloped,below] {\small }     (rightr);
     \draw[kernel1,blue] (rightrl) to
     node [sloped,below] {\small }     (rightr);
     \draw[kernel1,blue] (rightl) to
     node [sloped,below] {\small }     (right);
     \draw[kernel1,blue] (rightlr) to
     node [sloped,below] {\small }     (rightl);
     \draw[kernel1] (rightll) to
     node [sloped,below] {\small }     (rightl);
\end{tikzpicture} 
\begin{tikzpicture}[scale=0.2,baseline=0.2cm]
          \node at (-5,2)  [dot,red,label=left:] (leftl) {};
          \node at (-4,4)  [dot,red] (leftlr) {};
          \node at (-6,4)  [dot] (leftlc) {};
      \node at (-3,0)  [dot,color=red,label={[label distance=-0.2em]below: \scriptsize  $ \rho_{A_3} $}] (left) {};
         \node at (-1,2)  [dot,red] (leftr) {};
        
        \draw[kernel1,red] (leftl) to
     node [sloped,below] {\small }     (left);
     \draw[kernel1,red] (leftlr) to
     node [sloped,below] {\small }     (leftl); 
     \draw[kernel1,red] (leftr) to
     node [sloped,below] {\small }     (left);  
      \draw[kernel1] (leftlc) to
     node [sloped,below] {\small }     (leftl); 
\end{tikzpicture} 
\begin{tikzpicture}[scale=0.2,baseline=0.2cm]
      \node at (-3,0)  [dot,color=blue,label={[label distance=-0.2em]below: \scriptsize  $ \rho_{A_4} $}] (left) {};
\end{tikzpicture} 
\end{equ}
We then have $\hat F_1= \hat F^{-1}(1) = A_1\sqcup A_3 $ and $\hat F_2=\hat F^{-1}(2) = A_2\sqcup A_4$. 
\end{example}

The set $\mathfrak{C}$ is a commutative monoid under the {\it forest product}
\begin{equ}[cdotC]
(F,\hat F)
\cdot (G,\hat G)
= (F\sqcup G,\hat F+ \hat G)\;,
\end{equ}
where colouringss defined on one of the forests are extended to the disjoint union by
setting them to vanish on the other forest. The neutral element for this associative product is the empty coloured
forest $\one$.

We add now {\it decorations} on the nodes and edges of a coloured forest. 
For this, we fix throughout this article 
an arbitrary ``dimension'' $d \in \N$ and we give the following definition.

\begin{definition}\label{def:decoration}
We denote by $\Tra$ the set of all $5$-tuples $(F,\hat F,\Labn,\Labo,\Labe)$ such that
\begin{enumerate}
\item  $(F,\hat F)\in\C $ is a coloured forest in the sense of Definition~\ref{def:colour}.
\item One has $\Labn\colon N_F \to \N^d$ 
\item One has $\Labo \colon N_F \to \Z^d\oplus\Z(\Lab)$ with $\supp \Labo \subset \supp\hat F$.
\item One has $\Labe \colon E_F \to \N^d$ with $\supp\Labe\subset \{e\in E_F :\,  \hat F(e)=0\}=E_F\setminus\hat E$.
\end{enumerate}

\begin{remark}\label{whylabo}
The reason why $\Labo$ takes values in the space $\Z^d\oplus\Z(\Lab)$ will become apparent 
in \eqref{e:defhatn} below when we define the contraction of coloured subforests and
its action on decorations.
\end{remark}

We identify $(F,\hat F,\Labn,\Labo,\Labe)$ and $(F',\hat F',\Labn',\Labo',\Labe')$
whenever $F$ is isomorphic to $F'$, the corresponding isomorphism maps
$\hat F$ to $\hat F'$ and pushes the three decoration functions onto their counterparts.
We call elements of $\Tra$ {\it decorated forests}. We will also sometimes use the notation
$(F,\hat F)^{\Labn,\Labo}_\Labe$ instead of $(F,\hat F,\Labn,\Labo,\Labe)$.
\end{definition}

\begin{example}\label{ex:decorations} 
Let consider the  decorated forest $ (F,\hat F,\Labn,\Labo,\Labe) $ given by   
\begin{equs}   
\begin{tikzpicture}[scale=0.19,baseline=2cm]
        \node at (0,10)  (a) {}; %a
         \node at (-28,30)  (h) {}; %h
          \node at (-20,20) (d) {}; %d
          \node at (-12,30)  (j) {}; %j
          \node at (-20,30) (i) {}; %i
      \node at (-12,15) (b) {}; %b
         \node at (-4,20)  (e) {}; %e
         \node at (4,20)  (f) {}; %f
          \node at (0,30)  (k) {}; %k
           \node at (8,30) (l) {}; %l
           \node at (20,20)  (g) {}; %g
            \node at (16,30) (m) {};  %m
        \node at (12,15) (c) {}; %c
         \node at (24,30) (p) {}; %p

     \draw[kernel1,black] (h)node[rect2] {\tiny $ \Labn({h})$}  -- node [rect1] {\tiny$\Labhom(7),\Labe(7)$}   (d) --  node [near end, rect1]{\tiny $ \Labhom(8),\Labe(8) $ } (i) node[rect2] {\tiny $ \Labn({i})$};

     \draw[kernel1,red] (b) -- node [round1] {\tiny$\Labhom(3)$}   (d) node[round2] {\tiny $ \Labn(d),\Labo(d) $} --  node [round1]{\tiny $ \Labhom(9) $ } (j) node[round2] {\tiny $ \Labn({j}),\Labo({j}) $};
     \draw[kernel1,red] (b) -- node [round1] {\tiny$\Labhom(4)$}  (e) node[round2] {\tiny $ \Labn(e),\Labo(e) $} ;
    \draw[kernel1,black] (a) -- node [rect1] {\tiny $\Labhom(1),\Labe(1)$}  (b) node[red,round2] {\tiny $ \Labn(b),\Labo(b) $} ; 
    \draw[kernel1,blue] (a) -- node [round1] {\tiny $\Labhom(2)$}  (c) ; 
     \draw[kernel1,blue] (c) -- node [round1] {\tiny $\Labhom(5)$}  (f) ; 
     \draw[kernel1,blue] (c) -- node [round1] {\tiny $\Labhom(6)$}  (g) ; 
     \draw[kernel1,black] (f) -- node [near end, rect1] {\tiny $\Labhom({10}), \Labe({10})$}  (k) ; 
     \draw[kernel1,blue] (f) -- node [round1] {\tiny $\Labhom({11}) $}  (l) ; 
     \draw[kernel1,blue] (g) -- node [round1] {\tiny $\Labhom({12}) $}  (m) ; 
     \draw[kernel1,black] (g) -- node [near end, rect1] {\tiny $\Labhom({13}),\Labe({13}) $}  (p) ;
     
     \draw (p) node [rect2] {\tiny $ \Labn({p}) $}  ;
    \draw (m) node [blue,round3] {\tiny $ \Labn({m}), \Labo({m}) $}  ;
    \draw (l) node [blue,round3] {\tiny $ \Labn({l}), \Labo({l}) $}  ;
     \draw (k) node [red,round2] {\tiny $ \Labn(k), \Labo(k) $}  ;
    \draw (g) node [blue,round3] {\tiny $ \Labn(g),\Labo(g) $}  ;
    \draw (f) node [blue,round3] {\tiny $ \Labn(f),\Labo(f) $}  ;
    \draw (c) node [blue,round3] {\tiny $ \Labn(c),\Labo(c) $}  ;
    \draw (a) node [blue,round3] {\tiny $ \Labn(a),\Labo(a) $}  ;
\end{tikzpicture} 
\end{equs}
In this figure, the edges in $E_F$ are labelled with the numbers from $1$ to $13$ and the nodes in $N_F$ with the letters $\{a,b,c,f,e,f,g,h,i,j,k,l,m,p\}$. We set
$\hat F^{-1}(1)=\{b,d,e,j,k\}\sqcup\{3,4,9\}$ (red subforest), $\hat F^{-1}(2)=\{a,c,f,g,l,m\}\sqcup\{2,5,6,11,12\}$ (blue subforest), and on all remaining (black) nodes 
and edges $\hat F$ is set equal to $0$. Every edge has a type $\Labhom\in\Lab$, but only 
black 
edges have a possibly non-zero decoration $\Labe\in\N^d$. All nodes
have a decoration $\Labn\in\N^d$, but only coloured nodes have a possibly non-zero 
decoration $\Labo\in\Z^d\oplus\Z(\Lab)$. 
\end{example}
Example~\ref{ex:decorations} is continued in Examples~\ref{ex:colours}, \ref{ex:decor1} 
and~\ref{ex:decor2}.

\begin{definition}\label{def:contrac0}
For any coloured forest $(F,\hat F)$, we define an equivalence relation 
$\sim$ on the node set $N_F$
by saying that $x \sim y$ if 
$x$ and $y$ are connected in $\hat E$;
this is the smallest equivalence relation for which $x \sim y$ whenever $(x,y) \in \hat E$. 
\end{definition}
Definition~\ref{def:contrac0} will be extended to a decorated forest $(F,\hat F,\Labn,\Labo,\Labe)$ in Definition~\ref{CKop} 
below.

\begin{remark}\label{rem:intuition1}
We want to show the intuition behind decorated forests. We think of each $\tau=(F, \hat F,\Labn,\Labo,\Labe)$ 
as defining a function on $(\R^d)^{N_F}$ in the following way. 
We associate to each type $\Labhom\in\Lab$ a kernel $\varphi_\Labhom:\R^d\to\R$ and
we define the domain 
\[
U_F\eqdef \{x\in (\R^d)^{N_F} : \ x_v = x_w \quad \text{if} \quad v\sim w\}\;,
\]
where $\sim$ is the equivalence relation of Definition~\ref{def:contrac0}.
Then we set $H_\tau\in \CC^\infty(U_F)$,
\begin{equ}[function]
H_\tau(x_v, v\in N_{F}) \eqdef \prod_{v\in N_F} (x_v)^{\Labn(v)} \prod_{e=(u,v)\in E_F\setminus \hat E} \partial^{\Labe(e)}\varphi_{\Labhom(e)}(x_u-x_v),
\end{equ}
where, for $x=(x^1,\ldots,x^d)\in\R^d$, $n=(n^1,\ldots,n^d)\in\N^d$ and $\varphi\in \CC^\infty(\R^d)$
\[
(x)^n\eqdef\prod_{j=1}^d (x^j)^{n^j}, \qquad 
\partial^{n}\varphi= \partial^{n^1}_{x^1}\cdots\partial^{n^d}_{x^d}\varphi\in \CC^\infty(\R^d)\;.
\]
In this way,
a decorated forest encodes a function: every node in $N_F/\sim$ represents a variable in $\R^d$, every uncoloured edge of a 
certain type $\Labhom$ a function $ \varphi_{\Labhom(e)}$ of the difference of the two variables sitting at 
each one of its nodes; the decoration $\Labn(v)$ gives a power of $x_v$ and $\Labe(e)$ a derivative of the kernel $\varphi_{\Labhom(e)}$. 

In this example the decoration $\Labo$ plays no role; we shall see below that it allows to 
encode some additional information relevant for the various algebraic manipulations we wish to subject
these functions to, see Remarks~\ref{rem:intuition2}, \ref{rem:explanation},  \ref{explanation} and~\ref{rem:fails} below for further discussions.
\end{remark}

\begin{remark}\label{rem:connected}
Every forest $F=(N_F,E_F)$ has a unique decomposition into non-empty connected components. This property naturally extends to decorated forests $(F, \hat F,\Labn,\Labo,\Labe)$, by considering the connected components of the underlying forest $F$ and restricting the colouring $\hat F$ and the decorations $\Labn,\Labo,\Labe$.
\end{remark}

\begin{remark}
Starting from Section~\ref{sec4} we are going to consider a specific situation where there
are only two colours, namely $\hat F\to\{0,1,2\}$; all examples throughout the paper
are in this setting. However the results of Sections~\ref{sec2} and~\ref{sec3} are stated 
and proved in the more general setting $\hat F\to\N$ without any additional difficulty.
\end{remark}

\subsection{Bigraded spaces and triangular maps}
\label{sec:bigraded}

It will be convenient in the sequel to consider a particular category of  
\textit{bigraded spaces} as follows.
\begin{definition}\label{categ}
For a collection of vector spaces $\{V_n\,:\, n \in \N^2\}$, we define the vector space
\begin{equ}
V = \bplus_{n \in \N^2} V_n\;,
\end{equ}
as the space of all formal sums $\sum_{n \in \N^2} v_n$ with
$v_n \in V_n$ and such that there exists $k \in \N$ such that $v_n= 0$ as soon as
$n_2 > k$.
Given two bigraded spaces $V$ and $W$, we write $V \hattimes W$ for the bigraded space
\begin{equ}\label{newtensor}
V \hattimes W \eqdef \bplus_{n \in \N^2} \left[ \,
\bigoplus_{m+\ell = n}(V_m \otimes W_\ell)\right]. 
\end{equ}
\end{definition}
One has a canonical inclusion $V \otimes W \subset V \hattimes W$ given by
\[
\left(\sum_m v_m\right)\otimes\left(\sum_\ell w_\ell\right)\mapsto \sum_n\left(
\sum_{m+\ell=n} v_m\otimes w_\ell\right), \qquad v_m\in V_m, \ w_\ell\in W_\ell.
\]
However in general $V \hattimes W$ is strictly larger since its generic element has the form
\[
 \sum_n\left(
\sum_{m+\ell=n} v_m^n\otimes w_\ell^n\right),\qquad v_m^n\in V_m, \ w_\ell^n\in W_\ell.
\] 
Note that all tensor products we consider are algebraic.

\begin{definition}\label{def:triang}
We introduce a partial order on $\N^2$ by
\begin{equ}
(m_1, m_2) \ge (n_1, n_2)  \qquad\Leftrightarrow\qquad m_1 \ge n_1 \;\&\; m_2 \le n_2\;.
\end{equ}
Given two such bigraded spaces $V$ and $\bar V$, a family $\{A_{mn}\}_{m,n \in \N^2}$ of linear maps $A_{mn}:V_n\to\bar V_m$  is called \textit{triangular} if $A_{mn} = 0$ unless $m\ge n$.
\end{definition}
\begin{lemma}\label{lem:triang}
Let $V$ and $\bar V$ be two bigraded spaces and $\{A_{mn}\}_{m,n \in \N^2}$ a triangular family of linear maps $A_{mn}:V_n\to\bar V_m$. Then the map
\[
Av\eqdef \sum_m\Big(\sum_n A_{mn}v_n\Big)\in\bplus_{m \in \N^2} \bar V_m, \qquad v=\sum_nv_n\in \bplus_{n \in \N^2} V_n
\]
is well defined from $V$ to $\bar V$ and linear. We call $A:V\to V$ a triangular map.
\end{lemma}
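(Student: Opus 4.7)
The plan is to verify the two conditions required for the formal sum $Av = \sum_m \bigl(\sum_n A_{mn} v_n\bigr)$ to define a bona fide element of $\bar V$ in the sense of Definition~\ref{categ}: namely, (a) for every fixed $m \in \N^2$ the inner sum has only finitely many nonzero terms (so it makes sense as an element of $\bar V_m$), and (b) there exists $k \in \N$ such that this element of $\bar V_m$ vanishes whenever $m_2 > k$. Linearity of $A$ will then be immediate from linearity of each $A_{mn}$.

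First I would fix $v = \sum_n v_n \in V$ and, invoking Definition~\ref{categ}, pick $k_v \in \N$ such that $v_n = 0$ for every $n$ with $n_2 > k_v$. For~(a), fix $m \in \N^2$. By the triangularity hypothesis, a summand $A_{mn} v_n$ can be nonzero only if $m \ge n$ in the partial order of Definition~\ref{def:triang}, i.e.\ $n_1 \le m_1$ and $n_2 \ge m_2$. Together with the constraint $n_2 \le k_v$ (otherwise $v_n = 0$), the index $n$ is then confined to the finite set $\{0,\ldots,m_1\} \times \{m_2,\ldots,k_v\}$, which is empty as soon as $m_2 > k_v$. Hence the inner sum reduces to a finite sum in $\bar V_m$.

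For~(b), I would simply take $k = k_v$. Indeed, the computation just made shows that whenever $m_2 > k_v$, every $n$ with a nonzero contribution would have to satisfy $n_2 \ge m_2 > k_v$, forcing $v_n = 0$; therefore the $m$-component of $Av$ vanishes in all bigrades with $m_2 > k$. Both conditions of Definition~\ref{categ} are thus satisfied, so $Av \in \bar V$, and the map $A : V \to \bar V$ is linear.

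There is no real obstacle here: the argument is pure book-keeping of indices in $\N^2$. What is worth emphasising is that both ingredients of the framework are used in an essential way: the \emph{reversed} inequality in the second coordinate built into Definition~\ref{def:triang} cooperates with the one-sided boundedness in $n_2$ built into Definition~\ref{categ} to squeeze the contributing indices into a finite rectangle. Either condition alone would be insufficient: without triangularity the box would be unbounded in $n_1$, and without the boundedness constraint on $V$ it would be unbounded in $n_2$.
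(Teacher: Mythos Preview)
Your proof is correct and follows essentially the same approach as the paper: fix $k$ bounding the second index of the support of $v$, use triangularity together with this bound to confine the contributing $n$ to a finite rectangle for each fixed $m$, and observe that this rectangle is empty when $m_2>k$. The only cosmetic difference is that you also invoke the bound $n_2\ge m_2$ from triangularity to get the slightly sharper box $\{0,\ldots,m_1\}\times\{m_2,\ldots,k_v\}$, whereas the paper is content with $\{0,\ldots,m_1\}\times\{0,\ldots,k\}$.
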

\begin{proof}
Let $v=\sum_nv_n\in V$ and $k\in\N$ such that $v_n=0$ whenever $n_2>k$. 

First we note that, for fixed $m\in\N^2$, the family $(A_{mn}v_n)_{n\in\N^2}$ is zero unless $n\in[0,m_1]\times[0,k]$; indeed if $n_2>k$ then $v_n=0$, while if $n_1>m_1$ then $A_{mn}=0$. Therefore the sum $\sum_n A_{mn}v_n$ is well defined and equal to some $\bar v_m\in \bar V_m$.

We now prove that $\bar v_m=0$ whenever $m_2>k$, so that indeed $\sum_m\bar v_m\in \bplus_{m \in \N^2} \bar V_m$. Let $m_2> k$; for $n_2>k$, $v_n$ is $0$, while for $n_2\leq k$ we have $n_2< m_2$ and therefore $A_{nm}=0$ and this proves the claim. 
\end{proof}
A linear function $A:V\to\bar V$ which can be obtained as in Lemma~\ref{lem:triang} is called \textit{triangular}.
The family $(A_{mn})_{m,n\in\N^2}$ defines an infinite lower triangular matrix and composition of triangular maps is then simply given by formal matrix multiplication, which only ever involves finite sums thanks to the triangular structure of these matrices.

\begin{remark}\label{rem:triang}
The notion of bigraded spaces as above is useful for at least two reasons: 
\begin{enumerate}
\item The operators $\Delta_i$ built in \eqref{def:Deltabar} below turn out to be triangular in 
the sense of Definition~\ref{def:triang} and are therefore well-defined thanks to Lemma~\ref{lem:triang}, see Remark
\ref{rem:triang} below. This is not completely trivial since we are dealing with spaces of infinite formal series.
\item Some of our main tools below will be spaces of multiplicative functionals, see Section~\ref{sec:char} below.
Had we simply considered spaces of arbitrary
infinite formal series, their dual would be too small to contain any non-trivial multiplicative functional at all. 
Considering instead spaces of finite series would cure this problem, but unfortunately
the coproducts $\Delta_i$ do not make sense there.
The notion of bigrading introduced here provides the best of both worlds by 
considering bi-indexed series that are infinite in the first index and finite in the second.  
This yields spaces that are sufficiently large to contain our coproducts and whose dual is still
sufficiently large to contain enough multiplicative linear functionals for our purpose.
\end{enumerate}
\end{remark}

\begin{remark}\label{rem:dual}
One important remark is that this construction behaves quite nicely under duality in the
sense that if $V$ and $W$ are two bigraded spaces, then it is still the case that one
has a canonical inclusion $V^* \otimes W^* \subset (V \hattimes W)^*$, see e.g. \eqref{e:defG} 
below for the applications we have in mind.
Indeed, the dual $V^*$ consists of formal sums $\sum_n v^*_n$ with $v_n^* \in V_n^*$
such that, for every $k \in \N$ there exists $f(k)$ such that $v^*_n = 0$
for every $n \in \N^2$ with $n_1 \ge f(n_2)$.
\end{remark}

The set ${\Forests}$, see Definition~\ref{def:decoration}, admits a number of
different useful gradings and bigradings. One bigrading that is well adapted 
to the construction we give below is 
\begin{equ}[e:grading0]
|(F,\hat F)^{\Labn,\Labo}_\Labe|_\bi \eqdef  (|\Labe|,|F \setminus (\hat F \cup \rho_F)|)\;,
\end{equ}
where 
\[
|\Labe| = \sum_{e \in E_F} |\Labe(e)|, \qquad |a|=\sum_{i=1}^d a_i, \qquad \forall \ a\in\N^d,
\]
and $|F \setminus (\hat F \cup \rho_F)|$ denotes the number of edges and vertices on 
which $\hat F$ vanishes that aren't roots of $F$.

For any subset $A\subseteq\Forests$ let now $\scal{A}$ denote the space built from $A$ with this
grading, namely
\begin{equ}[e:defForests]
\scal{A} \eqdef \bplus_{n \in \N^2}\Vec\{\CF \in A\,:\, |\CF|_\bi = n\}\;,
\end{equ} 
where $\Vec\, S$ denotes the free vector space generated by a set $S$. 
Note that in general $\scal{M}$ is larger than $\Vec\, M$.

The following simple fact will be used several times in the sequel.
Here and throughout this article, we use
as usual the notation $f \restr A$\label{frestr} for the restriction of a map $f$ to some subset $A$
of its domain.

\begin{lemma}\label{lem:PP=P}
Let $V = \bplus_n V_n$ be a bigraded space and let $P:V\to V$ be a triangular map preserving 
the bigrading of $V$ (in the sense that there exist linear maps $P_n \colon V_n \to V_n$ such that
$P \restr V_n = P_n$ for every $n$) and satisfying $P\circ P=P$.
Then, the quotient space $\hat V = V / \ker P$ is again bigraded
and one has canonical identifications
\[
\hat V = \bplus_n (V_n / \ker P_n) = \bplus_n (P_n V_n)\;.
\]
\end{lemma}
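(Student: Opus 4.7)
The plan is to reduce everything to the classical fact that an idempotent endomorphism of a vector space splits it as a direct sum of its kernel and image, applied pointwise at each bidegree, and then to check compatibility with the formal-sum convention of Definition~\ref{categ}.

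\emph{Step 1 (pointwise idempotency).} Because $P$ preserves the bigrading, we have $P\restr V_n = P_n$ with image in $V_n$, hence $P \circ P = P$ restricted to $V_n$ gives $P_n \circ P_n = P_n$. Thus each $P_n$ is an idempotent endomorphism of $V_n$, which yields the standard direct sum decomposition $V_n = \ker P_n \oplus P_n V_n$ and, in particular, a canonical linear isomorphism $V_n / \ker P_n \simeq P_n V_n$ given by sending $[v]\mapsto P_n v$ (with inverse the restriction of the quotient map to $P_n V_n$). This disposes of the second equality in the statement once the first is established.

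\emph{Step 2 (identification of the kernel).} Let $v = \sum_n v_n \in V$, with $v_n \in V_n$ vanishing for $n_2$ larger than some $k$. Since $P$ preserves the bigrading, $Pv = \sum_n P_n v_n$, and this sum again lies in $V$ because $P_n v_n = 0$ as soon as $v_n = 0$. Distinct bidegree components cannot cancel, so $Pv = 0$ if and only if $P_n v_n = 0$ for every $n$. Using the convention of Definition~\ref{categ}, this gives the identification
\[
\ker P = \bplus_{n \in \N^2} \ker P_n \;.
\]

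\emph{Step 3 (the quotient inherits a bigrading).} Consider the linear map
\[
\phi \colon V \to \bplus_{n \in \N^2} (V_n / \ker P_n)\;,\qquad \phi\Bigl(\sum_n v_n\Bigr) = \sum_n [v_n]\;,
\]
where $[v_n]$ is the class of $v_n$ in $V_n / \ker P_n$. The image lies in the target bigraded space because if $v_n = 0$ for $n_2 > k$ then $[v_n] = 0$ for $n_2 > k$, so the same finite support condition in the second index is satisfied. Surjectivity is immediate: given $\sum_n [w_n]$ in the target (with $[w_n] = 0$ for $n_2 > k$), choose representatives $w_n \in V_n$ with $w_n = 0$ for $n_2 > k$, obtaining a preimage $\sum_n w_n \in V$. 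By Step 2, $\ker \phi = \ker P$, so $\phi$ descends to a canonical isomorphism $\hat V = V/\ker P \simeq \bplus_n (V_n/\ker P_n)$.

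\emph{Step 4 (conclusion).} Combining the per-bidegree isomorphism $V_n / \ker P_n \simeq P_n V_n$ from Step~1 with the isomorphism of Step~3 gives $\hat V \simeq \bplus_n (P_n V_n)$, and in this chain of identifications the bigrading of $\hat V$ is exactly the one induced by the $V_n$. The only non-formal point is the compatibility of quotients with the finite-support-in-$n_2$ convention, which is precisely what is checked in Step~3; everything else is a routine application of the idempotent decomposition at each bidegree.
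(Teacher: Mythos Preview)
Your proof is correct. The paper states this lemma as a ``simple fact'' without proof, and your argument fills in exactly the routine details one would expect: componentwise idempotency, identification of $\ker P$ with $\bplus_n \ker P_n$, and the compatibility check with the finite-support-in-$n_2$ convention of Definition~\ref{categ}.
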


\section{Bialgebras, Hopf algebras and comodules of decorated forests}
\label{sec3}

In this section we want to introduce a general class of operators on spaces of decorated forests and show that, under suitable assumptions, one can construct in this way bialgebras, Hopf algebras and comodules.

We recall that $(H,\CM,\one,\Delta,\one^\star)$ is a {\it bialgebra} if:
\begin{itemize}
\item $H$ is a vector space over $\R$
\item there are a linear map $\CM:H\otimes H\to H$ (product) and an element $\one\in H$ 
(identity) such that $(H,\CM,\eta)$ is a unital associative algebra, where $\eta:\R\to H$
is the map $r\mapsto r\one$ (unit)
\item there are linear maps $\Delta:H\to H\otimes H$ (coproduct) and $\one^\star:H\to\R$ (counit), 
such that $(H,\Delta,\one^\star)$ is a counital coassociative coalgebra, namely
\begin{equation}\label{e:coasso}
(\Delta\otimes\id)\Delta=(\id\otimes\Delta)\Delta, \qquad (\one^\star\otimes\id)\Delta=
(\id\otimes\one^\star)\Delta=\id
\end{equation}
\item the coproduct and the counit are homomorphisms of algebras (or, equivalently, multiplication and unit are homomorphisms of coalgebras).
\end{itemize}

A {\it Hopf algebra} is a bialgebra $(H,\CM,\one,\Delta,\one^\star)$ endowed with a linear map
$\CA:H\to H$ such that
\begin{equation}\label{anti}
\CM(\id\otimes \CA)\Delta = \CM(\CA\otimes\id)\Delta= \one^\star\one.
\end{equation}

A {\it left comodule} over a bialgebra $(H,\CM,\one,\Delta,\one^\star)$ is a pair $(M,\psi)$ where $M$ is a vector space and $\psi:M\to H\otimes M$ is a linear map such that
\[
(\Delta\otimes\id)\psi=(\id\otimes\psi)\psi, \qquad (\one^\star\otimes\id)\psi=\id.
\]
Right comodules are defined analogously.

For more details on the theory of coalgebras, bialgebras, Hopf algebras and comodules we 
refer the reader to \cite{Molnar, Cartier}.

\subsection{Incidence coalgebras of forests}

Denote by $\mathcal P$ the set of all pairs $(G;F)$ such that $F$ is a typed forest and $G$ is a subforest of $F$ and by $\Vec({\mathcal P})$ the free vector space generated by $\mathcal P$. Suppose that for all $(G;F)\in{\mathcal P}$ we are given a (finite) collection $\Adm(G;F)$ of subforests $A$ of $F$ such that $G\subseteq A\subseteq F$. Then we define the linear map $\Delta:\Vec({\mathcal P})\to\Vec({\mathcal P})\otimes\Vec({\mathcal P})$ by
\begin{equ}[firstDelta]
\Delta(G;F) \eqdef \sum_{A\in\Adm(G;F)} (G;A)\otimes(A;F).
\end{equ}
We also define the linear functional $\one^\star:\Vec({\mathcal P})\to\R$ by $\one^\star(G;F):=\un{(G=F)}$. 
If $\Adm(G;F)$ is equal to the set of all subforests $A$ of $F$ containing $G$, then it is a simple exercise to show that $(\Vec({\mathcal P}),\Delta,\one^\star)$ is a coalgebra, namely
\eqref{e:coasso} holds. 
In particular, since the inclusion $G\subseteq F$ endows the set of typed forests 
with a partial order, 
$(\Vec({\mathcal P}),\Delta,\one^\star)$ is an example of an {\it incidence coalgebra}, see \cite{MR914660,schmitt99}. However, if $\Adm(F;G)$ is a more general class of subforests, then coassociativity is not granted in general and holds only under certain assumptions. 

Suppose now that, given a typed forest $F$, we want to consider not one but several disjoint subforests $G_1,\ldots,G_n$ of $F$. A natural way to code $(G_1,\ldots,G_n;F)$ is to use a coloured forest $(F,\hat F)$ where 
\[
\hat F(x)=\sum_k k\,\un{x\in G_k}, \qquad x\in N_F\sqcup E_F.
\]
Then, in the notation of Definition~\ref{def:colour}, we have $\hat F_i=G_i$ for $i>0$ and $\hat F^{-1}(0)=F\setminus(\cup_i G_i)$.

In order to define a generalisation of the operator $\Delta$ of formula \eqref{firstDelta} to this setting, we fix $i>0$ and assume the following.
\begin{assumption}\label{ass:1}
Let $i>0$.
For each coloured forest $(F, \hat F)$ as in Definition~\ref{def:colour}
we are given a collection
$\Adm_i(F,\hat F)$ of subforests of $F$ such that for every $A \in \Adm_i(F,\hat F)$
\begin{enumerate}
\item  $\hat F_i \subset A$ and $\hat F_{j} \cap A=\emptyset$ for every $j>i$,
\item for all $0<j<i$ and every connected component $T$ of $\hat F_{j}$, one has either $T \subset A$ or $T \cap A=\emptyset$.
\end{enumerate}
We also assume that $\Adm_i$ is compatible with the equivalence relation $\sim$
given by forest isomorphisms described above in the sense that if $A \in \Adm_i(F,\hat F)$ and
$\iota\colon (F,\hat F) \to (G, \hat G)$ is a forest isomorphism, then $\iota(A) \in \Adm_i(G,\hat G)$.
\end{assumption}

It is important to note that colours are denoted by positive integer numbers and are therefore ordered, so that the forests $\hat F_j$, $\hat F_i$ and $\hat F_k$ can play different roles in Assumption~\ref{ass:1} if $j<i<k$. This becomes crucial in our construction below, see Proposition~\ref{prop:doublecoass} and Remark~\ref{rem:i<j}.

\begin{lemma}\label{lem:new_coloured_forest}
Let $(F, \hat F)\in\C$ be a coloured forest and $A \in \Adm_i(F,\hat F)$. Write
\begin{itemize}
\item $\hat F\restr A$ for the restriction of $\hat F$ to $N_A\sqcup E_A$ 
\item $\hat F \cup_i A$ for the function on $E_F\sqcup N_F$ given by
\begin{equ}
(\hat F \cup_i A)(x) = 
\left\{\begin{array}{cl}
	i & \text{if $x \in E_A \sqcup N_A$,} \\
	\hat F(x) & \text{otherwise.}
\end{array}\right.
\end{equ} 
\end{itemize}
Then, under Assumption~\ref{ass:1}, $(A,\hat F\restr A)$ and $(F, \hat F \cup_i A)$ are coloured forests.
\end{lemma}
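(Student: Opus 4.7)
The plan is to verify the two conditions in Definition~\ref{def:colour} for each of the two candidate coloured forests. The first condition (being a typed rooted forest) is essentially immediate in both cases: for $(A, \hat F\restr A)$, $A$ is by construction a subforest of $F$ with inherited typing, and roots of its connected components are prescribed as the vertices minimal in the partial order inherited from $F$; for $(F, \hat F \cup_i A)$, the forest structure and typing are exactly those of $F$. So the real work is checking the compatibility condition on the colouring, namely that whenever an edge receives a nonzero colour, its endpoints receive the same colour.

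For $(A, \hat F \restr A)$ this is immediate: if $e = (x,y) \in E_A$ and $\hat F(e) \neq 0$, then the original condition on $(F, \hat F)$ gives $\hat F(x) = \hat F(y) = \hat F(e)$, and since $e \in E_A$ forces $x, y \in N_A$, the restriction agrees with $\hat F$ at $x, y, e$.

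For $(F, \hat F \cup_i A)$ I would split into cases depending on the colour of the edge $e = (x,y)$ and whether $e \in E_A$. If $e \in E_A$, then $(\hat F \cup_i A)(e) = i$, and since $A$ is a subforest we have $x, y \in N_A$, so $(\hat F \cup_i A)(x) = (\hat F \cup_i A)(y) = i$ as required. If $e \notin E_A$ and $\hat F(e) = 0$, nothing is needed. The only nontrivial case is therefore $e \notin E_A$ and $\hat F(e) = j \neq 0$; here I would use Assumption~\ref{ass:1} to argue that $x, y \notin N_A$, so the new colouring agrees with the old one at the two endpoints, which together with the compatibility condition for $(F, \hat F)$ gives the result. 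Concretely, the original condition forces $x, y$ and $e$ to all lie in $\hat F_j$, so the connected component of $\hat F_j$ containing $e$ contains $x$ and $y$. If $j = i$, then $\hat F_i \subset A$ by condition~(1), forcing $e \in E_A$, a contradiction. If $j > i$, condition~(1) gives $\hat F_j \cap A = \emptyset$, so $x, y \notin N_A$. If $0 < j < i$, condition~(2) applied to this connected component rules out the case where it lies inside $A$ (else $e \in E_A$), so the component is disjoint from $A$ and again $x, y \notin N_A$.

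The main (only) obstacle is the third subcase above, where one has to rule out that an $e \notin E_A$ carrying a nonzero $\hat F$-colour could have an endpoint in $N_A$; this is precisely the reason for the two clauses in Assumption~\ref{ass:1}, and it is why colours below $i$ are handled by the ``connected component'' condition~(2) while colours $\geq i$ are handled by the set-theoretic condition~(1). Everything else is bookkeeping.
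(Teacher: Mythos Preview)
Your proof is correct and proceeds along essentially the same lines as the paper's. The only cosmetic difference is that you verify the edge-compatibility condition of Definition~\ref{def:colour} directly, whereas the paper checks the equivalent reformulation that each $\hat G_j \eqdef (\hat F\cup_i A)^{-1}(j)$ is a subforest, noting that $\hat G_i = \hat F_i \cup A = A$ and $\hat G_j = \hat F_j \setminus A$ for $j\neq i$; both arguments then invoke conditions (1) and (2) of Assumption~\ref{ass:1} in exactly the way you describe.
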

\begin{proof}
The claim is elementary for $(A,\hat F\restr A)$; in particular, setting $\hat G\eqdef\hat F\restr A$, we have $\hat G_j=\hat F_j\cap A$ for all $j>0$. We prove it now for $(F, \hat F \cup_i A)$. 
We must prove that, setting $\hat G\eqdef \hat F \cup_i A$, the sets $\hat G_j\eqdef\hat G^{-1}(j)$ define subforests of $F$ for all $j>0$. We have by the definitions
\[
\hat G_i=\hat F_i\cup A, \qquad \hat G_j=\hat F_j\backslash A, \qquad j\ne i, \ j>0,
\]
and these are subforests of $F$ by the properties 1 and 2 of Assumption~\ref{ass:1}.
\end{proof}

We denote by $\Vec(\mathfrak{C})$ the free vector space generated by all coloured forests.
This allows to define the following operator for fixed $i>0$, $\Delta_i:\Vec(\mathfrak{C})\to\Vec(\mathfrak{C})\otimes\Vec(\mathfrak{C})$
\begin{equ}[firstDelta_i]
\Delta_i(F,\hat F)\eqdef\sum_{A \in \Adm_i(F,\hat F)} (A,\hat F\restr A)\otimes (F,\hat F \cup_i A).
\end{equ}
Note that if $i=1$ and $\hat F\leq 1$ then we can identify
\begin{itemize}
\item the coloured forest $(F,\hat F)$ with the pair of subforests $(\hat F_1;F)\in{\mathcal P}$, 
\item $\Adm(\hat F_1;F)$ with $\Adm_1(F,\hat F)$
\item $\Delta$ in \eqref{firstDelta} with $\Delta_1$ in \eqref{firstDelta_i}. 
\end{itemize}

\begin{example}\label{ex:colours}
Let us continue Example~\ref{ex:decorations}, forgetting the decorations but keeping the
same labels for the nodes and in particular for the leaves. We recall that
$\hat F$ is equal to $ 1 $ on the red subforest, to $ 2 $ on the blue subforest and to $0$
elsewhere. Then
\begin{equ}
 (F,\hat F) =   
\begin{tikzpicture}[scale=0.2,baseline=0.2cm]
        \node at (0,0)  [dot,blue] (root) {};
         \node at (-7,6)  [dot,label={[label distance=-0.2em]above: \scriptsize  $ h $} ] (leftll) {};
          \node at (-5,4)  [dot,red,label=left:] (leftl) {};
          \node at (-3,6)  [dot,red,label={[label distance=-0.2em]above: \scriptsize  $ j $}] (leftlr) {};
          \node at (-5,6)  [dot,label={[label distance=-0.2em]above: \scriptsize  $ i $}] (leftlc) {};
      \node at (-3,2)  [dot,color=red] (left) {};
         \node at (-1,4)  [dot,red,label={[label distance=-0.2em]above: \scriptsize  $ e $}] (leftr) {};
         \node at (1,4)  [dot,blue,label=above:   ] (rightl) {};
          \node at (0,6)  [dot,red,label={[label distance=-0.2em]above: \scriptsize  $ k $} ] (rightll) {};
           \node at (2,6)  [dot,blue,label={[label distance=-0.2em]above: \scriptsize  $ l $}] (rightlr) {};
           \node at (5,4)  [dot,blue,label=left:] (rightr) {};
            \node at (4,6)  [dot,blue,label={[label distance=-0.2em]above: \scriptsize  $ m $}] (rightrl) {};
        \node at (3,2) [dot,blue,label={[label distance=-0.4em]below right: \scriptsize }] (right) {};
         \node at (6,6)  [dot,label={[label distance=-0.2em]above: \scriptsize  $ p $} ] (rightrr) {};
        
        \draw[kernel1] (left) to node [sloped,below] {\small } (root); ;
        \draw[kernel1,red] (leftl) to
     node [sloped,below] {\small }     (left);
     \draw[kernel1,red] (leftlr) to
     node [sloped,below] {\small }     (leftl); 
     \draw[kernel1] (leftll) to
     node [sloped,below] {\small }     (leftl);
     \draw[kernel1,red] (leftr) to
     node [sloped,below] {\small }     (left);  
        \draw[kernel1,blue] (right) to
     node [sloped,below] {\small }     (root);
      \draw[kernel1] (leftlc) to
     node [sloped,below] {\small }     (leftl); 
     \draw[kernel1,blue] (rightr) to
     node [sloped,below] {\small }     (right);
     \draw[kernel1] (rightrr) to
     node [sloped,below] {\small }     (rightr);
     \draw[kernel1,blue] (rightrl) to
     node [sloped,below] {\small }     (rightr);
     \draw[kernel1,blue] (rightl) to
     node [sloped,below] {\small }     (right);
     \draw[kernel1,blue] (rightlr) to
     node [sloped,below] {\small }     (rightl);
     \draw[kernel1] (rightll) to
     node [sloped,below] {\small }     (rightl);
     \end{tikzpicture} 
\end{equ}
A valid example 
 of $ A \in \Adm_{2}(F,\hat F) $ could be such that
\begin{equs}     
  (A,\hat F\restr A) \otimes  (F,\hat F \cup_{2} A)  =  
\begin{tikzpicture}[scale=0.2,baseline=0.2cm]
        \node at (0,0)  [dot,blue        ] (root) {};
          \node at (-5,4)  [dot,red,label=left:] (leftl) {};
          \node at (-3,6)  [dot,red,label={[label distance=-0.2em]above: \scriptsize  $ j $}] (leftlr) {};
      \node at (-3,2)  [dot,color=red] (left) {};
         \node at (-1,4)  [dot,red,label={[label distance=-0.2em]above: \scriptsize  $ e $}] (leftr) {};
         \node at (1,4)  [dot,blue,label=above:   ] (rightl) {};
           \node at (2,6)  [dot,blue,label={[label distance=-0.2em]above: \scriptsize  $ l $}] (rightlr) {};
           \node at (5,4)  [dot,blue,label=left:] (rightr) {};
            \node at (4,6)  [dot,blue,label={[label distance=-0.2em]above: \scriptsize  $ m $}] (rightrl) {};
        \node at (3,2) [dot,blue,label={[label distance=-0.4em]below right: \scriptsize  $  $}] (right) {};
        
        \draw[kernel1] (left) to node [sloped,below] {\small } (root); 
        \draw[kernel1,red] (leftl) to
     node [sloped,below] {\small }     (left);
     \draw[kernel1,red] (leftlr) to
     node [sloped,below] {\small }     (leftl); 
     \draw[kernel1,red] (leftr) to
     node [sloped,below] {\small }     (left);  
        \draw[kernel1,blue] (right) to
     node [sloped,below] {\small }     (root); 
     \draw[kernel1,blue] (rightr) to
     node [sloped,below] {\small }     (right);
     \draw[kernel1,blue] (rightrl) to
     node [sloped,below] {\small }     (rightr);
     \draw[kernel1,blue] (rightl) to
     node [sloped,below] {\small }     (right);
     \draw[kernel1,blue] (rightlr) to
     node [sloped,below] {\small }     (rightl);
     \end{tikzpicture} 
       \otimes  
\begin{tikzpicture}[scale=0.2,baseline=0.2cm]
        \node at (0,0)  [dot,blue ] (root) {};
         \node at (-7,6)  [dot,label={[label distance=-0.2em]above: \scriptsize  $ h $} ] (leftll) {};
          \node at (-5,4)  [dot,blue,label=left:] (leftl) {};
          \node at (-3,6)  [dot,blue,label={[label distance=-0.2em]above: \scriptsize  $ j $}] (leftlr) {};
          \node at (-5,6)  [dot,label={[label distance=-0.2em]above: \scriptsize  $ i $}] (leftlc) {};
      \node at (-3,2)  [dot,color=blue,label={[label distance=-0.2em]below: \scriptsize  }] (left) {};
         \node at (-1,4)  [dot,blue,label={[label distance=-0.2em]above: \scriptsize  $ e $}] (leftr) {};
         \node at (1,4)  [dot,blue,label=above:   ] (rightl) {};
          \node at (0,6)  [dot,red,label={[label distance=-0.2em]above: \scriptsize  $ k $} ] (rightll) {};
           \node at (2,6)  [dot,blue,label={[label distance=-0.2em]above: \scriptsize  $ l $}] (rightlr) {};
           \node at (5,4)  [dot,blue,label=left:] (rightr) {};
            \node at (4,6)  [dot,blue,label={[label distance=-0.2em]above: \scriptsize  $ m $}] (rightrl) {};
        \node at (3,2) [dot,blue,label={[label distance=-0.4em]below right: \scriptsize  }] (right) {};
         \node at (6,6)  [dot,label={[label distance=-0.2em]above: \scriptsize  $ p $} ] (rightrr) {};
        
        \draw[kernel1,blue] (left) to node [sloped,below] {\small } (root); 
        \draw[kernel1,blue] (leftl) to
     node [sloped,below] {\small }     (left);
     \draw[kernel1,blue] (leftlr) to
     node [sloped,below] {\small }     (leftl); 
     \draw[kernel1] (leftll) to
     node [sloped,below] {\small }     (leftl);
     \draw[kernel1,blue] (leftr) to
     node [sloped,below] {\small }     (left);  
        \draw[kernel1,blue] (right) to
     node [sloped,below] {\small }     (root);
      \draw[kernel1] (leftlc) to
     node [sloped,below] {\small }     (leftl); 
     \draw[kernel1,blue] (rightr) to
     node [sloped,below] {\small }     (right);
     \draw[kernel1] (rightrr) to
     node [sloped,below] {\small }     (rightr);
     \draw[kernel1,blue] (rightrl) to
     node [sloped,below] {\small }     (rightr);
     \draw[kernel1,blue] (rightl) to
     node [sloped,below] {\small }     (right);
     \draw[kernel1,blue] (rightlr) to
     node [sloped,below] {\small }     (rightl);
     \draw[kernel1] (rightll) to
     node [sloped,below] {\small }     (rightl);
     \end{tikzpicture}
  \end{equs}
Note that in this example, one has $\hat F_2 \subset A$, so that $A\notin\Adm_{1}(F,\hat F) $ since $A$ violates
the first condition of Assumption~\ref{ass:1}. A valid example of $ B \in \Adm_{1}(F,\hat F) $ 
could be such that 
 \begin{equs}     
  (B,\hat F\restr B) \otimes  (F,\hat F \cup_{1} B)  =  
\begin{tikzpicture}[scale=0.2,baseline=0.2cm]
          \node at (-5,2)  [dot,red,label=left:] (leftl) {};
          \node at (-3,4)  [dot,red,label={[label distance=-0.2em]above: \scriptsize  $ j $}] (leftlr) {};
          \node at (-5,4)  [dot,label={[label distance=-0.2em]above: \scriptsize  $ i $}] (leftlc) {};
      \node at (-3,0)  [dot,color=red] (left) {};
         \node at (-1,2)  [dot,red,label={[label distance=-0.2em]above: \scriptsize  $ e $}] (leftr) {};
         \node at (1,0)  [dot,red,label={[label distance=-0.2em]above: \scriptsize  $ k $}] (o) {};
         \node at (4,0)  [dot,label={[label distance=-0.2em]above: \scriptsize  $ p $}] (o) {};
        
        \draw[kernel1,red] (leftl) to
     node [sloped,below] {\small }     (left);
     \draw[kernel1,red] (leftlr) to
     node [sloped,below] {\small }     (leftl); 
     \draw[kernel1,red] (leftr) to
     node [sloped,below] {\small }     (left);  
      \draw[kernel1] (leftlc) to
     node [sloped,below] {\small }     (leftl); 
     \end{tikzpicture} 
      \otimes   
\begin{tikzpicture}[scale=0.2,baseline=0.2cm]
        \node at (0,0)  [dot,blue] (root) {};
         \node at (-7,6)  [dot,label={[label distance=-0.2em]above: \scriptsize  $ h $} ] (leftll) {};
          \node at (-5,4)  [dot,red,label=left:] (leftl) {};
          \node at (-3,6)  [dot,red,label={[label distance=-0.2em]above: \scriptsize  $ j $}] (leftlr) {};
          \node at (-5,6)  [dot,red,label={[label distance=-0.2em]above: \scriptsize  $ i $}] (leftlc) {};
      \node at (-3,2)  [dot,color=red] (left) {};
         \node at (-1,4)  [dot,red,label={[label distance=-0.2em]above: \scriptsize  $ e $}] (leftr) {};
         \node at (1,4)  [dot,blue,label=above:   ] (rightl) {};
          \node at (0,6)  [dot,red,label={[label distance=-0.2em]above: \scriptsize  $ k $} ] (rightll) {};
           \node at (2,6)  [dot,blue,label={[label distance=-0.2em]above: \scriptsize  $ l $}] (rightlr) {};
           \node at (5,4)  [dot,blue,label=left:] (rightr) {};
            \node at (4,6)  [dot,blue,label={[label distance=-0.2em]above: \scriptsize  $ m $}] (rightrl) {};
        \node at (3,2) [dot,blue,label={[label distance=-0.4em]below right: \scriptsize }] (right) {};
         \node at (6,6)  [dot,red,label={[label distance=-0.2em]above: \scriptsize  $ p $} ] (rightrr) {};
        
        \draw[kernel1] (left) to node [sloped,below] {\small } (root); ;
        \draw[kernel1,red] (leftl) to
     node [sloped,below] {\small }     (left);
     \draw[kernel1,red] (leftlr) to
     node [sloped,below] {\small }     (leftl); 
     \draw[kernel1] (leftll) to
     node [sloped,below] {\small }     (leftl);
     \draw[kernel1,red] (leftr) to
     node [sloped,below] {\small }     (left);  
        \draw[kernel1,blue] (right) to
     node [sloped,below] {\small }     (root);
      \draw[kernel1,red] (leftlc) to
     node [sloped,below] {\small }     (leftl); 
     \draw[kernel1,blue] (rightr) to
     node [sloped,below] {\small }     (right);
     \draw[kernel1] (rightrr) to
     node [sloped,below] {\small }     (rightr);
     \draw[kernel1,blue] (rightrl) to
     node [sloped,below] {\small }     (rightr);
     \draw[kernel1,blue] (rightl) to
     node [sloped,below] {\small }     (right);
     \draw[kernel1,blue] (rightlr) to
     node [sloped,below] {\small }     (rightl);
     \draw[kernel1] (rightll) to
     node [sloped,below] {\small }     (rightl);
     \end{tikzpicture}
  \end{equs}
     \end{example}
     
In the rest of this section we state several assumptions on the family $\Adm_i(F,\hat F)$ yielding nice properties for the operator $\Delta_i$ such as coassociativity, see e.g. Assumption~\ref{ass:2}. 
However, one of the main results of this article is the fact that such properties then automatically
also hold at the level of decorated forests with a non-trivial action on the decorations which 
will be defined in the next subsection.

\subsection{Operators on decorated forests}

The set $\Tra$, see Definition~\ref{def:decoration}, is a commutative monoid under the {\it forest product}
\begin{equ}[cdot]
(F,\hat F,\Labn,\Labo,\Labe)
\cdot (G,\hat G,\Labn',\Labo',\Labe')
= (F\sqcup G,\hat F+ \hat G,\Labn + \Labn',\Labo + \Labo',\Labe + \Labe')\;,
\end{equ}
where decorations defined on one of the forests are extended to the disjoint union by
setting them to vanish on the other forest. This product is the natural extension of the product \eqref{cdotC}
on coloured forests and its identity element is the empty forest $\one$. 

Note that 
\begin{equ}[e:prodgrad]
|\CF \cdot \CG|_\bi = |\CF|_\bi + |\CG|_\bi\;,
\end{equ}
for any $\CF, \CG \in \Forests$, where $|\cdot|_\bi$ is the bigrading defined in \eqref{e:grading0} above.
Whenever $M$ is a submonoid of $\Forests$, 
as a consequence of \eqref{e:prodgrad} the forest product
$\cdot$ defined in \eqref{cdot} can be interpreted as a triangular linear map from $\scal{M}\hattimes \scal{M}$
into $\scal{M}$, thus
turning $(\scal{M},\cdot)$ into an algebra in the category of bigraded spaces as in Definition~\ref{categ}; this is in particular the case for $M=\Forests$. We recall that $\scal{M}$ is defined in \eqref{e:defForests}.

We generalise now the construction \eqref{firstDelta_i} to decorated forests.
\begin{definition}\label{def:maps}
The triangular linear maps $\Delta_i \colon \scal{\Tra} \to \scal{\Tra}\hattimes \scal{\Tra}$ are given
for $\tau = (F, \hat F,\Labn,\Labo,\Labe)$ by
\begin{equs}\label{def:Deltabar}
\Delta_i \tau &= 
 \sum_{A \in \Adm_i(F,\hat F)} \sum_{\varepsilon_A^F,\Labn_A} \frac1{\varepsilon_A^F!}
\binom{\Labn}{\Labn_A}
 (A,\hat F\restr A,\Labn_A+\pi\varepsilon_A^F, \Labo \restr N_A,\Labe \restr E_A) 
 \\& \qquad  \otimes(F,\hat F \cup_i A,\Labn - \Labn_A,\ \Labo+\Labn_A+\pi(\varepsilon_A^F-\Labe_\emptyset^A), \Labe_A^F + \varepsilon_A^F)\;, 
\end{equs}
where 
\begin{itemize}
\item[a)] For $A\subseteq B \subseteq F$ and $f:E_F\to\N^d$, we use the notation $f_A^B \eqdef f\,\un{E_B\setminus E_A}$. 
\item[b)] The sum over $\Labn_A$ runs over all maps $\Labn_A:N_F \to \N^d$ with $\supp \Labn_A\subset N_A$.
\item[c)] The sum over $\varepsilon_A^F$ runs over all $\varepsilon_A^F:E_F \to \N^d$ supported on 
the set of edges 
\begin{equ}\label{part0}
\d(A,F)  \eqdef \left\{  (e_+,e_-) \in E_F\setminus E_{A}  \,:\, 
 e_+ \in N_A \right\},
\end{equ}
that we call the {\it boundary of $A$ in $F$}. This notation is consistent with point a).
\item[d)] For all $\varepsilon: E_F\to\Z^d$ we denote
\[
\pi\varepsilon:N_F\to\Z^d, \qquad \pi\varepsilon(x)\eqdef \sum_{e=(x,y)\in E_F} \varepsilon(e).
\]
\end{itemize}
\end{definition}
We will henceforth use these notational conventions for sums over node / edge decorations
without always spelling them out in full.

%\begin{example}%\label{ex:decorations} 
%Let $(F,\hat F)$ and $A$ as in Example~\ref{ex:colours}. Then the boundary of $A$ in $F$ 
%is given by $\d(A,F)=\{(x,\ell_1),(x,\ell_2),(y,\ell_5),(z,\ell_8)\}$ for some $x,y,z\in N_A$.
%\end{example}
%
\begin{example}\label{ex:decor1}
We continue Examples~\ref{ex:decorations} and~\ref{ex:colours}, by showing how 
decorations are modified by $\Delta_i$. We consider first $i=2$, corresponding to
a blue subforest $A\in\Adm_2(F,\hat F)$. Then we have that
$ (A,\hat F\restr A,\Labn_A+\pi\varepsilon_A^F, \Labo \restr N_A,\Labe \restr E_A) $
is equal to

\begin{equ}\label{ex:tra2_1}   
\begin{tikzpicture}[scale=0.10,baseline=2cm]
        \node at (0,0)  (a) {}; %a
         \node at (-28,30)  (h) {}; %h
          \node at (-20,20) (d) {}; %d
          \node at (-12,30)  (j) {}; %j
          \node at (-20,30) (i) {}; %i
      \node at (-12,10) (b) {}; %b
         \node at (-8,20)  (e) {}; %e
         \node at (4,20)  (f) {}; %f
          \node at (0,30)  (k) {}; %k
           \node at (8,30) (l) {}; %l
           \node at (20,20)  (g) {}; %g
            \node at (16,30) (m) {};  %m
        \node at (12,10) (c) {}; %c
         \node at (24,30) (p) {}; %p

%     \draw[kernel1,black] (h)node[rect2] {\tiny $ \Labn$}  -- node [rect1] {\tiny$\Labhom,\Labe$}   (d) --  node [near end, rect1]{\tiny $ \Labhom,\Labe $ } (i) node[rect2] {\tiny $ \Labn$};

     \draw[kernel1,red] (b) --    (d) node[round2] {\tiny $ \Labn_A+\pi\varepsilon_A^F,\Labo $} --   (j) node[round2] {\tiny $ \Labn_A,\Labo $};
     \draw[kernel1,red] (b) --   (e) node[round2] {\tiny $ \Labn_A,\Labo $} ;
    \draw[kernel1,black] (a) -- node [rect1] {\tiny $\Labe$}  (b) node[red,round2] {\tiny $ \Labn_A,\Labo $} ; 
    \draw[kernel1,blue] (a) --   (c) ; 
     \draw[kernel1,blue] (c) --   (f) ; 
     \draw[kernel1,blue] (c) --   (g) ; 
 %    \draw[kernel1,black] (f) -- node [rect1] {\tiny $\Labhom, \Labe$}  (k) ; 
     \draw[kernel1,blue] (f) --   (l) ; 
     \draw[kernel1,blue] (g) --   (m) ; 
%     \draw[kernel1,black] (g) -- node [near end, rect1] {\tiny $\Labhom,\Labe $}  (p) ;
     
%     \draw (p) node [rect2] {\tiny $ \Labn_A $}  ;
    \draw (m) node [blue,round3] {\tiny $ \Labn_A, \Labo $}  ;
    \draw (l) node [blue,round3] {\tiny $ \Labn_A, \Labo $}  ;
%     \draw (k) node [red,round2] {\tiny $ \Labn_A, \Labo $}  ;
    \draw (g) node [blue,round3] {\tiny $ \Labn_A+\pi\varepsilon_A^F,\Labo $}  ;
    \draw (f) node [blue,round3] {\tiny $ \Labn_A+\pi\varepsilon_A^F,\Labo $}  ;
    \draw (c) node [blue,round3] {\tiny $ \Labn_A,\Labo $}  ;
    \draw (a) node [blue,round3] {\tiny $ \Labn_A,\Labo $}  ;
\end{tikzpicture} 
\end{equ}
while 
$(F,\hat F \cup_2 A,\Labn - \Labn_A,\ \Labo+\Labn_A+\pi(\varepsilon_A^F-\Labe_\emptyset^A),\Labe_A^F + \varepsilon_A^F)$ becomes
\begin{equ}\label{ex:tra2_2}   
\begin{tikzpicture}[scale=0.18,baseline=2cm]
        \node at (0,6)  (a) {}; %a
         \node at (-28,24)  (h) {}; %h
          \node at (-20,14) (d) {}; %d
          \node at (-12,24)  (j) {}; %j
          \node at (-20,24) (i) {}; %i
      \node at (-12,10) (b) {}; %b
         \node at (-7,14)  (e) {}; %e
         \node at (6,14)  (f) {}; %f
          \node at (0,24)  (k) {}; %k
           \node at (8,24) (l) {}; %l
           \node at (22,14)  (g) {}; %g
            \node at (19,24) (m) {};  %m
        \node at (12,10) (c) {}; %c
         \node at (27,24) (p) {}; %p

     \draw[kernel1,black] (h)node[rect2] {\tiny $ \Labn$}  -- node [near start, rect1] {\tiny$\Labe+\varepsilon_A^F$}   (d) --  node [ rect1]{\tiny $ \Labe+\varepsilon_A^F $ } (i) node[rect2] {\tiny $ \Labn$};

     \draw[kernel1,blue] (b) --   (d) node[round3] {\tiny $ \Labn-\Labn_A,\Labo+\Labn_A +\pi\varepsilon_A^F$} --   (j) node[round3] {\tiny $ \Labn-\Labn_A,\Labo+\Labn_A $};
     \draw[kernel1,blue] (b) --  (e) node[round3] {\tiny $ \Labn-\Labn_A,\Labo+\Labn_A $} ;
    \draw[kernel1,blue] (a) --   (b) node[blue,round3] {\tiny $ \Labn-\Labn_A,\Labo+\Labn_A $} ; 
    \draw[kernel1,blue] (a) --   (c) ; 
     \draw[kernel1,blue] (c) --   (f) ; 
     \draw[kernel1,blue] (c) --   (g) ; 
     \draw[kernel1,black] (f) -- node [rect1] {\tiny $ \Labe+\varepsilon_A^F$}  (k) ; 
     \draw[kernel1,blue] (f) --   (l) ; 
     \draw[kernel1,blue] (g) --   (m) ; 
    \draw[kernel1,black] (g) -- node [rect1] {\tiny $\Labe+\varepsilon_A^F $}  (p) ;
     
     \draw (p) node [rect2] {\tiny $ \Labn$}  ;
    \draw (m) node [blue,round3] {\tiny $ \Labn-\Labn_A, \Labo+\Labn_A $}  ;
    \draw (l) node [blue,round3] {\tiny $ \Labn-\Labn_A, \Labo+\Labn_A $}  ;
     \draw (k) node [red,round2] {\tiny $ \Labn, \Labo $}  ;
    \draw (g) node [blue,round3] {\tiny $ \Labn-\Labn_A,\Labo+\Labn_A +\pi\varepsilon_A^F$}  ;
    \draw (f) node [blue,round3] {\tiny $ \Labn-\Labn_A,\Labo+\Labn_A +\pi\varepsilon_A^F$}  ;
    \draw (c) node [blue,round3] {\tiny $ \Labn-\Labn_A,\Labo+\Labn_A $}  ;
    \draw (a) node [blue,round3] {\tiny $ \Labn-\Labn_A,\Labo+\Labn_A-\pi\Labe $}  ;
\end{tikzpicture} 
\end{equ}
Note that $\varepsilon_A^F$ is
supported by $\partial(A,F)=\{7,8,10,13\}$, where we refer to the labelling of edges and nodes fixed in the Example~\ref{ex:decorations}, and
\[
\pi\varepsilon_A^F(d)=\varepsilon_A^F(7)+\varepsilon_A^F(8), \qquad\pi\varepsilon_A^F(f)=\varepsilon_A^F(10), \qquad \pi\varepsilon_A^F(g)=\varepsilon_A^F(13). 
\]
Note that the edge $1$ was black in $(F,\hat F)$ and becomes blue in $(F,\hat F \cup_2 A)$; accordingly, in $(F,\hat F \cup_2 A,\Labn - \Labn_A,\ \Labo+\Labn_A+\pi(\varepsilon_A^F-\Labe_\emptyset^A),\Labe_A^F + \varepsilon_A^F)$ the value of $\Labe$ on $1$ is set to $0$ and $\Labe(1)$ 
is subtracted from $\Labo(a)$. In accordance with Assumption~\ref{ass:1}, $A\in\Adm_2(F,\hat F)$ contains one of the two connected components
of $\hat F_1$ and is disjoint from the other one. 
\end{example}

\begin{example}\label{ex:decor2}
We continue Example~\ref{ex:decor1} for the choice of $B$ made in Example~\ref{ex:colours} 
and for $i=1$, corresponding to a red subforest $B\in\Adm_1(F,\hat F)$. Then
$ (B,\hat F\restr B,\Labn_B+\pi\varepsilon_B^F, \Labo \restr N_B,\Labe \restr E_B) $
is equal to
\begin{equ}\label{ex:tra1_1}   
\begin{tikzpicture}[scale=0.13,baseline=1cm]
%        \node at (0,0)  (a) {}; %a
%         \node at (-28,30)  (h) {}; %h
          \node at (-20,5) (d) {}; %d
          \node at (-12,13)  (j) {}; %j
          \node at (-20,13) (i) {}; %i
      \node at (-12,0) (b) {}; %b
         \node at (-4,5)  (e) {}; %e
%         \node at (4,20)  (f) {}; %f
          \node at (0,0)  (k) {}; %k
%           \node at (8,30) (l) {}; %l
%           \node at (20,20)  (g) {}; %g
%            \node at (16,30) (m) {};  %m
%        \node at (12,10) (c) {}; %c
         \node at (12,0) (p) {}; %p

     \draw[kernel1,black] (i)node[rect2] {\tiny $ \Labn_B(i)$}  -- node [rect1] {\tiny$\Labe$}   (d);% --  node [near end, rect1]{\tiny $ \Labhom,\Labe $ } (i) node[rect2] {\tiny $ \Labn$};

     \draw[kernel1,red] (b) --  (e) node[round2] {\tiny $ \Labn_B,\Labo $} ;
     \draw[kernel1,red] (b)node[red,round2] {\tiny $ \Labn_B(b),\Labo(b) $} --   (d) node[round2] {\tiny $ \Labn_B+\pi\varepsilon_B^F,\Labo $} --  (j) node[round2] {\tiny $ \Labn_B,\Labo $};

\draw (p) node [rect2] {\tiny $ \Labn_B(p) $}  ;
\draw (k) node [red,round2] {\tiny $ \Labn_B(k),\Labo(k) $}  ;
\end{tikzpicture} 
\end{equ}
while
$(F,\hat F \cup_1 B,\Labn - \Labn_B,\ \Labo+\Labn_B+\pi(\varepsilon_B^F-\Labe_\emptyset^B),\Labe_B^F + \varepsilon_B^F)$
becomes
\begin{equ}\label{ex:tra2_3}   
\begin{tikzpicture}[scale=0.18,baseline=1.5cm]
\def\frst{4}\def\scnd{8}\def\thrd{14}
        \node at (0,0)  (a) {}; %a
         \node at (-28,\thrd)  (h) {}; %h
          \node at (-20,\scnd) (d) {}; %d
          \node at (-10,\thrd)  (j) {}; %j
          \node at (-20,\thrd) (i) {}; %i
      \node at (-12,\frst) (b) {}; %b
         \node at (-4,\scnd)  (e) {}; %e
         \node at (4,\scnd)  (f) {}; %f
          \node at (2,\thrd)  (k) {}; %k
           \node at (10,\thrd) (l) {}; %l
           \node at (20,\scnd)  (g) {}; %g
            \node at (16,\thrd) (m) {};  %m
        \node at (12,\frst) (c) {}; %c
         \node at (24,\thrd) (p) {}; %p

     \draw[kernel1,black] (h)node[rect2] {\tiny $ \Labn$}  -- node [rect1] {\tiny$\Labe+\varepsilon_B^F$}   (d);
     \draw[kernel1,red] (d)--  (i) node[round2] {\tiny $ \Labn-\Labn_B,\Labn_B$};

     \draw[kernel1,red] (b) --   (d) node[round2] {\tiny $ \Labn-\Labn_B,\Labo +\Labn_B+\pi(\varepsilon_B^F-\Labe_\emptyset^B)$} --  (j) node[round2] {\tiny $ \Labn-\Labn_B,\Labo+\Labn_B $};
     \draw[kernel1,red] (b) --  (e) node[round2] {\tiny $ \Labn-\Labn_B,\Labo+\Labn_B $} ;
    \draw[kernel1,black] (a) -- node [rect1] {\tiny $\Labe$}  (b) node[red,round2] {\tiny $ \Labn-\Labn_B,\Labo+\Labn_B $} ; 
    \draw[kernel1,blue] (a) --   (c) ; 
     \draw[kernel1,blue] (c) --   (f) ; 
     \draw[kernel1,blue] (c) --   (g) ; 
    \draw[kernel1,black] (f) -- node [rect1] {\tiny $ \Labe$}  (k) ; 
     \draw[kernel1,blue] (f) --  (l) ; 
     \draw[kernel1,blue] (g) --  (m) ; 
     \draw[kernel1,black] (g) -- node [rect1] {\tiny $\Labe $}  (p) ;
     
     \draw (p) node [red,round2] {\tiny $ \Labn-\Labn_B,\Labn_B$}  ;
    \draw (m) node [blue,round3] {\tiny $ \Labn, \Labo $}  ;
    \draw (l) node [blue,round3] {\tiny $ \Labn, \Labo $}  ;
     \draw (k) node [red,round2] {\tiny $ \Labn-\Labn_B,\Labo+\Labn_B$}  ;
    \draw (g) node [blue,round3] {\tiny $ \Labn,\Labo $}  ;
    \draw (f) node [blue,round3] {\tiny $ \Labn,\Labo $}  ;
    \draw (c) node [blue,round3] {\tiny $ \Labn,\Labo $}  ;
    \draw (a) node [blue,round3] {\tiny $ \Labn,\Labo $}  ;
\end{tikzpicture} 
\end{equ}
Here we have that $\partial(B,F)=\{7\}$, where we refer to the labelling of edges and nodes fixed in the Example~\ref{ex:decorations}. Therefore 
$\pi\varepsilon_B^F(d)=\varepsilon_B^F(7)$. 
Note that the edge $8$ was black in $(F,\hat F)$ and becomes red in $(F,\hat F \cup_1 B)$; 
accordingly, in $(F,\hat F \cup_1 B,\Labn - \Labn_B,\ \Labo+\Labn_B+\pi(\varepsilon_B^F-\Labe_\emptyset^B),\Labe_B^F + \varepsilon_B^F)$ the value of $\Labe$ on $8$ is set to $0$ 
and $\Labe(8)$ is subtracted from $\Labo(d)$. In accordance with Assumption~\ref{ass:1},  $B\in\Adm_1(F,\hat F)$ is disjoint from the blue
subforest $\hat F_2$ and, accordingly, all decorations on $\hat F_2$ are unchanged.
Finally, note that the edge $1$ is not in $\partial(B,F)$ since it is equal to $(a,b)$ with $b\in B$ and $a\notin B$.
\end{example}

\begin{remark}\label{rem:convention}
From now on, in expressions like \eqref{def:Deltabar} we are going to use the simplified notation
\[
(A,\hat F\restr A,\Labn_A+\pi\varepsilon_A^F, \Labo \restr N_A,\Labe \restr E_A)=:
(A,\hat F\restr A,\Labn_A+\pi\varepsilon_A^F, \Labo,\Labe),
\]
namely the restrictions of $\Labo$ and $\Labe$ will not be made explicit. This should generate no
confusion, since by Definition~\ref{def:decoration} in $(A,\hat A,\Labn', \Labo',\Labe')$ 
we have $\Labo' \colon N_A \to \Z^d\oplus\Z(\Lab)$ and $\Labe' \colon E_A \to \N^d$. 
%with $\supp \Labo' \subset \supp\hat F$. with $\supp\Labe\subset \{e\in E_F :\,  \hat F(e)=0\}$.
On the other hand, the notation $\hat F\restr A$ refers to a  slightly less standard operation, see Lemma~\ref{lem:new_coloured_forest} above, and will therefore be made explicitly throughout.
Note also that $\Labn_A$ is \emph{not} defined as the restriction of $\Labn$ to $N_A$.
\end{remark}

\begin{remark}\label{rem:intuition2}
It may not be obvious why Definition~\ref{def:maps} is natural, so let us try to offer an intuitive 
explanation of where it comes from. First note that \eqref{def:Deltabar} 
reduces to \eqref{firstDelta_i} if we drop the decorations and the combinatorial coefficients. 

If we go back to Remark~\ref{rem:intuition1}, and we recall that a decorated forest encodes a function of 
a set of variables in $\R^d$ indexed by the nodes of the underlying forest, then we can realise that the 
operator $\Delta_i$ in \eqref{def:Deltabar} is naturally motivated by Taylor expansions. 

Let us consider first the particular case of $\tau=(F,\hat F,0,\Labo,\Labe)$. Then $\Labn_A$ has to vanish 
because of the constraint $0\leq\Labn_A\leq\Labn$ and \eqref{def:Deltabar} becomes
\begin{equs}\label{def:Deltabar2}
\Delta_i \tau &= 
 \sum_{A \in \Adm_i(F,\hat F)} \sum_{\varepsilon_A^F} \frac1{\varepsilon_A^F!}
 (A,\hat F\restr A,\pi\varepsilon_A^F, \Labo ,\Labe ) 
 \\& \qquad  \otimes(F,\hat F \cup_i A,0,\ \Labo+\pi(\varepsilon_A^F-\Labe_\emptyset^A), \Labe_A^F + \varepsilon_A^F)\;.
\end{equs}
Consider a single term in this sum and fix an edge $e=(v,w)\in\partial(A,F)$. Then, in the expression
\[
(F,\hat F \cup_i A,0,\ \Labo+\pi(\varepsilon_A^F-\Labe_\emptyset^A), \Labe_A^F + \varepsilon_A^F)\;,
\]
the decoration of $e$ is changing from $\Labe(e)$ to $\Labe(e)+\varepsilon_A^F(e)$. Recalling \eqref{function}, this 
should be interpreted as differentiating $\varepsilon_A^F(e)$ times the kernel encoded by the edge $e$. 
At the same time, in the expression
\[
(A,\hat F\restr A,\pi\varepsilon_A^F, \Labo ,\Labe ) \;,
\]
the term $\pi\varepsilon_A^F(v)$ is a sum of several contributions, among which $\varepsilon_A^F(e)$. If we take into account the factor $1/\varepsilon_A^F(e)!$, we recognise a (formal) Taylor sum
\[
\sum_{k\in\N^d} \frac{(x_v)^k}{k!} \partial^{\Labe(e)+k}_{x_v}\varphi_{\Labhom(e)}(x_v-x_w), \qquad e=(v,w)\in\partial(A,F).
\]
If $\Labn$ is not zero, then we have a similar Taylor sum given by
\[
\sum_{k\in\N^d} \frac{(x_v)^k}{k!} \partial^{\Labe(e)+k}_{x_v}\left[ (x_v)^{\Labn(v)}\varphi_{\Labhom(e)}(x_v-x_w)\right], \qquad e=(v,w)\in\partial(A,F).
\]
The role of the decoration $ \Labo $ is still mysterious at this stage: we ask the reader to wait until the Remarks~\ref{rem:explanation},  \ref{explanation} and~\ref{rem:fails} below for an explanation. The connection between our construction and Taylor expansions (more precisely, Taylor {\it remainders}) will be made clear in Lemma~\ref{lem:algpropr} and Remark~\ref{rem:remainders} below.
\end{remark}     

\begin{remark}\label{rem:triang2}
Note that, in \eqref{def:Deltabar}, for each fixed $A$ the decoration $\Labn_A$ runs over a finite set because of the constraint $0\leq \Labn_A\leq\Labn$. 

On the other hand, $\varepsilon_A^F$ runs over an infinite set, but the sum is nevertheless
well defined as an element of $\scal{\Tra}\hattimes \scal{\Tra}$, even though it does \textit{not} belong
to the algebraic tensor product $\scal{\Tra}\otimes \scal{\Tra}$. Indeed, since 
$|\Labe \restr E_A| + |\Labe_A^F + \eps_A^F| = |\Labe| + |\eps_A^F| \ge |\Labe|$ and 
\begin{equ}
|A\setminus \bigl((\hat F\restr A)\cup \rho_A\bigr)| + |F \setminus \bigl((\hat F\cup_i A) \cup \rho_F\bigr)| \le |F\setminus (\hat F\cup \rho_F)|\;,
\end{equ}
it is the case that if $|\tau|_\bi = n$, then the degree of each term appearing 
on the right hand side of \eqref{def:Deltabar} is of the type $(n_1+k_1,n_2-k_2)$ with $k_i \ge 0$.
Since furthermore the sum is finite for any given value of $|\eps_A^F|$, this is 
indeed a triangular map on $\scal{\Tra}$, see Remark~\ref{rem:triang} above.

There are many other ways of bigrading $\Tra$ to make the $\Delta_i$ triangular,
but the one chosen here
has the advantage that it behaves nicely with respect to the various quotient operations
of Sections~\ref{sec:Hopf} and~\ref{sec:join} below.
\end{remark}

\begin{remark}
The coproduct $\Delta_i$ defined in \eqref{def:Deltabar} does not look like that of a combinatorial Hopf algebra
since for $\eps_A^F$ the coefficients are not necessarily integers. This could in principle be rectified easily
by a simple change of basis: if we set
\begin{equ}
(F, \hat F,\Labn,\Labo,\Labe)_\circ \eqdef \frac1{\Labe!}(F, \hat F,\Labn,\Labo,\Labe)\;,
\end{equ} 
then we can write \eqref{def:Deltabar} equivalently as
\begin{equs}
\Delta_i \tau &= 
 \sum_{A \in \Adm_i(F,\hat F)} \sum_{\varepsilon_A^F,\Labn_A} \binom{\Labe + \varepsilon_A^F}{\varepsilon_A^F}
\binom{\Labn}{\Labn_A}
 (A,\hat F\restr A,\Labn_A+\pi\varepsilon_A^F,\Labo,\Labe)_\circ
 \\& \qquad  \otimes(F,\hat F \cup_i A,\Labn - \Labn_A,\ \Labo+\Labn_A+\pi(\varepsilon_A^F-\Labe_\emptyset^A), \Labe_A^F + \varepsilon_A^F)_\circ\;, 
\end{equs}
for $\tau = (F, \hat F,\Labn,\Labo,\Labe)_\circ$. Note that with this notation it is still 
the case that
\begin{equ}
(F,\hat F,\Labn,\Labo,\Labe)_\circ
\cdot (G,\hat G,\Labn',\Labo',\Labe')_\circ
= (F\sqcup G,\hat F+ \hat G,\Labn + \Labn',\Labo + \Labo',\Labe + \Labe')_\circ\;.
\end{equ}
However, since this lengthens some expressions, does not seem to create
any significant simplifications, and completely destroys compatibility with the notations of \cite{reg}, 
we prefer to stick to \eqref{def:Deltabar}.
\end{remark}

\begin{remark}
As already remarked, the grading $|\,\cdot\,|_\bi$ defined in \eqref{e:prodgrad} is not preserved by the $\Delta_i$.
This should be considered a feature, not a bug! Indeed, the fact that the first component
of our bigrading is not preserved is precisely what allows us to have an infinite sum in \eqref{def:Deltabar}.
A more natural integer-valued grading in that respect would have been given for example by
\begin{equ}
|(F,\hat F)^{\Labn,\Labo}_\Labe|_- = |E_F| - |\hat E| + |\Labn| - |\Labe|\;,
\end{equ}
which would be preserved by
both the forest product $\cdot$ and $\Delta_i$. However, since $\Labe$ can take arbitrarily
large values, this grading is no longer positive.
A grading very similar to this will play an important role later on, see Definition~\ref{homog} below.
\end{remark}

\subsection{Coassociativity}

\begin{assumption}\label{ass:2}
For each coloured forest $(F, \hat F)$ as in Definition~\ref{def:colour},
the collection 
$\Adm_i(F,\hat F)$ of subforests of $F$ satisfies the following properties.
\begin{enumerate}
\item One has
\begin{equ}[e:prop_mult]
\Adm_i(F\sqcup G,\hat F + \hat G)
= \{C \sqcup D \,:\, C \in \Adm_i(F,\hat F)\;\&\; D \in \Adm_i(G,\hat G)\}\;.
\end{equ}
\item One has
\minilab{e:prop}
\begin{equ}[e:prop2]
A \in \Adm_i(F,\hat F)\quad\&\quad B \in \Adm_i(F,\hat F \cup_i A)\;,
\end{equ}
if and only if
\minilab{e:prop}
\begin{equ}[e:prop1]
B \in \Adm_i(F,\hat F)\quad\&\quad A \in \Adm_i(B,\hat F\restr B).
\end{equ}
\end{enumerate}
\end{assumption}

Assumption~\ref{ass:2} is precisely what is required so that 
the ``undecorated'' versions of the maps $\Delta_i$, as defined in \eqref{firstDelta_i},
are both multiplicative and coassociative. The next proposition shows that the definition
\eqref{def:Deltabar} is such that this automatically carries over to the
``decorated'' counterparts.

\begin{proposition}\label{prop:coassoc}
Under Assumptions~\ref{ass:1} and  \ref{ass:2},
the maps $\Delta_i$ are coassociative and multiplicative
on $\scal{\Forests}$, namely the identities
\minilab{mult_Deltaa}
\begin{equs}\label{mult_Deltaa1}
(\Delta_i \otimes \id)\Delta_i\CF &= (\id \otimes \Delta_i)\Delta_i\CF\;,\\
\Delta_i (\CF\cdot \CG)&=(\Delta_i \CF)\cdot(\Delta_i \CG)\;,
\label{mult_Deltaa2}
\end{equs}
hold for all $\CF,\CG\in\scal{\Forests}$.
\end{proposition}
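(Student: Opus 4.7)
The plan is to treat the two identities in turn: multiplicativity is essentially bookkeeping on disjoint unions, while coassociativity requires establishing a bijection between admissible pairs and then matching decorations via combinatorial identities.

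For multiplicativity \eqref{mult_Deltaa2}, I would take $\CF = (F,\hat F,\Labn,\Labo,\Labe)$ and $\CG = (G,\hat G,\Labn',\Labo',\Labe')$ and observe that the product $\CF\cdot\CG$ has underlying forest $F\sqcup G$ with decorations supported disjointly on $F$ and $G$. By Assumption~\ref{ass:2}(1), every admissible subforest of $F\sqcup G$ decomposes uniquely as $C\sqcup D$ with $C\in\Adm_i(F,\hat F)$ and $D\in\Adm_i(G,\hat G)$, and moreover $\partial(C\sqcup D, F\sqcup G)=\partial(C,F)\sqcup\partial(D,G)$. Consequently, each summation variable $\Labn_A$ and $\varepsilon_A^F$ in \eqref{def:Deltabar} splits into independent components supported on the two pieces. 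Since $\Labn_C$ and $\Labn_D$ have disjoint supports, the binomial factorises as $\binom{\Labn+\Labn'}{\Labn_C+\Labn_D}=\binom{\Labn}{\Labn_C}\binom{\Labn'}{\Labn_D}$, and similarly $(\varepsilon_C^F+\varepsilon_D^G)!=\varepsilon_C^F!\,\varepsilon_D^G!$. Assembling these factorisations yields $\Delta_i(\CF\cdot\CG)=(\Delta_i\CF)\cdot(\Delta_i\CG)$ as elements of $\scal{\Tra}\hattimes\scal{\Tra}$.

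For coassociativity \eqref{mult_Deltaa1}, the first step is to identify the index sets on both sides. The LHS $(\Delta_i\otimes\id)\Delta_i\tau$ runs over pairs $(A,B)$ with $A\in\Adm_i(F,\hat F)$ and $B\in\Adm_i(A,\hat F\restr A)$ (so $B\subset A$), while the RHS $(\id\otimes\Delta_i)\Delta_i\tau$ runs over pairs $(A,B')$ with $A\in\Adm_i(F,\hat F)$ and $B'\in\Adm_i(F,\hat F\cup_i A)$; by Assumption~\ref{ass:1}(1) applied to the coloured forest $(F,\hat F\cup_i A)$, one has $A\subset B'$. Assumption~\ref{ass:2}(2) is precisely the statement that the map $(A_L,B_L)\mapsto(B_L,A_L)$ is a bijection between the LHS and RHS index sets. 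Under this bijection, a straightforward verification using Lemma~\ref{lem:new_coloured_forest} shows that the three output factors agree as coloured forests: innermost equals $B_L=A_R$; middle equals $A_L=B_R'$ with $B_L$ recoloured~$i$; and outermost equals $F$ with $A_L=B_R'$ recoloured~$i$ (using that $(\hat F\cup_i A)\cup_i B'=\hat F\cup_i B'$ when $A\subset B'$).

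The main obstacle is the matching of decorations and combinatorial weights factor-by-factor under this bijection, because the $\varepsilon$-type summation variables have \emph{different supports} on the two sides. Setting $E_1=\partial(B,A)$, $E_2=\{e\in\partial(A,F):\text{endpoint in }N_B\}$ and $E_3=\partial(A,F)\setminus E_2$, one has $\partial(B,F)=E_1\sqcup E_2$. The LHS variables $\eta$ (on $E_1$) and $\varepsilon_A^F=\alpha+\beta$ (on $E_2\sqcup E_3$) must be put in correspondence with the RHS variables $\varepsilon_B^F=\eta'+\alpha'$ (on $E_1\sqcup E_2$) and $\xi=\gamma+\beta'$ (on $E_2\sqcup E_3$), and the factorisation $\frac{1}{(\alpha'+\gamma)!}\binom{\alpha'+\gamma}{\alpha'}=\frac{1}{\alpha'!\gamma!}$ together with the Chu-Vandermonde identity of the opening lemma reorganises the coefficients $\binom{\Labn}{\Labn_A}\binom{\Labn_A+\pi\varepsilon_A^F}{\Labn_B}$ into $\binom{\Labn}{\Labn_B}\binom{\Labn-\Labn_B}{\Labn_A-\Labn_B}$ after resumming over the $E_2$-component of $\varepsilon_A^F$ that gets absorbed into $\Labn_B+\pi\varepsilon_B^F$. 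The accompanying $\Labo$-decoration matches essentially automatically: the accumulated $\pi(\varepsilon-\Labe_\emptyset)$-terms on each side reassemble correctly, which is precisely where the enlargement of the target of $\Labo$ to $\Z^d\oplus\Z(\Lab)$ (see Remark~\ref{whylabo}) is needed, as some contributions enter with opposite signs before cancellation. The main delicacy is therefore a careful simultaneous bookkeeping of the three decorations against one combinatorial change of variables; the triangular structure of Remark~\ref{rem:triang2} ensures that all infinite sums converge in $\scal{\Tra}\hattimes\scal{\Tra}\hattimes\scal{\Tra}$.
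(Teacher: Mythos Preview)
Your proposal follows essentially the same approach as the paper: multiplicativity via the disjoint-union splitting of Assumption~\ref{ass:2}(1), and coassociativity via the bijection of Assumption~\ref{ass:2}(2) together with a change of variables in the $\varepsilon$- and $\Labn$-decorations and an application of Chu--Vandermonde. The paper executes this by writing both triple sums out explicitly, introducing new variables $\bar\varepsilon_A^C,\bar\varepsilon_C^F,\bar\varepsilon_{A,C}^F$ and $\bar\Labn_A,\bar\Labn_C$, and then summing out $\bar\varepsilon_{A,C}^F$ via Chu--Vandermonde; your $E_1,E_2,E_3$ partition encodes exactly the same bookkeeping (with the roles of the letters $A$ and $B$ swapped relative to the paper).

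One small inaccuracy worth flagging: the enlargement of the target of $\Labo$ to $\Z^d\oplus\Z(\Lab)$ is \emph{not} what is being used in this proof. The updates to $\Labo$ in \eqref{def:Deltabar} are purely $\Z^d$-valued (coming from $\Labn_A$, $\pi\varepsilon$ and $-\pi\Labe_\emptyset^A$), so the $\Z(\Lab)$ component is just carried along inertly; that component only enters later through the contraction operator $\CK$ via \eqref{e:defhatn}, as Remark~\ref{whylabo} indicates. What matters here is simply that the $\Z^d$ part is allowed to be negative.
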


\begin{proof}
The multiplicativity property \eqref{mult_Deltaa2} is an immediate consequence of property 1 in 
Assumption~\ref{ass:2} 
and the fact that the factorial factorises for functions with disjoint supports, so we only 
need to verify \eqref{mult_Deltaa1}.

Applying the definition \eqref{def:Deltabar} twice yields the identity
\begin{equs}
{} & (\Delta_i \otimes \id)\Delta_i (F, \hat F,\Labn,\Labo,\Labe) =
\\ &= \sum_{B \in \Adm_i(F,\hat F)}\sum_{\varepsilon_B^F,\Labn_B}
\sum_{A \in \Adm_i(B,\hat F\restr B)} \sum_{\varepsilon_{A}^B,\Labn_{A}}\frac1{\varepsilon_B^F!}
\binom{\Labn}{\Labn_B}\frac1{\varepsilon_{A}^B!}
\binom{\Labn_B+\pi\varepsilon_B^F}{\Labn_{A}} \\
&\qquad (A,\hat F\restr A,\Labn_{A}+\pi\varepsilon_{A}^B,\Labo,\Labe) \otimes \label{e:firstprod}\\
&\qquad (B, (\hat F \restr B)\cup_i A,\Labn_B+\pi\varepsilon_B^F - \Labn_{A},\ \Labo+\Labn_A+\pi(\varepsilon_A^B-\Labe_\emptyset^A),\Labe_A^B  + \varepsilon_{A}^B)
\otimes \\
&\qquad (F,\hat F \cup_i B,\Labn - \Labn_B, \ \Labo+\Labn_B+\pi(\varepsilon_B^F-\Labe_\emptyset^B),\Labe_B^F + \varepsilon_B^F)\;.
\end{equs}
Note that we should write for instance $(A,\hat F\restr A,\Labn_{A}+\pi\varepsilon_{A}^B,\Labo\restr N_A,\Labe\restr E_A)$ rather than $(A,\hat F\restr A,\Labn_{A}+\pi\varepsilon_{A}^B,\Labo,\Labe)$, but in this as in other cases we prefer the lighter notation if there is no risk of confusion. 
Analogously, one has
\begin{equs}
{}& (\id \otimes \Delta_i)\Delta_i (F, \hat F,\Labn,\Labo,\Labe) =
\\&= \sum_{A \in \Adm_i(F,\hat F)} \sum_{\varepsilon_A^F,\Labn_A}
\sum_{C \in \Adm_i(F,\hat F \cup_i A)} \sum_{\varepsilon_C^F,\Labn_C}\frac1{\varepsilon_A^F!}
\binom{\Labn}{\Labn_A}\frac1{\varepsilon_C^F!}
\binom{\Labn-\Labn_A}{\Labn_C}
\\ & (A, \hat F \restr A,\Labn_A+\pi\varepsilon_A^F,\Labo,\Labe) \otimes \label{e:second}\\
& (C,(\hat F \cup_i A)\restr C,\Labn_C+\pi\varepsilon_C^F ,  \Labo+\Labn_A+\pi(\varepsilon_A^F-\Labe_\emptyset^A),\Labe_A^C + (\varepsilon_A^F)_A^C) \otimes \\
&  (F,\hat F \cup_i C,\Labn\!-\! \Labn_A\!-\!\Labn_C,  \Labo\!+\!\Labn_A\!+\!\Labn_C+\pi((\varepsilon_A^F)_C^F+\varepsilon_C^F-\Labe_\emptyset^C),\Labe_C^F+ (\varepsilon_A^F)_C^F + \varepsilon_C^F),
\end{equs}
where we recall that, by Definition~\ref{def:maps}, for $A\subseteq B \subseteq F$ and $f:E_F\to\N^d$, we use the notation $f_A^B \eqdef f\,\un{E_B\setminus E_A}$; in particular
\begin{equ}[e:names]
(\varepsilon_A^F)_C^F \eqdef \varepsilon_A^F\,\un{E_F\setminus E_C}\;,\qquad
(\varepsilon_A^F)_A^C \eqdef \varepsilon_A^F\,\un{E_C}\;.
\end{equ}
By this definition it is clear that $(\varepsilon_A^F)_C^F$ and $(\varepsilon_A^F)_A^C$ have disjoint supports and moreover
\[
(\varepsilon_A^F)_C^F+(\varepsilon_A^F)_A^C=\varepsilon_A^F.
\]
This is the reason, in particular, why the term $\pi((\varepsilon_A^F)_C^F)$ appears in the last line of \eqref{e:second}. In the proof of \eqref{e:second} we also make use of the fact that, since $A \subset C$, 
one has
\begin{equ}
(\hat F \cup_i A)\cup_i C = \hat F \cup_i C\;.
\end{equ}

We now make the following changes of variables. First, we set
\begin{equ}[e:directMap]
\bar\varepsilon_{C}^F \eqdef(\varepsilon_A^F)_C^F + \varepsilon_C^F\;, \qquad 
\bar\varepsilon_A^C\eqdef(\varepsilon_A^F)_A^C\;,
\qquad \bar \eps_{A,C}^F\eqdef\bar \eps_C^F-\eps_C^F=(\varepsilon_A^F)_C^F
\end{equ}
with the naming conventions \eqref{e:names}. Note that the support of $\bar \eps_{A,C}^F$ is contained in
$\d(A,F)\cap\d(C,F)$. 
Now the map
\begin{equ}
(\eps_A^F, \eps_C^F) \mapsto (\bar\varepsilon_{C}^F,\bar\varepsilon_{A}^C, \bar \eps_{A,C}^F)
\end{equ}
given by \eqref{e:directMap} is invertible on its image, with inverse given by
\begin{equ}[e:inverseMap]
(\bar\varepsilon_{C}^F,\bar\varepsilon_{A}^C, \bar \eps_{A,C}^F) \mapsto 
(\eps_A^F, \eps_C^F)=(\bar \eps_A^C  + \bar \eps_{A,C}^F, \bar\eps_C^F-\bar \eps_{A,C}^F).
\end{equ}
Furthermore, the only restriction on its image besides the constraints on the supports is
the fact that $\bar \eps_{A,C}^F \le \bar \eps_C^F$, which is required to guarantee 
that, with $\eps_C^F=\bar \eps_C^F  - \bar \eps_{A,C}^F$ as in \eqref{e:inverseMap}, one has $\eps_C^F \ge 0$.

Now, the supports of $\bar \eps_A^C$ and $\bar \eps_{A,C}^F$ are disjoint, since 
\[
\supp\bar \eps_A^C\subset \partial(A,F)\cap E_C, \qquad \supp\bar \eps_{A,C}^F\subset \partial(A,F)\setminus E_C.
\]
Since the factorial factorises for functions with disjoint supports,
we can rewrite the combinatorial prefactor as
\begin{equs}
\frac1{\varepsilon_A^F!}\frac1{\varepsilon_C^F!}
&=
\frac1{\bar \eps_A^C! \bar \eps_{A,C}^F!} \frac1{(\bar \eps_C^F-\bar \eps_{A,C}^F)!}= {1\over  \bar \eps_A^C! \bar \eps_C^F!} \binom{\bar \eps_C^F}{\bar \eps_{A,C}^F}\;.\label{e:calc1}
\end{equs}
In this way, the constraint $\bar \eps_{A,C}^F \le \bar \eps_C^F$ is automatically enforced
by our convention for binomial coefficients, so that \eqref{e:second} can be written as
\begin{equs}
{}& (\id \otimes \Delta_i)\Delta_i (F, \hat F,\Labn,\Labo,\Labe) =
\\&= \sum_{A \in \Adm_i(F,\hat F)} 
\sum_{C \in \Adm_i(F,\hat F \cup_i A)} \sum_{\bar\eps_A^C,\bar\eps_C^F,\bar\eps_{A,C}^F}\sum_{\Labn_A,\Labn_C}{1\over \bar \eps_C^F! \bar \eps_A^C!} \binom{\bar \eps_C^F}{\bar \eps_{A,C}^F}
\binom{\Labn}{\Labn_A}
\binom{\Labn-\Labn_A}{\Labn_C}
\\ & (A, \hat F \restr A,\Labn_A+\pi \eps_A^F,\Labo,\Labe) \otimes \label{e:third}\\
& (C,(\hat F \cup_i A)\restr C,\Labn_C+\pi \eps_C^F,  \Labo+\Labn_A+\pi(\varepsilon_A^F-\Labe_\emptyset^A),\Labe_A^C + \bar \eps_A^C) \otimes \\
&  (F,\hat F \cup_i C,\Labn\!-\! \Labn_A\!-\!\Labn_C,  \Labo\!+\!\Labn_A\!+\!\Labn_C+\pi(\bar\eps_C^F-\Labe_\emptyset^C),\Labe_C^F + \bar\eps_C^F)\;,
\end{equs}
where $\eps_A^F$ and $\eps_C^F$ are determined by \eqref{e:inverseMap}.

We now make the further change of variables
\begin{equ}
\bar\Labn_C =\Labn_A+\Labn_C\;,\qquad
\bar \Labn_A = \Labn_A + \pi \bar \eps_{A,C}^F\;.
\end{equ}
It is clear that, given $\bar \eps_{A,C}^F$, 
this is again a bijection onto its image and that the latter 
is given by those functions with the relevant supports such that furthermore
\begin{equ}[e:supportN]
\bar \Labn_A \ge \pi \bar \eps_{A,C}^F\;.
\end{equ}
With these new variables, \eqref{e:inverseMap} immediately yields
\begin{equ}[e:goodexpr]
\Labn_A+\pi \eps_A^F = \bar \Labn_A + \pi \bar \eps_A^C\;, \qquad
\Labn_C+\pi \eps_C^F = \bar \Labn_C - \bar \Labn_A + \pi \bar \eps_C^F\;.
\end{equ}
Furthermore, we have
\begin{equ}[e:calc2]
\binom{\Labn}{\Labn_A}
\binom{\Labn-\Labn_A}{\Labn_C}
=
\binom{\Labn}{\Labn_A+\Labn_C}
\binom{\Labn_A+\Labn_C}{\Labn_A}
=
\binom{\Labn}{\bar\Labn_C}
\binom{\bar\Labn_C}{\bar \Labn_A - \pi \bar \eps_{A,C}^F}\;.
\end{equ}
Rewriting the combinatorial factor in this way, our convention on binomial coefficients 
once again enforces the condition \eqref{e:supportN}, so that \eqref{e:third} can be written as
\begin{equs}\label{e:fourth}
{}& (\id \otimes \Delta_i)\Delta_i (F, \hat F,\Labn,\Labo,\Labe) =
\\&= \sum_{A \in \Adm_i(F,\hat F)} 
\sum_{C \in \Adm_i(F,\hat F \cup_i A)} \sum_{\bar\eps_A^C,\bar\eps_C^F,\bar\eps_{A,C}^F}\sum_{\bar\Labn_A,\bar\Labn_C}
 \frac1{\bar \eps_C^F! \bar \eps_A^C!} 
\binom{\Labn}{\bar\Labn_C}
\binom{\bar \eps_C^F}{\bar \eps_{A,C}^F}
\binom{\bar\Labn_C}{\bar \Labn_A - \pi \bar \eps_{A,C}^F}
\\ & (A, \hat F \restr A,\bar \Labn_A + \pi \bar \eps_A^C,\Labo,\Labe) \otimes \\
& (C,(\hat F \cup_i A)\restr C,\bar \Labn_C - \bar \Labn_A + \pi \bar \eps_C^F,  \Labo+\bar \Labn_A + \pi \bar \eps_A^C-\pi\Labe_\emptyset^A,\Labe_A^C + \bar \eps_A^C) \otimes \\
&  (F,\hat F \cup_i C,\Labn\!-\bar \Labn_C,  \Labo\!+\!\bar\Labn_C+\pi(\bar\eps_C^F-\Labe_\emptyset^C),\Labe_C^F + \bar\eps_C^F)\;,
\end{equs}
with the summation only restricted by the conditions on the supports implicit in the notations. 
At this point, we note that the right hand side depends on $\bar \eps_{A,C}^F$ only
via the combinatorial factor and that, as a consequence of Chu-Vandermonde, one has
\begin{equs}
\sum_{\bar \eps_{A,C}^F}
&\binom{\bar \eps_C^F}{\bar \eps_{A,C}^F}
\binom{\bar\Labn_C}{\bar \Labn_A -  \pi \bar \eps_{A,C}^F}
=
%\sum_{\bar \eps_{A,C}^F}
%\binom{\bar \eps_C^F}{\bar \eps_{A,C}^F}
%\binom{\bar\Labn_C}{\bar \Labn_A - \pi \bar \eps_{A,C}^F}\\
%&=
\sum_{\pi\bar \eps_{A,C}^F}
\binom{\pi\bar \eps_C^F}{\pi\bar \eps_{A,C}^F}
\binom{\bar\Labn_C}{\bar \Labn_A - \pi \bar \eps_{A,C}^F} \\
&=
\binom{\bar\Labn_C+\pi\bar \eps_C^F}{\bar \Labn_A }.
\label{e:binomfinal}
\end{equs}
Inserting \eqref{e:binomfinal} into \eqref{e:fourth}, using the fact that
$(\hat F \restr C)\cup_i A = (\hat F \cup_i A)\restr C$ 
 and comparing to \eqref{e:firstprod} (with $B$ replaced by $C$) completes the proof.
\end{proof}

\subsection{Bialgebra structure}
\label{sec:Bialg}

Fix throughout this section $i>0$.
\begin{definition}\label{def:tra_i}
For $\Adm_i$ a family satisfying Assumptions~\ref{ass:1} and~\ref{ass:2}, we set 
\begin{equs}
\begin{split}
\C_i \ & \ \eqdef \{(F,\hat F) \in \C \,:\, \hat F \le i\;\ \&\;\ \{F,\hat F_i\}\subset\Adm_i(F,\hat F)\}\;,
\\ \Tra_i \ & \ \eqdef \{(F,\hat F, \Labn, \Labo, \Labe) \in \Tra \,:\, \hat F \le i\;\ \&\;\ \{F,\hat F_i\}\subset\Adm_i(F,\hat F)\}\;.
\end{split}
\end{equs}
We also define the set $\Units_i$ of all $(F,i, 0, \Labo, 0) \in \Tra_i$, where $(F,i)$ denotes the coloured forest $(F,\hat F)$ such that either $F$ is empty or $\hat F\equiv i$ on the whole forest $F$. In particular, one has $|\tau|_\bi = 0$ for every $\tau\in \Units_i$. Finally we define $\one_i^\star \colon \Tra \to \R$ by setting 
\begin{equ}[e:counit]
\one_i^\star(\tau)  \eqdef \un{(\tau\in\Units_i)}.
\end{equ}
\end{definition}
For instance, the following forest belongs to $\Units_1$ where $1$ corresponds to red:
\begin{equ}   \label{exunits_i}
\begin{tikzpicture}[scale=0.1,baseline=0.7cm]
\def\frst{8}\def\scnd{16}
%        \node at (0,0)  (a) {}; %a
%         \node at (-28,30)  (h) {}; %h
          \node at (-20,\frst) (d) {}; %d
          \node at (-12,\scnd)  (j) {}; %j
          \node at (-20,\scnd) (i) {}; %i
      \node at (-12,0) (b) {}; %b
         \node at (-12,\frst)  (e) {}; %e
%         \node at (4,20)  (f) {}; %f
        \node at (-4,\frst) (q) {}; %p
          \node at (0,0)  (k) {}; %k
        \node at (4,\frst) (r) {}; %p
%           \node at (8,30) (l) {}; %l
%           \node at (20,20)  (g) {}; %g
%            \node at (16,30) (m) {};  %m
%        \node at (12,10) (c) {}; %c
         \node at (12,0) (p) {}; %p

     \draw[kernel1,red] (i)node[red,round2] {\tiny $ \Labo$}  -- node [round1] {\tiny$\Labhom$}   (d);% --  node [near end, rect1]{\tiny $ \Labhom,\Labe $ } (i) node[rect2] {\tiny $ \Labn$};

     \draw[kernel1,red] (b)node[red,round2] {\tiny $\Labo $} -- node [round1] {\tiny$\Labhom$}   (d) node[round2] {\tiny $ \Labo $} --  node [round1]{\tiny $ \Labhom $ } (j) node[round2] {\tiny $\Labo $};
     \draw[kernel1,red] (b) -- node [round1] {\tiny$\Labhom$}  (e) node[round2] {\tiny $ \Labo $} ;

\draw (p) node [red,round2] {\tiny $ \Labo $}  ;
     \draw[kernel1,red] (q)node[red,round2] {\tiny $ \Labo$}  -- node [round1] {\tiny$\Labhom$}   (k)    node[red,round2] {\tiny $ \Labo$}   (k) --  node [round1]{\tiny $ \Labhom$ } (r) node[red,round2] {\tiny $ \Labo$};

\end{tikzpicture} 
\end{equ}
We also define $\one_i^\star:\C\to\R$ as $\one_i^\star(F,\hat F)=\un{(\hat F\equiv i)}$.
\begin{assumption}\label{ass:3}
For every coloured forest $(F,\hat F)$ such that $\hat F_i \in \Adm_i(F,\hat F)$ and for all $A\in\Adm_i(F,\hat F)$, 
we have 
\begin{enumerate}
\item $\{A,\hat F_i\}\subset\Adm_i(A,\hat F\restr A)$
\item if $\hat F \le i$ then $\{F, A\}\subset\Adm_i(F,\hat F\cup_i A)$.
\end{enumerate}
\end{assumption}
Under Assumptions~\ref{ass:1} and~\ref{ass:3} it immediately follows from \eqref{def:Deltabar} that, setting 
\begin{equ}
\scal{\Forests_i} = \bplus_{n \in \N^2}\Vec\{\CF \in \Forests_i\,:\, |\CF|_\bi = n\}
\end{equ} 
as in \eqref{e:defForests}, $\Delta_i$ maps $\scal{\Tra_i}$
into $\scal{\Tra_i} \hattimes \scal{\Tra_i}$.

\begin{lemma}\label{lem:counit}
Under Assumptions~\ref{ass:1}, \ref{ass:2} and~\ref{ass:3},
\begin{itemize}
\item $(\Vec(\C_i), \cdot,\Delta_i, \one, \one_i^\star)$
is a bialgebra
\item $(\scal{\Tra_i}, \cdot,\Delta_i, \one, \one_i^\star)$
is a bialgebra in the category of bigraded spaces as in Definition~\ref{categ}.
\end{itemize}
\end{lemma}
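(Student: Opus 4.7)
The plan is to observe that most of the hard work has already been done: associativity of $\cdot$ is trivial from the disjoint-union definition \eqref{cdot}, while coassociativity and multiplicativity of $\Delta_i$, namely \eqref{mult_Deltaa1} and \eqref{mult_Deltaa2}, are precisely the content of Proposition~\ref{prop:coassoc}. So the remaining tasks are: (a) check that $\Delta_i$ sends $\scal{\Tra_i}$ into $\scal{\Tra_i}\hattimes\scal{\Tra_i}$, (b) verify the counit axioms, and (c) verify multiplicativity of $\one_i^\star$. The same arguments work verbatim for $\Vec(\C_i)$, which is essentially the decoration-free shadow.

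For (a), fix $\tau=(F,\hat F,\Labn,\Labo,\Labe)\in\Tra_i$ and a term of $\Delta_i\tau$ associated to $A\in\Adm_i(F,\hat F)$. In the left factor $(A,\hat F\restr A,\ldots)$ one still has $\hat F\restr A\le i$, and since $\hat F_i\subset A$ (Assumption~\ref{ass:1}) the coloured subforest $(\hat F\restr A)_i$ coincides with $\hat F_i$, so Assumption~\ref{ass:3}(1) yields $\{A,\hat F_i\}\subset\Adm_i(A,\hat F\restr A)$. In the right factor, $\hat F\cup_i A\le i$ again by construction, and $(\hat F\cup_i A)_i=A$; Assumption~\ref{ass:3}(2) gives $\{F,A\}\subset\Adm_i(F,\hat F\cup_i A)$. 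Hence both factors lie in $\Tra_i$.

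For (b), consider $(\one_i^\star\otimes \id)\Delta_i\tau$. The left factor $(A,\hat F\restr A,\Labn_A+\pi\varepsilon_A^F,\Labo,\Labe)$ lies in $\Units_i$ only when $\hat F\restr A\equiv i$, $\Labn_A+\pi\varepsilon_A^F=0$, and $\Labe\restr E_A=0$. The first forces $A\subseteq \hat F_i$, which combined with the inclusion $\hat F_i\subseteq A$ from Assumption~\ref{ass:1} gives $A=\hat F_i$; the second forces $\Labn_A=0$ and $\varepsilon_A^F=0$ (both valued in $\N^d$); the support constraint in Definition~\ref{def:decoration} makes the third automatic. Since $\hat F\cup_i\hat F_i=\hat F$, the surviving term is precisely $\tau$ with coefficient $\binom{\Labn}{0}/0!=1$. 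Dually, in $(\id\otimes\one_i^\star)\Delta_i\tau$ the right factor lies in $\Units_i$ only if $\hat F\cup_i A\equiv i$, $\Labn-\Labn_A=0$ and $\Labe_A^F+\varepsilon_A^F=0$. Using $\hat F_i\subseteq A$ once more and $\hat F\le i$, the first condition forces $A=F$, which makes $\partial(A,F)=\emptyset$ (hence $\varepsilon_A^F=0$) and $\Labe_A^F=0$ automatic, while $\Labn_A=\Labn$ reconstructs $\tau$ with coefficient $1$.

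For (c), multiplicativity of $\one_i^\star$ follows directly from the product \eqref{cdot}: $\CF\cdot \CG$ is fully $i$-coloured with vanishing $\Labn$ and $\Labe$ if and only if both $\CF$ and $\CG$ are, and $\one_i^\star(\one)=1$. Putting (a)--(c) together with Proposition~\ref{prop:coassoc} establishes all the bialgebra axioms. There is no substantial obstacle here; the only subtle point is the two sharp cancellations in the counit computation, which hinge on using Assumption~\ref{ass:1}(1) (to pin down $A=\hat F_i$ on one side and force $A=F$ on the other via $\hat F_i\subseteq A$ together with $\hat F\le i$). The argument for $(\Vec(\C_i),\cdot,\Delta_i,\one,\one_i^\star)$ is identical once one drops all decorations, replacing \eqref{def:Deltabar} by its bare version \eqref{firstDelta_i}; the triangularity issues disappear and everything is an honest finite sum.
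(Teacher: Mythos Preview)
Your proof is correct and follows essentially the same route as the paper's: both rely on Proposition~\ref{prop:coassoc} for coassociativity and multiplicativity of $\Delta_i$, use Assumptions~\ref{ass:1} and~\ref{ass:3} to show $\Delta_i$ lands in $\scal{\Tra_i}\hattimes\scal{\Tra_i}$, and pin down $A=\hat F_i$ (resp.\ $A=F$) via $\hat F_i\subset A$ to establish the counit identity. You are somewhat more explicit than the paper in tracking the decorations in the counit computation and in checking multiplicativity of $\one_i^\star$, but the arguments are the same.
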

\begin{proof}
We consider only $(\scal{\Tra_i}, \cdot,\Delta_i, \one, \one_i^\star)$, since the other case follows in the same way.
By the first part of Assumption~\ref{ass:2}, 
$\Tra_i$ is closed under the forest product, so that $(\scal{\Tra_i}, \cdot, \one)$ is indeed an algebra.

Since we already argued that $\Delta_i\colon \scal{\Tra_i} \to \scal{\Tra_i} \hattimes \scal{\Tra_i}$ and since $\Delta_i$ is coassociative by \eqref{mult_Deltaa1}, in order to show that $(\scal{\Tra_i}, \Delta_i, \one_i^\star)$ is a coalgebra, 
it remains to show that
\begin{equ}
(\one_i^\star \otimes \id)\Delta_i =  (\id \otimes \one_i^\star)\Delta_i = \id,
\quad \text{on} \quad \scal{\Tra_i}\;.
\end{equ}
For $A\in\Adm_i(F,\hat F)$, we have $(A,\hat F\restr A,\Labn',\Labo',\Labe')\in\Units_i$ if and only if $\hat F\equiv i$ on $A$, i.e. $A\subseteq \hat F_i$; since $\hat F_i\subseteq A$ by Assumption~\ref{ass:1}, then the only possibility is $A=\hat F_i$. Analogously, we have $(F,\hat F \cup_i A,\Labn',\Labo', \Labe')\in\Units_i$ if and only if $A=F$. The definition \eqref{def:Deltabar} of $\Delta_i$ yields the result.

The required compatibility between the algebra and coalgebra structures is given by
\eqref{mult_Deltaa2}, thus concluding the proof.
\end{proof}

\subsection{Contraction of coloured subforests and Hopf algebra structure}
\label{sec:Hopf}

The bialgebra $(\scal{\Tra_i}, \cdot,\Delta_i, \one, \one_i^\star)$ does not admit an 
antipode. Indeed, for any $\tau=(F,i,0,\Labo,0)\in\Units_i$, see Definition~\ref{def:tra_i},
with $F$ non-empty, satisfies by \eqref{def:Deltabar2}
\begin{equation}\label{noantipode}
\Delta_i \tau = \tau\otimes \tau.
\end{equation}
In other words $\tau$ is {\it grouplike}.
If a linear map $A:\scal{\Tra_i}\to\scal{\Tra_i}$ must satisfy \eqref{anti}, then
\[
\tau \cdot A\tau = \one_i^\star(\tau) \, \one = \one
\]
by \eqref{e:counit}, which is impossible since $F$ is non-empty while $\one$ is the
empty decorated forest. A way of turning $\scal{\Tra_i}$ into a Hopf algebra
(again in the category of bigraded spaces as in Definition~\ref{categ}) is to take a 
suitable quotient in order to eliminate elements which do not admit an antipode, and this is what we are going to show now.

To formalise this, we introduce a {\it contraction operator} on coloured forests. Given a coloured forest $(F,\hat F)$, we recall that $\hat E$, defined in Definition~\ref{def:colour}, is the union of all edges in $\hat F_j$ over all $j>0$. 
\begin{definition}\label{def:contrac}
For any coloured forest $(F,\hat F)$, we write
$\CK_{\hat F} F$ for the typed forest obtained in the following way. We use the equivalence relation $\sim$ on the node set $N_F$ defined in Definition~\ref{def:contrac0}, namely $x \sim y$ if $x$ and $y$ are connected in $\hat E$. Then
$\CK_{\hat F} F$ is the quotient graph of $(N_F, E_F \setminus \hat E)$ by $\sim$.
By the definition of $\sim$, each equivalence class is connected so that $\CK_{\hat F} F$ is again a typed forest. 
Finally, $\hat F$ is constant on equivalence classes with respect to $\sim$, so that the coloured forest $(\CK_{\hat F} F,\hat F)$ is well defined and we denote it by
\[
\CK(F,\hat F)\eqdef (\CK_{\hat F} F,\hat F).
\]
If $G:=\CK_{\hat F} F$, then there is a canonical projection $\pi:N_F\to N_G$. This allows to define a canonical map $\CK_{\hat F}^\sharp$ from subforests of $\CK_{\hat F} F$ to subforests of $F$ as follows: if $A=(N_A,E_A)$ is 
a subforest of $\CK_{\hat F} F$, then $\CK_{\hat F}^\sharp A:=(N_B,E_B)$ where
$N_B$ is $\pi^{-1}(N_A)$ and $E_B$ is the set of all $(x,y)\in E_F$ such that either $\pi(x)=\pi(y)\in N_A$ or $(\pi(x),\pi(y))\in E_A$.
\end{definition}
Note that in $(\CK_{\hat F} F,\hat F)$ all non-empty coloured subforests are reduced to single nodes.

We are going to restrict our attention to collections $\Adm_i$ satisfying the following assumption.

\begin{assumption}\label{ass:4}
For all coloured forests $(F,\hat F)$, the map $\CK_{\hat F}^\sharp$ is a 
bijection between $\Adm_i(\CK_{\hat F} F,\hat F)$ and $\Adm_i(F,\hat F)$.
\end{assumption}
We recall that we have defined in \eqref{firstDelta_i} the operator acting on linear 
combinations of coloured forests $(F,\hat F)\mapsto \Delta_i(F,\hat F)$. Then we have
\begin{lemma}\label{lem:ass4}
If $\Adm_i$ satisfies Assumption~\ref{ass:4}, then
\[
(\CK\otimes\CK)\Delta_i=(\CK\otimes\CK)\Delta_i\CK \qquad \text{on }\quad \Vec(\C).
\]
\end{lemma}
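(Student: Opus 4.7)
The plan is to verify the identity term by term on a single coloured forest $(F,\hat F)$. Writing $G\eqdef \CK_{\hat F}F$ and using the fact that $\CK(F,\hat F)=(G,\hat F)$, the definition \eqref{firstDelta_i} gives
\[
(\CK\otimes\CK)\Delta_i(F,\hat F)=\sum_{A\in\Adm_i(F,\hat F)} \CK(A,\hat F\restr A)\otimes\CK(F,\hat F\cup_i A),
\]
\[
(\CK\otimes\CK)\Delta_i\CK(F,\hat F)=\sum_{B\in\Adm_i(G,\hat F)} \CK(B,\hat F\restr B)\otimes\CK(G,\hat F\cup_i B).
\]
Assumption~\ref{ass:4} supplies the bijection $B\mapsto A\eqdef \CK_{\hat F}^\sharp B$ between the two summation indices, so it suffices to show that, under this identification, each tensor factor on the left matches the corresponding factor on the right.

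For the first factor, the essential observation is that Assumption~\ref{ass:1} prevents any coloured component from straddling the boundary of $A$: every coloured edge with at least one endpoint in $N_A$ lies entirely in $A$. Consequently, the equivalence relation on $N_A$ induced by the coloured edges of $A$ coincides with the restriction to $N_A$ of the global equivalence $\sim$ on $N_F$, so $\CK_{\hat F\restr A}A$ embeds canonically as a coloured subforest of $G$; unwinding the definition of $\CK_{\hat F}^\sharp$ identifies it with precisely $B$. Since $B\subset G$ has no coloured edges of its own (only, possibly, coloured nodes), a second application of $\CK$ acts as the identity, so $\CK(A,\hat F\restr A)=(B,\hat F\restr B)=\CK(B,\hat F\restr B)$.

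For the second factor, the argument is an ``associativity of contraction'': contracting $F$ along the colouring $\hat F\cup_i A$ produces the same coloured forest as first contracting $F$ along $\hat F$ to obtain $G$ and then contracting $G$ along $\hat F\cup_i B$. This reduces to showing that the equivalence relation on $N_F$ generated by edges in $\hat E\cup E_A$ has the same transitive closure as the two-step relation in which one first quotients by $\sim$ and then by the equivalence on $N_G$ generated by $E_B$. Because $N_A=\pi^{-1}(N_B)$ and because the non-coloured edges of $A$ lift exactly the edges of $E_B$ (the coloured edges of $A$ being already collapsed by $\sim$), the two relations agree on $N_F$, and a parallel verification shows that the pushforward colourings $\hat F\cup_i A$ and $\hat F\cup_i B$ induce the same colouring on the common quotient.

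The main technical obstacle is the careful bookkeeping in the associativity step, in particular ensuring that no two distinct $\sim$-classes of $N_A$ become identified via a coloured path through $N_F\setminus N_A$, and that the colourings of the two iterated quotients coincide as functions on the common node set. Both points rest crucially on Assumption~\ref{ass:1}: without the restriction that $\hat F_j$-components do not cross $\partial(A,F)$, neither the embedding $\CK_{\hat F\restr A}A\hookrightarrow G$ nor the coincidence of the two-step and one-step contractions of $F$ would hold. Once these are in place, the term-by-term identification under the bijection of Assumption~\ref{ass:4} yields the claim.
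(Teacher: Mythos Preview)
Your proof is correct and takes the same approach as the paper: use the bijection of Assumption~\ref{ass:4} to pair up the summands, then verify the two identities $\CK(A,\hat F\restr A)=\CK(B,\hat F\restr B)$ and $\CK(F,\hat F\cup_i A)=\CK(G,\hat F\cup_i B)$ separately. The paper's proof simply records these two equalities and remarks that they ``follow from the definitions''; you have unpacked that claim with the explicit associativity-of-contraction argument.
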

\begin{proof}
It is enough to check that for all $A\in\Adm_i(\CK_{\hat F}F,\hat F)$, setting $A'=\CK_{\hat F}^\sharp A$,
\[
\CK(A',\hat F\restr A') =\CK(A,\hat F\restr A), 
\]
\[
\CK(F,\hat F \cup_{i} A')=
\CK(\CK_{\hat F} F,\hat F\cup_i A),
\]
which follow from the definitions.
\end{proof}

\begin{example}\label{ex:colours2}
For the tree of Example~\ref{ex:colours}, we have
\begin{equ}
 (F,\hat F) =   
\begin{tikzpicture}[scale=0.2,baseline=0.2cm]
        \node at (0,0)  [dot,blue,label={[label distance=-0.2em]above: \scriptsize  $ a $}] (root) {};
         \node at (-7,6)  [dot,label={[label distance=-0.2em]above: \scriptsize  $ h $} ] (leftll) {};
          \node at (-5,4)  [dot,red,label=left:] (leftl) {};
          \node at (-3,6)  [dot,red,label={[label distance=-0.2em]above: \scriptsize  $ j $}] (leftlr) {};
          \node at (-5,6)  [dot,label={[label distance=-0.2em]above: \scriptsize  $ i $}] (leftlc) {};
      \node at (-3,2)  [dot,color=red,label={[label distance=-0.2em]above: \scriptsize  $ b $}] (left) {};
         \node at (-1,4)  [dot,red,label={[label distance=-0.2em]above: \scriptsize  $ e $}] (leftr) {};
         \node at (1,4)  [dot,blue,label=above:   ] (rightl) {};
          \node at (0,6)  [dot,red,label={[label distance=-0.2em]above: \scriptsize  $ k $} ] (rightll) {};
           \node at (2,6)  [dot,blue,label={[label distance=-0.2em]above: \scriptsize  $ l $}] (rightlr) {};
           \node at (5,4)  [dot,blue,label=left:] (rightr) {};
            \node at (4,6)  [dot,blue,label={[label distance=-0.2em]above: \scriptsize  $ m $}] (rightrl) {};
        \node at (3,2) [dot,blue,label={[label distance=-0.4em]below right: \scriptsize }] (right) {};
         \node at (6,6)  [dot,label={[label distance=-0.2em]above: \scriptsize  $ p $} ] (rightrr) {};
        
        \draw[kernel1] (left) to node [sloped,below] {\small } (root); ;
        \draw[kernel1,red] (leftl) to
     node [sloped,below] {\small }     (left);
     \draw[kernel1,red] (leftlr) to
     node [sloped,below] {\small }     (leftl); 
     \draw[kernel1] (leftll) to
     node [sloped,below] {\small }     (leftl);
     \draw[kernel1,red] (leftr) to
     node [sloped,below] {\small }     (left);  
        \draw[kernel1,blue] (right) to
     node [sloped,below] {\small }     (root);
      \draw[kernel1] (leftlc) to
     node [sloped,below] {\small }     (leftl); 
     \draw[kernel1,blue] (rightr) to
     node [sloped,below] {\small }     (right);
     \draw[kernel1] (rightrr) to
     node [sloped,below] {\small }     (rightr);
     \draw[kernel1,blue] (rightrl) to
     node [sloped,below] {\small }     (rightr);
     \draw[kernel1,blue] (rightl) to
     node [sloped,below] {\small }     (right);
     \draw[kernel1,blue] (rightlr) to
     node [sloped,below] {\small }     (rightl);
     \draw[kernel1] (rightll) to
     node [sloped,below] {\small }     (rightl);
     \end{tikzpicture} 
     , \qquad (\CK_{\hat F} F,\hat F)=
   \begin{tikzpicture}[scale=0.2,baseline=0.2cm]
        \node at (0,0)  [dot,blue,label={[label distance=-0.2em]left: \scriptsize  $ a $} ] (root) {};
          \node at (-2,2)  [dot,red,,label={[label distance=-0.2em]left: \scriptsize  $ b $} ] (leftl) {};
         \node at (-3,4)  [dot,label={[label distance=-0.2em]above: \scriptsize  $ h $} ] (leftll) {};
          \node at (-1,4)  [dot,label={[label distance=-0.2em]above: \scriptsize  $ i $}] (leftlc) {};
           \node at (1,2)  [dot,red,label={[label distance=-0.2em]above: \scriptsize  $ k $} ] (rightll) {};
         \node at (3,2)  [dot,label={[label distance=-0.2em]above: \scriptsize  $ p $} ] (rightrr) {};
        
       \draw[kernel1] (leftl) to node [sloped,below] {\small } (root); ;
        \draw[kernel1] (leftll) to node [sloped,below] {\small } (leftl); ;
        \draw[kernel1] (leftlc) to
     node [sloped,below] {\small }     (leftl);
     \draw[kernel1] (rightll) to
     node [sloped,below] {\small }     (root); 
     \draw[kernel1] (rightrr) to
     node [sloped,below] {\small }     (root);
     \end{tikzpicture}   
\end{equ}
Moreover for the choice $ A' \in \Adm_{2}(\CK_{\hat F} F,\hat F) $ given by
\[
A' = \begin{tikzpicture}[scale=0.2,baseline=0.1cm]
        \node at (-1,0)  [dot,label={[label distance=-0.2em]left: \scriptsize  $ a $}         ] (root) {};
          \node at (-1,2)  [dot,label={[label distance=-0.2em]left: \scriptsize  $ b $} ] (left) {};
        
        \draw[kernel1] (left) to node [sloped,below] {\small } (root); 
     \end{tikzpicture} \quad \Longrightarrow \quad
(A',\hat F\restr A') = \begin{tikzpicture}[scale=0.2,baseline=0.1cm]
        \node at (-1,0)  [dot,blue,label={[label distance=-0.2em]left: \scriptsize  $ a $}  ] (root) {};
          \node at (-1,2)  [dot,red,label={[label distance=-0.2em]left: \scriptsize  $ b $} ] (left) {};
        
        \draw[kernel1] (left) to node [sloped,below] {\small } (root); 
     \end{tikzpicture} , \qquad (\CK_{\hat F} F,\hat F\cup_2 A')=
   \begin{tikzpicture}[scale=0.2,baseline=0.2cm]
        \node at (0,0)  [dot,blue,label={[label distance=-0.2em]left: \scriptsize  $ a $} ] (root) {};
          \node at (-2,2)  [dot,blue,label={[label distance=-0.2em]left: \scriptsize  $ b $} ] (leftl) {};
         \node at (-3,4)  [dot,label={[label distance=-0.2em]above: \scriptsize  $ h $} ] (leftll) {};
          \node at (-1,4)  [dot,label={[label distance=-0.2em]above: \scriptsize  $ i $}] (leftlc) {};
           \node at (1,2)  [dot,red,label={[label distance=-0.2em]above: \scriptsize  $ k $} ] (rightll) {};
         \node at (3,2)  [dot,label={[label distance=-0.2em]above: \scriptsize  $ p $} ] (rightrr) {};
        
       \draw[kernel1,blue] (leftl) to node [sloped,below] {\small } (root); ;
        \draw[kernel1] (leftll) to node [sloped,below] {\small } (leftl); ;
        \draw[kernel1] (leftlc) to
     node [sloped,below] {\small }     (leftl);
     \draw[kernel1] (rightll) to
     node [sloped,below] {\small }     (root); 
     \draw[kernel1] (rightrr) to
     node [sloped,below] {\small }     (root);
     \end{tikzpicture}   
     \]
     we obtain that $A=\CK_{\hat F}^\sharp A'$ is such that
\begin{equs}     
  (A,\hat F\restr A) \otimes  (F,\hat F \cup_{2} A)  =  
\begin{tikzpicture}[scale=0.2,baseline=0.2cm]
        \node at (0,0)  [dot,blue        ] (root) {};
          \node at (-5,4)  [dot,red,label=left:] (leftl) {};
          \node at (-3,6)  [dot,red,label={[label distance=-0.2em]above: \scriptsize  $ j $}] (leftlr) {};
      \node at (-3,2)  [dot,color=red] (left) {};
         \node at (-1,4)  [dot,red,label={[label distance=-0.2em]above: \scriptsize  $ e $}] (leftr) {};
         \node at (1,4)  [dot,blue,label=above:   ] (rightl) {};
           \node at (2,6)  [dot,blue,label={[label distance=-0.2em]above: \scriptsize  $ l $}] (rightlr) {};
           \node at (5,4)  [dot,blue,label=left:] (rightr) {};
            \node at (4,6)  [dot,blue,label={[label distance=-0.2em]above: \scriptsize  $ m $}] (rightrl) {};
        \node at (3,2) [dot,blue,label={[label distance=-0.4em]below right: \scriptsize  $  $}] (right) {};
        
        \draw[kernel1] (left) to node [sloped,below] {\small } (root); 
        \draw[kernel1,red] (leftl) to
     node [sloped,below] {\small }     (left);
     \draw[kernel1,red] (leftlr) to
     node [sloped,below] {\small }     (leftl); 
     \draw[kernel1,red] (leftr) to
     node [sloped,below] {\small }     (left);  
        \draw[kernel1,blue] (right) to
     node [sloped,below] {\small }     (root); 
     \draw[kernel1,blue] (rightr) to
     node [sloped,below] {\small }     (right);
     \draw[kernel1,blue] (rightrl) to
     node [sloped,below] {\small }     (rightr);
     \draw[kernel1,blue] (rightl) to
     node [sloped,below] {\small }     (right);
     \draw[kernel1,blue] (rightlr) to
     node [sloped,below] {\small }     (rightl);
     \end{tikzpicture} 
       \otimes  
\begin{tikzpicture}[scale=0.2,baseline=0.2cm]
        \node at (0,0)  [dot,blue ] (root) {};
         \node at (-7,6)  [dot,label={[label distance=-0.2em]above: \scriptsize  $ h $} ] (leftll) {};
          \node at (-5,4)  [dot,blue,label=left:] (leftl) {};
          \node at (-3,6)  [dot,blue,label={[label distance=-0.2em]above: \scriptsize  $ j $}] (leftlr) {};
          \node at (-5,6)  [dot,label={[label distance=-0.2em]above: \scriptsize  $ i $}] (leftlc) {};
      \node at (-3,2)  [dot,color=blue,label={[label distance=-0.2em]below: \scriptsize  }] (left) {};
         \node at (-1,4)  [dot,blue,label={[label distance=-0.2em]above: \scriptsize  $ e $}] (leftr) {};
         \node at (1,4)  [dot,blue,label=above:   ] (rightl) {};
          \node at (0,6)  [dot,red,label={[label distance=-0.2em]above: \scriptsize  $ k $} ] (rightll) {};
           \node at (2,6)  [dot,blue,label={[label distance=-0.2em]above: \scriptsize  $ l $}] (rightlr) {};
           \node at (5,4)  [dot,blue,label=left:] (rightr) {};
            \node at (4,6)  [dot,blue,label={[label distance=-0.2em]above: \scriptsize  $ m $}] (rightrl) {};
        \node at (3,2) [dot,blue,label={[label distance=-0.4em]below right: \scriptsize  }] (right) {};
         \node at (6,6)  [dot,label={[label distance=-0.2em]above: \scriptsize  $ p $} ] (rightrr) {};
        
        \draw[kernel1,blue] (left) to node [sloped,below] {\small } (root); 
        \draw[kernel1,blue] (leftl) to
     node [sloped,below] {\small }     (left);
     \draw[kernel1,blue] (leftlr) to
     node [sloped,below] {\small }     (leftl); 
     \draw[kernel1] (leftll) to
     node [sloped,below] {\small }     (leftl);
     \draw[kernel1,blue] (leftr) to
     node [sloped,below] {\small }     (left);  
        \draw[kernel1,blue] (right) to
     node [sloped,below] {\small }     (root);
      \draw[kernel1] (leftlc) to
     node [sloped,below] {\small }     (leftl); 
     \draw[kernel1,blue] (rightr) to
     node [sloped,below] {\small }     (right);
     \draw[kernel1] (rightrr) to
     node [sloped,below] {\small }     (rightr);
     \draw[kernel1,blue] (rightrl) to
     node [sloped,below] {\small }     (rightr);
     \draw[kernel1,blue] (rightl) to
     node [sloped,below] {\small }     (right);
     \draw[kernel1,blue] (rightlr) to
     node [sloped,below] {\small }     (rightl);
     \draw[kernel1] (rightll) to
     node [sloped,below] {\small }     (rightl);
     \end{tikzpicture}
  \end{equs}
  Then in accordance with Lemma~\ref{lem:ass4} we have 
%  we have $\CK_{\hat F\restr A'}A'\in\A'dm_2(G,\hat G)$ and 
\[
  \CK(A,\hat F\restr A) \otimes  \CK(F,\hat F \cup_{2} A)  =  
  \CK(A',\hat F\restr A') \otimes  \CK(\CK_{\hat F}F,\hat F \cup_{2} A') \;,
  \]
  and both are equal to $
\begin{tikzpicture}[scale=0.2,baseline=0.1cm]
        \node at (-1,0)  [dot,blue,label={[label distance=-0.2em]left: \scriptsize  $ a $} ] (root) {};
          \node at (-1,2)  [dot,red,label={[label distance=-0.2em]left: \scriptsize  $ b $}] (left) {};
        
        \draw[kernel1] (left) to node [sloped,below] {\small } (root); 
     \end{tikzpicture} \
       \otimes  
   \begin{tikzpicture}[scale=0.2,baseline=0.1cm]
        \node at (0,0)  [dot,blue] (root) {};
          \node at (-3,2)  [dot,label={[label distance=-0.2em]above: \scriptsize  $ h $} ] (leftll) {};
          \node at (-1,2)  [dot,label={[label distance=-0.2em]above: \scriptsize  $ i $}] (leftlc) {};
           \node at (1,2)  [dot,red,label={[label distance=-0.2em]above: \scriptsize  $ k $} ] (rightll) {};
         \node at (3,2)  [dot,label={[label distance=-0.2em]above: \scriptsize  $ p $} ] (rightrr) {};
        
        \draw[kernel1] (leftll) to node [sloped,below] {\small } (root); ;
        \draw[kernel1] (leftlc) to
     node [sloped,below] {\small }     (root);
     \draw[kernel1] (rightll) to
     node [sloped,below] {\small }     (root); 
     \draw[kernel1] (rightrr) to
     node [sloped,below] {\small }     (root);
     \end{tikzpicture}$.
For the choice of $ B' \in \Adm_{1}(\CK_{\hat F} F,\hat F) $ given by
$
B'=\begin{tikzpicture}[scale=0.2,baseline=0.1cm]
           \node at (-2,2)  [dot,label={[label distance=-0.2em]left: \scriptsize  $ i $}] (leftlc) {};
      \node at (-2,0)  [dot,label={[label distance=-0.2em]left: \scriptsize  $ b $} ] (left) {};
         \node at (1,.7)  [dot,label={[label distance=-0.2em]above: \scriptsize  $ k $}] (o) {};
         \node at (4,.7)  [dot,label={[label distance=-0.2em]above: \scriptsize  $ p $}] (o) {};
        
        \draw[kernel1] (leftlc) to
     node [sloped,below] {\small }     (left);
     \end{tikzpicture}$ so that
 \[
(B',\hat F\restr B') =
     \begin{tikzpicture}[scale=0.2,baseline=0.1cm]
           \node at (-2,2)  [dot,label={[label distance=-0.2em]left: \scriptsize  $ i $}] (leftlc) {};
      \node at (-2,0)  [dot,color=red,label={[label distance=-0.2em]left: \scriptsize  $ b $} ] (left) {};
         \node at (1,.7)  [dot,red,label={[label distance=-0.2em]above: \scriptsize  $ k $}] (o) {};
         \node at (4,.7)  [dot,label={[label distance=-0.2em]above: \scriptsize  $ p $}] (o) {};
        
        \draw[kernel1] (leftlc) to
     node [sloped,below] {\small }     (left);
     \end{tikzpicture}\;,\qquad
      (\CK_{\hat F} F,\hat F\cup_1 B')=
   \begin{tikzpicture}[scale=0.2,baseline=0.2cm]
        \node at (0,0)  [dot,blue,label={[label distance=-0.2em]left: \scriptsize  $ a $} ] (root) {};
          \node at (-2,2)  [dot,red,label={[label distance=-0.2em]left: \scriptsize  $ b $} ] (leftl) {};
         \node at (-3,4)  [dot,label={[label distance=-0.2em]above: \scriptsize  $ h $} ] (leftll) {};
          \node at (-1,4)  [dot,red,label={[label distance=-0.2em]above: \scriptsize  $ i $}] (leftlc) {};
           \node at (1,2)  [dot,red,label={[label distance=-0.2em]above: \scriptsize  $ k $} ] (rightll) {};
         \node at (3,2)  [dot,red,label={[label distance=-0.2em]above: \scriptsize  $ p $} ] (rightrr) {};
        
       \draw[kernel1] (leftl) to node [sloped,below] {\small } (root); ;
        \draw[kernel1] (leftll) to node [sloped,below] {\small } (leftl); ;
        \draw[kernel1,red] (leftlc) to
     node [sloped,below] {\small }     (leftl);
     \draw[kernel1] (rightll) to
     node [sloped,below] {\small }     (root); 
     \draw[kernel1] (rightrr) to
     node [sloped,below] {\small }     (root);
     \end{tikzpicture}   \;,
\]
     we obtain that $B=\CK_{\hat F}^\sharp B'$ is such that
 \begin{equs}     
  (B,\hat F\restr B) \otimes  (F,\hat F \cup_{1} B)  =  
\begin{tikzpicture}[scale=0.2,baseline=0.1cm]
          \node at (-5,2)  [dot,red,label={[label distance=-0.2em]left: \scriptsize  $ b $}] (leftl) {};
          \node at (-3,4)  [dot,red,label={[label distance=-0.2em]above: \scriptsize  $ j $}] (leftlr) {};
          \node at (-5,4)  [dot,label={[label distance=-0.2em]above: \scriptsize  $ i $}] (leftlc) {};
      \node at (-3,0)  [dot,color=red] (left) {};
         \node at (-1,2)  [dot,red,label={[label distance=-0.2em]above: \scriptsize  $ e $}] (leftr) {};
         \node at (1,0)  [dot,red,label={[label distance=-0.2em]above: \scriptsize  $ k $}] (o) {};
         \node at (4,0)  [dot,label={[label distance=-0.2em]above: \scriptsize  $ p $}] (o) {};
        
        \draw[kernel1,red] (leftl) to
     node [sloped,below] {\small }     (left);
     \draw[kernel1,red] (leftlr) to
     node [sloped,below] {\small }     (leftl); 
     \draw[kernel1,red] (leftr) to
     node [sloped,below] {\small }     (left);  
      \draw[kernel1] (leftlc) to
     node [sloped,below] {\small }     (leftl); 
      \end{tikzpicture} 
      \otimes   
\begin{tikzpicture}[scale=0.2,baseline=0.2cm]
        \node at (0,0)  [dot,blue] (root) {};
         \node at (-7,6)  [dot,label={[label distance=-0.2em]above: \scriptsize  $ h $} ] (leftll) {};
          \node at (-5,4)  [dot,red,label=left:] (leftl) {};
          \node at (-3,6)  [dot,red,label={[label distance=-0.2em]above: \scriptsize  $ j $}] (leftlr) {};
          \node at (-5,6)  [dot,red,label={[label distance=-0.2em]above: \scriptsize  $ i $}] (leftlc) {};
      \node at (-3,2)  [dot,color=red] (left) {};
         \node at (-1,4)  [dot,red,label={[label distance=-0.2em]above: \scriptsize  $ e $}] (leftr) {};
         \node at (1,4)  [dot,blue,label=above:   ] (rightl) {};
          \node at (0,6)  [dot,red,label={[label distance=-0.2em]above: \scriptsize  $ k $} ] (rightll) {};
           \node at (2,6)  [dot,blue,label={[label distance=-0.2em]above: \scriptsize  $ l $}] (rightlr) {};
           \node at (5,4)  [dot,blue,label=left:] (rightr) {};
            \node at (4,6)  [dot,blue,label={[label distance=-0.2em]above: \scriptsize  $ m $}] (rightrl) {};
        \node at (3,2) [dot,blue,label={[label distance=-0.4em]below right: \scriptsize }] (right) {};
         \node at (6,6)  [dot,red,label={[label distance=-0.2em]above: \scriptsize  $ p $} ] (rightrr) {};
        
        \draw[kernel1] (left) to node [sloped,below] {\small } (root); ;
        \draw[kernel1,red] (leftl) to
     node [sloped,below] {\small }     (left);
     \draw[kernel1,red] (leftlr) to
     node [sloped,below] {\small }     (leftl); 
     \draw[kernel1] (leftll) to
     node [sloped,below] {\small }     (leftl);
     \draw[kernel1,red] (leftr) to
     node [sloped,below] {\small }     (left);  
        \draw[kernel1,blue] (right) to
     node [sloped,below] {\small }     (root);
      \draw[kernel1,red] (leftlc) to
     node [sloped,below] {\small }     (leftl); 
     \draw[kernel1,blue] (rightr) to
     node [sloped,below] {\small }     (right);
     \draw[kernel1] (rightrr) to
     node [sloped,below] {\small }     (rightr);
     \draw[kernel1,blue] (rightrl) to
     node [sloped,below] {\small }     (rightr);
     \draw[kernel1,blue] (rightl) to
     node [sloped,below] {\small }     (right);
     \draw[kernel1,blue] (rightlr) to
     node [sloped,below] {\small }     (rightl);
     \draw[kernel1] (rightll) to
     node [sloped,below] {\small }     (rightl);
%\draw [brown,use as bounding box] (current bounding box.south west) rectangle ([shift={(0mm,-20mm)}]current bounding box.north east);
     \end{tikzpicture}
  \end{equs}
  Then in accordance with Lemma~\ref{lem:ass4} we have 
%  we have $\CK_{\hat F\restr A}A\in\Adm_2(G,\hat G)$ and 
\[
  \CK(B,\hat F\restr B) \otimes  \CK(F,\hat F \cup_{1} B)  =  
  \CK(B',\hat F\restr B') \otimes  \CK(\CK_{\hat F}F,\hat F \cup_{1} B')  
  \]
  and both are equal to
  \begin{equs}     
%  (\CK_{\hat F\restr B}B,\hat F\restr B) \otimes  (\CK_{\hat F\cup_{1} B}F,\hat F \cup_{1} B)  =  
\begin{tikzpicture}[scale=0.2,baseline=0.2cm]
           \node at (-2,2)  [dot,label={[label distance=-0.2em]above: \scriptsize  $ i $}] (leftlc) {};
      \node at (-2,0)  [dot,color=red] (left) {};
         \node at (1,0)  [dot,red,label={[label distance=-0.2em]above: \scriptsize  $ k $}] (o) {};
         \node at (4,0)  [dot,label={[label distance=-0.2em]above: \scriptsize  $ p $}] (o) {};
        
        \draw[kernel1] (leftlc) to
     node [sloped,below] {\small }     (left);
     \end{tikzpicture} 
      \otimes   
\begin{tikzpicture}[scale=0.2,baseline=0.2cm]
        \node at (0,0)  [dot,blue] (root) {};
          \node at (-2,2)  [dot,red,label=left:] (leftl) {};
         \node at (-2,4)  [dot,label={[label distance=-0.2em]above: \scriptsize  $ h $} ] (leftll) {};
             \node at (0,2)  [dot,red,label={[label distance=-0.2em]above: \scriptsize  $ k $} ] (rightll) {};
         \node at (2,2)  [dot,red,label={[label distance=-0.2em]above: \scriptsize  $ p $} ] (rightrr) {};
        
       \draw[kernel1] (leftl) to node [sloped,below] {\small } (root); ;
        \draw[kernel1] (leftll) to node [sloped,below] {\small } (leftl); ;
      \draw[kernel1] (rightll) to
     node [sloped,below] {\small }     (root); 
     \draw[kernel1] (rightrr) to
     node [sloped,below] {\small }     (root);
     \end{tikzpicture}
  \end{equs}
    \end{example}
Contraction of couloured subforests leads us closer to a Hopf algebra, but there is still a missing element. Indeed, an 
element like 
$(F,\hat F)=(\bullet\sqcup\bullet,1)$, namely two red isolated roots with no edge, is grouplike since it satisfies 
$\Delta_1(F,\hat F)=(F,\hat F)\otimes(F,\hat F)$
and therefore it can not admit an antipode, see the discussion after \eqref{noantipode} above. 

We recall that $\C_i$ has been introduced in Definition~\ref{def:tra_i}.
We define first the factorisation of $\C_i\ni\tau=\mu\cdot\nu$ where the forest product $\cdot$ has been defined 
in \eqref{cdotC} and
\begin{itemize}
\item  $\nu\in\C_i$ is the disjoint union of all non-empty 
connected componens of $\tau$ of the form $(A,i)$
\item $\mu\in\C_i$ is the unique element such that $\tau=\mu\cdot\nu$.
\end{itemize}
For instance
\[
\tau=
\begin{tikzpicture}[scale=0.2,baseline=0.2cm]
          \node at (-5,4)  [dot,red] (leftll) {};
          \node at (-5,2)  [dot,red] (leftlr) {};
      \node at (-4,0)  [dot] (left) {};
         \node at (-3,2)  [dot,red] (leftr) {};
        
        \draw[kernel1,red] (leftll) to     node [sloped,below] {\small }     (leftlr);
        \draw[kernel1] (leftlr) to     node [sloped,below] {\small }     (left);
     \draw[kernel1] (leftr) to     node [sloped,below] {\small }     (left);  
     \end{tikzpicture} 
\begin{tikzpicture}[scale=0.2,baseline=0.2cm]
          \node at (-5,2)  [dot,red] (leftl) {};
          \node at (-4,4)  [dot,red] (leftlr) {};
          \node at (-6,4)  [dot,red] (leftlc) {};
      \node at (-4,0)  [dot,color=red] (left) {};
         \node at (-3,2)  [dot,red] (leftr) {};
        
        \draw[kernel1,red] (leftl) to
     node [sloped,below] {\small }     (left);
     \draw[kernel1,red] (leftlr) to
     node [sloped,below] {\small }     (leftl); 
     \draw[kernel1,red] (leftr) to
     node [sloped,below] {\small }     (left);  
      \draw[kernel1,red] (leftlc) to
     node [sloped,below] {\small }     (leftl); 
     \end{tikzpicture} 
\begin{tikzpicture}[scale=0.2,baseline=0.2cm]
           \node at (-2,2)  [dot,label] (leftlc) {};
      \node at (-2,0)  [dot,color=red] (left) {};
         \node at (1,0)  [dot,red] (o) {};
         \node at (4,0)  [dot] (o) {};
        
        \draw[kernel1] (leftlc) to
     node [sloped,below] {\small }     (left);
     \end{tikzpicture} 
 \quad    \Longrightarrow \quad 
 \nu = \begin{tikzpicture}[scale=0.2,baseline=0.2cm]
          \node at (-5,2)  [dot,red] (leftl) {};
          \node at (-4,4)  [dot,red] (leftlr) {};
          \node at (-6,4)  [dot,red] (leftlc) {};
      \node at (-4,0)  [dot,color=red] (left) {};
         \node at (-3,2)  [dot,red] (leftr) {};
        
        \draw[kernel1,red] (leftl) to
     node [sloped,below] {\small }     (left);
     \draw[kernel1,red] (leftlr) to
     node [sloped,below] {\small }     (leftl); 
     \draw[kernel1,red] (leftr) to
     node [sloped,below] {\small }     (left);  
      \draw[kernel1,red] (leftlc) to
     node [sloped,below] {\small }     (leftl); 
     \end{tikzpicture}  \ 
\begin{tikzpicture}[scale=0.2,baseline=0.2cm]
         \node at (0,0)  [dot,red] (o) {};
     node [sloped,below] {\small }     (left);
     \end{tikzpicture} 
\quad 
 \mu=
\begin{tikzpicture}[scale=0.2,baseline=0.2cm]
          \node at (-5,4)  [dot,red] (leftll) {};
          \node at (-5,2)  [dot,red] (leftlr) {};
      \node at (-4,0)  [dot] (left) {};
         \node at (-3,2)  [dot,red] (leftr) {};
        
        \draw[kernel1,red] (leftll) to     node [sloped,below] {\small }     (leftlr);
        \draw[kernel1] (leftlr) to     node [sloped,below] {\small }     (left);
     \draw[kernel1] (leftr) to     node [sloped,below] {\small }     (left);  
     \end{tikzpicture} 
\begin{tikzpicture}[scale=0.2,baseline=0.2cm]
           \node at (-2,2)  [dot,label] (leftlc) {};
      \node at (-2,0)  [dot,color=red] (left) {};
         \node at (1,0)  [dot] (o) {};
        
        \draw[kernel1] (leftlc) to
     node [sloped,below] {\small }     (left);
     \end{tikzpicture} 
\]
Note that by the first part of Assumption~\ref{ass:2}, we know that if $\tau = \mu\cdot\nu \in \C_i$,
then $\mu \in \C_i$ and $\nu\in \C_i$. Then, we know by Assumption~\ref{ass:4} that if $\mu \in \C_i$, then
$\CK(\mu)\in \C_i$. Then, using this factorisation, we define $\CK_i:\Vec(\C_i)\to\Vec(\C_i)$ as the linear operator such  
that 
\begin{equ}\label{e:cki}
\CK_i(\tau)=\CK(\mu).
\end{equ}
For example
\[
\CK_1 \left(\begin{tikzpicture}[scale=0.2,baseline=0.2cm]
          \node at (-5,4)  [dot,red] (leftll) {};
          \node at (-5,2)  [dot,red] (leftlr) {};
      \node at (-4,0)  [dot] (left) {};
         \node at (-3,2)  [dot,red] (leftr) {};
        
        \draw[kernel1,red] (leftll) to     node [sloped,below] {\small }     (leftlr);
        \draw[kernel1] (leftlr) to     node [sloped,below] {\small }     (left);
     \draw[kernel1] (leftr) to     node [sloped,below] {\small }     (left);  
     \end{tikzpicture} 
\begin{tikzpicture}[scale=0.2,baseline=0.2cm]
          \node at (-5,2)  [dot,red] (leftl) {};
          \node at (-4,4)  [dot,red] (leftlr) {};
          \node at (-6,4)  [dot,red] (leftlc) {};
      \node at (-4,0)  [dot,color=red] (left) {};
         \node at (-3,2)  [dot,red] (leftr) {};
        
        \draw[kernel1,red] (leftl) to
     node [sloped,below] {\small }     (left);
     \draw[kernel1,red] (leftlr) to
     node [sloped,below] {\small }     (leftl); 
     \draw[kernel1,red] (leftr) to
     node [sloped,below] {\small }     (left);  
      \draw[kernel1,red] (leftlc) to
     node [sloped,below] {\small }     (leftl); 
     \end{tikzpicture} 
\begin{tikzpicture}[scale=0.2,baseline=0.2cm]
           \node at (-2,2)  [dot,label] (leftlc) {};
      \node at (-2,0)  [dot,color=red] (left) {};
         \node at (1,0)  [dot,red] (o) {};
         \node at (4,0)  [dot] (o) {};
        
        \draw[kernel1] (leftlc) to
     node [sloped,below] {\small }     (left);
     \end{tikzpicture} 
 \right) \
= \ \begin{tikzpicture}[scale=0.2,baseline=0.2cm]
          \node at (-5,2)  [dot,red] (leftlr) {};
      \node at (-4,0)  [dot] (left) {};
         \node at (-3,2)  [dot,red] (leftr) {};
        
        \draw[kernel1] (leftlr) to     node [sloped,below] {\small }     (left);
     \draw[kernel1] (leftr) to     node [sloped,below] {\small }     (left);  
     \end{tikzpicture} 
\begin{tikzpicture}[scale=0.2,baseline=0.2cm]
           \node at (-2,2)  [dot,label] (leftlc) {};
      \node at (-2,0)  [dot,color=red] (left) {};
         \node at (1,0)  [dot] (o) {};
        
        \draw[kernel1] (leftlc) to
     node [sloped,below] {\small }     (left);
     \end{tikzpicture} 
\]
Then 
\begin{proposition}\label{prop:combi}
Under Assumptions~\ref{ass:1}--\ref{ass:4}, the space
$ \CI_i\eqdef\ker \CK_i$ is a bialgebra ideal of $\Vec(\C_i)$, i.e. 
\begin{equ}
\Vec(\C_i)\cdot \CI_i\subset \CI_i, \qquad 
\Delta_i  \CI_i \subset \CI_i \otimes \Vec(\C_i) + \Vec(\C_i) \otimes\CI_i\;.
\end{equ}
Moreover setting $\BB_i:=\Vec(\C_i)/\CI_i$, the bialgebra $(\BB_i,\cdot,\Delta_i,\one_i,\one_i^\star)$ is a Hopf algebra, where $\one_i\eqdef\one+\CI_i$.
\end{proposition}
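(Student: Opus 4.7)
The plan splits into three steps: establishing the algebra ideal property, the coideal property, and the existence of the antipode.

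For the \emph{ideal property}, I observe that the factorisation $\tau = \mu(\tau) \cdot \nu(\tau)$ is compatible with forest products (the fully $i$-coloured connected components of a disjoint union are precisely the union of those of the two factors), and that $\CK$ acts independently on each connected component. Together these imply $\CK_i(\tau \cdot \sigma) = \CK_i(\tau)\cdot \CK_i(\sigma)$, so $\CK_i$ is an algebra morphism and its kernel $\CI_i$ is automatically an algebra ideal of $\Vec(\C_i)$.

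For the \emph{coideal property}, my plan is to establish the identity $(\CK_i \otimes \CK_i)\Delta_i = (\CK_i \otimes \CK_i)\Delta_i \CK_i$; applied to $\sigma \in \CI_i$ this yields $(\CK_i \otimes \CK_i)\Delta_i \sigma = 0$, which is equivalent to $\Delta_i \sigma \in \CI_i \otimes \Vec(\C_i) + \Vec(\C_i) \otimes \CI_i$. The proof combines three ingredients: Lemma~\ref{lem:ass4}, the compatibility identities $\CK_i \circ \CK = \CK \circ \CK_i = \CK_i$ (both immediate from the definitions, since one contraction absorbs the other and the $\nu$-factor is unchanged by $\CK$), and the observation that for a fully $i$-coloured $\nu$ one has $\Delta_i \nu = \nu \otimes \nu$ by \eqref{noantipode} together with $\CK_i \nu = \one$. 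Precomposing Lemma~\ref{lem:ass4} on the left with $\CK_i \otimes \CK_i$ and using $\CK_i \circ \CK = \CK_i$ gives $(\CK_i \otimes \CK_i)\Delta_i = (\CK_i \otimes \CK_i)\Delta_i \CK$; then writing $\CK\tau = \CK\mu \cdot \CK\nu$ and using multiplicativity of $\Delta_i$ and $\CK_i$, the fully $i$-coloured factor $\CK\nu$ is stripped off, reducing the right-hand side to $(\CK_i \otimes \CK_i)\Delta_i \CK_i \tau$.

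For the \emph{antipode}, I equip $\Vec(\C_i)$ with the $\N$-grading
\[
n(F) = |N_F| + |E_F| - |\hat F_i^N| - |\hat F_i^E|
\]
counting non-$i$-coloured nodes and edges. It is additive under the forest product, and a direct calculation shows that each term in $\Delta_i F$ preserves $n$, so $\Vec(\C_i)$ becomes a graded bialgebra. The grading descends to $\BB_i$ via $\bar n([\tau]) = n(\CK_i \tau)$, and to verify that $\bar\Delta_i$ still preserves $\bar n$ one matches the reductions $r(T) = |N_T| + |E_T| - 1$ incurred by contracting each $\hat F_j$-component $T$ with $j < i$ on the two tensor factors, using Assumption~\ref{ass:1}(2) to guarantee that each such component lies entirely in one factor or the other. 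Finally, $\bar n([\tau]) = 0$ forces $\CK_i \tau$ to be fully $i$-coloured and hence equal to $\one_i$ in $\BB_i$, so $\BB_i$ is connected graded, and an antipode is then constructed by the standard recursion $\CA(\one_i) = \one_i$ and $\CA(x) = -x - \sum_{(x)} \CA(x^{(1)})\cdot x^{(2)}$ by induction on $\bar n$.

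The main obstacle will be the last verification: ensuring that the grading descends to $\BB_i$ and remains preserved by $\bar\Delta_i$ requires careful bookkeeping of how contractions of $\hat F_j$-components for $j < i$ (which are responsible for the only nontrivial discrepancies between $n(\tau)$ and $n(\CK_i \tau)$) are distributed among the two tensor factors.
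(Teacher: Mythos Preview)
Your proof is correct and follows the same route as the paper: the algebra-ideal property from $\CK_i$ being multiplicative, the coideal property from Lemma~\ref{lem:ass4} (upgraded to $\CK_i$ exactly as you describe), and the antipode from the connected grading $|F\setminus\hat F_i|$.

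The ``main obstacle'' you flag can be sidestepped entirely. Instead of defining $\bar n$ on arbitrary cosets and tracking the reductions $r(T)$ for each $\hat F_j$-component with $j<i$, work directly on canonical representatives: identify $\BB_i$ with $\Vec(C_i)$ where $C_i=\{\tau\in\C_i:\CK_i\tau=\tau\}$, as the paper does. For $\tau\in C_i$ every coloured subforest is already a single node, so when you compute $(\CK_i\otimes\CK_i)\Delta_i\tau$, the operator $\CK$ acts trivially on the left factor $(A,\hat F\restr A)$, and on the right factor $(F,\hat F\cup_i A)$ it only contracts the $i$-coloured piece $A$, which contributes zero to $n$ both before and after. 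Hence $n$ is manifestly preserved term by term, with no bookkeeping of lower-colour components required.
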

\begin{proof}
The first assertion follows from the fact that $\CK_i$ is an algebra morphism, and from Lemma~\ref{lem:ass4}.

For the second assertion, we note that the vector space $\BB_i$ is isomorphic to 
$\Vec(C_i)$, where $C_i=\{\tau\in\C_i: \CK_i\tau=\tau\}=\CK_i\C_i$. 
Moreover $\Vec(\C_i)/\CI_i$ as a bialgebra
is isomorphic to $(\Vec(C_i),\CK_i\CM,(\CK_i\otimes\CK_i)\Delta_i,\one_i,\one_i^\star)$, where $\CM$ denotes the forest product. The latter space is a Hopf algebra since it is a connected 
graded bialgebra with respect to the grading $|(F,\hat F)|_i\eqdef |F\setminus\hat F_i|$, namely the number of nodes and edges which are not coloured with $i$.
\end{proof}

We now extend the above construction to decorated forests.
\begin{definition}\label{CKop}
Let $\CK: \scal{\Tra}\to\scal{\Tra}$ be the triangular map given by
\[
\CK(F, \hat F)_\Labe^{\Labn,\Labo}\eqdef(\CK_{\hat F} F,\hat F)_{[\Labe]}^{[\Labn],[\Labo]}\;, \qquad (F, \hat F)_\Labe^{\Labn,\Labo}\in\Tra,
\]
where the decorations $[\Labn]$, $[\Labo]$ and $[\Labe]$ are defined as follows:
\begin{itemize}
\item if $x$ is an equivalence class of $\sim$ as in Definition~\ref{def:contrac}, then
$[\Labn](x) = \sum_{y \in x} \Labn(y)$. 
\item $[\Labe]$ is defined by simple restriction of $\Labe$ on $E_F\setminus \hat E$. 
\item $[\Labo](x)$ is defined by
\begin{equ}[e:defhatn]
[\Labo](x) \eqdef \sum_{y \in x} \Labo(y) + \sum_{e\in E_F\cap x^2}  \Labhom(e) . 
\end{equ}
\end{itemize}
\end{definition}
The definition \eqref{e:defhatn} explains why $\Labo$ is defined as a function taking 
values in $\Z^d\oplus\Z(\Lab)$, see Remark~\ref{whylabo} above.

\begin{remark}\label{rem:explanation}
The contraction of a subforest entails a loss of information. We use the decoration $\Labo$ in order to retain part of the lost information, namely the types of the edges which are contracted. This plays an important role in the degree $|\cdot|_+$ introduced in Definition~\ref{homog} below and is the key to one of the main results of this paper, see Remark~\ref{explanation}.
\end{remark}

\begin{example}
If $(F, \hat F)_\Labe^{\Labn,\Labo}$ is
\begin{equ}\label{excontr1}  
\begin{tikzpicture}[scale=0.13,baseline=1cm]
\def\frst{8}\def\scnd{16}
         \node at (-28,\scnd)  (i) {}; %h
          \node at (-20,\frst) (d) {}; %d
          \node at (-12,\scnd)  (j) {}; %j
          \node at (-20,\scnd) (h) {}; %i
      \node at (-12,0) (b) {}; %b
         \node at (-4,\frst)  (e) {}; %e
         \node at (-4,\scnd)  (f) {}; %f
          \node at (4,0)  (k) {}; %k
           \node at (4,\frst) (l) {}; %l
          \node at (12,0)  (s) {}; %s
           \node at (12,\frst) (t) {}; %t
         \node at (20,0) (p) {}; %p

      \draw[kernel1,black] (h)node[rect2] {\tiny $ \Labn$}  -- node [rect1] {\tiny$\Labhom,\Labe$}   (d);
          \draw[kernel1,black] (f)node[rect2] {\tiny $ \Labn$}  -- node [rect1] {\tiny$\Labhom,\Labe$}   (e);
    \draw[kernel1,red] (i)node[red,round2] {\tiny $ \Labn,\Labo$}  -- node [round1] {\tiny$\Labhom$}   (d);

     \draw[kernel1,black] (b)node[blue,round3] {\tiny $ \Labn_1,\Labo_1 $} -- node [rect1] {\tiny$\Labhom,\Labe$}   (d);
     \draw[kernel1,red] (d) node[round2] {\tiny $ \Labn,\Labo $} --  node [round1]{\tiny $ \Labhom $ } (j) node[round2] {\tiny $ \Labn,\Labo $};
     \draw[kernel1,blue] (b) -- node [round1] {\tiny$\Labhom$}  (e) node[round3] {\tiny $ \Labn_1,\Labo_1 $} ;

    \draw[kernel1,blue] (s)node[round3] {\tiny $ \Labn_3,\Labo_3 $} -- node [round1] {\tiny$\Labhom$}   (t) node[round3] {\tiny $ \Labn_3,\Labo_3 $} ;

    \draw[kernel1,black] (l)node[red,round2] {\tiny $ \Labn,\Labo $} -- node [rect1] {\tiny$\Labhom,\Labe$}   (k) node[blue,round3] {\tiny $ \Labn_2,\Labo_2 $} ;

\draw (p) node [blue,round3] {\tiny $ \Labn_4,\Labo_4$}  ;
\end{tikzpicture} 
\end{equ}
then $\CK(F, \hat F)_\Labe^{\Labn,\Labo}$ is
\begin{equ}\label{excontr2}
\begin{tikzpicture}[scale=0.13,baseline=1cm]
%        \node at (0,0)  (a) {}; %a
\def\frst{8}\def\scnd{16}
         \node at (-28,\scnd)  (i) {}; %h
          \node at (-20,\frst) (d) {}; %d
          \node at (-12,\scnd)  (j) {}; %j
          \node at (-20,\scnd) (h) {}; %i
      \node at (-12,0) (b) {}; %b
         \node at (-4,\frst)  (e) {}; %e
         \node at (-12,\frst)  (f) {}; %f
          \node at (-4,0)  (k) {}; %k
           \node at (-4,\frst) (l) {}; %l
          \node at (4,0)  (s) {}; %s
           \node at (4,\frst) (t) {}; %t
         \node at (12,0) (p) {}; %p

      \draw[kernel1,black] (h)node[rect2] {\tiny $ \Labn$}  -- node [rect1] {\tiny$\Labhom,\Labe$}   (d);
          \draw[kernel1,black] (f)node[rect2] {\tiny $ \Labn$}  -- node [rect1] {\tiny$\Labhom,\Labe$}   (b);% --  node [near end, rect1]{\tiny $ \Labhom,\Labe $ } (i) node[rect2] {\tiny $ \Labn$};
 %   \draw[kernel1,red] (i)node[red,round2] {\tiny $ \Labn,\Labo$}  -- node [round1] {\tiny$\Labhom$}   (d);% --  node [near end, rect1]{\tiny $ \Labhom,\Labe $ } (i) node[rect2] {\tiny $ \Labn$};

     \draw[kernel1,black] (b)node[blue,round3] {\tiny $ [\Labn_1],[\Labo_1] $} -- node [rect1] {\tiny$\Labhom,\Labe$}   (d);
     \draw[kernel1,red] (d) node[round2] {\tiny $ [\Labn],[\Labo] $};
 % --  node [round1]{\tiny $ \Labhom $ } (j) node[round2] {\tiny $ \Labn,\Labo $};
%     \draw[kernel1,blue] (b) -- node [round1] {\tiny$\Labhom$}  (e) node[round3] {\tiny $ \Labn,\Labo $} ;

    \draw[kernel1,black] (l)node[red,round2] {\tiny $ \Labn,\Labo $} -- node [rect1] {\tiny$\Labhom,\Labe$}   (k) node[blue,round3] {\tiny $ \Labn_2,\Labo_2 $} ;

    \draw[kernel1,blue]  (s) node[round3] {\tiny $ [\Labn_3],[\Labo_3]  $} ;

\draw (p) node [blue,round3] {\tiny $ \Labn_4,\Labo_4$}  ;
\end{tikzpicture} 
\end{equ}
Note that the types $\Labhom$ of edges which are erased by the contraction are stored 
inside the decoration $[\Labo]$ of the corresponding node.
\end{example}

Let now $\M_i \subset \Tra_i$ be the set
of decorated forests which are of type $(F,i, \Labn, \Labo, 0)$. This includes 
the case $F = \emptyset$ so that $\Units_i \subset \M_i$, where $\Units_i$ is defined in Definition~\ref{def:tra_i}. 
For example, the following decorated forest belongs to $\M_1$
\begin{equ}   \label{exM_i}
\begin{tikzpicture}[scale=0.13,baseline=1cm]
\def\frst{8}\def\scnd{16}
%        \node at (0,0)  (a) {}; %a
%         \node at (-28,30)  (h) {}; %h
          \node at (-20,\frst) (d) {}; %d
          \node at (-12,\scnd)  (j) {}; %j
          \node at (-20,\scnd) (i) {}; %i
      \node at (-12,0) (b) {}; %b
         \node at (-12,\frst)  (e) {}; %e
%         \node at (4,20)  (f) {}; %f
        \node at (-4,\frst) (q) {}; %p
          \node at (0,0)  (k) {}; %k
        \node at (4,\frst) (r) {}; %p
%           \node at (8,30) (l) {}; %l
%           \node at (20,20)  (g) {}; %g
%            \node at (16,30) (m) {};  %m
%        \node at (12,10) (c) {}; %c
         \node at (12,0) (p) {}; %p

     \draw[kernel1,red] (i)node[red,round2] {\tiny $ \Labn,\Labo$}  -- node [round1] {\tiny$\Labhom$}   (d);% --  node [near end, rect1]{\tiny $ \Labhom,\Labe $ } (i) node[rect2] {\tiny $ \Labn$};

     \draw[kernel1,red] (b)node[red,round2] {\tiny $\Labn,\Labo $} -- node [round1] {\tiny$\Labhom$}   (d) node[round2] {\tiny $ \Labn,\Labo $} --  node [round1]{\tiny $ \Labhom $ } (j) node[round2] {\tiny $\Labn,\Labo $};
     \draw[kernel1,red] (b) -- node [round1] {\tiny$\Labhom$}  (e) node[round2] {\tiny $ \Labn,\Labo $} ;

\draw (p) node [red,round2] {\tiny $ \Labn,\Labo $}  ;
     \draw[kernel1,red] (q)node[red,round2] {\tiny $ \Labn,\Labo$}  -- node [round1] {\tiny$\Labhom$}   (k)    node[red,round2] {\tiny $ \Labn,\Labo$}   (k) --  node [round1]{\tiny $ \Labhom$ } (r) node[red,round2] {\tiny $ \Labn,\Labo$};

\end{tikzpicture} 
\end{equ}
Compare this forest with that in \eqref{exunits_i}, which belongs to $\Units_1$; in \eqref{exM_i} the decoration $\Labn$ can 
be non-zero, while it has to be identically zero in \eqref{exunits_i}.

We define then an operator $k_i:\M_i\to\M_i$\label{Mi} by setting
\[
k_i(\nu)\eqdef(\bullet,i,\Sigma_\nu\Labn,0,0)\;,
\]
for any $\nu=(F,i, \Labn, \Labo, 0)$ with $\Sigma_\nu\Labn\eqdef\sum_{N_F}\Labn$. For 
instance, the forest in \eqref{exunits_i} is mapped by $k_1$ to $(\bullet,1,0,0,0)$, while
the forest $\nu$ in \eqref{exM_i} is mapped by $k_1$ to $(\bullet,1,\Sigma\Labn,0,0)$.
%, where
%\[
%(\bullet,1,0,0,0) = %{\color{red}\bullet}
%\begin{tikzpicture}[scale=0.2]
%        \node at (0,0)  (a) {}; %a
%     \draw[kernel1,red] (a)node[red,round2] {\tiny $0$ };  \end{tikzpicture},
%     \qquad  \qquad
%(\bullet,1,\Sigma_\nu\Labn,0,0) =
%\begin{tikzpicture}[scale=0.2]
%        \node at (0,0)  (a) {}; %a
%     \draw[kernel1,red] (a)node[red,round2] {\tiny $ \Sigma_\nu\Labn$};  \end{tikzpicture}
%\]

We define first the factorisation of $\Tra_i\ni\tau=\mu\cdot\nu$ where the forest product $\cdot$ has been defined 
in \eqref{cdot} and
\begin{itemize}
\item  $\nu\in\M_i$ is the disjoint union of all non-empty 
connected componens of $\tau$ of the form $(A,i, \Labn, \Labo, \Labe)$
\item $\mu\in\Tra_i$ is the unique element such that $\tau=\mu\cdot\nu$.
\end{itemize}
%Note that, for every $\tau\in\Tra_i$, there is a unique couple $(\mu,\nu)$ such that 
%$\tau = \mu\cdot \nu$, $\nu\in\M_i$, and such that every connected component 
%of $\mu = (G,\hat G, \Labn, \Labo, \Labe)$ in the sense of Remark~\ref{rem:connected} contains at least one node $x$ on which $\hat G(x) \neq i$. 
For instance, in \eqref{excontr1} and \eqref{excontr2}, we have two forests in $\Tra_2$; in both cases we 
have $\tau=\mu\cdot\nu$ as above, where 
$\mu$ is the product of the first two trees (from left to right) and $\nu\in\M_2$ is
the product of the two remaining trees. 

By the first part of Assumption~\ref{ass:2}, we know that if $\tau = \mu\cdot\nu \in \Tra_i$,
then $\mu \in \Tra_i$ and $\nu\in \Tra_i$. We also know by Assumption~\ref{ass:4} that if $\mu \in \Tra_i$, then
$\CK(\mu)\in \Tra_i$. Therefore, using this factorisation, we define $\Phi_i: \Tra_i\to\Tra_i$ by
\begin{equ}[Phi]
\Phi_i(\tau)\eqdef\mu\cdot k_i(\nu)\;.
\end{equ}
In \eqref{excontr1} and \eqref{excontr2}, the action of $\Phi_2$ corresponds to merging 
the third and fourth tree into a single decorated node $(\bullet,2,\Sigma\Labn_3+\Sigma\Labn_4,0,0)$
with all other components remaining unchanged. 

We also define $\hat \Phi_i: \Tra_i\to\Tra_i$ by $\hat \Phi_i = \hat P_i\circ \Phi_i = \Phi_i \circ \hat P_i$, 
where $\hat P_i (G,\hat G, \Labn, \Labo, \Labe)$ sets $\Labo$ to $0$ on 
every connected component of $\hat G_i$ that contains a root of $G$. For instance, the
action of $\hat P_2$ on the forests in   
\eqref{excontr1} and \eqref{excontr2} is to set to $0$ the decoration $\Labo$ of
all blue nodes. On the other hand, we have
\begin{equ}\label{exTra1_2}
\hat P_1 \left( \quad
\begin{tikzpicture}[scale=0.13,baseline=.5cm]
          \node at (-4,0)  (k) {}; %k
           \node at (-4,10) (l) {}; %l

    \draw[kernel1,black] (l)node[red,round2] {\tiny $ \Labn,\Labo $} -- node [rect1] {\tiny$\Labhom,\Labe$}   (k) node[red,round2] {\tiny $ \Labn,\Labo $} ;
\end{tikzpicture} 
\quad \right)=\quad 
\begin{tikzpicture}[scale=0.13,baseline=.5cm]
          \node at (-4,0)  (k) {}; %k
           \node at (-4,10) (l) {}; %l

    \draw[kernel1,black] (l)node[red,round2] {\tiny $ \Labn,\Labo $} -- node [rect1] {\tiny$\Labhom,\Labe$}   (k) node[red,round2] {\tiny $ \Labn $} ;
\end{tikzpicture} 
\end{equ}
namely the red node which is not in the red connected component of the root is left unchanged.
 
Finally, we define $\CK_i, \hat \CK_i:\Tra_i\to\Tra_i$
\begin{equ}[e:bi-ideal]
\CK_i\eqdef\Phi_i\circ\CK\;,\qquad \hat \CK_i\eqdef\hat \Phi_i\circ\CK\;. 
\end{equ}
For instance, if $\tau$ is the forest of \eqref{excontr1} and $\sigma=\CK(\tau)$ is 
that of \eqref{excontr2}, then 
\begin{equs}\label{exCK_2}
\CK_2(\tau)&= %\Phi_2(\CK(\tau))=
\Phi_2(\sigma)= \qquad
\begin{tikzpicture}[scale=0.13,baseline=1cm]
\def\frst{8}\def\scnd{16}
          \node at (-20,\frst) (d) {}; %d
          \node at (-12,\scnd)  (j) {}; %j
          \node at (-20,\scnd) (h) {}; %i
      \node at (-12,0) (b) {}; %b
         \node at (-4,\frst)  (e) {}; %e
         \node at (-12,\frst)  (f) {}; %f
          \node at (-2,0)  (k) {}; %k
           \node at (-2,\frst) (l) {}; %l
          \node at (8,0)  (s) {}; %s
 
      \draw[kernel1,black] (h)node[rect2] {\tiny $ \Labn$}  -- node [rect1] {\tiny$\Labhom,\Labe$}   (d);
          \draw[kernel1,black] (f)node[rect2] {\tiny $ \Labn$}  -- node [rect1] {\tiny$\Labhom,\Labe$}   (b);
     \draw[kernel1,black] (b)node[blue,round3] {\tiny $ [\Labn_1],[\Labo_1] $} -- node [rect1] {\tiny$\Labhom,\Labe$}   (d);
     \draw[kernel1,red] (d) node[round2] {\tiny $ [\Labn],[\Labo] $};

    \draw[kernel1,black] (l)node[red,round2] {\tiny $ \Labn,\Labo $} -- node [rect1] {\tiny$\Labhom,\Labe$}   (k) node[blue,round3] {\tiny $ \Labn_2,\Labo_2 $} ;

    \draw[kernel1,blue]  (s) node[round3] {\tiny $ [\Labn_3]+\Labn_4  $} ;
\end{tikzpicture} \\
\hat\CK_2(\tau)&= %\hat\Phi_2(\CK(\tau))=
\hat\Phi_2(\sigma)=\qquad
\begin{tikzpicture}[scale=0.13,baseline=1cm]
\def\frst{8}\def\scnd{16}
          \node at (-20,\frst) (d) {}; %d
          \node at (-12,\scnd)  (j) {}; %j
          \node at (-20,\scnd) (h) {}; %i
      \node at (-12,0) (b) {}; %b
         \node at (-4,\frst)  (e) {}; %e
         \node at (-12,\frst)  (f) {}; %f
          \node at (-2,0)  (k) {}; %k
           \node at (-2,\frst) (l) {}; %l
          \node at (8,0)  (s) {}; %s

      \draw[kernel1,black] (h)node[rect2] {\tiny $ \Labn$}  -- node [rect1] {\tiny$\Labhom,\Labe$}   (d);
          \draw[kernel1,black] (f)node[rect2] {\tiny $ \Labn$}  -- node [rect1] {\tiny$\Labhom,\Labe$}   (b);

     \draw[kernel1,black] (b)node[blue,round3] {\tiny $ [\Labn_1] $} -- node [rect1] {\tiny$\Labhom,\Labe$}   (d);
     \draw[kernel1,red] (d) node[round2] {\tiny $ [\Labn],[\Labo] $};
 % --  node [round1]{\tiny $ \Labhom $ } (j) node[round2] {\tiny $ \Labn,\Labo $};
%     \draw[kernel1,blue] (b) -- node [round1] {\tiny$\Labhom$}  (e) node[round3] {\tiny $ \Labn,\Labo $} ;

    \draw[kernel1,black] (l)node[red,round2] {\tiny $ \Labn,\Labo $} -- node [rect1] {\tiny$\Labhom,\Labe$}   (k) node[blue,round3] {\tiny $ \Labn_2 $} ;

    \draw[kernel1,blue]  (s) node[round3] {\tiny $[\Labn_3]+\Labn_4  $} ;

%\draw (p) node [blue,round3] {\tiny $ \Labn,\Labo$}  ;
\end{tikzpicture} 
\end{equs}
Note that in $\CK_2(\tau)$ the roots of the connected components which do not belong to 
$\M_2$ may have a non-zero $\Labo$ decoration, while the unique connected component in 
$\M_2$ (reduced to a blue root with a possibly non-zero $\Labn$ decoration) always has a zero 
$\Labo$ decoration. In $\hat\CK_2(\tau)$ all roots have zero $\Labo$ decoration.

Since $\CK$ commutes with $\Phi_i$ (as well as with $\hat \Phi_i$), 
is multiplicative, and is the identity on the image of $k_i$ in $\M_i$, 
it follows that for $\tau=\mu\cdot\nu$ as above, we have
\[
\CK_i(\tau)=\CK(\mu)\cdot k_i(\nu)\;.
\]
Moreover $\CK_i$ and $\hat \CK_i$ are idempotent and extend to triangular maps on 
$\scal{\Tra_i}$ since $\CK$, $\Phi_i$ and $\hat \Phi_i$ are all idempotent
and preserve our bigrading. We then have the following result.

\begin{lemma}\label{lem:biideal} Under Assumptions~\ref{ass:1}--\ref{ass:4}, the spaces 
$ \CI_i = \ker \CK_i$ and $\hat \CI_i = \ker \hat \CK_i$ are bialgebra ideals, i.e. 
\begin{equ}
\scal{\Tra_i}\cdot \CI_i \subset \CI_i, \qquad 
\Delta_i \CI_i \subset \CI_i \hattimes \scal{\Tra_i} + \scal{\Tra_i} \hattimes \CI_i\;,
\end{equ}
and similarly for $\hat \CI_i$.
\end{lemma}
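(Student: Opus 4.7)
The plan is to reduce both assertions to tractable operator identities. Since $\CK_i$ and $\hat\CK_i$ are idempotent and send basis elements to basis elements, $\CI_i=\ker\CK_i$ is spanned by $\{\tau-\CK_i(\tau):\tau\in\Tra_i\}$, and similarly for $\hat\CI_i$. Thus the algebra ideal property becomes the identity $\CK_i(\mu\cdot\tau)=\CK_i(\mu\cdot\CK_i(\tau))$ for basis elements $\mu,\tau\in\Tra_i$, while the coideal property amounts to
\[
(\CK_i\otimes\CK_i)\Delta_i \;=\; (\CK_i\otimes\CK_i)\Delta_i\CK_i\quad\text{on }\scal{\Tra_i}\;,
\]
with analogous statements for $\hat\CK_i$.

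The algebra ideal property follows from the unique $\M_i$-factorisation $\tau=\mu_\tau\cdot\nu_\tau$ (with $\nu_\tau\in\M_i$ and $\mu_\tau$ free of $\M_i$-components), which gives $\CK_i(\tau)=\CK(\mu_\tau)\cdot k_i(\nu_\tau)$. The $\M_i$-factorisation is compatible with the forest product, $\CK$ is multiplicative, and $k_i$ depends on its argument only through the total weight $\Sigma_{\nu_\tau}\Labn$. Therefore $\CK_i(\mu\cdot\tau)$ depends on $\tau$ only through the data that $\CK_i$ retains, yielding the identity. For $\hat\CK_i$, the extra operator $\hat P_i$ modifies $\Labo$ only on $\M_i$-components containing a root of the full forest, and since roots are disjoint across factors of a forest product this modification is also compatible with products.

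For the coideal property, I factor $\CK_i=\Phi_i\circ\CK=\CK\circ\Phi_i$ (the two compositions coincide because $\CK$ leaves single isolated nodes unchanged and these are precisely the images of $k_i$). The strategy is to establish separately
\begin{enumerate}
\item[(a)] $(\CK\otimes\CK)\Delta_i=(\CK\otimes\CK)\Delta_i\CK$ on $\scal{\Tra}$, a decorated version of Lemma~\ref{lem:ass4};
\item[(b)] $(\Phi_i\otimes\Phi_i)\Delta_i=(\Phi_i\otimes\Phi_i)\Delta_i\Phi_i$ on $\scal{\Tra_i}$.
\end{enumerate}
Combining (a) and (b) via $\CK_i\otimes\CK_i=(\CK\otimes\CK)(\Phi_i\otimes\Phi_i)$ then yields the required identity in a few lines. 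For (a), Assumption~\ref{ass:4} matches admissible subforests before and after contraction; the decoration updates in \eqref{def:Deltabar} commute with the node-merging prescribed in Definition~\ref{CKop}, the only subtlety being that the edge-types absorbed into $\Labo$ via \eqref{e:defhatn} agree on both sides.

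The main obstacle is statement (b). By multiplicativity of $\Delta_i$ (Proposition~\ref{prop:coassoc}) and the product rule for the $\M_i$-factorisation, it suffices to verify (b) for $\tau\in\M_i$; but then Assumption~\ref{ass:1} forces $A=F$ as the sole admissible subforest, so $\partial(A,F)=\emptyset$ and $\eps_A^F=0$. The coproduct collapses to
\[
\Delta_i\tau=\sum_{\Labn_A\le\Labn}\binom{\Labn}{\Labn_A}(F,i,\Labn_A,\Labo,0)\otimes(F,i,\Labn-\Labn_A,\Labo+\Labn_A,0)\;.
\]
Both tensor factors lie in $\M_i$, so $\Phi_i\otimes\Phi_i$ merges them to single $i$-coloured nodes with $\Labn$-decorations $\Sigma\Labn_A$ and $\Sigma\Labn-\Sigma\Labn_A$ (forgetting $\Labo$ in the process). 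The multivariate Chu-Vandermonde identity summed over $\Labn_A$ with fixed total $s=\Sigma\Labn_A$ produces $\binom{\Sigma\Labn}{s}$, which matches exactly $(\Phi_i\otimes\Phi_i)\Delta_i k_i(\tau)$ computed from $k_i(\tau)=(\bullet,i,\Sigma\Labn,0,0)$. The case of $\hat\CI_i$ follows by the same strategy with $\hat\Phi_i$ in place of $\Phi_i$: the additional operator $\hat P_i$ resets $\Labo$ only on root-containing $i$-components, and since the locations of roots in the two factors of $\Delta_i\tau$ are tracked directly through \eqref{def:Deltabar}, the extra step is a straightforward consistency check.
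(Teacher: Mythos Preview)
Your proposal is correct and follows essentially the same route as the paper: reduce the coideal claim to the identity $(\CK_i\otimes\CK_i)\Delta_i = (\CK_i\otimes\CK_i)\Delta_i\CK_i$, and verify the analogous statement separately for $\CK$, $\Phi_i$, and (for $\hat\CK_i$) $\hat P_i$, then combine. One minor remark: Chu--Vandermonde is equally needed in your step (a) --- when $\CK$ merges an equivalence class of nodes into one, the sum over node-wise $\Labn_A$'s with product of binomials must be matched against the single binomial at the merged node --- and this is in fact where the paper places the emphasis, leaving the $\Phi_i$ and $\hat P_i$ verifications (your step (b) and its analogue) as the easy parts.
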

\begin{proof}
Although $\CK_i$ is not quite an algebra morphism of $(\scal{\Tra_i},\cdot)$, it has the property $\CK_i(a\cdot b)=\CK_i(a\cdot\CK_i(b))$ for all $a,b\in\Tra_i$, from which the first property follows
for $\CI_i$. Since $\hat P_i$ is an algebra morphism, the same holds for $\hat \CI_i$.
To show the second claim, we first recall 
that for all coloured forests $(F,\hat F)$, the map $\CK_{\hat F}^\sharp$ defined in Definition~\ref{def:contrac} is, by the Assumption~\ref{ass:4}, a bijection between $\Adm_i(\CK_{\hat F} F,\hat F)$ and $\Adm_i(F,\hat F)$. Combining this with
Chu-Vandermonde, one can show that $\CK$ satisfies
\begin{equ}[e:CKCK]
(\CK\otimes\CK)\Delta_i \CK= (\CK\otimes\CK)\Delta_i\;.
\end{equ}
The same can easily be verified for $\Phi_i$ and $\hat P_i$, so that it also holds for $\CK_i$
and $\hat \CK_i$,
whence the claim follows.
\end{proof} 

If we define
\begin{equ}[e:defHi]
\CH_i\eqdef\scal{\Tra_i} / \CI_i, \qquad \one_i\eqdef \one+\CI_i\in\CH_i, 
\end{equ}
then, as a consequence of Lemma~\ref{lem:biideal}, $( \CH_i,\cdot,\Delta_i,\one_i,\one_i^\star)$ defines a bialgebra. 
\begin{remark}\label{rem:H_i}
Using Lemma~\ref{lem:PP=P}, we have a canonical isomorphism
\begin{equ}
( \CH_i,\CM,\Delta_i,\one_i,\one_i^\star)
\quad\longleftrightarrow\quad (\scal{H_i}, \CK_i \CM, (\CK_i\otimes\CK_i)\Delta_i, \one_i, \one_i^\star)\;,
\end{equ}
where $H_i=\{\CF\in\Tra_i : \CK_i\CF=\CF\}=\CK_i\Tra_i$ and $\CM$ denotes the forest product. 
This can be useful if one wants to work with explicit representatives rather than 
with equivalence classes. Note that $H_i$ can be characterised as the set of
all $(F,\hat F,\Labn,\Labo,\Labe)\in\Tra_i$ such that
\begin{enumerate}
\item the coloured subforests $\hat F_k$, $0<k\leq i$, contain no edges, namely $\hat E=\emptyset$,
\item there is one and only one connected component of $F$ which has the form $(\bullet,i,\Labn,\Labo,0)$ 
and moreover $\Labo(\bullet)=0$.
\end{enumerate}
For example, the forest in \eqref{exCK_2} is an element of $H_2$. 
\end{remark}

\begin{proposition}\label{prop:Hopf}
Under Assumptions~\ref{ass:1}--\ref{ass:4}, the space $(\CH_i, \cdot,\Delta_i, \one_i, \one_i^\star)$ is a Hopf algebra.
\end{proposition}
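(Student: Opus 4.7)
By Lemma~\ref{lem:biideal}, $(\CH_i, \cdot, \Delta_i, \one_i, \one_i^\star)$ is already a bialgebra in the category of bigraded spaces, so it only remains to produce an antipode $\CA_i$. My plan is to mimic the end of the proof of Proposition~\ref{prop:combi}, with the important difference that the degree-zero part will now be a non-trivial Hopf subalgebra (a polynomial algebra) rather than $\R \cdot \one_i$.

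The first step is to grade $\CH_i$ by $\deg \tau \eqdef |F \setminus \hat F_i|$, counting nodes and edges not coloured by $i$. A direct check using \eqref{def:Deltabar} together with the definitions of $\CK$ and $\Phi_i$ should show that $\deg$ descends to the quotient $\CH_i$, is additive under the forest product, and satisfies $\Delta_i \CH_i^{(n)} \subset \bplus_{m+\ell=n} \CH_i^{(m)} \hattimes \CH_i^{(\ell)}$, where $\CH_i^{(n)}$ denotes the degree-$n$ subspace; the verification is essentially the one used for the undecorated case in Proposition~\ref{prop:combi}, because $\deg$ depends only on the underlying coloured-forest structure. This makes $\CH_i$ into a graded bialgebra.

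Next, I identify $\CH_i^{(0)}$ explicitly: by Remark~\ref{rem:H_i}, its representatives in $H_i$ are forced to be the decorated singletons $(\bullet, i, \Labn, 0, 0)$ with $\Labn \in \N^d$. Applying \eqref{def:Deltabar} followed by $\CK_i$ (whose $k_i$-step zeroes the $\Labo$-decoration on the unique $\M_i$-component of each factor) yields the binomial coproduct
\[
\Delta_i (\bullet, i, \Labn, 0, 0) = \sum_{\Labn_A \le \Labn} \binom{\Labn}{\Labn_A}\,(\bullet, i, \Labn_A, 0, 0) \otimes (\bullet, i, \Labn - \Labn_A, 0, 0),
\]
while the product reads $(\bullet, i, \Labn_1, 0, 0) \cdot (\bullet, i, \Labn_2, 0, 0) = (\bullet, i, \Labn_1 + \Labn_2, 0, 0)$ (since $k_i$ sums the $\Labn$'s). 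Thus $\CH_i^{(0)}$ is isomorphic to the polynomial Hopf algebra $\R[X_1, \ldots, X_d]$ with unit $\one_i = (\bullet, i, 0, 0, 0)$, binomial coproduct, and explicit antipode $S_0(\bullet, i, \Labn, 0, 0) = (-1)^{|\Labn|}(\bullet, i, \Labn, 0, 0)$.

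To conclude, I invoke the standard fact that a graded bialgebra whose degree-zero component is a Hopf algebra is itself a Hopf algebra: the antipode $\CA_i$ is then the convolution inverse of $\id$ in $\Hom(\CH_i, \CH_i)$, extending $S_0$ and satisfying, for $\tau \in \CH_i^{(n)}$ with $n \ge 1$,
\[
\CA_i(\tau) = -\tau - \CM(\CA_i \otimes \id)\bigl(\Delta_i \tau - \tau \otimes \one_i\bigr),
\]
with the symmetric identity following from uniqueness of two-sided convolution inverses. The main subtlety will be the $\CH_i^{(n)} \otimes \CH_i^{(0)}$ part of $\Delta_i \tau$: besides the canonical summand $\tau \otimes \one_i$, it contains further terms of the form $y \otimes X^{\Labn}$ with $y \in \CH_i^{(n)}$ and $\Labn \neq 0$, which couple the unknowns $\{\CA_i(z) : z \in \CH_i^{(n)}\}$ at level $n$. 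Consequently, the recursion at each level must be set up as a linear system on $\CH_i^{(n)}$ rather than as an explicit pointwise formula, and its unique solvability ultimately rests on the Hopf structure of $\CH_i^{(0)}$.
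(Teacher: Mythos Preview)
Your plan is essentially the paper's proof: grade by $|\tau|_i \eqdef |F\setminus\hat F_i|$, identify the degree-zero part with the polynomial Hopf algebra via \eqref{e:DeltaX}, and build $\CA_i$ inductively. You correctly flag the key subtlety that the $\CH_i^{(n)}\otimes\CH_i^{(0)}$ piece of $\Delta_i\tau$ couples the unknowns at level $n$. Where you appeal to a general principle about graded bialgebras with Hopf degree-zero part, the paper instead resolves this system concretely by a \emph{second} induction, now on $|\Labn|$: with $R_k\tau \eqdef (F,\hat F,k,\Labo,\Labe)$ one has $\Delta_i\tau = \sum_{k\le\Labn}\binom{\Labn}{k} R_k\tau \otimes X^{\Sigma(\Labn-k)}$ plus terms whose first factor has strictly smaller $|\cdot|_i$, so the defining equation for $\CA_i\tau$ involves only $\CA_i R_k\tau$ with $|k|<|\Labn|$ and lower-level data. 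For the right-sided antipode identity the paper proves multiplicativity of $\CA_i$ rather than invoking uniqueness of convolution inverses; either route is fine.

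One point you should add: because $\CH_i$ lives in the category of bigraded spaces of Definition~\ref{categ} and carries infinite formal sums, the antipode must be a \emph{triangular} map in the sense of Definition~\ref{def:triang} to be well-defined at all. The paper tracks this along the induction, checking that if $|\tau|_\bi = m$ then $(\CA_i\tau)_n\neq 0$ only for $n\ge m$, using that both $\CM$ and $\Delta_i$ are triangular. Your outline does not mention this, and the auxiliary $\deg$-grading alone does not settle it.
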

\begin{proof}
By Lemma~\ref{lem:counit}, $\one_i^\star$ is a counit in $\CH_i$. 
We only need now to show that this space admits an antipode $\CA_i$, that we are going to
construct recursively. 

For $k \in \N^d$, we denote by $X^{k} \in \CH_i$ the equivalence class of the element $(\bullet,i,k,0,0)$. It then follows from 
\begin{equ}[e:DeltaX]
\Delta_i X^k=\sum_{j\in \N^d} \binom{k}{j}X^j\otimes X^{k-j}
\end{equ} 
that the subspace spanned by $(X^{k}, k \in \N^d)$ is isomorphic to the Hopf algebra of 
polynomials in $d$ commuting variables, provided that we set
\begin{equ}[e:defAbasic2]
\CA_i X^{k} = (-1)^{|k|} X^{k}\;.
\end{equ}
For any $\tau = (F,\hat F, \Labn, \Labo, \Labe) \in \Tra_i$, let $|\tau|_i = |F \setminus \hat F_i|$ and recall the definition \eqref{e:grading0} of the bigrading $|\tau|_\bi$. Note that $|\CK_i\tau|_i=|\tau|_i$ and, as we have already remarked, $|\CK_i\tau|_\bi=|\tau|_\bi$, so that both these gradings make sense on $\CH_i$. We now extend $\CA_i$ to $\CH_i$ by induction on $|\tau|_i$.

If $|\tau|_i = 0$ then, by definition, one has $\tau \in \M_i$ so that $\tau = X^k$ for some $k$ and \eqref{e:defAbasic2} defines $\CA_i\tau$. 
Let now $N > 0$ and assume that 
$\CA_i \tau$ has been defined for all $\tau \in \CH_i$ with $|\tau|_i < N$.
Assume also that it is such that if $|\tau|_\bi = m$, then 
$(\CA_i \tau)_n \neq 0$ only if $n \ge m$,
which is indeed the case for \eqref{e:defAbasic2} since all the terms appearing
there have degree $(0,0)$. (This latter condition is required if we want $\CA_i$
to be a triangular map.)

For $\tau = (F,\hat F, \Labn, \Labo, \Labe)$ and 
$k \colon N_F \to \N^d$, we define
$R_k \tau \eqdef (F,\hat F, k, \Labo, \Labe)$.
For such a $\tau$ with $|\tau|_i = N$ and $|\tau|_\bi = M$, we then note that one has
%(with the obvious modification of Sweedler's notation)
\begin{equ}
\Delta_i \tau = \sum_{k\le \Labn} \binom{\Labn}{k} R_k \tau \otimes X^{\Sigma(\Labn-k)}
+ \sum_{\ell+m \ge M} \tau_{(1)}^\ell\otimes \tau_{(2)}^m\;,
\end{equ}
where $\Sigma(\Labn-k):=\sum_{x\in F}(\Labn-k)(x)$ and for $\ell\in\N^2$
\begin{equs}
&\tau_{(1)}^\ell \in \Vec{\{\sigma \in \CH_i: |\sigma|_\bi=\ell, |\sigma|_i < N\}},
\\
&\tau_{(2)}^\ell \in \Vec{\{\sigma \in \CH_i: |\sigma|_\bi=\ell, |\sigma|_i \leq N\}}.
\end{equs}
Note that the first term in the right hand side above corresponds to the choice of $A=F$, while the second term contains the sum over all possible $A\ne F$.
Here, the property $|\tau_{(1)}^\ell|_i < N$ holds because these terms come
from terms with $A \neq F$ in \eqref{def:Deltabar}.
Since for $\tau \neq \one_i$ we want to have
\begin{equ}
\CM(\CA_i \otimes \id)\Delta_i \tau = 0\;,
\end{equ}
this forces us to choose $\CA_i \tau$ in such a way that
\begin{equ}[e:defAirec]
\CA_i \tau = - \sum_{k \neq \Labn} \binom{\Labn}{k} \CA_i(R_k \tau) \cdot X^{\Sigma(\Labn-k)}
- \sum_{\ell+m \ge M} \CA_i(\tau_{(1)}^\ell)\cdot  \tau_{(2)}^m\;.
\end{equ}
In the case $\Labn = 0$, this uniquely defines $\CA_i \tau$ by the induction hypothesis
since every one of the terms $\tau_{(1)}^\ell$ appearing in this expression 
satisfies $|\tau_{(1)}^\ell|_i < N$. 

In the case where $\Labn \neq 0$, $\CA_i \tau$ is also easily seen to be uniquely defined
by performing a second inductive step over $|\Labn| \in \N$.
All terms appearing in the right hand side of \eqref{e:defAirec} do indeed satisfy that their
total $|\cdot|_\bi$-degree is at least $M$ by using the induction hypothesis.
Furthermore, our definition
immediately guarantees that $\CM(\CA_i \otimes \id)\Delta_i = \one_i\one_i^\star$.
It remains to verify that one also has $\CM(\id\otimes \CA_i)\Delta_i = \one_i\one_i^\star$.
For this, it suffices to verify that $\CA_i$ is multiplicative,
whence the claim follows by mimicking the proof of the fact that a semigroup with
left identity and left inverse is a group.

Multiplicativity of $\CA_i$ also follows by induction over $N=|\tau|_i$. Indeed,
it follows from \eqref{e:defAbasic2} that it is the case for $N=0$. It is also
easy to see from \eqref{e:defAirec} that if $\tau$ is of the form $\tau' \cdot X^{k}$ for some
$\tau'$ and some $k >0$, then one has $\CA_i \tau = (\CA_i \tau')\cdot (\CA_i X^{k})$.
Assuming that 
it is the case for all values less than some $N$, it therefore suffices to verify
that $\CA_i$ is multiplicative for elements of the type 
$\tau = \sigma\cdot \bar \sigma$ with $|\sigma|_i\wedge |\bar \sigma|_i > 0$.
If we extend $\CA_i$ multiplicatively to elements of this type then, as a consequence of the 
multiplicativity of $\Delta_i$, one has
\begin{equ}
\CM(\CA_i \otimes \id)\Delta_i \tau = (\CM(\CA_i \otimes \id)\Delta_i \sigma)\cdot
(\CM(\CA_i \otimes \id)\Delta_i \bar\sigma) = 0\;,
\end{equ}
as required. Since the map $\CA_i$ satisfying this property was uniquely defined
by our recursion, this implies that $\CA_i$ is indeed multiplicative.
\end{proof} 

\subsection{Characters group}\label{sec:char}

Recall that an element $g\in\CH_i^*$ is a character if 
$g(\tau \cdot \bar \tau) = g(\tau)g(\bar \tau)$ for any $\tau, \bar \tau \in \CH_i$.
Denoting by $\CG_i$ the set of all such characters, the Hopf algebra
structure described above turns $\CG_i$ into a group by 
\begin{equ}[e:defG]
(f\circ g)(\tau) = (f\otimes g)\,\Delta_i\tau\;,\qquad g^{-1}(\tau) = g(\CA_i\tau)\;,
\end{equ}
where the former operation is guaranteed to make sense by Remark~\ref{rem:dual}.
\begin{definition}\label{def:primitive}
Denote by $\Primitive_i$ the set of elements $\CF=(F,\hat F,\Labn,\Labo,\Labe) \in H_i$ as in Remark~\ref{rem:H_i}, such that 
\begin{itemize}
\item $F$ has exactly one connected component 
\item either $\hat F$ is not identically equal to $i$ or $\CF=(\bullet,i,\delta_n,0,0)$ for some $n \in \{1,\ldots,d\}$, where $(\delta_n(\bullet))_j=\delta_{nj}$.
\end{itemize}
\end{definition}
It is then easy to see that for every $\tau \in H_i$ there exists a unique
(possibly empty) collection $\{\tau_1,\ldots,\tau_N\} \subset \Primitive_i$ such that
$\tau = \CK_i(\tau_1 \cdot \ldots\cdot \tau_N)$.
As a consequence, a multiplicative functional on $\CH_i$ is uniquely determined by
the collection of values $\{g(\tau)\,:\, \tau \in \Primitive_i\}$.
The following result gives a complete characterisation of the class of functions 
$g \colon \Primitive_i \to \R$ which can be extended in this way to a multiplicative functional on $\CH_i$. 

\begin{proposition}
A function $g \colon \Primitive_i \to \R$ determines an element of $\CG_i$ 
as above if
and only if there exists $m \colon \N \to \N$ such that 
$g(\tau) = 0$ for every $\tau\in \Primitive_i$ with $|\tau|_\bi = n$ such that $n_1 > m(n_2)$.
\end{proposition}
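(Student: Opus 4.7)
The proof splits naturally into two directions, both driven by the bigraded dual structure recalled in Remark~\ref{rem:dual}.

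For the ``only if'' direction I would simply invoke Remark~\ref{rem:dual}: since $\CG_i \subset \CH_i^*$, any character $g$ is a formal sum $\sum_n g_n$ with $g_n\in (\CH_i)_n^*$ and there exists a function $f\colon \N \to \N$ such that $g_n = 0$ whenever $n_1 \ge f(n_2)$. Restricting to $\Primitive_i \subset \CH_i$ and setting $m(k) \eqdef f(k)$ gives the desired bound.

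For the ``if'' direction, the plan is to extend $g$ from $\Primitive_i$ to a multiplicative functional on $\CH_i$ using the unique factorisation stated just before the Proposition: for $\tau = \CK_i(\tau_1 \cdots \tau_N)$ with $\tau_j \in \Primitive_i$, set $g(\tau) \eqdef \prod_j g(\tau_j)$ (and $g(\one_i) = 1$ for the empty collection). Uniqueness of the primitive decomposition makes this well-defined; multiplicativity across products $\sigma \cdot \sigma'$ in $\CH_i$ is checked by concatenating primitive factorisations, using that the product on $\CH_i$ is $\CK_i\CM$ and that $\CK_i$ only acts on the $\M_i$-portion, which decomposes into $X^{\delta_n}$-factors additively.

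The heart of the proof is to verify that this extension actually lies in $\CH_i^*$, i.e. to produce $\bar m\colon \N \to \N$ such that $g$ vanishes on $(\CH_i)_{(n_1,n_2)}$ whenever $n_1 > \bar m(n_2)$. Two facts drive the bound: (a) by \eqref{e:prodgrad} and the fact that $\CK_i$ preserves the bigrading (it contracts only edges and vertices already lying in $\hat F$, and merges $\M_i$-components which themselves carry bigrading $(0,0)$), one has $|\tau|_\bi = \sum_j |\tau_j|_\bi$ for any factorisation $\tau = \CK_i(\tau_1 \cdots \tau_N)$; (b) every primitive $\tau_j \in \Primitive_i$ with $|\tau_j|_{\bi,2} = 0$ automatically has $|\tau_j|_{\bi,1} = 0$ as well, since within $H_i$ one has $\hat E = \emptyset$, so the conditions ``connected, no edges or non-root vertices outside $\hat F$'' force $\tau_j$ to consist of a single vertex with trivial edge decoration. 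Assuming $g(\tau) \ne 0$, every primitive factor satisfies $|\tau_j|_{\bi,1} \le m(|\tau_j|_{\bi,2})$; splitting the factors into those with $|\tau_j|_{\bi,2} = 0$ (which contribute nothing to $n_1$ by (b)) and those with $|\tau_j|_{\bi,2} \ge 1$ (whose number is at most $n_2$ by (a)), one obtains
\begin{equ}
n_1 \;\le\; \sum_{j\,:\, |\tau_j|_{\bi,2}\ge 1} m(|\tau_j|_{\bi,2}) \;\le\; n_2 \cdot \max_{0 \le k \le n_2} m(k) \;=:\; \bar m(n_2)\;,
\end{equ}
which is the required bound.

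The main obstacle I anticipate is a careful handling of point (b): one must unwind the interplay between the definitions of $\Primitive_i$, $H_i$ (via Remark~\ref{rem:H_i}), and the bigrading \eqref{e:grading0} to rule out hidden primitives of second bigrading zero but positive first bigrading. Once this is settled, everything else is bookkeeping, and the constant $\bar m(n_2) = n_2 \cdot \max_{k\le n_2} m(k)$ in the last display makes the extension a bona fide element of $\CH_i^*$, hence a character.
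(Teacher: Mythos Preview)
Your proposal is correct and follows essentially the same approach as the paper. Both directions match: for the ``only if'' part the paper simply says ``the converse is elementary'', and your invocation of Remark~\ref{rem:dual} is exactly the right unpacking; for the ``if'' part the paper proves precisely your points (a) and (b) (with the same key observation that a primitive with second bigrading zero must be a single vertex, hence has first bigrading zero) and arrives at the identical bound $\tilde m(k) = k\sup_{1\le \ell\le k} m(\ell)$.
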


\begin{proof}
We first show that, under this condition, the unique multiplicative extension of $g$ defines an element of $\CH_i^*$. By Remark~\ref{rem:dual}, we thus need to show
that there exists a function $\tilde m\colon \N \to \N$ such that
$g(\tau) = 0$ for every $\tau\in H_i$ with $|\tau|_\bi = n$ and $n_1 > \tilde m(n_2)$.

If $\sigma=(F,\hat F,\Labn,\Labo,\Labe) \in \Primitive_i$ satisfies $n_2 =0$, then $\hat F$ is nowhere equal to 0 on $F$ by the definition \eqref{e:grading0}; by property 2 in Definition~\ref{def:colour}, $\hat F$ is constant on $F$, since we also assume that $F$ has a single connected component; in this case $\Labe\equiv 0$ by property 3 in Definition~\ref{def:decoration}; therefore, if $n_2=0$ then $n_1=0$ as well. Therefore we can set $\tilde m(0)=0$.

Let now $k\geq 1$. We claim that $\tilde m(k) \eqdef k\sup_{1\le\ell \le k} m(\ell)$ has the required property. 
Indeed, for $\tau = \CK_i(\tau_1 \cdot \ldots\cdot \tau_N)$,
one has  $g(\tau) = 0$ unless $g(\tau_j) \neq 0$ for every $j$; in this case, setting $n^j=(n^j_1,n^j_2)=|\tau_j|_\bi$, we have $m(n^j_2)\geq n^j_1$ for all $j=1,\ldots,N$.
Since $n=(n_1,n_2)\eqdef|\tau|_\bi = \sum_j |\tau_j|_\bi$, this implies that $n_k=\sum_jn^j_k$, $k=1,2$. Then \[
\tilde m(n_2)\geq n_2\max_{1\leq\ell\leq n_2}m(\ell)\geq  n_2\max_{1\leq\ell\leq N}n^j_1\geq n_1.
\]

The converse is elementary.
\end{proof}

\subsection{Comodule bialgebras}
Let us fix throughout this section $0<i < j$. We want now to study the possible interaction between the structures given by the operators $\Delta_i$ and $\Delta_j$. For the definition
of a comodule, see the beginning of Section \ref{sec3}.

\begin{assumption}\label{ass:5}
Let $0<i < j$.
For every coloured forest $(F,\hat F)$ such that $\hat F \le j$ and $\{F,\hat F_j\} \subset \Adm_j(F,\hat F)$, one has $\hat F_i \in \Adm_i(F,\hat F)$.
\end{assumption}

\begin{lemma}\label{prop:comodule}
Let $0<i < j$. Under Assumptions~\ref{ass:1}--\ref{ass:4} for $i$ and under Assumption~\ref{ass:5} we have
\begin{equ}
\Delta_i \colon \scal{\Tra_{j}} \to \scal{\Tra_i} \hattimes \scal{\Tra_{j}}\;,\qquad
(\one_i^\star \otimes \id)\Delta_i = \id\;,
\end{equ}
which endows $\scal{\Tra_{j}}$ with the structure of a left comodule over the bialgebra $\scal{\Tra_i}$.
\end{lemma}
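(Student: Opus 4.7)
The proof reduces to three verifications: (i) $\Delta_i$ maps $\scal{\Tra_j}$ into $\scal{\Tra_i} \hattimes \scal{\Tra_j}$; (ii) the counit identity $(\one_i^\star \otimes \id)\Delta_i = \id$ on $\scal{\Tra_j}$; and (iii) the coaction axiom $(\Delta_i \otimes \id)\Delta_i = (\id \otimes \Delta_i)\Delta_i$ there. The bulk of the work is in (i); items (ii) and (iii) follow from earlier results by inspection.

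For (i), I fix $\tau = (F,\hat F,\Labn,\Labo,\Labe) \in \Tra_j$ and $A \in \Adm_i(F,\hat F)$, and check each tensor factor in \eqref{def:Deltabar}. On the left, Assumption~\ref{ass:1}(1) gives $\hat F_k \cap A = \emptyset$ for all $k > i$, so $\hat F \restr A \le i$. Since $\tau \in \Tra_j$ means $\{F,\hat F_j\} \subset \Adm_j(F,\hat F)$, Assumption~\ref{ass:5} furnishes $\hat F_i \in \Adm_i(F,\hat F)$, and then Assumption~\ref{ass:3}(1) yields $\{A,\hat F_i\} \subset \Adm_i(A,\hat F\restr A)$. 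Combined with the identity $(\hat F\restr A)_i = \hat F_i$ (which uses $\hat F_i \subset A$ from Assumption~\ref{ass:1}(1)), this places the left factor in $\Tra_i$. On the right, the recolouring satisfies $\hat F \cup_i A \le j$ because $i < j$ and $\hat F \le j$; moreover since $\hat F_j \cap A = \emptyset$ by Assumption~\ref{ass:1}(1), one obtains $(\hat F \cup_i A)_j = \hat F_j$, and $\{F, \hat F_j\} \subset \Adm_j(F, \hat F \cup_i A)$ follows by checking the conditions of Assumption~\ref{ass:1} for $j$ after the recolouring, which leave the admissibility criteria invariant. Triangularity of $\Delta_i$ (Remark~\ref{rem:triang2}) then ensures the formal sum defining $\Delta_i\tau$ does land in the bigraded completion $\scal{\Tra_i} \hattimes \scal{\Tra_j}$.

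For (ii), the argument is identical to that in the proof of Lemma~\ref{lem:counit}: the only term in the expansion of $\Delta_i \tau$ whose left factor belongs to $\Units_i$ corresponds to $A = \hat F_i$, and in that case the right factor is exactly $\tau$. For (iii), Proposition~\ref{prop:coassoc} already establishes the coassociativity identity on all of $\scal{\Forests}$; the only additional point is that, by applying (i) twice, both sides of the identity make sense as elements of $\scal{\Tra_i} \hattimes \scal{\Tra_i} \hattimes \scal{\Tra_j}$, so the restriction of that identity to $\scal{\Tra_j}$ is the required coaction coassociativity.

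The one genuinely delicate step is the right-factor check in (i): everything else is either a direct restriction of already-proved facts or an immediate consequence of Assumption~\ref{ass:1} combined with Assumption~\ref{ass:5}. The role of Assumption~\ref{ass:5} is precisely to bridge the two admissibility families so that the argument on the left factor can be made, while the compatibility of $\Adm_j$ with the $A$-recolouring is what allows the right factor to stay in $\Tra_j$.
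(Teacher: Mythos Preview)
Your proof follows essentially the same path as the paper's: Assumption~\ref{ass:5} plus Assumption~\ref{ass:3}(1) for the left factor, the identity $(\hat F\cup_i A)_j=\hat F_j$ (from $A\cap\hat F_j=\emptyset$) for the right factor, the counit argument borrowed from Lemma~\ref{lem:counit}, and Proposition~\ref{prop:coassoc} for the coassociativity of the coaction.

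The one place to tighten is your right-factor justification. Assumption~\ref{ass:1} records \emph{necessary} properties of elements of $\Adm_j$, not a characterisation, so ``checking the conditions of Assumption~\ref{ass:1} for $j$'' cannot by itself establish $\{F,\hat F_j\}\subset\Adm_j(F,\hat F\cup_i A)$; moreover, the lemma's hypotheses do not even explicitly include Assumption~\ref{ass:1} for the index $j$. The paper's own proof is admittedly just as terse at this step (it merely invokes ``the Definition of $\Tra_j$''), and in the concrete setting of Section~\ref{sec4} the claim is easily verified directly from Definition~\ref{def:admspecific}: there $\Adm_2(F,\hat G)$ depends on $\hat G_1$ only through its connected components, and since $A\cap\hat F_2=\emptyset$ and $A\subset F$, both $F$ and $\hat F_2$ remain admissible after replacing $\hat F_1$ by $A$. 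In the abstract setting, however, the invariance of $\Adm_j(F,\cdot)$ under recolouring the $i$-slot is an additional property not captured by the stated assumptions alone.
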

\begin{proof}
Let $(F,\hat F, \Labn, \Labo, \Labe) \in \Tra_j$ and $A\in\Adm_i(F,\hat F)$; 
by Definition~\ref{def:tra_i}, we have $\hat F\leq j$ and $\{F,\hat F_j\}\subset\Adm_j(F,\hat F)$, so that by Assumption~\ref{ass:5} we have $\hat F_i\in\Adm_i(F,\hat F)$. Then, by property 1 in Assumption~\ref{ass:3}, we have $\hat F_i\cap A=\hat F_i\in\Adm_i(A,\hat F\restr A)$. Now, since $A\cap \hat F_j=\emptyset$ by property 1 in Assumption~\ref{ass:1}, we have $(\hat F\cup_i A)_j=\hat F_j\setminus A=\hat F_j\in\Adm_j(F,\hat F\cup_i A)$ by the Definition~\ref{def:tra_i} of $\Tra_j$; all this shows that $\Delta_i \colon \scal{\Tra_{j}} \to \scal{\Tra_i} \hattimes \scal{\Tra_{j}}$.

For $A\in\Adm_i(F,\hat F)$, we have $(A,\hat F\restr A,\Labn',\Labo',\Labe')\in\Units_i$ if and only if $\hat F\equiv i$ on $A$, i.e. $A\subseteq \hat F_i$; since $\hat F_i\subseteq A$ by Assumption~\ref{ass:1}, then the only possibility is $A=\hat F_i$. By  Assumption~\ref{ass:5} we have $\hat F_i \in \Adm_i(F,\hat F)$ and therefore $(\one_i^\star \otimes \id)\Delta_i = \id$.

Finally, the co-associativity \eqref{mult_Deltaa1} of $\Delta_i$ on $\Tra$ shows the required compatibility between the coaction $\Delta_i \colon \scal{\Tra_{j}} \to \scal{\Tra_i} \hattimes \scal{\Tra_{j}}$ and the coproduct $\Delta_i \colon \scal{\Tra_{i}} \to \scal{\Tra_i} \hattimes \scal{\Tra_{i}}$.
\end{proof}

We now introduce an additional structure which will yield as a consequence the {\it cointeraction} property \eqref{e:intertwine} between the maps $\Delta_i$ and $\Delta_j$, see 
Remark \ref{cointera}.

\begin{assumption}\label{ass:6}
Let $0<i < j$.
For every coloured forest $(F,\hat F)$, one has
\minilab{e:compat}
\begin{equ}[e:compat1]
A \in \Adm_i(F,\hat F)\qquad\& \qquad B \in \Adm_j(F, \hat F \cup_i A)\;,
\end{equ}
if and only if
\minilab{e:compat}
\begin{equ}[e:compat2]
B \in \Adm_j(F,\hat F)\qquad\&\qquad A \in \Adm_i(F, \hat F \cup_j B) \sqcup \Adm_i(B, \hat F \restr B)\;,
\end{equ}
where $\Adm \sqcup \bar \Adm$ is a shorthand for $\{A\sqcup \bar A\,:\ A\in \Adm \;\&\; \bar A\in \bar \Adm\}$. 
\end{assumption}

We then have the following crucial result. 
\begin{proposition}\label{prop:doublecoass}
Under Assumptions~\ref{ass:1} and~\ref{ass:6} for some $0 < i<j$, the identity
\begin{equ}[e:intertwine]
\CM^{(13)(2)(4)} \bigl(\Delta_i \otimes \Delta_i\bigr)\Delta_j = (\id \otimes \Delta_j)\Delta_i
\end{equ}
holds on $\Tra$, where we used the notation
\begin{equ}[e:def1324]
 \mathcal{M}^{(13)(2)(4)} (\tau_1 \otimes \tau_2 \otimes \tau_3 \otimes \tau_4) =
 ( \tau_1\cdot \tau_3 \otimes \tau_2 \otimes \tau_4  )\;.
\end{equ}
\end{proposition}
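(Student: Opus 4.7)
The proof proceeds by expanding both sides of \eqref{e:intertwine} using the definition \eqref{def:Deltabar} twice and matching terms via a bijection between the resulting sums. At the level of subforests, Assumption~\ref{ass:6} furnishes the bijection: for $\tau = (F, \hat F, \Labn, \Labo, \Labe)$, pairs $(A, B)$ with $A \in \Adm_i(F, \hat F)$ and $B \in \Adm_j(F, \hat F \cup_i A)$ correspond exactly to triples $(B, A_1, A_2)$ with $B \in \Adm_j(F, \hat F)$, $A_1 \in \Adm_i(B, \hat F \restr B)$ and $A_2 \in \Adm_i(F, \hat F \cup_j B)$, via $A = A_1 \sqcup A_2$ (so $A_1 \subseteq B$ and $N_{A_2} \cap N_B = \emptyset$). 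Accordingly, $(\id \otimes \Delta_j)\Delta_i \tau$ produces a triple tensor indexed by $(A, B, \Labn_A, \Labn_B, \eps_A^F, \eps_B^F)$, while $(\Delta_i \otimes \Delta_i)\Delta_j \tau$ yields a quadruple tensor indexed by $(B, A_1, A_2, \Labn_B, \Labn_{A_1}, \Labn_{A_2}, \eps_B^F, \eps_{A_1}^B, \eps_{A_2}^F)$, which the operator $\CM^{(13)(2)(4)}$ contracts into three factors by merging the $A_1$- and $A_2$-pieces via the forest product \eqref{cdot}, supported on $A_1 \sqcup A_2 = A$.

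\textbf{Matching decorations.} Under $A = A_1 \sqcup A_2$, the node decoration splits as $\Labn_A = \Labn_{A_1} + \Labn_{A_2}$ with disjoint supports, so that
$\binom{\Labn}{\Labn_A} = \binom{\Labn}{\Labn_{A_1}}\binom{\Labn - \Labn_{A_1}}{\Labn_{A_2}}$ as in \eqref{e:calc2}. For the edge decorations, the boundary decomposes as $\partial(A, F) = \partial(A_1, F) \sqcup \partial(A_2, F)$; since $A_1 \subseteq B$, one has further $\partial(A_1, F) = \partial(A_1, B) \sqcup \bigl(\partial(A_1, F) \cap \partial(B, F)\bigr)$. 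This yields a decomposition $\eps_A^F = \bar\eps_{A_1}^B + \bar\eps_{A_2}^F + \eta$ with pairwise disjoint supports, where $\bar\eps_{A_1}^B$ (resp.\ $\bar\eps_{A_2}^F$) is to be identified with $\eps_{A_1}^B$ (resp.\ $\eps_{A_2}^F$) on the LHS, and $\eta$ is supported on $\partial(B, F) \cap \partial(A_1,F)$. A change of variables in the spirit of \eqref{e:directMap} then absorbs $\eta$ into the $\eps_B^F$ appearing on the LHS, which is the step responsible for producing the cointeraction.

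\textbf{Combinatorics and forest equalities.} The prefactor $1/\eps_A^F!$ factorises over disjoint supports as $1/(\bar\eps_{A_1}^B!\,\bar\eps_{A_2}^F!\,\eta!)$, so summing over $\eta$ with the resulting binomial coefficient reproduces $1/\eps_B^F!$ and $\binom{\Labn_B + \pi\eps_B^F}{\Labn_{A_1}}$ on the middle factor after an application of Chu-Vandermonde identical to \eqref{e:binomfinal}. Once the indices and coefficients have been matched, one verifies forest-by-forest agreement: the middle factor on the LHS, $(B, (\hat F \restr B) \cup_i A_1, \ldots)$, equals the middle factor on the RHS, $(B, (\hat F \cup_i A)\restr B, \ldots)$, because $N_{A_2} \cap N_B = \emptyset$; the third factor $(F, (\hat F \cup_j B) \cup_i A_2, \ldots)$ equals $(F, (\hat F \cup_i A) \cup_j B, \ldots)$ because $A_1 \subseteq B$ so the two colourings commute; and the forest product $(A_1, \ldots) \cdot (A_2, \ldots)$ reconstructs the first factor $(A, \ldots)$ by the multiplicativity of decorations on disjoint supports.

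\textbf{Main obstacle.} The delicate point is not the set-theoretic bijection itself, which is immediate from Assumption~\ref{ass:6}, but the simultaneous tracking of $\Labn$, $\Labo$ and $\Labe$ contributions on the middle and third factors. The terms $\pi\eps_A^F$, $\pi\Labe_\emptyset^A$ and their analogues from $B$ distribute between the $A_1$-, $A_2$- and $\eta$-parts, and only after the $\eta$-sum is collapsed by Chu-Vandermonde do the $\Labo$-decorations on both sides coincide. This closing manipulation is essentially the same as the one performed in \eqref{e:fourth}--\eqref{e:binomfinal} of the proof of Proposition~\ref{prop:coassoc}, which I would follow closely as a template.
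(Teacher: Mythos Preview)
Your proposal is correct and follows the same approach as the paper's proof: expand both sides, use Assumption~\ref{ass:6} for the subforest bijection $A\leftrightarrow A_1\sqcup A_2$, split $\eps_A^F$ into three disjointly supported pieces (your $\eta$ is the paper's $\bar\eps_{A_1,C}^F$), absorb $\eta$ into $\eps_B^F$, and collapse the $\eta$-sum by Chu--Vandermonde exactly as in \eqref{e:binomfinal}. One point you understate is the $\Labn$-side change of variables: the simple splitting $\Labn_A=\Labn_{A_1}+\Labn_{A_2}$ does not directly match the LHS factor $\binom{\Labn_B+\pi\eps_B^F}{\Labn_{A_1}}$, and one must additionally set the LHS $\Labn_B$ equal to the RHS $\Labn_B+(\Labn_A\restr N_B)$ and the LHS $\Labn_{A_1}$ equal to $(\Labn_A\restr N_B)+\pi\eta$, exactly as in the paper's variables $\bar\Labn_C$, $\bar\Labn_{A_1}$; but this is precisely the manipulation of \eqref{e:calc2}--\eqref{e:binomfinal} that you cite as a template, so the gap is only in the write-up, not in the argument.
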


\begin{proof}
The proof is very similar to that of Proposition~\ref{prop:coassoc}, but using
\eqref{e:compat} instead of \eqref{e:prop}. 
Using \eqref{e:compat} and our definitions, for $\tau=(F,\hat F, \Labn, \Labo, \Labe)\in\Tra$ one has
\begin{equs}
{} &
\CM^{(13)(2)(4)} \bigl(\Delta_i \otimes \Delta_i\bigr)\Delta_j \,\tau =
\\ & = \sum_{B \in \Adm_j(F,\hat F)} 
\sum_{A_1 \in \Adm_i(B,\hat F \restr B)} 
\sum_{A_2 \in \Adm_i(F,\hat F \cup_j B)} 
\sum_{\eps_B^F, \eps_{A_1}^B,\eps_{A_2}^F}\sum_{\Labn_B,\Labn_{A_1},\Labn_{A_2}} \\
& {1\over \eps_B^F! \eps_{A_1}^B!\eps_{A_2}^F!}
\binom{\Labn}{\Labn_B} 
\binom{\Labn-\Labn_B}{\Labn_{A_2}} 
\binom{\Labn_B + \pi\eps_B^F}{\Labn_{A_1}} \label{com_fact0} \\
&(A_1 \sqcup A_2, \hat F \restr A, \Labn_{A_1}+\Labn_{A_2}+ \pi(\eps_{A_1}^B+\eps_{A_2}^F), \Labo ,\Labe) \\
&\otimes (B, (\hat F\restr B)\cup_i A_1, \Labn_B + \pi\eps_B^F - \Labn_{A_1}, \Labo + \Labn_{A_1} + \pi(\eps_{A_1}^B - \Labe_\emptyset^{A_1}), \Labe_{A_1}^B + \eps_{A_1}^B)\\
&\otimes (F, (\hat F \cup_j B) \cup_i A_2, \Labn - \Labn_B - \Labn_{A_2}, \Labo +\Labn_B + \Labn_{A_2}
+ \pi(\eps_B^F + \eps_{A_2}^F - \Labe_\emptyset^{A_2\sqcup B})\\
&\qquad, \Labe_{A_2\sqcup B}^F + (\eps_B^F)_{A_2}^F + \eps_{A_2}^F)\;.
\end{equs}
We claim that $A_2 \cap B = \emptyset$. Indeed, as noted in the proof of Lemma~\ref{lem:new_coloured_forest}, since $B\in\Adm_j(F,\hat F)$ one has $(\hat F \cup_j B)^{-1}(j)=B$ and since $A_2\in \Adm_i(F,\hat F \cup_j B)$ one has $A_2\cap (\hat F \cup_j B)^{-1}(j)=\emptyset$ by property 1 in Assumption~\ref{ass:1}. This implies that
\[
(\eps_B^F)_{A_2}^F=\eps_B^F\;,
\]
since $\eps_B^F$ has support in $\partial(B,F)$ which is disjoint from $E_{A_2}$. 
This is because, for $e=(e_+,e_-)\in\partial(B,F)$ we have by definition $e_+\in N_B\subset N_F\setminus N_{A_2}$ and therefore $e\notin E_{A_2}$. 

Similarly, one has
\begin{equs}
(\id &\otimes \Delta_j)\Delta_i \tau= 
\\ & = 
 \sum_{A \in \Adm_i(F,\hat F)} 
\sum_{C \in \Adm_j(F,\hat F \cup_i A)}
 \sum_{\eps_C^F, \eps_A^F}\sum_{\Labn_C,\Labn_A} 
 {1\over \eps_A^F! \eps_C^F!}\binom{\Labn}{\Labn_A}\binom{\Labn-\Labn_A}{\Labn_C}
 \label{com_fact} \\
& (A, \hat F\restr A, \Labn_A + \pi \eps_A^F, \Labo, \Labe) \\
&\otimes (C, (\hat F\cup_i A) \restr C, \Labn_C + \pi \eps_C^F, \Labo + \Labn_A + \pi(\eps_A^F - \Labe_\emptyset^{C}), \Labe_A^F + \eps_A^F)\\
&\otimes (F, (\hat F \cup_i A) \cup_j C, \Labn - \Labn_A-\Labn_C, \Labo + \Labn_A + \Labn_C 
+ \pi((\eps_A^F)_C^F + \eps_C^F - \Labe_\emptyset^{C\cup A})\\
&\qquad , \Labe_{C\cup A}^F + (\eps_A^F)_C^F + \eps_C^F)\;.
\end{equs}
By Assumption~\ref{ass:6}, there is a bijection between the outer sums of \eqref{com_fact0}
and \eqref{com_fact} given by
$(A,C) \leftrightarrow (A_1 \sqcup A_2,B)$, with inverse $(A_1,A_2,B) \leftrightarrow (A \cap C,A\backslash C,C)$. Then one then has
indeed $(\hat F \restr B)\cup_i A_1 = (\hat F\cup_i A) \restr C$. Similarly, since
$i < j$  and $A_2\cap C=\emptyset$,
one has $(\hat F \cup_j B) \cup_i A_2 = (\hat F \cup_i A) \cup_j C$, so we only need to 
consider the decorations and the combinatorial factors.

For this purpose, we define
\begin{equs}
\bar \eps_{A_1}^C = \eps_A^F\, \un{E_C}\;,\qquad
& \bar \eps_{A_2}^F = (\eps_A^F)\, \un{\partial({A_2},F)}\;,
\\ \bar \eps_{A_1,C}^F = \eps_A^F\,\un{\partial(C,F)}\;,\qquad
&\bar \eps_C^F = \eps_C^F + \bar \eps_{A_1,C}^F\;, 
\end{equs}
as well as
\begin{equ}
\bar \Labn_{A_1} = (\Labn_A \restr C) + \pi \bar \eps_{A_1,C}^F\;,\quad
\bar \Labn_{A_2} = \Labn_A \restr (F\setminus C)\;,\quad
\bar \Labn_C = \Labn_C + (\Labn_A \restr C)\;.
\end{equ}
As before, the supports of these functions are consistent with our notations, with the particular case of $\bar \eps_{A_1,C}^F$ whose support is contained in $\partial(A,F)\cap\partial(C,F)=\partial(A_1,F)\cap\partial(C,F)$, where we use again the fact that $A_2\cap C=\emptyset$. Moreover the map 
\[
(\eps_A^F,\eps_C^F,\Labn_A,\Labn_C)\mapsto(\bar \eps_{A_1}^C,\bar \eps_{A_2}^C,\bar \eps_{A_1,C}^F, \bar \eps_C^F,\bar \Labn_{A_1},\bar \Labn_{A_2},\bar \Labn_C)
\]
is invertible on its image, given by the functions with the correct supports and the additional constraint
\[
\bar \Labn_{A_1}\geq \pi\bar \eps_{A_1,C}^F\;.
\]
Its inverse is given by
\begin{equs}[2]
\eps_C^F&=\bar \eps_C^F-\bar \eps_{A_1,C}^F\;,&\quad
\eps_A^F&=\bar \eps_{A_1}^C+\bar \eps_{A_2}^F+\bar \eps_{A_1,C}^F\;,
\\ \Labn_A&=\bar \Labn_{A_1}+\bar \Labn_{A_2}-\pi \bar \eps_{A_1,C}^F\;,&\quad
 \Labn_C&=\bar \Labn_C-\bar \Labn_{A_1}+\pi \bar \eps_{A_1,C}^F\;.
\end{equs}

Following a calculation virtually identical to \eqref{e:calc1} and \eqref{e:calc2}, combined
with the fact that $\Labn_A + \Labn_C = \bar \Labn_C + \bar \Labn_{A_2}$, we see that
\begin{equs}
{1\over \eps_A^F! \,\eps_C^F!}&=
\frac1{\bar \eps_{A_1}^C!\,\bar \eps_{A_2}^F!\,\bar \eps_{A_1,C}^F!} \frac1{(\bar \eps_C^F-\bar \eps_{A_1,C}^F)!}=
{1\over \bar \eps_C^F! \,\bar \eps_{A_1}^C! \, \bar \eps_{A_2}^F!}
\binom{\bar \eps_C^F}{\bar \eps_{A_1,C}^F}\;, \\
\binom{\Labn}{\Labn_A}\binom{\Labn-\Labn_A}{\Labn_C}
&=
\binom{\bar \Labn_C + \bar \Labn_{A_2}}{\bar \Labn_{A_1} + \bar \Labn_{A_2} - \pi \bar \eps_{A_1,C}^F} 
\binom{\Labn}{\bar \Labn_C + \bar \Labn_{A_2}} \;.
\end{equs}
Since $A_2\cap C=\emptyset$ and $A_1\subset C$, we can simplify this expression further and obtain
\[
\binom{\bar \Labn_C + \bar \Labn_{A_2}}{\bar \Labn_{A_1} + \bar \Labn_{A_2} - \pi \bar \eps_{A_1,C}^F} =
\binom{\bar \Labn_C }{\bar \Labn_{A_1}  - \pi \bar \eps_{A_1,C}^F} \;.
\]
Following the same argument as \eqref{e:binomfinal}, we conclude that
\begin{equ}
\sum_{\bar \eps_{A_1,C}^F} 
\binom{\bar \eps_C^F}{\bar \eps_{A_1,C}^F}
\binom{\bar \Labn_C }{\bar \Labn_{A_1}  - \pi \bar \eps_{A_1,C}^F} 
=
\binom{\bar \Labn_C + \pi\bar \eps_C^F}{\bar \Labn_{A_1}}\;,
\end{equ}
so that \eqref{com_fact} can be rewritten as
\begin{equs}[e:finalPrefact]
(\id &\otimes \Delta_j)\Delta_i \tau= 
\sum_{C \in \Adm_j(F,\hat F)}
 \sum_{A \in \Adm_i(F,\hat F\restr C)} 
 \sum_{A \in \Adm_i(F,\hat F\cup_j C)} 
 \sum_{\bar \eps_{A_1}^C,\bar \eps_{A_2}^C,\bar \eps_C^F}\sum_{\bar \Labn_{A_1},\bar \Labn_{A_2},\bar \Labn_C} 
 \\ & {1\over \bar \eps_C^F! \bar \eps_{A_1}^C!\bar \eps_{A_2}^F!}
\binom{\Labn}{\bar \Labn_C + \bar \Labn_{A_2}} 
\binom{\bar \Labn_C + \pi\bar \eps_C^F}{\bar \Labn_{A_1}} 
\\ &
 (A_1\sqcup A_2, \hat F\restr A, \bar\Labn_{A_1}+\bar\Labn_{A_2}+ \pi(\bar\eps_{A_1}^C+\bar\eps_{A_2}^F), \Labo, \Labe) \\
&\otimes (C, (\hat F\restr C)\cup_i A_1, \bar\Labn_C + \pi \bar\eps_C^F-\bar\Labn_{A_1}, \Labo + \bar\Labn_{A_1} + \pi(\bar\eps_{A_1}^C - \Labe_\emptyset^{A_1}), \Labe_{A_1}^C + \bar\eps_{A_1}^C)\\
&\otimes (F, (\hat F \cup_j C) \cup_i A_2, \Labn - \bar\Labn_C - \bar\Labn_{A_2}, \Labo +\bar\Labn_C + \bar\Labn_{A_2}
+ \pi(\bar\eps_C^F + \bar\eps_{A_2}^F - \Labe_\emptyset^{A_2\sqcup C})\\
&\qquad, \Labe_{A_2\sqcup C}^F + \bar\eps_C^F + \bar\eps_{A_2}^F)\;.
\end{equs}
We have also used the fact that
\[
(\pi\eps_A^F)\restr N_C = \pi(\eps_A^F\un{E_C})+\pi(\eps_A^F\un{\partial(C,F)})=\pi\bar\eps_{A_1}^C+\pi\bar\eps^F_{A_1,C}.
\]

On the other hand, since $A_2$ and $B$ are disjoint, one has
\begin{equ}
\binom{\Labn}{\Labn_B} 
\binom{\Labn-\Labn_B}{\Labn_{A_2}} 
=
{\Labn! \over \Labn_B!\, \Labn_{A_2}! \, (\Labn - \Labn_B - \Labn_{A_2})!}
=
\binom{\Labn}{\Labn_B + \Labn_{A_2}}\;,
\end{equ}
so that \eqref{com_fact0} can be rewritten as
\begin{equs}[e:finalPrefact2]
{} &
\CM^{(13)(2)(4)} \bigl(\Delta_i \otimes \Delta_i\bigr)\Delta_j \,\tau =
\\ & = \sum_{B \in \Adm_j(F,\hat F)} 
\sum_{A_1 \in \Adm_i(B,\hat F \restr B)} 
\sum_{A_2 \in \Adm_i(F,\hat F \cup_j B)} 
\sum_{\eps_B^F, \eps_{A_1}^B,\eps_{A_2}^F}\sum_{\Labn_B,\Labn_{A_1},\Labn_{A_2}} \\
& {1\over \eps_B^F! \eps_{A_1}^B!\eps_{A_2}^F!}
\binom{\Labn}{\Labn_B + \Labn_{A_2}}
\binom{\Labn_B + \pi\eps_B^F}{\Labn_{A_1}} \\
&(A_1 \sqcup A_2, \hat F \restr A, \Labn_{A_1}+\Labn_{A_2}+ \pi(\eps_{A_1}^B+\eps_{A_2}^F), \Labo ,\Labe_\emptyset^{A_1\sqcup A_2}) \\
&\otimes (B, (\hat F\restr B)\cup_i A_1, \Labn_B + \pi\eps_B^F - \Labn_{A_1}, \Labo + \Labn_{A_1} + \pi(\eps_{A_1}^B - \Labe_\emptyset^{A_1}), \Labe_{A_1}^B + \eps_{A_1}^B)\\
&\otimes (F, (\hat F \cup_j B) \cup_i A_2, \Labn - \Labn_B - \Labn_{A_2}, \Labo +\Labn_B + \Labn_{A_2}
+ \pi(\eps_B^F + \eps_{A_2}^F - \Labe_\emptyset^{A_2}),\\
&\qquad \Labe_{A_2}^F + \eps_B^F + \eps_{A_2}^F)\;.
\end{equs}
Comparing this with \eqref{e:finalPrefact} we obtain the desired result.
\end{proof}

\begin{remark}\label{cointera}
Let $0<i < j$. If Assumptions~\ref{ass:1}--\ref{ass:6} hold, then the space $\scal{\Tra_{j}}$ is a 
comodule bialgebra over the bialgebra $\scal{\Tra_i}$ with coaction $\Delta_i$, in the sense of \cite[Def~2.1(e)]{Molnar}. 
In the terminology of \cite[Def. 1]{2016arXiv160508310F},
 $\scal{\Tra_{j}}$ and $\scal{\Tra_{i}}$ are in {\it cointeraction}.
\end{remark}

\begin{remark}\label{rem:i<j}
Note that the roles of $i$ and $j$ are asymmetric for $0<i < j$: 
$\scal{\Tra_{i}}$ is in general \textit{not} a
comodule bialgebra over $\scal{\Tra_j}$.
This is a consequence of the asymmetry between the roles played by $i$ and $j$ in Assumption~\ref{ass:1}. In particular, every $A\in\Adm_i(F,\hat F)$ has empty intersection with $\hat F_j$, while any $B\in\Adm_j(F,\hat F)$ can contain connected components of $\hat F_i$.
\end{remark}

\subsection{Skew products and group actions}
\label{sec:skew}
We assume throughout this subsection that $0<i<j$ and that 
Assumptions~\ref{ass:1}--\ref{ass:6} hold. Following \cite{Molnar},
we define a space $\CH_{ij} = \CH_i \ltimes \CH_j$ as follows.
As a vector space, we set $\CH_{ij} = \CH_i \hattimes \CH_j$, and we
endow it with the product and coproduct
\begin{equs}
(a \otimes b)\cdot (\bar a \otimes \bar b)
&= (a\cdot \bar a) \otimes (b\cdot \bar b)\;,\label{deltaij}\\
 \Delta_{ij}(a \otimes b) &= \CM^{(14)(3)(2)(5)}(\id \otimes \id \otimes \id \otimes \Delta_i)(\Delta_i \otimes \Delta_j)(a\otimes b)\;. 
\end{equs}
We also define $\one_{ij}\eqdef\one_i\otimes\one_j$, 
$\one_{ij}^\star\eqdef\one_i^\star\otimes \one_j^\star$. 

\begin{proposition}
The 5-tuple $(\CH_{ij},\cdot,\Delta_{ij},\one_{ij},\one_{ij}^\star)$ is a Hopf algebra.
\end{proposition}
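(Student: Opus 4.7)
My approach is to treat this as an instance of the smash product construction of \cite{Molnar}, using the cointeraction identity \eqref{e:intertwine} as the main algebraic input. I would verify the bialgebra axioms directly and then construct the antipode inductively, as in Proposition \ref{prop:Hopf}. In Sweedler notation, writing $\Delta_i(a) = a_{(1)}\otimes a_{(2)}$, $\Delta_j(b) = b_{(1)}\otimes b_{(2)}$, and $\Delta_i(b) = b_{[-1]}\otimes b_{[0]}$ for the coaction of Lemma \ref{prop:comodule}, the definition \eqref{deltaij} unfolds to
\[
\Delta_{ij}(a\otimes b) \;=\; (a_{(1)}\cdot b_{(2)[-1]})\otimes b_{(1)}\otimes a_{(2)}\otimes b_{(2)[0]}\;,
\]
which will be triangular on the bigraded space $\CH_i\hattimes \CH_j$, hence well-defined. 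The algebra axioms are immediate, and multiplicativity of $\Delta_{ij}$ will reduce to multiplicativity of $\Delta_i$ and $\Delta_j$, both consequences of \eqref{mult_Deltaa2}.

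The counit property should reduce to the auxiliary identity $(\id \otimes \one_j^\star)\Delta_i(b) = \one_j^\star(b)\,\one_i$ for every $b\in \CH_j$, which I would prove by direct inspection of \eqref{def:Deltabar}: the only admissible subforest $A$ for which $(F,\hat F\cup_i A,\ldots)\in \Units_j$ is $A=\emptyset$, and this forces $\eps_A^F=0$ and $\Labn_A=0$, so that the corresponding term equals $\one\otimes\tau$ when $\tau\in\Units_j$ (and vanishes otherwise).

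Coassociativity of $\Delta_{ij}$ is where I expect the main difficulty to lie. Expanding both $(\Delta_{ij}\otimes\id)\Delta_{ij}(a\otimes b)$ and $(\id\otimes\Delta_{ij})\Delta_{ij}(a\otimes b)$ into six-fold tensors with positions alternating between $\CH_i$ and $\CH_j$, the $\CH_i$-components will match by coassociativity of $\Delta_i$ on $\CH_i$, and the $\CH_j$-components by coassociativity of $\Delta_j$ on $\CH_j$ together with coassociativity of the coaction $\Delta_i$ on $\CH_j$ (all three being consequences of \eqref{mult_Deltaa1}). The remaining ``crossed'' terms, arising when one iteration applies $\Delta_j$ to $b_{[0]}$ while the other applies the coaction $\Delta_i$ to $b_{(2)}$, will match precisely by the cointeraction identity \eqref{e:intertwine}. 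This one-shot invocation of Proposition \ref{prop:doublecoass} is the algebraic heart of the argument; the remainder is careful bookkeeping on five- and six-fold tensor products.

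Finally, to construct the antipode $\CA_{ij}$, I would induct on the bigrading \eqref{e:grading0} inherited through \eqref{newtensor}, essentially verbatim from the proof of Proposition \ref{prop:Hopf}. On the sub-bialgebra of bidegree-$(0,0)$ elements --- isomorphic to a polynomial Hopf algebra in commuting generators $X^k\otimes X^\ell$ --- one sets $\CA_{ij}(X^k\otimes X^\ell) = (-1)^{|k|+|\ell|}\,X^k\otimes X^\ell$. On higher-degree elements one solves the left-antipode equation $\CM(\CA_{ij}\otimes\id)\Delta_{ij} = \one_{ij}\one_{ij}^\star$ recursively, with the triangularity of $\Delta_{ij}$ and the inductive bound that $(\CA_{ij}\tau)_n = 0$ unless $n\ge |\tau|_\bi$ ensuring that each step terminates at finite order; multiplicativity of $\CA_{ij}$, and hence the right-antipode identity, will then follow from the same semigroup argument used in the proof of Proposition \ref{prop:Hopf}.
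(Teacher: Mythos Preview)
Your plan is correct and largely parallels the paper's argument: both identify $\CH_{ij}$ with the smash coproduct of \cite{Molnar}, invoke the cointeraction identity \eqref{e:intertwine} for coassociativity, and use the auxiliary counit identity $(\id\otimes\one_j^\star)\Delta_i=\one_i\,\one_j^\star$ on $\CH_j$.

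Two differences are worth noting. First, before anything else the paper checks that the coaction $\Delta_i$ actually descends from $\scal{\Tra_j}$ to the quotient $\CH_j=\scal{\Tra_j}/\ker\CK_j$: this requires the observation that $\Delta_i\tau=\one\otimes\tau$ for $\tau\in\M_j$ (since $\Adm_i(F,j)=\{\emptyset\}$ by Assumptions~\ref{ass:1} and~\ref{ass:5}), whence $(\CK_i\otimes\CK_j)\Delta_i=(\CK_i\otimes\CK_j)\Delta_i\CK_j$. You implicitly use this coaction on $\CH_j$ when you write $\Delta_i(b)=b_{[-1]}\otimes b_{[0]}$, but Lemma~\ref{prop:comodule} only gives it on $\scal{\Tra_j}$; you should address this step explicitly. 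Second, for the antipode the paper does not rerun the inductive argument of Proposition~\ref{prop:Hopf} but instead writes down the closed formula
\[
\CA_{ij}=(\CA_i\,\CM\otimes\CA_j)(\id\otimes\Delta_i)
\]
and verifies it directly. This is shorter and more conceptual, since it reuses the antipodes $\CA_i$ and $\CA_j$ already built in Proposition~\ref{prop:Hopf}; your inductive route would also work but essentially reproves their existence from scratch.
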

\begin{proof}
We first note that, for every $\tau \in \M_j$, one has $\Delta_i \tau = \one \otimes \tau$ since 
one has $\Adm_i(F,j) = \{\emptyset\}$ by Assumptions~\ref{ass:1} and~\ref{ass:5}.
It follows that one has the identity
\begin{equ}
(\CK_i \otimes \CK_j) \Delta_i = (\CK_i \otimes \CK_j) \Delta_i \CK_j\;,
\end{equ}
see also \eqref{e:CKCK}. Combining this with Lemma~\ref{prop:comodule}, we conclude that one can indeed view $\Delta_i$ as a map $\Delta_i \colon \CH_j \to \CH_i \hattimes \CH_j$,
so that \eqref{deltaij} is well-defined.

By Proposition~\ref{prop:doublecoass}, $\Delta_{ij}$ is coassociative, and it is multiplicative
with respect to the product, see also \cite[Thm~2.14]{Molnar}.
Note also that on $\CH_j$ one has the identity
\begin{equ}
(\id \otimes \one_j^\star)\Delta_i = \one_i \,\one_j^\star\;,
\end{equ}
where $\one_i$ is the unit in $\CH_i$. As a consequence, $\one_{ij}^\star$
is the counit for $\CH_{ij}$, and one can verify that
\begin{equ}
\CA_{ij} = (\CA_i \CM \otimes \CA_j)(\id \otimes \Delta_i)\;,
\end{equ}
is the antipode turning $\CH_{ij}$ into a Hopf algebra. 
\end{proof}
Let us recall that $\CG_i$ denotes the character group of $\CH_i$.
\begin{lemma}
Let us set for $g\in \CG_i$, $f\in \CG_j$, the element $g f\in \CH_j^*$
\[
(g f)\tau \eqdef (g\otimes f)\, \Delta_i \tau, \qquad  \tau\in \CH_j .
\]
Then this defines a left action of $\CG_i$ onto $\CG_j$ by group automorphisms.
\end{lemma}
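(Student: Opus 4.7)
The plan is to verify three claims in order: (a) for every $g\in\CG_i$ and $f\in\CG_j$ the functional $gf$ is a well-defined character of $\CH_j$; (b) the assignment $(g,f)\mapsto gf$ defines a left action of the group $\CG_i$ on $\CG_j$; and (c) each fixed $g\in\CG_i$ acts by a group automorphism of $\CG_j$. The three ingredients available to us are multiplicativity and coassociativity of $\Delta_i$ (Proposition~\ref{prop:coassoc}), the comodule property $\Delta_i\colon\CH_j\to\CH_i\hattimes\CH_j$ together with its counit axiom (Lemma~\ref{prop:comodule}, noting that $\Delta_i$ descends to the quotient as already observed in the construction of $\CH_{ij}$), and the cointeraction identity of Proposition~\ref{prop:doublecoass}.

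For (a), the sum defining $(g\otimes f)\Delta_i\tau$ is meaningful for every $\tau\in\CH_j$ by the bigrading considerations of Remark~\ref{rem:dual}. Multiplicativity of $gf$ follows by evaluating $g\otimes f$ on the identity $\Delta_i(\tau\cdot\sigma)=\Delta_i\tau\cdot\Delta_i\sigma$ (from Proposition~\ref{prop:coassoc}) and using that both $g$ and $f$ are characters. The support/finiteness condition singling out $\CH_j^*$ inside the full algebraic dual is inherited from those of $g$ and $f$ via the triangularity of $\Delta_i$.

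For (b), the identity axiom $\one_i^\star f = f$ is precisely the counit identity $(\one_i^\star\otimes\id)\Delta_i=\id$ from Lemma~\ref{prop:comodule}. The compatibility $(g_1\star g_2)f=g_1(g_2 f)$, where $\star$ denotes the product in $\CG_i$, amounts after unfolding to the equality of $(g_1\otimes g_2\otimes f)(\Delta_i\otimes\id)\Delta_i\tau$ and $(g_1\otimes g_2\otimes f)(\id\otimes\Delta_i)\Delta_i\tau$, which is the coassociativity of the coaction, again from Lemma~\ref{prop:comodule}.

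The main step is (c). I need to establish $g(f_1\star f_2)=(gf_1)\star(gf_2)$, where $\star$ now denotes the product in $\CG_j$ induced by $\Delta_j$. Unfolding the left-hand side gives $(g\otimes f_1\otimes f_2)(\id\otimes\Delta_j)\Delta_i\tau$. Unfolding the right-hand side produces $(g\otimes f_1\otimes g\otimes f_2)(\Delta_i\otimes\Delta_i)\Delta_j\tau$; collapsing the two copies of $g$ by multiplicativity, i.e.\ $g(a)g(a')=g(a\cdot a')$, converts this expression into $(g\otimes f_1\otimes f_2)\,\CM^{(13)(2)(4)}(\Delta_i\otimes\Delta_i)\Delta_j\tau$. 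The two sides are then matched by the cointeraction identity \eqref{e:intertwine} of Proposition~\ref{prop:doublecoass}. Bijectivity of $f\mapsto gf$ is automatic from (b), since $g^{-1}\in\CG_i$ provides a two-sided inverse map, so the action is indeed by group automorphisms. The hard part of the argument has been absorbed into Proposition~\ref{prop:doublecoass} upstream; the only delicate observation that remains at this stage is that it is exactly multiplicativity of the acting character $g$ that bridges the two tensor placements of $g$ on the right-hand side of (c) and matches them with the contraction structure $\CM^{(13)(2)(4)}$ appearing in the cointeraction formula.
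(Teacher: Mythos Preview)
Your proof is correct and follows the same route as the paper: the automorphism property $g(f_1\star f_2)=(gf_1)\star(gf_2)$ is obtained by dualising the cointeraction identity of Proposition~\ref{prop:doublecoass}, which is exactly the argument the paper gives. You have simply spelled out in detail the two preliminary points (that $gf$ is a character and that $(g,f)\mapsto gf$ is a left action) which the paper leaves implicit, relying tacitly on the comodule structure of Lemma~\ref{prop:comodule} and the multiplicativity of $\Delta_i$.
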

\begin{proof}
The dualization of the cointeraction property \eqref{e:intertwine} yields that
$g(f_1f_2) = (g f_1)(g f_2)$,
which means that this is indeed an action. 
\end{proof}
\begin{proposition}
The semi-direct product $\CG_{ij}\eqdef \CG_i\ltimes \CG_j$, with group multiplication 
\begin{equ}\label{groupm}
(g_1,f_1)(g_2,f_2)=(g_1g_2, f_1(g_1f_2)), \qquad g_1,g_2\in \CG_i, \ f_1,f_2\in \CG_j,
\end{equ}
defines a sub-group of the group of characters of $\CH_{ij}$.
\end{proposition}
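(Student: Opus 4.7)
The plan is to exhibit an explicit group homomorphism
\begin{equ}
\iota \colon \CG_{ij} \to \mathop{\mathrm{Char}}(\CH_{ij})\;,\qquad (g,f) \mapsto g \otimes f\;,
\end{equ}
where $g \otimes f$ denotes the linear functional on $\CH_{ij} = \CH_i \hattimes \CH_j$ given by $(g \otimes f)(a \otimes b) = g(a)\,f(b)$ (this makes sense on $\CH_i \hattimes \CH_j$ by Remark~\ref{rem:dual}). I would then verify, in three steps, that $\iota$ takes values in $\mathop{\mathrm{Char}}(\CH_{ij})$, is injective, and is a group morphism with respect to the convolution product on characters; this automatically makes its image a subgroup, which is what is claimed.

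The first two steps are routine. Multiplicativity of $g \otimes f$ on $\CH_{ij}$ follows immediately from the definition \eqref{deltaij} of the product as a tensor product of the products on $\CH_i$ and $\CH_j$, together with the fact that $g \in \CG_i$ and $f \in \CG_j$ are themselves characters. Injectivity of $\iota$ follows by restricting a character of $\CH_{ij}$ to elements of the form $a \otimes \one_j$ and $\one_i \otimes b$.

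The substance of the proof lies in showing that $\iota$ intertwines the semi-direct product \eqref{groupm} with the convolution product on $\mathop{\mathrm{Char}}(\CH_{ij})$. Using multiplicativity of $\Delta_{ij}$ and of the characters involved, and writing $a\otimes b = (a \otimes \one_j)\cdot(\one_i \otimes b)$, it suffices to check the identity
\begin{equ}
\bigl((g_1 \otimes f_1) \star (g_2 \otimes f_2)\bigr)(a \otimes b) = (g_1 g_2)(a)\cdot \bigl(f_1\cdot(g_1 f_2)\bigr)(b)
\end{equ}
separately on elements of the form $a \otimes \one_j$ and $\one_i \otimes b$. For the first, unfolding $\Delta_{ij}(a \otimes \one_j)$ through \eqref{deltaij} and using $\Delta_i \one_j = \one_i \otimes \one_j$, the map $\CM^{(14)(3)(2)(5)}$ reduces the computation to $\Delta_i a$, and applying $(g_1\otimes f_1)\otimes(g_2\otimes f_2)$ yields exactly $(g_1 g_2)(a)$ after using $f_k(\one_j) = 1$. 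For the second, the same unfolding applied to $\Delta_{ij}(\one_i \otimes b)$ produces, with Sweedler notation $\Delta_j b = \sum b^{(1)} \otimes b^{(2)}$ and $\Delta_i b^{(2)} = \sum b^{(2,1)} \otimes b^{(2,2)}$, the expression $\sum b^{(2,1)} \otimes b^{(1)} \otimes \one_i \otimes b^{(2,2)}$, so that the pairing against $(g_1\otimes f_1)\otimes(g_2\otimes f_2)$ equals
\begin{equ}
\sum g_1(b^{(2,1)})\,f_1(b^{(1)})\,f_2(b^{(2,2)}) = \sum f_1(b^{(1)})\,(g_1 f_2)(b^{(2)}) = \bigl(f_1\cdot (g_1 f_2)\bigr)(b)\;,
\end{equ}
which is precisely the $\CG_j$-component of \eqref{groupm}.

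The main obstacle — though of a purely notational nature — will be to carefully track the ordering conventions in $\CM^{(14)(3)(2)(5)}$ and to correctly identify the coaction $\Delta_i b^{(2)}$ inside $\Delta_{ij}$ with the action $g_1 f_2$ defined in the preceding lemma; once this bookkeeping is done, the computation collapses to the two elementary identifications above. Associativity and the group structure on $\CG_{ij}$ itself have already been established (the action of $\CG_i$ on $\CG_j$ is by group automorphisms, so \eqref{groupm} is a bona fide semi-direct product), so together with the morphism property above, the image of $\iota$ is a subgroup of $\mathop{\mathrm{Char}}(\CH_{ij})$, concluding the proof.
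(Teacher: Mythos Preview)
Your proposal is correct and follows essentially the same approach as the paper: both identify the semi-direct product law \eqref{groupm} with the convolution product dual to $\Delta_{ij}$. The paper's proof is much terser—it simply asserts that \eqref{groupm} is the dualisation of $\Delta_{ij}$ and then writes down the inverse $(g,f)^{-1}=(g^{-1},g^{-1}f^{-1})$ explicitly—whereas you spell out the homomorphism computation in Sweedler notation and rely on the group structure of the domain to handle inverses; these are the same argument at different levels of detail.
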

\begin{proof}
Note that \eqref{groupm} is the dualisation of $\Delta_{ij}$ in \eqref{deltaij}.
The inverse is given by
\begin{equ}
(g,f)^{-1} = (g^{-1}, g^{-1}f^{-1})\;,
\end{equ}
since
$(g, f)\cdot (g^{-1}, g^{-1}f^{-1}) = (gg^{-1}, f (gg^{-1}f^{-1})) = (\one_i^\star,\one_j^\star)$.
\end{proof}

\begin{proposition}\label{left-right}
Let $V$ be a vector space such that $\CG_i$ acts on $V$ on the left and $\CG_j$ acts on $V$ on the right, and we assume that
\begin{equ}\label{actduali}
g(h f) = (g h)(g f)\;, \qquad g\in \CG_i, \ f\in \CG_j, \ h\in V.
\end{equ}
Then $\CG_{ij}$ acts on the left on $V$ by
\begin{equ}\label{actsleft}
(g,f)h = (g h)f^{-1}\;, \qquad g\in \CG_i, \ f\in \CG_j, \ h\in V.
\end{equ}
\end{proposition}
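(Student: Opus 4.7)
The plan is to verify directly the two axioms of a left group action: that the identity element $(\one_i^\star,\one_j^\star)$ of $\CG_{ij}$ acts as the identity on $V$, and that the formula \eqref{actsleft} is compatible with the semidirect-product multiplication \eqref{groupm}. The first axiom is immediate since the identity in $\CG_i$ acts trivially on $V$ by assumption and the identity in $\CG_j$ also acts trivially, so $(\one_i^\star h)(\one_j^\star)^{-1} = h$.

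The compatibility axiom is the substantial (but still purely formal) computation. Given $(g_1,f_1),(g_2,f_2)\in\CG_{ij}$ and $h\in V$, I would expand both sides. The left-hand side, using \eqref{groupm} and the definition \eqref{actsleft}, reads
\begin{equ}
\bigl((g_1,f_1)(g_2,f_2)\bigr) h = (g_1g_2,\; f_1(g_1f_2))\,h = \bigl((g_1g_2)h\bigr)\,\bigl(f_1(g_1f_2)\bigr)^{-1}\;.
\end{equ}
The right-hand side is
\begin{equ}
(g_1,f_1)\bigl((g_2,f_2)h\bigr) = (g_1,f_1)\bigl((g_2h)f_2^{-1}\bigr) = \bigl(g_1\bigl((g_2h)f_2^{-1}\bigr)\bigr)\,f_1^{-1}\;.
\end{equ}

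The next step is to reduce the right-hand side using the compatibility relation \eqref{actduali}: applying it with $h$ replaced by $g_2 h\in V$ and $f$ replaced by $f_2^{-1}\in\CG_j$, one obtains
\begin{equ}
g_1\bigl((g_2h)f_2^{-1}\bigr) = \bigl(g_1(g_2h)\bigr)\,\bigl(g_1 f_2^{-1}\bigr)\;.
\end{equ}
Now I use that $\CG_i$ acts on the left on $V$, so $g_1(g_2h) = (g_1g_2)h$, and that $\CG_i$ acts on $\CG_j$ by group automorphisms (as established in the lemma preceding the proposition), hence $g_1 f_2^{-1} = (g_1 f_2)^{-1}$. Substituting, the right-hand side becomes $((g_1g_2)h)(g_1f_2)^{-1} f_1^{-1}$, which coincides with the left-hand side since in $\CG_j$ one has $(f_1(g_1f_2))^{-1} = (g_1f_2)^{-1} f_1^{-1}$.

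There is no real obstacle here: the whole argument is a short algebraic manipulation whose only non-trivial ingredient is the hypothesis \eqref{actduali}, which plays exactly the role of transporting $g_1$ past the right action of $f_2^{-1}$. The cointeraction property \eqref{e:intertwine} is not invoked directly at this stage; it has already done its work in ensuring that \eqref{actduali} can hold in the concrete applications we have in mind (which is the content of Theorem~\ref{theo:algebra} later on). Thus the proposition is, in essence, a clean abstract consequence of the semidirect-product structure together with the compatibility \eqref{actduali}.
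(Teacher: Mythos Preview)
Your proof is correct and follows essentially the same route as the paper's: both arguments expand $(g_1,f_1)\bigl((g_2,f_2)h\bigr)$, apply the compatibility relation \eqref{actduali} to move $g_1$ past the right action of $f_2^{-1}$, and then use that $\CG_i$ acts on $\CG_j$ by automorphisms so that $g_1 f_2^{-1}=(g_1 f_2)^{-1}$. You are slightly more explicit about the identity axiom and the automorphism step, but the substance is identical.
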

\begin{proof}
Now we have
\begin{equs}
(g_1,f_1)&\bigl((g_2,f_2)h\bigr) 
= (g_1,f_1) \bigl((g_2h)f_2^{-1}\bigr) 
= \bigl(g_1\bigl((g_2h)f_2^{-1}\bigr) \bigr)f_1^{-1}
\\ & = \bigl(g_1g_2 h\bigr)\bigl(g_1f_2^{-1}\bigr)f_1^{-1}=(g_1g_2,f_1(g_1f_2))h
= \bigl((g_1,f_1)(g_2,f_2)\bigr) h\;,
\end{equs}
which is exactly what we wanted. 
\end{proof}
For instance, we can choose as $V$ the dual space $\CH_j^*$ of $\CH_j$. For all $h\in\CH_j^*$, $g\in \CG_i$ 
and $f\in \CG_j$ we can set
\[
\CH_j^*\ni g h:=(g\otimes h)\Delta_i, \qquad \CH_j^*\ni hf:=(h\otimes f)\Delta_j.
\]
In this case \eqref{actduali} is the dualisation of the cointeraction property \eqref{e:intertwine}. 
The space $\CH_j$ is a left comodule over $\CH_{ij}$ with coaction 
given by $\beta_{ij} \colon \CH_j \to \CH_{ij} \otimes \CH_j$ with 
\begin{equ}[e:lact]
\beta_{ij}  = \sigma^{(132)}(\Delta_i \otimes \CA_j)\Delta_j\;,
\end{equ} 
where $\sigma^{(132)}(a\otimes b\otimes c)\eqdef a\otimes c\otimes b$. 
Note that \eqref{actsleft} is the dualisation of \eqref{e:lact}. 

\section{A specific setting suitable for renormalisation}
\label{sec4}

We now specialise the framework described in the previous section to the situation 
of interest to us. We define two collections $\Adm_1$ and $\Adm_2$ 
as follows.

\begin{definition}\label{def:admspecific}
For any coloured forest $(F,\hat F)$  as in Definition~\ref{def:colour} we define the collection
$\Adm_1(F,\hat F)$ of all subforests $A$ of $F$
such that $\hat F_1 \subset A$ and $\hat F_2 \cap A=\emptyset$.
We also define $\Adm_2(F,\hat F)$ to consist of all subforests $A$ of $F$
with the following properties:
\begin{enumerate}
\item $A$ contains $\hat F_2$
\item for every non-empty connected component $T$ of $F$, $T\cap A$ is connected and contains the root of $T$
\item for every connected component $S$ of
$\hat F_1$, one has either $S \subset A$ or $S \cap A = \emptyset$.
\end{enumerate}
\end{definition}
The images in Examples~\ref{ex:colours} and~\ref{ex:colours2} above are compatible with these definitions.
We recall from Definition~\ref{def:tra_i} that $\C_i$ and $\Tra_i$ are given for $i=1,2$ by
\begin{equs}
\begin{split}
\C_i \ & \ = \{(F,\hat F) \in \C \,:\, \hat F \le i\;\ \&\;\ \{F,\hat F_i\}\subset\Adm_i(F,\hat F)\}\;,
\\ \Tra_i \ & \ = \{(F,\hat F, \Labn, \Labo, \Labe) \in \Tra \,:\, (F,\hat F)\in\C_i\}\;.
\end{split}
\end{equs}

\begin{lemma}\label{lem:Fspecific}
For $\tau = (F,\hat F)\in\C$ we have
\begin{itemize}
\item $\tau \in \C_1$ if and only if $\hat F \le 1$
\item $\tau \in \C_2$ if and only if $\hat F\leq 2$ and, for every non-empty connected component $T$ of $F$, $\hat F_2\cap T$ is a subtree of $T$ containing the root of $T$. 
\end{itemize}
\end{lemma}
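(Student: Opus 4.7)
The proof is a matter of unpacking definitions, so my plan is simply to check each of the two cases separately by comparing Definition~\ref{def:admspecific} with the characterisation of $\C_i$ in Definition~\ref{def:tra_i}.

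For the first bullet, I would observe that whenever $\hat F \le 1$, one automatically has $\hat F_2 = \emptyset$. Consequently, both membership conditions $\hat F_1 \subset A$ and $\hat F_2 \cap A = \emptyset$ defining $\Adm_1(F,\hat F)$ reduce to the trivial requirements $\hat F_1 \subset A$ and to no constraint respectively. These are satisfied by $A = \hat F_1$ and $A = F$ tautologically, so $\{F, \hat F_1\} \subset \Adm_1(F,\hat F)$ holds as soon as $\hat F \le 1$. The converse is immediate from the explicit requirement $\hat F \le i$ in Definition~\ref{def:tra_i}.

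For the second bullet, I would first remark that $F \in \Adm_2(F,\hat F)$ is automatic: condition~1 is trivial, condition~2 holds because $T \cap F = T$ is connected and contains its own root, and condition~3 holds because $S \subset F$ for any subforest $S$. Hence the nontrivial requirement reduces to $\hat F_2 \in \Adm_2(F,\hat F)$. I would then check the three defining conditions of $\Adm_2$ against $A = \hat F_2$: condition~1 is tautological, and condition~3 holds automatically because $\hat F_1$ and $\hat F_2$ are disjoint by Definition~\ref{def:colour}, forcing $S \cap \hat F_2 = \emptyset$ for every connected component $S$ of $\hat F_1$. The only remaining constraint is condition~2 applied to $A = \hat F_2$, which says precisely that for every non-empty connected component $T$ of $F$, the intersection $T \cap \hat F_2$ is connected and contains $\rho_T$. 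Since a connected subforest containing the root of $T$ is by definition a subtree of $T$ rooted at $\rho_T$, this is exactly the condition stated in the lemma.

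Conversely, if $\hat F \le 2$ and $\hat F_2 \cap T$ is a subtree of $T$ containing $\rho_T$ for every non-empty connected component $T$, then the analysis above shows that $\hat F_2 \in \Adm_2(F,\hat F)$, and $F \in \Adm_2(F,\hat F)$ was already seen to be automatic, so $(F,\hat F) \in \C_2$. Since the entire argument is a direct verification of the definitions, I do not anticipate any genuine obstacle; the only care needed is in correctly parsing condition~2 of Definition~\ref{def:admspecific} (where ``contains the root of $T$'' together with connectedness is what promotes a subforest to a subtree rooted at $\rho_T$).
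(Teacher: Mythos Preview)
Your proposal is correct and follows essentially the same approach as the paper's proof: both arguments unpack Definition~\ref{def:admspecific} against the defining conditions of $\C_i$, observing that the first bullet is immediate once $\hat F_2=\emptyset$, and that for the second bullet $F\in\Adm_2(F,\hat F)$ is automatic while $\hat F_2\in\Adm_2(F,\hat F)$ reduces to condition~2 since conditions~1 and~3 are trivially satisfied.
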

\begin{proof}
Let $(F,\hat F)\in\C$. If $\hat F\leq 1$ then $\hat F_2=\emptyset$ and therefore $F\in\Adm_1(F,\hat F)$; moreover $A=\hat F_1$ clearly satisfies $\hat F_1\subset A$ and $A\cap \hat F_2=\emptyset$, so that $\hat F_1\in\Adm_1(F,\hat F)$ and therefore $(F,\hat F)_\Labe^{\Labn,\Labo}\in\C_1$. The converse is obvious.

Let us suppose now that $\hat F\leq 2$ and for every connected component $T$ of $F$, $\hat F_2\cap T$ is a subtree of $T$ containing the root of $T$. Then $A=F$ clearly satisfies the properties 1-3 of Definition~\ref{def:admspecific}. If now $A=\hat F_2$, then $A$ satisfies the properties 1 and 2 since for every non-empty connected component $T$ of $F$, $\hat F_2\cap T$ is a subtree of $T$ containing the root of $T$, while property 3 is satisfied since $\hat F_1\cap\hat F_2=\emptyset$. The converse is again obvious.
\end{proof}
\begin{example}\label{F1F2}
As in previous examples, red stands for $1$ and blue for $2$ (and black for $0$):
\[
\begin{tikzpicture}[scale=0.2,baseline=0.2cm]
          \node at (-5,4)  [dot,red] (leftll) {};
          \node at (-5,2)  [dot,red] (leftlr) {};
      \node at (-4,0)  [dot] (left) {};
         \node at (-3,2)  [dot,red] (leftr) {};
        
        \draw[kernel1,red] (leftll) to     node [sloped,below] {\small }     (leftlr);
        \draw[kernel1] (leftlr) to     node [sloped,below] {\small }     (left);
     \draw[kernel1] (leftr) to     node [sloped,below] {\small }     (left);  
     \end{tikzpicture} 
\begin{tikzpicture}[scale=0.2,baseline=0.2cm]
          \node at (-5,2)  [dot,red] (leftl) {};
          \node at (-4,4)  [dot,red] (leftlr) {};
          \node at (-6,4)  [dot] (leftlc) {};
      \node at (-4,0)  [dot,color=red] (left) {};
         \node at (-3,2)  [dot,red] (leftr) {};
        
        \draw[kernel1,red] (leftl) to
     node [sloped,below] {\small }     (left);
     \draw[kernel1,red] (leftlr) to
     node [sloped,below] {\small }     (leftl); 
     \draw[kernel1,red] (leftr) to
     node [sloped,below] {\small }     (left);  
      \draw[kernel1] (leftlc) to
     node [sloped,below] {\small }     (leftl); 
     \end{tikzpicture} 
     \in \C_1, \qquad 
\begin{tikzpicture}[scale=0.2,baseline=0.2cm]
        \node at (0,0)  [dot,blue] (root) {};
         \node at (-7,6)  [dot] (leftll) {};
          \node at (-5,4)  [dot,red] (leftl) {};
          \node at (-3,6)  [dot,red] (leftlr) {};
          \node at (-5,6)  [dot] (leftlc) {};
      \node at (-3,2)  [dot,color=red] (left) {};
         \node at (-1,4)  [dot,red] (leftr) {};
         \node at (1,4)  [dot,blue,label=above:   ] (rightl) {};
          \node at (0,6)  [dot ] (rightll) {};
           \node at (2,6)  [dot,blue] (rightlr) {};
           \node at (5,4)  [dot,blue,label=left:] (rightr) {};
            \node at (4,6)  [dot,blue] (rightrl) {};
        \node at (3,2) [dot,blue] (right) {};
         \node at (6,6)  [dot ] (rightrr) {};
        
        \draw[kernel1] (left) to node [sloped,below] {\small } (root); ;
        \draw[kernel1,red] (leftl) to
     node [sloped,below] {\small }     (left);
     \draw[kernel1,red] (leftlr) to
     node [sloped,below] {\small }     (leftl); 
     \draw[kernel1] (leftll) to
     node [sloped,below] {\small }     (leftl);
     \draw[kernel1,red] (leftr) to
     node [sloped,below] {\small }     (left);  
        \draw[kernel1,blue] (right) to
     node [sloped,below] {\small }     (root);
      \draw[kernel1] (leftlc) to
     node [sloped,below] {\small }     (leftl); 
     \draw[kernel1,blue] (rightr) to
     node [sloped,below] {\small }     (right);
     \draw[kernel1] (rightrr) to
     node [sloped,below] {\small }     (rightr);
     \draw[kernel1,blue] (rightrl) to
     node [sloped,below] {\small }     (rightr);
     \draw[kernel1,blue] (rightl) to
     node [sloped,below] {\small }     (right);
     \draw[kernel1,blue] (rightlr) to
     node [sloped,below] {\small }     (rightl);
     \draw[kernel1] (rightll) to
     node [sloped,below] {\small }     (rightl);
     \end{tikzpicture} 
\begin{tikzpicture}[scale=0.2,baseline=0.2cm]
          \node at (-5,4)  [dot,red] (leftll) {};
          \node at (-5,2)  [dot,red] (leftlr) {};
      \node at (-4,0)  [dot,blue] (left) {};
         \node at (-3,2)  [dot,blue] (leftr) {};

        \draw[kernel1,red] (leftll) to     node [sloped,below] {\small }     (leftlr);
        \draw[kernel1] (leftlr) to     node [sloped,below] {\small }     (left);
     \draw[kernel1,blue] (leftr) to     node [sloped,below] {\small }     (left);  
     \end{tikzpicture} 
     \in\C_2.
\]
On the other hand, 
\[
\begin{tikzpicture}[scale=0.2,baseline=0.2cm]
          \node at (-5,4)  [dot,red] (leftll) {};
          \node at (-5,2)  [dot,red] (leftlr) {};
       \node at (-4,0)  [dot] (left) {};
         \node at (-3,2)  [dot,blue] (leftr) {};

        \draw[kernel1,red] (leftll) to     node [sloped,below] {\small }     (leftlr);
        \draw[kernel1] (leftlr) to     node [sloped,below] {\small }     (left);
     \draw[kernel1] (leftr) to     node [sloped,below] {\small }     (left);  
     \end{tikzpicture} 
     \notin\C_2, \qquad
\begin{tikzpicture}[scale=0.2,baseline=0.2cm]
          \node at (-5,6)  [dot,blue] (q) {};
         \node at (-5,4)  [dot,blue] (leftll) {};
          \node at (-5,2)  [dot,red] (leftlr) {};
      \node at (-4,0)  [dot,blue] (left) {};
         \node at (-3,2)  [dot,blue] (leftr) {};

        \draw[kernel1,blue] (leftll) to     node [sloped,below] {\small }     (q);
        \draw[kernel1] (leftll) to     node [sloped,below] {\small }     (leftlr);
        \draw[kernel1] (leftlr) to     node [sloped,below] {\small }     (left);
     \draw[kernel1,blue] (leftr) to     node [sloped,below] {\small }     (left);  
     \end{tikzpicture} 
     \notin\C_2
\]
because $\hat F_2$ does not contain the root in the first case, and in the second
$\hat F_2$ has two disjoint connected components inside a connected component of $F$. The 
decorated forests \eqref{ex:tra1_1}, \eqref{exunits_i}, \eqref{exM_i} and \eqref{exTra1_2} 
are in $\C_1$, while the decorated forests in \eqref{ex:tra2_1}, \eqref{ex:tra2_2}, 
\eqref{ex:tra2_3}, \eqref{excontr1} and \eqref{excontr2} are in $\C_2$.
\end{example}
\begin{lemma}
Let $\Adm_1$ and $\Adm_2$ be given by Definition~\ref{def:admspecific}.
\begin{itemize}
\item $\Adm_1$ satisfies Assumptions~\ref{ass:1}, \ref{ass:2}, \ref{ass:3} and~\ref{ass:4}.
\item $\Adm_2$ satisfies Assumptions~\ref{ass:1}, \ref{ass:2}, \ref{ass:3} and~\ref{ass:4}.
\item The pair $(\Adm_1,\Adm_2)$ satisfies Assumptions~\ref{ass:5} and~\ref{ass:6}.
\end{itemize}
\end{lemma}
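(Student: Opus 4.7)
The verification is almost entirely mechanical, consisting in unfolding Definition~\ref{def:admspecific} against each assumption. I sketch the structure and single out the main obstacle.

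Assumptions~\ref{ass:1}, \ref{ass:3}, and \ref{ass:5} follow essentially by construction. For Assumption~\ref{ass:1}, condition~(1) for $\Adm_1$ is literally the definition (in the two-colour setting of the present section there are no $j > 2$), while for $\Adm_2$ condition~(1) combines item~(1) of Definition~\ref{def:admspecific} with vacuity, and condition~(2) is item~(3) of Definition~\ref{def:admspecific}. Isomorphism compatibility is immediate in both cases. For Assumption~\ref{ass:5}, $\hat F_1 \in \Adm_1(F, \hat F)$ because $\hat F_1 \subset \hat F_1$ and $\hat F_2 \cap \hat F_1 = \emptyset$ by colour disjointness. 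For Assumption~\ref{ass:3}, I use that the non-empty connected components of any $A \in \Adm_2(F, \hat F)$ are precisely the sets $T \cap A$ for the non-empty components $T$ of $F$ (by the connectedness-with-root condition), sharing their roots with the corresponding $T$, together with $\hat F_1 \cap \hat F_2 = \emptyset$; the corresponding check for $\Adm_1$ is trivial.

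For Assumption~\ref{ass:2}(1) both cases are immediate since all defining conditions are tree-wise. For Assumption~\ref{ass:2}(2) I unfold both sides explicitly. For $\Adm_1$ each side collapses to $\hat F_1 \subset A \subset B$ and $\hat F_2 \cap B = \emptyset$. For $\Adm_2$ the key identities are $(\hat F \cup_2 A)_2 = A$ and $(\hat F \cup_2 A)_1 = \hat F_1 \setminus A$; under the first-side hypothesis that each connected component $S$ of $\hat F_1$ is either contained in or disjoint from $A$, the components of $\hat F_1 \setminus A$ are exactly those $S$ with $S \cap A = \emptyset$. A case split per component $S$ (three possibilities: $S \subset A$; or $S \cap A = \emptyset$ with either $S \subset B$ or $S \cap B = \emptyset$) then shows that both sides reduce to $\hat F_2 \subset A \subset B$, the same trichotomy for $\hat F_1$-components, and the root/connectedness conditions on both $T \cap A$ and $T \cap B$ for each component $T$ of $F$.

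For Assumption~\ref{ass:4}, I first observe that the equivalence classes of $\sim$ come in exactly three types: singleton uncoloured nodes, connected components of $\hat F_1$, and connected components of $\hat F_2$. Any $B \in \Adm_i(F, \hat F)$ automatically respects this partition, since it contains all of $\hat F_i$ together with the internal edges of $\hat E_i$, and either contains or is disjoint from each connected component of the other colour (directly from the definitions in both cases). Hence $B$ descends to a subforest of $\CK_{\hat F} F$ whose image under $\CK_{\hat F}^\sharp$ recovers $B$, and the bijection follows. The $\Adm_2$ root and connectedness conditions transfer in both directions because the canonical projection $\pi$ is a bijection between the connected components of $F$ and those of $\CK_{\hat F} F$ and sends roots to roots.

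The main obstacle is Assumption~\ref{ass:6}. Setting $A_1 = A \cap B$ and $A_2 = A \setminus B$, one direction runs as follows: starting from $A \in \Adm_1(F, \hat F)$ and $B \in \Adm_2(F, \hat F \cup_1 A)$, the identities $(\hat F \cup_1 A)_1 = A$ and $(\hat F \cup_1 A)_2 = \hat F_2$ force each connected component of $A$ to be either contained in or disjoint from $B$, which simultaneously produces the decomposition $A = A_1 \sqcup A_2$ and, combined with $\hat F_1 \subset A$, propagates the trichotomy to the components of $\hat F_1$, yielding $B \in \Adm_2(F, \hat F)$. One then verifies mechanically that $A_2 \in \Adm_1(F, \hat F \cup_2 B)$ (using $(\hat F \cup_2 B)_2 = B$) and $A_1 \in \Adm_1(B, \hat F \restr B)$ (using $(\hat F \restr B)_2 = \hat F_2$). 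The converse direction reassembles $A = A' \sqcup A''$ as a subforest of $F$ with $A' \cap B = \emptyset$ and $A'' \subset B$; the only delicate point is to verify that the reconstituted $A$ still satisfies the component condition relative to $B$ required for $B \in \Adm_2(F, \hat F \cup_1 A)$, which is immediate since each component of $A$ lies entirely in exactly one of $A'$ or $A''$ by the disjointness of their node sets.
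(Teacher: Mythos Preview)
Your proposal is correct and aligns closely with the paper's own proof. The paper is even more terse than you are: it declares the $\Adm_1$ checks ``elementary'', dispatches Assumption~\ref{ass:2}(2) for $\Adm_2$ by noting the slightly stronger decoupling property that, for $A \subset B$, membership of $B$ in $\Adm_2(F,\hat F)$ is equivalent to membership in $\Adm_2(F,\hat F \cup_2 A)$ and membership of $A$ in $\Adm_2(F,\hat F)$ is equivalent to membership in $\Adm_2(B,\hat F\restr B)$, and treats Assumption~\ref{ass:6} by exactly the same component-splitting argument you give. Your explicit trichotomy unfolding for Assumption~\ref{ass:2}(2) and your discussion of Assumption~\ref{ass:4} via the structure of $\sim$-classes are more detailed than the paper but amount to the same verification.
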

\begin{proof}
The first statement concerning $\Adm_1$ is elementary. The only non-trivial property to be checked about $\Adm_2$ is \eqref{e:prop}; note that $\Adm_2$ has the stronger property that for any two subtrees $B \subset A \subset F$,
one has $A \in \Adm_2(F,\hat F)$ if and only if  $A \in \Adm_2(F,\hat F \cup_2 B)$
and $B \in \Adm_2(F,\hat F)$ if and only if $B \in \Adm_2(A,\hat F \restr A)$,
so that property \eqref{e:prop} follows at once. 

Assumption~\ref{ass:5} is easily seen to hold, since for every coloured forest $(F,\hat F)$ such that $\hat F \le 2$ and $\{F,\hat F_2\} \subset \Adm_2(F,\hat F)$, for $A\eqdef\hat F_1$ one has $\hat F_1\subset A$ and $\hat F_2\cap A=\emptyset$, so that $\hat F_1 \in \Adm_1(F,\hat F)$.

We check now that $\Adm_1$ and $\Adm_2$ satisfy Assumption~\ref{ass:6}. Let $A \in \Adm_1(F,\hat F)$ and $B \in \Adm_2(F, \hat F \cup_1 A)$; then $A\cap \hat F_2=\emptyset$ and therefore $B \in \Adm_2(F,\hat F)$; moreover every connected component of $A$ is contained in a connected component of $\hat F_1$ and therefore is either contained in $B$ or disjoint from $B$, i.e. $A \in \Adm_1(F, \hat F \cup_2 B) \sqcup \Adm_1(B, \hat F \restr B)$. Conversely, let $B \in \Adm_2(F,\hat F)$ and $A \in \Adm_1(F, \hat F \cup_2 B) \sqcup \Adm_1(B, \hat F \restr B)$; then $\hat F_1=(\hat F \cup_2 B)_1\sqcup (\hat F \restr B)_1$ and $\hat F_2\subset(\hat F \cup_2 B)_2$ so that $A$ contains $\hat F_1$ and is disjoint from $\hat F_2$ and therefore $A\in\Adm_1(F,\hat F)$; moreover $(\hat F \cup_1 A)_2\subseteq \hat F_2$ so that $B$ contains $(\hat F \cup_1 A)_2$; finally $(\hat F \cup_1 A)_1=A$ and by the assumption on $A$ we have that every connected component of  $(\hat F \cup_1 A)_1$ is either contained in $B$ or disjoint from $B$. The proof is complete.
\end{proof}

In view of Propositions~\ref{prop:combi}, \ref{prop:Hopf} and~\ref{prop:doublecoass}, we have the following result.

\begin{corollary}\label{lem:preCEFM}
Denoting by $\CM$ the forest product, we have:
\begin{enumerate}
\item The space $(\BB_2,\CM,\Delta_2,\one_2,\one^\star_2)$ is a Hopf algebra and a comodule bialgebra over the Hopf algebra $(\BB_1,\CM,\Delta_1,\one_1,\one^\star_1)$ with coaction $\Delta_1$ and counit $\one^\star_1$.
\item The space $(\CH_2,\CM,\Delta_2,\one_2,\one^\star_2)$ is a Hopf algebra and a comodule bialgebra over the Hopf algebra $(\CH_1,\cdot,\Delta_1,\one_1,\one^\star_1)$ with coaction $\Delta_1$ and counit $\one^\star_1$.
\end{enumerate}
\end{corollary}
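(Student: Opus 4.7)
The plan is to assemble this corollary from the general results of Sections~\ref{sec2} and~\ref{sec3}, specialised to the families $\Adm_1$ and $\Adm_2$ of Definition~\ref{def:admspecific}. Since the preceding lemma verifies that $\Adm_1$ and $\Adm_2$ individually satisfy Assumptions~\ref{ass:1}--\ref{ass:4}, while the pair $(\Adm_1,\Adm_2)$ satisfies Assumptions~\ref{ass:5} and~\ref{ass:6}, all the abstract machinery is available: Proposition~\ref{prop:combi} for the Hopf algebra structure on $\BB_i = \Vec(\C_i)/\CI_i$, Proposition~\ref{prop:Hopf} for the Hopf algebra structure on $\CH_i = \scal{\Tra_i}/\CI_i$, Lemma~\ref{prop:comodule} for the coaction $\Delta_1 \colon \scal{\Tra_2} \to \scal{\Tra_1} \hattimes \scal{\Tra_2}$, and Proposition~\ref{prop:doublecoass} for the cointeraction identity \eqref{e:intertwine}.

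First I would invoke Proposition~\ref{prop:combi} (respectively Proposition~\ref{prop:Hopf}) with $i=2$ to obtain that $(\BB_2,\CM,\Delta_2,\one_2,\one^\star_2)$ (respectively $(\CH_2,\CM,\Delta_2,\one_2,\one^\star_2)$) is a Hopf algebra, and similarly for $i=1$. It then remains to show that the coaction $\Delta_1$ turns $\BB_2$ (respectively $\CH_2$) into a left comodule bialgebra over $\BB_1$ (respectively $\CH_1$). Lemma~\ref{prop:comodule} provides this coaction at the level of $\scal{\Tra_2}$ and gives the counit property $(\one_1^\star \otimes \id)\Delta_1 = \id$, together with coassociativity $(\Delta_1 \otimes \id)\Delta_1 = (\id \otimes \Delta_1)\Delta_1$ (this is \eqref{mult_Deltaa1} for $i=1$). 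The cointeraction identity \eqref{e:intertwine} of Proposition~\ref{prop:doublecoass}, specialised to $(i,j)=(1,2)$, together with multiplicativity \eqref{mult_Deltaa2} of $\Delta_1$, gives exactly the compatibility required for $(\scal{\Tra_2},\CM,\Delta_2)$ to be a comodule bialgebra over $(\scal{\Tra_1},\CM,\Delta_1)$ in the sense of \cite[Def~2.1(e)]{Molnar}, as already noted in Remark~\ref{cointera}.

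The only point that genuinely requires care is that all of these structures descend from $\Vec(\C_i)$ and $\scal{\Tra_i}$ to the quotients $\BB_i$ and $\CH_i$. For the multiplicative and coproduct structures, this is handled by Lemma~\ref{lem:biideal}, which tells us that $\CI_1$ and $\CI_2$ are bialgebra ideals of $\scal{\Tra_1}$ and $\scal{\Tra_2}$ respectively. For the coaction, we need to check that $\Delta_1(\CI_2) \subset \CI_1 \hattimes \scal{\Tra_2} + \scal{\Tra_1} \hattimes \CI_2$, so that $\Delta_1$ passes to a well-defined map $\CH_2 \to \CH_1 \hattimes \CH_2$ (and analogously $\BB_2 \to \BB_1 \hattimes \BB_2$). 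This is the main technical point to verify: it follows from the identity $(\CK \otimes \CK)\Delta_i \CK = (\CK \otimes \CK)\Delta_i$ established in Lemma~\ref{lem:ass4} (which relies on Assumption~\ref{ass:4}), combined with the easy observations that $\Phi_2$ acts trivially on the second tensor factor of $\Delta_1 \tau$ whenever $\tau \in \Tra_2$ (since the map $A \in \Adm_1(F,\hat F)$ satisfies $A \cap \hat F_2 = \emptyset$, hence contraction by $\CK_2$ commutes with the coaction) and that $\Phi_1$ acts on the first tensor factor compatibly with $\CK_1$ on $\scal{\Tra_1}$.

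The main obstacle is therefore precisely this verification that the cointeraction-compatible coaction descends to the quotients, which is a bookkeeping exercise tracking how the contraction $\CK$, the root-identification $\Phi_i$, and the $\hat P_i$ reduction interact with the boundary-decoration shifts $\eps_A^F$ appearing in the explicit formula \eqref{def:Deltabar} for $\Delta_1$. Once this is in place, the statement of the corollary follows by transporting the bialgebra and cointeraction identities from $\scal{\Tra_i}$ (and $\Vec(\C_i)$) to the corresponding quotients.
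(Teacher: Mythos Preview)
Your proposal is correct and follows essentially the same approach as the paper, which treats this corollary as an immediate consequence of Propositions~\ref{prop:combi}, \ref{prop:Hopf} and~\ref{prop:doublecoass} (together with the preceding lemma verifying the assumptions) and gives no separate proof. You supply more detail than the paper does, in particular on the descent of the coaction $\Delta_1$ to the quotients; the paper's own justification for this descent appears not here but in the proof of the first proposition of Section~\ref{sec:skew}, where the identity $(\CK_i \otimes \CK_j)\Delta_i = (\CK_i \otimes \CK_j)\Delta_i \CK_j$ is derived from the observation that $\Delta_i \tau = \one \otimes \tau$ for $\tau \in \M_j$ (a consequence of $\Adm_i(F,j) = \{\emptyset\}$ by Assumptions~\ref{ass:1} and~\ref{ass:5}). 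Your phrasing of this point via $\Phi_2$ is slightly imprecise but the underlying idea is the same.
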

We note that $\BB_1$ can be canonically identified with $\Vec(C_1)$, where $C_1=\CK_1\C_1$, see the definition of 
$\CK_i$ before Proposition~\ref{prop:combi}, and $C_1$ is the set of (possibly empty) coloured forests $(F,\hat F)$ such that $\hat F\leq 1$ 
and $\hat F_1$ is a collection of isolated nodes, namely $E_1=\emptyset$. For instance
\begin{equs}
\begin{tikzpicture}[scale=0.2,baseline=0.2cm]
          \node at (-5,4)  [dot,red] (leftll) {};
          \node at (-5,2)  [dot,red] (leftlr) {};
      \node at (-4,0)  [dot] (left) {};
         \node at (-3,2)  [dot,red] (leftr) {};
        
        \draw[kernel1] (leftll) to     node [sloped,below] {\small }     (leftlr);
        \draw[kernel1] (leftlr) to     node [sloped,below] {\small }     (left);
     \draw[kernel1] (leftr) to     node [sloped,below] {\small }     (left);  
     \end{tikzpicture} 
\begin{tikzpicture}[scale=0.2,baseline=0.2cm]
          \node at (-5,2)  [dot,red] (leftl) {};
          \node at (-4,4)  [dot,red] (leftlr) {};
          \node at (-6,4)  [dot] (leftlc) {};
      \node at (-4,0)  [dot,color=red] (left) {};
         \node at (-3,2)  [dot,red] (leftr) {};
        
        \draw[kernel1] (leftl) to
     node [sloped,below] {\small }     (left);
     \draw[kernel1] (leftlr) to
     node [sloped,below] {\small }     (leftl); 
     \draw[kernel1] (leftr) to
     node [sloped,below] {\small }     (left);  
      \draw[kernel1] (leftlc) to
     node [sloped,below] {\small }     (leftl); 
     \end{tikzpicture} 
 \in C_1, \qquad
   \begin{tikzpicture}[scale=0.2,baseline=0.2cm]
          \node at (-5,4)  [dot,red] (leftll) {};
          \node at (-5,2)  [dot,red] (leftlr) {};
      \node at (-4,0)  [dot] (left) {};
         \node at (-3,2)  [dot,red] (leftr) {};
        
        \draw[kernel1,red] (leftll) to     node [sloped,below] {\small }     (leftlr);
        \draw[kernel1] (leftlr) to     node [sloped,below] {\small }     (left);
     \draw[kernel1] (leftr) to     node [sloped,below] {\small }     (left);  
     \end{tikzpicture} 
\begin{tikzpicture}[scale=0.2,baseline=0.2cm]
          \node at (-5,2)  [dot,red] (leftl) {};
          \node at (-4,4)  [dot,red] (leftlr) {};
          \node at (-6,4)  [dot] (leftlc) {};
      \node at (-4,0)  [dot,color=red] (left) {};
         \node at (-3,2)  [dot,red] (leftr) {};
        
        \draw[kernel1,red] (leftl) to
     node [sloped,below] {\small }     (left);
     \draw[kernel1,red] (leftlr) to
     node [sloped,below] {\small }     (leftl); 
     \draw[kernel1,red] (leftr) to
     node [sloped,below] {\small }     (left);  
      \draw[kernel1] (leftlc) to
     node [sloped,below] {\small }     (leftl); 
     \end{tikzpicture} 
 \notin C_1.
  \end{equs}
Analogously, $\BB_2$ can be canonically identified with $\Vec(C_2)$, where $C_2=\CK_2\C_2$, and $C_2$ is the set of 
non-empty coloured forests $(F,\hat F)$ such that $\hat F\leq 2$,  $\hat F_1$ is a collection of isolated nodes, 
namely $E_1=\emptyset$, and $\hat F_2$ coincides with the set of roots of $F$. For instance
\begin{equs}
\begin{tikzpicture}[scale=0.2,baseline=0.2cm]
          \node at (-5,4)  [dot,red] (leftll) {};
          \node at (-5,2)  [dot,red] (leftlr) {};
      \node at (-4,0)  [dot,blue] (left) {};
         \node at (-3,2)  [dot,red] (leftr) {};
        
        \draw[kernel1] (leftll) to     node [sloped,below] {\small }     (leftlr);
        \draw[kernel1] (leftlr) to     node [sloped,below] {\small }     (left);
     \draw[kernel1] (leftr) to     node [sloped,below] {\small }     (left);  
     \end{tikzpicture} 
\begin{tikzpicture}[scale=0.2,baseline=0.2cm]
          \node at (-5,2)  [dot,red] (leftl) {};
          \node at (-4,4)  [dot,red] (leftlr) {};
          \node at (-6,4)  [dot] (leftlc) {};
      \node at (-4,0)  [dot,color=blue] (left) {};
         \node at (-3,2)  [dot,red] (leftr) {};
        
        \draw[kernel1] (leftl) to
     node [sloped,below] {\small }     (left);
     \draw[kernel1] (leftlr) to
     node [sloped,below] {\small }     (leftl); 
     \draw[kernel1] (leftr) to
     node [sloped,below] {\small }     (left);  
      \draw[kernel1] (leftlc) to
     node [sloped,below] {\small }     (leftl); 
     \end{tikzpicture} 
 \in C_2, \qquad
   \begin{tikzpicture}[scale=0.2,baseline=0.2cm]
          \node at (-5,4)  [dot,red] (leftll) {};
          \node at (-5,2)  [dot,red] (leftlr) {};
      \node at (-4,0)  [dot] (left) {};
         \node at (-3,2)  [dot,red] (leftr) {};
        
        \draw[kernel1,red] (leftll) to     node [sloped,below] {\small }     (leftlr);
        \draw[kernel1] (leftlr) to     node [sloped,below] {\small }     (left);
     \draw[kernel1] (leftr) to     node [sloped,below] {\small }     (left);  
     \end{tikzpicture} 
\begin{tikzpicture}[scale=0.2,baseline=0.2cm]
          \node at (-5,2)  [dot,blue] (leftl) {};
          \node at (-4,4)  [dot,blue] (leftlr) {};
          \node at (-6,4)  [dot] (leftlc) {};
      \node at (-4,0)  [dot,color=blue] (left) {};
         \node at (-3,2)  [dot,blue] (leftr) {};
        
        \draw[kernel1,blue] (leftl) to
     node [sloped,below] {\small }     (left);
     \draw[kernel1,blue] (leftlr) to
     node [sloped,below] {\small }     (leftl); 
     \draw[kernel1,blue] (leftr) to
     node [sloped,below] {\small }     (left);  
      \draw[kernel1] (leftlc) to
     node [sloped,below] {\small }     (leftl); 
     \end{tikzpicture} 
 \notin C_2.
  \end{equs}
  
The action of $\Delta_1$ on $\BB_i$, $i=1,2$, can be described on $\Vec(C_i)$ as the action of $(\CK_1\otimes\CK_i)\Delta_1$, namely: on a coloured forest $(F,\hat F)\in C_i$, one chooses a subforest $B$ of $F$ which contains $\hat F_1$
and is disjoint from $\hat F_2$, which is empty if $i=1$ and equal to the set of roots of $F$ if $i=2$; then one has
$(B,\hat F\restr B)\in C_1$ and $\CK_i(F,F\cup_1 B)\in C_i$. Summing over all possible $B$ of this form, we find
\[
(\CK_1\otimes\CK_i)\Delta_1 (F,\hat F)=\sum_B \CK_1(B,\hat F\restr B)\otimes \CK_i(F,F\cup_1 B) \in \BB_1\otimes\BB_i.
\]
This describes the coproduct of $\BB_1$ if $i=1$ and the coaction on $\BB_2$ if $i=2$. In both cases, we have a 
contraction/extraction operator of subforests: indeed, in $(B,\hat F\restr B)$ we have the extracted subforest $B$, with
colouring inherited from $\hat F$, while in $\CK_i(F,F\cup_1 B) $ we have extended the red colour to $B$ and then
contracted $B$ to a family of red single nodes. For instance, using Example~\ref{ex:colours2}
\[
(\CK_1\otimes\CK_2)\Delta_1 \begin{tikzpicture}[scale=0.2,baseline=0.2cm]
        \node at (0,0)  [dot,blue,label={[label distance=-0.2em]left: \scriptsize  $ a $} ] (root) {};
          \node at (-2,2)  [dot,red,,label={[label distance=-0.2em]left: \scriptsize  $ b $} ] (leftl) {};
         \node at (-3,4)  [dot,label={[label distance=-0.2em]above: \scriptsize  $ h $} ] (leftll) {};
          \node at (-1,4)  [dot,label={[label distance=-0.2em]above: \scriptsize  $ i $}] (leftlc) {};
           \node at (1,2)  [dot,red,label={[label distance=-0.2em]above: \scriptsize  $ k $} ] (rightll) {};
         \node at (3,2)  [dot,label={[label distance=-0.2em]above: \scriptsize  $ p $} ] (rightrr) {};
        
       \draw[kernel1] (leftl) to node [sloped,below] {\small } (root); ;
        \draw[kernel1] (leftll) to node [sloped,below] {\small } (leftl); ;
        \draw[kernel1] (leftlc) to
     node [sloped,below] {\small }     (leftl);
     \draw[kernel1] (rightll) to
     node [sloped,below] {\small }     (root); 
     \draw[kernel1] (rightrr) to
     node [sloped,below] {\small }     (root);
     \end{tikzpicture}   
=\ldots + 
\begin{tikzpicture}[scale=0.2,baseline=0.2cm]
           \node at (-2,2)  [dot,label={[label distance=-0.2em]above: \scriptsize  $ i $}] (leftlc) {};
      \node at (-2,0)  [dot,color=red] (left) {};
         \node at (0,0)  [dot,label={[label distance=-0.2em]above: \scriptsize  $ p $}] (o) {};
        
        \draw[kernel1] (leftlc) to
     node [sloped,below] {\small }     (left);
     \end{tikzpicture} 
      \otimes   
\begin{tikzpicture}[scale=0.2,baseline=0.2cm]
        \node at (0,0)  [dot,blue] (root) {};
          \node at (-2,2)  [dot,red,label=left:] (leftl) {};
         \node at (-2,4)  [dot,label={[label distance=-0.2em]above: \scriptsize  $ h $} ] (leftll) {};
             \node at (0,2)  [dot,red,label={[label distance=-0.2em]above: \scriptsize  $ k $} ] (rightll) {};
         \node at (2,2)  [dot,red,label={[label distance=-0.2em]above: \scriptsize  $ p $} ] (rightrr) {};
        
       \draw[kernel1] (leftl) to node [sloped,below] {\small } (root); ;
        \draw[kernel1] (leftll) to node [sloped,below] {\small } (leftl); ;
      \draw[kernel1] (rightll) to
     node [sloped,below] {\small }     (root); 
     \draw[kernel1] (rightrr) to
     node [sloped,below] {\small }     (root);
     \end{tikzpicture}
+ \ldots
\]
since by \eqref{e:cki} the red node labelled $k$ on the left side of the tensor product is killed by $\CK_1$.

The action of $\Delta_2$ on $\BB_2$ can be described on $\Vec(C_2)$ as the action of $(\CK_2\otimes\CK_2)\Delta_2$, namely: on a coloured tree $(F,\hat F)\in C_2$, one chooses a subtree $A$ of $F$ which contains the root of
$F$; then one has
$(A,\hat F\restr A)\in C_2$ and $\CK_2(F,F\cup_2 A)\in C_2$. Summing over all possible $A$ of this form, we find
\[
(\CK_2\otimes\CK_2)\Delta_2 (F,\hat F)=\sum_A \CK_2(A,\hat F\restr A)\otimes \CK_2(F,F\cup_2 A) \in \BB_2\otimes\BB_2.
\]
If $(F,\hat F)=\tau\in C_2$ is a coloured forest, one decomposes $\tau$ in connected components, and then uses
the above description and the multiplicativity of the coproduct. 
This describes the coproduct of $\BB_2$ as a contraction/extraction operator of rooted subtrees.
For instance, using Example~\ref{ex:colours2}
\[
(\CK_2\otimes\CK_2)\Delta_2 \begin{tikzpicture}[scale=0.2,baseline=0.2cm]
        \node at (0,0)  [dot,blue,label={[label distance=-0.2em]left: \scriptsize  $ a $} ] (root) {};
          \node at (-2,2)  [dot,red,,label={[label distance=-0.2em]left: \scriptsize  $ b $} ] (leftl) {};
         \node at (-3,4)  [dot,label={[label distance=-0.2em]above: \scriptsize  $ h $} ] (leftll) {};
          \node at (-1,4)  [dot,label={[label distance=-0.2em]above: \scriptsize  $ i $}] (leftlc) {};
           \node at (1,2)  [dot,red,label={[label distance=-0.2em]above: \scriptsize  $ k $} ] (rightll) {};
         \node at (3,2)  [dot,label={[label distance=-0.2em]above: \scriptsize  $ p $} ] (rightrr) {};
        
       \draw[kernel1] (leftl) to node [sloped,below] {\small } (root); ;
        \draw[kernel1] (leftll) to node [sloped,below] {\small } (leftl); ;
        \draw[kernel1] (leftlc) to
     node [sloped,below] {\small }     (leftl);
     \draw[kernel1] (rightll) to
     node [sloped,below] {\small }     (root); 
     \draw[kernel1] (rightrr) to
     node [sloped,below] {\small }     (root);
     \end{tikzpicture}   
=\ldots + 
\begin{tikzpicture}[scale=0.2,baseline=0.1cm]
        \node at (-1,0)  [dot,blue,label={[label distance=-0.2em]left: \scriptsize  $ a $} ] (root) {};
          \node at (-1,2)  [dot,red,label={[label distance=-0.2em]left: \scriptsize  $ b $}] (left) {};
        
        \draw[kernel1] (left) to node [sloped,below] {\small } (root); 
     \end{tikzpicture} \
       \otimes  
   \begin{tikzpicture}[scale=0.2,baseline=0.1cm]
        \node at (0,0)  [dot,blue] (root) {};
          \node at (-3,2)  [dot,label={[label distance=-0.2em]above: \scriptsize  $ h $} ] (leftll) {};
          \node at (-1,2)  [dot,label={[label distance=-0.2em]above: \scriptsize  $ i $}] (leftlc) {};
           \node at (1,2)  [dot,red,label={[label distance=-0.2em]above: \scriptsize  $ k $} ] (rightll) {};
         \node at (3,2)  [dot,label={[label distance=-0.2em]above: \scriptsize  $ p $} ] (rightrr) {};
        
        \draw[kernel1] (leftll) to node [sloped,below] {\small } (root); ;
        \draw[kernel1] (leftlc) to
     node [sloped,below] {\small }     (root);
     \draw[kernel1] (rightll) to
     node [sloped,below] {\small }     (root); 
     \draw[kernel1] (rightrr) to
     node [sloped,below] {\small }     (root);
     \end{tikzpicture}   
+ \ldots
\]
The operators $\{\Delta_1,\Delta_2\}$ on the spaces $\{\CH_1,\CH_2\}$ act in the same way on the coloured subforests, 
and add the action on the decorations.

\subsection{Joining roots}
\label{sec:join}

While the product given by ``disjoint unions'' considered so far is very natural when
considering forests, it is much less natural when considering spaces of trees. There,
the more natural thing to do is to join trees together by their roots.  
Given a typed forest $F$, we then define the typed tree $\JJ(F)$ by joining all the 
roots of $F$ together. In other words, we set $\JJ(F) = F / \sim$, where 
$\sim$ is the equivalence relation on nodes in $N_F$ given by 
$x \sim y$ if and only if either $x=y$ or both $x$ and $y$ belong to the set $\rho_F$ of nodes of $F$. For example
\[
F=
\begin{tikzpicture}[scale=0.2,baseline=0.2cm]
      \node at (-7,0)  [dot] (1t) {};
      \node at (-7,2)  [dot] (2t) {};
        \draw[kernel1] (1t) to     node [sloped,below] {\small }     (2t);
          \node at (-5,4)  [dot] (leftll) {};
          \node at (-5,2)  [dot] (leftlr) {};
      \node at (-4,0)  [dot] (left) {};
         \node at (-3,2)  [dot] (leftr) {};

        \draw[kernel1] (leftll) to     node [sloped,below] {\small }     (leftlr);
        \draw[kernel1] (leftlr) to     node [sloped,below] {\small }     (left);
     \draw[kernel1] (leftr) to     node [sloped,below] {\small }     (left);  
\end{tikzpicture} \ 
\Longrightarrow \ \JJ(F)=
\begin{tikzpicture}[scale=0.2,baseline=0.2cm]
 %     \node at (-5,0)  [dot] (1t) {};
      \node at (-5,2)  [dot] (2t) {};
          \node at (-3,4)  [dot] (leftll) {};
          \node at (-3,2)  [dot] (leftlr) {};
      \node at (-3,0)  [dot] (left) {};
         \node at (-1,2)  [dot] (leftr) {};

        \draw[kernel1] (left) to     node [sloped,below] {\small }     (2t);
        \draw[kernel1] (leftll) to     node [sloped,below] {\small }     (leftlr);
        \draw[kernel1] (leftlr) to     node [sloped,below] {\small }     (left);
     \draw[kernel1] (leftr) to     node [sloped,below] {\small }     (left);  
\end{tikzpicture} 
\]
When considering coloured or decorated trees as we do here, such an operation cannot in general be performed 
unambiguously since different trees may have roots of different colours. For example, if
\[
(F,\hat F)=
\begin{tikzpicture}[scale=0.2,baseline=0.2cm]
      \node at (-7,0)  [dot,blue] (1t) {};
      \node at (-7,2)  [dot,blue] (2t) {};
        \draw[kernel1,blue] (1t) to     node [sloped,below] {\small }     (2t);
          \node at (-5,4)  [dot,red] (leftll) {};
          \node at (-5,2)  [dot,red] (leftlr) {};
      \node at (-4,0)  [dot,red] (left) {};
         \node at (-3,2)  [dot,red] (leftr) {};

        \draw[kernel1,red] (leftll) to     node [sloped,below] {\small }     (leftlr);
        \draw[kernel1,red] (leftlr) to     node [sloped,below] {\small }     (left);
     \draw[kernel1,red] (leftr) to     node [sloped,below] {\small }     (left);  
\end{tikzpicture} 
\] 
then we do not know how to define a colouring of $\JJ(F)$ which is compatible
with $\hat F$. This justifies the
definition of the subset $\CD_i(\JJ) \subset \Tra$\label{defDJJ} as
the set of all forests $(F,\hat F,\Labn,\Labo,\Labe)$ such that 
$\hat F(\rho) \in \{0,i\}$ for every root $\rho$ of $F$.
We also write $\CD(\JJ) = \bigcup_{i \ge 0} \CD_i(\JJ)$ and $\hat\CD_i(\JJ) \subset \CD_i(\JJ)$\label{defDJJhat}
for the set of forests such that \textit{every} root has colour $i$.
\begin{example} Using as usual red for $1$ and blue for $2$, we have
\[
\begin{tikzpicture}[scale=0.2,baseline=0.2cm]
      \node at (-7,0)  [dot] (1t) {};
      \node at (-7,2)  [dot] (2t) {};
        \draw[kernel1] (1t) to     node [sloped,below] {\small }     (2t);
          \node at (-5,4)  [dot,blue] (leftll) {};
          \node at (-5,2)  [dot,blue] (leftlr) {};
      \node at (-4,0)  [dot,red] (left) {};
         \node at (-3,2)  [dot,red] (leftr) {};

        \draw[kernel1,blue] (leftll) to     node [sloped,below] {\small }     (leftlr);
        \draw[kernel1] (leftlr) to     node [sloped,below] {\small }     (left);
     \draw[kernel1] (leftr) to     node [sloped,below] {\small }     (left);  
\end{tikzpicture} 
     \in\CD_1(\JJ), \qquad 
\begin{tikzpicture}[scale=0.2,baseline=0.2cm]
      \node at (-7,0)  [dot,red] (1t) {};
      \node at (-7,2)  [dot] (2t) {};
        \draw[kernel1] (1t) to     node [sloped,below] {\small }     (2t);
          \node at (-5,4)  [dot,blue] (leftll) {};
          \node at (-5,2)  [dot,blue] (leftlr) {};
      \node at (-4,0)  [dot,red] (left) {};
         \node at (-3,2)  [dot,red] (leftr) {};

        \draw[kernel1,blue] (leftll) to     node [sloped,below] {\small }     (leftlr);
        \draw[kernel1] (leftlr) to     node [sloped,below] {\small }     (left);
     \draw[kernel1] (leftr) to     node [sloped,below] {\small }     (left);  
\end{tikzpicture} 
     \in\hat\CD_1(\JJ), \qquad 
\begin{tikzpicture}[scale=0.2,baseline=0.2cm]
      \node at (-7,0)  [dot,blue] (1t) {};
      \node at (-7,2)  [dot] (2t) {};
        \draw[kernel1] (1t) to     node [sloped,below] {\small }     (2t);
          \node at (-5,4)  [dot,red] (leftll) {};
          \node at (-5,2)  [dot,red] (leftlr) {};
      \node at (-4,0)  [dot,blue] (left) {};
         \node at (-3,2)  [dot,blue] (leftr) {};

        \draw[kernel1,red] (leftll) to     node [sloped,below] {\small }     (leftlr);
        \draw[kernel1] (leftlr) to     node [sloped,below] {\small }     (left);
     \draw[kernel1,blue] (leftr) to     node [sloped,below] {\small }     (left);  
\end{tikzpicture} 
     \in\hat\CD_2(\JJ).
\]
\end{example}
We can then extend $\JJ$ to $\CD(\JJ)$ in a natural way as follows.

\begin{definition}\label{def:productTrees}
For $\tau = (F,\hat F,\Labn,\Labo,\Labe) \in \CD(\JJ)$, we define the 
decorated tree $\JJ(\tau)\in\Tra$ by
\begin{equ}
\JJ(\tau) = (\JJ(F),[\hat F],[\Labn],[\Labo],\Labe)\;,
\end{equ}
where $[\Labn](x) = \sum_{y \in x} \Labn(y)$, $[\Labo](x) = \sum_{y \in x} \Labo(y)$,
and $[\hat F](x) = \sup_{y \in x} \hat F(y)$.
\end{definition}

\begin{example} The following coloured forests belong to $\CD_2(\JJ)$
\[
 \tau_1=  
\begin{tikzpicture}[scale=0.2,baseline=0.2cm]
          \node at (0,4)  [dot,red] (leftll) {};
          \node at (0,2)  [dot,red] (leftlr) {};
       \node at (0,0)  [dot] (left) {};
 
        \draw[kernel1,red] (leftll) to     node [sloped,below] {\small }     (leftlr);
        \draw[kernel1] (leftlr) to     node [sloped,below] {\small }     (left);
     \end{tikzpicture} 
\qquad \tau_2=   
\begin{tikzpicture}[scale=0.2,baseline=0.2cm]
          \node at (-5,4)  [dot] (leftll) {};
          \node at (-5,2)  [dot,blue] (leftlr) {};
      \node at (-4,0)  [dot,blue] (left) {};
         \node at (-3,2)  [dot,blue] (leftr) {};

        \draw[kernel1] (leftll) to     node [sloped,below] {\small }     (leftlr);
        \draw[kernel1,blue] (leftlr) to     node [sloped,below] {\small }     (left);
     \draw[kernel1,blue] (leftr) to     node [sloped,below] {\small }     (left);  
     \end{tikzpicture} 
     \qquad \tau_1\cdot \tau_2=
\begin{tikzpicture}[scale=0.2,baseline=0.2cm]
          \node at (0,4)  [dot,red] (leftll) {};
          \node at (0,2)  [dot,red] (leftlr) {};
       \node at (0,0)  [dot] (left) {};
 
        \draw[kernel1,red] (leftll) to     node [sloped,below] {\small }     (leftlr);
        \draw[kernel1] (leftlr) to     node [sloped,below] {\small }     (left);
     \end{tikzpicture} 
\begin{tikzpicture}[scale=0.2,baseline=0.2cm]
          \node at (-5,4)  [dot] (leftll) {};
          \node at (-5,2)  [dot,blue] (leftlr) {};
      \node at (-4,0)  [dot,blue] (left) {};
         \node at (-3,2)  [dot,blue] (leftr) {};

        \draw[kernel1] (leftll) to     node [sloped,below] {\small }     (leftlr);
        \draw[kernel1,blue] (leftlr) to     node [sloped,below] {\small }     (left);
     \draw[kernel1,blue] (leftr) to     node [sloped,below] {\small }     (left);  
     \end{tikzpicture} 
\qquad \JJ(\tau_1\cdot \tau_2) = 
\begin{tikzpicture}[scale=0.2,baseline=0.2cm]
          \node at (-2,4)  [dot,red] (21) {};
          \node at (-2,2)  [dot,red] (11) {};
       \node at (0,0)  [dot,blue] (root) {};
 
        \draw[kernel1,red] (21) to     node [sloped,below] {\small }     (11);
        \draw[kernel1] (11) to     node [sloped,below] {\small }     (root);
          \node at (0,4)  [dot] (22) {};
          \node at (0,2)  [dot,blue] (12) {};
         \node at (2,2)  [dot,blue] (13) {};

        \draw[kernel1] (22) to     node [sloped,below] {\small }     (12);
        \draw[kernel1,blue] (12) to     node [sloped,below] {\small }     (root);
     \draw[kernel1,blue] (13) to     node [sloped,below] {\small }     (root);  
     \end{tikzpicture} 
\]
The following coloured forests belong to $\hat\CD_2(\JJ)$
\[
 \tau_1=  
\begin{tikzpicture}[scale=0.2,baseline=0.2cm]
          \node at (0,4)  [dot,red] (leftll) {};
          \node at (0,2)  [dot,red] (leftlr) {};
       \node at (0,0)  [dot,blue] (left) {};
 
        \draw[kernel1,red] (leftll) to     node [sloped,below] {\small }     (leftlr);
        \draw[kernel1] (leftlr) to     node [sloped,below] {\small }     (left);
     \end{tikzpicture} 
\qquad \tau_2=   
\begin{tikzpicture}[scale=0.2,baseline=0.2cm]
          \node at (-5,4)  [dot] (leftll) {};
          \node at (-5,2)  [dot,blue] (leftlr) {};
      \node at (-4,0)  [dot,blue] (left) {};
         \node at (-3,2)  [dot,blue] (leftr) {};

        \draw[kernel1] (leftll) to     node [sloped,below] {\small }     (leftlr);
        \draw[kernel1,blue] (leftlr) to     node [sloped,below] {\small }     (left);
     \draw[kernel1,blue] (leftr) to     node [sloped,below] {\small }     (left);  
     \end{tikzpicture} 
     \qquad \tau_1\cdot \tau_2=
\begin{tikzpicture}[scale=0.2,baseline=0.2cm]
          \node at (0,4)  [dot,red] (leftll) {};
          \node at (0,2)  [dot,red] (leftlr) {};
       \node at (0,0)  [dot,blue] (left) {};
 
        \draw[kernel1,red] (leftll) to     node [sloped,below] {\small }     (leftlr);
        \draw[kernel1] (leftlr) to     node [sloped,below] {\small }     (left);
     \end{tikzpicture} 
\begin{tikzpicture}[scale=0.2,baseline=0.2cm]
          \node at (-5,4)  [dot] (leftll) {};
          \node at (-5,2)  [dot,blue] (leftlr) {};
      \node at (-4,0)  [dot,blue] (left) {};
         \node at (-3,2)  [dot,blue] (leftr) {};

        \draw[kernel1] (leftll) to     node [sloped,below] {\small }     (leftlr);
        \draw[kernel1,blue] (leftlr) to     node [sloped,below] {\small }     (left);
     \draw[kernel1,blue] (leftr) to     node [sloped,below] {\small }     (left);  
     \end{tikzpicture} 
\qquad \JJ(\tau_1\cdot \tau_2) = 
\begin{tikzpicture}[scale=0.2,baseline=0.2cm]
          \node at (-2,4)  [dot,red] (21) {};
          \node at (-2,2)  [dot,red] (11) {};
       \node at (0,0)  [dot,blue] (root) {};
 
        \draw[kernel1,red] (21) to     node [sloped,below] {\small }     (11);
        \draw[kernel1] (11) to     node [sloped,below] {\small }     (root);
          \node at (0,4)  [dot] (22) {};
          \node at (0,2)  [dot,blue] (12) {};
         \node at (2,2)  [dot,blue] (13) {};

        \draw[kernel1] (22) to     node [sloped,below] {\small }     (12);
        \draw[kernel1,blue] (12) to     node [sloped,below] {\small }     (root);
     \draw[kernel1,blue] (13) to     node [sloped,below] {\small }     (root);  
     \end{tikzpicture} 
\]
\end{example}
It is clear that the $\CD_i$'s are closed under multiplication and that 
one has 
\begin{equ}[e:multJJ]
\JJ(\tau\cdot \bar \tau) = \JJ\bigl(\tau\cdot \JJ(\bar \tau)\bigr)\;,\qquad \tau, \bar \tau \in \CD_i(\JJ)
\end{equ}
for every $i \ge 0$. Furthermore, $\JJ$ is idempotent and preserves our bigrading.
The following fact is also easy to verify, where $\CK$, $\hat\CK_i$, $\Phi_i$, $\hat\Phi_i$ and $\hat P_i$ were defined in Section~\ref{sec:Hopf}.

\begin{lemma}\label{lem:JJ0}
For $i \ge 0$, the sets $\CD_i(\JJ)$ and $\hat\CD_i(\JJ)$ are invariant under $\CK$, $\Phi_i$, $\hat P_i$ and $\JJ$. 
Furthermore, $\JJ$ commutes with both $\CK$ and $\hat P_i$
on $\CD_i(\JJ)$ and satisfies the identity 
\begin{equ}[e:PhiJJ]
\hat\CK_i \JJ = \hat\CK_i \JJ \hat\CK_i\;, \qquad \text{on \ $\hat \CD_i(\JJ)$.}
\end{equ}
In particular $\hat\CK_i \JJ$ is idempotent on $\hat \CD_i(\JJ)$.
\end{lemma}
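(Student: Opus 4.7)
I would first verify invariance of $\CD_i(\JJ)$ and $\hat\CD_i(\JJ)$ under the four operators, which is immediate from the definitions: for $\CK$, each root of $\CK_{\hat F}F$ is the $\sim$-class of an original root and $\hat F$ is constant on classes; for $\Phi_i$, non-$\M_i$ components keep their roots while any $\M_i$ components are replaced by a single $i$-coloured node; for $\hat P_i$, only $\Labo$ is touched; for $\JJ$, the merged root inherits colour $\sup_{y\in\rho}\hat F(y)$, which lies in $\{0,i\}$ (respectively equals $i$) whenever every original root colour does. The commutation $\JJ\CK=\CK\JJ$ on $\CD_i(\JJ)$ follows because $\sim$ depends only on the coloured-edge set, which is preserved by $\JJ$: the $\sim_{\JJ(F)}$-class of the merged root is the disjoint union of the $\sim_F$-classes of the original roots while all other classes are unchanged, and $[\Labn]$, $[\Labo]$ are additive over such unions. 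Similarly, $\JJ\hat P_i=\hat P_i\JJ$ holds because the set of nodes whose $\Labo$ is ultimately zeroed---those in an $i$-coloured connected component meeting at least one root---is the same before and after joining.

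The main work is the identity \eqref{e:PhiJJ}. I would first verify the auxiliary commutations $\Phi_i\CK=\CK\Phi_i$ and $\Phi_i\hat P_i=\hat P_i\Phi_i$ on $\hat\CD_i(\JJ)$: a component of $\tau$ lies in $\M_i$ iff its contraction does, since any non-$i$ node or uncoloured edge survives $\CK$, so $\CK$ respects the factorisation $\tau=\mu\cdot\nu$ defining $\Phi_i$, and both $\Phi_i$ and $\hat P_i$ only act on $\M_i$-pieces and root $\Labo$-values. Combining these with $\CK\JJ=\JJ\CK$ and the idempotency of $\CK$ on already-contracted forests, one can rewrite
\begin{equation*}
\hat\CK_i\JJ\hat\CK_i\,\tau = \hat P_i\Phi_i\,\JJ\,\hat P_i\Phi_i\CK\,\tau\,,\qquad \hat\CK_i\JJ\,\tau = \hat P_i\Phi_i\,\JJ\CK\,\tau\,.
\end{equation*}
Setting $\xi=\CK\tau$ and writing $\xi=\mu\cdot\nu$ with $\nu$ the $\M_i$-part (a disjoint union of single $i$-coloured nodes, since $\xi$ is already contracted), both sides reduce to the same tree: a single merged root $R$ of colour $i$ with $[\Labn](R)=\sum_{r\in\rho_\mu}\Labn(r)+\Sigma_\nu\Labn$ and $\mu$'s non-$\M_i$ substructure attached to $R$. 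The only pre-outer-$\hat P_i$ discrepancy lies in $[\Labo](R)$: on the right it is $\sum_{r}\Labo(r)+\Sigma_\nu\Labo$, while on the left the inner $\hat P_i$ has already zeroed these; the outer $\hat P_i$ then zeroes both, matching the expressions.

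Finally, idempotency of $\hat\CK_i\JJ$ on $\hat\CD_i(\JJ)$ is immediate: setting $\sigma=\JJ\tau\in\hat\CD_i(\JJ)$ (by the invariance step) and applying \eqref{e:PhiJJ} together with $\JJ^2=\JJ$, one obtains
\begin{equation*}
(\hat\CK_i\JJ)^2\,\tau = \hat\CK_i\JJ\hat\CK_i\,\sigma = \hat\CK_i\JJ\,\sigma = \hat\CK_i\JJ^2\,\tau = \hat\CK_i\JJ\,\tau\,.
\end{equation*}
The hard part will be the decoration-chasing in \eqref{e:PhiJJ}, in particular the subcase $\mu=\one$: there $\JJ\tau$ is itself a tree in $\M_i$ on which $\Phi_i$ acts non-trivially via $k_i$, so one needs to check that the total $\Labn$ accumulated at the resulting single node matches on both sides, which it does because $k_i$ is compatible with the summation implicit in $[\Labn]$.
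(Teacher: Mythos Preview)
Your proof is correct and follows essentially the same approach as the paper: after the invariance and commutation checks, both arguments reduce \eqref{e:PhiJJ} via the commutations among $\CK$, $\Phi_i$, $\hat P_i$ and $\JJ$ to a comparison of $\JJ$ applied before versus after $\Phi_i$ (the paper) or $\hat P_i\Phi_i$ (you) on an already $\CK$-contracted forest, and then observe via the $\mu\cdot\nu$ factorisation that the only discrepancy is the $\Labo$-value at the merged root, which the outer $\hat P_i$ kills. The paper streamlines the reduction one step further by also invoking $\JJ\hat P_i=\hat P_i\JJ$ to pull the inner $\hat P_i$ past $\JJ$, arriving at the slightly cleaner target $\hat P_i\JJ\CK=\hat P_i\JJ\Phi_i\CK$, but the substance of the decoration comparison is identical to yours.
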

\begin{proof}
The spaces $\CD_i(\JJ)$ and $\hat\CD_i(\JJ)$ are invariant under 
$\CK$, $\Phi_i$ and $\hat P_i$ because these operations never change the colours
of the roots. The invariance under $\JJ$ follows in a similar way.

The fact that $\JJ$ commutes with $\CK$ is obvious.
The reason why it commutes with $\hat P_i$ is that $\Labo$ 
vanishes on colourless nodes by the definition of $\Tra$.
Regarding \eqref{e:PhiJJ}, since $\hat\CK_i = \hat P_i \Phi_i \CK$,
and all three operators are idempotent and commute with each other, 
we have
\begin{equ}
\hat\CK_i \JJ = \Phi_i \hat P_i \JJ \CK \;,\qquad
\hat\CK_i \JJ \hat\CK_i = \Phi_i \hat P_i \JJ  \Phi_i \CK
\end{equ}
so that it suffices to show that
\begin{equ}[e:idenJJ]
\hat P_i \JJ  \CK = \hat P_i \JJ \Phi_i \CK\;.
\end{equ}
For this, consider an element $\tau \in \hat \CD_i(\JJ)$ and write
$\tau = \mu\cdot \nu$ as in \eqref{Phi}. By the definition of this decomposition and
of $\CK$, there exist $k \ge 0$ and labels $n_j \in \N^d$, $o_j \in \Z^d \oplus \Z(\Lab)$ 
with $j \in \{1,\ldots,k\}$ such that
\begin{equ}
\CK \tau = (\CK \mu)\cdot x_{n_1,o_1}^{(i)}\cdots x_{n_k,o_k}^{(i)}\;,
\end{equ}
where $x_{n,o}^{(i)} = (\bullet,i,n,o,0)$. It follows that 
\begin{equ}[e:PhiK]
\Phi_i \CK \tau = (\CK \mu)\cdot x_{n,0}^{(i)}
\end{equ}
with $n = \sum_{j=1}^k n_j$. On the other hand, by \eqref{e:multJJ}, one has
\begin{equ}
\JJ \CK \tau = \JJ \bigl((\CK \mu)\cdot x_{n,o}^{(i)}\bigr)\;,
\end{equ}
with $o$ defined from the $o_i$ similarly to $n$. Comparing this to \eqref{e:PhiK}, 
it follows that 
$\JJ \CK \tau $ differs from $\JJ \Phi_i \CK \tau $ only by its $\Labo$-decoration 
at the root of one of its connected components in the sense of Remark~\ref{rem:connected}. 
Since these are set to $0$ by $\hat \Phi_i$, \eqref{e:idenJJ} follows.
\end{proof}

Finally, we show that the operation of joining roots is well adapted to the definitions
given in the previous subsection. In particular, we assume from now on that the $\Adm_i$ for $i=1,2$ are given by Definition~\ref{def:admspecific}. Our definitions guarantee that 
\begin{itemize}
\item $\Tra_1 \subset \CD_1(\JJ)$ 
\item $\Tra_2 \subset \hat\CD_2(\JJ)$.
\end{itemize}
We then have the following, where $\JJ$ is extended to the relevant spaces as a triangular map.

\begin{proposition}\label{prop:propJJ}
One has the identities
\begin{equs}[2]
\Delta_2 \JJ &= 
(\JJ \otimes \JJ)\Delta_2 =
(\JJ \otimes \JJ)\Delta_2 \JJ \;,&\quad &\text{on \quad $\CD(\JJ)$,}\\
\Delta_1 \JJ &= (\id \otimes \JJ)\Delta_1= (\id \otimes \JJ)\Delta_1\JJ\;,&\quad &\text{on \quad $\Tra_2$.}
\end{equs}
\end{proposition}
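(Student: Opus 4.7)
The plan is to prove each identity by exhibiting, under a natural bijection between the admissible subforests summed over in $\Delta_i$ acting on $\tau$ and on $\JJ \tau$, a summand-by-summand matching of the decorations and combinatorial factors. Once the first equality in each line is established, the second equality is immediate: applying the first equality to $\JJ \tau$ instead of $\tau$ and using the idempotence $\JJ \circ \JJ = \JJ$ (together with the fact, easily checked, that $\JJ$ preserves $\Tra_2$) closes the argument.

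For the first identity, fix $\tau = (F,\hat F,\Labn,\Labo,\Labe) \in \CD(\JJ)$. The crucial structural input is that every $A \in \Adm_2(F,\hat F)$ contains the root of each non-empty connected component of $F$ and is connected componentwise, so that $\JJ(A)$ is a connected subtree of $\JJ F$ containing the joined root $\rho$. One checks routinely that $A \mapsto \JJ(A)$ is a bijection $\Adm_2(F,\hat F) \to \Adm_2(\JJ F,[\hat F])$, and that under this bijection the edge sets $E_A$ and the boundaries $\partial(A,F) = \partial(\JJ A, \JJ F)$ are canonically identified (since $\JJ$ does not alter edges), so the summation variable $\eps_A^F$ matches together with its weight $1/\eps_A^F!$. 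The only delicate point is the node-decoration variable $\Labn_A$: a function on $N_F$ supported on $N_A$ descends under $\JJ$ to a function on $N_{\JJ F}$ supported on $N_{\JJ A}$, and the fibre of this projection above a given $\Labn'_{A'}$ is parametrised by all ways of decomposing $\Labn'_{A'}(\rho) = m_1 + \ldots + m_k$ across the original roots $\rho_1, \ldots, \rho_k$. Chu-Vandermonde then yields
\begin{equ}
\sum_{m_1+\ldots+m_k = \Labn'_{A'}(\rho)} \prod_{j=1}^k \binom{\Labn(\rho_j)}{m_j} = \binom{[\Labn](\rho)}{\Labn'_{A'}(\rho)}\;,
\end{equ}
so that the factor $\binom{\Labn}{\Labn_A}$ collapses exactly into $\binom{[\Labn]}{\Labn'_{A'}}$. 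The remaining bookkeeping for the decorations is then driven by the minimality of roots: no edge of $E_F \setminus E_A$ has source at a root (since all roots lie in $N_A$), so $\pi \Labe_\emptyset^A$ and $\pi \eps_A^F$ vanish at every root of $F$, while for the joined root one has $\pi'\eps_{A'}^{\JJ F}(\rho) = \sum_j \pi \eps_A^F(\rho_j)$ because the outgoing edges from $\rho$ in $\JJ F$ are precisely the outgoing edges from the original roots. A parallel verification shows that $[\hat F \cup_2 A] = [\hat F] \cup_2 \JJ A$.

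The second identity on $\Tra_2$ is considerably simpler. The defining conditions of $\Tra_2$ force $\hat F_2$ to contain every root of $F$, so any $A \in \Adm_1(F,\hat F)$ (which must be disjoint from $\hat F_2$) avoids all roots. Viewing $A$ as a subforest of $\JJ F$ avoiding the joined root gives a strict bijection $\Adm_1(F,\hat F) \leftrightarrow \Adm_1(\JJ F, [\hat F])$ on the nose, with no identification of nodes taking place and no Chu-Vandermonde needed. On the left tensor factor $(A, \hat F\restr A, \Labn_A + \pi \eps_A^F, \Labo, \Labe)$ the operator $\JJ$ therefore acts as the identity, which explains the form $(\id \otimes \JJ)\Delta_1$. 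For the right factor, the minimality of roots again makes $\Labn_A$, $\pi \eps_A^F$ and $\pi \Labe_\emptyset^A$ vanish at every root of $F$, so applying $\JJ$ after $\Delta_1$ matches $\Delta_1$ after $\JJ$ on the joined root, while on non-root nodes both sides are manifestly unaffected by $\JJ$.

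The main obstacle of the proof, modest as it is, lies in the combinatorial accounting for $\Delta_2$ via Chu-Vandermonde; everything else reduces to observing that roots are minimal and, in the $\Delta_2$ case, always contained in the extracted subforest, so every decoration that is modified by $\Delta_2$ is already equal to zero at each root of $F$ or transforms covariantly under the root-joining operation.
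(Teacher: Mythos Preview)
Your proof is correct and follows essentially the same route as the paper's own proof: a bijection between $\Adm_2(F,\hat F)$ and $\Adm_2(\JJ F,[\hat F])$ together with Chu--Vandermonde at the joined root for the $\Delta_2$ identity, and the observation that on $\Tra_2$ every root has colour $2$ so that elements of $\Adm_1$ avoid all roots, making the $\Delta_1$ identity immediate without any combinatorial summation. The only notational wrinkle is that you write $\JJ(A)$ for the image of $A$ in $\JJ F$ under the quotient map, whereas strictly speaking $\JJ(A)$ denotes joining the roots of $A$ as a standalone forest; the paper distinguishes these (writing $\JJ_F A$ for the image) and then observes they are naturally isomorphic since every component of $A$ contains a root of $F$, but this does not affect the substance of your argument.
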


\begin{proof}
Extend $\JJ$ to coloured trees by $\JJ(F,\hat F) = (\JJ(F),[\hat F])$ with $[\hat F]$
as in Definition~\ref{def:productTrees}.
The first identity then follows from the following facts. By the definition of $\Adm_2$,
one has
\begin{equ}[e:idenAdm2]
\Adm_2(\JJ(F,\hat F)) = \{\JJ_F A\,:\, A \in \Adm_2(F,\hat F)\}\;,
\end{equ}
where $\JJ_F A$ is the subforest of $\JJ F$ obtained by the image of 
the subforest $A$ of $F$ under the quotient map. The map
$\JJ_F$ is furthermore injective on $\Adm_2(F,\hat F)$, thus yielding
a bijection between $\Adm_2(\JJ(F,\hat F))$ and $\Adm_2(F,\hat F)$.
Finally, as a consequence of
the fact that each connected component of $A$ contains a root of $F$, 
there is a natural tree isomorphism between $\JJ_F A$ and $\JJ A$.
Combining this with an application of the Chu-Vandermonde identity on 
the roots allows to conclude.

The identity \eqref{e:idenAdm2} fails to be true for $\Adm_1$ in general.
However, if $(F, \hat F, \Labn, \Labo, \Labe) \in \Tra_2$, then each of the roots of $F$ is covered by $\hat F^{-1}(2)$, so that \eqref{e:idenAdm2} with $\Adm_2$ replaced by $\Adm_1$
does hold in this case. Furthermore, one then has a natural forest
isomorphism between $\JJ_F A$ and $A$ (as a consequence of the fact that $A$ does not
contain any of the roots of $F$), so that the second identity follows immediately.
\end{proof}

We now use the ``root joining'' map $\JJ$ to define
\begin{equ}[eq:hatCH2] 
\hat \CH_2 \eqdef \scal{\Tra_2} / \ker (\JJ\hat \CK_2) \simeq \CH_2 / \ker (\JJ\hat P_2) \;.
\end{equ}
Note here that $\JJ \hat P_2$ is well-defined on $\CH_2$ by \eqref{e:PhiJJ}, so that 
the last identity makes sense. The identity \eqref{e:PhiJJ} also implies that
$\ker (\JJ\hat \CK_2) = \ker (\hat \CK_2\JJ)$, so the order in which the two operators appear here
does not matter. 
 We define also
\begin{equ}[eq:hatBB2] 
\hat \BB_2 \eqdef \Vec({\C_2}) / \ker (\JJ\CK_2) \simeq \BB_2 / \ker (\JJ) \;,
\end{equ}
where $\JJ:\C_2\to\C_2$ is defined by $(\JJ(F),\hat F)$, which makes sense since all roots in $F$ have the same
(blue) colour. 

Finally, we define the {\it tree product} for $i\geq 0$
\begin{equ}[odot]
\CD_i(\JJ)\times\CD_i(\JJ)\ni
(\tau, \bar \tau) \mapsto \tau \bar \tau \eqdef \JJ(\tau \cdot \bar \tau)\;
\end{equ}

Then we have the following complement to Corollary~\ref{lem:preCEFM}
\begin{proposition}
\label{lem:treeprod}
Denoting by $\hat\CM$ the tree product \eqref{odot}, 
\begin{enumerate}
\item $(\hat\CH_2,\hat\CM,\Delta_2,\one_2,\one^\star_2)$ is a Hopf algebra and a comodule bialgebra over the Hopf algebra $(\CH_1,\CM,\Delta_1,\one_1,\one^\star_1)$ with coaction $\Delta_1$ and counit $\one^\star_1$.
\item $(\hat\BB_2,\hat\CM,\Delta_2,\one_2,\one^\star_2)$ is a Hopf algebra and a comodule bialgebra over the Hopf algebra $(\BB_1,\CM,\Delta_1,\one_1,\one^\star_1)$ with coaction $\Delta_1$ and counit $\one^\star_1$.
\end{enumerate}
\end{proposition}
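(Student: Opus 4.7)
The plan is to deduce this proposition from Corollary \ref{lem:preCEFM} by passing to the quotient spaces $\hat\CH_2$ and $\hat\BB_2$ defined in \eqref{eq:hatCH2} and \eqref{eq:hatBB2}. The overall scheme is to replace the forest product $\CM$ by the tree product $\hat\CM = \JJ \circ \CM$ and to verify that every Hopf and comodule bialgebra operation established on $\CH_2$ (respectively $\BB_2$) descends to the further quotient by $\ker(\JJ\hat P_2)$ (respectively $\ker\JJ$). The compatibility between $\JJ$ and the remaining operations is precisely encoded in Proposition \ref{prop:propJJ} and Lemma \ref{lem:JJ0}.

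First I would verify that $\hat\CM$ defines a well-defined, associative, commutative, unital product on $\hat\CH_2$. Associativity and commutativity of $\hat\CM$ reduce to those of $\CM$ via \eqref{e:multJJ}, while well-definedness on the quotient uses the identity $\ker(\JJ\hat\CK_2) = \ker(\hat\CK_2 \JJ)$ noted after \eqref{eq:hatCH2}. Next, combining the first identity of Proposition \ref{prop:propJJ} with the fact that $\hat\CK_2$ is compatible with $\Delta_2$ on $\CH_2$ (Lemma \ref{lem:biideal}), one obtains
\[
(\JJ\hat\CK_2 \otimes \JJ\hat\CK_2)\,\Delta_2\, \JJ\hat\CK_2 = (\JJ\hat\CK_2 \otimes \JJ\hat\CK_2)\,\Delta_2\;,
\]
so that $\Delta_2$ descends to a coassociative, counital coproduct on $\hat\CH_2$. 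The antipode is then constructed by the inductive argument of Proposition \ref{prop:Hopf}, applied with $\CM$ replaced by $\hat\CM$; the induction goes through because the grading $|\cdot|_2$ is preserved by $\JJ$.

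For the comodule bialgebra structure over $\CH_1$, the crucial input is the second identity of Proposition \ref{prop:propJJ}, namely $(\id\otimes\JJ)\Delta_1 = (\id\otimes\JJ)\Delta_1 \JJ$ on $\Tra_2$. Together with the comodule bialgebra structure of $\CH_2$ over $\CH_1$, this yields
\[
(\CK_1 \otimes \JJ\hat\CK_2)\,\Delta_1 = (\CK_1 \otimes \JJ\hat\CK_2)\,\Delta_1\, \JJ\hat\CK_2\;,
\]
so $\Delta_1$ descends to a coaction $\hat\CH_2 \to \CH_1 \hattimes \hat\CH_2$. The cointeraction property \eqref{e:intertwine} and the multiplicativity of $\Delta_1$ with respect to $\CM$ then pass to the quotient, giving the comodule bialgebra structure with respect to $\hat\CM$. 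The analogous statement for $\hat\BB_2$ over $\BB_1$ follows by the same arguments stripped of decorations, since all identities in Lemma \ref{lem:JJ0} and Proposition \ref{prop:propJJ} hold already at the level of coloured forests.

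The main obstacle will be to verify that $\Delta_2$ is multiplicative with respect to the tree product $\hat\CM$ on $\hat\CH_2$, since $\hat\CM = \JJ\circ\CM$ inserts an extra $\JJ$ that does not a priori commute with $\Delta_2$. Proposition \ref{prop:propJJ} is precisely tailored to overcome this: the identity $\Delta_2 \JJ = (\JJ\otimes\JJ)\Delta_2$ says that joining roots commutes with $\Delta_2$ in the right sense, which combined with multiplicativity of $\Delta_2$ with respect to $\CM$ (Corollary \ref{lem:preCEFM}) gives multiplicativity with respect to $\hat\CM$ on the quotient; a symmetric remark applies to the multiplicativity of $\Delta_1$.
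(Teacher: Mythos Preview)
Your proposal is correct and follows essentially the same strategy as the paper: use Proposition~\ref{prop:propJJ} together with \eqref{e:multJJ} to show that $\ker(\JJ\hat\CK_2)$ is a bi-ideal of $\CH_2$ compatible with both $\Delta_2$ and the $\CH_1$-coaction $\Delta_1$, then pass to the quotient.

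The one notable difference lies in how the antipode on $\hat\CH_2$ is obtained. You propose to rerun the induction of Proposition~\ref{prop:Hopf} with $\CM$ replaced by $\hat\CM$, which works since the grading $|\cdot|_2$ and the triangular structure of $\Delta_2$ survive under $\JJ$. The paper instead observes that $\ker(\JJ\hat\CK_2)$ is a bi-ideal and then invokes an external result (Takeuchi's theorem, \cite[Thm~1(iv)]{Quotients}): if $H$ is a Hopf algebra over a field and $I$ a bi-ideal with $H/I$ commutative, then $H/I$ is automatically a Hopf algebra. This sidesteps the need to check that the antipode of $\CH_2$ descends (i.e.\ that $I$ is a Hopf ideal). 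Your direct reconstruction is more self-contained; the paper's citation is shorter but imports a non-trivial fact. Your proposal is also more explicit than the paper's proof about the comodule bialgebra structure, which the paper leaves implicit.
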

\begin{proof}
The Hopf algebra structure of $\CH_2$
turns $\hat \CH_2$ into a Hopf algebra as well by the first part of Proposition~\ref{prop:propJJ}
and \eqref{e:multJJ}, combined with
\cite[Thm~1 (iv)]{Quotients}, which states that if $H$ is a Hopf algebra over a field and $I$ a bi-ideal of $H$ such that $H/I$ is commutative, then $H/I$ is a Hopf algebra. 
For $\hat\BB_2$, the same proof holds.
\end{proof}

The second assertion in Proposition~\ref{lem:treeprod} is in fact the same result, just written differently, 
as \cite[Thm~8]{MR2803804}. Indeed,
our space $\BB_2$ is isomorphic to the Connes-Kreimer Hopf algebra ${\mathcal H}_{\rm CK}$, and $\BB_1$ is 
isomorphic to an extension of
the extraction/contraction Hopf algebra ${\mathcal H}$. The difference between our $\BB_1$ and ${\mathcal H}$ in 
\cite{MR2803804} is that we allow extraction of arbitrary subforests, including with connected components reduced to
single nodes; a subspace of $\BB_1$ which turns out to be exactly isomorphic to $\mathcal H$ is the linear space 
generated by coloured forests $(F,\hat F)\in C_1$ such that $N_F\subset \hat F_1$.

\subsection{Algebraic renormalisation}

We set
\begin{equ}[eq:CHcirc] 
\Tra_\circ \eqdef\{(F,\hat F, \Labn, \Labo, \Labe) \in \Tra: \hat F \le 1, \ 
F \text{ is a tree}\}\;,\quad 
\CH_\circ \eqdef \scal{\Tra_\circ} / \ker (\CK)\;.
\end{equ}
Then, $\CH_\circ$ is an algebra when endowed with the {\it tree product} \eqref{odot}
in the special case $i=1$.
Note that this product is well-defined on $\CH_\circ$ since $\CK$ is multiplicative and
$\JJ$ commutes with $\CK$.  Furthermore, one has $\tau \cdot \bar \tau \in \CD_1(\JJ)$
for any $\tau, \bar \tau \in \Tra_\circ$.
As a consequence of \eqref{e:multJJ} and the fact that $\cdot$ is
associative, we see that the tree product is associative, thus turning $\CH_\circ$ into
a commutative algebra with unit $(\bullet,0,0,0,0)$.

\begin{remark}
The main reason why we do not define $\CH_\circ$ similarly to $\hat \CH_2$ by 
setting $\CH_\circ = \scal{\Tra_1} / \ker (\JJ\CK)$ is that $\Delta_1$ is not 
well-defined on that quotient space, while it is well-defined on $\CH_\circ$ as
given by \eqref{eq:CHcirc}, see Proposition~\ref{prop:alg}.
\end{remark}

\begin{remark}\label{rem:H_circ}
Using Lemma~\ref{lem:PP=P} as in Remark~\ref{rem:H_i}, we have canonical isomorphisms 
\begin{equs}
\CH_\circ &\simeq \scal{H_\circ}, \qquad  H_\circ\eqdef \{\CF\in\Tra_\circ: \CK\CF=\CF\}\;,\\
\CH_1 &\simeq \scal{H_1}, \qquad  H_1 \eqdef \{\CF\in\Tra_1: \CK_1\CF=\CF\}\;,\label{e:isoH}\\
\hat \CH_2 &\simeq \scal{\hat H_2}, \qquad  \hat H_2 \eqdef \{\CF\in\Tra_2: \JJ\hat \CK_2\CF=\CF\}\;.
\end{equs}
In particular, we can view $\CH_\circ$ and $\hat \CH_2$ as spaces of decorated trees rather than forests.
In both cases, the original forest product $\cdot$ can (and will) be interpreted as the tree product \eqref{odot} with, respectively, $i=1$ and $i=2$. 
\end{remark}
We denote by $\hat\CG_2$\label{charGhat2} the group of characters of $\hat\CH_2$ and by $\CG_1$  the group of characters of $\CH_1$.

Combining all the results we obtained so far, we see that we have constructed the following structure.
\begin{proposition}\label{prop:alg}
We have
\begin{enumerate}
\item $\CH_\circ$ is a left comodule over $\CH_1$ with coaction $\Delta_1$ and counit $\one^\star_1$.
\item $\hat\CH_2$ is a left comodule over $\CH_1$ with coaction $\Delta_1$ and counit $\one^\star_1$.
\item $\CH_\circ$ is a right comodule algebra over $\hat\CH_2$ with coaction $\Delta_2$ and counit $\one^\star_2$.
\item Let $\CH\in\{\CH_\circ,\hat\CH_2\}$. We define a left action of $\CG_1$ on $\CH^*$ by
\[
g h(\tau)\eqdef(g\otimes h)\Delta_1\tau, \qquad g\in \CG_1, \ h\in \CH^*, \ \tau\in\CH,
\]
and a right action of $\hat\CG_2$ on $\CH^*$ by
\[
hf(\tau)\eqdef(h\otimes f)\Delta_2\tau, \qquad f\in \hat \CG_2, \ h\in\CH^*, \ \tau\in\CH.
\]
Then we have
\begin{equs}\label{??}
g(hf) = (g h)(g f)\;, \qquad g\in \CG_1, \ f\in \hat \CG_2, \ h\in \CH^*.
\end{equs}
\end{enumerate}
\end{proposition}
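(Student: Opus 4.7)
The plan is to lift the comodule bialgebra structure constructed in Section~\ref{sec3} on $\scal{\Tra_\circ}$, $\scal{\Tra_1}$, $\scal{\Tra_2}$ to the quotients $\CH_\circ$, $\CH_1$, $\hat\CH_2$, and then to dualise the cointeraction identity~\eqref{e:intertwine} to obtain~\eqref{??}. Coassociativity, counit and multiplicativity pass mechanically to the quotients; the real work lies in the descent of the coactions.

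For parts (1) and (2), I would start from Lemma~\ref{prop:comodule} with $i=1$ and $j\in\{1,2\}$, which provides $\Delta_1\colon \scal{\Tra_j}\to\scal{\Tra_1}\hattimes\scal{\Tra_j}$. A direct inspection of Definition~\ref{def:maps} shows that when $\tau\in\Tra_\circ$ (resp.\ $\tau\in\Tra_2$) the second factor on the right-hand side of~\eqref{def:Deltabar} stays in $\Tra_\circ$ (resp.\ in $\Tra_2$): the recolouring $\hat F\cup_1 A$ preserves the bound $\hat F\le 1$ and the tree structure of $F$, and in the second case leaves $\hat F_2$ untouched so that the roots remain blue. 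The descent to $\CH_\circ=\scal{\Tra_\circ}/\ker\CK$ in part~(1) follows from~\eqref{e:CKCK}, which gives $(\CK\otimes\CK)\Delta_1=(\CK\otimes\CK)\Delta_1\CK$ and hence, composing with $\Phi_1\otimes\id$, $(\CK_1\otimes\CK)\Delta_1=(\CK_1\otimes\CK)\Delta_1\CK$. The descent to $\hat\CH_2$ in part~(2) follows by combining~\eqref{e:CKCK} with the identity $\Delta_1\JJ=(\id\otimes\JJ)\Delta_1$ of Proposition~\ref{prop:propJJ}, the commutation of $\JJ$ with $\CK$ and $\hat P_2$ from Lemma~\ref{lem:JJ0}, and the idempotency~\eqref{e:PhiJJ}. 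In both cases coassociativity and the counit axiom follow from the corresponding identities on $\scal{\Tra_j}$ established in Proposition~\ref{prop:coassoc} and Lemma~\ref{lem:counit}.

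For part (3), I check that if $\tau=(F,\hat F,\Labn,\Labo,\Labe)\in\Tra_\circ$ and $A\in\Adm_2(F,\hat F)$, then by Definition~\ref{def:admspecific} $A$ is a subtree of $F$ containing the root, so that $(A,\hat F\restr A,\ldots)\in\Tra_\circ$ and $(F,\hat F\cup_2 A,\ldots)\in\Tra_2$. Hence $\Delta_2$ defines a map $\scal{\Tra_\circ}\to\scal{\Tra_\circ}\hattimes\scal{\Tra_2}$. Multiplicativity with respect to the tree product $\tau\bar\tau=\JJ(\tau\cdot\bar\tau)$ follows by combining multiplicativity with respect to the forest product~\eqref{mult_Deltaa2} with the identity $\Delta_2\JJ=(\JJ\otimes\JJ)\Delta_2$ of Proposition~\ref{prop:propJJ}, which allows the joining of roots to be performed before or after extraction. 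Descent to $\CH_\circ\hattimes\hat\CH_2$ uses Lemma~\ref{lem:biideal} applied to $\hat\CK_2$ together with~\eqref{e:CKCK} and the $\JJ$-compatibility of Lemma~\ref{lem:JJ0} and Proposition~\ref{prop:propJJ}. Coassociativity and the counit axiom are inherited from~\eqref{mult_Deltaa1} and Lemma~\ref{lem:counit}.

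For part (4), the identity~\eqref{??} is the dualisation of the cointeraction~\eqref{e:intertwine}: for $g\in\CG_1$, $f\in\hat\CG_2$, $h\in\CH^*$ and $\tau\in\CH$,
\begin{equs}
g(hf)(\tau) &= (g\otimes h\otimes f)(\id\otimes\Delta_2)\Delta_1\tau \\
&= (g\otimes h\otimes f)\,\CM^{(13)(2)(4)}(\Delta_1\otimes\Delta_1)\Delta_2\tau \\
&= \bigl((g\otimes h)\Delta_1\otimes (g\otimes f)\Delta_1\bigr)\Delta_2\tau = \bigl((gh)(gf)\bigr)(\tau),
\end{equs}
where the first equality uses the definitions of the two actions together with parts (1)--(3), the second is precisely~\eqref{e:intertwine}, and the third uses the multiplicativity of the character $g$ to split the merged first and third tensor factors. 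The main obstacle is therefore purely notational: carefully verifying the descent to the quotients in parts~(1)--(3) by tracking all the idempotents $\CK$, $\Phi_i$, $\hat P_i$, $\JJ$; every required commutation identity has already been assembled in~\eqref{e:CKCK}, Lemma~\ref{lem:JJ0} and Proposition~\ref{prop:propJJ}.
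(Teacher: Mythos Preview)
Your proposal is correct and follows essentially the same route as the paper: parts (1)--(3) are reduced to the coassociativity of $\Delta_1$, $\Delta_2$ from Proposition~\ref{prop:coassoc} together with the descent identities~\eqref{e:CKCK}, Lemma~\ref{lem:JJ0} and Proposition~\ref{prop:propJJ}, and part (4) is obtained by dualising the cointeraction~\eqref{e:intertwine}. The paper's proof in addition spells out the verifications $g_1(g_2h)=(g_1g_2)h$ and $(hf_1)f_2=h(f_1f_2)$ from coassociativity, which you implicitly absorb into ``parts (1)--(3)''; otherwise your argument is a somewhat more detailed rendering of the same strategy.
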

\begin{proof}
The first, the second and the third assertions follow from the coassociativity of $\Delta_1$, respectively $\Delta_2$, proved in Proposition~\ref{prop:coassoc}, combined with Proposition~\ref{prop:propJJ} to show
that these maps are well-defined on the relevant quotient spaces. 
The multiplicativity of $\Delta_2$ with respect to the tree product \eqref{odot} follows from
the first identity of Proposition~\ref{prop:propJJ}, combined with the fact that 
$\hat \CH_2$ is a quotient by $\ker \JJ$.

In order to prove the last assertion, we show first that the above definitions yield indeed actions, since by the 
coassociativity of $\Delta_1$ and $\Delta_2$ proved in Proposition~\ref{prop:coassoc}
\begin{equs}
g_1(g_2h) & =(g_1\otimes(g_2\otimes h)\Delta_1)\Delta_1=(g_1\otimes g_2\otimes h)(\id\otimes\Delta_1)\Delta_1
\\ & = (g_1\otimes g_2\otimes h)(\Delta_1\otimes\id)\Delta_1 = ((g_1\otimes g_2)\Delta_1\otimes h)\Delta_1 = (g_1g_2)h,
\end{equs}
and
\begin{equs}
(hf_1)f_2 & =((h\otimes f_1)\Delta_2\otimes f_2)\Delta_2 = (h\otimes f_1\otimes f_2)(\Delta_2\otimes\id)\Delta_2
\\ & = (h\otimes f_1\otimes f_2)(\id\otimes\Delta_2)\Delta_2
= (h\otimes(f_1\otimes f_2)\Delta_2)\Delta_2
=h(f_1f_2).
\end{equs}
Following \eqref{actsleft}, the natural definition is for $(g,f)\in \CG_1\times \hat \CG_2$ and $h\in \CH_\circ^*$
\[
(g,f)h\eqdef(g h)f^{-1}=(g h\otimes f\CA_2)\Delta_2=(g\otimes h\otimes f\CA_2)(\Delta_1\otimes\id)\Delta_2.
\]
We prove now \eqref{??}. By the definitions, we have
\begin{equs}
g(hf) & = (g\otimes (h\otimes f)\Delta_2)\Delta_1 = (g\otimes h\otimes f)(\id\otimes \Delta_2)\Delta_1
\\ & = (g\otimes h\otimes f)(\id\otimes \Delta_2)\Delta_1,
\end{equs}
while
\begin{equs}
(g h)(g f) & = ((g\otimes h)\Delta_1\otimes (g\otimes f)\Delta_1)\Delta_2
\\ & = (g\otimes h \otimes g\otimes f)(\Delta_1\otimes\Delta_1)\Delta_2
\\ & =(g\otimes h\otimes f)\CM^{(13)(2)(4)}(\Delta_1\otimes\Delta_1)\Delta_2.
\end{equs}
and we conclude by Proposition~\ref{prop:doublecoass}.
\end{proof}

Proposition~\ref{prop:alg} and its direct descendant, Theorem~\ref{def:charpm}, are
crucial in the renormalisation procedure below, see Theorem~\ref{theo:algebra} and in particular \eqref{e:defPig}.

By Proposition~\ref{left-right} and \eqref{??}, we obtain from \eqref{??} that
$\CH_\circ$ is a left comodule over the Hopf algebra $\hat\CH_{12}=\CH_1\ltimes \hat\CH_2 = (\CH_1\ltimes \CH_2)/\ker(\id\otimes \JJ)$, with counit $\one^\star_{12}$ and coaction
\[
\Delta_{\circ}:\CH_\circ\to\hat\CH_{12}\hattimes\CH_\circ, \qquad
\Delta_{\circ} \eqdef \sigma^{(132)} (\Delta_1\otimes \hat\CA_2)\Delta_2
\]
where $\sigma^{(132)}(a\otimes b\otimes c)\eqdef a\otimes c\otimes b$ and 
$\hat\CA_2$ is the antipode of $\hat\CH_2$. Equivalently,  the semi-direct product $\CG_1\ltimes \hat\CG_2$ acts on the left on the dual space $\CH_\circ^*$ by the formula 
\[
(\ell,g)h(\tau)\eqdef(\ell\otimes h \otimes g\hat\CA_2)(\Delta_1\otimes \id)\Delta_2\tau,
\]
for $\ell\in \CG_1$, $g\in \hat \CG_2$, $h\in \CH_\circ^*$, $\tau\in\CH_\circ$. In other words, with this 
action $\CH_\circ^*$ is a left module on $\CG_1\ltimes \hat \CG_2$, see Proposition~\ref{left-right}.

\begin{remark}
The action of  $\Delta_1$ on $\hat\CH_2$ differs from the action on $\{\CH_\circ,\CH_1\}$ because of the following
detail: $\hat\CH_2$ is generated (as bigraded space) by a basis of rooted trees whose root is blue; since $\Delta_1$ acts
by extraction/contraction of subforests which contain $\hat F_1$ and are disjoint from $\hat F_2$, such subforests
can never contain the root. Since on the other hand in $\CH_\circ$ and $\CH_1$ one has coloured forests with empty 
$\hat F_2$, no such restriction applies to the action of $\Delta_1$ on these spaces.
\end{remark}

\subsection{Recursive formulae}\label{sec:recurs}

We now show how the formalism developed so far in this article links to the 
one developed in \cite[Sec.~8]{reg}. 
For that, we use the canonical identifications  
\[
\CH_\circ=\scal{H_\circ}, \qquad \CH_1=\scal{H_1}, \qquad \hat\CH_2=\scal{\hat H_2}, 
\]
given in Remarks~\ref{rem:H_i} and~\ref{rem:H_circ}. We furthermore
introduce the following notations. 
\begin{enumerate}
\item For $ k \in \N^d$, we write $X^k$\label{defX} as a shorthand for $(\bullet,0)_{0}^{k,0} \in H_\circ$. 
We also interpret this as an element of $\hat \CH_2$, although
its canonical representative there is $(\bullet,2)_{0}^{k,0} \in \hat H_2$.
As usual, we also write $\one$ instead of $X^0$, 
and we write $X_i$ with $i \in \{1,\ldots,d\}$ as a shorthand for $X^k$ with $k$ equal to the
$i$-th canonical basis element of $\N^d$.
\item For every type $ \Labhom \in \Lab$ and every
 $ k \in \N^d $, we define the linear operator 
\begin{equ}[e:CI]
\CI^{\Labhom}_{k}\colon \CH_\circ \to \CH_\circ
\end{equ} 
in the following way.
Let $\tau = (F,\hat F)_\Labe^{\Labn,\Labo} \in H_\circ$, so that 
we can assume that $F$ consists of a single tree with root $\rho$.
Then, $\CI^{\Labhom}_{k}(\tau) = (G,\hat G)_{\bar\Labe}^{\bar \Labn,\bar \Labo}\in H_\circ$ is 
given by
\begin{equ}
N_G = N_F \sqcup \{\rho_G\}\;,\qquad E_G = E_F \sqcup \{(\rho_G,\rho)\}\;,
\end{equ}
the root of $G$ is $\rho_G$, the type of the edge $(\rho_G,\rho)$ is $\Labhom$. For instance
\[
(F,\hat F)=
\begin{tikzpicture}[scale=0.2,baseline=0.2cm]
        \node at (0,0)  [dot,red,label={[label distance=-0.2em]below: \scriptsize  $ \rho $} ] (root) {};
         \node at (-4,4)  [dot] (leftll) {};
          \node at (-2,4)  [dot] (leftlc) {};
      \node at (-2,2)  [dot,color=red] (left) {};
          \node at (0,2)  [dot,red ] (rightll) {};
          \node at (0,4)  [dot ] (rightll2) {};
         \node at (2,2)  [dot ] (rightrr) {};
        
        \draw[kernel1] (left) to node [sloped,below] {\small } (root); ;
     \draw[kernel1] (leftll) to
     node [sloped,below] {\small }     (left);
      \draw[kernel1] (leftlc) to
     node [sloped,below] {\small }     (left); 
     \draw[kernel1] (rightrr) to
     node [sloped,below] {\small }     (root);
     \draw[kernel1] (rightll2) to
     node [sloped,below] {\small }     (rightll);
     \draw[kernel1] (rightll) to
     node [sloped,below] {\small }     (root);
     \end{tikzpicture} \qquad
\Longrightarrow \qquad (G,\hat G)=
\begin{tikzpicture}[scale=0.2,baseline=0.2cm]
        \node at (0,0)  [dot,label={[label distance=-0.2em]below: \scriptsize  $ \rho_G $} ] (star) {};
        \node at (0,2)  [dot,red] (root) {};
          \node at (-4,6)  [dot] (leftll) {};
          \node at (-2,6)  [dot] (leftlc) {};
      \node at (-2,4)  [dot,color=red] (left) {};
          \node at (0,4)  [dot,red ] (rightll) {};
          \node at (0,6)  [dot ] (rightll2) {};
         \node at (2,4)  [dot ] (rightrr) {};
       
        \draw[kernel1] (root) to node [sloped,below] {\small } (star); ;
        \draw[kernel1] (left) to node [sloped,below] {\small } (root); ;
     \draw[kernel1] (leftll) to
     node [sloped,below] {\small }     (left);
      \draw[kernel1] (leftlc) to
     node [sloped,below] {\small }     (left); 
     \draw[kernel1] (rightrr) to
     node [sloped,below] {\small }     (root);
     \draw[kernel1] (rightll2) to
     node [sloped,below] {\small }     (rightll);
     \draw[kernel1] (rightll) to
     node [sloped,below] {\small }     (root);
     \end{tikzpicture} 
\]
The decorations of $\CI^{\Labhom}_{k}(\tau)$, as well as $\hat G$, 
coincide with those of $\tau$, except on the newly added edge / vertex where $\hat G$, 
$\bar \Labn$ and $\bar \Labo$ vanish, while $\bar \Labe(\rho_G,\rho) = k$. This gives a triangular operator and $\CI^{\Labhom}_{k}\colon \CH_\circ \to \CH_\circ$ is therefore well defined.
\item Similarly, we define operators 
\begin{equ}[e:CJ]
\hat \CJ^{\Labhom}_{k}\colon \CH_\circ \to \hat\CH_2
\end{equ}
in exactly the same way as the operators $\CI^{\Labhom}_{k}$ defined in \eqref{e:CI}, 
except that the 
root of $\hat \CJ^{\Labhom}_{k}(\tau)$ is coloured with the colour $2$, for instance
\[
(F,\hat F)=
\begin{tikzpicture}[scale=0.2,baseline=0.2cm]
        \node at (0,0)  [dot,red,label={[label distance=-0.2em]below: \scriptsize  $ \rho $} ] (root) {};
         \node at (-4,4)  [dot] (leftll) {};
          \node at (-2,4)  [dot] (leftlc) {};
      \node at (-2,2)  [dot,color=red] (left) {};
          \node at (0,2)  [dot,red ] (rightll) {};
          \node at (0,4)  [dot ] (rightll2) {};
         \node at (2,2)  [dot ] (rightrr) {};
        
        \draw[kernel1] (left) to node [sloped,below] {\small } (root); ;
     \draw[kernel1] (leftll) to
     node [sloped,below] {\small }     (left);
      \draw[kernel1] (leftlc) to
     node [sloped,below] {\small }     (left); 
     \draw[kernel1] (rightrr) to
     node [sloped,below] {\small }     (root);
     \draw[kernel1] (rightll2) to
     node [sloped,below] {\small }     (rightll);
     \draw[kernel1] (rightll) to
     node [sloped,below] {\small }     (root);
     \end{tikzpicture} \qquad
\Longrightarrow \qquad (G,\hat G)=
\begin{tikzpicture}[scale=0.2,baseline=0.2cm]
        \node at (0,0)  [dot,blue,label={[label distance=-0.2em]below: \scriptsize  $ \rho_G $} ] (star) {};
        \node at (0,2)  [dot,red] (root) {};
          \node at (-4,6)  [dot] (leftll) {};
          \node at (-2,6)  [dot] (leftlc) {};
      \node at (-2,4)  [dot,color=red] (left) {};
          \node at (0,4)  [dot,red ] (rightll) {};
          \node at (0,6)  [dot ] (rightll2) {};
         \node at (2,4)  [dot ] (rightrr) {};
       
        \draw[kernel1] (root) to node [sloped,below] {\small } (star); ;
        \draw[kernel1] (left) to node [sloped,below] {\small } (root); ;
     \draw[kernel1] (leftll) to
     node [sloped,below] {\small }     (left);
      \draw[kernel1] (leftlc) to
     node [sloped,below] {\small }     (left); 
     \draw[kernel1] (rightrr) to
     node [sloped,below] {\small }     (root);
     \draw[kernel1] (rightll2) to
     node [sloped,below] {\small }     (rightll);
     \draw[kernel1] (rightll) to
     node [sloped,below] {\small }     (root);
     \end{tikzpicture} 
\]
\item
For $\alpha \in \Z^d\oplus\Z(\Lab)$, 
we define linear triangular maps $\CR_\alpha\colon \CH_\circ \to \CH_\circ$
in such a way that if $\tau = (T,\hat T)_\Labe^{\Labn,\Labo}\in H_\circ$ with root $\rho \in N_T$,
then $\CR_\alpha(\tau)\in H_\circ$ coincides with $\tau$, except for $\Labo(\rho)$ to which we
add $\alpha$ and $\hat T(\rho)$ which is set to $1$. 
In particular, one has $\CR_\alpha \circ \CR_\beta = \CR_{\alpha+\beta}$. 
\end{enumerate}
\begin{remark}\label{rem:A}
With these notations, it follows from the definition of the sets $H_\circ$, $H_1$ and $\hat H_2$
that they can be constructed as follows.
\begin{itemize}
\item Every element of $H_\circ\setminus\{\one\}$ can be obtained from
elements of the type $X^k$ by successive applications of the 
maps $\CI^{\Labhom}_{k}$, $\CR_\alpha$, and the tree product \eqref{odot}.
\item Every element of $H_1$ is the forest product of a finite number of elements of $H_\circ$.
\item Every element of $\hat H_2$ is of the form
\begin{equ}[e:formH2]
X^k \prod_i \hat \CJ_{k_i}^{\Labhom_i}(\tau_i)\;,
\end{equ}
for some finite collection of elements $\tau_i \in H_\circ\setminus\{\one\}$, $\Labhom_i \in \Lab$ and $k_i \in \N^d$. 
\end{itemize}
\end{remark}
Then, one obtains a simple recursive description of the coproduct $\Delta_2$.
\begin{proposition}
With the above notations, the operator 
$\Delta_2\colon \CH_\circ \to \CH_\circ \hotimes \hat\CH_2$ is multiplicative,
satisfies the identities
\begin{equs}
\Delta_2 X_i &= X_i \otimes \one + \one \otimes X_i \;,\qquad 
\Delta_2 \one  = \one \otimes \one\;, \\
\Delta_2 \CI^{\Labhom}_k(\tau) &  = \left( \CI^{\Labhom}_k \otimes \id \right)  \Delta_2 \tau
+ \sum_{\ell}  \frac{X^{\ell}}{\ell !}  \otimes   \hat \CJ^{\Labhom}_{k+ \ell}(\tau), \label{e:def_rec_2}\\
\Delta_2 \CR_\alpha(\tau) &= (\CR_\alpha \otimes \id) \Delta_2 \tau\;
\end{equs}
and it is completely determined by these properties. 
Likewise, $\Delta_2 \colon \hat\CH_2 \to \hat\CH_2 \hotimes \hat\CH_2$ is multiplicative, 
satisfies the identities on the first line of \eqref{e:def_rec_2} 
and 
\begin{equ}[e:defDeltaH2]
\Delta_2 \hat \CJ^{\Labhom}_k(\tau)  = \left( \hat \CJ^{\Labhom}_k \otimes \id \right)  \Delta_2 \tau
+ \sum_{\ell}  \frac{X^{\ell}}{\ell !}  \otimes   \hat \CJ^{\Labhom}_{k+ \ell}(\tau)\;
\end{equ}
and it is completely determined by these properties. 
\end{proposition}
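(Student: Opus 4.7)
The plan is to establish the statement in four steps: multiplicativity of $\Delta_2$ with respect to the tree product, the base cases $\Delta_2\one$ and $\Delta_2 X_i$, the recursive identities for $\CI_k^\Labhom$, $\hat\CJ_k^\Labhom$, and $\CR_\alpha$, and finally the uniqueness claim. Multiplicativity with respect to the tree product $\hat\CM(\tau,\bar\tau) = \JJ(\tau\cdot\bar\tau)$ on both $\CH_\circ$ and $\hat\CH_2$ is inherited from the multiplicativity of $\Delta_2$ with respect to the forest product proved in Proposition~\ref{prop:coassoc}, combined with the identity $\Delta_2\JJ = (\JJ\otimes\JJ)\Delta_2$ from Proposition~\ref{prop:propJJ}, which ensures that the coproduct descends coherently to the quotients. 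For the base cases, I would apply Definition~\ref{def:maps} directly to the coloured forest $(\bullet,0)$: by Definition~\ref{def:admspecific} only $A=\{\bullet\}$ is admissible, the boundary $\partial(A,F)$ is empty (so the $\varepsilon$-sum collapses), and the remaining finite sum is indexed by $\Labn_A \in \{0,\delta_i\}$. This yields the two terms $\one \otimes (\bullet,2,\delta_i,0,0)$ and $X_i \otimes (\bullet,2,0,\delta_i,0)$, and the second right-factor collapses to $\one$ in $\hat\CH_2$ since $\hat P_2$ annihilates $\Labo$ on blue roots.

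The core calculation is the recursion for $\CI_k^\Labhom$. Write $\sigma = \CI_k^\Labhom(\tau)$ with a representative of $\tau$ having root $\rho$, so that $\sigma$ has a new uncoloured root $\rho_G$ attached to $\rho$ by an edge $e_\star$ of type $\Labhom$ and edge-decoration $k$. By Definition~\ref{def:admspecific} every $A \in \Adm_2(\sigma)$ must contain $\rho_G$, and I would partition such $A$ according to whether $e_\star \in E_A$. When $e_\star \in E_A$, the map $A \mapsto A \setminus \{\rho_G,e_\star\}$ is a bijection onto $\Adm_2(\tau)$, the boundary sets $\partial(A,\sigma)$ and $\partial(A\setminus\{\rho_G,e_\star\},\tau)$ coincide, and a direct comparison of decorations in Definition~\ref{def:maps} shows that the contribution is exactly $(\CI_k^\Labhom \otimes \id)\Delta_2\tau$, with the extra black edge $e_\star$ sitting outside both $A$ and $\hat F \cup_2 A$-contractible region. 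When $e_\star \notin E_A$, necessarily $A = \{\rho_G\}$; then $e_\star \in \partial(A,\sigma)$ and $\varepsilon_A^\sigma$ is free on $e_\star$ with value $\ell \in \N^d$. The left factor becomes $(\bullet,0,\ell,0,0) = X^\ell$ with weight $1/\ell!$, while the right factor has $\rho_G$ recoloured blue and the edge $e_\star$ carrying new decoration $k+\ell$, which is precisely $\hat\CJ_{k+\ell}^\Labhom(\tau)$ in $\hat\CH_2$. Summing over $\ell$ produces the second term of the claimed identity. The recursion \eqref{e:defDeltaH2} for $\hat\CJ_k^\Labhom$ on $\hat\CH_2$ follows by the same argument, the only cosmetic change being that the new root $\rho_G$ of $\hat\CJ_k^\Labhom(\tau)$ is already blue, so in the first case the contribution is $(\hat\CJ_k^\Labhom \otimes \id)\Delta_2\tau$.

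The identity for $\CR_\alpha$ follows from the observation that $\CR_\alpha$ only modifies $\hat F(\rho)$ and $\Labo(\rho)$ at the root $\rho$, which is forced to belong to every $A \in \Adm_2(\tau)$ by Definition~\ref{def:admspecific}. Inspection of Definition~\ref{def:maps} shows that neither the admissibility set nor the combinatorial factors depend on these changes, so the extra $\alpha$ at $\rho$ appears unchanged in the left factor (whose root $\rho$ is now red), giving the claimed $(\CR_\alpha \otimes \id)\Delta_2\tau$; the same $\alpha$ also appears in the right factor's $\Labo$ at $\rho$, but is killed by $\hat P_2$ since $\rho$ is a root of a blue connected component of $\hat F \cup_2 A$. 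Uniqueness of $\Delta_2$ given these properties is then immediate from Remark~\ref{rem:A}: $H_\circ$ is spanned by elements built from the $X^k$ by iterated applications of $\CI_k^\Labhom$, $\CR_\alpha$ and the tree product, and $\hat H_2$ by elements of the form \eqref{e:formH2}; hence multiplicativity, the base cases, and the recursions determine $\Delta_2$ on all of $\CH_\circ$ and $\hat\CH_2$.

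The main obstacle I anticipate is the careful bookkeeping of the decorations $\Labn_A, \Labo, \varepsilon_A^F$, and $\Labe$ through the bijection between $\Adm_2(\sigma)$ and $\Adm_2(\tau)$ in the case $e_\star \in E_A$, together with the combinatorial prefactors $1/\varepsilon_A^F!$ and $\binom{\Labn}{\Labn_A}$, and in particular verifying that the shifts by $\alpha$ introduced by $\CR_\alpha$ in the right factor are correctly absorbed when passing to the quotient $\hat\CH_2$. These checks are of the same nature as the support-tracking and Chu--Vandermonde manipulations carried out in the proof of Proposition~\ref{prop:coassoc}, and should be routine but tedious.
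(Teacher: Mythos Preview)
Your proposal is correct and follows essentially the same approach as the paper: multiplicativity via Propositions~\ref{prop:coassoc} and~\ref{prop:propJJ}, the partition of $\Adm_2(\CI_k^\Labhom\tau)$ according to whether the trunk edge lies in $A$, and the absorption of the $\CR_\alpha$-shift on the right factor by the quotient map $\hat P_2$. The only imprecision is your parenthetical remark that $e_\star$ sits ``outside $A$'' in the case $e_\star\in E_A$---it is in $A$ and becomes blue (hence contracted) in the right factor---but this does not affect the argument.
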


\begin{proof} 
The operator $\Delta_2$ is multiplicative on $\CH_\circ$ as a consequence of the first identity
of Proposition~\ref{prop:propJJ} and
its action on $X^k$ was already mentioned in \eqref{e:DeltaX}.
It remains to verify that the recursive identities hold as well. 

We first consider $\Delta_2 \sigma$ with $\sigma = \CI_k^{\Labhom}(\tau)$
and $\tau = (T,\hat T)^{\Labn,\Labo}_\Labe$.
We write $\sigma = (F,\hat F)^{\Labn,\Labo}_{\Labe + k \one_e}$, where $e$ is the 
``trunk'' of type $\Labhom$ created by $\CI_k^{\Labhom}$ and $\rho$ is the root of $F$; 
moreover we extend $\Labn$ to $N_F$ and $\Labo$ to $N_{\hat F}$ by setting 
$\Labn(\rho)=\Labo(\rho)=0$.
It follows from the definitions that
\begin{equs}
\Adm_2(F,\hat F) = \{\{\rho\}\} \cup \{ A \cup \{\rho,e\}\,:\, A \in \Adm_2(T,\hat T)\}\;.
\end{equs}
Indeed, if $e$ does not belong to an element $A$ of $\Adm_2(F,\hat F)$ then, since $A$ has to 
contain $\rho$ and be connected, one necessarily has $A = \{\rho\}$. If on the other hand
$e \in A$, then one also has $\rho \in A$ and the remainder of $A$ is necessarily a connected 
subtree of $T$ containing its root, namely an element of $\Adm_2(T,\hat T)$. 

Given $A \in \Adm_2(T, \hat T)$, since the root-label of $\sigma$ is $0$, the set of 
all possible node-labels $\Labn_A$ for $\sigma$ appearing in \eqref{def:Deltabar} for 
$\Delta_2 \sigma$ coincides with those appearing 
in the expression for 
$ \Delta_2 \tau$, 
so that we have the identity
\begin{equs}
 \Delta_2 \sigma &= (\CI^{\Labhom}_k \otimes \id) \Delta_2 \tau
+ \sum_{\varepsilon_{\rho}^F,\Labn_{\rho}} {1\over \varepsilon_{\rho}^F!}\binom{\Labn }{\Labn_{\rho}}
(\bullet,0,\Labn_{\rho} + \pi \varepsilon_{\rho}^F,0,0 ) \\
& \qquad \otimes (F,\hat F + 2 \un{\rho}, \Labn -  \Labn_{\rho},\Labo,\Labe+k\un{e}+\varepsilon_{\rho}^F)      \\ 
& = (\CI^{\Labhom}_k \otimes \id)  \Delta_2 \tau
+  \sum_{\ell} {1\over \ell !} 
X^{\ell}  \otimes 
  \hat \CJ^{\Labhom}_{k+\ell}(\tau)\;.
\end{equs}
This is because $\Labn(\rho) =0$, so that
the sum over $\Labn_{\rho}$ contains only the zero term. Since $\Delta_2 \colon\CH_\circ \to 
\CH_\circ \hotimes \hat\CH_2$, we are implicitly applying the appropriate contraction $\CK\otimes\JJ\hat \CK_2$, see \eqref{eq:hatCH2}-\eqref{eq:CHcirc}.

We now consider $\Delta_2 \sigma$ with $\sigma = \CR_\alpha(\tau)$.
In this case, we write $\tau = (T,\hat T)^{\Labn,\Labo}_\Labe$ so that,
denoting by $\rho$ the root of $T$, one has
$\sigma = (T,\hat T \vee \un{\rho},\Labn,\Labo + \alpha \un{\rho},\Labe)$.
We claim that in this case one has
\begin{equ}
\Adm_2(T,\hat T) = \Adm_2(T,\hat T \vee \un{\rho})\;.
\end{equ}
This is non-trivial only in the case $\hat T(\rho) = 0$. In this case however,
it is necessarily the case that $\hat T(e) = 0$ for every edge $e$ incident to the
root. This in turn guarantees that the family $\Adm_2(T,\hat T)$ remains unchanged by the 
operation of colouring the root. 
This implies that one has 
\begin{equ}
\Delta_2 \CR_\alpha(\tau) = (\CR_\alpha \otimes \CR_\alpha) \Delta_2 \tau\;.
\end{equ}
This appears slightly different from the desired identity, but the latter then
follows by observing that, for every $\bar \tau \in \hat \CH_2$,
one has $\CR_\alpha \bar \tau = \bar \tau$ as elements of $\hat \CH_2$, thanks to the 
fact that we quotiented by the kernel of $\hat \CK_2$ which sets the
value of $\Labo$ to $0$ on the root. 
\end{proof}

We finally have the following results on the antipode of $\hat\CH_2$:
\begin{proposition}\label{prop:CA2}
Let $\hat\CA_2:\hat\CH_2\to\hat\CH_2$ be the antipode of $\hat\CH_2$. Then 
\begin{itemize}
\item The algebra morphism $\hat\CA_2:\hat\CH_2\to\hat\CH_2$ is defined uniquely by the fact that $\hat\CA_2 X_i = - X_i$ and for all $\hat \CJ^{\Labhom}_{k}(\tau)\in \hat\CH_2$ with $\tau \in \CH_\circ$
\begin{equ}[e:defA+0]
\hat\CA_2 \hat \CJ_k^{\Labhom}(\tau) = -\sum_{\ell\in \N^d}{(-X)^\ell \over \ell!} \CM \bigl(\hat \CJ_{k+\ell}^{\Labhom}\otimes \hat\CA_2\bigr)\Delta_2 \tau\;,
\end{equ}
where $\CM \colon \hat\CH_2\hotimes \hat\CH_2\to \hat\CH_2$ denotes the (tree) product.
\item On $\hat\CH_2$, one has the identity
\begin{equ}[e:propWanted20]
\Delta_1  \hat\CA_2 = (\id \otimes  \hat\CA_2)\Delta_1\;.
\end{equ}
\end{itemize}
\end{proposition}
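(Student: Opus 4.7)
Since Proposition~\ref{lem:treeprod} already guarantees the existence of $\hat\CA_2$, my plan for this part is simply to derive the explicit formula. As $\hat\CH_2$ is commutative, $\hat\CA_2$ is an algebra morphism, hence uniquely determined by its values on the generators $X_i$ and $\hat\CJ_k^{\Labhom}(\tau)$ listed in Remark~\ref{rem:A}. The relation $\hat\CA_2 X_i = -X_i$ is forced by the primitive coproduct of $X_i$ together with the antipode axiom. For $\hat\CJ_k^{\Labhom}(\tau)$, I would apply $\CM(\id\otimes\hat\CA_2)$ to \eqref{e:defDeltaH2} and invoke the antipode axiom to obtain
\[
\sum_{\ell\in\N^d}\frac{X^\ell}{\ell!}\cdot\hat\CA_2\hat\CJ_{k+\ell}^{\Labhom}(\tau) \;=\; -\,\CM\bigl(\hat\CJ_k^{\Labhom}\otimes\hat\CA_2\bigr)\Delta_2\tau.
\]
Inverting this linear system via the elementary identity $\sum_{\ell+m=n}\frac{X^\ell (-X)^m}{\ell!\,m!} = \un{n=0}$ recovers exactly \eqref{e:defA+0}, and the resulting recursion is well defined thanks to the triangularity of all operators with respect to the bigrading.

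\textbf{Part (2): convolution strategy.} For the intertwining identity I would work in the convolution algebra $\Hom(\hat\CH_2,\CH_1\hotimes\hat\CH_2)$, with convolution $*$ built from $\Delta_2$ on the source and the tensor-product multiplication on the target, whose identity element is $e(\sigma) = \one_2^\star(\sigma)(\one_1\otimes\one_2)$. Setting $\Phi_1 = \Delta_1\circ\hat\CA_2$ and $\Phi_2 = (\id\otimes\hat\CA_2)\Delta_1$, the plan is to show both are convolution inverses of $\Delta_1$, which forces $\Phi_1 = \Phi_2$ by uniqueness of inverses. For $\Phi_1$, multiplicativity of $\Delta_1$ with respect to the tree product gives immediately
\[
(\Delta_1*\Phi_1)(\sigma) = \Delta_1\bigl(\CM(\id\otimes\hat\CA_2)\Delta_2\sigma\bigr) = e(\sigma),
\]
and similarly on the other side. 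For $\Phi_2$, applying the cointeraction identity \eqref{e:intertwine} of Proposition~\ref{prop:doublecoass} allows one to rewrite $(\Delta_1*\Phi_2)(\sigma) = \sum a\otimes b_{(1)}\cdot\hat\CA_2 b_{(2)}$, where $\Delta_1\sigma = \sum a\otimes b$ and $\Delta_2 b = \sum b_{(1)}\otimes b_{(2)}$; the antipode axiom applied inside the second tensor factor then collapses this to $\bigl((\id\otimes\one_2^\star)\Delta_1\sigma\bigr)\otimes\one_2$.

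\textbf{Auxiliary identity and main obstacle.} To close the argument I still need $(\id\otimes\one_2^\star)\Delta_1 = \one_2^\star(\,\cdot\,)\,\one_1$ on $\hat\CH_2$; note this is strictly stronger than the counit preservation $(\one_1^\star\otimes\one_2^\star)\Delta_1 = \one_2^\star$ that one gets for free from the comodule-bialgebra structure, and so must be verified directly. My plan is to check it on the generators of Remark~\ref{rem:A}: it is trivial on $X^k$ since $\Delta_1 X^k = \one_1\otimes X^k$, and on $\sigma = \hat\CJ_k^{\Labhom}(\tau)$ it follows from first establishing the auxiliary formula
\[
\Delta_1\hat\CJ_k^{\Labhom}(\tau) = (\id\otimes\hat\CJ_k^{\Labhom})\Delta_1\tau
\]
(a direct consequence of the fact that the blue root of $\hat\CJ_k^{\Labhom}(\tau)$ cannot lie in any $A\in\Adm_1$ and that the newly added trunk edge contributes trivially to the boundary $\partial(A,F)$), combined with the observation $\one_2^\star\circ\hat\CJ_k^{\Labhom} = 0$ since the output of $\hat\CJ_k^{\Labhom}$ is never entirely blue, hence never in $\Units_2$. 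Multiplicativity of $\Delta_1$ and of $\one_2^\star$ then propagates the identity to all of $\hat\CH_2$. The main obstacle will be the combinatorial verification of $\Delta_1\hat\CJ_k^{\Labhom}(\tau) = (\id\otimes\hat\CJ_k^{\Labhom})\Delta_1\tau$, where one must carefully match the sums over $(A,\eps_A^F,\Labn_A)$ in \eqref{def:Deltabar} on both sides while keeping track of the trivial $\Labn$- and $\Labo$-decorations at the new root and the absent $\Labe$-decoration on the trunk.
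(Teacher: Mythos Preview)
Your proposal is correct. The two parts differ from the paper's treatment in the following ways.

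\textbf{Part 1.} The paper runs the argument in the opposite direction: it takes \eqref{e:defA+0} as a recursive \emph{definition} of an algebra morphism and then verifies by direct computation that $\CM(\id\otimes\hat\CA_2)\Delta_2 = \one_2\one_2^\star$ on the generators $\hat\CJ_k^\Labhom(\tau)$, thereby identifying the map as the antipode. Your route---start from the antipode already provided by Proposition~\ref{lem:treeprod} and invert the relation obtained from \eqref{e:defDeltaH2}---is the dual of this and equally valid; the binomial inversion you describe is exactly the computation the paper performs in the other direction.

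\textbf{Part 2.} Here the paper's primary suggestion is an induction over the number of edges using the recursion \eqref{e:defA+0}, pointing forward to the proof of Lemma~\ref{commutation_antipode} for the template. As a second option it cites the general fact that in any comodule--bialgebra the antipode is automatically a comodule morphism. Your convolution-algebra argument in $\Hom(\hat\CH_2,\CH_1\hotimes\hat\CH_2)$ is precisely a self-contained proof of this second option, and is cleaner than the inductive computation. Two remarks that may shorten your write-up:
\begin{itemize}
\item The auxiliary identity $(\id\otimes\one_2^\star)\Delta_1 = \one_1\,\one_2^\star$ on $\hat\CH_2$ is exactly the statement that the counit of $\hat\CH_2$ is a $\CH_1$-comodule map; it is part of the comodule--bialgebra package asserted in Remark~\ref{cointera} and is also recorded explicitly in the proof in Section~\ref{sec:skew}. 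So you may cite it rather than reprove it.
\item If you do prove it directly, your key formula $\Delta_1\hat\CJ_k^\Labhom(\tau) = (\id\otimes\hat\CJ_k^\Labhom)\Delta_1\tau$ is correct (your boundary analysis is right: the trunk edge $(\rho_G,\rho_\tau)$ has $e_+=\rho_G\in\hat F_2$, hence $e_+\notin N_A$), and the same formula reappears later in the proof of Theorem~\ref{theo:algebra}.
\end{itemize}
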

\begin{proof}
By \eref{e:formH2} and by induction over the number of edges in $\tau$, 
this uniquely determines a morphism 
$\hat\CA_2$ of $\hat\CH_2$, so it only remains to show that
\[
\CM(\id\otimes\hat\CA_2)\Delta_2 \tau = \one_{\hat\CH_2}\one^\star_{\hat\CH_2}(\tau)\;.
\]
The formula is true for $\tau= X^k$, so that, since both sides are multiplicative, it is enough to consider elements of the form $\hat \CJ_k^\Labhom(\tau)$ for some $\tau \in \CH_\circ$.
Exploiting the identity \eqref{e:defA+0}, one then has
\begin{equs}
{} &
\CM \bigl(\id \otimes \hat\CA_2 \bigr) \Delta_2 \hat \CJ^\Labhom_k(\tau)  = 
\\ & =  \CM \bigl(\id \otimes \hat\CA_2\bigr) \left[ 
\left( \hat \CJ^{\Labhom}_k \otimes \id \right)  \Delta_2 \tau
+ \sum_{\ell}  \frac{X^{\ell}}{\ell !}  \otimes  \hat \CJ^{\Labhom}_{k+ \ell}(\tau) \right]
\\ & = \CM \left[ 
\left( \hat \CJ^{\Labhom}_k \otimes  \hat\CA_2 \right)  
- \sum_{\ell,i}  \frac{X^{\ell}}{\ell !}  \otimes  \frac{(-X)^{i}}{i !} \CM(\hat \CJ^{\Labhom}_{k+ \ell+i}\otimes\hat\CA_2)\right]\Delta_2 \tau
\\ & = \CM \left[ 
\left( \hat \CJ^{\Labhom}_k \otimes  \hat\CA_2 \right)  
- \sum_{\ell} \frac{(X-X)^{\ell}}{\ell !}\, \CM(\hat \CJ^{\Labhom}_{k+ \ell}\otimes\hat\CA_2)\right]\Delta_2 \tau
\\ & = \left[ \CM 
\left( \hat \CJ^{\Labhom}_k \otimes  \hat\CA_2 \right)  
- \CM(\hat \CJ^{\Labhom}_{k}\otimes\hat\CA_2)\right]\Delta_2 \tau  = 0\;,
\end{equs}
as required.

A similar proof by induction yields \eqref{e:propWanted20}: see the proof of Lemma~\ref{commutation_antipode} for an analogous argument. Note that \eqref{e:propWanted20} is also a direct consequence of Proposition~\ref{prop:doublecoass} and more precisely of the fact that the bialgebras $\CH_1$ and $\hat\CH_2$ are in cointeraction, as follows from Remark~\ref{cointera}: see \cite[Prop.~2]{2016arXiv160508310F} for a proof. Having this property, the antipode $\hat\CA_2$ is a morphism of the $\CH_1$-comodule $\hat\CH_2$.
\end{proof}

In this section we have shown several useful recursive formulae that characterize 
$\Delta_2$, see also Section~\ref{sec:reduced} below. The paper \cite{YB} explores in greater
detail this recursive approach to Regularity Structures, and includes a recursive formula
for $\Delta_1$, which is however more complex than that for $\Delta_2$.

\section{Rules and associated Regularity Structures}

\label{sec5}

We recall the definition of a regularity structure from \cite[Def. 2.1]{reg}
\begin{definition}\label{def:regStruct}
A \textit{regularity structure} $\TT = (A, T, G)$ consists of the following elements:
\begin{itemize}
\item An index set $A \subset \R$ such that $A$ is bounded from below, and $A$ is locally finite.
\item A \textit{model space} $T$, which is a graded vector space $T = \bigoplus_{\alpha \in A} T_\alpha$,
with each $T_\alpha$ a Banach space. 
\item A \textit{structure group} $G$ of linear operators acting on $T$ such that, for every $\Gamma \in G$, every $\alpha \in A$,
and every $a \in T_\alpha$, one has
\begin{equ}[e:coundGroup]
\Gamma a - a \in \bigoplus_{\beta < \alpha} T_\beta\;.
\end{equ}
\end{itemize}
\end{definition}

The aim of this section is to relate the construction of the previous section to the
theory of regularity structures as exposed in \cite{reg,CDM}. For this, we 
first assign real-valued degrees to each element of $\Tra$.

\begin{definition}\label{def:scaling}
A \textit{scaling} is a map $\s:\{1,\ldots d\} \to [1,\infty)$ and a \textit{degree assignment} is
a map $|\cdot|_\s \colon \Lab \to \R\setminus \{0\}$.
By additivity, we then assign a \textit{degree} to each $(k,v) \in\Z^d\oplus\Z(\Lab)$ by
setting
\begin{equ}[e:grading]
|(k,v)|_\s\eqdef |k|_\s+|v|_\s\in\R, \qquad |k|_\s \eqdef  \sum_{i=1}^d k_i \s_i , \qquad
|v|_\s\eqdef  \sum_{\Labhom\in \Lab} v_\Labhom\,|\Labhom|_\s, 
\end{equ}
if $v=\sum_{\Labhom\in \Lab} v_\Labhom \Labhom$ with $v_\Labhom\in\Z$. 
\end{definition}

\begin{definition} \label{homog}
Given a scaling $\s$ as above, for  $\tau = (F,\hat F, \Labn,\Labo,\Labe) \in \Tra_2$,
we define two different notions of degree $|\tau|_-, |\tau|_+ \in\R$ by
\begin{equs}
|\tau|_- &= \sum_{e \in E_F \setminus \hat E} \bigl(|\Labhom(e)|_{\s} - |\Labe(e)|_\s\bigr) + \sum_{x \in N_F} |\Labn(x)|_\s \;,\\
|\tau|_+ &= \sum_{e \in E_F\setminus {\hat E_2}} \bigl(|\Labhom(e)|_{\s} - |\Labe(e)|_\s\bigr) + \sum_{x \in N_F} |\Labn(x)|_\s + \sum_{x \in N_F \setminus {\hat N_2}} |\Labo(x)|_\s \;,
\end{equs}
where we recall that $\Labo$ takes values in $\Z^d\oplus\Z(\Lab)$ and $\Labhom:E_F\to\Lab$ is the map assigning to an edge its type in $F$, see Section~\ref{sec:rooted}.
\end{definition}

Note that both of these degrees are compatible with the contraction operator $\CK$ of Definition~\ref{CKop}, as well as the operator $\JJ$, in the sense that 
$|\tau|_\pm = |\bar \tau|_\pm$ if and only if
$|\CK\tau|_\pm = |\CK \bar \tau|_\pm$ and similarly for $\JJ$. 
In the case of $|\cdot|_+$, this is true thanks
to the definition \eqref{e:defhatn}, while the coloured part of the tree is simply ignored by $|\cdot|_-$.
We furthermore have

\begin{lemma}\label{lem:grading}
The degree $|\cdot|_-$ is compatible with the operators $\CK_i$ and $\hat \CK_i$ of \eref{e:bi-ideal},
while $|\cdot|_+$ is compatible with $\CK_2$ and $\hat\CK_2$. Furthermore, both degrees are
compatible with $\JJ$ and $\CK$, so that in particular $\CH_1$ is $|\cdot|_-$-graded and
$\hat\CH_2$ and $\CH_\circ$ are both $|\cdot|_-$ and $|\cdot|_+$-graded.
\end{lemma}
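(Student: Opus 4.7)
The plan is to reduce the claim to verifying that each of the basic operations $\CK$, $\JJ$, $\Phi_i$, and $\hat P_i$ preserves the relevant degree on the level of $\scal{\Tra}$, and then to combine them via $\CK_i = \Phi_i\circ\CK$ and $\hat\CK_i = \hat P_i\circ\Phi_i\circ\CK$. Since a sufficient condition for the ``if and only if'' compatibility stated before the lemma is that the operator preserves the degree outright, it is enough to check $|P\tau|_\pm = |\tau|_\pm$ for each $P$ in the appropriate range.

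First I would check the operator $\CK$. For $\tau=(F,\hat F,\Labn,\Labo,\Labe)$, contraction leaves the edges in $E_F\setminus\hat E$ and their $\Labe$-decorations untouched, is constant on colouring classes, and replaces node labels by their sums within each $\sim$-class. For $|\cdot|_-$, which involves only uncoloured edges and the total $\Labn$-mass, invariance is immediate. For $|\cdot|_+$ the key accounting is the formula \eqref{e:defhatn}: the edges of $\hat E_1$ no longer appear in the edge sum after contraction, but their types $\Labhom(e)$ have been added into $[\Labo](x)$ at a node $x\in\hat N_1\setminus\hat N_2$, so their contribution to $|\cdot|_+$ is exactly recovered. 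Edges in $\hat E_2$ and the additions they induce to $[\Labo]$ at a node of $\hat N_2$ are both ignored on either side. This bookkeeping is the most delicate point in the proof and is where the choice of codomain $\Z^d\oplus\Z(\Lab)$ for $\Labo$ pays off.

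Next I would handle $\Phi_i$ and $\hat P_i$. The map $\Phi_i$ collapses each connected component of $\tau$ that lies in $\M_i$ to a single node $(\bullet,i,\Sigma\Labn,0,0)$; the erased edges all lie in $\hat E_i$, so they contribute to neither $|\cdot|_-$ nor (in the case $i=2$) to $|\cdot|_+$, while the total $\Labn$-mass of each such component is preserved. Hence both degrees are invariant. The map $\hat P_i$ only modifies $\Labo$ at certain roots coloured $i$: this is trivially harmless for $|\cdot|_-$, and for $|\cdot|_+$ it is harmless when $i=2$ since the affected nodes lie in $\hat N_2$ and are excluded from the $\Labo$-sum. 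When $i=1$ the affected roots may sit in $\hat N_1\setminus\hat N_2$, explaining why compatibility of $|\cdot|_+$ is claimed only for $\CK_2$ and $\hat\CK_2$.

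For $\JJ$ acting on $\tau\in\CD(\JJ)$, no edges or edge decorations change, the total $\Labn$-mass is preserved, and root $\Labo$-masses are summed. Because all merged roots carry the same colour by definition of $\CD(\JJ)$, the new root lies in $\hat N_2$ exactly when all merged roots did, so the node set on which $\Labo$ is counted in $|\cdot|_+$ is compatible before and after, and thus $|\JJ\tau|_\pm=|\tau|_\pm$. Assembling these invariances yields the compatibility statements for $\CK_i$ and $\hat\CK_i$ claimed in the lemma. The gradings then descend to $\CH_1$, $\CH_\circ$ and $\hat\CH_2$ by the quotient identifications of Remarks~\ref{rem:H_i} and~\ref{rem:H_circ}, using Lemma~\ref{lem:PP=P} together with the fact that both $|\cdot|_-$ and $|\cdot|_+$ are constant on the fibres of the idempotent projections defining those quotients.
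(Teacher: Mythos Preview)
Your approach is the same as the paper's (which is much terser): reduce to checking that each of $\CK$, $\JJ$, $\Phi_i$, $\hat P_i$ preserves the relevant degree, using that $|\cdot|_-$ ignores the coloured part while $|\cdot|_+$ ignores $\Labo$ on $\hat N_2$. The argument is correct, with one small inaccuracy: it is not true that ``all merged roots carry the same colour by definition of $\CD(\JJ)$''---in $\CD_i(\JJ)$ roots may have colour $0$ or $i$, and the new root has colour $i$ as soon as \emph{some} old root does. This does not break your conclusion, since a colour-$0$ root has $\Labo=0$ by Definition~\ref{def:decoration}, so whether or not it is absorbed into a colour-$2$ root is immaterial for $|\cdot|_+$; but the sentence as written is wrong and should be fixed (and in the applications to $\hat\CH_2$ one is anyway on $\Tra_2$, where every root has colour $2$).
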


\begin{proof}
The first statement is obvious since $|\cdot|_-$ ignores the coloured part of the tree, except for
the labels $\Labn$ whose total sum is preserved by all these operations.
For the second statement, we need to verify that $|\cdot|_+$ is compatible
with $\hat \Phi_2$ as defined just below \eqref{Phi}.
 which is the case when acting on a tree with $\rho \in \hat F_2$ since the $\Labo$-decoration of nodes in $\hat F_2$ does not contribute to the definition of $|\cdot|_+$.
\end{proof}

As a consequence, $|\cdot|_-$ yields a grading for $\CH_1$, $|\cdot|_+$ yields
a grading for $\hat \CH_2$, and both of them yield gradings for $\CH_\circ$. 
With these definitions, we see that we obtain a structure resembling a regularity
structure by taking $\CH_\circ$ to be our model space, with grading given by 
$|\cdot|_+$ and structure group
given by the character group $\hat \CG_2$ of $\hat \CH_2$ acting on $\CH_\circ$
via 
\begin{equ}
\Gamma_g \colon \scal{\Tra_\circ} \to \scal{\Tra_\circ}\;,\quad
\Gamma_g \tau = (\id \otimes g) \Delta_2 \tau\;.
\end{equ}
The second statement of Proposition~\ref{prop:alg} then guarantees that this action is multiplicative
with respect to the tree product \eqref{odot} on $\CH_\circ$, so that we are in the 
context of \cite[Sec.~4]{reg}. There are however two conditions that are not met:
\begin{enumerate}
\item The action of $\hat\CG_2$ on $\CH_\circ$ is not of the form ``identity plus
terms of strictly lower degree'', as required for regularity structures.
\item The possible degrees appearing in $\CH_\circ$ have no lower bound
and might have accumulation points. 
\end{enumerate}

We will fix the first problem by encoding in our context what we mean by considering
a ``subcritical problem''. Such problems will allow us to prune our structure in
a natural way so that we are left with a subspace of $\CH_\circ$ that has the 
required properties. The second problem will then be addressed by quotienting 
a suitable subspace of $\hat \CH_2$ by the terms of negative degree. The
group of characters of the resulting Hopf algebra will then turn out to 
act on $\CH_\circ$ in the desired way.

\subsection{Trees generated by rules}

From now to Section~\ref{sec:SPDERules} included, the colourings and the labels $\Labo$ will be ignored. 
It is therefore convenient to consider the space
\begin{equ}[def:Trees]
\Trees\eqdef\{(T,\hat T,\Labn,\Labo,\Labe)\in\Tra: \, T \ \text{is a tree}, \ \hat T\equiv 0, \ \Labo\equiv 0\}.
\end{equ}
In order to lighten notations, we
 write elements of $\Trees$ as $(T,\Labn,\Labe) = T_\Labe^{\Labn}$ with 
$T$ a typed tree (for some set of types $\Lab$) and $\Labn\colon N_T \to \N^d$, 
$\Labe\colon E_T \to \N^d$ as above. Similarly to before, $\Trees$ is a monoid for the {tree product} \eqref{odot}.
Again, this product is associative and commutative, with unit $(\bullet,0,0)$.

\begin{definition}\label{def:elementary}
We say that an element $T_\Labe^{\Labn}\in \Trees$ is 
\textit{trivial} if $T$ consists of a single node $\bullet$. It is
\textit{planted} if $T$ has exactly one edge incident to its root $\rho$ and furthermore $\Labn(\rho) = 0$. 
\end{definition}
In other words, a planted $T_\Labe^{\Labn}\in \Trees$ is necessarily of the form $\CI_k^\Labhom(\tau)$ with $\tau\in\Trees$, see \eqref{e:CI}. For example, 
\[
\text{a planted tree:} \quad 
\begin{tikzpicture}[scale=0.2,baseline=0.2cm]
          \node at (1,4)  [dot] (leftll) {};
        \node at (-1,4)  [dot] (leftup) {};
         \node at (0,2)  [dot] (leftlr) {};
       \node at (0,0)  [dot] (left) {};
 
        \draw[kernel1] (leftll) to     node [sloped,below] {\small }     (leftlr);
        \draw[kernel1] (leftup) to     node [sloped,below] {\small }     (leftlr);
        \draw[kernel1] (leftlr) to     node [sloped,below] {\small }     (left);
     \end{tikzpicture}
\qquad \text{and a non-planted tree:} \quad    
\begin{tikzpicture}[scale=0.2,baseline=0.2cm]
          \node at (-5,4)  [dot] (leftll) {};
          \node at (-5,2)  [dot] (leftlr) {};
      \node at (-4,0)  [dot] (left) {};
         \node at (-3,2)  [dot] (leftr) {};

        \draw[kernel1] (leftll) to     node [sloped,below] {\small }     (leftlr);
        \draw[kernel1] (leftlr) to     node [sloped,below] {\small }     (left);
     \draw[kernel1] (leftr) to     node [sloped,below] {\small }     (left);  
     \end{tikzpicture} .
     \]

With this definition, each $\tau \in \Trees$ has by \eqref{e:formH2} a unique (up to permutations) factorisation with respect to the tree product \eqref{odot}
\begin{equ}[e:factorisationTrees]
\tau = \bullet_n \tau_1 \tau_2\cdots  \tau_k\;,
\end{equ}
for some $n \in \N^d$, where each $\tau_i$ is planted and $\bullet_n$ denotes the trivial
element $(\bullet,n,0)\in\Trees$.

In order to define a suitable substructure of the structure described in 
Proposition~\ref{prop:alg}, we introduce the notion of ``rules''. Essentially, a ``rule''
describes what behaviour we allow for a tree in the vicinity of any one of
its nodes.

In order to formalise this,
we first define the set of \textit{edge types} $\CE$ and the set of \textit{node types} $\CN$ by
\begin{equ}[e:defCN]
\CE = \Lab \times \N^d \;,\qquad 
\CN = \hat\CP(\CE) \eqdef \bigcup_{n \ge 0} [\CE]^{n}\;,
\end{equ} 
where $[\CE]^n$ denotes the set of \textit{unordered} $\CE$-valued $n$-uples, namely
$[\CE]^n = \CE^n / S_n$, with the natural action of the symmetric group $S_n$ on $\CE^n$.
In other words, given any set $A$, 
$\hat \CP(A)$ consists of all finite multisets whose elements are elements of $A$.
\begin{remark}
The fact that 
we consider multisets and not just $n$-uples is a reflection of the fact that we always
consider the situation where the tree product \eqref{odot} is commutative. This condition could in principle
be dropped, thus leading us to consider forests consisting of planar trees instead, but this would lead to additional complications and does not seem to bring any advantage.
\end{remark}
Given two sets $A \subset B$, we have a natural inclusion $\hat \CP(A) \subset \hat \CP(B)$.
We will usually write elements of $[\CE]^{n}$ as $n$-uples with the understanding that this is
just an arbitrary representative of an equivalence class. In particular, we write 
$()$ for the unique element of $[\CE]^0$.

Given any $T_\Labe^{\Labn} \in \Trees$, we then associate to each node
$x \in N_T$ a node type $\CN(x) \in \CN$ by
\begin{equ}[e:defNx]
\CN(x) = \bigl(s(e_1),\ldots,s(e_n)\bigr), \qquad s(e)\eqdef
(\Labhom(e),\Labe(e))\in\CE, \quad e\in E_T,
\end{equ}
where $(e_1,\ldots,e_n)$ denotes the collection of edges leaving $x$, i.e.\ edges
of the form $(x,y)$ for some node $y$.
We will sometimes use set-theoretic notations. 
In particular, given $N = (s_1,\ldots, s_n)\in \CN$ and $M = (r_1,\ldots,r_\ell) \in \CN$, we write
\begin{equ}
M \sqcup N \eqdef (r_1,\ldots,r_\ell,s_1,\ldots, s_n)\;,
\end{equ} 
and we say that $M \subset N$ if there exists $\bar N$ such that $N = M \sqcup \bar N$.
%In this situation, we then write $N \setminus M$ for $\bar N$. 
When we write a sum of the type $\sum_{M \subset N}$, we take multiplicities into
account. For example $(a,b)$ is contained twice in $(a,b,b)$, so that such a sum
always contains $2^{n}$ terms if $N$ is an $n$-tuple.
Similarly, we write $t \in N$ if $(t) \subset N$ and we also count sums
of the type $\sum_{t \in N}$ with the corresponding multiplicities.

\begin{definition}\label{def:rule}
Denoting by $\CP \CN$ the powerset of $\CN$, a \textit{rule} is a 
map $R \colon \Lab \to \CP \CN\setminus\{\emptyset\}$.
A rule is said to be \textit{normal} if, whenever $M \subset N \in R(\Labhom)$, one also has 
$M \in R(\Labhom)$.
\end{definition}

For example we may have $\Lab=\{\Labhom_1,\Labhom_2\}$ and 
\begin{equation}\label{e:ru}
R\bigg( \begin{tikzpicture}[scale=0.2,baseline=0.32cm]
          \node at (0,0) [dot] (k) {}; %k
           \node at (0,5)  (l) {}; %l
    \draw[kernel1] (l) -- node [rect1] {\tiny$\Labhom_1$}   (k)  ;
\end{tikzpicture}  
\bigg)
=R\bigg( \begin{tikzpicture}[scale=0.2,baseline=0.32cm]
          \node at (0,0) [dot] (k) {}; %k
           \node at (0,5)  (l) {}; %l
    \draw[kernel1] (l) -- node [rect1] {\tiny$\Labhom_2$}   (k)  ;
\end{tikzpicture}  
\bigg)
=\bigg\{ (), \ 
\bigg( \begin{tikzpicture}[scale=0.2,baseline=0.32cm]
          \node at (0,0) [dot] (k) {}; %k
           \node at (0,5)  (l) {}; %l
    \draw[kernel1] (l) -- node [rect1] {\tiny$\Labhom_1,\Labe_1$}   (k)  ;
\end{tikzpicture}  
\bigg),
\bigg( \begin{tikzpicture}[scale=0.2,baseline=0.32cm]
          \node at (0,0) [dot] (k) {}; %k
           \node at (0,5)  (l) {}; %l
    \draw[kernel1] (l) -- node [rect1] {\tiny$\Labhom_2,\Labe_2$}   (k)  ;
\end{tikzpicture}  
\bigg),
\bigg( \begin{tikzpicture}[scale=0.2,baseline=0.32cm]
          \node at (0,0) [dot] (k) {}; %k
           \node at (0,5)  (l) {}; %l
    \draw[kernel1] (l) -- node [rect1] {\tiny$\Labhom_1,\Labe_1$}   (k)  ;
\end{tikzpicture}  ,
\begin{tikzpicture}[scale=0.2,baseline=0.32cm]
          \node at (0,0) [dot] (k) {}; %k
           \node at (0,5)  (l) {}; %l
    \draw[kernel1] (l) -- node [rect1] {\tiny$\Labhom_2,\Labe_2$}   (k)  ;
\end{tikzpicture}  
\bigg)
\bigg\}\;.
\end{equation}
Then, according to the rule $R$, an edge of type $\Labhom_1$ or $\Labhom_2$ can be followed
in a tree by, respectively, no edge, or a single edge of type $\Labhom_i$ with decoration $\Labe_i$
with $i\in\{1,2\}$, or by two edges, one of type $\Labhom_1$ with decoration $\Labe_1$ and
one of type $\Labhom_2$ with decoration $\Labe_2$. We do not expect however to find two edges both of type
$\Labhom_1$ (or $\Labhom_2$) sharing a node which is not the root. 

\begin{definition}\label{def:conform}
Let $R$ be a rule and $\tau = T_\Labe^{\Labn} \in \Trees$. We say that
\begin{itemize}
\item $\tau$ \textit{conforms to $R$ at the vertex $x$} if either $x$ is the root and 
there exists $\Labhom\in\Lab$ such that $\CN(x) \in R(\Labhom)$
or one has $\CN(x) \in R(\Labhom(e))$, where $e$ is the unique edge linking $x$ to its parent in $T$.
\item $\tau$ \textit{conforms to $R$} if it conforms to $R$ at every vertex $x$, except possibly its root.
\item $\tau$ \textit{strongly conforms to $R$} if it conforms to $R$
at every vertex $x$.
\end{itemize}
\end{definition}
In particular, the trivial tree $\bullet$ strongly conforms to every normal rule since,
as a consequence of Definition~\ref{def:rule}, there exists at least one 
$\Labhom \in \Lab$ with $() \in R(\Labhom)$.

\begin{example} Consider $R$ as in \eqref{e:ru} and the trees
\[
\begin{tikzpicture}[scale=0.2,baseline=2cm]
         \node at (-9,10) [dot] (i) {}; %h
          \node at (-4,4) [dot] (d) {}; %d
          \node at (1,10) [dot] (j) {}; %j
          \node at (-4,10) [dot] (h) {}; %i
      \node at (0,0) [dot] (b) {}; %b
         \node at (4,5) [dot] (e) {}; %e
         \node at (4,10) [dot] (f) {}; %f
 
          \draw[kernel1] (f)  -- node [rect1] {\tiny$\Labhom_2,\Labe_3$}   (e);
    \draw[kernel1] (i)  -- node [rect1,pos=0.3] {\tiny$\Labhom_1,\Labe_1$}   (d);
     \draw[kernel1] (j)  --  node [rect1,pos=0.3]{\tiny $ \Labhom_1,\Labe_1 $ } (d) ;

     \draw[kernel1] (b) -- node [rect1] {\tiny$\Labhom_2,\Labe_2$}   (d);
      \draw[kernel1,black] (d)  -- node [rect1,pos=0.3] {\tiny$\Labhom_2,\Labe_2$}   (h);
     \draw[kernel1] (b) -- node [rect1] {\tiny$\Labhom_2,\Labe_2$}  (e) ;

\end{tikzpicture} \qquad
\begin{tikzpicture}[scale=0.2,baseline=2cm]
         \node at (-9,10) [dot] (i) {}; %h
          \node at (-5,5) [dot] (d) {}; %d
          \node at (-1,10) [dot] (j) {}; %j
          \node at (0,6) [dot] (h) {}; %i
      \node at (0,0) [dot] (b) {}; %b
         \node at (5,5) [dot] (e) {}; %e
         \node at (5,10) [dot] (f) {}; %f
 
%      \draw[kernel1,black] (d)  -- node [rect1,near end] {\tiny$\Labhom_2,\Labe_2$}   (h);
          \draw[kernel1] (f)  -- node [rect1] {\tiny$\Labhom_1,\Labe_1$}   (e);
    \draw[kernel1] (i)  -- node [rect1] {\tiny$\Labhom_1,\Labe_1$}   (d);

     \draw[kernel1] (b) -- node [rect1,near end] {\tiny$\Labhom_2,\Labe_2$}   (h);
     \draw[kernel1] (b) -- node [rect1] {\tiny$\Labhom_1,\Labe_1$}   (d);
     \draw[kernel1] (j)  --  node [rect1]{\tiny $ \Labhom_2,\Labe_2 $ } (d) ;
     \draw[kernel1] (b) -- node [rect1] {\tiny$\Labhom_1,\Labe_1$}  (e) ;

\end{tikzpicture} \qquad
\begin{tikzpicture}[scale=0.2,baseline=2cm]
         \node at (-8,10) [dot] (i) {}; %h
          \node at (-4,5) [dot] (d) {}; %d
          \node at (0,10) [dot] (j) {}; %j
  %        \node at (-4,15) [dot] (h) {}; %i
      \node at (0,0) [dot] (b) {}; %b
         \node at (4,5) [dot] (e) {}; %e
         \node at (4,10) [dot] (f) {}; %f
 
%      \draw[kernel1,black] (d)  -- node [rect1,near end] {\tiny$\Labhom_2,\Labe_2$}   (h);
          \draw[kernel1] (f)  -- node [rect1] {\tiny$\Labhom_1,\Labe_1$}   (e);
    \draw[kernel1] (i)  -- node [rect1] {\tiny$\Labhom_1,\Labe_1$}   (d);

     \draw[kernel1] (b) -- node [rect1] {\tiny$\Labhom_1,\Labe_1$}   (d);
     \draw[kernel1] (j)  --  node [rect1]{\tiny $ \Labhom_2,\Labe_2 $ } (d) ;
     \draw[kernel1] (b) -- node [rect1] {\tiny$\Labhom_2,\Labe_2$}  (e) ;

\end{tikzpicture} 
\]
The first tree does not conform to the rule $R$ since the bottom left edge of type $ \Labhom_2$ 
is followed by three edges. 
The second tree conforms to $R$ but not strongly, since the root is incident to three edges. The third
tree strongly conforms to $R$. If we call $\rho_i$ the root of the $i$-th tree, then we have
$\CN(\rho_1)=\{(\Labhom_2,\Labe_2),(\Labhom_2,\Labe_2)\}$,
$\CN(\rho_2)=\{(\Labhom_1,\Labe_1),(\Labhom_1,\Labe_1),(\Labhom_2,\Labe_2)\}$,
$\CN(\rho_3)=\{(\Labhom_1,\Labe_1),(\Labhom_2,\Labe_2)\}$, see \eqref{e:defNx}. Finally, note that $R$ is normal.
\end{example}

\begin{remark}\label{nouseless}
If $R$ is a normal rule, then by Definition~\ref{def:rule} we have in particular that $()\in R(\Labhom)$ for every $\Labhom \in \Lab$. This guarantees that $\Lab$ contains no \textit{useless}
labels in the sense that, for every $\Labhom \in \Lab$, there exists a 
tree conforming to $R$ containing an edge of type $\Labhom$: it suffices to consider a rooted tree with a single
edge $e=(x,y)$ of type $\Labhom$; in this case, $\CN(y)=\{()\}\in R(\Labhom)$.
More importantly, this also guarantees that we can build any tree conforming to $R$
from the root upwards (start with an edge of type $\Labhom$, add 
to it a node of some type in $R(\Labhom)$, then restart the construction for 
each of the outgoing edges of that node) in finitely many steps.
\end{remark}

\begin{remark}\label{G(R)}
A rule $R$ can be represented by a directed bipartite multigraph 
$\CG(R) = (V(R), E(R))$ as follows.
Take as the vertex set $V(R) = \CE \sqcup \CN$. Then, connect $N \in \CN$ to $t \in \CE$ 
if $t \in N$. If $t$ is contained in $N$ multiple times, repeat the connection
the corresponding number of times. Conversely, connect $(\Labhom,k) \in \CE$ to $N \in \CN$ 
if $N \in R(\Labhom)$. The conditions then guarantee that $() \in \CN$ can be reached from
every vertex in the graph.
Given a tree $\tau \in \Trees$, every edge of $\tau$ corresponds to an element of $\CE$ and every 
node corresponds to an element of $\CN$ via the map $x \mapsto \CN(x)$ defined above.
A tree then conforms to $R$ if, for every path joining the root to one of the leaves, 
the corresponding path in $V$ always follows directed edges in $\CG(R)$. It strongly conforms
to $R$ if the root corresponds to a vertex in $V$ with at least one incoming edge.
\end{remark}

\begin{definition}
Given $\s$ as in Definition~\ref{def:scaling}, 
we assign a degree $|\tau|_\s$ to
any $\tau \in \Trees$ by setting
\begin{equ}[e:defDegree]
|T_\Labe^\Labn|_\s = \sum_{e \in E_T} \bigl(|\Labhom(e)|_{\s} - |\Labe(e)|_\s\bigr) + \sum_{x \in N_T} |\Labn(x)|_\s \;.
\end{equ}
\end{definition}
This definition is compatible with both notions of degree given in Definition~\ref{homog}, since we view $\Trees$ as a subset of $\Tra$ with $\hat F$ and $\Labo$ identically $0$. This also allows us to give the following definition.

\begin{definition} \label{def_space}
Given a rule $R$, we write 
\begin{itemize}
\item $\Trees_\circ(R) \subset \Trees$ for the set of trees that strongly
conform to $R$ 
\item $\Trees_1(R) \subset \Tra$ for the submonoid of $\Tra$ (for the forest product) generated by $\Trees_\circ(R)$
\item $\Trees_2(R) \subset \Trees$ for the set of trees that conform to $ R $.
\end{itemize}
Moreover, we write $\Trees_-(R) \subset \Trees_\circ(R)$ for the set of trees $\tau = T_\Labe^\Labn$ such 
that 
\begin{itemize}
\item $|\tau|_\s < 0$, $\Labn(\rho_\tau) = 0$, 
\item if $\tau$ is planted, namely $\tau=\CI_k^\Labhom(\bar\tau)$ with $\bar\tau\in\Trees$, see \eqref{e:CI}, then $|\Labhom|_\s< 0$.
\end{itemize}
\end{definition}
The second restriction on the definition of $\tau\in\Trees_-(R)$ is related to the definition~\eqref{e:defJ} of the Hopf algebra $\CT^\ex_-$ and of its characters group $\CG^\ex_-$, that we call the {\it renormalisation group} and which plays a fundamental role in the theory, see e.g.\ Theorem~\ref{theo:algebra}.

\subsection{Subcriticality}

Given a map $\reg\colon \Lab \to \R$ we will henceforth interpret it as maps 
$\reg\colon \CE \to \R$ and $\reg\colon \CN \to \R$ as follows: 
for $(\Labhom,k)\in\CE$ and $N\in\CN$
\begin{equ}[convention]
\reg(\Labhom,k) \eqdef \reg(\Labhom)-|k|_\s, \qquad
\reg(N) \eqdef \sum_{(\Labhom,k) \in N} \reg(\Labhom,k),
\end{equ}
with the convention that the sum over the empty word $()\in\CN$ is $0$.
\begin{definition}\label{def:subcritical}
A rule $R$ is \textit{subcritical} with respect to a fixed scaling $\s$ if there exists a map 
$\reg\colon \Lab \to \R$ such that
\begin{equ}[e:defreg]
\reg(\Labhom) < |\Labhom|_\s + \inf_{N \in R(\Labhom)} 
\reg(N)
\;, \qquad \forall \, \Labhom \in \Lab,
\end{equ}
where we use the notation \eqref{convention}.
\end{definition}

We will see in Section~\ref{sec:SPDERules} below that classes of stochastic PDEs generate 
rules. In this context, the notion of subcriticality given here formalises the one 
given somewhat informally  in \cite{reg}. In particular, we have the following result which is 
essentially a reformulation of \cite[Lem.~8.10]{reg} in this context.

\begin{proposition}\label{prop:finite}
If $R$ is a subcritical rule, then, for every $\gamma \in \R$, the set $\{\tau \in \Trees_\circ(R)\,:\, |\tau|_\s \le \gamma\}$ is finite.
\end{proposition}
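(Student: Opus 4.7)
The plan is to exploit subcriticality to extract a strict \emph{per-edge gain}, and then to translate an upper bound on $|\tau|_\s$ into a bound on the number of edges of $\tau$, on $|\Labn|$ and on $|\Labe|$. Since $\Lab$ is finite, subcriticality yields a uniform gap
\[
\delta \eqdef \min_{\Labhom \in \Lab}\Bigl(|\Labhom|_\s + \inf_{N \in R(\Labhom)} \reg(N) - \reg(\Labhom)\Bigr) > 0\;,
\]
and also a finite lower bound $-C \eqdef \min_{\Labhom}\bigl(\reg(\Labhom) - |\Labhom|_\s\bigr)$ on $\inf_{N \in R(\Labhom)} \reg(N)$. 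In particular, for any $\tau \in \Trees_\circ(R)$ with root $\rho$, strong conformance gives $\CN(\rho) \in R(\Labhom)$ for some $\Labhom$, so $\reg(\CN(\rho)) \ge -C$.

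The main step is to introduce the auxiliary functional
\[
g(\tau) \eqdef |\tau|_\s - \reg(\CN(\rho_\tau))\;,
\]
and to show, by induction on the number of edges, that
\[
g(\tau) \ge \delta\,|E_T| + \sum_{x \in N_T} |\Labn(x)|_\s\;.
\]
Using the decomposition $\tau = \bullet_n \prod_{i=1}^k \CI^{\Labhom_i}_{\Labe_i}(\tau_i)$ and the definitions, one computes that the $|\Labe_i|_\s$ terms cancel and
\[
g(\tau) = |n|_\s + \sum_i \bigl(|\Labhom_i|_\s - \reg(\Labhom_i) + |\tau_i|_\s\bigr)\;.
\]
Since $\tau_i$ conforms to $R$, the node type $\CN(\rho_{\tau_i})$ lies in $R(\Labhom_i)$, so $\inf_{N\in R(\Labhom_i)}\reg(N) \le \reg(\CN(\rho_{\tau_i}))$, and rewriting $|\Labhom_i|_\s - \reg(\Labhom_i) = \Delta(\Labhom_i) - \inf_N \reg(N)$ gives
\[
g(\tau) \ge |n|_\s + \sum_i \Bigl(\Delta(\Labhom_i) + |\tau_i|_\s - \reg(\CN(\rho_{\tau_i}))\Bigr) = |n|_\s + \sum_i\bigl(\Delta(\Labhom_i) + g(\tau_i)\bigr)\;,
\]
whence the inductive claim follows (the base case $g(\bullet_n) = |n|_\s$ is immediate).

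Combining the two ingredients, $|\tau|_\s \ge \delta\,|E_T| + \sum_x |\Labn(x)|_\s - C$. Hence $|\tau|_\s \le \gamma$ forces $|E_T| \le (\gamma+C)/\delta$ and $\sum_x |\Labn(x)|_\s \le \gamma + C$; since $\s_i \ge 1$, each $\Labn(x) \in \N^d$ is confined to a finite set, and only finitely many typed tree skeletons arise. Finally, rewriting
\[
\sum_{e \in E_T}|\Labe(e)|_\s = \sum_{e \in E_T}|\Labhom(e)|_\s + \sum_{x \in N_T}|\Labn(x)|_\s - |\tau|_\s\;,
\]
the right-hand side is now bounded in absolute value, and because $|\Labe(e)|_\s \ge 0$ with $\s_i\ge 1$, each edge decoration $\Labe(e)$ also ranges over a finite set. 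Finiteness of the collection follows.

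The only non-routine point is the inductive inequality for $g$: the rest is essentially bookkeeping. The delicate choice is to subtract precisely $\reg(\CN(\rho))$ from $|\tau|_\s$, since this is what makes the ``$-|\Labe|_\s$'' contributions at the root cancel and allows subcriticality to be applied cleanly to each child, producing one gap $\Delta(\Labhom_i)$ per edge.
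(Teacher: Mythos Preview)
Your argument is correct and follows essentially the same strategy as the paper: both extract a uniform per-edge gain $\delta$ (the paper calls it $\kappa$) from subcriticality and propagate it through an induction on the tree structure. The only organizational difference is that the paper carries out the induction on \emph{planted} trees (proving $\reg(\Labhom,k)\le |\tau|_\s-\kappa|E_T|$) and then handles a general strongly conforming tree by grafting on an extra trunk, whereas your auxiliary functional $g(\tau)=|\tau|_\s-\reg(\CN(\rho_\tau))$ lets you induct directly on arbitrary trees and simultaneously track the $\Labn$-contribution; for planted trees the two functionals coincide, so this is really the same induction repackaged, with your final bookkeeping on $\Labe$ spelled out a bit more explicitly than in the paper.
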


\begin{proof}
Fix $\gamma \in \R$ and let $T_\Labe^\Labn \in \Trees_\circ(R)$ with $|T_\Labe^\Labn|_\s \le \gamma$.
Since there exists $c>0$ such that 
\begin{equ}
|T_\Labe^\Labn|_\s \ge |T_\Labe^0|_\s + c|\Labn|
\end{equ}
and there exist only finitely many trees in $\Trees_\circ(R)$ of the type $|T_\Labe^0|$ for
a given number of edges,
it suffices to show that the number $|E_T|$ of edges of $T$ is bounded by some constant depending only
on $\gamma$. 

Since the set $\Lab$ is finite, \eqref{e:defreg} implies that there exists a constant $\kappa > 0$ such that the bound
\begin{equ}[e:regkappa]
\reg(\Labhom) + \kappa \le |\Labhom|_\s  + \inf_{N \in R(\Labhom)}\reg(N)\;,
\end{equ}
holds for every $\Labhom \in \Lab$ with the notation \eqref{convention}. We claim that for every planted $T_\Labe^\Labn\in\Trees_\circ(R)$ such that the edge type of its trunk $e=(\rho,x)$ is $(\Labhom,k)\in\CE$, we have
\begin{equ}[reg1]
\reg(\Labhom,k)\leq |T_\Labe^\Labn|_\s - \kappa |E_T|.
\end{equ}
We denote the space of such planted trees by $\Trees^{(\Labhom,k)}_\circ(R)  $.
We verify \eqref{reg1} by induction on the number of edges $|E_T|$ of $T$. If $|E_T|=1$, namely the unique element of $E_T$ is the trunk $e=(\rho,x)$, then $\CN(x)=()\in R(\Labhom)$ in the notation of \eqref{e:defNx} and by \eqref{e:regkappa}
\[
\reg(\Labhom) + \kappa \le |\Labhom|_\s \quad \Longrightarrow \quad
\reg(\Labhom,k)\leq |\Labhom|_\s-|k|_\s- \kappa\leq|T_\Labe^\Labn|_\s- \kappa.
\]
For a planted $T_\Labe^\Labn\in\Trees_\circ(R)$ with $|E_T|>1$, then $\CN(x)=(s(e_1),\ldots,s(e_n))\in R(\Labhom)$ and by \eqref{e:regkappa} and the induction hypothesis
\[
\reg(\Labhom)-|k|_\s + \kappa \le |\Labhom|_\s-|k|_\s  + 
\sum_{i=1}^n \left[\reg(\Labhom_i)-|k_i|_\s\right]\leq |T_\Labe^\Labn|_\s-\kappa(|E_T|-1)
\;,
\]
where $s(e_i)=(\Labhom_i,k_i)$. Therefore \eqref{reg1} is proved for planted trees.

Given an arbitrary tree $T_\Labe^\Labn$ of degree at most $\gamma$
strongly conforming to the rule $R$, there exists $\Labhom_0\in\Lab$ such that $e\in\CN(\rho_T)=R(\Labhom_0)$. We can therefore consider the planted tree
$\bar T_\Labe^\Labn$ containing a trunk of type $\Labhom_0$ connected to the root of $T$,
and with vanishing labels on the root and trunk respectively. It then follows
that 
\begin{equs}
\kappa |E_T| < \kappa |E_{\bar T}|
&\le |\bar T_\Labe^\Labn|_\s-\reg(\Labhom_0)
= |T_\Labe^\Labn|_\s + |\Labhom_0|_\s -\reg(\Labhom_0)\\
&\le \gamma + \inf_{\Labhom \in \Lab} \bigl(|\Labhom|_\s -\reg(\Labhom)\bigr)\;,
\end{equs}
and the latter expression is finite since $\Lab$ is finite. The claim follows at once.
\end{proof}

\begin{remark} 
The inequality \eqref{e:defreg} encodes the fact that we would like to be able to assign a regularity
$\reg(\Labhom)$ to each component $ u_{\Labhom} $ of our SPDE in such a way that the ``na\"\i ve regularity'' of the 
corresponding right hand side obtained by a power-counting argument
is strictly better than $\reg(\Labhom) - |\Labhom|$. Indeed, $\inf_{N \in R(\Labhom)} 
\reg(N) $ is precisely the regularity one would like to assign to $ F_{\Labhom}(u,\nabla u,\xi) $.
Note that if the inequality in \eqref{e:defreg} is not strict,
then the conclusion of Proposition~\ref{prop:finite} may fail to hold.
\end{remark}

\begin{remark}\label{min_plus}
Assuming that there exists a map $\reg$ satisfying \eqref{e:regkappa} for a
given $\kappa > 0$, one can find a map $  \reg_\kappa$ that is optimal in the sense that it saturates 
the bound \eqref{reg1}:
\begin{equs}
\reg_\kappa(\Labhom,k) = \min_{T_\Labe^\Labn \in \Trees^{(\Labhom,k)}_\circ(R) } \left( |T_\Labe^\Labn|_\s - \kappa |E_T| \right)
\end{equs} 
 where $  (\Labhom,k) \in \CE $. We proceed as follows.
Set $\reg_\kappa^0(\Labhom) = +\infty$ for every $\Labhom  \in \Lab$ and then 
define recursively
\begin{equ}[e:iterationReg]
\reg_\kappa^{n+1}(\Labhom) = |\Labhom|_\s - \kappa + \inf_{N \in R(\Labhom)} \reg_\kappa^n(N)\;.
\end{equ}
By recurrence we show that $n\mapsto\reg_\kappa^n(\Labhom)$ is decreasing and $\reg\leq\reg_\kappa^n$; then the limit
\[
\reg_\kappa(\Labhom) = \lim_{n \to \infty} \reg_\kappa^n(\Labhom)
\]
exists and has the required properties. If we extend $\reg_\kappa^n$ to
$\CE \sqcup \CN$ by \eqref{convention}, the iteration \eqref{e:iterationReg} 
can be interpreted as a
min-plus network on the graph $\CG(R)$ with arrows reversed, see Remark~\ref{G(R)}. 
\end{remark}

\subsection{Completeness}

Given an arbitrary rule (subcritical or not), there is no reason in general
to expect that the actions of the analogues of the groups $\CG_1$ and $\hat\CG_2$
constructed in Section~\ref{sec4} leave the linear span of $\Trees_\circ(R)$ invariant.
We now introduce a notion of completeness, which will guarantee later on
that the actions of $\CG_1$ and $\hat\CG_2$ do indeed leave the span of $\Trees_\circ(R)$ (or rather an 
extension of it involving again labels $\Labo$ on nodes) invariant. 
This eventually allows us to build, for large classes of subcritical stochastic PDEs,
regularity structures allowing to formulate them, endowed with a large enough group
of automorphisms to perform the renormalisation procedures required to give them
canonical meaning.

\begin{definition}\label{partial^mN}
Given $N = ((\Labhom_1,k_1),\ldots,(\Labhom_n,k_n)) \in \CN$ and $m \in \N^d$, we define
$\d^m N \subset \CN$ as the set of all $n$-tuples of the form
$((\Labhom_1,k_1+m_1),\ldots,(\Labhom_n,k_n+m_n))$ where the $m_i\in\N^d$ are such
that $\sum_i m_i = m$.
\end{definition}
Furthermore, we introduce the following \textit{substitution} operation on $\CN$.
Assume that we are given $N \in \CN$, $M \subset N$ and an element $\tilde M \in \hat \CP(\CN)$ 
which has the same size as $M$. In other words, if $M = (r_1,\ldots,r_\ell)$,
one has
$\tilde M = (\tilde M_1,\ldots,\tilde M_\ell)$ with $\tilde M_i \in \CN$. Then, writing
$N = M \sqcup \bar N$, we define
\begin{equ}[e:subst]
\CR_M^{\tilde M} N \eqdef \bar N \sqcup \tilde M_1 \sqcup \ldots \sqcup \tilde M_\ell\;.
\end{equ}

\begin{definition}\label{barCN}
Given a rule $R$, for any tree $T_\Labe^\Labn \in \Trees_\circ(R)$ we associate to each edge $e\in E_T$ a set $\bar \CN(e) \subset \CN$ in the following recursive way.
If $e = (x,y)$ and $y$ is a leaf, namely the node-type $\CN(y)$ of the vertex $y$ is equal to the empty word $()\in\CN$, then we set
\begin{equ}
\bar \CN(e) \eqdef R(\Labhom(e))\;.
\end{equ}
Otherwise, writing $(e_1,\ldots,e_\ell)$ the incoming edges of $y$, namely $e_i=(y,v_i)$, we define 
\begin{equs}
\bar \CN(e) \eqdef \{\CR_{\CN(y)}^M N\,:& \ \CN(y) \subset N \in R(\Labhom(e)),
\ M\in \bar\CN(e_1)\times\cdots\times\bar\CN(e_\ell)\}\;. 
\end{equs}
Finally, we define for every node $y\in N_T$ a set $\CM(y)\subset\hat \CP(\CN)$ by $\CM(y)\eqdef\{()\}$ if $y$ is a leaf, and
\[
\CM(y)\eqdef\bar\CN(e_1)\times\cdots\times\bar\CN(e_\ell)
\]
if $(e_1,\ldots,e_\ell)$ are the outgoing edges of $y$.
\end{definition}
It is easy to see that, if we explore the tree \textit{from the leaves down}, this 
specifies $\bar \CN(e)$ and $\CM(y)$ uniquely for all edges and nodes of $T$.

\begin{definition}\label{-compl}
A rule $R$ is $\ominus$-\textit{complete} with respect to a fixed scaling $\s$ if, whenever $\tau \in \Trees_-(R)$ and $\Labhom \in \Lab$ are
such that there exists $N \in R(\Labhom)$ with
$\CN(\rho_\tau) \subset N$, one also has
\begin{equ}
\d^{m} \bigl(\CR_{\CN(\rho_\tau)}^M N\bigr) \subset R(\Labhom)\;,
\end{equ}
for every $M \in \CM(\rho_\tau)$ and for every multiindex $m$
with $|m|_\s + |\tau|_\s < 0$.
\end{definition}

At first sight, the notion of $\ominus$-completeness might seem rather 
tedious to verify and potentially quite restrictive. Our next result shows
that this is fortunately not the case, at least when we are in the subcritical situation.

\begin{proposition}\label{prop:completion}
Let $R$ be a normal subcritical rule. Then, there exists a normal subcritical rule
$\bar R$ which is $\ominus$-complete and extends $R$ in the sense that
$R(\Labhom) \subset \bar R(\Labhom)$ for every $\Labhom \in \Lab$.
\end{proposition}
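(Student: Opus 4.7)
My plan is to construct $\bar R$ by iteration. I would set $\bar R^{(0)} = R$ and, for each $n\ge 0$, define $\bar R^{(n+1)}(\Labhom)$ to be the union of $\bar R^{(n)}(\Labhom)$ with all multisets of the form $\partial^m(\CR_{\CN(\rho_\tau)}^M N)$ demanded by the $\ominus$-completion condition for trees $\tau \in \Trees_-(\bar R^{(n)})$, all $N \in \bar R^{(n)}(\Labhom)$ containing $\CN(\rho_\tau)$, all $M \in \CM(\rho_\tau)$ computed in $\bar R^{(n)}$, and all $m$ with $|m|_\s + |\tau|_\s < 0$. Setting $\bar R = \bigcup_n \bar R^{(n)}$, one obtains $\ominus$-completeness since any tree in $\Trees_-(\bar R)$ has finitely many edges and hence lies in some $\Trees_-(\bar R^{(n)})$, while every element of $\CM(\rho_\tau)$ is itself finite and therefore lies in some $\bar R^{(m)}$ with $m \ge n$, so the required contracted node types are added by step $m+1$.

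Normality of $\bar R$ is then automatic at each step, and this is the subtle point I would check carefully. Given a sub-multiset $M' \subset N'$ of a newly-added $N' = \partial^m(\bar N_0 \sqcup \tilde M_1 \sqcup \ldots \sqcup \tilde M_\ell)$ with $\bar N_0 = N_0\setminus \CN(\rho_\tau)$, one decomposes $M' = \partial^{m'}(\bar N_0' \sqcup \tilde M_1' \sqcup \ldots \sqcup \tilde M_\ell')$ with $\bar N_0' \subset \bar N_0$, $\tilde M_i' \subset \tilde M_i$ (allowing $\tilde M_i' = ()$, which is what corresponds to pruning the $i$-th branch at $\rho_\tau$), and $m' \le m$. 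An induction on the height of $y$ using Definition~\ref{barCN} and normality of $R$ shows that each $\bar\CN(e)$ is itself closed under taking sub-multisets, so $\tilde M_i' \in \bar\CN(e_i)$. Setting $N_0' = \CN(\rho_\tau) \sqcup \bar N_0' \subset N_0$, which lies in $R(\Labhom)$ by normality of $R$, we recognise $M' = \partial^{m'}\CR^{(\tilde M_1', \ldots, \tilde M_\ell')}_{\CN(\rho_\tau)} N_0'$ as being produced by the completion applied to the \emph{same} tree $\tau$ with smaller parameters (note $|\tau|_\s + |m'|_\s \le |\tau|_\s + |m|_\s < 0$), so $M'$ is already in $\bar R^{(n+1)}(\Labhom)$ without any additional closure operation.

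The core of the proof is showing that subcriticality persists. I would fix $\reg\colon \Lab \to \R$ satisfying \eqref{e:defreg} for $R$ with uniform slack $\kappa > 0$ as in \eqref{e:regkappa}, extend it to $\CE$ and $\CN$ via \eqref{convention}, and show inductively that the same $\reg$ with slack $\kappa$ witnesses subcriticality of every $\bar R^{(n)}$. For the inductive step, for a new $N' = \partial^m\CR^{(\tilde M_1,\ldots,\tilde M_\ell)}_{\CN(\rho_\tau)} N_0$, a direct expansion gives
\begin{equ}
\reg(N') = \reg(N_0) - \reg(\CN(\rho_\tau)) + \sum_i \reg(\tilde M_i) - |m|_\s.
\end{equ}
Associating to each edge $e = (\cdot, y)$ of $\tau$ the quantity $P(e) = \kappa V(y) - D(y) + \reg(\Labhom(e)) - |\Labhom(e)|_\s$, where $V(y)$ counts the nodes and $D(y) = \sum_{e' \in E_{T(y)}}(|\Labhom(e')|_\s - |\Labe(e')|_\s)$ in the subtree of $\tau$ rooted at $y$, I would prove by induction on the height of $y$, using the inductive subcriticality of $\bar R^{(n)}$ at each substitution in Definition~\ref{barCN}, that $\reg(\tilde N) \ge P(e)$ for every $\tilde N \in \bar\CN(e)$. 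Summing this bound over the children $x_1,\ldots,x_\ell$ of $\rho_\tau$, the identities $V(\rho_\tau) = 1 + \sum_i V(x_i)$ and $D(\rho_\tau) = \sum_i[D(x_i) + |\Labhom_i|_\s - |k_i|_\s]$ telescope to
\begin{equ}
\sum_i \reg(\tilde M_i) - \reg(\CN(\rho_\tau)) \ge \kappa(V(\tau)-1) - D(\rho_\tau);
\end{equ}
combined with $|\tau|_\s \ge D(\rho_\tau)$ (since $\Labn \ge 0$), $V(\tau) \ge 2$ (as $\tau \in \Trees_-$ has $|\tau|_\s < 0$ and hence at least one edge because $|\bullet|_\s = 0$), and $|m|_\s + |\tau|_\s < 0$, this yields $\reg(N') \ge \reg(N_0) + \kappa$, so the slack is preserved (in fact slightly improved).

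The main obstacle I anticipate is the recursive lower bound $\reg(\tilde N) \ge P(e)$ combined with the sub-multiset closure of $\bar\CN(e)$: both rest on a careful unwinding of the substitution operator $\CR_{\CN(y)}^\cdot$ at every level of the tree, and both repeatedly rely on the normality of the intermediate $\bar R^{(n)}$ together with the inductive subcriticality bound with slack $\kappa$. Once these recursions are verified cleanly, everything else follows mechanically from the closure of the iteration.
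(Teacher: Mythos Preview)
Your proposal is correct and follows essentially the same approach as the paper: both build $\bar R$ as the increasing union of iterated one-step completions, both verify normality via the closure of $\bar\CN(e)$ under sub-multisets, and both preserve subcriticality by proving an inductive lower bound on $\reg$ over $\bar\CN(e)$ and then summing over the branches at $\rho_\tau$.

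The only cosmetic difference is in the bookkeeping for the subcriticality step. The paper works with the extremal function $\reg_\kappa$ of Remark~\ref{min_plus} and proves the bound \eqref{e:boundReg}, namely $\reg_\kappa(G)\ge \reg_\kappa(\Labhom,k)-|\sigma|_\s$ for $G\in\bar\CN(e)$, discarding the slack $\kappa$ at each inductive step and arriving at $\reg_\kappa(N')\ge \reg_\kappa(N_0)$. You keep the slack and track it through the quantity $P(e)=\kappa V(y)-D(y)+\reg(\Labhom(e))-|\Labhom(e)|_\s$, which yields the slightly sharper $\reg(N')>\reg(N_0)+\kappa(V(\tau)-1)$. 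Either bound suffices to propagate \eqref{e:regkappa} to $\bar R^{(n+1)}$, and the underlying induction on the tree height is identical.
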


\begin{proof}
Given a normal subcritical rule $R$, we define a new rule $\CQ R$ by setting
\begin{equ}[e:defR-]
\bigl(\CQ R\bigr)(\Labhom) = R(\Labhom) \cup \bigcup_{\tau \in \Trees_-(R)} R_-(\Labhom;\tau)\;,
\end{equ} 
where $R_-(\Labhom;\tau)$ is the union of all collections of node types of the type
\begin{equ}
\hat N \in \d^{m} \bigl(\CR_{\CN(\rho_\tau)}^M N\bigr)\;,
\end{equ}
for some $N \in R(\Labhom)$ with $\CN(\rho_\tau) \subset N$,
some $M \in \CM(\rho_\tau)$, and some multiindex $m$ with
$|m|_\s + |\tau|_\s \le 0$. 
Since $ \bigl(\CQ R\bigr) (\Labhom) \supset R(\Labhom)$ and $\Trees_-(R)$ is finite by
Proposition~\ref{prop:finite}, this is again a valid rule.
Furthermore, by definition, a rule $R$ is $\ominus$-complete
if and only if $\CQ R = R$.

We claim that the desired rule $\bar R$ can be obtained by setting
\begin{equ}
\bar R(\Labhom) = \bigcup_{n \ge 0} \bigl(\CQ^n R\bigr)(\Labhom)\;. 
\end{equ}
It is straightforward to verify that $\bar R$ is $\ominus$-complete.
(This follows from the fact that the sequence of rules $\CQ^n R$ is
increasing and $\CQ$ is closed under increasing limits.)

It remains to show that $\bar R$ is again normal and subcritical. 
To show normality, we note that if $R$ is normal, then $\CQ R$ is again
normal. This is because, by Definition~\ref{barCN}, the sets $\bar \CN(e)$ used to build 
$\CM(\rho_\tau)$ also 
have the property that if $N \in \bar \CN(e)$ and $M \subset N$, then
one also has $M\in \bar \CN(e)$.
As a consequence, $\CQ^n R$ is normal for every $n$, from which the normality
of $\bar R$ follows.

To show that $\bar R$ is subcritical,  we 
first recall that by Remark~\ref{min_plus}, for $\kappa$ as in \eqref{e:regkappa},
we can find a maximal function $\reg_\kappa \colon \Lab \to \R$ such that
\begin{equ}[e:regkappa2]
\reg_\kappa(\Labhom) = |\Labhom|_\s - \kappa + \inf_{N \in R(\Labhom)} \reg_\kappa(N)\;.
\end{equ}
Furthermore, the extension of $\reg_\kappa$ to node types given by  
\eqref{convention} is such that, for every node type $N$ and every multiindex $m$, one has
\begin{equ}[e:regmm]
\reg_\kappa(\d^m N)=\reg_\kappa(N)- |m|_\s\;.
\end{equ}
(We used a small abuse of notation here since $\d^m N$ is really a collection of node
types. Since $\reg_\kappa$ takes the same value on each of them, this creates no
ambiguity.)

We claim that the same function $\reg_\kappa$ also satisfies \eqref{e:defreg} 
for the larger rule $\CQ R$. In view of \eqref{e:regkappa2} and of the definition \eqref{e:defR-} of $\CQ R$, it is enough to prove that
\begin{equ}[e:defreg3]
\reg_\kappa(\Labhom) \leq |\Labhom|_\s - \kappa +\reg(N), \qquad \forall \, N\in  \bigcup_{\tau \in \Trees_-(R)} R_-(\Labhom;\tau).
\end{equ}

Arguing by induction as in the proof of \eqref{reg1}, one can first show the following. Let $\sigma \in \Trees_\circ(R)$ any every planted tree whose trunk $e$ has edge type $(\Labhom,k)$. Then one has the bound
\begin{equ}[e:boundReg]
\reg_\kappa(\Labhom,k) \le |\sigma|_\s + \reg_\kappa(G)\;, \qquad \forall \, G\in\bar\CN(e).
\end{equ}
Indeed, if $e$ is the only edge of $\sigma$, then $\bar\CN(e)=R(\Labhom)$ and 
by \eqref{e:regkappa2} 
\[
\reg_\kappa(\Labhom,k)\leq |\Labhom|_\s-|k|_\s+\reg_\kappa(G)=|\sigma|_\s+\reg_\kappa(G).
\]
If now $e=(x,y)$ and $(e_1,\ldots,e_\ell)$ are the outgoing edges of $y$, then $\bar\CN(e)$ is the set of all $\CR_{\CN(y)}^M N$ with $\CN(y) \subset N \in R(\Labhom(e))$ and $M=(M_1,\ldots,M_\ell)$  with $M_i \in \bar\CN(e_i)$. By the induction hypothesis,
\[
\reg_\kappa(\CN(y))\leq \sum_{i=1}^\ell \Big[|\sigma_i|_\s+\reg_\kappa(M_i)\Big]
\]
where $\sigma_i$ is the largest planted subtree of $\sigma$ with trunk $e_i$. 
Then
\begin{equ}
\reg_\kappa(\CR_{\CN(y)}^M N) =  \reg_\kappa(N)-\reg_\kappa(\CN(y))+\sum_{i=1}^\ell \reg_\kappa(M_i) \geq \reg_\kappa(N)-\sum_{i=1}^\ell|\sigma_i|_\s.
\end{equ}
Combining this with \eqref{e:regkappa2} we obtain, since $|\Labhom|_\s - |k|_\s+\sum_{i=1}^\ell|\sigma_i|_\s=|\sigma|_\s$,
\[
\reg_\kappa(\Labhom,k) \leq |\Labhom|_\s - |k|_\s + \reg(N)\leq |\sigma|_\s+\reg_\kappa(\CR_{\CN(y)}^M N)
\]
and \eqref{e:boundReg} is proved.

We prove now \eqref{e:defreg3}.  Let $\tau\in\Trees_-(R)$, $N \in R(\Labhom)$ with $\CN(\rho_\tau) \subset N$, $M=(M_1,\ldots,M_\ell) \in \CM(\rho_\tau)$, and $m\in\N^d$ with $|m|_\s + |\tau|_\s \le 0$. Let $\tau = \tau_1\ldots \tau_\ell$ be the decomposition of $\tau$ into planted trees.
Recalling \eqref{e:regmm} and Definitions~\ref{barCN} and~\ref{partial^mN}, 
we have
\begin{equs}
\reg_\kappa\big(\d^{m} \bigl(\CR_{\CN(\rho_\tau)}^M N\bigr)\big) 
& \ = \reg_\kappa\left(\CR_{\CN(\rho_\tau)}^M N\right) - |m|_\s
\\ & \ =\reg_\kappa(N)+\sum_{i=1}^\ell \Big[\reg_\kappa(M_i)-\reg_\kappa(s_i)\Big] - |m|_\s\;,
\end{equs}
where $s_i$ is the edge type of the trunk of $\tau_i$.
Combining this with \eqref{e:boundReg} yields
\begin{equs}
\reg_\kappa\big(\d^{m} \bigl(\CR_{\CN(\rho_\tau)}^M N\bigr)\big) 
\ge \reg_\kappa(N) - |m|_\s - |\tau|_\s \ge \reg_\kappa(N) \;,
\end{equs}
with the last inequality a consequence of the condition $|m|_\s + |\tau|_\s \le 0$. This proves \eqref{e:defreg3}. 

We conclude that \eqref{e:regkappa2} also holds when considering $N \in (\CQ R)(\Labhom)$,
thus yielding the desired claim. Iterating this, we conclude
that $\reg_\kappa$ satisfies \eqref{e:defreg}
for each of the rules $\CQ^n R$ and therefore also for $\bar R$ as required.
\end{proof}

\begin{definition}\label{def:complete}
We say that a subcritical rule $R$ is \textit{complete} (with respect to a fixed scaling $\s$) if it is both normal and $\ominus$-complete. If $R$ is only normal, we call the rule $\bar R$ constructed in the proof of Proposition~\ref{prop:completion} the completion of $R$.
\end{definition}

\subsection{Three prototypical examples}
\label{sec:SPDERules}

Let us now show how, concretely, a given stochastic PDE (or system thereof) gives rise
to a rule in a natural way.
Let us start with a very simple example, the KPZ equation formally given by
\begin{equ}
\partial_{t} u = \Delta u + \left( \partial_x u \right)^{2} + \xi\;. 
\end{equ}
One then chooses the set $\Lab$ so that it has one element for each noise process and one
for each convolution operator appearing in the equation. In this case, using the variation of constants
formula, we rewrite the equation in integral form as
\begin{equ}
 u = P u_0 + P * \one_{t > 0}\bigl(\left( \partial_x u \right)^{2} + \xi\bigr)\;,
\end{equ}
where $P$ denotes the heat kernel and $*$ is space-time convolution.
We therefore need two types in $\Lab$ in this case, which we call 
$\{\Xi,\CI\}$ in order to be consistent with \cite{reg}.

We assign degrees to these types 
just as in \cite{reg}. In our example, the underlying space-time dimension is $d=2$
and the equation is parabolic, so we fix the parabolic scaling
$\s = (2,1)$ and then assign to $\Xi$ a degree just below the exponent of self-similarity 
of white noise under the scaling $\s$,
namely $|\Xi|_\s = -{3\over 2} - \kappa$ for some small $\kappa > 0$. We also assign to each
type representing a convolution operator the degree corresponding to the amount by which
it improves regularity in the sense of \cite[Sec.~4]{reg}. 
In our case, this is given by $|\CI|_\s = 2$.

It then seems natural to assign to such an equation a rule $\tilde R$ by
\begin{equs} 
  \tilde R(\Xi)  = \lbrace () \rbrace, \; \quad 
    \tilde R( \CI) = \{(\Xi), (\CI_1,\CI_1)\}\;,
\end{equs}
where $\CI_1$ is a shorthand for the edge type $(\CI,(0,1))$ and we simply write $\Labhom$ as
a shorthand for the edge type $(\Labhom,0)$.
In other words, for every noise type $\Labhom$, we set $\tilde R(\Labhom) = \{()\}$
and for every kernel type $\Labhom$ we include one node type into $\tilde R(\Labhom)$ 
for each of the monomials in our equation that are convolved with the corresponding kernel.
The problem is that such a rule is not normal. Therefore we define rather
\begin{equs} 
  R(\Xi)  = \lbrace () \rbrace, \; \quad 
    R( \CI) = \{(), (\Xi), (\CI_1), (\CI_1,\CI_1)\}\;,
\end{equs} 
which turns out to be normal and complete. It is simple to see that the function $\reg_\kappa:\{\Xi,\CI\}\to\R$
\[
\reg_\kappa(\Xi)=-{3\over 2}-2\kappa, \qquad \reg_\kappa(\CI)={1\over 2}-3\kappa,
\]
makes $R$ subcritical for sufficiently small $\kappa>0$.
%We can see that we obtain $R$ from $\tilde R$ by applying the 
%Let us show how the algorithm given in Remark~\ref{min_plus} easily shows that this
%rule is indeed subcritical for sufficiently small values of $\kappa$. 
%Applying this algorithm, we obtain for $n = 1,2$
%\begin{equs}[2]
%\reg_\kappa^1(\Xi) &= -{3\over 2}-2\kappa\;,&\quad  \reg_\kappa^1(\CI) &= 2-\kappa\;,\\
%\reg_\kappa^2(\Xi) &= -{3\over 2}-2\kappa\;,&\quad  \reg_\kappa^2(\CI) &= {1\over 2}-3\kappa\;,
%\end{equs}
%and then $\reg_\kappa^n = \reg_\kappa^2$ for $n \ge 2$, provided that $\kappa \le {1\over 8}$. 
%The fact that the algorithm stops after
%finitely many steps shows that the rule $R$ is indeed subcritical.

One can also consider systems of equations. Consider for example the system of coupled  KPZ  equations
formally given by
\begin{equs}
\partial_{t} u_1 & = \Delta u_1 + \left( \partial_x u_1 \right)^{2} + \xi_1\;, \\
\partial_{t} u_2 & = \nu\Delta u_2 + \left( \partial_x u_2 \right)^{2} + \Delta u_1 + \xi_2\;. 
\end{equs}
In this case, we have two noise types $\Xi_{1,2}$ as well as two kernel types,
which we call $\CI$ for the heat kernel with diffusion constant $1$ and $\CI^\nu$
for the heat kernel with diffusion constant $\nu$. There is some ambiguity in this case
whether the term $\Delta u_1$ appearing in the second equation should be considered
part of the linearisation of the equation or part of the nonlinearity. In this
case, it turns out to be more convenient to consider this term as part of the nonlinearity,
and we will see that the corresponding rule is still subcritical thanks to the triangular
structure of this system. 

Using the same notations as above, the normal and complete rule $R$ naturally associated with this system of equations is given by
\begin{equs}[0]
R(\Xi_i) = \{()\}, \; \quad  R( \CI) = \lbrace (), (\Xi^1) , (\CI_1),  (\CI_1,\CI_1) \rbrace \\
R(\CI^{\nu}) = \lbrace (), (\Xi_2), (\CI_1^{\nu}),  (\CI_1^\nu,\CI_1^\nu), \; (\CI_2) \rbrace.
\end{equs}
In this case, we see that $R$ is again subcritical for sufficiently small $\kappa>0$ with
\begin{equ}
\reg_{\kappa}(\Xi_i) = -3/2 - 2 \kappa\;,\quad 
\reg_{\kappa}(\CI) = 1/2 - 3\kappa\;,\quad
\reg_{\kappa}(\CI^\nu) = 1/2 - 4\kappa\;.
\end{equ}

Our last example is given by the following generalisation of the KPZ equation:
\begin{equs}
\partial_{t} u = \Delta u + g(u) \left( \partial_x u \right)^2 + h(u) \partial_{x} u + k(u) + f(u) \xi\;,
\end{equs}
which is motivated by \eqref{e:chris} above, see \cite{proc}.
In this case, the set $ \Lab $ is again given by $\{\Xi, \; \CI\}$, just as in the
case of the standard KPZ equation.
Writing $[\CI]_{\ell} $ as a shorthand for $\CI,...,\CI$ where $ \CI $ is repeated $ \ell $ times,
the rule $ R $ associated to this equation is given by
\begin{equs}
R(\Xi) = \lbrace () \rbrace, \; R(\CI) = \lbrace ([\CI]_{\ell}), \;   ([\CI]_{\ell},\CI_1), \;    ([\CI]_{\ell},\CI_1,\CI_1), \;  ([\CI]_{\ell},\Xi), \; \ell \in \N \rbrace\;.
\end{equs}
Again, it is straightforward to verify that $ R $ is subcritical and that one can use the same
map $\reg_\kappa$ as in the case of the standard KPZ equation. Even though in this case there 
are infinitely many node types appearing in $R(\CI)$, this is not a problem 
because $\reg_\kappa(\CI) > 0$,
so that repetitions of the symbol $\CI$ in a node type only increase the corresponding 
degree.

\subsection{Regularity structures determined by rules}
\label{sec:quotient}

Throughout this section, we assume that we are given 
\begin{itemize}
\item a \textit{finite} type set $\Lab$
together with a scaling $\s$ and degrees $|\cdot|_\s$ as in 
Definition~\ref{def:scaling},
\item a normal rule $R$ for $\Lab$ which is both subcritical and complete, in the sense 
of Definition~\ref{def:complete},
\item the integer $d\geq 1$ which has been fixed at the beginning of the paper.
\end{itemize}

We show that the above choices, when combined with the structure built in
Sections~\ref{sec3} and~\ref{sec4}, 
yield a natural substructure with the same algebraic properties (the
only exception being that the subspace of $\CH_\circ$ we consider is not an algebra in general),
but which is sufficiently small to yield a regularity structure. Furthermore,
this regularity structure contains a very large group of automorphisms, unlike
the slightly smaller structure described in \cite{reg}. The reason for this is the additional flexibility
granted by the presence of the decoration $\Labo$, 
which allows to keep track of the degrees of the subtrees contracted by the action of $\CG_1$.

\begin{definition}\label{def:D}
We define for every $\tau=(G,\Labn',\Labe')\in\Trees$  
and every node $x\in N_G$ 
a set $D(x,\tau) \subset \Z^d \oplus \Z(\Lab)$ by postulating that $\alpha\in D(x,\tau)$ 
if there exist 
\begin{itemize}
\item $\sigma=(F,\Labn,\Labe)\in\Trees$
\item $A\subset F$ is a subtree such that $\sigma$ conforms to the rule $R$ at every node $y \in A$
\item functions $\Labn_A \colon N_A \to \N^d$ with $\Labn_A\leq \Labn\restr N_A$ and $\eps_A^F\colon \d(A,F)\to \N^d$
\end{itemize}
such that $(A,0,\Labn_A+\pi\varepsilon_A^F,0,\Labe)\in\Trees_-(R)$ (see Definition~\ref{def_space})
and 
\begin{equ}[e:reprLambda]
(G,\un{\{x\}},\Labn',\alpha\un{\{x\}},\Labe')=\CK_1(F,\un{A},\Labn - \Labn_A, \Labn_A + \pi(\varepsilon_A^F-\Labe_\emptyset^A), \Labe_A^F + \varepsilon_A^F)
\end{equ}
and in particular 
\[
\alpha=\sum_{N_A}\left(\Labn_A + \pi(\varepsilon_A^F-\Labe_\emptyset^A)\right).
\]
%
%
%\item $\tau = (F,\Labn,\Labe) \in \Trees_-(R)$,
%\item $M \in R(\Labhom)$ with $\CN(\rho_\tau) \subset M$ and $\bar M \in \CM(\rho_\tau)$,
%\item $m \in \N^d$ with $|\tau|_\s + |m|_\s < 0$,
%\end{itemize}
%such that
%\begin{equ}
%N \in \d^{m} \bigl(\CR_{\CN(\rho_\tau)}^{\bar M} M\bigr)\;
%\qquad\text{and}\qquad
%\alpha = \sum_{e \in E_F} \bigl(\Labhom(e) - \Labe(e)\bigr) + \sum_{x \in N_F}\Labn(x) + m\;.
%\end{equ}
\end{definition}
We define $\CS:\Tra\to \Trees \subset \Tra$ by $\CS(F,\hat F, \Labn,\Labo,\Labe)\eqdef
(F , \Labn,\Labe)$.
\begin{definition}\label{D(labhom,N)}
We denote by $\Lambda=\Lambda(\Lab,R,\s,d)$ the set of all $\tau = (F,\hat F, \Labn,\Labo,\Labe)\in\Tra$ such that
$\tau = \CK_1\tau$ and,
for all $x\in N_F$, exactly one of the following two mutually exclusive statements holds.
\begin{itemize}
\item One has $\hat F(x) \in \{0,2\}$ and $\Labo(x)=0$.
\item One has $\hat F(x) = 1$ and $\Labo(x)\in D(x,\CS\tau)$.
%there exists a node $y \in N_F$ and an
%edge $e = (y,x) \in E_F$ such that $\Labo(x) \in D(\Labhom(e),\CN(x))$, 
%where $\CN(x)$ is as in Definition~\ref{def:conform}.
%\item One has $\hat F(x) = 1$, $x$ is a root of $F$, and there exists $\Labhom \in \Lab$ such that
%$\Labo(x) \in D(\Labhom,\CN(x))$.
\end{itemize}
\end{definition}

\begin{lemma}\label{lem:contraction}
Let $\sigma = (F,\hat F, \Labn,\Labo,\Labe)\in \Lambda$ and $A \in\Adm_1(F,\hat F)$ 
be a subforest
such that $\sigma$ conforms to the rule $R$ at every vertex $x \in A$ and  fix
functions $\Labn_A \colon N_A \to \N^d$ with $\Labn_A\leq \Labn\restr N_A$ and $\eps_A^F\colon \d(A,F)\to \N^d$. 
Assume furthermore that for each connected component $B$ of $A$, we have 
$(B,0,\Labn_A+\pi\varepsilon_A^F,0,\Labe)\in\Trees_-(R)$, see Definition~\ref{def_space}.
Then
the element
\begin{equ}[e:reprLambda2]
\tau = \CK_1 (F, \hat F \cup_1 A,\Labn - \Labn_A, \Labn_A + \pi(\varepsilon_A^F-\Labe_\emptyset^A), \Labe_A^F + \varepsilon_A^F)
\end{equ}
also belongs to $\Lambda$.

Conversely, every element $\tau$ of $\Lambda$ is of the form \eqref{e:reprLambda2} for 
an element $\sigma$ with $\hat F(x) \in \{0,2\}$ and $\Labo \equiv 0$.
\end{lemma}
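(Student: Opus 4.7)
The plan is to prove both directions via a grafting construction that encodes the interplay between the decoration $\Labo$ (which records degrees of contracted subtrees) and the extraction operation \eqref{e:reprLambda2}. The key structural ingredient is the coassociativity of $\Delta_1$ established in Proposition~\ref{prop:coassoc}, which lets a two-step extraction be rewritten as a single one-step extraction.

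For the forward direction, having defined $\tau$ by \eqref{e:reprLambda2}, the identity $\CK_1\tau=\tau$ is immediate from the idempotence of $\CK_1$. The node conditions of Definition~\ref{D(labhom,N)} then need to be verified separately for uncontracted and contracted nodes. On $N_F\setminus N_A$, the inclusion $\hat F_1\subset A$ built into $\Adm_1$ forces $\hat F(y)\in\{0,2\}$, and the decoration increment $\Labn_A+\pi(\varepsilon_A^F-\Labe_\emptyset^A)$ vanishes at $y$ since its support lies in $N_A$, so $\Labo_\tau(y)=\Labo_\sigma(y)=0$ by $\sigma\in\Lambda$. The substantive step concerns the contracted nodes: for each connected component $B$ of $A$, the contracted node $x_B$ has colour $1$ and formula \eqref{e:defhatn} gives
\[
\alpha_B \;=\; \sum_{y\in N_B}\!\bigl[\Labo_\sigma(y)+\Labn_A(y)+\pi(\varepsilon_A^F-\Labe_\emptyset^A)(y)\bigr]+\sum_{e\in E_B}\Labhom(e),
\]
which I must place in $D(x_B,\CS\tau)$.

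To produce the required witness, I graft. For each $y\in N_B\cap\hat F_1$, hypothesis $\sigma\in\Lambda$ supplies a witness subtree $A^y\subset F^y$ realising $\Labo_\sigma(y)\in D(y,\CS\sigma)$; I replace each such $y$ in $B$ by $A^y$ (identifying $y$ with $\rho_{A^y}$) to obtain a combined subtree $A_\star$, with labels assembled by direct sum of the corresponding $\Labn_{A^y},\varepsilon_{A^y}^{F^y},\Labe^y$ together with $\Labn_A,\varepsilon_A^F,\Labe$, sitting inside an ambient tree $F_\star$ obtained by expanding $x_B$ in $\CS\tau$ accordingly. Three items then need checking: (a) $F_\star$ conforms to $R$ at each node of $A_\star$, which holds on $B$-nodes and $A^y$-nodes separately by hypothesis, and at the grafting points $y$ because $y$ is an isolated colour-$1$ leaf of $\sigma$ (by $\CK_1\sigma=\sigma$), so replacement by $A^y$ creates no node-type conflict; (b) membership in $\Trees_-(R)$: its degree equals $|B|_\s+\sum_y|A^y|_\s<0$, and the root and planted-trunk conditions transfer from $B$ since $\rho_{A_\star}=\rho_B$; (c) the identity \eqref{e:reprLambda}, which follows from Proposition~\ref{prop:coassoc} applied to the pair of successive contractions (first of each $A^y$, producing $\sigma$, then of $A$, producing $\tau$), whose composition coincides with the single-step contraction of $A_\star$ and yields exactly the same $\alpha_B$.

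For the converse direction, given $\tau\in\Lambda$, its colour-$1$ nodes are isolated by $\CK_1\tau=\tau$; at each such $x$, $\Labo_\tau(x)\in D(x,\CS\tau)$ supplies a witness subtree $A^x$ with associated labels. Simultaneously blowing up each $x$ into $A^x$ inside $\tau$, setting $\hat F_\sigma$ to take colour $2$ exactly where $\tau$ did and colour $0$ on all newly exposed nodes, with $\Labo_\sigma\equiv 0$, and taking $A=\bigsqcup_x A^x$ with labels assembled, produces the required $\sigma$. The identity \eqref{e:reprLambda2} holds termwise, because the grafts at different $x$'s are pairwise disjoint and each local contraction reproduces the witness identity \eqref{e:reprLambda} at that $x$. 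The main obstacle is item (c) in the forward direction: the grafting must be explicit enough that the combined edge-increments form a valid $\varepsilon_{A_\star}^{F_\star}$ supported in $\partial(A_\star,F_\star)$ and the combined node-increments remain disjointly supported; once this bookkeeping is checked, the change of variables \eqref{e:directMap}--\eqref{e:inverseMap} from the proof of Proposition~\ref{prop:coassoc} identifies $\alpha_B$ with the $\Labo$-decoration produced by the single-step contraction of $A_\star$, as demanded by Definition~\ref{def:D}.
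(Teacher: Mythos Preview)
Your approach is correct and matches the paper's: both rely on expanding each colour-$1$ node via its Definition~\ref{def:D} witness and using coassociativity of $\Delta_1$ (Proposition~\ref{prop:coassoc}) to rewrite successive extractions as a single one. The paper's only organisational difference is that it proves the converse first by inductively blowing up one colour-$1$ node at a time, and then observes that the forward direction follows immediately, since the converse reduces $\sigma$ to the case $\hat F\in\{0,2\}$, $\Labo\equiv 0$, where the forward claim is nothing but Definition~\ref{D(labhom,N)} itself; this avoids the explicit bookkeeping you flag in item~(c). One small slip in your write-up: the colour-$1$ nodes $y$ are isolated in the colour-$1$ subforest (since $\CK_1\sigma=\sigma$) but need not be \emph{leaves} of $\sigma$; conformity of $F_\star$ at nodes of $A^y$ holds because the node types there coincide with those in $F^y$, which conforms by the witness hypothesis, rather than because ``no node-type conflict'' arises.
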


\begin{proof}
Let us start by showing the last assertion. Let $\tau=(G,\hat G,\Labn',\Labo',\Labe')\in\Lambda$ and $\{x_1,\ldots,x_n\}\subset N_G$ all nodes is such that $\hat G(x_i)=1$. Let us argue by
recurrence over  $i\in\{1,\ldots,n\}$. By Definition \ref{def:D} one can write 
\begin{equs}
\ &(G,\hat G\un{\{x_1,\ldots,x_i\}},\Labn',\Labo'\un{\{x_1,\ldots,x_i\}},\Labe')=\CK_1\sigma_i
\\ & = \CK_1(F_i,\un{A_i},\Labn - \Labn_{A_i}, \Labn_{A_i} + \pi(\varepsilon_{A_i}^{F_i}-\Labe_\emptyset^{A_i}), \Labe_{A_i}^{F_i} + \varepsilon_{A_i}^{F_i})
\end{equs}
as in \eqref{e:reprLambda}. Setting $F=F_n$ and $A=A_n$ we have the required representation. 

Now the first assertion follows easily from the second one.
\end{proof}

We now define spaces of coloured forests $\tau = (F,\hat F, \Labn,\Labo,\Labe)$ such that $(F,0,\Labn,0,\Labe)$ is compatible with the rule $R$ in a suitable sense, and 
such that $\tau \in \Lambda$.
\begin{definition}\label{def:CT}
Recalling Definition~\ref{def_space} and Remark~\ref{rem:H_circ}, we define the bigraded spaces
\begin{equs}[2]
\hat\CT_{+}^{\ex}& = \scal{B_+} \subset \hat \CH_2 \;,&\qquad B_+ &\eqdef \{\tau\in\hat H_2 \, : \,  \tau\in{\Lambda}\ \ \& \ \ \CS \tau\in \Trees_2(R) \} \;,\\
\hat\CT_{-}^{\ex}&= \scal{B_-}\subset \CH_1 \;,&\qquad B_- &\eqdef \{\tau\in H_{1}\, : \,  \tau\in{\Lambda}\ \ \& \ \ \CS \tau \in \Trees_1(R)\} \;,\\
\CT^{\ex} &= \scal{B_\circ}\subset \CH_\circ\;,&\qquad B_\circ& \eqdef \{\tau\in H_\circ\, : \, \tau\in{\Lambda}\ \ \& \ \ \CS \tau\in \Trees_\circ(R)\}\;.
\end{equs}
\end{definition}

\begin{remark}\label{extended}
The superscript ``ex'' stands for ``extended'', see Section~\ref{sec:reduced} below for an explanation
of the reason why we choose this terminology.
The identification of these spaces as suitable subspaces of $\hat \CH_2$, $\CH_1$ and $\CH_\circ$
is done via the canonical basis \eqref{e:isoH}.
\end{remark}

Note that both $\hat \CT_{-}^{\ex}$  and $\hat \CT_{+}^{\ex}$ are algebras for the products
inherited from $\CH_1$ and $\hat \CH_2$ respectively. On the other hand, $\CT^{\ex} $ is in general 
not an algebra anymore. 

\begin{lemma}\label{lem:5.28}
We have 
\begin{equ}
\Delta_1\colon \CT^{\ex} \to \hat \CT_{-}^{\ex}\hotimes \CH_{\circ}\;,\quad
\Delta_1\colon \hat \CT_{-}^{\ex} \to \hat \CT_{-}^{\ex}\hotimes \CH_{1}\;,\quad
\Delta_1\colon \hat \CT_{+}^{\ex} \to \hat \CT_{-}^{\ex}\hotimes \hat\CH_{2}\;,
\end{equ}
as well as
$\Delta_2\colon \CH \to \CH\hotimes \hat \CT_{+}^{\ex}$ for $\CH \in \{\CT^\ex, \hat \CT_+^\ex\}$.
Moreover, $\hat \CT_{+}^{\ex}$ is a Hopf subalgebra of $\hat\CH_{2}$ and $\CT^{\ex}$ is a right Hopf-comodule over $\hat \CT_{+}^{\ex}$ with coaction $\Delta_2$.
\end{lemma}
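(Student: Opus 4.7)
The plan is to verify the four stability claims summand by summand in the explicit formula~\eqref{def:Deltabar}, and then to deduce the Hopf subalgebra and Hopf-comodule assertions from these closures. For each summand two things must be checked: (a)~that the underlying tree/forest (after applying $\CS$) satisfies the relevant conformity constraint from Definition~\ref{def_space}, and (b)~that the decoration constraint $\Lambda$ from Definition~\ref{D(labhom,N)} is preserved.

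For the three $\Delta_1$ statements the second tensor factor always lives in $\CH_\circ$, $\CH_1$ or $\hat\CH_2$ with no rule-theoretic side condition, so it is handled by Proposition~\ref{prop:alg}. The first factor is $\CK_1(A,\hat F\restr A,\Labn_A+\pi\eps_A^F,\Labo,\Labe)$ for some $A\in\Adm_1(F,\hat F)$, and each of its connected components strongly conforms to $R$ because node types in $A$ are submultisets of those in $F$ and normality of $R$ (Definition~\ref{def:rule}) keeps submultisets inside the appropriate $R(\Labhom)$; strong conformity at the root of each component is argued by the same submultiset/normality reasoning applied either to the incoming $F$-edge (when the component is not rooted at $\rho_F$) or inherited from the strong conformity of $\tau$ (when it is). The $\Lambda$ condition on the first factor then follows from Lemma~\ref{lem:contraction}: representing $\tau$ as the $\CK_1$-contraction of some $\sigma$ with uncoloured roots, composing with the contraction defined by $A$ exhibits the first factor as a $\CK_1$-contraction of $\sigma$ along an enlarged admissible subforest, and hence lies in $\Lambda$. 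The two $\Delta_2$ claims follow the same pattern for the first factor, which is now a rooted subtree landing in $\Trees_\circ(R)$ or $\Trees_2(R)$ respectively. The second factor, obtained by applying $\JJ\hat\CK_2$, collapses $A$ to a single blue node; non-root node types are unaffected since boundary edges in $\d(A,F)$ only modify decorations at their upper endpoint (which lies in $A$ and is swallowed into the contracted root), and the root node type is unconstrained by $\Trees_2(R)$. The $\Lambda$ condition at a node $x\in \hat F_1\setminus A$ follows because any witness $(\sigma,A_0)$ to $\Labo(x)\in D(x,\CS\tau)$ in the sense of Definition~\ref{def:D} yields a witness for the contracted tree by further contracting the preimage of $A$ in $\sigma$: since $x\notin A$ the two subtrees are disjoint in $\sigma$, so the contractions commute. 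Decorations at nodes in $A$ (now coloured~$2$) are set to zero by $\hat P_2$ and automatically satisfy the $\Lambda$ condition.

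For the Hopf structures, $\hat\CT_+^\ex$ is closed under the tree product~\eqref{odot} because joining roots preserves non-root node types, leaves $\Labo$ zero at the merged root, and the root's type is unconstrained in $\Trees_2(R)$. Combined with $\Delta_2\hat\CT_+^\ex\subseteq\hat\CT_+^\ex\hotimes\hat\CT_+^\ex$ this makes $\hat\CT_+^\ex$ a subbialgebra of $\hat\CH_2$; antipode closure then follows by induction on the number of edges using the recursive formula~\eqref{e:defA+0} from Proposition~\ref{prop:CA2} together with the closure $\Delta_2\CT^\ex\subseteq\CT^\ex\hotimes\hat\CT_+^\ex$, noting that normality of $R$ ensures $\hat\CJ_{k+\ell}^\Labhom$ sends compatible elements of $\CT^\ex$ into $\hat\CT_+^\ex$. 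The right Hopf-comodule assertion for $\CT^\ex$ over $\hat\CT_+^\ex$ then follows from the same closure combined with the ambient comodule structure of $\CH_\circ$ over $\hat\CH_2$ from Proposition~\ref{prop:alg}. The main obstacle I expect is the $\Lambda$-preservation at the second factor of $\Delta_2$: this is where the proof genuinely uses the ``locality'' of Definition~\ref{def:D}, namely that $D(x,\cdot)$ depends only on the decorated subtree at and below $x$, so that contractions performed elsewhere in the tree leave it invariant.
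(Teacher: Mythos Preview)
Your overall strategy matches the paper's: normality of $R$ handles conformity of extracted subforests (first factors), while the second factors of $\Delta_1$ land in the ambient spaces $\CH_\circ,\CH_1,\hat\CH_2$ with no rule-theoretic constraint. Your treatment of the Hopf-subalgebra and comodule claims via the recursive antipode formula is exactly what the paper does. You are also more explicit than the paper in separating the conformity check from the $\Lambda$ check, which is good.

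There is, however, a genuine gap in your $\Lambda$ argument for the \emph{first} factor of $\Delta_1$. You write that ``composing with the contraction defined by $A$ exhibits the first factor as a $\CK_1$-contraction of $\sigma$ along an enlarged admissible subforest''. This is backwards: the first factor $(A,\hat F\restr A,\Labn_A+\pi\eps_A^F,\Labo,\Labe)$ is obtained by \emph{restricting} to the subforest $A$, not by contracting a larger subforest. Lemma~\ref{lem:contraction} in the direction you invoke (contractions stay in $\Lambda$) produces something \emph{smaller} than $\tau$, whereas the first factor is a subobject of $\tau$ with the \emph{same} $\Labo$-labels at the same red nodes. What you actually need is that a witness $(\sigma,A_0,\Labn_{A_0},\eps)$ for $\Labo(x)\in D(x,\CS\tau)$ restricts to a witness for $D(x,\CS(\text{first factor}))$: one takes $\sigma'$ to be the preimage of the connected component of $A$ containing $x$, keeps the same $A_0$ (which sits entirely in $\sigma'$ since $x\in A$), and restricts $\eps$ to $\d(A_0,\sigma')$. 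Normality then gives conformity of $\sigma'$ at $A_0$, and one must check that the decoration bookkeeping (in particular the modified $\Labn$-decoration $\Labn_A+\pi\eps_A^F$ at $x$) is consistent with the representation equation~\eqref{e:reprLambda}. Your phrase ``enlarged admissible subforest'' would be appropriate for the \emph{second} factor of $\Delta_1$, but there the target is only $\CH_\circ$ and no $\Lambda$ check is required.

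A related point: your locality remark at the end states that $D(x,\cdot)$ depends on ``the decorated subtree at and below $x$''. In the paper's orientation conventions ($w\le v$ means $w$ is closer to the root), the witness subtree $A_0$ sits at and \emph{above} $\rho_{A_0}$, so $D(x,\cdot)$ depends on the subtree of descendants of $x$ together with the type of the incoming edge at $x$, not on the path towards the root. Your argument for the second factor of $\Delta_2$ is nonetheless correct, because contracting the rooted subtree $A$ only alters the region below $x$, leaving the relevant data untouched; but the stated reason should be flipped.

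Finally, note a difference in emphasis: the paper invokes \emph{completeness} of $R$ for the second factor of $\Delta_2$, whereas you argue directly that non-root node types are unchanged under rooted-subtree contraction. Your observation is correct for the bare conformity statement in this lemma; completeness is what the paper really needs in the \emph{next} lemma, where the second factor of $\Deltam_\ex$ must remain in $\CT^\ex$ after contracting subforests whose components lie in $\Trees_-(R)$.
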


\begin{proof}
By the normality of the rule $R$, if a tree conforms to $R$ then any of its subtrees does too. On the other hand, contracting subforests can generate non-conforming trees in the case of $\Delta_1$, while, since $\Delta_2$ extracts only subtrees at the root, completeness of the rule implies that this can not happen in the case of $\Delta_2$, thus showing that the maps $\Delta_i$ do indeed
behave as claimed.

The fact that $\hat \CT_{+}^{\ex}$ is in fact a Hopf algebra, namely that the antipode $\hat\CA_2$ of $\hat\CH_2$ leaves $\hat \CT_{+}^{\ex}$ invariant, can be shown by induction using \eqref{e:defA+0} and Remark~\ref{rem:A}.
\end{proof}
Note that $\hat \CT_{-}^{\ex}$ is a sub-algebra but in general {\it not} a sub-coalgebra of $\CH_{1}$
(and \textit{a fortiori} not a Hopf algebra).
Recall also that, by Lemma~\ref{lem:grading}, the grading $|\cdot|_-$ of 
Definition~\ref{homog} is well defined on $\hat\CT_{-}^{\ex}$ and on $\CT^{\ex}$,  and 
that $|\cdot|_+$ is well defined on both $\hat\CT_{+}^{\ex}$ and $\CT^{\ex} $. 
Furthermore, these gradings are preserved by the corresponding products and coproducts.

\begin{definition}\label{CT^ex_pm}
Let $\CJ_\mp \subset \hat\CT_\pm^\ex$ be the ideals given by 
\begin{equs}[e:defJ]
\CJ_- = &\scal{\{\tau \in B_+\,:\, \tau = \JJ\hat \CK_2(\sigma \cdot \bar\sigma) \;,\quad \sigma,\bar\sigma \in B_+,\ \sigma \neq \one_2,\  |\sigma|_+ \le 0\}}\;,
\\ \CJ_+ = &\langle\{\tau \in B_-\,:\, \tau = \CK_1(\sigma \cdot \bar\sigma) \;,\quad \sigma,\bar\sigma \in B_-,\ 
 \qquad\qquad
\\ & \qquad\qquad(\sigma \neq \one_1\quad \& \quad |\sigma|_- \geq 0) \quad \text{or} \quad (\sigma = \CI_k^\Labhom(\sigma')\quad \& \quad  |\Labhom|_\s>0)\}\rangle  \;.
\end{equs}
Then, we set
\begin{equ}\label{eq:Hopf}
\CT_{-}^{\ex} \eqdef \hat \CT_{-}^{\ex} / \CJ_{+}\;, \qquad
\CT_{+}^{\ex} \eqdef \hat \CT_{+}^{\ex} / \CJ_{-}\;,
\end{equ}
with canonical projections $\proj^\ex_\pm:\hat\CT_{\pm}^{\ex}\to \CT_{\pm}^{\ex}$. Moreover, we define the operator $ \CJ^{\Labhom}_k : \CT^{\ex} \rightarrow  \CT^{\ex}_+ $ as $  \CJ^{\Labhom}_k =  \proj^\ex_+ \circ \hat \CJ^{\Labhom}_k $.
\end{definition}
With these definitions at hand, it turns out that the map $(\proj_-^\ex\otimes\id)\Delta_1$ is much
better behaved. Indeed, we have the following.
\begin{lemma}
The map $\Deltam_\ex = (\proj_-^\ex\otimes\id)\Delta_1$ satisfies
\begin{equ}
\Deltam_\ex\colon \CH \to \CT_{-}^{\ex}\hotimes \CH\;,\qquad
\text{for $\CH \in \{\hat \CT_-^\ex, \CT^\ex, \hat\CT_+^\ex\}$.} 
\end{equ}
\end{lemma}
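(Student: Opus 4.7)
The plan is the following. By Lemma \ref{lem:5.28}, $\Delta_1$ already sends each of $\hat\CT_-^\ex$, $\CT^\ex$, $\hat\CT_+^\ex$ into $\hat\CT_-^\ex \hotimes \tilde\CH$ with $\tilde\CH \in \{\CH_1, \CH_\circ, \hat\CH_2\}$ respectively. Applying $\proj_-^\ex \otimes \id$ therefore automatically yields an element of $\CT_-^\ex \hotimes \tilde\CH$, and the substantive content of the lemma is to upgrade the right-hand factor from $\tilde\CH$ to $\CH$.

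First I would expand $\Delta_1\tau$ via \eqref{def:Deltabar} as a sum indexed by admissible subforests $A \in \Adm_1(F,\hat F)$ and decoration data $(\eps_A^F,\Labn_A)$, and inspect which terms are killed by $\proj_-^\ex$. The ideal $\CJ_+$ is generated by decorated forests carrying at least one non-trivial connected component whose $|\cdot|_-$-degree is non-negative, or which is planted with trunk of positively-scaled edge-type. A basis term therefore survives $\proj_-^\ex$ only when every non-trivial component of the extracted subforest lies in $\Trees_-(R)$, the additional constraint $\Labn(\rho)=0$ required there being read off from the fact that the first factor must lie in $\hat\CT_-^\ex$ to begin with (by Lemma \ref{lem:5.28}).

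The main step is then to show that for such surviving terms the second factor lies in the appropriate $\CH$. This membership has two aspects: lying in the set $\Lambda$ of Definition \ref{D(labhom,N)} so that the $\Labo$-decorations on coloured-$1$ vertices are admissible, and having its underlying undecorated tree conform to the rule in the sense required by $\CT^\ex$, $\hat\CT_-^\ex$ or $\hat\CT_+^\ex$. The former is an immediate application of Lemma \ref{lem:contraction}: its hypothesis on extracted components matches exactly the surviving-term condition, and its contracted representative \eqref{e:reprLambda2} is exactly the second factor of $\Delta_1\tau$.

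Rule conformity is the crux, and is where $\ominus$-completeness enters. Working component by component, each extracted subtree $B \in \Trees_-(R)$ is contracted to a single vertex whose new node-type is built by starting from the original node-type $N \in R(\Labhom)$ at the parent of $\rho_B$ in $F$, substituting the $\rho_B$-child multiset $\CN(\rho_B)$ by data $M \in \CM(\rho_B)$ extracted from the edges below $\rho_B$ in $B$, and shifting edge-decorations on $\partial(B,F)$ by $\eps_A^F$. Matching this against the substitution $\CR_{\CN(\rho_B)}^M N$ of \eqref{e:subst} and the derivative operation $\partial^m$ of Definition \ref{partial^mN} with $m = \pi\eps_A^F|_{\partial(B,F)}$, Definition \ref{-compl} of $\ominus$-completeness places the new node-type in $R(\Labhom)$. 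At every vertex of $F$ lying outside $A$, both node-type and incident edge-decorations are unchanged, so rule conformity is automatic there. Finally, the tree/forest-shape and root-colour constraints distinguishing the three target spaces are preserved by the extraction, using Assumption \ref{ass:1} to see that $A \cap \hat F_2 = \emptyset$. The main obstacle will be the careful combinatorial identification between the extraction procedure in \eqref{def:Deltabar} and the $\ominus$-completeness formulation of Definition \ref{-compl}, and the tracking of the degree constraint $|m|_\s + |B|_\s < 0$ under which the latter applies.
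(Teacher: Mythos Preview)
Your proposal is correct and follows exactly the approach of the paper, whose proof is extremely terse: it simply cites Lemma~\ref{lem:5.28} and observes that $\ominus$-completeness (Definition~\ref{-compl}) was designed precisely so that the only extractions surviving $\proj_-^\ex$, namely those with components in $\Trees_-(R)$, preserve rule conformity after contraction. Your explicit use of Lemma~\ref{lem:contraction} for the $\Lambda$-membership and your unpacking of the combinatorial match between the second factor of \eqref{def:Deltabar} and the substitution $\d^m(\CR_{\CN(\rho_\tau)}^M N)$ in Definition~\ref{-compl} are exactly the details the paper leaves implicit; one small imprecision is that the constraint $\Labn(\rho)=0$ is not enforced by membership in $\hat\CT_-^\ex$ but is harmless since neither the rule-conformity nor the $\Lambda$-membership of the second factor depends on the root-$\Labn$ of the extracted component.
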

\begin{proof}
This follows immediately from Lemma~\ref{lem:5.28}, combined with 
the fact that completeness of $R$ has beed defined in Definition \ref{-compl}
in terms of extraction of $\tau\in\Trees_-(R)$, which in particular means that
$|\tau|_\s=|\tau|_-<0$.
%Lemma~\ref{lem:contraction}.
\end{proof}

Analogously to Lemma~\ref{lem:biideal} we have

\begin{lemma}\label{lem:biideal2}
We have
\begin{equ}[e:projIden]
(\proj_-^\ex\otimes \proj_-^\ex)\Delta_1 \CJ_+ = 0\;,\quad
(\proj_-^\ex\otimes \proj_+^\ex)\Delta_1 \CJ_- = 0\;,\quad
(\proj_+^\ex\otimes \proj_+^\ex)\Delta_2 \CJ_- = 0\;.
\end{equ}
\end{lemma}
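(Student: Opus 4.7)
The plan is to verify each of the three identities by reducing, via multiplicativity of the relevant coaction, to a single generator of $\CJ_\pm$, and then analysing that generator through the recursive formulas of Section~\ref{sec:recurs} together with appropriate degree counting based on $|\cdot|_\pm$ (Lemma~\ref{lem:grading}).

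For the third identity $(\proj_+^\ex\otimes\proj_+^\ex)\Delta_2\CJ_-=0$, I would use that $\Delta_2$ is multiplicative for the tree product on $\hat\CT_+^\ex$ and that $\CJ_-\hotimes\hat\CT_+^\ex+\hat\CT_+^\ex\hotimes\CJ_-$ is a tree-product ideal of $\hat\CT_+^\ex\hotimes\hat\CT_+^\ex$; it then suffices to check the inclusion on a tree-product generator $\sigma\in B_+$ of $\CJ_-$, namely $\sigma\neq\one_2$ with $|\sigma|_+\le 0$. The case $\sigma=X^k$ cannot arise since $|X^k|_+=|k|_\s\ge 0$ with equality only for $\sigma=\one_2$, so $\sigma=\hat\CJ_k^\Labhom(\tau)$ and the recursion \eqref{e:defDeltaH2} splits $\Delta_2\sigma$ into a ``planted'' sum $\sum_\ell\frac{X^\ell}{\ell!}\otimes\hat\CJ_{k+\ell}^\Labhom(\tau)$ whose second factor is a non-trivial planted tree of degree $|\sigma|_+-|\ell|_\s\le 0$ and hence a generator of $\CJ_-$, plus the contribution $(\hat\CJ_k^\Labhom\otimes\id)\Delta_2\tau=\sum_i\hat\CJ_k^\Labhom(\tau_{(1),i})\otimes\tau_{(2),i}$ in which additivity of $|\cdot|_+$ along $\Delta_2$ forces at least one tensor factor to have non-positive degree; this factor is either a planted tree (automatically $\neq\one_2$), or a strictly negatively graded non-zero element, and so belongs to $\CJ_-$ in either case.

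For the second identity, the key observation is that on $\hat\CT_+^\ex$ the coaction $\Delta_1$ preserves the planted structure at the root. Every $\sigma\in\hat\CT_+^\ex$ has its root in $\hat F_2$ while any $A\in\Adm_1$ must be disjoint from $\hat F_2$ and hence also from the trunk of $\sigma=\hat\CJ_k^\Labhom(\tau)$; a direct inspection of the $\Delta_1$-formula \eqref{def:Deltabar} then yields $\Delta_1\hat\CJ_k^\Labhom(\tau)=(\id\otimes\hat\CJ_k^\Labhom)\Delta_1\tau$. Writing $\Delta_1\tau=\sum_i\tau_{(1),i}\otimes\tau_{(2),i}\in\hat\CT_-^\ex\hotimes\CH_\circ$ and reducing to a generator $\sigma$, I case-split: when $|\tau_{(1),i}|_-\ge 0$ with $\tau_{(1),i}\neq\one_1$, the connected-component argument (any non-empty forest in $\hat\CT_-^\ex$ of non-negative $|\cdot|_-$-degree must contain a tree component of non-negative degree, which is a Type~A generator of $\CJ_+$ in the sense of \eqref{e:defJ}) places $\tau_{(1),i}$ into $\CJ_+$; otherwise $|\tau_{(1),i}|_-<0$, so the relevant additivity gives $|\hat\CJ_k^\Labhom(\tau_{(2),i})|_+\le|\sigma|_+\le 0$, placing the second factor in $\CJ_-$.

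The first identity is the most delicate, since $\hat\CT_-^\ex$ is not a subcoalgebra of $\CH_1$ and so the second occurrence of $\proj_-^\ex$ must be interpreted as the natural extension by zero from $\hat\CT_-^\ex$ to its basis-complement in $\CH_1$. Multiplicativity of $\Delta_1$ for the forest product reduces the problem to the two kinds of generators listed in \eqref{e:defJ}. For a Type~A generator $\sigma$, the same connected-component argument applied to both factors of $\Delta_1\sigma=\sum_i\sigma_{(1),i}\otimes\sigma_{(2),i}$, combined with additivity of $|\cdot|_-$, covers all cases: $\sigma_{(1),i}=\one_1$ forces $\sigma_{(2),i}=\sigma\in\CJ_+$; $\sigma_{(1),i}\neq\one_1$ with $|\sigma_{(1),i}|_-\ge 0$ puts $\sigma_{(1),i}$ into $\CJ_+$; and $|\sigma_{(1),i}|_-<0$ yields $|\sigma_{(2),i}|_->0$ and hence a Type~A component of $\sigma_{(2),i}$ belonging to $\CJ_+$. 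For a Type~B generator $\sigma=\CI_k^\Labhom(\sigma')$ with $|\Labhom|_\s>0$, I split admissible extractions $A\in\Adm_1(\sigma)$ into those not containing the trunk (so the contracted factor retains the planted $\CI_k^\Labhom$-structure and is a Type~B generator of $\CJ_+$) and those containing the trunk (so the extracted factor itself carries a positive-type trunk at its root and is again a Type~B generator of $\CJ_+$). The main obstacle is precisely this Type~B analysis: one must carefully track the decorations $(\Labn,\Labo,\Labe)$ through the modifications prescribed by \eqref{def:Deltabar} and through the contraction $\CK$, and invoke $\ominus$-completeness of $R$ (Proposition~\ref{prop:completion}) to ensure that the resulting pieces land in the correct subspaces $\hat\CT_\pm^\ex$ with their $\Lambda$-constraints from Definition~\ref{D(labhom,N)} respected.
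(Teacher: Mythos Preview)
Your treatment of the first and third identities is essentially the paper's, just written out in more detail: both arguments reduce via multiplicativity to a single generator and then invoke, respectively, additivity of $|\cdot|_-$ along $\Delta_1$ and of $|\cdot|_+$ along $\Delta_2$ (these are the first and third relations in \eqref{e:degreeDelta}), together with the trunk dichotomy for planted generators. Your worry about extending $\proj_-^\ex$ to $\CH_1$ in the first identity is also moot: the unlabelled lemma immediately preceding Lemma~\ref{lem:biideal2} already shows that $\Deltam_\ex=(\proj_-^\ex\otimes\id)\Delta_1$ sends $\hat\CT_-^\ex$ into $\CT_-^\ex\hotimes\hat\CT_-^\ex$, so after the first projection the second factor sits in $\hat\CT_-^\ex$ and the second $\proj_-^\ex$ applies.

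For the second identity, however, there is a genuine gap. In your case $|\tau_{(1),i}|_-<0$ you invoke an unspecified ``relevant additivity'' to conclude $|\hat\CJ_k^\Labhom(\tau_{(2),i})|_+\le|\sigma|_+$. If you mean $|\cdot|_-$-additivity, this goes the wrong way: it gives $|\tau_{(2),i}|_->|\tau|_-$, which says nothing useful about the $|\cdot|_+$-degree of the second factor (the two degrees differ exactly by the $\Labo$-contributions, and those are non-trivially modified by $\Delta_1$). The paper's argument is both correct and much shorter: by the \emph{second} relation in \eqref{e:degreeDelta}, the coaction $\Delta_1$ on $\hat\CT_+^\ex$ \emph{exactly preserves} the $|\cdot|_+$-degree of the right tensor factor, so $|\hat\CJ_k^\Labhom(\tau_{(2),i})|_+=|\sigma|_+\le 0$ for \emph{every} term, and since that factor is never $\one_2$ (it is always of the form $\hat\CJ_k^\Labhom(\cdot)$), it always lies in $\CJ_-$. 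No case-split on the first factor is needed. This preservation property is precisely the role of the extended decoration $\Labo$; see Remark~\ref{explanation}.
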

\begin{proof}
We note that the degrees $|\cdot|_\pm$ have the following compatibility properties with the operators $\Delta_i$. For $0<i\leq j\leq 2$, $\tau\in\Tra_j$ and $\Delta_i\tau=\sum \tau^{(1)}_i\otimes\tau^{(2)}_i$ 
(with the summation variable suppressed), one has
\begin{equ}[e:degreeDelta]
|\tau^{(1)}_1|_- +|\tau^{(2)}_1|_-=|\tau|_-\;, \quad |\tau^{(2)}_1|_+=|\tau|_+\;,\quad
|\tau^{(1)}_2|_+ +|\tau^{(2)}_2|_+=|\tau|_+ \;.
\end{equ}
The first identity of \eqref{e:projIden} then follows from the 
first identity of \eqref{e:degreeDelta} and from the following remark: if $B_-\ni\tau = \CI_k^\Labhom(\sigma)$,
then for each term appearing in the sum over $A\in\Adm_1$ in the expression \eqref{def:Deltabar} for $\Delta_1\tau$, one has two possibilities: 
\begin{itemize}
\item either $A$ does not contain the edge incident
to the root of $\tau$, and then the second factor is a tree with only one edge incident to its root,
\item or $A$ does contain the edge incident
to the root, in which case the first factor contains one connected component of that type.
\end{itemize}
The second identity of \eqref{e:projIden} follows from the second identity
of \eqref{e:degreeDelta} combined with the fact that, for $\tau  \in \Tra_2$,
$\Delta_1 \tau$ contains no term of the form $\sigma\otimes \one_2$, even when 
quotiented by $\ker(\JJ\hat \CK_2)$. The third identity of \eqref{e:projIden}
finally follows from the third identity of \eqref{e:degreeDelta}, combined with the fact
that if $\tau \in B_+\setminus\{\one_2\}$ with $|\tau|_+\le 0$, 
then the term $\one_2 \otimes \one_2$ does not appear in the
expansion for $\Delta_2 \tau$.
\end{proof}

As a corollary, we have the following.

\begin{corollary}\label{cor:domains}
The operator $\Deltam_\ex = (\proj_-^\ex\otimes \id)\Delta_1$ is well-defined as a map
\begin{equ}
\Deltam_\ex \colon \CH \to \CT_-^\ex \hotimes \CH\;,\qquad
\text{for $\CH \in \{\hat \CT_-^\ex, \CT_-^\ex, \CT^\ex, \CT_+^\ex, \hat\CT_+^\ex\}$.} \end{equ}
Similarly, the operator $\Deltap_\ex = (\id \otimes \proj_+^\ex ) \Delta_2$ is well-defined as a map
\begin{equ}
\Deltap_\ex \colon \CH \to \CH \hotimes \CT_+^\ex\;,\qquad
\text{for $\CH \in \{\CT^\ex, \CT_+^\ex, \hat\CT_+^\ex\}$.} 
\end{equ}
\end{corollary}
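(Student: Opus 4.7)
The plan is to split the proof into two kinds of arguments. First I would dispatch the assertions for which the source $\CH$ is not a quotient, namely $\Deltam_\ex$ for $\CH\in\{\hat\CT_-^\ex,\CT^\ex,\hat\CT_+^\ex\}$ and $\Deltap_\ex$ for $\CH\in\{\CT^\ex,\hat\CT_+^\ex\}$. The two cases of $\Deltap_\ex$ are immediate from Lemma~\ref{lem:5.28}, which already gives $\Delta_2\colon\CH\to\CH\hotimes\hat\CT_+^\ex$; one simply composes with $\id\otimes\proj_+^\ex$.

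The three cases of $\Deltam_\ex$ require more work, because Lemma~\ref{lem:5.28} only gives the second tensor factor in the larger spaces $\CH_1$, $\CH_\circ$, $\hat\CH_2$, so one has to tighten that statement once the first factor has been projected by $\proj_-^\ex$. The key observation is that $\proj_-^\ex$ annihilates every term $A\otimes\tilde F$ in the expansion of $\Delta_1\tau$ for which at least one connected component of $A$ either satisfies $|\cdot|_-\ge 0$ or is planted with a trunk of positive degree; the surviving terms correspond precisely to subforests $A$ all of whose connected components lie in $\Trees_-(R)$. For such $A$, the $\ominus$-completeness of the rule $R$ (Definition~\ref{-compl}) is tailored to guarantee that the contracted forest $\tilde F$ still conforms to $R$ at every vertex, including the newly formed red root-nodes that replace the connected components of $A$; the $\Lambda$-decoration of $\tilde F$ is preserved by Lemma~\ref{lem:contraction}. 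Hence $\CS\tilde F$ lies in $\Trees_1(R)$, $\Trees_\circ(R)$, or $\Trees_2(R)$ respectively, and $\tilde F$ lies in the relevant subspace $\hat\CT_-^\ex$, $\CT^\ex$, or $\hat\CT_+^\ex$.

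The three remaining assertions, in which $\CH$ is a quotient space, then follow directly from Lemma~\ref{lem:biideal2}. In each case the map on the quotient is defined by a lift to the unquotiented space: for $\Deltam_\ex$ on $\CT_-^\ex=\hat\CT_-^\ex/\CJ_+$ the first identity $(\proj_-^\ex\otimes\proj_-^\ex)\Delta_1\CJ_+=0$ shows that the composition descends to a well-defined map into $\CT_-^\ex\hotimes\CT_-^\ex$; for $\Deltam_\ex$ on $\CT_+^\ex=\hat\CT_+^\ex/\CJ_-$ the second identity $(\proj_-^\ex\otimes\proj_+^\ex)\Delta_1\CJ_-=0$ plays the analogous role; and for $\Deltap_\ex$ on $\CT_+^\ex$ the third identity $(\proj_+^\ex\otimes\proj_+^\ex)\Delta_2\CJ_-=0$ handles that case. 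The main obstacle in the whole programme is the tightening performed in the second paragraph, which hinges on $\ominus$-completeness being exactly the combinatorial condition that keeps the second tensor factor of $\Deltam_\ex$ inside the intended subspace; once this is established, the quotient arguments are entirely formal.
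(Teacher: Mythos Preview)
Your proposal is correct and follows essentially the same route as the paper. In fact, your second paragraph is a spelled-out version of the short lemma that the paper states and proves immediately before Lemma~\ref{lem:biideal2} (the one asserting $\Deltam_\ex\colon\CH\to\CT_-^\ex\hotimes\CH$ for $\CH\in\{\hat\CT_-^\ex,\CT^\ex,\hat\CT_+^\ex\}$, whose proof there is the single sentence invoking $\ominus$-completeness); you could simply cite that lemma rather than re-derive it. With that lemma and Lemma~\ref{lem:5.28} in hand, the corollary is indeed immediate: the non-quotient cases are already covered, and the three quotient cases drop out of the three identities in Lemma~\ref{lem:biideal2} exactly as you describe.
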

\begin{remark}
The operators $\Delta_\ex^{\pm}$ of Corollary~\ref{cor:domains} are now given by {\it finite sums}
so that for all of these choices of $\CH$, the operators $\Deltam_\ex$ and $\Deltap_\ex$ actually
map $\CH$ into $\CT_-^\ex \otimes \CH$ and $\CH \otimes \CT_+^\ex$ respectively. 
\end{remark}

\begin{proposition}\label{prop:Hopf+}
There exists an algebra morphism $\CA^{\ex}_+ \colon \CT^{\ex}_+ \to \CT^{\ex}_+$ so that 
$(\CT^{\ex}_+,\CM,\br\Deltapp_{\ex},\one_2,\one^\star_2,\CA^{\ex}_+)$, where $\CM$ is the tree product \eqref{odot}, is a Hopf algebra. Moreover the map $\Deltap_{\ex}:\CT^{\ex} \to \CT^{\ex} \otimes \CT^{\ex}_+$, turns
$\CT^{\ex}$ into a right comodule for $\CT^{\ex}_+$ with counit $\one^\star_2$.
\end{proposition}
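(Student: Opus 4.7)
The plan is to show that $\CJ_-$ is a bi-ideal of the Hopf algebra $\hat\CT^{\ex}_+$, so that $\CT^{\ex}_+ = \hat\CT^{\ex}_+ / \CJ_-$ inherits a Hopf algebra structure via \cite[Thm~1~(iv)]{Quotients}, exactly as in the proof of Proposition~\ref{lem:treeprod}. Once this is in hand, the right comodule structure of $\CT^{\ex}$ over $\CT^{\ex}_+$ will follow from the comodule structure of $\CT^{\ex}$ over $\hat\CT^{\ex}_+$ given in Lemma~\ref{lem:5.28}, by postcomposition with $\proj^\ex_+$.

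First I would verify that $\CJ_-$ is a bi-ideal. It is an ideal for the tree product by its very definition, so only the coideal property requires work. The key input is the third identity of Lemma~\ref{lem:biideal2}, namely $(\proj_+^\ex \otimes \proj_+^\ex)\Delta_2 \CJ_- = 0$. Combined with the standard identification
\begin{equation*}
\ker(\proj_+^\ex \otimes \proj_+^\ex) = \CJ_- \otimes \hat\CT^\ex_+ + \hat\CT^\ex_+ \otimes \CJ_-\;,
\end{equation*}
this yields $\Delta_2 \CJ_- \subset \CJ_- \otimes \hat\CT^\ex_+ + \hat\CT^\ex_+ \otimes \CJ_-$, so $\CJ_-$ is a coideal. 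Since $\hat\CT^\ex_+$ is a Hopf subalgebra of the \emph{commutative} Hopf algebra $\hat\CH_2$ by Lemma~\ref{lem:5.28}, the quotient $\CT^\ex_+$ is a commutative bialgebra, and \cite[Thm~1~(iv)]{Quotients} produces an antipode $\CA^\ex_+$ (necessarily an algebra morphism by commutativity), concretely the map induced from $\hat\CA_2$ on the quotient.

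For the comodule structure, Corollary~\ref{cor:domains} already provides a well-defined map $\Deltap_\ex = (\id \otimes \proj_+^\ex)\Delta_2 \colon \CT^\ex \to \CT^\ex \otimes \CT^\ex_+$, and the coideal property shows that the coproduct $\bar\Deltap_\ex$ induced on $\CT^\ex_+$ satisfies $\bar\Deltap_\ex \circ \proj_+^\ex = (\proj_+^\ex \otimes \proj_+^\ex)\Delta_2$. Coassociativity of the coaction then reduces to coassociativity of $\Delta_2$ on $\hat\CH_2$:
\begin{equation*}
(\id \otimes \bar\Deltap_\ex)\Deltap_\ex = (\id \otimes \proj_+^\ex \otimes \proj_+^\ex)(\id \otimes \Delta_2)\Delta_2 = (\Deltap_\ex \otimes \id)\Deltap_\ex\;,
\end{equation*}
and the counit property is immediate since $\one_2^\star$ vanishes on $\CJ_-$ (every generator contains a non-trivial tree factor), so that $\one_2^\star$ descends to $\CT^\ex_+$ and $(\id \otimes \one_2^\star)\Deltap_\ex = (\id \otimes \one_2^\star)\Delta_2 = \id$.

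The hard part, namely checking that $\Delta_2$ behaves well modulo $\CJ_-$, has already been discharged in Lemma~\ref{lem:biideal2}, so the remaining argument is essentially bookkeeping. The one point to be pedantic about is that the induced map $\bar\Deltap_\ex$ on $\CT^\ex_+$ and the map $\Deltap_\ex$ on $\CT^\ex$ are denoted by the same symbol but act on different spaces; once this is set up cleanly, the diagrams assemble without further difficulty.
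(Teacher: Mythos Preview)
Your proposal is correct and follows essentially the same approach as the paper: both argue that $\CJ_-$ is a bi-ideal (using Lemma~\ref{lem:biideal2} for the coideal part) and then invoke \cite[Thm~1~(iv)]{Quotients} to pass to the quotient, with the comodule structure following from the coassociativity of $\Delta_2$. The paper's proof is simply terser, packaging your ideal-plus-coideal verification into the single phrase ``$\CJ_-$ is a bialgebra ideal by Lemma~\ref{lem:biideal2}''.
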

\begin{proof}
We already know that $\hat\CT^{\ex}_+$ is a Hopf sub-algebra of $\hat\CH_2$ with antipode $\hat\CA_2$ satisfying \eqref{e:defA+0}. Since $\CJ_-$ is a bialgebra ideal by Lemma~\ref{lem:biideal2}, the 
first claim follows from \cite[Thm~1.(iv)]{Quotients}.

The fact that $\Deltap_{\ex}:\CT^{\ex} \to \CT^{\ex} \otimes \CT^{\ex}_+$ is a co-action and turns $\CT^{\ex}$ into a right comodule for $\CT^{\ex}_+$ follows from the coassociativity of $\Delta_2$.
\end{proof}

\begin{proposition}\label{prop:Hopf-}
There exists an algebra morphism $\CA^{\ex}_- \colon \CT^{\ex}_- \to \CT^{\ex}_-$ so that 
$(\CT^{\ex}_-, \cdot,\br \Deltamm_{\ex},\one_1,\one^\star_1,\CA^{\ex}_-)$ is a Hopf algebra. Moreover 
the map $\Deltam_{\ex}:\CT^{\ex} \to \CT^{\ex}_- \otimes \CT^{\ex}$ turns
$\CT^{\ex}$ into a left comodule for $\CT^{\ex}_-$ with counit $\one^\star_1$.
\end{proposition}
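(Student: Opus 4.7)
The plan is to follow the approach of the proof of Proposition~\ref{prop:Hopf+} for $\CT^\ex_+$, but an important modification has to be made: since $\hat\CT_-^\ex$ is only a subalgebra of $\CH_1$ and \emph{not} a sub-coalgebra, the quotient-of-Hopf-algebra theorem \cite[Thm~1.(iv)]{Quotients} invoked in the proof of Proposition~\ref{prop:Hopf+} does not apply here, and the antipode has to be constructed by hand on the quotient.

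First I would establish the bialgebra structure on $\CT_-^\ex$. The forest product descends from $\hat\CT_-^\ex$ since $\CJ_+$ is an ideal by construction, and the counit $\one_1^\star$ descends because every generator of $\CJ_+$ is of the form $\CK_1(\sigma\cdot\bar\sigma)$ with $\sigma\ne\one_1$, hence is a non-empty forest distinct from $\one_1$. For the coproduct, set $\br\Deltamm_\ex\eqdef(\proj_-^\ex\otimes\proj_-^\ex)\Delta_1$: the first identity of Lemma~\ref{lem:biideal2} states precisely that this descends to a well-defined map $\br\Deltamm_\ex\colon\CT_-^\ex\to\CT_-^\ex\otimes\CT_-^\ex$. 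Coassociativity, multiplicativity with respect to the forest product, and the counit axiom then transfer from the corresponding properties of $\Delta_1$ on $\CH_1$ (Proposition~\ref{prop:coassoc}) by pushing them through the algebra morphism $\proj_-^\ex$.

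The main step is the construction of the antipode. I would grade $\CT_-^\ex$ by the number of edges of the underlying forest, which is an $\N$-valued grading that is additive under the forest product and preserved by $\br\Deltamm_\ex$ (since the coproduct $\Delta_1$ partitions the edges of $F$ between the extracted subforest $A$ and its complement in the contracted factor). The crucial observation is that this grading makes $\CT_-^\ex$ \emph{connected}: any element of $\hat\CT_-^\ex$ with zero edges is a product of isolated nodes, and every such node $\bullet_n$ satisfies $|\bullet_n|_- = |n|_\s \ge 0$, so $\bullet_n=\CK_1(\bullet_n\cdot\one_1)$ lies in $\CJ_+$ and vanishes in $\CT_-^\ex$; hence the degree-zero subspace of $\CT_-^\ex$ is precisely $\R\,\one_1$. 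Combined with the commutativity of the forest product and the finite-sum nature of $\br\Deltamm_\ex$ guaranteed by Corollary~\ref{cor:domains}, the standard recursive construction of an antipode in a connected graded commutative bialgebra then yields an algebra morphism $\CA_-^\ex$ and turns $\CT_-^\ex$ into a Hopf algebra.

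Finally, the left comodule structure of $\CT^\ex$ is essentially automatic: Corollary~\ref{cor:domains} already provides a map $\Deltam_\ex\colon\CT^\ex\to\CT_-^\ex\otimes\CT^\ex$, the coaction axiom $(\br\Deltamm_\ex\otimes\id)\Deltam_\ex=(\id\otimes\Deltam_\ex)\Deltam_\ex$ follows by projecting the coassociativity of $\Delta_1$ on $\CH_\circ$ to the quotient via $\proj_-^\ex\otimes\proj_-^\ex\otimes\id$, and the counit property $(\one_1^\star\otimes\id)\Deltam_\ex=\id$ descends directly from the counit property for $\Delta_1$. The hard part of the argument is really the antipode step: one must identify the right $\N$-grading and verify connectedness only \emph{after} passing to the quotient by $\CJ_+$, both of which depend crucially on the precise defining form of $\CJ_+$ and on the fact that every isolated node has non-negative $|\cdot|_-$-degree.
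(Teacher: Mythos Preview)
Your argument is correct and takes a genuinely different route from the paper's. The paper does not build the antipode on $\CT_-^\ex$ from scratch: instead it observes that the antipode $\CA_1$ of the ambient Hopf algebra $\CH_1$ already exists (Proposition~\ref{prop:Hopf}) and then shows, via the recursive formula $\CA_1\tau=-\tau-\CM(\CA_1\otimes\id)(\Delta_1\tau-\tau\otimes\one-\one\otimes\tau)$ together with the degree identities \eqref{e:degreeDelta}, that $\CA_1(\CJ_+)\subset\CJ_+$; hence $\CA_1$ descends to the quotient and the resulting map is automatically an antipode. Your approach is instead intrinsic to the quotient: you equip $\CT_-^\ex$ with the edge-count grading, check that the quotient by $\CJ_+$ kills every zero-edge basis vector except $\one_1$ (the crucial point being that isolated nodes have $|\cdot|_-\ge 0$), and invoke the standard theorem that a commutative connected graded bialgebra is Hopf. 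The paper's route has the advantage of exhibiting $\CA_-^\ex$ explicitly as the image of $\CA_1$, which is convenient when one later wants to compare the quotient structure with the ambient one; your route is more elementary in that it avoids any reference to the somewhat delicate construction of $\CA_1$ on the bigraded space $\CH_1$, and makes the Hopf structure of $\CT_-^\ex$ depend only on the finite-sum map $\br\Deltamm_\ex$ and the defining form of $\CJ_+$.
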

\begin{proof}
One difference between $\CT^{\ex}_-$ and $\CT^{\ex}_+$ is that $\hat\CT^{\ex}_-$ is not in general a sub-coalgebra of $\CH_1$ and therefore it does not possess an antipode. However we can see that the antipode $\CA_1$ of $\CH_1$ satisfies for all $\tau\ne\one$
\[
\CA_1\tau= -\tau-\CM(\CA_1\otimes\id)(\Delta_1\tau-\tau\otimes \one-\one\otimes\tau),
\]
where $\CM$ is the product map.
By the second formula of \eqref{e:degreeDelta}, it follows that if $|\tau|_->0$ then 
$\CA_1\tau\in\CJ_+$ and therefore, since $\CA_1$ is an algebra morphism, $\CA_1(\CJ_+)\subseteq\CJ_+$. We obtain that $\CA_1$ defines a unique algebra morphism $\CA^{\ex}_- \colon \CT^{\ex}_- \to \CT^{\ex}_-$ which is an antipode for $\CT^{\ex}_-$.
\end{proof}

\begin{definition}\label{def:charpm}
We call $\CG_\pm^\ex$ the character group of $\CT^{\ex}_\pm$.
\end{definition}
We have therefore obtained the following analogue of Proposition~\ref{prop:alg}:

\begin{theorem}\label{CTpmexHopf}
$ $
\begin{enumerate}
\item  On $\CT^{\ex} $, we have the identity
\begin{equ}[e:propWanted1]
\CM^{(13)(2)(4)}(\Deltam_{\ex} \otimes \Deltam_{\ex} ) \Deltap_{\ex}  = (\id \otimes \Deltap_{\ex} ) \Deltam_{\ex} \;,
\end{equ}
holds, with $\CM^{(13)(2)(4)}$ as in \eqref{e:def1324}. 
The same is also true on $\CT^{\ex}_+$.
\item Let $\CH\in\{\CT^\ex,\CT^\ex_+\}$. We define a left action of $\CG_-^\ex$ on $\CH^*$ by
\[
g h(\tau)\eqdef(g\otimes h)\Deltam_\ex\tau, \qquad g\in \CG_-^\ex, \ h\in \CH^*, \ \tau\in\CH,
\]
and a right action of $\CG_+^\ex$ on $\CH^*$ by
\[
hf(\tau)\eqdef(h\otimes f)\Deltap_\ex\tau, \qquad f\in \CG_+^\ex, \ h\in\CH^*, \ \tau\in\CH.
\]
Then we have
\begin{equs}\label{e:semidirect}
g(hf) = (g h)(g f)\;, \qquad g\in \CG_-^\ex, \ f\in \CG_+^\ex, \ h\in \CH^*.
\end{equs}
\end{enumerate}
\end{theorem}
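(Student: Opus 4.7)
The strategy is to deduce the cointeraction identity \eqref{e:propWanted1} from its ``unprojected'' counterpart, Proposition \ref{prop:doublecoass}, by carefully applying the projections $\proj_-^\ex$ and $\proj_+^\ex$ to both sides, and then to derive \eqref{e:semidirect} purely algebraically via Proposition \ref{left-right}.

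First, I would start from the identity $\CM^{(13)(2)(4)}(\Delta_1 \otimes \Delta_1)\Delta_2 = (\id \otimes \Delta_2)\Delta_1$ on $\scal{\Tra}$ established in Proposition \ref{prop:doublecoass}. By Lemma \ref{lem:5.28}, when restricted to $\CH \in \{\CT^\ex, \hat\CT^\ex_+\}$, the map $\Delta_1$ lands in $\hat\CT^\ex_- \hotimes \CH$, the map $\Delta_2$ lands in $\CH \hotimes \hat\CT^\ex_+$, and moreover on $\CT^\ex$ the iterated coactions land in the appropriate tensor products of $\hat\CT^\ex_-, \CT^\ex, \hat\CT^\ex_+$. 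Applying the four-fold projection $\proj_-^\ex \otimes \id \otimes \proj_-^\ex \otimes \proj_+^\ex$ to the left-hand side, and $\proj_-^\ex \otimes \id \otimes \proj_+^\ex$ to the right-hand side, both expressions are well-defined by Corollary \ref{cor:domains}. The desired identity on $\CT^\ex$ (resp.\ $\CT^\ex_+$) then follows provided these projections commute appropriately with $\CM^{(13)(2)(4)}$ and $\Delta_1, \Delta_2$ modulo the ideals $\CJ_\pm$.

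The main obstacle — and the heart of the argument — lies in these compatibility checks. Concretely, I need to verify:
\begin{itemize}
\item $(\proj_-^\ex \otimes \id)\Delta_1$ descends well through $\proj_+^\ex$ on the second factor; that is, when $\tau \in \CJ_-$, the image of $\Delta_1\tau$ under $\proj_-^\ex \otimes \proj_+^\ex$ vanishes. This is exactly the second identity in \eqref{e:projIden} of Lemma \ref{lem:biideal2}.
\item $(\id \otimes \proj_+^\ex)\Delta_2$ descends well through $\proj_-^\ex$ on the first factor; by the algebra morphism property of $\Deltam_\ex$ on $\hat\CT^\ex_+$ combined with multiplicativity of $\proj_+^\ex$ and the third identity of \eqref{e:projIden}, this holds.
\item The ``doubled'' projection used on the LHS is consistent: since the map $\CM^{(13)(2)(4)}$ multiplies the first and third tensor slots using the forest product of $\CT^\ex_-$, and $\proj_-^\ex$ is an algebra morphism (being a quotient by an ideal), the projection commutes with $\CM^{(13)(2)(4)}$.
\end{itemize}
Once each projection is shown to slide through consistently, the image of Proposition \ref{prop:doublecoass} under the combined projection yields \eqref{e:propWanted1} on $\CT^\ex$, and the same argument repeated on $\hat\CT^\ex_+$ (using again Lemma \ref{lem:biideal2}) gives it on $\CT^\ex_+$.

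For the second claim, I would simply observe that \eqref{e:propWanted1} is the precise dualisation of \eqref{e:semidirect}: the pairing $g(hf)(\tau)$ unfolds to $(g \otimes h \otimes f)(\id \otimes \Deltap_\ex)\Deltam_\ex \tau$, while $(gh)(gf)(\tau)$ unfolds to $(g \otimes h \otimes g \otimes f)(\Deltam_\ex \otimes \Deltam_\ex)\Deltap_\ex \tau$, and after reordering tensor slots and using that $g$ is multiplicative with respect to the forest product of $\CT^\ex_-$, the two expressions agree exactly under \eqref{e:propWanted1}. This is the content of Proposition \ref{left-right}. The only subtlety is checking that $g \in \CG^\ex_-$ is truly multiplicative with respect to the product used in $\CM^{(13)(2)(4)}$ — but this is guaranteed since $\CG^\ex_-$ is the character group of the Hopf algebra $\CT^\ex_-$ whose product is exactly the forest product used in \eqref{e:def1324}.
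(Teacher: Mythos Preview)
Your proposal is correct and follows essentially the same route as the paper: reduce \eqref{e:propWanted1} to Proposition~\ref{prop:doublecoass} by pushing the projections $\proj_-^\ex,\proj_+^\ex$ through, then dualise for part~2 exactly as in Proposition~\ref{prop:alg}. The paper condenses your three bullet points into the single intertwining relation $\Deltam_\ex\proj_+^\ex=(\id\otimes\proj_+^\ex)\Deltam_\ex$, which it obtains directly from the fact that $\Deltam_\ex$ preserves the $|\cdot|_+$-degree (the second identity in \eqref{e:degreeDelta}); once this is in hand, together with the multiplicativity of $\proj_-^\ex$, the left-hand side becomes $(\proj_-^\ex\otimes\id\otimes\proj_+^\ex)\CM^{(13)(2)(4)}(\Delta_1\otimes\Delta_1)\Delta_2$ and one applies Proposition~\ref{prop:doublecoass}---your second bullet is then superfluous.
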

\begin{proof}
By the second identity of \eqref{e:degreeDelta}, the action of $ \Deltam_{\ex} $ preserves the degree $ |\cdot|_{+} $. In particular we have
\begin{equ}[e:crucoin]
\Deltam_{\ex} \proj_+^{\ex} = \left( \id \otimes \proj_{+}^{\ex} \right) \Deltam_{\ex}.
\end{equ}
From this property, one has:
\begin{equs}
\CM^{(13)(2)(4)}(\Deltam_{\ex} \otimes \Deltam_{\ex} ) \Deltap_{\ex} & = \CM^{(13)(2)(4)}(\Deltam_{\ex} \otimes \left(  \id \otimes \proj_{+}^{\ex} \right) \Deltam_{\ex} ) \Delta_{2} 
\\
& = \left( \proj_{-}^{\ex} \otimes \id \otimes \proj_{+}^{\ex} \right) \CM^{(13)(2)(4)}(\Delta_1 \otimes \Delta_1 ) \Delta_2
\end{equs}
and we conclude by applying the Proposition~\ref{prop:doublecoass}. Now the proof of \eqref{e:semidirect} is the same as that of 
\eqref{??} above.
\end{proof}
Formula \eqref{e:propWanted1} yields the cointeraction property see 
Remark \ref{cointera}.
\begin{remark}\label{explanation}
We can finally see here the role played by the decoration $\Labo$: were it not included, the cointeraction property \eqref{e:propWanted1} of Theorem~\ref{CTpmexHopf} would fail, since it is based upon \eqref{e:crucoin}, which itself depends on the second identity of \eqref{e:degreeDelta}. Now recall that $|\cdot|_+$ takes the decoration $\Labo$ into account, and this is what makes the second identity of \eqref{e:degreeDelta} true. See also Remark~\ref{rem:fails} below.
\end{remark}
As in the discussion following Proposition~\ref{prop:alg}, we see that 
$\CT^\ex$ is a left comodule over the Hopf algebra $\hat\CT_{12}^\ex\eqdef\CT_-^\ex\ltimes \CT_+^\ex$, with coaction
\[
\Delta_{\circ}:\CT^\ex\to\hat\CT_{12}^\ex\hattimes\CT^\ex, \qquad
\Delta_{\circ} \eqdef \sigma^{(132)} (\Deltam_\ex\otimes \CA_+^\ex)\Deltap_\ex
\]
where $\sigma^{(132)}(a\otimes b\otimes c)\eqdef a\otimes c\otimes b$ and 
$\CA_+^\ex$ is the antipode of $\CT_+^\ex$.

We define $A^\ex\eqdef\{|\tau|_+:\tau\in B_\circ\}$, where $\CT^\ex=\scal{B_\circ}$ as in Definition~\ref{def:CT}.
\begin{proposition}\label{TTex}
The above construction yields a regularity structure $\TT^{\ex}=(A^\ex,\CT^\ex,\CG_+^\ex)$ in the sense of Definition~\ref{def:regStruct}.
\end{proposition}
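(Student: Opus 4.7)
My plan is to verify each of the three requirements of Definition~\ref{def:regStruct} for the triple $(A^\ex,\CT^\ex,\CG_+^\ex)$. The grading of $\CT^\ex$ comes from $|\cdot|_+$, which is well-defined on $\CT^\ex\subset \CH_\circ$ by Lemma~\ref{lem:grading}; setting $\CT^\ex_\alpha \eqdef \mathrm{Vec}\{\tau\in B_\circ : |\tau|_+ = \alpha\}$ yields the decomposition $\CT^\ex = \bigoplus_{\alpha\in A^\ex}\CT^\ex_\alpha$, with each summand automatically Banach once shown to be finite-dimensional.

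For local finiteness and boundedness below of $A^\ex$, I would exploit Lemma~\ref{lem:contraction} to represent each $\tau\in B_\circ$ as $\CK_1$ applied to a coloured forest obtained from an uncoloured tree $\sigma = (F,0,\Labn,0,\Labe)$ with $\CS\sigma\in\Trees_\circ(R)$ by extracting a subforest $A$ whose connected components belong to $\Trees_-(R)$. A direct computation using the linearity of $|\cdot|_\s$ on $\Z^d\oplus\Z(\Lab)$, the combinatorial definition of $\pi$, and the definition \eqref{e:defhatn} of the type contributions to $[\Labo]$ after $\CK$-contraction shows that the contributions of $|\eps_A^F|_\s$, $|\Labe_\emptyset^A|_\s$ and the contracted edge types telescope pairwise, leaving $|\tau|_+ = |\CS\sigma|_\s$. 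Combining this with Proposition~\ref{prop:finite} gives local finiteness: for $\alpha\in\R$, the set of $\CS\sigma\in\Trees_\circ(R)$ with $|\CS\sigma|_\s\le \alpha$ is finite, for each such $\CS\sigma$ the subforest $A$ has only finitely many choices, the decorations $\Labn_A + \pi\eps_A^F$ and $\Labe\restr E_A$ are constrained by the finite set $\Trees_-(R)$, and these data together determine $\tau$. The subcriticality bound $|E_T|\le (|T|_\s+C)/\kappa$ derived in the proof of Proposition~\ref{prop:finite}, reversed, yields $|T|_\s\ge -C$ for all $T\in\Trees_\circ(R)$, hence $A^\ex$ is bounded below.

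For the structure group action, I would define $\Gamma_g:\CT^\ex\to\CT^\ex$ for $g\in\CG_+^\ex$ by $\Gamma_g\tau = (\id\otimes g)\Deltap_\ex\tau$; this is well-defined by Corollary~\ref{cor:domains} and the right-comodule structure from Proposition~\ref{prop:Hopf+} gives both the group homomorphism property from $\CG_+^\ex$ into $\mathrm{GL}(\CT^\ex)$ and the identity $\Gamma_{\one_2^\star} = \id$. The triangular property \eqref{e:coundGroup} is the key point: expanding $\Deltap_\ex\tau = \tau\otimes\one_2 + \sum_i \tau_i^{(1)}\otimes \tau_i^{(2)}$ with $\tau_i^{(2)}\ne\one_2$, the definition of $\CJ_-$ (taking $\bar\sigma = \one_2$ in the generating family) forces every non-trivial $\sigma\in B_+$ with $|\sigma|_+\le 0$ to lie in $\CJ_-$, so every character $g\in\CG_+^\ex$ vanishes on such $\sigma$. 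Hence only terms with $|\tau_i^{(2)}|_+ > 0$ survive in $\Gamma_g\tau - \tau$, and the second identity of \eqref{e:degreeDelta} gives $|\tau_i^{(1)}|_+ = |\tau|_+ - |\tau_i^{(2)}|_+ < |\tau|_+$, yielding exactly the condition required.

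The main obstacle is the degree identity $|\tau|_+ = |\CS\sigma|_\s$ from the pre-contraction representation: it requires careful bookkeeping of how $\Labn_A$, $\pi\eps_A^F$, $\pi\Labe_\emptyset^A$ and the contracted edge types combine into the $\Labo$-decoration at each contracted node, and verifying that all these contributions cancel against the loss of edge-degrees in $|\CS\tau|_\s$ to recover the degree of the original uncontracted tree. Once this identity is established the rest is a straightforward combinatorial finiteness argument together with the quotient construction of $\CT_+^\ex$.
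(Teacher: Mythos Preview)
Your proposal is correct and follows essentially the same route as the paper. Both arguments hinge on the representation from Lemma~\ref{lem:contraction} to deduce the key identity $|\tau|_+ = |\sigma|_\s$ for the uncoloured precursor $\sigma\in\Trees$, then invoke Proposition~\ref{prop:finite} for the index set and the $|\cdot|_+$-preservation of $\Deltap_\ex$ together with the quotient by $\CJ_-$ for the triangular property \eqref{e:coundGroup}. You are somewhat more explicit than the paper on two points: you spell out the telescoping that yields the degree identity (which the paper simply asserts ``follows from the definitions''), and you argue finite-dimensionality of each $\CT^\ex_\alpha$ rather than just local finiteness of $A^\ex$, which cleanly handles the Banach requirement. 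One minor slip: the identity $|\tau_i^{(1)}|_+ + |\tau_i^{(2)}|_+ = |\tau|_+$ you invoke is the \emph{third} identity in \eqref{e:degreeDelta}, not the second.
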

\begin{proof}
By %Lemma~\ref{lem:contraction}
the definitions, every element $\tau \in B_\circ$ has a representation
of the type \eqref{e:reprLambda2} for some $\sigma = (T,0,\Labn,0,\Labe) \in \Trees$. 
Furthermore, it follows from the definitions of $|\cdot|_+$ and $|\cdot |_\s$ that 
one has $|\tau|_+ = |\sigma|_\s$.
The fact that, for all $\gamma\in\R$, the set $\{a\in A^\ex: a\leq \gamma\}$ is finite then
follows from Proposition~\ref{prop:finite}.

The space $\CT^\ex$ is graded by $|\cdot|_+$
and $\CG_+^\ex$ acts on it by $\Gamma_g\eqdef (\id\otimes g)\Deltap_\ex$. 
The property \eqref{e:coundGroup} then follows from the fact $\Deltap_\ex$ preserves
the total $|\cdot|_+$-degree by the third identity in \eqref{e:degreeDelta} and all 
terms appearing in the second factor of  $\Deltap_\ex \tau - \tau \otimes \one$ 
have strictly positive $|\cdot|_+$-degree by Definition~\ref{CT^ex_pm}.
\end{proof}

\begin{remark}
Since $\CT^\ex_-$ is finitely generated as an algebra (though infinite-dimen\-sional as a vector space), its
character group $\CG_-^\ex$ is a finite-dimensional Lie group. In contrast, $\CG_+^\ex$ is
not finite-dimensional but can be given the structure of an infinite-dimensional Lie 
group, see \cite{Liegroup}.
\end{remark}

%\lorenzoText{We should comment on the fact that $\CT^\ex$ has two gradings,
%$|\cdot|_+$ and $|\cdot|_-$. We use the former to define $A^\ex$ but in Theorem~\ref{main:renormalisation} the "negative" elements are defined using the latter. 
%Since on $\CT^\ex$ we have $|\cdot|_+\leq|\cdot|_-$, there are less $|\cdot|_-$-negative
%trees than $|\cdot|_+$-negative trees. Does this have any impact on other results, like
%Theorem 10.7 in \cite{reg} where one uses negative trees? }
%\martinText{
%That theorem is really a theorem about $\CT$ on which the two degrees agree...
%It exploits the fact that everything is generated by
%integration and products which isn't really true for $\CT^\ex$.
%Of course, the sort of models we consider are determined by their values on $\CT$ so then
%it applies...
%}

\section{Renormalisation of models}
\label{sec6}

We now show how the construction of the previous sections can be 
applied to the theory of regularity structures to show that the
``contraction'' operations one would like to perform in order to renormalise
models are ``legitimate'' in the sense that they give rise to 
automorphisms of the regularity structures built in Section~\ref{sec:quotient}. Throughout this section, we are in the framework set at the beginning of Section~\ref{sec:quotient}. We furthermore impose the additional constraint
that, writing $\Lab = \Lab_- \sqcup \Lab_+$ with $\Labhom \in \Lab_+$ if and only if $|\Labhom|_\s > 0$, one has
\begin{equ}[e:oldnormal]
\Labhom \in \Lab_- \quad\Rightarrow\quad R(\Labhom) = \{()\}\;.
\end{equ}

\begin{remark}
Labels in $\Lab_+$ represent ``kernels'' while labels in $\Lab_+$ represent ``noises'', which naturally
leads to \eqref{e:oldnormal}. (We could actually have defined $\Lab_-$ by $\Lab_- = \{\Labhom \,:\, R(\Labhom) = \{()\}\}$.)
The condition that elements of $\Lab_-$ are of negative degree and those in $\Lab_+$ are of positive degree
is also natural in this context. It could in principle be weakened, which corresponds to allowing kernels with a non-integrable
singularity at the origin. This would force us to slightly modify Definition~\ref{def:modelmap} below in order to interpret these
kernels as distributions but would not otherwise lead to any additional complications. 
\end{remark}

Note now that we have a natural identification of $\CT_\pm^\ex$ with
the subspaces 
\begin{equ}
\scal{\{\tau \in B_\pm \,:\, \tau \not \in  \CJ_{\mp}^{\ex} \}} \subset \hat \CT_\pm^\ex\;.
\end{equ}
Denote by $\inj_\pm^\ex \colon \CT_\pm^\ex \to \hat \CT_\pm^\ex$\label{inj} the corresponding
inclusions, so that we have direct sum decompositions
\begin{equ}[hatandwithouthat]
\hat \CT_{\pm}^{\ex}=\CT_{\pm}^{\ex}\oplus\CJ_{\mp}^{\ex}\;.
\end{equ}
For instance, with this identification, the map $\hat\CJ^{\Labhom}_{k}\colon \CT^{\ex} \to \hat\CT_+^{\ex}$ defined in \eqref{e:CJ} associates to $\tau\in\CT^{\ex}$ an element $\hat\CJ^{\Labhom}_{k}(\tau)\in \hat\CT_+^{\ex}$ which 
can be viewed as $\CJ^{\Labhom}_{k}(\tau)\in\CT_+^{\ex}\setminus\{0\}$ if and only if its degree $|\CJ^{\Labhom}_{k}(\tau)|_+$ is positive, namely $|\tau|_+ + |\Labhom|_\s - |k|_\s > 0$.

\begin{proposition} 
Let $\CA^{\ex}_+:\CT^\ex_+\to\CT^\ex_+$ be the antipode of $\CT^\ex_+$. Then 
\begin{itemize}
\item $\CA^{\ex}_+$ is defined uniquely by the fact that $\CA^{\ex}_+ X_i = - X_i$ and for all $\CJ^{\Labhom}_{k}(\tau)\in \CT_+^{\ex}$
\begin{equ}[e:defA+]
\CA^{\ex}_+ \CJ^{\Labhom}_k(\tau) = -\sum_{\ell\in \N^d}{(-X)^\ell \over \ell!} \CM^{\ex}_+ \bigl(\CJ^{\Labhom}_{k+\ell}\otimes \CA_+^{\ex}\bigr)\Deltap_{\ex} \tau\;,
\end{equ}
where $\CM^{\ex}_+ \colon \CT^{\ex}_+\otimes \CT^{\ex}_+ \to \CT^{\ex}_+$ denotes the (tree) product and $\Deltap_\ex\colon \CT^{\ex} \to \CT^{\ex}\otimes \CT_{+}^{\ex}$.
\item On $\CT^{\ex}_+$, one has the identity
\begin{equ}[e:propWanted2]
\Deltam_{\ex}  \CA^{\ex}_+ = (\id \otimes  \CA^{\ex}_+)\Deltam_{\ex}\;.
\end{equ}
\end{itemize}
\end{proposition}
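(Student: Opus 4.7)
My plan is to derive both statements by descent from the corresponding facts about $\hat\CA_2$ established in Proposition~\ref{prop:CA2}. The bridge between the two worlds is the identity $\proj_+^\ex\circ\hat\CA_2 = \CA_+^\ex\circ\proj_+^\ex$, which is precisely the content of the quotient construction of the antipode carried out in Proposition~\ref{prop:Hopf+} (and which relies on $\CJ_-$ being a bi-ideal stable under $\hat\CA_2$).

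For the recursive formula \eqref{e:defA+}, I would apply $\proj_+^\ex$ to both sides of \eqref{e:defA+0}, with $\tau$ interpreted as an element of $\CT^\ex\subset\CH_\circ$. The left-hand side becomes $\CA_+^\ex\CJ_k^\Labhom(\tau)$, using the bridge identity above together with $\proj_+^\ex\circ\hat\CJ_k^\Labhom = \CJ_k^\Labhom$. The right-hand side transforms term by term: the scalar factor $(-X)^\ell/\ell!$ is unchanged since $X\in\CT_+^\ex$; the algebra morphism $\proj_+^\ex$ commutes with the tree product, turning $\CM$ into $\CM_+^\ex$; within the tensor, $\proj_+^\ex\hat\CJ_{k+\ell}^\Labhom = \CJ_{k+\ell}^\Labhom$ again, and $\proj_+^\ex\hat\CA_2 = \CA_+^\ex\proj_+^\ex$ allows one to push the projection across to the second factor, where $(\id\otimes\proj_+^\ex)\Delta_2 = \Deltap_\ex$ on $\CT^\ex$ by Corollary~\ref{cor:domains}. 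The outcome is exactly \eqref{e:defA+}. Uniqueness then follows as in the proof of Proposition~\ref{prop:CA2}: by Remark~\ref{rem:A} and multiplicativity, it suffices to determine $\CA_+^\ex$ on the generators $X_i$ and on each $\CJ_k^\Labhom(\sigma)$, and \eqref{e:defA+} expresses the latter inductively in terms of values of $\CA_+^\ex$ on trees with strictly fewer edges.

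For the commutation identity \eqref{e:propWanted2} the derivation is purely formal. Given $\bar\tau\in\CT_+^\ex$ with any representative $\tau\in\hat\CT_+^\ex$, both $\Deltam_\ex\CA_+^\ex\bar\tau$ and $(\id\otimes\CA_+^\ex)\Deltam_\ex\bar\tau$ can be computed as the image under $\proj_-^\ex\otimes\proj_+^\ex$ of, respectively, $\Delta_1\hat\CA_2\tau$ and $(\id\otimes\hat\CA_2)\Delta_1\tau$, after inserting the bridge identity in the appropriate slot. These two expressions are equal in $\hat\CT_-^\ex\otimes\hat\CT_+^\ex$ by \eqref{e:propWanted20}, hence their images in $\CT_-^\ex\otimes\CT_+^\ex$ coincide as well.

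I do not anticipate any substantial obstacle. The genuine algebraic content, namely the cointeraction between the two coproducts and its consequence for antipodes, has already been encoded at the unhatted level in Proposition~\ref{prop:CA2} and Theorem~\ref{CTpmexHopf}. The only mildly delicate point is to check that every map being pushed through the quotient descends coherently, but this is exactly what is guaranteed by Lemma~\ref{lem:biideal2} together with the multiplicativity of $\proj_+^\ex$.
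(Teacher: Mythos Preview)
Your proposal is correct and follows essentially the same route as the paper, which simply states that ``the claims follow easily from Propositions~\ref{prop:CA2} and~\ref{prop:Hopf+}.'' You have spelled out precisely the descent argument those references encode, namely pushing \eqref{e:defA+0} and \eqref{e:propWanted20} through the quotient via $\proj_+^\ex\circ\hat\CA_2 = \CA_+^\ex\circ\proj_+^\ex$ and the compatibility \eqref{e:crucoin}.
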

\begin{proof}
The claims follow easily from Propositions~\ref{prop:CA2} and~\ref{prop:Hopf+}.
\end{proof}

\subsection{Twisted antipodes}

We define now the operator $P_+:\hat\CT_+^{\ex}\to\hat\CT_+^{\ex}$
given on $\tau \in B_+$ by
\begin{equ}
P_+(\tau)\eqdef \left\{ \begin{array}{ll} \tau \qquad & \text{if $|\tau|_+ > 0$,}
\\ 0 & \text{otherwise.}
\end{array}\right.
\end{equ}
Note that this is quite different from the projection $\inj_+^\ex\circ \proj_+^\ex$.
However, for elements of the form $\hat \CJ_k^\Labhom(\tau)\in\hat\CT_+^{\ex}$ for some $\tau \in \CT^\ex$, we have $P_+ \hat \CJ_k^\Labhom(\tau)= (\inj_+^\ex\circ \proj_+^\ex)\bigl(\hat \CJ_k^\Labhom(\tau)\bigr)$. The difference is that $\inj_+^\ex\circ \proj_+^\ex$ is multiplicative under the tree product, while $P_+$ is not.
\begin{proposition}\label{prop:twisted+}
There exists a unique algebra morphism $\tilde\CA_+^{\ex}\colon \CT_+^{\ex} \to \hat \CT_+^{\ex}$, 
which we call the ``positive twisted antipode'', such that $\tilde\CA_+^{\ex} X_i = - X_i$ and furthermore for all $\CJ^{\Labhom}_{k}(\tau)\in \CT_+^{\ex}$
\begin{equ}[e:pseudoant]
\tilde\CA_+^{\ex} \CJ^\Labhom_k(\tau) = -\sum_{\ell\in\N^d}
{(-X)^\ell \over \ell!} P_+ \hat \CM^{\ex}_+ \bigl(\hat \CJ_{k+\ell}^\Labhom\otimes \tilde\CA_+^{\ex}\bigr)\Deltap_{\ex} \tau\;,
\end{equ}
where $\hat \CJ^\Labhom_k:\CT^{\ex} \to \hat \CT_+^{\ex}$ is defined in \eqref{e:CJ}, similarly to above $\hat \CM^{\ex}_+$ is the product in $\hat\CT^{\ex}_+$ and $\Deltap_\ex\colon \CT^{\ex} \to \CT^{\ex}\otimes \CT_{+}^{\ex}$ is as in Corollary~\ref{cor:domains}.
\end{proposition}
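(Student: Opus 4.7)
The plan is to exploit the fact that $\CT_+^\ex$ is freely generated as a commutative algebra under the tree product. By Remark~\ref{rem:A}, elements of $\hat H_2$ have the form $X^k \prod_i \hat\CJ^{\Labhom_i}_{k_i}(\tau_i)$ with essentially unique decomposition, so $\hat\CT_+^\ex$ is a polynomial algebra on $\{X_i\}$ and $\{\hat\CJ^\Labhom_k(\tau)\}$. Taking $\bar\sigma = \one_2$ in the defining relation of $\CJ_-$ in \eqref{e:defJ} (using that $\sigma \star \one_2 = \sigma$) shows that every planted generator $\hat\CJ^\Labhom_k(\tau)$ with non-positive $|\cdot|_+$-degree is killed in the quotient. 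Consequently $\CT_+^\ex$ is itself a free polynomial algebra, on the set $\CG = \{X_i\}_{1 \le i \le d} \cup \{\CJ^\Labhom_k(\tau) : \tau \in B_\circ,\, |\CJ^\Labhom_k(\tau)|_+ > 0\}$. Thus any algebra morphism from $\CT_+^\ex$ to a commutative algebra is uniquely determined by its values on $\CG$, and conversely one may prescribe such values freely.

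Uniqueness is then immediate: $\tilde\CA_+^\ex(X_i) = -X_i$ is imposed, while for each positive-degree planted generator $\CJ^\Labhom_k(\tau)$ the formula \eqref{e:pseudoant} expresses $\tilde\CA_+^\ex \CJ^\Labhom_k(\tau)$ in terms of a trivial contribution depending only on $\tau$ together with terms $\tilde\CA_+^\ex(\tau^{(2)})$ arising from $\Deltap_\ex \tau$. For existence I would induct on the number of edges $|E_T|$ of the underlying tree of $\tau$. The base case $\tau = X^n$ uses $\Deltap_\ex X^n = \sum_m \binom{n}{m} X^m \otimes X^{n-m}$, so \eqref{e:pseudoant} reduces to a closed expression in the $X_i$'s via $\tilde\CA_+^\ex(X^p) = (-X)^p$. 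For the inductive step, write $\Deltap_\ex \tau = \tau \otimes \one_2 + \sum \tau^{(1)} \otimes \tau^{(2)}$; each non-trivial $\tau^{(2)}$ is a tree product of planted factors $\hat\CJ^{\Labhom_i}_{k_i}(\sigma_i)$ where each $\sigma_i$ is a proper subtree of $\tau$, hence has strictly fewer edges. By the inductive hypothesis and multiplicativity on $\hat\CT_+^\ex$, every such $\tilde\CA_+^\ex(\tau^{(2)})$ has already been defined, and the trivial term $\tau \otimes \one_2$ contributes $-\sum_\ell (-X)^\ell/\ell!\, P_+ \hat\CJ^\Labhom_{k+\ell}(\tau)$ depending only on $\tau$ itself. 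Thus the right-hand side of \eqref{e:pseudoant} yields a definite element of $\hat\CT_+^\ex$, which one then extends multiplicatively to all of $\CT_+^\ex$ using the free polynomial structure.

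The main genuine technicality is verifying that the sum over $\ell \in \N^d$ in \eqref{e:pseudoant} is finite. The projector $P_+$ vanishes on any summand of non-positive $|\cdot|_+$-degree, and the degree of $\hat\CJ^\Labhom_{k+\ell}(\tau^{(1)})\cdot\tilde\CA_+^\ex(\tau^{(2)})$ decreases linearly in $|\ell|_\s$ (since $\s(i) \ge 1$), so only finitely many $\ell$ contribute; combined with finiteness of the sum defining $\Deltap_\ex \tau$ (Corollary~\ref{cor:domains}), the right-hand side is a finite linear combination in $\hat\CT_+^\ex$. The hard part is therefore not an intricate algebraic coherence, but rather this degree bookkeeping together with the identification of $\CT_+^\ex$ as a free polynomial algebra on its positive-degree planted generators; once those two observations are in hand, the recursive edge-count induction gives both existence and uniqueness in a routine way.
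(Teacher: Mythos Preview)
Your proof is correct and follows essentially the same approach as the paper, which simply states ``Proceeding by induction over the number of edges appearing in $\tau$, one easily verifies that such a map exists and is uniquely determined by the above properties.'' You have fleshed out the details the paper leaves implicit: the identification of $\CT_+^\ex$ as a free commutative algebra on $\{X_i\}$ and the positive-degree planted generators (which is what makes the values on generators freely prescribable), the edge-count induction, and the finiteness of the $\ell$-sum via the degree bound and $P_+$.
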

\begin{proof}
Proceeding by induction over the number of edges appearing in $\tau$, one easily
verifies that such a map exists and is uniquely determined by the above properties.
\end{proof}
Comparing this to the recursion for $\CA^{\ex}_+$ given in \eqref{e:defA+}, we see that
they are very similar, but the projection $\proj_+^\ex$ in \eqref{e:defA+} is {\it inside} the multiplication $\CM^{\ex}_+$, while $P_+$ in \eqref{e:pseudoant} is {\it outside} $\hat \CM^{\ex}_+$.

We recall now that the antipode $\CA_+^{\ex}$ is characterised among algebra-morphisms of $\CT^\ex_+$ by the 
identity
\begin{equs}[e:defAhat++]
\CM^{\ex}_+ \bigl(\id \otimes \CA_+^{\ex}\bigr) \Deltapp_{\ex}  = 
%\one_{\CT_+^{\ex}} \one^{\star}_{ \CT_+^{\ex}}\;.
\one_{2} \one^{\star}_{2} \qquad \text{on} \quad \CT_+^{\ex}\; ,
\end{equs}
where $\Deltap_\ex\colon \CT^{\ex}_+ \to \CT^{\ex}_+\otimes \CT_{+}^{\ex}$ is as in Corollary~\ref{cor:domains}.
The following result shows that $\tilde\CA_+^{\ex}$ satisfies a property close to \eqref{e:defAhat++},
which is where the name ``twisted antipode'' comes from.

\begin{proposition}
The map $\tilde\CA_+^{\ex}:\CT_+^{\ex}\to\hat \CT_+^{\ex}$ satisfies the equation
\begin{equs}[e:defAhat+]
\hat \CM^{\ex}_+ \bigl(\id \otimes \tilde\CA_+^{\ex}\bigr) \Deltap_{\ex} \inj_+^\ex  = 
%\one_{\hat \CT_+^{\ex}} \one^{\star}_{ \CT_+^{\ex}}\;,
\one_{2} \one^{\star}_{ 2} \qquad \text{on} \quad \CT_+^{\ex}\; ,
\end{equs}
where  
%$ \one_{\hat \CT_+^{\ex}} $ is the unit of $ \hat \CT_+^{\ex} $, $ \one^{\star}_{ \CT_+^{\ex}} $ is the counit of $\CT_+^{\ex} $ and 
$\Deltap_\ex\colon \hat\CT^{\ex}_+ \to \hat\CT^{\ex}_+\otimes \CT_{+}^{\ex}$ is as in Corollary~\ref{cor:domains}.
\end{proposition}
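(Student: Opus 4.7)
The proof combines a direct cancellation on algebra generators with a multiplicativity argument. The plan is to introduce the auxiliary linear map $\tilde\phi \colon \hat\CT^\ex_+ \to \hat\CT^\ex_+$ defined by $\tilde\phi = \hat\CM^\ex_+ (\id \otimes \tilde\CA^\ex_+ \circ \proj^\ex_+) \Delta_2$, so that $\tilde\phi \circ \inj^\ex_+$ coincides with the left-hand side of \eqref{e:defAhat+}. Since $\Delta_2$ is a Hopf-algebra coproduct on the commutative algebra $\hat\CT^\ex_+$, and both $\proj^\ex_+$ and $\tilde\CA^\ex_+$ are algebra morphisms, a standard convolution argument shows that $\tilde\phi$ is itself an algebra morphism. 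A preliminary induction, parallel to the recursive definition of $\tilde\CA^\ex_+$ in Proposition~\ref{prop:twisted+} and using the second identity of \eqref{e:degreeDelta}, establishes that $\tilde\CA^\ex_+$ preserves the $|\cdot|_+$-grading.

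The core step will be to show $\tilde\phi$ vanishes on the algebra generators $X_i$ and on each $\hat\CJ^\Labhom_k(\sigma)$ with $|\hat\CJ^\Labhom_k(\sigma)|_+ > 0$. The first case is immediate from $\Delta_2 X_i = X_i \otimes \one_2 + \one_2 \otimes X_i$ and $\tilde\CA^\ex_+ X_i = -X_i$. For the second, apply \eqref{e:def_rec_2} to expand $\Delta_2 \hat\CJ^\Labhom_k(\sigma)$ and substitute the recursion \eqref{e:pseudoant} for $\tilde\CA^\ex_+ \CJ^\Labhom_{k+\ell}(\sigma)$ into the resulting ``polynomial'' piece. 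Reindexing the double sum by $j = \ell + m$ and applying the multinomial identity $\sum_{\ell+m=j} X^\ell (-X)^m/(\ell!\,m!) = (X-X)^j/j! = \delta_{j,0}$, exactly as in the proof of \eqref{e:defA+0} in Proposition~\ref{prop:CA2}, collapses everything to
\[
\tilde\phi(\hat\CJ^\Labhom_k(\sigma)) = (\id - P_+)\, \hat\CM^\ex_+ (\hat\CJ^\Labhom_k \otimes \tilde\CA^\ex_+)\Deltap_\ex \sigma.
\]
By the degree preservation of $\tilde\CA^\ex_+$ combined with \eqref{e:degreeDelta}, every summand on the right carries the $|\cdot|_+$-degree $|\hat\CJ^\Labhom_k(\sigma)|_+$; when this is positive, $P_+$ acts as the identity on the expression and the difference vanishes.

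To conclude, recall from Remark~\ref{rem:A} that $\hat\CT^\ex_+$ has a vector-space basis consisting of products $X^k \prod_i \hat\CJ^{\Labhom_i}_{k_i}(\sigma_i)$, and that any simple factor $\hat\CJ^\Labhom_k(\sigma)$ with $|\cdot|_+ \le 0$ lies in $\CJ_-$ (take $\bar\sigma = \one_2$ in \eqref{e:defJ}). Hence the vector-space complement $\inj^\ex_+(\CT^\ex_+)$ of $\CJ_-$ inside $\hat\CT^\ex_+$ is spanned by basis elements in which either $k \neq 0$ or every simple factor has strictly positive degree. For any such non-unit basis element at least one factor is annihilated by $\tilde\phi$ by the previous step, so multiplicativity of $\tilde\phi$ yields $\tilde\phi = 0$ on it; extending by linearity gives $\tilde\phi \circ \inj^\ex_+(\tau) = 0$ for every $\tau \in \CT^\ex_+ \setminus \R \one_2$, while $\tilde\phi(\one_2) = \one_2$ is immediate. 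The main obstacle throughout is that the projection $P_+$ appearing in \eqref{e:pseudoant} is not multiplicative, so the cancellation can succeed only in the positive-degree regime where $P_+$ reduces to the identity; the degree preservation of $\tilde\CA^\ex_+$ is exactly what guarantees that the cancellation takes place in this regime.
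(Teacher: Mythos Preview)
Your proof is correct and follows essentially the same route as the paper: reduce to algebra generators by multiplicativity, dispose of $X_i$ trivially, and for $\hat\CJ^\Labhom_k(\sigma)$ with positive degree perform the binomial collapse to obtain $(\id-P_+)\,\hat\CM^\ex_+(\hat\CJ^\Labhom_k\otimes\tilde\CA^\ex_+)\Deltap_\ex\sigma$, which vanishes by degree preservation. The paper's write-up is slightly terser (it invokes multiplicativity of the left-hand side directly, relying implicitly on $\inj^\ex_+$ being an algebra morphism, rather than introducing your auxiliary $\tilde\phi$), but the substance is identical.

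One small wording issue: your characterisation of $\inj^\ex_+(\CT^\ex_+)$ as spanned by basis elements ``in which either $k\neq 0$ or every simple factor has strictly positive degree'' is not quite right---an element $X^k\hat\CJ^\Labhom_m(\sigma)$ with $k\neq 0$ and $|\hat\CJ^\Labhom_m(\sigma)|_+\le 0$ lies in $\CJ_-$, not in $\inj^\ex_+(\CT^\ex_+)$. The correct description is simply that every simple factor $\hat\CJ^{\Labhom_i}_{k_i}(\sigma_i)$ has strictly positive degree (with no condition on $k$); being non-unit then means either $k\neq 0$ or at least one such factor is present. This does not affect your argument, since in either case some factor is annihilated by $\tilde\phi$, but you should tighten the statement.
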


\begin{proof}
%We note that the map $\Deltap_\ex$ which appears in \eqref{e:defAhat+} should not be confused with $\Deltap_\ex\colon \CT^{\ex} \to \CT^{\ex}\otimes \CT_{+}^{\ex}$ in \eqref{e:defA+} and \eqref{e:pseudoant}.
Since both sides of \eqref{e:defAhat+} are multiplicative and since the identity obviously holds when applied to elements of the type $X^k$, we only need to verify that the left hand side vanishes when applied to elements of the form $ \CJ_k^\Labhom(\tau)$ for some $\tau \in \CT^\ex$ with 
$|\tau|_+ + |\Labhom|_\s - |k|_\s > 0$, and then use Remark~\ref{rem:A}.
Similarly to the proof of \eqref{e:defA+0}, we have
\begin{equs}
{} &
\hat \CM^{\ex}_+ \bigl(\id \otimes \tilde\CA_+^{\ex} \bigr) \Deltap_{\ex}  \hat \CJ^\Labhom_k(\tau)  = 
\\ & = \hat \CM^{\ex}_+ \bigl(\id \otimes \tilde\CA_+^{\ex}\bigr) \left[ 
\left( \hat \CJ^{\Labhom}_k \otimes \id \right)  \Deltap_{\ex} \tau
+ \sum_{\ell}  \frac{X^{\ell}}{\ell !}  \otimes   \CJ^{\Labhom}_{k+ \ell}(\tau) \right]
\\ & = \hat \CM^{\ex}_+ \left[ 
\left( \hat \CJ^{\Labhom}_k \otimes  \tilde\CA_+^{\ex} \right)\Deltap_{\ex} \tau  
- \sum_{\ell,m}  \frac{X^{\ell}}{\ell !}  \otimes  \frac{(-X)^{m}}{m !} P_+\hat\CM^{\ex}_+(\hat \CJ^{\Labhom}_{k+ \ell+m}\otimes\tilde\CA_+^{\ex})\Deltap_{\ex} \tau  \right]
\\ & = \left[ \hat \CM^{\ex}_+ 
\left( \hat \CJ^{\Labhom}_k \otimes  \tilde\CA_+^{\ex} \right)  
- P_+\hat\CM^{\ex}_+(\hat \CJ^{\Labhom}_{k}\otimes\tilde\CA_+^{\ex})\right]\Deltap_{\ex} \tau  = 0\;,
\end{equs}
since $|\hat\CM^{\ex}_+(\hat \CJ^{\Labhom}_{k}\otimes\tilde\CA_+^{\ex})\Deltap_{\ex} \tau|_+=| \CJ^{\Labhom}_{k}(\tau)|_+>0$.
\end{proof}

A very useful property of the positive twisted antipode $\tilde\CA_+^{\ex}$ is that 
its action is intertwined with that of $\Deltam_\ex$ in the following way.

\begin{lemma}\label{commutation_antipode}
The identity
\begin{equ}
\Deltam_{\ex} \tilde \CA_{+}^{\ex} = \big( \id \otimes \tilde \CA^{\ex}_+ \big)
\Deltam_{\ex}
\end{equ}
holds between linear maps from $\CT_+^\ex$ to $\CT_-^\ex \otimes \hat \CT_+^\ex$.
\end{lemma}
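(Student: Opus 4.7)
The plan is to proceed by induction on the number of edges, exploiting the algebra-morphism property of $\tilde\CA_+^\ex$ together with the cointeraction identity \eqref{e:propWanted1}. By Remark~\ref{rem:A} (adapted to $\CT_+^\ex$), every element of $\CT_+^\ex$ is a product, with respect to the tree product $\hat\CM_+^\ex$, of elements of the form $X^k$ and $\CJ_k^\Labhom(\tau)$ with $\tau \in \CT^\ex$. Since $\tilde\CA_+^\ex$ is multiplicative by construction, and since $\Deltam_\ex$ is multiplicative with respect to the tree product (this is the second part of Corollary~\ref{lem:preCEFM}, combined with the quotient structure of $\CT_\pm^\ex$), both sides of the desired identity are algebra morphisms from $\CT_+^\ex$ to $\CT_-^\ex \otimes \hat\CT_+^\ex$. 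It therefore suffices to check the identity on generators.

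First I would verify the case of $X_i$: $\Deltam_\ex X_i = \one \otimes X_i$, so both sides give $\one \otimes (-X_i)$. The substantive case is $\CJ_k^\Labhom(\tau)$. Here I need two auxiliary commutation relations. The first is
\begin{equ}
\Deltam_\ex\, \hat \CJ_k^\Labhom = (\id \otimes \hat \CJ_k^\Labhom)\,\Deltam_\ex\qquad \text{on } \CT^\ex\;,
\end{equ}
which reflects the fact that the new ``trunk'' added by $\hat\CJ_k^\Labhom$ is incident to a freshly colour-$2$-coloured root, so it never lies in the colour-$1$ subforest being extracted by $\Delta_1$; this is a direct check from the recursive formula \eqref{e:def_rec_2} for $\Delta_2$ on trunks, combined with the restriction that colour-$1$ extractions are disjoint from $\hat F_2$. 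The second is
\begin{equ}
\Deltam_\ex\, P_+ = (\id \otimes P_+)\,\Deltam_\ex\;,
\end{equ}
which follows from the identity $|\tau^{(2)}_1|_+ = |\tau|_+$ in \eqref{e:degreeDelta}: $\Deltam_\ex$ preserves the $|\cdot|_+$-degree in its second tensor slot.

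Applying $\Deltam_\ex$ to the defining recursion \eqref{e:pseudoant} for $\tilde\CA_+^\ex \CJ_k^\Labhom(\tau)$, using multiplicativity of $\Deltam_\ex$ in the tree product, using the two commutations above, the fact that $\Deltam_\ex X^\ell = \one \otimes X^\ell$ (so it commutes with multiplication by powers of $X$ in the second slot), and the induction hypothesis applied to $\tilde\CA_+^\ex$ on subtrees with strictly fewer edges, one obtains
\begin{equs}
\Deltam_\ex \tilde\CA_+^\ex \CJ_k^\Labhom(\tau) = -\sum_\ell \frac{(-X)^\ell}{\ell!}(\id \otimes P_+\hat\CM_+^\ex)(\id \otimes \hat\CJ_{k+\ell}^\Labhom \otimes \tilde\CA_+^\ex) \CM^{(13)(2)(4)}(\Deltam_\ex \otimes \Deltam_\ex)\Deltap_\ex \tau\;,
\end{equs}
where the $(\id \otimes \tilde\CA_+^\ex)$ on the third slot uses induction. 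Invoking the cointeraction relation \eqref{e:propWanted1} to rewrite $\CM^{(13)(2)(4)}(\Deltam_\ex \otimes \Deltam_\ex)\Deltap_\ex = (\id \otimes \Deltap_\ex)\Deltam_\ex$, and then pulling the outer sum and multiplication back into $\tilde\CA_+^\ex$ via \eqref{e:pseudoant}, the right-hand side collapses to $(\id \otimes \tilde\CA_+^\ex)\Deltam_\ex \CJ_k^\Labhom(\tau)$, closing the induction.

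The main obstacle is bookkeeping: one must verify carefully that all the operators involved ($\hat\CJ_{k+\ell}^\Labhom$, the multiplications by $X^\ell$, the tree product $\hat\CM_+^\ex$, and the projection $P_+$) really do commute with $\Deltam_\ex$ in the appropriate tensor slot before the cointeraction can be applied. Once these commutations are in place, the argument is a clean application of \eqref{e:propWanted1} and the induction hypothesis.
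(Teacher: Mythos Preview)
Your proposal is correct and follows essentially the same approach as the paper: reduce to generators by multiplicativity, handle $X_i$ trivially, and for $\CJ_k^\Labhom(\tau)$ apply $\Deltam_\ex$ to the recursion \eqref{e:pseudoant}, push it through using the two commutation relations $\Deltam_\ex P_+=(\id\otimes P_+)\Deltam_\ex$ and $\Deltam_\ex\hat\CJ_k^\Labhom=(\id\otimes\hat\CJ_k^\Labhom)\Deltam_\ex$, invoke the induction hypothesis on the second tensor slot, and close with the cointeraction identity \eqref{e:propWanted1}. The paper's proof is line-for-line the same, with the only cosmetic difference being that it writes out the intermediate expression $\CM^{(13)(2)(4)}\bigl(\Deltam_\ex\hat\CJ_{k+\ell}^\Labhom\otimes\Deltam_\ex\tilde\CA_+^\ex\bigr)\Deltap_\ex\tau$ before substituting both commutations simultaneously, whereas you combine these into a single step.
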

\begin{proof}
Since both sides of the identity are multiplicative, by using Remark~\ref{rem:A} it is enough to prove the result on $X_i$ and on elements of the form $\CJ_k(\tau)\in\CT_+^\ex$. The identity clearly holds on the linear span of $X^k$ since
$\Deltam_\ex$ acts trivially on them and $\tilde \CA_{+}^{\ex}$ preserves that subspace.

Using the recursion \eqref{e:pseudoant} for $\tilde \CA^{\ex}_+$, the 
identity $\Deltam_{\ex} P_+ =(\id \otimes P_+) \Deltam_{\ex} $
on $\hat\CT^\ex_+$, followed by the fact that $\Deltam_\ex$ is multiplicative,
we obtain
\begin{equs}
{} & \Deltam_{\ex} \tilde \CA^{\ex}_+ \CJ^{\Labhom}_k(\tau)  = -\sum_{\ell\in\N^d}\,\Big(\id \otimes {(-X)^\ell \over \ell!}\Big) \Deltam_{\ex} P_+ \hat \CM^{\ex}_+ \bigl(\hat \CJ^{\Labhom}_{k+\ell}\otimes \tilde \CA_+^{\ex}\bigr)\Deltap_{\ex} \tau
\\ & = -\sum_{\mathclap{\ell\in\N^d}}\,
\Big(\id \otimes{(-X)^\ell \over \ell!} P_+  \hat \CM^{\ex}_+ \Big)  \CM^{(13)(2)(4)} \bigl(\Deltam_{\ex} \hat \CJ^{\Labhom}_{k+\ell}\otimes \Deltam_{\ex} \tilde \CA_+^{\ex}\bigr)\Deltap_{\ex} \tau\;.
\end{equs}
Using the fact that $\Deltam_{\ex}  \hat\CJ^{\Labhom}_k=\left( \id \otimes \hat\CJ^{\Labhom}_{k} \right) \Deltam_{\ex}$, as well as \eqref{e:propWanted1}, we have 
\begin{equs}
\Deltam_{\ex} \tilde \CA^{\ex}_+ \CJ^{\Labhom}_k(\tau) & = -\sum_{\mathclap{\ell\in\N^d}}
\Big(\id \otimes {(-X)^\ell \over \ell!}P_+ \hat \CM^{\ex}_+ \Big)  \\
&\quad \times \CM^{(13)(2)(4)} \bigl( ( \id \otimes \hat \CJ^{\Labhom}_{k+\ell} ) \Deltam_{\ex}  \otimes (\id \otimes \tilde \CA_+^{\ex} ) \Deltam_{\ex} \bigr)\Deltap_{\ex} \tau
\\
& = -\sum_{\mathclap{\ell\in\N^d}}\,
\Big(\id \otimes {(-X)^\ell \over \ell!}P_+ \hat \CM^{\ex}_+( \hat \CJ^{\Labhom}_{k + \ell} \otimes \tilde \CA_+^{\ex})\Big) (\id \otimes   \Deltap_{\ex} )  \Deltam_{\ex} \tau 
\\ &  = (  \id \otimes \tilde \CA_{+}^{\ex}\CJ^{\Labhom}_{k})   \Deltam_{\ex} \tau
  = (  \id \otimes \tilde \CA_{+}^{\ex} ) \Deltam_{\ex} \CJ^{\Labhom}_{k}\tau\;.
\end{equs} 
Here, the passage from the penultimate to the last line crucially relies on the
fact that the action of $\CG_{\ex}^{-}$ onto $\CT_+^\ex$ preserves
the $|\cdot|_+$-degree, i.e. on the second formula in \eqref{e:degreeDelta}.
\end{proof}

We have now a similar construction of a {\it negative twisted antipode}.
\begin{proposition}\label{prop:twisted-}
There exists a unique algebra morphism $\tilde\CA_-^{\ex}\colon \CT_-^{\ex} \to \hat \CT_-^{\ex}$, that we call the ``negative twisted antipode'', such that for $\tau\in\CT_-^{\ex}\cap\ker\one^\star_1$
%\begin{equ}[e:pseudoant-]
%\tilde\CA_-^{\ex} \tau = -\tau-\hat\CM_-^\ex(\tilde\CA_-^{\ex}\otimes\id)(\Deltam_\ex\tau-\tau\otimes \one_1-\one_1\otimes\tau).
%\end{equ}
\begin{equ}[e:pseudoant-]
\tilde\CA_-^{\ex} \tau = -\hat\CM_-^\ex(\tilde\CA_-^{\ex}\otimes\id)(\Deltam_\ex \inj_-^\ex\tau-\tau\otimes \one_1).
\end{equ}
Similarly to \eqref{e:defAhat+}, the 
morphism $\tilde\CA_-^{\ex}\colon \CT_-^{\ex} \to \hat \CT_-^{\ex}$ satisfies
\begin{equs}[e:defAhat-]
\hat \CM^{\ex}_- \bigl(\tilde\CA_-^{\ex} \otimes \id\bigr) \Deltam_{\ex}\inj_-^\ex  = 
%\one_{\hat \CT_-^{\ex}} \one^{\star}_{ \CT_-^{\ex}}\;,
\one_{1} \one^{\star}_{1} \qquad \text{on} \quad \CT_-^{\ex}\; ,
\end{equs}
where  %$ \one_{\hat \CT_-^{\ex}} $ is the unit of $ \hat \CT_-^{\ex} $, $ \one^{\star}_{ \CT_-^{\ex}} $ is the counit of $\CT_-^{\ex} $ and 
$\Deltam_{\ex}\colon \hat\CT^{\ex}_- \to \CT^{\ex}_-\otimes \hat\CT_{-}^{\ex}$ is as in Corollary~\ref{cor:domains}.
\end{proposition}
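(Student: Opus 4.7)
The plan is to mirror the construction of the positive twisted antipode $\tilde\CA_+^{\ex}$ from Proposition~\ref{prop:twisted+}, but exploiting the triangularity of the coaction $\Deltam_{\ex}$ rather than of $\Deltap_{\ex}$. First I would declare $\tilde\CA_-^{\ex}(\one_1) = \one_1$ and extend it as an algebra morphism on products; on the generators, namely the non-trivial connected elements $\tau \in B_-$ (which correspond via Remark~\ref{rem:A} to single trees in $\Trees_-(R)$), I would define $\tilde\CA_-^{\ex}\tau$ inductively via \eqref{e:pseudoant-}. For this to make sense, I need the recursion to involve only $\tilde\CA_-^{\ex}$ applied to strictly simpler elements. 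The right grading to use is the number of edges $|E_F|$ of the underlying forest, which is preserved by $\CK_1$ and strictly decreases on the left-hand factor of any term in $\Deltam_{\ex}\inj_-^\ex\tau - \tau\otimes\one_1$. This triangularity is immediate from Definition~\ref{def:maps}: the left-hand factor is a proper subforest $A$ of $F$, and the only way to get the full $F$ back on the left is by taking $A = F$, which contributes the distinguished term $\tau\otimes\one_1$. Note that although the right-hand factor carries unbounded Taylor decorations $\varepsilon_A^F$, the left-hand factor is constrained to lie in $\CT_-^\ex$ after projection by $\proj_-^\ex$, and hence ranges over a finite set because $|\cdot|_-$ is bounded on each fixed coloured shape; this ensures the sum in \eqref{e:pseudoant-} is finite.

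Next I would need to check that the multiplicative extension is consistent with \eqref{e:pseudoant-} on non-generator elements. The natural way is to show directly that the map $\tilde\CA_-^{\ex}$, defined on generators by the recursion and extended multiplicatively, satisfies \eqref{e:pseudoant-} also on products $\tau = \tau_1\cdot\tau_2$ with both factors in $\ker\one_1^\star$. This is a consequence of the fact that $\Deltam_{\ex}$ is a bialgebra morphism (since $(\hat\CT^{\ex}_-, \cdot, \Delta_1)$ is a bialgebra, inherited from $\CH_1$), so
\begin{equ}
\Deltam_{\ex} \inj_-^\ex(\tau_1\cdot \tau_2) = \Deltam_{\ex}\inj_-^\ex\tau_1 \cdot \Deltam_{\ex}\inj_-^\ex\tau_2\;,
\end{equ}
combined with the multiplicativity of $\hat\CM_-^\ex$ in each slot. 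One then verifies that applying $\hat\CM_-^\ex(\tilde\CA_-^\ex\otimes\id)$ to both sides yields the product of the same quantities for $\tau_1$ and $\tau_2$, which vanishes by the inductive hypothesis; this is the same mechanism as in the classical proof that the antipode of a Hopf algebra is multiplicative.

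Finally, to obtain the twisted antipode identity \eqref{e:defAhat-}, one simply rearranges \eqref{e:pseudoant-}. Writing $\Deltam_\ex \inj_-^\ex \tau = \tau\otimes\one_1 + \bar\Delta\tau$ for $\tau \in \ker\one^\star_1$, the defining recursion gives
\begin{equ}
\hat\CM^\ex_-(\tilde\CA_-^\ex \otimes \id)(\tau\otimes\one_1 + \bar\Delta\tau) = \tilde\CA_-^\ex\tau + \hat\CM^\ex_-(\tilde\CA_-^\ex\otimes\id)\bar\Delta\tau = 0\;,
\end{equ}
which is exactly \eqref{e:defAhat-} on $\ker\one^\star_1$; on $\one_1$ the identity is trivial, and since both sides of \eqref{e:defAhat-} are algebra morphisms (the right-hand side because $\one_1\one^\star_1$ is the unit of the convolution algebra, the left-hand side by multiplicativity of $\tilde\CA_-^\ex$ and $\Deltam_\ex$), the identity extends to all of $\CT_-^\ex$.

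The main obstacle I anticipate is the second step: ensuring that the recursive formula is compatible with multiplicativity without circularity. The cleanest way is probably to separate the construction into two parts — first define $\tilde\CA_-^\ex$ on connected generators by the recursion, extend multiplicatively, and then \emph{verify} \eqref{e:pseudoant-} on products as a consequence — rather than trying to use \eqref{e:pseudoant-} as a direct definition on all of $\ker\one^\star_1$.
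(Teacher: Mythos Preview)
Your approach is the same as the paper's, which dispatches the proposition in two sentences: induction over the number of colourless edges gives existence and uniqueness, and \eqref{e:defAhat-} follows by rearranging \eqref{e:pseudoant-}. Your expansion is in fact more careful than the paper's sketch, particularly in separating the definition on generators from the verification that \eqref{e:pseudoant-} then holds on products.

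There is, however, one concrete inaccuracy in your triangularity argument. You claim that taking $A = F$ in \eqref{def:Deltabar} contributes only the distinguished term $\tau \otimes \one_1$, but in fact $A = F$ contributes one term for \emph{each} $\Labn_A \le \Labn$ (with $\eps_A^F = 0$ since the boundary $\partial(F,F)$ is empty): the left factor is $(F, \hat F, \Labn_A, \Labo, \Labe)$ and the right factor collapses under $\CK_1$ to $X^{\Sigma(\Labn - \Labn_A)}$ in $\hat\CT_-^\ex$. Only the choice $\Labn_A = \Labn$ yields $\tau \otimes \one_1$; the remaining terms have the \emph{same} number of edges on the left, so your induction on $|E_F|$ alone does not close. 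The fix, exactly as in the proof of Proposition~\ref{prop:Hopf}, is a secondary induction on $|\Labn|$ (or a lexicographic induction on the pair $(|E_F|,|\Labn|)$), since all such residual terms have $|\Labn_A| < |\Labn|$. This case cannot be dismissed: generators of $\CT_-^\ex$ genuinely can carry $\Labn \neq 0$, for instance $X_i\,\CI(\Xi)^3$ in the $\Phi^4_3$ structure.
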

\begin{proof}
Proceeding by induction over the number of colourless edges appearing in $\tau$, one easily
verifies that such a morphism exists and is uniquely determined by \eqref{e:pseudoant-}. The property \eqref{e:defAhat-} is a trivial consequence of \eqref{e:pseudoant-}.
\end{proof}

\subsection{Models}

We now recall (a simplified version of) the definition of a model for a regularity structure given in \cite[Def. 2.17]{reg}.
Given a scaling $\s$ as in Definition~\ref{def:scaling} and interpreting 
our constant $d\in\N$ as a space(-time) dimension, we define 
a metric $d_\s$ on $\R^d$ by
\begin{equ}[e:defds]
\|x-y\|_\s \eqdef \sum_{i=1}^d |x_i - y_i|^{1/\s_i}\;.
\end{equ}
Note that $\|\cdot\|_\s$ is not a norm since it is not 
$1$-homogeneous, but it is still a distance function since $\s_i \ge 1$.
It is also homogeneous with respect to the (inhomogeneous) scaling in which the 
$i$th component is multiplied by $\lambda^{\s_i}$.

\begin{definition}\label{def:model}
A \textit{smooth model} for a given regularity structure $\TT = (A,T,G)$ on $\R^d$ 
with scaling $\s$ consists of the following elements:
\begin{itemize}
\item A map $\Gamma \colon \R^d\times \R^d \to G$ such that $\Gamma_{xx} = \id$, the identity operator, and 
such that $\Gamma_{xy}\, \Gamma_{yz} = \Gamma_{xz}$ for every $x,y,z$ in $\R^d$.
\item A collection of continuous linear maps $\Pi_x \colon T \to \CC^\infty(\R^d)$ such that $\Pi_y = \Pi_x \circ \Gamma_{xy}$
for every $x,y \in \R^d$.
\end{itemize}
Furthermore, for every $\ell \in A$ and every compact set $\K \subset \R^d$, we assume the existence of a constant $C_{\ell,\K}$ such that the bounds
\begin{equ}[e:boundPi]
|\Pi_x \tau(y)| \le C_{\ell,\K} \|\tau\|_\ell \, \|x-y\|_\s^\ell, \qquad
\|\Gamma_{xy} \tau\|_m \le C_{\ell,\K} \|\tau\|_\ell \, \|x-y\|_\s^{\ell-m}\;, 
\end{equ}
hold uniformly over all $x,y \in \K$, all $m\in A$ with $m < \ell$ and all $\tau \in T_\ell$. 
\end{definition}

Here, recalling that the space $T$ in Definitions~\ref{def:regStruct} and~\ref{def:model} is a direct 
sum of Banach spaces $(T_\alpha)_{\alpha\in A}$,
the quantity $\|\sigma\|_m$ appearing in \eqref{e:boundPi} denotes the norm of the component of 
$\sigma\in T$ in the Banach space $T_m$ for $m\in A$.
We also note that Definition~\ref{def:model} does not include the general framework 
of \cite[Def.~2.17]{reg}, where $\Pi_x$ takes 
values in $\CD'(\R^d)$ rather than $\CC^\infty(\R^d)$; however this simplified setting is sufficient for our purposes, 
at least for now. The condition \eqref{e:boundPi} on $\Pi_x$ is of course relevant only for $\ell>0$ 
since $\Pi_x \tau(\cdot)$ is assumed to be a smooth function at this stage.

Recall that we fixed a label set $\Lab = \Lab_- \sqcup \Lab_+$.
We also fix a collection of kernels  
$\{K_{\Labhom}\}_{\Labhom \in \Lab_+}$, $K_{\Labhom}:\R^d\setminus\{0\}\to\R$, satisfying the conditions of
\cite[Ass.~5.1]{reg} with $\beta = |\Labhom|_\s$. 
We use extensively the notations of Section~\ref{sec:recurs}.

\begin{definition}\label{def:modelmap}
Given a linear map $\PPi\colon \CT^{\ex} \to \CC^\infty$, we define for all $z,\bar z\in\R^d$ 
\begin{itemize}
\item a character $g^+_z(\PPi)\colon \hat \CT^{\ex}_+ \to \R$ by extending multiplicatively 
\[
g^+_z(\PPi)X_i = \bigl(\PPi X_i\bigr)(z), \qquad 
g^+_z(\PPi)\CJ_k^\Labhom(\tau) = (D^k K_\Labhom* \PPi\tau)(z)
\]
for $\Labhom \in \Lab_+$ and setting $g^+_z(\PPi)\CJ_k^\noise(\tau) = 0$ for $\noise \in \Lab_-$.
\item a linear map $\Pi_z\colon \CT^{\ex} \to \CC^\infty$ and a character
$f_z\in \CG^{\ex}_+$ by
\begin{equ}[e:defModel1]
\Pi_z = \bigl(\PPi \otimes f_z\bigr)\Deltap_{\ex} \;,
\qquad f_z =  g_z^+(\PPi)\tilde\CA^{\ex}_+\;,
\end{equ}
where $\tilde\CA^\ex_+$ is the positive twisted antipode defined in \eqref{e:pseudoant}
\item a linear map $\Gamma_{\!z\bar z}\colon \CT^{\ex} \to \CT^{\ex}$ and a character $\gamma_{\!z\bar z}\in \CG^{\ex}_+$ by
\begin{equ}[e:defModel2]
\Gamma_{\!z\bar z} = \left( \id \otimes \gamma_{\!z\bar z} \right) \Deltap_{\ex}, \quad
 \gamma_{\!z\bar z} =  \bigl( f_z\CA_{+}^{\ex} \otimes f_{\bar z}\bigr)\Deltap_{\ex} \;.
\end{equ}
\end{itemize}
Finally, we write 
$\CZ^{\ex} \colon \PPi \mapsto (\Pi,\Gamma)$ for the map
given by \eqref{e:defModel1} and \eqref{e:defModel2}. 
\end{definition}

We do not want to consider arbitrary maps $\PPi$ as above, but we want them
to behave in a ``nice'' way with respect to the natural operations we have on $\CT^\ex$.
We therefore introduce the following notion of admissibility.
For this, we note that, as a consequence of \eqref{e:oldnormal}, the only basis vectors of the 
type $\CI_k^{\Labhom}(\tau)$\label{noise} with $\Labhom \in \Lab_-$ belonging to $\CT^\ex$ are those
with $\tau = X^\ell$ for some $\ell \in \N^d$, so we give them a special name
by setting $\Xi_{k,\ell}^\noise = \CI_k^{\noise}(X^\ell)$ and $\Xi^\noise = \Xi_{0,0}^\noise$.

\begin{definition}\label{def:admissible}
Given a linear map $\PPi\colon \CT^{\ex} \to\CC^\infty$, we set 
$\xi_\noise \eqdef \PPi \Xi^\noise$ for $\noise \in \Lab_-$.
We then say that $\PPi$ is \textit{admissible} if it satisfies
\begin{equs}[2][e:admissible]
\PPi \one &= 1\;,&\quad
\PPi X^k\tau &= x^k \PPi\tau\;,\\
\PPi \CI_k^{\Labhom}(\tau) &=  D^k K_{\Labhom} * \PPi\tau\;,&\qquad
\PPi \Xi_{k,\ell}^{\noise} &= D^k \bigl(x^\ell \xi_{\noise}\bigr)\;,
\end{equs}
for all $\tau\in\CT^\ex$, $k,\ell\in\N^d$, $\Labhom\in\Lab_+$, $\noise \in \Lab_-$, where $\CI_k^{\Labhom}:\CT^{\ex} \to \CT^{\ex}$ is defined by \eqref{e:CI}, $*$ is the distributional convolution in $\R^d$, and we use the notation
\[
D^k=\prod_{i=1}^d\frac{\partial^{k_i}}{\partial y_i^{k_i}}, \qquad x^k:\R^d\to\R, \quad x^k(y)\eqdef \prod_{i=1}^dy_i^{k_i}.
\]
Note that this definition guarantees that the identity $\PPi \CI_k^{\Labhom}(\tau) = D^k \PPi \CI_0^{\Labhom}(\tau)$
always holds, whether $\Labhom$ is in $\Lab_-$ or in $\Lab_+$. 
\end{definition}

It is then simple to check that, with these definitions, $\Pi_z \Gamma_{\!z\bar z} = \Pi_{\bar z}$ and $(\Pi,\Gamma)$ 
satisfies the algebraic requirements of Definition~\ref{def:model}. However, $(\Pi,\Gamma)$ does not necessarily 
satisfy the analytical bounds \eqref{e:boundPi}, although one has the following.

\begin{lemma}\label{lem:algpropr}
If $\PPi$ is admissible then, for every $ \CI_k^{\Labhom}(\tau) \in \CT^\ex$ with $\Labhom \in \Lab_+$, we have
\begin{equs}
f_z(\CJ_k^{\Labhom}(\tau))  &= -  \sum_{| \ell |_{\s} < | \CJ_k^{\Labhom}(\tau) |_{+} }{(-z)^\ell \over \ell!}   \big( D^{k + \ell} K_{\Labhom} * \Pi_{z} \tau \big)(z)\;, \label{e:exprfz}\\
\left( \Pi_z \CI_k^{\Labhom}(\tau) \right)(\bar z) &= 
\big( D^k K_{\Labhom} * \Pi_{z} \tau \big) (\bar z) - \sum_{|\ell |_{s} < |\CI_k^{\Labhom}(\tau)|_{+} } \frac{(\bar z - z)^{\ell}}{\ell ! } \big( D^{k+\ell} K_{\Labhom} * \Pi_{z} \tau \big) ( z)\;.
\end{equs}
\end{lemma}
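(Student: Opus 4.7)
The two identities will be proved in order, with the first used to establish the second. For \eqref{e:exprfz}, start from $f_z(\CJ_k^\Labhom(\tau)) = g_z^+(\PPi)\bigl(\tilde\CA_+^\ex \CJ_k^\Labhom(\tau)\bigr)$ and apply the recursion \eqref{e:pseudoant} defining the positive twisted antipode. Since $g_z^+(\PPi)$ is multiplicative and $(\PPi X^\ell)(y) = y^\ell$, it acts on the monomial $(-X)^\ell/\ell!$ to produce $(-z)^\ell/\ell!$. Writing $\Deltap_\ex \tau = \sum \tau^{(1)}\otimes \tau^{(2)}$, the contribution of each summand is $g_z^+(\PPi)\hat\CJ_{k+\ell}^\Labhom(\tau^{(1)})\, f_z(\tau^{(2)}) = (D^{k+\ell}K_\Labhom * \PPi \tau^{(1)})(z)\, f_z(\tau^{(2)})$, so that summing over $\tau^{(1)},\tau^{(2)}$ and interchanging integral with finite sum yields $\bigl(D^{k+\ell}K_\Labhom * \Pi_z\tau\bigr)(z)$ upon using $\Pi_z = (\PPi\otimes f_z)\Deltap_\ex$. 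It remains to identify the range of $\ell$ appearing under $P_+$: thanks to the third identity in \eqref{e:degreeDelta}, to Proposition~\ref{prop:twisted+} (which shows $\tilde\CA_+^\ex$ preserves $|\cdot|_+$) and to the fact that $\hat\CJ_{k+\ell}^\Labhom$ raises the degree by $|\Labhom|_\s - |k+\ell|_\s$, each term in the image has total $|\cdot|_+$-degree $|\CJ_k^\Labhom(\tau)|_+ - |\ell|_\s$, and $P_+$ keeps exactly those with strictly positive degree.

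For the second identity, start from the recursive formula \eqref{e:def_rec_2} and apply $\proj_+^\ex$ on the second factor to obtain
\[
\Deltap_\ex \CI_k^\Labhom(\tau) = (\CI_k^\Labhom \otimes \id)\Deltap_\ex \tau + \sum_{\ell} \frac{X^\ell}{\ell!}\otimes \CJ_{k+\ell}^\Labhom(\tau)\;,
\]
where the sum is effectively restricted to $|\ell|_\s < |\CI_k^\Labhom(\tau)|_+$ because otherwise $\CJ_{k+\ell}^\Labhom(\tau) = 0$ by Definition~\ref{CT^ex_pm}. Applying $(\PPi\otimes f_z)$ and evaluating at $\bar z$, the first term yields $(D^k K_\Labhom * \Pi_z\tau)(\bar z)$ by the same interchange of convolution and finite sum as above together with the admissibility condition $\PPi \CI_k^\Labhom(\sigma) = D^k K_\Labhom * \PPi \sigma$. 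The second term yields $\sum_\ell \frac{\bar z^\ell}{\ell!} f_z(\CJ_{k+\ell}^\Labhom(\tau))$, into which we substitute \eqref{e:exprfz}.

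This produces a double sum over $\ell$ and $m$ of terms involving $\bar z^\ell (-z)^m$, and the final step consists of grouping them by $p = \ell + m$ and invoking the multi-binomial identity $(\bar z - z)^p/p! = \sum_{\ell+m=p} \bar z^\ell (-z)^m/(\ell!\,m!)$. The one point that requires a small verification is the equivalence between the two constraint systems: the outer bound $|\ell|_\s < |\CI_k^\Labhom(\tau)|_+$ paired with the inner $|m|_\s < |\CJ_{k+\ell}^\Labhom(\tau)|_+ = |\CI_k^\Labhom(\tau)|_+ - |\ell|_\s$ is equivalent to the single bound $|p|_\s < |\CI_k^\Labhom(\tau)|_+$, since $|\ell|_\s, |m|_\s \ge 0$.

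The main obstacle is bookkeeping: one must track how $P_+$, $\proj_+^\ex$, and the degree function $|\cdot|_+$ interact with the recursions, and in particular confirm that the apparent infinite sums truncate precisely at $|\ell|_\s < |\CJ_k^\Labhom(\tau)|_+$ (respectively $|\CI_k^\Labhom(\tau)|_+$) rather than at some weaker or stronger threshold. Once the degree preservation of $\tilde\CA_+^\ex$ and the second identity of \eqref{e:degreeDelta} are correctly invoked, the rest is a direct computation.
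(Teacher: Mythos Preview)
Your proof is correct, but it reverses the order used in the paper and is more computational in spirit. The paper first establishes the second identity by a conceptual Taylor-remainder argument: from \eqref{e:defDeltaH2} and admissibility one sees that $\Pi_z\CI_k^\Labhom(\tau)-D^kK_\Labhom*\Pi_z\tau$ is a polynomial of the right degree, and from the defining property \eqref{e:defAhat+} of the twisted antipode one sees that $\Pi_z\CI_k^\Labhom(\tau)$ vanishes at $z$ to that order, which fixes the polynomial uniquely. The first identity is then read off by comparing to \eqref{e:defModel1}. Your route instead unfolds the recursion \eqref{e:pseudoant} explicitly, uses the degree-preservation of $\tilde\CA_+^\ex$ to pin down the range of $\ell$ surviving $P_+$, and then reassembles the double sum via the multinomial identity. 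The paper's argument is shorter and highlights the Taylor-remainder interpretation of the twisted antipode; your argument is more hands-on and does not invoke \eqref{e:defAhat+}, at the cost of a longer calculation. One small remark: Proposition~\ref{prop:twisted+} as stated only asserts existence and uniqueness of $\tilde\CA_+^\ex$; the fact that it preserves $|\cdot|_+$ is not proved there but follows by the obvious induction on \eqref{e:pseudoant}, so you should say so rather than attribute it to that proposition.
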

\begin{proof}
%In view of the identifications \eqref{e:identifcations}, this is essentially the same as \cite[Eqns~(8.19)--(8.20)]{reg}.
%One has:
%\begin{equs}
%f_z(\CJ_k^{\Labhom}(\tau)) & =  \left(g_z^+(\PPi)\hat\CA^{\ex}_+ \right)(\CJ^{\Labhom}_k(\tau)) \\
%& = -  g_z^+(\PPi)\left( \sum_{\ell \in\N^d}{(-X)^\ell \over \ell!} P_{+} \hat \CM^{\ex}_+ \bigl(\CJ^{\Labhom}_{k+\ell}\otimes \hat\CA_+^{\ex}\bigr)\Deltap_{\ex} \tau \right)
%\\ & = -  \sum_{| \ell |_{\s} < | \CI_k^{\Labhom}(\tau) |_{+} }{(-z)^\ell \over \ell!}  g_z^+(\PPi) \hat \CM_+^{\ex} \bigl(\CJ_{k+\ell}^{\Labhom}\otimes \hat\CA_+^{\ex}\bigr)\Deltap_{\ex} \tau
%\\ &  =  -  \sum_{| \ell |_{\s} < | \CI_k^{\Labhom}(\tau) |_{+} }{(-z)^\ell \over \ell!}   \left( \left( D^{k + \ell} K_{\Labhom} * \PPi \cdot \right)(z) \otimes g_z^+(\PPi)  \hat\CA_+^{\ex}\right)\Deltap_{\ex} \tau
%\\ &  = -  \sum_{| \ell |_{\s} < | \CI_k^{\Labhom}(\tau) |_{+} }{(-z)^\ell \over \ell!}   \left( D^{k + \ell} K_{\Labhom} * \Pi_{z} \tau \right)(z).
%\end{equs}
%Alternatively, i
It follows immediately from 
\eqref{e:defDeltaH2} and the admissibility of $\PPi$ that 
$\Pi_z \CI_k^\Labhom(\tau) - D^k K_\Labhom * \Pi_z \tau$ is a polynomial
of degree $|\CI_k^{\Labhom}(\tau)|_{+}$.
On the other hand, it follows from \eqref{e:defAhat+} that $\Pi_z \CI_k^\Labhom(\tau)$
and its derivatives up to the required order (because taking derivatives
commutes with the action of the structure group) vanish at $z$, so there is
no choice of what that polynomial is, thus yielding the second identity.
The first identity then follows by comparing the second formula to \eqref{e:defModel1}. 
\end{proof}

\begin{remark}\label{rem:remainders}
Lemma~\ref{lem:algpropr} shows that the positive twisted antipode $\tilde\CA^\ex_+$ is intimately related to Taylor remainders, see Remark~\ref{rem:intuition2} and \eqref{e:defModel1}.
\end{remark}

Lemma~\ref{lem:algpropr} shows that $(\Pi,\Gamma)$ satisfies the analytical property \eqref{e:boundPi} 
on planted trees of the form $\CI_k^{\Labhom}(\tau)\in\CT^\ex$. However this is not necessarily 
the case for products of such trees, since neither $\PPi$ nor $\Pi_z$ are assumed to be 
multiplicative under the tree product \eqref{odot}. 
If, however, we also assume that $\PPi$ is multiplicative, then the map $\CZ^{\ex}$ always produces
a \textit{bona fide} model.

\begin{proposition}\label{prop:model}
If $\PPi\colon \CT^{\ex} \to \CC^\infty$ is admissible and such that, for all $\tau, \bar \tau\in\CT^\ex$ with $\tau \bar \tau\in\CT^\ex$ and all  $\alpha \in \Z^d\oplus\Z(\Lab)$, we have
\begin{equ}\label{e:canonical}
\PPi (\tau \bar \tau) 
=  (\PPi \tau)\cdot (\PPi \bar \tau)\;, \qquad \PPi \CR_\alpha(\tau) =  \PPi\tau\;,
\end{equ}
then $\CZ^{\ex}(\PPi)$ is a model for $\TT^{\ex}$. 
\end{proposition}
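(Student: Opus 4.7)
\medskip

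\noindent\textbf{Proof plan.} The statement decomposes into two essentially independent assertions: the algebraic identities making $(\Pi,\Gamma)$ into a ``model in the sense of Chen'' (i.e.\ $\Gamma_{xx}=\id$, $\Gamma_{xy}\Gamma_{yz}=\Gamma_{xz}$, $\Pi_y=\Pi_x\Gamma_{xy}$), and the analytic bounds \eqref{e:boundPi}. The first part would follow purely from the Hopf algebra / comodule structure of Proposition~\ref{prop:Hopf+}. Unfolding the definition \eqref{e:defModel2} of $\gamma_{z\bar z}$ one checks that $\gamma_{xy}$ is the convolution product of $f_x\CA^{\ex}_+$ and $f_y$, which by the antipode axiom is nothing but $(f_x)^{-1}f_y$ inside the character group $\CG^{\ex}_+$. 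Hence $\gamma_{xx}=\one_2^\star$ and $\gamma_{xy}\gamma_{yz}=\gamma_{xz}$ follow from group associativity, and inserting this into \eqref{e:defModel1}--\eqref{e:defModel2} together with coassociativity of $\Deltap_{\ex}$ gives the desired $\Pi_x\Gamma_{xy}=\Pi_y$.

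\medskip

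For the analytic bounds on $\Pi_z$, I would proceed by induction on the number of edges using the recursive description in Remark~\ref{rem:A}: every $\tau\in\CT^{\ex}$ is obtained from monomials $X^k$ via $\CI^{\Labhom}_k$, $\CR_\alpha$ and the tree product. The assumption \eqref{e:canonical} combined with the multiplicativity of the character $f_z$ and the (tree-)multiplicativity of $\Deltap_{\ex}$ shows that $\Pi_z$ is itself multiplicative under the tree product whenever both factors and their product lie in $\CT^{\ex}$; thus it suffices to handle the building blocks. For the monomials one checks directly that $\Pi_z X^k(y)=(y-z)^k$, which gives the optimal bound. Invariance of $\Pi_z$ under $\CR_\alpha$ is immediate since this operator only acts on the $\Labo$-decoration at the root, which contributes to $|\cdot|_+$ but not to $\PPi$ nor to $f_z$ on the planted-tree level. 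The inductive step, for an element of the form $\CI_k^\Labhom(\tau)$, is where the positive twisted antipode earns its name: formula \eqref{e:exprfz} of Lemma~\ref{lem:algpropr} expresses $\Pi_z\CI_k^\Labhom(\tau)(\bar z)$ as the Taylor remainder
\[
\bigl(D^kK_\Labhom*\Pi_z\tau\bigr)(\bar z)-\!\!\sum_{|\ell|_\s<|\CI_k^\Labhom(\tau)|_+}\!\!\frac{(\bar z-z)^\ell}{\ell!}\bigl(D^{k+\ell}K_\Labhom*\Pi_z\tau\bigr)(z),
\]
so the combination of the inductive bound on $\Pi_z\tau$ with the kernel assumptions on $K_\Labhom$ (behaving like a kernel of order $|\Labhom|_\s$, cf.\ \cite[Ass.~5.1]{reg}) and a standard Taylor-remainder estimate yield the bound with exponent $|\CI_k^\Labhom(\tau)|_+$.

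\medskip

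For $\Gamma_{xy}$, the same type of argument applies once one bounds the character $f_z$ on a basis of $\CT^{\ex}_+$. Using \eqref{e:exprfz} again, $f_z(\CJ_k^\Labhom(\tau))$ is (up to signs and a Taylor-polynomial rearrangement around the origin) a polynomial in $z$ whose coefficients are derivatives of $K_\Labhom*\Pi_z\tau$ at $z$; the same kernel-regularity / Taylor-remainder argument shows that if $\sigma\in\CT^{\ex}_+$ has degree $|\sigma|_+$, then $\gamma_{xy}\sigma=\bigl((f_x)^{-1}f_y\bigr)\sigma$ is bounded by a constant multiple of $\|x-y\|_\s^{|\sigma|_+}$. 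Feeding this into \eqref{e:defModel2} together with the fact that $\Deltap_{\ex}$ preserves the $|\cdot|_+$-grading (the third identity in \eqref{e:degreeDelta}) and that the term $\tau\otimes\one_2$ has the only maximal-degree component delivers the required $\|\Gamma_{xy}\tau\|_m\lesssim\|x-y\|_\s^{\ell-m}$.

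\medskip

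The principal obstacle is the analytic induction on planted trees: one has to verify carefully that the degree bookkeeping built into $|\cdot|_+$ (in particular the role of the $\Labo$-decoration, cf.\ Remark~\ref{explanation}) matches exactly what is needed for the Taylor-remainder argument to close, and that the sum in \eqref{e:exprfz} captures precisely those $\ell$ for which the corresponding Taylor coefficient of the smooth function $K_\Labhom*\Pi_z\tau$ has to be subtracted in order to bring the remainder down to the right order. All the other steps are essentially formal manipulations with the Hopf-algebraic data assembled in Section~\ref{sec5}.
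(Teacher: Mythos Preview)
Your proposal is correct and follows essentially the same route as the paper's proof: algebraic identities from the Hopf/comodule structure, the $\Pi_z$ bound by induction on the generators of Remark~\ref{rem:A} (monomials directly, planted trees via Lemma~\ref{lem:algpropr}, tree products by multiplicativity, and $\CR_\alpha$ by invariance), and the $\Gamma$ bound from the behaviour of $\gamma_{z\bar z}$ on generators of $\CT^{\ex}_+$ together with the $|\cdot|_+$-grading preservation of $\Deltap_{\ex}$. The paper is considerably terser---it dispatches the $\Gamma_{xy}$ bound by pointing to \cite[Prop.~8.27]{reg} combined with Lemma~\ref{lem:algpropr} rather than sketching the character estimate you outline---but the substance is the same.
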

\begin{proof}
The proof of the algebraic properties follows immediately from
\eqref{e:defModel2}. Regarding the analytical bound \eqref{e:boundPi} on $\Pi_z \sigma$,
it immediately follows from Lemma~\ref{lem:algpropr} in the case when $\sigma$ is 
of the form $\CI_k^{\Labhom}(\tau)$. For products of such elements, it follows immediately from
the multiplicative property of $\PPi$ combined with the multiplicativity of the action of 
$\Deltap_\ex$ on $\CT^\ex$, which imply that 
\begin{equ}
\Pi_x (\sigma \bar \sigma) 
=  (\Pi_x \sigma)\cdot (\Pi_x \bar \sigma)\;.
\end{equ}
Regarding vectors of the type $\sigma = \CR_\alpha(\tau)$, it follows immediately from 
the last identity in \eqref{e:def_rec_2} combined with \eqref{e:canonical} that
$\Pi_x \CR_\alpha(\tau) = \Pi_x \tau$.

The proof of the second bound in \eqref{e:boundPi} for $\Gamma_{xy}$ is virtually identical to 
the one given in \cite[Prop.~8.27]{reg}, combined with Lemma~\ref{lem:algpropr}. 
Formally, the main difference comes from the change of basis \eqref{e:tildeJ} mentioned in Section~\ref{sec:reduced},
but this does not affect the relevant bounds since it does not mix basis vectors of different
$|\cdot|_+$-degree.
\end{proof}

\begin{remark}\label{rem:canonical}
If a map $\PPi\colon \CT^{\ex} \to \CC^\infty$ is admissible and 
furthermore satisfies  \eqref{e:canonical}, then it is uniquely determined by
the functions $\xi_\noise \eqdef \PPi \Xi^\noise$ for $\noise \in \Lab_-$.
In this case, we call $\PPi$ the \textit{canonical lift} of the functions
$\xi_\noise$.
\end{remark}

\subsection{Renormalised Models}

We now use the structure built in this article to provide a large
class of renormalisation procedures, which in particular includes those 
used in \cite{reg,wong,2015arXiv150701237H}. For this, we first need
a topology on the space of all models for a given regularity structure. 
Given two smooth models $(\Pi,\Gamma)$ and $(\bar\Pi,\bar\Gamma)$, for all $\ell\in A$ and $\K\subset\R^d$ a compact set, we define the pseudo-metrics
\begin{equ}[e:defDistModel]
\$(\Pi,\Gamma) ; (\bar\Pi,\bar\Gamma)\$_{\ell;\K} \eqdef  
\|\Pi - \bar \Pi\|_{\ell;\K} + \|\Gamma - \bar \Gamma\|_{\ell;\K}\;,
\end{equ}
where
\begin{equs}
\|\Pi - \bar \Pi\|_{\ell;\K}&\eqdef \sup\left\{ \frac{|\scal{(\Pi_x - \bar \Pi_x) \tau,\phi_x^\lambda}|}{\|\tau\|\,\lambda^\ell} :
\ x \in \K, |\tau|_+=\ell, \lambda \in (0,1], \phi \in \CB\right\} \;,\\
\|\Gamma - \bar \Gamma\|_{\ell;\K}&\eqdef \sup\left\{\frac{\|\Gamma_{xy}\tau -\bar\Gamma_{xy}\tau\|_m}{\|\tau\|\,\|x-y\|_\s^{m-\ell}} :  x,y \in \K, x\ne y,  |\tau|_+=\ell, m<\ell\right\}. 
\end{equs}
Here, the set $\CB \in \CC_0^\infty(\R^d)$ denotes the set of test functions
with support in the centred ball of radius one and all derivatives up to oder $1 + |\inf A|$
bounded by $1$. Given $\phi \in \CB$, $\phi_x^\lambda:\R^d\to\R$ denotes the translated and rescaled
function 
\[
\phi_x^\lambda(y)\eqdef \lambda^{-(\s_1+\cdots+\s_d)}\, \varphi\Big( \bigl((y_i-x_i)\lambda^{-\s_i}\bigr)_{i=1}^d\Big), \qquad y\in\R^d,
\]
for $x\in\R^d$ and $\lambda>0$ as in \cite{reg}. Finally, $\scal{\cdot,\cdot}$ is the usual $L^2$ scalar product.

\begin{definition}\label{def:modelspace}
We denote by $\MM^\ex_{\infty}$ the space of all smooth models of the form $\CZ^{\ex}(\PPi)$ for some admissible linear map $\PPi\colon \CT^{\ex} \to\CC^\infty$ in the sense of Definition~\ref{def:admissible}. We endow $\MM^\ex_{\infty}$ with the system of pseudo-metrics $(\$\cdot ; \cdot\$_{\ell;\K})_{\ell;\K}$ and we denote by $\MM^\ex_0$ the completion of this metric space. 
\end{definition}
We refer to \cite[Def.~2.17]{reg} for the definition of the space $\MM^\ex$ of models of a fixed regularity structure. With that definition, $\MM^\ex_0$ is nothing but the closure of $\MM^\ex_{\infty}$ in $\MM^\ex$.

In many singular SPDEs, one is naturally led to a sequence of models 
$\CZ(\PPi^{(\eps)})$ which do not converge as $\eps \to 0$. One would 
then like to be able to ``tweak'' this model in such a way that it remains an admissible
model but has a chance of converging as $\eps \to 0$.
A natural way of ``tweaking'' $\PPi^{(\eps)}$ is to compose it with some
linear map $M^{\ex}\colon \CT^{\ex} \to \CT^{\ex}$.
This naturally leads to the following question: what are the linear maps
$M^{\ex}$ which are such that if $\CZ^{\ex}(\PPi)$ is an admissible
model, then $\CZ^{\ex}(\PPi M^{\ex})$ is also a model? 
We then give the following definition.

\begin{definition}\label{def:renormalize}
A linear map $M \colon \CT^{\ex} \to \CT^{\ex}$ is an \textit{admissible renormalisation procedure} if 
\begin{itemize}
\item for every admissible $\PPi\colon \CT^{\ex} \to\CC^\infty$ such that $\CZ^{\ex}(\PPi)\in\MM^\ex_\infty$, $\PPi M$ is admissible and $\CZ^{\ex}(\PPi M)\in\MM^\ex_\infty$
\item the map  $\MM^\ex_\infty\ni\CZ^{\ex}(\PPi) \mapsto \CZ^{\ex}(\PPi M)\in\MM^\ex_\infty$ extends to a continuous map from $\MM^\ex_0$ to $\MM^\ex_0$.
\end{itemize}
\end{definition}

We define a right action of $\CG^{\ex}_-$ onto
$\CH$, with $\CH\in\{\CT^\ex,\CT^\ex_+\}$, by $g \mapsto M^{\ex}_g$ with
\begin{equ}[e:defMg]
M^{\ex}_g\colon\CH\to\CH, \qquad M^{\ex}_g \tau = (g \otimes \id)\Deltam_{\ex} \tau,
\qquad g\in\CG^{\ex}_-, \ \tau\in\CH.\;
\end{equ}
The following Theorem is one of the main results of this article.

\begin{theorem}\label{theo:algebra}
For every $g \in \CG^{\ex}_-$, the map $M^{\ex}_g\colon \CT^\ex \to \CT^\ex$ is an admissible renormalisation procedure. Moreover the renormalised model $ \CZ^{\ex}(\PPi M^{\ex}_g) = (\Pi^{g},\Gamma^{g}) $ is described by:
\begin{equ}[e::nice_action]
\Pi_{z}^g  = \Pi_z M^{\ex}_g, \quad \gamma^{g}_{z \bar z} =  \gamma_{z \bar z}  M^{\ex}_{g}\;.
\end{equ}
\end{theorem}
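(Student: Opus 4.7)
My plan is to use the cointeraction identity \eqref{e:propWanted1}, Lemma~\ref{commutation_antipode}, and the multiplicativity of characters $g \in \CG_-^\ex$ to derive the explicit formulas \eqref{e::nice_action}, from which admissibility, the model property, and continuity all follow.

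The first and crucial algebraic step is to establish that $f_z^g$ equals the action of $g$ on $f_z$, namely $f_z^g = (g \otimes f_z)\Deltam_\ex$ as a character on $\CT_+^\ex$. To show this, I would first verify the auxiliary identity $g_z^+(\PPi M_g^\ex) = (g \otimes g_z^+(\PPi))\Deltam_\ex$ on $\hat\CT_+^\ex$. Both sides are multiplicative characters, so by Remark~\ref{rem:A} it is enough to compare them on the generators $X_i$ (where both evaluate to $z_i$) and on $\hat\CJ_k^\Labhom(\tau)$. For the latter, the key point is that $\Deltam_\ex$ never extracts a planted edge of type $\Labhom \in \Lab_+$: such an edge would force the extracted connected component to be a planted tree in $\Trees_-(R)$ whose trunk has positive degree, contradicting Definition~\ref{def_space}. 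This yields the intertwining $\Deltam_\ex \hat\CJ_k^\Labhom = (\id \otimes \hat\CJ_k^\Labhom)\Deltam_\ex$, which together with the defining identity $\PPi M_g^\ex = (g \otimes \PPi)\Deltam_\ex$ settles the auxiliary formula. Substituting into $f_z^g = g_z^+(\PPi M_g^\ex)\tilde\CA_+^\ex$ and applying Lemma~\ref{commutation_antipode} to commute $\tilde\CA_+^\ex$ past $\Deltam_\ex$ then yields the desired identity for $f_z^g$.

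The second step derives $\Pi_z^g = \Pi_z M_g^\ex$ via the cointeraction. Expanding and using multiplicativity of $g$ to merge the two occurrences of $g$ into one via the forest product on $\CT_-^\ex$, one finds
\[
(\PPi M_g^\ex \otimes f_z^g)\Deltap_\ex \tau = (g \otimes \PPi \otimes f_z)\,\CM^{(13)(2)(4)}(\Deltam_\ex \otimes \Deltam_\ex)\Deltap_\ex \tau\,,
\]
with $\CM^{(13)(2)(4)}$ as in \eqref{e:def1324}. By \eqref{e:propWanted1} the right-hand side equals $(g \otimes \PPi \otimes f_z)(\id \otimes \Deltap_\ex)\Deltam_\ex \tau = (g \otimes \Pi_z)\Deltam_\ex \tau = \Pi_z M_g^\ex \tau$. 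The formula $\gamma_{z\bar z}^g = \gamma_{z\bar z} M_g^\ex$ is derived in the same spirit, now using the antipode-compatibility \eqref{e:propWanted2} to write $f_z^g \CA_+^\ex = (g \otimes f_z \CA_+^\ex)\Deltam_\ex$, together with the semi-direct product relation \eqref{e:semidirect} to control the convolution of $f_z \CA_+^\ex$ with $f_{\bar z}$ appearing in the definition of $\gamma_{z\bar z}$.

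Finally, admissibility of $\PPi M_g^\ex$ and the model property of $(\Pi^g, \Gamma^g)$ follow from the commutation properties of $M_g^\ex$ combined with \eqref{e::nice_action}. The identities $M_g^\ex(X^k \tau) = X^k M_g^\ex \tau$ and $M_g^\ex \CI_k^\Labhom(\tau) = \CI_k^\Labhom(M_g^\ex \tau)$ for $\Labhom \in \Lab_+$ (consequences of the fact that extractions by $\Deltam_\ex$ leave positive trunks and the root $\Labn$-label intact) verify items 1--3 of Definition~\ref{def:admissible} for $\PPi M_g^\ex$, with $\xi_\noise^g \eqdef \PPi M_g^\ex \Xi^\noise$, while item 4 is verified from the explicit structure of $\Deltam_\ex \Xi_{k,\ell}^\noise$. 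Since $\Deltam_\ex$ preserves the $|\cdot|_+$-grading by the second identity in \eqref{e:degreeDelta}, $M_g^\ex$ is a bounded graded endomorphism of $\CT^\ex$ whose norm on each graded component depends on only finitely many values $g(\tau)$ (by Proposition~\ref{prop:finite}), so the bounds \eqref{e:boundPi} for $(\Pi, \Gamma)$ transfer directly to $(\Pi^g, \Gamma^g)$ via \eqref{e::nice_action} and continuity in the topology \eqref{e:defDistModel} is immediate. The principal obstacle will be the bookkeeping in the cointeraction step: aligning the tensor factors so that the multiplicativity of $g$ combines the two $\CT_-^\ex$-factors exactly as the cointeraction identity demands, and performing the analogous rearrangement for $\gamma_{z\bar z}^g$ where one must simultaneously manipulate three characters $g$, $f_z \CA_+^\ex$, and $f_{\bar z}$.
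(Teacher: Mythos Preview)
Your proposal is correct and follows essentially the same route as the paper: both proofs hinge on the intertwining $\Deltam_\ex \hat\CJ_k^\Labhom = (\id \otimes \hat\CJ_k^\Labhom)\Deltam_\ex$ (planted trunks with $|\Labhom|_\s>0$ are killed by $\proj_-^\ex$), Lemma~\ref{commutation_antipode} to obtain $f_z^g = (g\otimes f_z)\Deltam_\ex$, the cointeraction identity \eqref{e:propWanted1} for $\Pi_z^g$, and \eqref{e:propWanted2} together with the semidirect product relation for $\gamma_{z\bar z}^g$, with the analytical bounds then following from preservation of the $|\cdot|_+$-degree. The only cosmetic difference is ordering: the paper verifies admissibility of $\PPi M_g^\ex$ first and then derives \eqref{e::nice_action}, whereas you do the reverse.
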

\begin{proof}
Let us fix $g \in \CG^{\ex}_-$ and an admissible linear map $\PPi$ such that
$\CZ^{\ex}(\PPi) = (\Pi,\Gamma)$ is a model and set $\PPi^g \eqdef \PPi M^{\ex}_g$. We check first that $\PPi^g$ is admissible, namely that it satisfies \eqref{e:admissible}. First, we note that, in the sum over $A$ in \eqref{def:Deltabar} defining $\Delta_1\CI_k^\Labhom(\tau)$, we have two mutually excluding possibilities:
\begin{enumerate}
\item $A$ is a subforest of $\tau$ 
\item $A$ contains the edge of type $\Labhom$ added by the operator $\CI_k^\Labhom$ or the root of $\CI_k^\Labhom(\tau)$ as an isolated node (which has however positive degree and is therefore killed by the projection $\proj^\ex_-$ in $\Deltam_\ex$). 
\end{enumerate}
When we apply $g\proj^\ex_-$ to the terms corresponding to case 2, the result is $0$ since 
$A$ contains one planted tree (with same root as that of $\CI_k^\Labhom(\tau)$) and 
$\proj^\ex_-\CI_k^\Labhom=0$ by the definition \eqref{e:defJ} of $\CJ_+$.
Therefore we have 
\begin{equs}
{} & (g\otimes \id)\Deltam_\ex  \CI_k^\Labhom(\tau) %= (g\proj^\ex_-\otimes\CI_k^\Labhom)\Delta_1\tau
= (g\otimes\CI_k^\Labhom)\Deltam_\ex\tau.
\end{equs}
Therefore
\begin{equs}
{} \PPi^g \CI_k^\Labhom(\tau) & = (g\otimes \PPi)\Deltam_\ex  \CI_k^\Labhom(\tau) =
(g\otimes \PPi \CI_k^\Labhom)\Deltam_\ex  \tau 
\\ & =(g\otimes D^kK_\Labhom*\PPi )\Deltam_\ex  \tau = D^kK_\Labhom*\PPi^g\tau.
\end{equs}
Since $X^k$ has positive degree, with a similar computation we obtain
\begin{equs}
{} \PPi^gX^k\tau & = (g\otimes \PPi)\Deltam_\ex X^k\tau =
(g\otimes \PPi X^k)\Deltam_\ex  \tau 
\\ & =(g\otimes x^k\PPi )\Deltam_\ex  \tau = x^k\PPi^g\tau
\end{equs}
and this shows that $\PPi^g$ is admissible.

Now we verify that, writing $M^{\ex}_g$ as before and $\CZ^{\ex}(\PPi^g) = (\Pi^g, \Gamma^g)$, we have
\begin{equ}
\gamma_{z\bar z}^g = (g \otimes \gamma_{z\bar z})\Deltam_{\ex}\;,\qquad 
\Pi_z^g  = (g \otimes \Pi_z)\Deltam_{\ex}\;.
\end{equ}
To show this, one first uses \eqref{e:propWanted2} to show that 
$f_{z}^g = (g \otimes f_z)\Deltam_{\ex}$,
where $f$ and $f^g$ are defined from $\PPi$ and $\PPi^g$ as in \eqref{e:defModel1}.
 Indeed, one has
\begin{equs}
f^{g}_{z}  & =   g_z^+(\PPi M^{\ex}_g)\tilde\CA^{\ex}_+
  = \left( g \otimes g_z^+(\PPi)\right) \Deltam_{\ex} \tilde\CA^{\ex}_+ \\
  & = \big( g \otimes g_z^+(\PPi) \tilde \CA^{\ex}_+ \big) \Deltam_{\ex} =  \left( g \otimes f_z \right) \Deltam_{\ex}  = f_z M_g^\ex \;.
\end{equs} 
One then uses \eqref{e:propWanted1} on $\CT^{\ex}$ to show that the required identity 
\eqref{e::nice_action} for $\Pi_z^g$ holds. Indeed, it follows that
\begin{equs}[e:defPig]
\Pi_{z}^{g} & = \left(  \PPi^{g} \otimes f^{g}_z \right) \Deltap_{\ex}  = \left(   g \otimes \PPi \otimes g
\otimes f_z \right) \left( \Deltam_{\ex} \otimes \Deltam_{\ex} \right) \Deltap_{\ex}
\\
& =\left( g\otimes \PPi \otimes f_z \right) \left(  \id \otimes \Deltap_{\ex} \right) \Deltam_{\ex}
= (g \otimes \Pi_z) \Deltam_{\ex}.
\end{equs}
In other words, we have applied \eqref{e:semidirect} for $(g,f,h)=(g,f_z,\PPi)$. 
Regarding $ \gamma_{z \bar z} $, we have analogously
\begin{equs}[e:defGammag]
\gamma_{z \bar z}^{g} 
 & = \left( f^g_z \CA_{+}^{\ex}   \otimes 
f^g_{\bar z}   \right) \Deltap_{\ex} 
  =\left( f_z M^{\ex}_{g}\CA_{+}^{\ex}  \otimes 
f_{\bar z} M^{\ex}_g \right) \Deltap_{\ex} 
\\ & =  (f_z \CA_{+}^{\ex} \otimes f_{\bar z}) \left(   M^{\ex}_g \otimes  M^{\ex}_g \right) \Deltap_{\ex}
 =  (f_z \CA_{+}^{\ex} \otimes f_{\bar z})  \Deltap_{\ex}  M^{\ex}_g
 \\ & = (g \otimes \gamma_{z \bar z} ) \Deltam_{\ex}\;.
\end{equs}
Note now that, at the level of the character $\gamma_{z\bar z}$, the bound \eqref{e:boundPi}
reads $|\gamma_{z\bar z}(\tau)| \le \|z-\bar z\|_\s^{|\tau|_+}$ as a consequence of the fact that 
$\Deltap_\ex$ preserves the sum of the $|\cdot|_+$-degrees of each factor.
On the other hand, for every character $g$ of $\CT_-^\ex$ and any $\tau$ belonging to either 
$B_\circ$ or $B_+$ (see Definition~\ref{def:CT}), the element $(g\otimes \id)\Deltam_\ex \tau$ 
is a linear combination of terms with the \textit{same} $|\cdot|_+$-degree as $\tau$.
As a consequence, it is immediate that if a given model $(\Pi,\Gamma)$ satisfies the
bounds \eqref{e:boundPi}, then the renormalised model $(\Pi^g,\Gamma^g)$ satisfies
the same bounds, albeit with different constants, depending on $g$.
We conclude that indeed for every admissible $\PPi\colon \CT^{\ex} \to\CC^\infty$ such that 
$\CZ^{\ex}(\PPi)\in\MM^\ex_\infty$, $\PPi^g$ is admissible and $\CZ^{\ex}(\PPi^g)\in\MM^\ex_\infty$.

The exact same argument also shows that if we extend the action of $\CG_-^\ex$ to all of $\MM^\ex$
by \eqref{e:defPig} and \eqref{e:defGammag}, then this yields a continuous action, which 
in particular leaves $\MM^\ex_0$ invariant as required by Definition~\ref{def:renormalize}.
\end{proof}

Note now that the group $\R^d$ acts on admissible (in the 
sense of Definition~\ref{def:admissible}) linear maps
$\PPi \colon \CT^{\ex} \to \CC^\infty$ in two different ways.
First, we have the natural action by translations $T_h$, $h \in \R^d$ given by
\begin{equ}
\bigl(T_h(\PPi) \tau\bigr)(z) \eqdef \bigl(\PPi\tau\bigr)(z-h)\;.
\end{equ}
However, $\R^d$ can also be viewed as a subgroup of $\CG_+^\ex$ 
by setting 
\begin{equ}[e:defgh]
g_h(X_i) = -h_i\;,\qquad g_h(\CJ_k^\Labhom(\tau)) = 0\;.
\end{equ}
This also acts on admissible linear maps by setting
\begin{equ}[e:actTilde]
\bigl(\tilde T_h(\PPi) \tau\bigr)(z) \eqdef  \bigl((\PPi \otimes g_h)\Deltap_\ex \tau\bigr)(z) \;.
\end{equ}
Note that if $\PPi$ is admissible, then 
one has $T_h(\PPi) X^k = \tilde T_h(\PPi)X^k$ for every $k \in \N^d$ and every $h \in \R^d$.

\begin{definition}\label{stationary}
We say that a \textit{random} linear map $\PPi \colon \CT^\ex \to \CC^\infty$ 
is \textit{stationary} if, for every (deterministic) element $h \in \R^d$, the random linear maps $T_h(\PPi)$
and $\tilde T_h(\PPi)$ are equal in law. We also assume that $\PPi$ and its derivatives, computed at $0$ have moments of all orders.
\end{definition}

By Definition~\ref{def:CT} and Remark~\ref{rem:A}, 
$\hat \CT_-^\ex$ can be identified canonically with the free algebra generated by $B_\circ$. We write
\[
\iota_\circ\colon \CT^\ex=\scal{B_\circ} \to \hat \CT_-^\ex
\] 
for the associated canonical injection.

Every random stationary map $\PPi \colon \CT^\ex \to \CC^\infty$ in the sense of Definition~\ref{stationary} 
then naturally determines a (deterministic) character $g^-(\PPi)$ of $\hat \CT_-^\ex$ by setting
\begin{equ}[e:defBPHZCharacter]
g^-(\PPi)(\iota_\circ\tau) \eqdef 
\E \left(\PPi \tau\right)(0)\;,
\end{equ}
for $\tau \in B_\circ$, where the symbol $\E$ on the right hand side denotes expectation
over the underlying probability space. This is
extended multiplicatively to all of $\hat\CT_-^\ex$. 
Then we can define a {\it renormalised} map $\hPPi\colon \CT^{\ex} \to \CC^\infty$ by
\begin{equ}[e:defHeps]
\hPPi \tau = \bigl(g^-(\PPi)\tilde\CA_-^\ex\otimes \PPi\bigr)\Deltam_\ex\tau\;,
\end{equ}
where $\tilde\CA_-^\ex\colon \CT_-^{\ex} \to \hat \CT_-^{\ex}$ is the negative twisted antipode defined in \eqref{e:pseudoant-} and satisfying \eqref{e:defAhat-}.
 
Let us also denote by $B_{\circ}^-$\label{defB0min} the (finite!) set of basis vectors $\tau\in B_\circ$ such that $|\tau|_- < 0$.
The specific choice of $g = g^-(\PPi)\tilde\CA_-^\ex$ used to define $\hPPi$ is very natural 
and canonical in the following sense. 
\begin{theorem}\label{main:renormalisation}
Let $\PPi\colon \CT^\ex \to \CC^\infty$ be stationary and admissible such that $\CZ^{\ex}(\PPi)$ is a model in $\MM^\ex_\infty$. Then, among all random functions $\PPi^g\colon \CT^\ex \to \CC^\infty $ of the form 
\[
\PPi^g=\PPi M^{\ex}_g= (g \otimes \PPi)\Deltam_{\ex} \tau, \qquad g\in\CG^\ex_-,
\] 
with $M^{\ex}_g$ as in \eqref{e:defMg}, 
$\hPPi$ is the only one such that, for all $h \in \R^d$, we have
\begin{equ}[e:claimPPi]
\E \big(\hPPi \tau\big)(h) = 0\;, \qquad \forall \, \tau\in B_\circ^-\;.
\end{equ}
We call $\hPPi$ the \emph{BPHZ renormalisation} of $\PPi$.
\end{theorem}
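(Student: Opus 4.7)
The plan is to prove existence (the BPHZ choice satisfies \eqref{e:claimPPi}) and then uniqueness, in that order.

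For the vanishing at $h=0$, I would use the defining property \eqref{e:defAhat-} of the negative twisted antipode. Since $\PPi\one = 1$ and $\E(\PPi\sigma)(0) = g^-(\PPi)(\iota_\circ\sigma)$ for every $\sigma \in \CT^\ex$ by linearly extending \eqref{e:defBPHZCharacter}, one has
\[
\E(\hPPi\tau)(0) = (g^-(\PPi)\tilde\CA^\ex_-\otimes g^-(\PPi)\iota_\circ)\Deltam_\ex\tau = g^-(\PPi)\bigl(\hat\CM^\ex_-(\tilde\CA^\ex_-\otimes\id)\Deltam_\ex\inj^\ex_-\bar\tau\bigr),
\]
where $\bar\tau$ is the class of $\iota_\circ\tau$ in $\CT^\ex_-$ (and I would use here that $\Deltam_\ex$ on $\CT^\ex$ agrees with $\Deltam_\ex$ on $\CT^\ex_-$ after applying $\iota_\circ$ to the second factor, since both are built from $\Delta_1$). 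A small verification is needed that $\bar\tau \neq \one_1$, which follows from the description of $\CJ_+$ in \eqref{e:defJ}: a single tree $\tau$ with $|\tau|_-<0$, not of the form $\CI^\Labhom_k(\sigma')$ with $|\Labhom|_\s>0$, does not lie in $\CJ_+$. Then \eqref{e:defAhat-} collapses the right-hand side to $g^-(\PPi)(\one_1)\one^\star_1(\bar\tau) = 0$.

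The main structural work is to extend the vanishing from $h=0$ to arbitrary $h\in\R^d$. The plan is to use stationarity of $\PPi$ (Definition~\ref{stationary}) and the cointeraction identity \eqref{e:propWanted1}. Stationarity yields $\E(\PPi\sigma)(u) = \sum \E(\PPi\sigma^{(1)})(0) g_{-u}(\sigma^{(2)})$ with $\Deltap_\ex\sigma = \sum\sigma^{(1)}\otimes\sigma^{(2)}$ and $g_{-u}$ the character \eqref{e:defgh}; combining this with the definition of $\hPPi$ and applying the cointeraction to reshape $(\id\otimes\Deltap_\ex)\Deltam_\ex$ gives
\[
\E(\hPPi\tau)(u) = \sum g(a_1)g(b_1)\E(\PPi a_2)(0)g_{-u}(b_2),
\]
summed over $\Deltap_\ex\tau = \sum a\otimes b$ with $\Deltam_\ex a = \sum a_1\otimes a_2$ and $\Deltam_\ex b = \sum b_1\otimes b_2$. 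Since $g_{-u}$ vanishes on any $\CJ$-element and $\Deltam_\ex \CJ^\Labhom_k = (\id\otimes\CJ^\Labhom_k)\Deltam_\ex$ preserves the $\CJ$-structure of the second factor, $g_{-u}(b_2)\neq 0$ forces $b$ to be a pure polynomial $X^k$. Now I would verify by a recursive argument using \eqref{e:def_rec_2}, based on the defining constraints of $\Trees_-(R)$ (namely $\Labn(\rho_\tau)=0$ and planted $\tau$ only via noise edges), that the only $b=X^k$ occurring in $\Deltap_\ex\tau$ with $k\ne 0$ comes from products with $X$-factors at the root, which are excluded. Thus only $b=\one$ contributes, producing exactly $\E(\hPPi\tau)(0)=0$. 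The main obstacle here is the careful case analysis distinguishing planted noise-trees $\Xi^\noise_{k,\ell}$ (for which $\Deltap_\ex$ does produce $X^{\ell-j}$ second factors, but the corresponding coefficients $\CI^\noise_k(X^j)$ turn out to satisfy the induction hypothesis) from non-planted trees.

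For uniqueness, the plan is to exploit a triangular structure. The description of $\CJ_+$ in \eqref{e:defJ} implies that any element of $B_-$ whose connected components include either a tree with $|\cdot|_-\geq 0$ or a planted tree with positive-degree trunk becomes zero in $\CT^\ex_-$; consequently $\CT^\ex_-$ is generated as a commutative algebra by the classes $\bar\tau$ for $\tau \in B_\circ^-$, so a character $g\in\CG^\ex_-$ is determined by the finite set of values $\{g(\bar\tau)\}_{\tau \in B_\circ^-}$. Specializing \eqref{e:claimPPi} to $h=0$ gives a system of $|B_\circ^-|$ scalar equations in these unknowns. Writing $\Deltam_\ex\tau = \bar\tau\otimes\one + \sum_{\text{other}} \tau^{(1)}\otimes\tau^{(2)}$, where ``other'' terms have $\tau^{(1)}$ a forest of trees in $B_\circ^-$ each strictly smaller than $\tau$ (say in the bigrading $|\cdot|_\bi$ of Section~\ref{sec:bigraded}), and using $\PPi\one=1$, the $\tau$-equation reads $g(\bar\tau) = -\sum_{\text{other}} g(\tau^{(1)})\E(\PPi\tau^{(2)})(0)$. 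Inducting on the bigrading, this uniquely determines each $g(\bar\tau)$, giving the unique character, which by the existence step must be $g^-(\PPi)\tilde\CA^\ex_-$.
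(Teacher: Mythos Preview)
Your argument at $h=0$ via the twisted antipode identity \eqref{e:defAhat-} and your uniqueness argument via the triangular structure of $\Deltam_\ex$ are essentially the paper's. There is however a genuine gap in the existence part. The identity \eqref{e:defAhat-} is formulated on the image of $\inj^\ex_-$, so your chain of equalities ending in $g^-(\PPi)\bigl(\hat\CM^\ex_-(\tilde\CA^\ex_-\otimes\id)\Deltam_\ex\inj^\ex_-\bar\tau\bigr)$ only holds when $\iota_\circ\tau = \inj^\ex_-\bar\tau$, i.e.\ when $\iota_\circ\tau \notin \CJ_+$. As you yourself note, this is exactly the condition $\tau \in B_\circ^\sharp$. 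But $B_\circ^-$, over which \eqref{e:claimPPi} is stated, is strictly larger: it contains planted trees $\tau = \CI^\Labhom_k(\sigma)$ with $|\Labhom|_\s > 0$, for which $\iota_\circ\tau \in \CJ_+$ and hence $\bar\tau = 0$ in $\CT^\ex_-$. For these your twisted antipode computation is vacuous, and you never return to treat them. (Relatedly, $B_\circ^-$ is not $\Trees_-(R)$: it carries no constraint on $\Labn(\rho_\tau)$, so the ``$X$-factors at the root are excluded'' clause in your $h\neq 0$ step is unjustified as stated.)

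This missing case is not a formality. For such a $\tau$, admissibility gives $\hPPi\tau = D^kK_\Labhom * \hPPi\sigma$, so
\[
\E(\hPPi\tau)(0) = (-1)^{|k|}\int K_\Labhom(-y)\,D^k\E(\hPPi\sigma)(y)\,dy\;,
\]
which has no reason to vanish termwise. The paper closes it by using stationarity of $\hPPi$ to expand $\E(\hPPi\sigma)(y)$ as a polynomial in $y$ with coefficients $\E(\hPPi L_{\bar\Labn}\sigma)(0)$, and then observing that either $D^k$ kills the monomial $y^{\Sigma\bar\Labn}$, or $|L_{\bar\Labn}\sigma|_\s < 0$ so that $L_{\bar\Labn}\sigma \in B_\circ^-$ with strictly fewer edges---allowing an induction on the number of edges. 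Your cointeraction route for $h\neq 0$ does not sidestep this: the polynomial-second-factor part of $\Deltap_\ex\CI^\Labhom_k(\sigma)$ is $(\CI^\Labhom_k\otimes\id)$ applied to that of $\sigma$, so every first factor is again planted with trunk type $\Labhom$, and the recursion never exits $B_\circ^-\setminus B_\circ^\sharp$.
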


%Theorem~\ref{main:renormalisation} is an immediate consequence of Proposition~\ref{prop:BPHZ}.
%
%\begin{proposition}\label{prop:BPHZ}
%Let $\PPi\colon \CT^\ex \to \CC^\infty$ be stationary %and admissible
% and let $\hat \PPi$ be defined by \eqref{e:defHeps}. 
%Then, among all functions $\PPi^g\colon \CT^\ex \to \CC^\infty $ of the form 
%\[
%\PPi^g=\PPi M^{\ex}_g= (g \otimes \PPi)\Deltam_{\ex} \tau, \qquad g\in\CG^\ex_-,
%\] 
%with $M^{\ex}_g$ as in \eqref{e:defMg}, 
%$\hPPi$ is the only one such that, for all $\tau \in B_\circ^-$ and all $h \in \R^d$, we have
%\begin{equ}[e:claimPPi]
%\E \big(\hPPi \tau\big)(h) = 0\;.
%\end{equ}
%\end{proposition}
\begin{proof}
We first show that $\hPPi$ does indeed have the desired property.
We first consider $h = 0$ and we write $\PPi_0\colon \CT^\ex \to \R$ for the map (not to be confused with $\Pi_0$)
\begin{equ}
\PPi_0 \tau = \E (\PPi \tau)(0)\;.
\end{equ}
Let us denote by $B_\circ^\sharp$\label{defBsharp} the set of $\tau \in B_\circ^{-}$ which are {\it not} of the form $\CI_k^\Labhom(\sigma)$ with $|\Labhom|_\s>0$.
The main point now is that, thanks to the definitions of $g^-(\PPi)$ and $\Deltam_\ex$, we
have the identity
\begin{equ}
\bigl(\id \otimes \PPi_0\bigr)\Deltam_\ex  = 
\bigl(\id \otimes g^-(\PPi)\bigr)\Deltam_\ex \iota_\circ\;, \qquad \text{on} \quad \CT^\ex.
\end{equ}
Combining this with \eqref{e:defHeps}, we obtain for all $\tau \in B_\circ^\sharp$
\begin{equs}
\E \big(\hPPi \tau\big)(0) &= 
\bigl(g^-(\PPi)\tilde\CA_-^\ex\otimes \PPi_0\bigr)\Deltam_\ex \tau=
\bigl(g^-(\PPi)\tilde\CA_-^\ex\otimes g^-(\PPi)\bigr)\Deltam_\ex \iota_\circ \tau\\
&=
g^-(\PPi)\hat \CM_-^{\ex}(\tilde\CA_-^\ex\otimes\id)\Deltam_\ex \iota_\circ\tau = 0\;,
\end{equs} 
by the defining property \eqref{e:defAhat-} of the negative twisted antipode, 
since $\iota_\circ \tau$ belongs both to the image of $\inj_-^\ex$ and
to the kernel of $\one^\star_{1}$. 

Let now $\tau\in B_\circ^-$ be of the form $\CI_k^\Labhom(\sigma)$ with $|\Labhom|_\s>0$, 
i.e. $\tau\in B_\circ^-\setminus B_\circ^\sharp$. Arguing as in the proof of Theorem 
\ref{theo:algebra} we see that
\begin{equ}
\Deltam_\ex \iota_\circ\CI_k^\Labhom(\sigma)  = (\id \otimes \iota_\circ\CI_k^\Labhom)\Deltam_\ex \sigma\;.
\end{equ}
It then follows that
\begin{equs}
\E \big(\hPPi \tau\big)(0) 
%&= \hat\CM_-^\ex(g^-(\PPi)\tilde\CA_-^{\ex}\otimes g^-(\PPi))\Deltam_\ex \iota_\circ\CI_k^\Labhom(\sigma) \\ & 
= \hat\CM_-^\ex(g^-(\PPi)\tilde\CA_-^{\ex}\otimes g^-(\PPi)\iota_\circ\CI_k^\Labhom)\Deltam_\ex \sigma\;.
\end{equs}
The definition of $g^-(\PPi)$ combined with the fact that 
$\PPi$ is admissible and the definition of $\hPPi$ now implies that 
\begin{equ}
\E \big(\hPPi \tau\big)(0)
= \int_{\R^d} D^k K_\Labhom(-y) \E (\hPPi \sigma)(y)\,dy\;,
\end{equ}
where $D^k K_\Labhom$ should be interpreted in the sense of distributions. In particular,
one has
\begin{equ}[e:integral]
\E \big(\hPPi \tau\big)(0)
= (-1)^{|k|}\int_{\R^d} K_\Labhom(-y) D^k \E (\hPPi \sigma)(y)\,dy\;.
\end{equ}
For $\sigma = (F,\hat F, \Labn,\Labo,\Labe)$ and $\bar \Labn \colon N_F \to \N^d$ with $\bar \Labn \le \Labn$,
we now write $L_{\bar \Labn}\sigma = (F,\hat F, \Labn-\bar \Labn,\Labo,\Labe)$ and we note that 
for $g_h$ as in \eqref{e:defgh} one has the identity
\begin{equ}
(\id \otimes g_h)\Deltap_\ex \sigma = \sum_{\bar \Labn} \binom{\Labn}{\bar \Labn} (-h)^{\Sigma \bar \Labn} L_{\bar \Labn}\sigma\;,
\end{equ}
so that the stationarity of $\PPi$ implies that
\begin{equ}
\E (\hPPi \sigma)(y) = \E (\tilde T_{-y}\hPPi \sigma)(0)
=  \sum_{\bar \Labn} \binom{\Labn}{\bar \Labn} y^{\Sigma \bar \Labn} \E (\hPPi L_{\bar \Labn}\sigma)(0)\;.
\end{equ}
Plugging this into \eqref{e:integral}, we conclude that 
the terms for which there exists $i$ with $k_i > (\Sigma \bar \Labn)_i$ vanish.
If on the other hand one has $k_i \le (\Sigma \bar \Labn)_i$ for every $i$, then 
$|k|_\s \le |\Sigma \bar \Labn|_\s$ and one has
\begin{equ}
|L_{\bar \Labn}\sigma|_\s = |\sigma|_\s - |\Sigma \bar \Labn|_\s \le |\sigma|_\s - |k|_\s
\le |\sigma|_\s - |k|_\s + |\Labhom|_\s = |\CI_k^\Labhom(\sigma)|_\s < 0\;,
\end{equ}
so that $L_{\bar \Labn}\sigma\in B_\circ^-$ and has strictly less colourless edges than $\tau=\CI_k^\Labhom(\sigma)$. If $\sigma$ has only one colourless edge, then $\sigma$ belongs to $B_\circ^\sharp$; therefore the proof follows by induction over the number of colourless edges of $\tau$. 

Let us now turn to the case $h \neq 0$. First, we claim that, setting $\hPPi_h = \tilde T_h(\hPPi)$, one has
\begin{equ}[e:almostWanted]
\E \big(\hPPi_h \tau\big)(h) = 0\;.
\end{equ} 
This follows from the fact that $\hat \PPi$ is 
stationary since the action $\tilde T$ commutes with that
of $\CG_-^\ex$ as a consequence of \eqref{e:propWanted1}, combined
with the fact that $(f \otimes g_h) \Deltam_\ex \tau = g_h(\tau)$
for every $f \in \CG_-^\ex$, every $\tau \in \CT_+^\ex$
and every $g_h$ of the form \eqref{e:defgh}.

On the other hand, we have
\begin{equ}
\hPPi \tau = \tilde T_{-h}(\hPPi_h) \tau\;.
\end{equ}
It follows immediately from the
expression for the action of $\tilde T$ that
$\hPPi \tau$ is a deterministic linear combination of terms
of the form $\hPPi_h \sigma$ with $|\sigma|_- \le |\tau|_-$, so that the
claim \eqref{e:claimPPi} follows from \eqref{e:almostWanted}.

It remains to show that $\hat \PPi$ is the only function of the type $\PPi^g$ with
this property. For this, note that every such function is also
of the form $\hPPi^g$ for some different $g \in \CG^{\ex}_-$, so that we only need to show that
for every element $g$ different from the identity, there exists $\tau$ such that 
$\E \big(\hPPi^g \tau\big)(0) \neq 0$.

Using Definitions~\ref{def:CT} and~\ref{CT^ex_pm}, Remark~\ref{rem:A} and the 
identification \eqref{hatandwithouthat}, $\CT_-^\ex$ can be canonically identified with 
the free algebra generated by $B_\circ^\sharp$. Therefore the character $g$ is completely characterised by its evaluation on $B_\circ^\sharp$ and it is the identity if and only if this evaluation vanishes identically.
Fix now such a $g$ different from the identity and let $\tau \in B_\circ^\sharp$ be such that $g(\tau) \neq 0$, and
such that $g(\sigma) = 0$ for all $\sigma \in B_\circ^\sharp$ with the property that either $|\sigma|_- < |\tau|_-$
or $|\sigma|_- = |\tau|_-$, but $\sigma$ has strictly less colourless edges than $\tau$.
Since $B_\circ^\sharp$ is finite and $g$ doesn't vanish identically, such a $\tau$ exists.

We can then also view $\tau$ as an element of $\CT^\ex$ and we write
\begin{equ}
\Deltam_\ex \tau = \tau \otimes \one_1 + \sum_i \tau_i^{(1)} \otimes \tau_i^{(2)}\;,
\end{equ}
so that 
\begin{equ}[e:actPPig]
\hPPi^g \tau = g(\tau) + \sum_i g(\tau_i^{(1)}) \hPPi \tau_i^{(2)}\;.
\end{equ}
Note now that $\Deltam_\ex$ preserves the $|\cdot|_-$-degree so that 
for each of the term in the sum it is either the case that 
$|\tau_i^{(1)}|_- < |\tau|_-$ or that $|\tau_i^{(2)}| \le 0$.
In the former case, the corresponding term in \eqref{e:actPPig} vanishes identically by the definition of $\tau$.
In the latter case, its expectation vanishes at the origin if $|\tau_i^{(2)}| < 0$ by \eqref{e:claimPPi}.
If $|\tau_i^{(2)}| = 0$ then, since $\tau_i^{(2)}$ is not proportional to $\one_1$ (this is the first
term which was taken out of the sum explicitly), $\tau_i^{(2)}$ must contain at least one colourless edge.
Since $\Deltam_\ex$ also preserves the number of colourless edges, this implies that again 
$g(\tau_i^{(1)}) = 0$ by our construction of $\tau$. We conclude that
one has indeed $\E(\hPPi^g \tau)(0) = g(\tau) \neq 0$, as required.
\end{proof}

\begin{remark}\label{rem:6.19}
The rigidity apparent in \eqref{e:claimPPi} suggests that for a large class of random admissible maps
$\PPi^{(\eps)}\colon \CT^{\ex} \to \CC^\infty$ built from some stationary processes 
$\xi_\Labhom^{(\eps)}$ by \eqref{e:admissible} and \eqref{e:canonical}, the corresponding collection of models
built from $\hPeps$ defined as in \eqref{e:defHeps} should converge to 
a limiting model, provided that the  $\xi_\Labhom^{(\eps)}$ converge in a suitable
sense as $\eps\to 0$. This is indeed the case, as shown in the companion ``analytical'' article
\cite{Ajay}. It is also possible to verify that the renormalisation procedures
that were essentially ``guessed'' in \cite{KPZ,reg,wong,2015arXiv150701237H} 
are precisely of BPHZ type, see Section~\ref{example} and Section~\ref{sec:6.5} below.
\end{remark}

\begin{remark}
One immediate consequence of Theorem~\ref{main:renormalisation} is that, for any $g \in \CG_-^\ex$ and any
admissible $\PPi$, if we set $\PPi^g = (g \otimes \PPi)\Deltam_\ex$ as in Theorem~\ref{theo:algebra}, then the BPHZ renormalisation of $\PPi^g$ is $\hPPi$. In particular, the BPHZ renormalisation of the canonical lift of a collection 
of stationary processes
$\{\xi_\noise\}_{\noise \in \Lab_-}$ as in Remark~\ref{rem:canonical} is identical to that 
of the centred collection $\{\tilde \xi_\noise\}_{\noise \in \Lab_-}$ where $\tilde \xi_\noise = \xi_\noise - \E \xi_\noise(0)$.
\end{remark}

\begin{remark}
Although the map $\PPi \mapsto \hPPi$ selects a ``canonical'' representative in the class of 
functions of the form $\PPi^g$, this does not necessarily mean that every stochastic PDE 
in the class described by the underlying rule $R$ can be renormalised in a canonical way.
The reason is that the kernels $K_\Labhom$ are typically some \textit{truncated} version of 
the heat kernel and not simply the heat kernel itself. Different choices of the kernels
$K_\Labhom$ may then lead to different choices of the renormalisation constants for the
corresponding SPDEs.
\end{remark}

\subsection{The reduced regularity structure}
\label{sec:reduced}

In this section we study the relation between the regularity structure $\TT^\ex$ introduced in this paper and the one originally constructed in \cite[Sec. 8]{reg}. 

\begin{definition}
Let us call an admissible map $\PPi\colon \CT^\ex \to \CC^\infty$ {\it reduced}
if the second identity in \eqref{e:canonical} holds, namely $\PPi \CR_\alpha(\tau) =  \PPi\tau$ for all $\tau\in\CT^\ex$ and $\alpha \in \Z^d\oplus\Z(\Lab)$. 
We also define the idempotent map $\CQ_1\colon \Tra \to \Tra$ by
\begin{equ}
\CQ_1 \colon (F,\hat F,\Labn,\Labo,\Labe) \mapsto (F,\dd_1 \circ \hat F,\Labn,0,\Labe)\;,
\end{equ} 
with $\dd_1\colon \N\to\N$, $\dd_1(n)=n\un{(n\ne 1)}$,
and set $\CQ = \CQ_1 \CK$. 
\end{definition}
For example
\[
\CQ_1\left(
\begin{tikzpicture}[scale=0.1,baseline=0.4cm]
          \node at (-6,10) (d) {}; %d
      \node at (0,0) (b) {}; %b
         \node at (6,10)  (f) {}; %f
          \draw[kernel1,black] (f) node[red,round2] {\tiny $ \Labn,\Labo $}  -- node [rect1] {\tiny$\Labhom,\Labe$}   (b);
     \draw[kernel1,black] (b)node[blue,round3] {\tiny $ \Labn,\Labo $ } -- node [rect1] {\tiny$\Labhom,\Labe$}   (d);
     \draw[kernel1] (d) node[rect2] {\tiny $ \Labn $};

\end{tikzpicture} 
\right) = 
\begin{tikzpicture}[scale=0.1,baseline=0.4cm]
          \node at (-6,10) (d) {}; %d
      \node at (0,0) (b) {}; %b
         \node at (6,10)  (f) {}; %f
          \draw[kernel1,black] (f) node[rect2] {\tiny $ \Labn $}  -- node [rect1] {\tiny$\Labhom,\Labe$}   (b);
     \draw[kernel1,black] (b)node[blue,round3] {\tiny $ \Labn $ } -- node [rect1] {\tiny$\Labhom,\Labe$}   (d);
     \draw[kernel1] (d) node[rect2] {\tiny $ \Labn $};

\end{tikzpicture} 
\]
An admissible map is reduced if and only if $\PPi \tau = \PPi \CQ \tau$ for every
$\tau \in \CT^\ex$. Moreover  $\CQ$ commutes with the maps $\CK_i$, $\hat \CK_i$ and $\JJ$, and preserves
the $|\cdot|_-$-degree, so that
it is in particular also well-defined on $\CT^\ex$, $\hat \CT_+^\ex$, $\hat \CT_-^\ex$ and  $\CT_-^\ex$.
It does however \textit{not} preserve the $|\cdot|_+$-degree so that it is
\textit{not} well-defined on $\CT_+^\ex$! Indeed,
the $|\cdot|_+$-degree depends on the $\Labo$ decoration, which is set to $0$ by $\CQ$, 
see Definition~\ref{homog}.

\begin{definition} \label{def:6.23}
Let $\CT$ and $\hat \CT_+$ respectively be the subspaces of $\CT^{\ex}$ and 
$\hat \CT_+^{\ex}$ given by 
\begin{equ}
\CT \eqdef \{\tau \in \CT^\ex \,:\, \CQ \tau = \tau\}\;,\quad
\hat \CT_+ \eqdef \{\tau \in \hat \CT_+^\ex \,:\, \CQ \tau = \tau\}\;, \quad 
\end{equ}
We also set $\CT_+ = \proj_+^\ex \hat \CT_+$, where $\proj^\ex_\pm:\hat\CT_{\pm}^{\ex}\to \CT_{\pm}^{\ex}$ is defined after \eqref{eq:Hopf}. 
\end{definition}

The reason why we define $\CT_+$ in this slightly more convoluted way 
instead of setting it equal to $\{\tau \in \CT_+^\ex \,:\, \CQ \tau = \tau\}$
is that although $\CQ$ is well-defined on $\hat \CT_+^\ex$, it is not well-defined on 
$\CT_+^\ex$ since it does not preserve the $|\cdot|_+$-degree, as already mentioned above. 
Since $\CQ$ is multiplicative, $\CT_+$ is a subalgebra of $\CT_+^\ex$. We set
\begin{equ}[e:Deltarestr]
\Delta\eqdef\Deltap_\ex\colon\CT\to\CT\otimes\CT_+ \;,\quad
\Deltap\eqdef\Deltap_\ex\colon\CT_+\to\CT_+\otimes\CT_+
\;.
\end{equ} 
Looking at the recursive definition \eqref{e:defA+} of 
the antipode $\CA_+^\ex$, it is clear that it also maps $\CT_+$ into itself,
so that $\CT_+$ is a Hopf subalgebra of $\CT_+^\ex$. Moreover $\Delta$ turns $\CT$
into a co-module over $\CT_+$.

We can therefore define $\CG_+$ as the characters group of $\CT_+$ and introduce the 
action of $\CG_+$ on $\CT$:
\[
\CG_+\ni f\to \Gamma_f:\CT\to\CT, \qquad \Gamma_f\tau\eqdef(\id\otimes f)\Delta\tau,
\qquad \tau\in\CT.
\]
If we grade $\CT$ by $|\cdot|_+$ and we define $\TT=(A,\CT,\CG_+)$ 
where $A\eqdef\{|\tau|_+:\tau\in B_\circ, \ \tau=\CQ\tau\}$ and $\CT^\ex=\scal{B_\circ}$ 
as in Definition~\ref{def:CT}, then arguing as in the proof of Proposition~\ref{TTex}, 
we see that the action of $\CG_+$ on $\CT$ satisfies \eqref{e:coundGroup}. Therefore $\TT$ 
is a regularity structure as in Definition~\ref{def:regStruct}.

We set now $\tilde\CJ_k^{\Labhom}:\CT^\ex\to\CT^\ex_+$ and $\tilde\CJ_k^{\Labhom}:\CT\to\CT_+$,
\begin{equ}[e:tildeJ]
\tilde\CJ_k^{\Labhom}(\tau)= \sum_{|m |_{\s} < | \CJ_k^{\Labhom}(\tau) |_{+} } \frac{(-X)^m}{m!} \CJ_{k+m}^{\Labhom}(\tau)\;.
\end{equ}
Suppose that $\{\Labhom,\ii\}\subseteq\Lab$ with $|\Labhom|_\s>0$ and $|\ii|_\s<0$. We set $\Xi^\ii:=\CI^\ii_0(\one)$. Then we have by \eqref{e:def_rec_2} and \eqref{e:defDeltaH2} for all $\tau\in\CT$
\begin{equs}[0]
   \Delta \one =  \one \otimes \one\;,\quad
  \Delta \Xi^\ii = \Xi^\ii \otimes \one\;,\quad \Delta X_i = X_i  \otimes \one + \one \otimes X_i\;,
     \\ 
 \Delta \CI^\Labhom_k(\tau) = (\CI^\Labhom_k \otimes 1)\Delta \tau + \sum_{\ell,m} {X^\ell \over \ell!} \otimes {X^m \over m!}
\tilde\CJ^\Labhom_{k+\ell+m}(\tau)\;,\label{e:recursiveDelta}
\end{equs}
as well as
\begin{equs}[0]
 \Deltap \one =  \one \otimes \one \;,\quad
   \Deltap X_i = X_i  \otimes \one + \one \otimes X_i\;,\\
\Deltap \tilde\CJ^\Labhom_{k}(\tau) = 1 \otimes \tilde\CJ^\Labhom_{k}\tau   \label{e:recursiveDeltap}
    +  \sum_{\ell}  \Big( \tilde\CJ^\Labhom_{k+\ell} \otimes \frac{(-X)^{\ell}}{\ell!} \Big)  \Delta \tau\;,
\end{equs}
with the additional property that both maps are multiplicative with respect to the tree product. 

We see therefore that the operators $\Delta \colon \CT \to \CT \otimes \CT_+$ and
$\Deltap\colon \CT_+ \to \CT_+\otimes \CT_+$ are isomorphic to those defined in \cite[Eq.~(8.8)--(8.9)]{reg}.
%As far as $\Deltap$ is concerned, exploiting the multiplicativity of $\Deltap$ and the
%fact that $\Deltap {X^k \over k!} = \sum_{m+n = k} {X^m \over m!} \otimes {X^n \over n!}$, one has the identity
 This shows that the regularity structure $\TT$, associated to a subcritical complete rule $R$, is 
isomorphic to the regularity structure associated to a subcritical equation constructed in 
\cite[Sec.~8]{reg}, modulo a simple change of coordinates. Note that this change of coordinates is ``harmless''
as far as the link to the analytical part of \cite{reg} is concerned since it 
does not mix basis vectors of different degrees. 

As explained in Remark~\ref{extended}, the superscript `$\ex$' stands for extended: the 
reason is that the regularity structure $\TT^\ex$ is an extension of $\TT$ in the sense 
that $\TT \subset \TT^\ex$ with the inclusion interpreted as in \cite[Sec.~2.1]{reg}. 
By contrast, we call $\TT$ the {\it reduced} regularity structure. 

By the definition 
of $\CQ$, the extended structure $\CT^\ex$ encodes more information since we keep
track of the effect of the action of $\CG_-$ by storing the (negative) homogeneity of the 
contracted subtrees in the decoration $\Labo$ and by colouring the corresponding nodes; 
both these details are lost when we apply $\CQ$ and therefore
in the reduced structure $\CT$.

Note that if $\PPi \colon \CT^\ex \to \CC^\infty$ is such that $ \CZ^{\ex}(\PPi) = (\Pi, \Gamma) $ is a model of $\TT^\ex$,
then the restriction $\CZ(\PPi)$ of $\CZ^{\ex}(\PPi)$ to $\TT$ is automatically again a model. 
This is always the case, irrespective of whether $\PPi$ is reduced or not, since the action of $\CG_+^\ex$ leaves $\CT$
invariant. This allows to give the following
definition.

\begin{definition}\label{def:modelspace2}
We denote by $\MM_{\infty}$ the space of all smooth models for $\TT$, in the sense of Definition~\ref{def:model}, obtained by restriction to $\CT$ of
$\CZ^{\ex}(\PPi)$ for some reduced admissible linear map $\PPi\colon \CT^{\ex} \to\CC^\infty$. 
We endow $\MM_{\infty}$ with the system of pseudo-metrics \eqref{e:defDistModel} and we denote by $\MM_0$ the completion of this metric space. 
\end{definition}

\begin{remark}
The restriction that $\PPi$ be reduced may not seem very natural in view of the discussion
preceding the definition. It follows however from Theorem~\ref{theo:models} below that 
lifting this restriction makes no difference whatsoever since it implies in particular that
every smooth admissible model on $\TT$ is of the form $\CZ(\PPi)$ for some reduced $\PPi$.
\end{remark}

\begin{remark}\label{rem:fails}
By restriction of $ \CZ^{\ex}(\PPi M_g) $ to $\TT$ for $g\in\CG_-^\ex$, we get a renormalised model $ \CZ(\PPi M_g) $ which covers all the examples treated so far in singular SPDEs. 
It is however not clear a priori whether we really have an action of a 
suitable subgroup of $\CG_-^\ex$ onto $\MM_{\infty}$ or $\MM_0$. This is because the coaction
of $\Deltam_\ex$ on $\CT^\ex$ and $\CT_+^\ex$ fails to leave the reduced sector invariant.
If on the other hand we tweak this coaction by setting $\Deltam = (\id\otimes \CQ)\Deltam_\ex$, 
then unfortunately $\Deltap$ and $\Deltam$ do
\textit{not} have the cointeraction property \eqref{e:intertwine}, which was crucial for our construction, see Remark~\ref{explanation}. See Corollary \ref{final} below for more on $\Deltam$.
\end{remark}

\begin{remark}\label{rem:freduced}
In accordance with \cite[Formula (8.20)]{reg}, it follows from \eqref{e:exprfz} and the binomial identity that,
for all $\CJ_k^{\Labhom}(\tau)\in\CT^\ex_+$ with $|\CJ_k^{\Labhom}(\tau)|_+>0$
\begin{equ}
f_z(\tilde\CJ_k^{\Labhom}(\tau))   = 
- \big( D^{k } K_{\Labhom} * \Pi_{z} \tau \big)(z)\;.
\end{equ}
\end{remark}

\begin{remark}\label{after}
The negative twisted antipode 
$\tilde\CA_-^{\ex}\colon \CT_-^{\ex} \to \hat \CT_-^{\ex}$ of Proposition 
\ref{prop:twisted-} satisfies the
identity $\CQ \tilde \CA_-^\ex  = \CQ \tilde \CA_-^\ex \CQ$. This follows from
the induction \eqref{e:pseudoant-}, the multiplicativity of $\CQ$, and the formula
\begin{equ}[e:intertwiningQDelta]
(\CQ \otimes \CQ)\Deltam_\ex \CQ = 
(\CQ \otimes \CQ)\Deltam_\ex\;,
\end{equ}
where $\Deltam_\ex\colon \CT^\ex \to \CT_-^\ex \otimes \CT^\ex$.
Therefore, if a stationary admissible $\PPi$ is (almost surely) reduced, then the character $g^-(\PPi)$
is also reduced in the sense that $g^-(\PPi)(\CQ\tau) = g^-(\PPi)(\tau)$.
Using again \eqref{e:intertwiningQDelta}, it follows immediately that 
$\hPPi$ as given by \eqref{e:defHeps} is again reduced, so that the 
class of reduced models is preserved by the BPHZ renormalisation procedure.
\end{remark}

There turn out to be two natural subgroups of $\CG_-^\ex$ that are determined 
by their values on $\CQ \CT_-^\ex$:
\begin{itemize}
\item We set $ \CG_-\eqdef\{ g\in \CG^{\ex}_-: 
g( \tau)= g( \CQ \tau) , \, \forall\, \tau \in \CT_{-}^{\ex}  \}$. This is the most natural 
subgroup of $\CG_-^\ex$ since it contains the characters $ 
g^{-}(\PPi) \tilde \CA_{-}^{\ex} $ used for the definition of
$ \hat \PPi $ in \eqref{e:defHeps}, as soon as $ \PPi = \PPi\CQ $. 
The fact that $\CG_-$ is a subgroup follows from the property \eqref{e:intertwiningQDelta}. 
\item We set $\CG_{-}^{a} \eqdef \lbrace  g \in \CG^{\ex}_{-} \; 
: \;  g(\tau) = 0, \, \forall \tau \in  
\CT^{c}_{-} \rbrace  $ where $ \CT^{c}_{-} $ is the bialgebra ideal of $ \CT_{-}^{\ex} $ generated by 
$\lbrace \tau \in B_-,\; \CQ \tau \neq \tau \rbrace$. Then one can identify $  \CG_{-}^{a} $   with the group of characters of the Hopf algebra  $ \left( \CT_{-}^{\ex} / \CT^{c}_{-}, \Deltam_{\ex} \right) $. 
It turns out that this is simply the polynomial Hopf algebra with generators 
$\{\tau \in B_-\,:\, |\tau|_- < 0,\,\CQ \tau = \tau\}$, so that $ \CG_{-}^{a} $
is abelian.
\end{itemize}
We then have the following result.

\begin{theorem}\label{thm:renorred}
There is a continuous action $R$ of $\CG_-$ onto $\MM_0$ with the property that, for every $g \in \CG_-$
and every reduced and admissible $\PPi \colon \CT^\ex \to \CC^\infty$ with $\CZ^\ex(\PPi) \in \MM_0^\ex$, one has
$R_g \CZ(\PPi) = \CZ(\PPi M_g)$.
\end{theorem}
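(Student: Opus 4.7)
The plan is to realise $R_g$ as a descent of the $\CG_-^\ex$-action of Theorem~\ref{theo:algebra} onto the reduced sector. The key point is that the subgroup $\CG_-$ is characterised precisely by the identity $g = g\CQ$ on $\CT_-^\ex$, and this is exactly the algebraic condition needed for the $g$-action to preserve reducedness.

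First I would show that the map $\PPi \mapsto \PPi M_g$ preserves reducedness whenever $g \in \CG_-$. If $\PPi = \PPi\CQ$ and $g = g\CQ$, then
\[
\PPi M_g = (g\otimes \PPi)\Deltam_\ex = (g\otimes\PPi)(\CQ\otimes\CQ)\Deltam_\ex,
\]
and \eqref{e:intertwiningQDelta} gives $(\CQ\otimes\CQ)\Deltam_\ex = (\CQ\otimes\CQ)\Deltam_\ex\CQ$, from which $\PPi M_g = \PPi M_g\CQ$ follows. Combined with the admissibility argument already established in the proof of Theorem~\ref{theo:algebra}, this lets me define $R_g$ on $\MM_\infty$ by $R_g \CZ(\PPi) \eqdef \CZ(\PPi M_g)$, where $\CZ(\cdot)$ denotes the restriction to $\TT$ of $\CZ^\ex(\cdot)$.

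Next I would derive a formula for $R_g(\Pi,\Gamma)$ depending only on $(\Pi,\Gamma)\in\MM$ and not on the lift $\PPi$. Using \eqref{e::nice_action} together with $g = g\CQ$ and the reducedness $\Pi_z = \Pi_z\CQ$, for every $\tau\in\CT$ one has
\[
\Pi_z^g(\tau) = \Pi_z M_g(\tau) = (g\otimes\Pi_z)(\CQ\otimes\CQ)\Deltam_\ex\tau,
\]
and analogously $\gamma_{z\bar z}^g(\tau) = (g\otimes\gamma_{z\bar z})(\CQ\otimes\CQ)\Deltam_\ex\tau$. Because $(\CQ\otimes\CQ)\Deltam_\ex\tau$ lies in $\CT_-^\ex\otimes\CT$, the right-hand side involves only $\Pi_z|_\CT$ and $\gamma_{z\bar z}$, so it is well-defined for any $(\Pi,\Gamma)\in\MM$. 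I would then verify that these formulas define a model of $\TT$: the algebraic identities $\Pi_z^g \Gamma_{z\bar z}^g = \Pi_{\bar z}^g$ and $\Gamma_{z\bar z}^g\Gamma_{\bar z \tilde z}^g = \Gamma_{z\tilde z}^g$ follow from the cointeraction \eqref{e:propWanted1} of Theorem~\ref{CTpmexHopf} and the coassociativity of $\Deltam_\ex$, exactly as in the proof of Theorem~\ref{theo:algebra}; the analytic bounds \eqref{e:boundPi} follow from the second identity of \eqref{e:degreeDelta}, which ensures that each term appearing in $(\CQ\otimes\CQ)\Deltam_\ex\tau$ has second factor of $|\cdot|_+$-degree equal to $|\tau|_+$.

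Finally, continuity of $R_g\colon\MM_0\to\MM_0$ is immediate since for each fixed $\tau$ the formula is a finite linear combination with coefficients depending continuously on $g$, and the group property $R_{g_1 g_2} = R_{g_1}R_{g_2}$ follows on the dense subset $\MM_\infty$ from the corresponding identity in $\CG_-^\ex$ together with coassociativity of $\Deltam_\ex$; one also needs that $\CG_-$ is indeed a subgroup of $\CG_-^\ex$, which again reduces to \eqref{e:intertwiningQDelta}. The main obstacle is Step 2: expressing $R_g$ intrinsically on $\MM$ requires that applying $\Pi_z$ can be replaced by applying $\Pi_z\CQ$ in the formula $(g\otimes\Pi_z)\Deltam_\ex\tau$, and this replacement is justified \emph{precisely} by $g = g\CQ$. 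It is exactly this step that fails for general $g\in\CG_-^\ex$ and thus forces the restriction to the subgroup $\CG_-$, in accordance with Remark~\ref{rem:fails}.
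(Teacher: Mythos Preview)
There is a genuine gap in your Step~2. You write ``the reducedness $\Pi_z = \Pi_z\CQ$'' and use it to deduce $\Pi_z^g(\tau) = \Pi_z M_g(\tau) = (g\otimes\Pi_z)(\CQ\otimes\CQ)\Deltam_\ex\tau$ from \eqref{e::nice_action}. But \eqref{e::nice_action} is an identity on $\CT^\ex$, namely $\Pi_z^{\ex,g} = \Pi_z^\ex M_g^\ex$. To restrict this to $\CT$ and express it in terms of the reduced model only, you would need $\Pi_z^\ex = \Pi_z^\ex\CQ$ on $\CT^\ex$; that is, you would need the \emph{model map} $\Pi_z^\ex$ (not just $\PPi$) to be reduced. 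This fails. Indeed $\Pi_z^\ex = (\PPi\otimes f_z^\ex)\Deltap_\ex$, and the coaction $\Deltap_\ex$ truncates its Taylor sums according to the $|\cdot|_+$-degree, which depends on the $\Labo$-decoration that $\CQ$ erases. The example in Section~\ref{example} makes this explicit: $\Deltap_\ex$ applied to the $\alpha$-decorated tree and to its $\CQ$-image produce genuinely different outputs, so $\Pi_z^\ex\sigma \neq \Pi_z^\ex\CQ\sigma$ for the second factors $\sigma$ arising in $\Deltam_\ex\tau$. Hence your formula computes $\Pi_z\CQ M_g^\ex\tau$, which is \emph{not} $\Pi_z^\ex M_g^\ex\tau$, and so does not coincide with $\CZ(\PPi M_g)$ restricted to $\TT$. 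This is precisely the obstruction spelled out in Remark~\ref{rem:fails}: setting $\Deltam = (\id\otimes\CQ)\Deltam_\ex$ destroys the cointeraction with $\Deltap$, so your proposed $(\Pi^g,\Gamma^g)$ need not even satisfy the Chen relation.

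The paper's route is different: rather than projecting the coaction, it constructs a continuous \emph{section} $\iota^\ex\colon \MM_0 \to \MM_0^\ex$ (Theorem~\ref{theo:models}) via the non-trivial maps $L$, $L_+$ of \eqref{e::defL}--\eqref{e::defL+}, then conjugates the $\CG_-^\ex$-action through $\iota^\ex$ and the restriction $\pi^\ex$. The map $L$ precisely compensates for the discrepancy between $\Pi_z^\ex$ and $\Pi_z\CQ$ by reinserting the missing Taylor terms; this is the content of Lemma~\ref{lem:L}. The resulting action on $\CT$ is $\Pi_z M_g$ with $M_g = \CQ M_g^\ex$ only after one also transports $\gamma_{z\bar z}$ via $\hat M_g = L_+ M_g^\ex$ rather than via $\CQ M_g^\ex$ (Theorem~\ref{thm:cg-r}), which is again not what your formula gives.
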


\begin{proof}
We already know by Theorem~\ref{theo:algebra} that $\CG_-$ acts continuously onto $\MM_0^\ex$.
Furthermore, by the definition of $\CG_-$, it preserves the subset $\MM_0^r\subset \MM_0^\ex$
of reduced models, i.e. the closure in $\MM_0^\ex$ of all models of the form $\CZ^\ex(\PPi)$
for $\PPi$ admissible and reduced. 
Since $\TT \subset \TT^\ex$, we already mentioned that we have a natural projection 
$\pi^\ex\colon \MM_0^\ex \to \MM_0$ given by restriction
(so that $\CZ(\PPi) = \pi^\ex \CZ^\ex(\PPi)$), and it is straightforward to see that 
$\pi^\ex$ is injective on $\MM_0^r$. It therefore suffices to show that there is a continuous
map $\iota^\ex \colon \MM_0 \to \MM_0^\ex$ which is a right inverse to $\pi^\ex$, and this is 
the content of Theorem~\ref{theo:models} below.
\end{proof}

\begin{remark}
We'll show in Section~\ref{sec:6.5} below that the action of 
$\CG_-$ onto $\MM_0$ is given by elements of the 
``renormalisation group'' defined in \cite[Sec.~8.3]{reg}.
\end{remark}

\subsubsection{An example}
\label{example}

We consider the example of the stochastic quantization given in dimension $ 3 $ by:
\begin{equs}
\partial_t u = \Delta u + u^{3} + \xi.
\end{equs}
This equation has been solved first in \cite{reg} with regularity structures and then in \cite{CatCh}. One tree needed for its resolution reveals the importance of the extended decoration.
Using the symbolic notation, it is given by $ \tau = \CI(\Xi)^2 \CI( \CI(\Xi)^3 )  $.  Then we use the following representation: 
   \begin{equs}
   \CI(\Xi) = \begin{tikzpicture}[scale=0.15,baseline=0.1cm]
        \node at (0,0)  [dot] (root) {\mbox{\tiny $ $}};
         \node at (0,3)  [circ] (center) {};
     \draw[kernel1] (center) to
     node [sloped,below] {\small }     (root);
     \end{tikzpicture}\;, \quad
   \CR_\alpha \CI_{e_i} =  \begin{tikzpicture}[scale=0.15,baseline=0.1cm]
        \node at (0,0)  [circ1] (root) {\mbox{\tiny $a$}};
         \node at (0,5)  [dot] (center) {};
     \draw[kernel1] (center) to
     node [rect] {\mbox{\tiny $ i $} }     (root);
     \end{tikzpicture}\;, \quad
   X_i = \begin{tikzpicture}[scale=0.15,baseline=0.13cm]
        \node at (0,1.5)  [circ] (root) {\mbox{\tiny $i$}};
     \end{tikzpicture}\;, \quad 
    \CJ = \begin{tikzpicture}[scale=0.15,baseline=0.1cm]
        \node at (0,0)  [dot,blue] (root) {};
         \node at (0,3)  [dot] (center) {};
     \draw[kernel1] (center) to
     node [sloped,below] {\small }     (root);
     \end{tikzpicture}\;, \quad \tau = \IXitwoIIXithree, 
    \end{equs}
    where $ e_i $ is the
$i$th canonical basis element of $\N^d$ and $ a $ belongs to $ \lbrace \alpha, \beta, \gamma \rbrace $ with $ \alpha = 2 \CI + 2 \Xi  $, $ \beta =  2 \CI + 2 \Xi + 1 $ and $ \gamma = 5 \CI + 4 \Xi $. Then we have 
  \begin{equs}
 \Deltam_{ \ex}  & \IXitwoIIXithree  =  \IXitwoIIXithree  
      \otimes \one_{1} + \one_{1} 
     \otimes \IXitwoIIXithree + 3 \IXitwo \otimes
     \begin{tikzpicture}[scale=0.15,baseline=0.1cm]
        \node at (0,0)  [dot,label=below:$  $] (root) {};
         \node at (2,2)  [circ] (right) {\mbox{\tiny $ $}};
         \node at (0,3)  [circ1] (center) {\mbox{\tiny $\alpha$}};
          \node at (2,5)  [circ] (centerr) {\mbox{\tiny $ $}};
         \node at (-2,2)  [circ] (left) {\mbox{\tiny $ $}};
            \draw[kernel1] (right) to
     node [sloped,below] {\small }     (root); \draw[kernel1] (left) to
     node [sloped,below] {\small }     (root);
     \draw[kernel1] (center) to
     node [sloped,below] {\small }     (root);
      \draw[kernel1] (centerr) to
     node [sloped,below] {\small }     (center);
     \end{tikzpicture} + 3  \XIXitwo \otimes \begin{tikzpicture}[scale=0.15,baseline=0.1cm]
        \node at (0,0)  [dot,label=below:$  $] (root) {};
         \node at (2,2)  [circ] (right) {\mbox{\tiny $ $}};
         \node at (0,3)  [circ1] (center) {\mbox{\tiny $\beta$}};
          \node at (2,8)  [circ] (centerr) {\mbox{\tiny $ $}};
         \node at (-2,2)  [circ] (left) {\mbox{\tiny $ $}};
            \draw[kernel1] (right) to
     node [sloped,below] {\small }     (root); \draw[kernel1] (left) to
     node [sloped,below] {\small }     (root);
     \draw[kernel1] (center) to
     node [sloped,below] {\small }     (root);
      \draw[kernel1] (centerr) to
     node [rect] {\mbox{\tiny $ i $} }     (center);
     \end{tikzpicture} \\ & +  \IXitwo \otimes 
     \begin{tikzpicture}[scale=0.15,baseline=0.1cm]
        \node at (0,0)  [circ1] (root) {\mbox{\tiny $\alpha$}};
         \node at (0,3)  [dot] (center) {};
          \node at (0,6)  [circ] (centerc) {\mbox{\tiny $ $}};
          \node at (-2,5)  [circ] (centerl) {\mbox{\tiny $ $}};
          \node at (2,5)  [circ] (centerr) {\mbox{\tiny $ $}};
     \draw[kernel1] (center) to
     node [sloped,below] {\small }     (root);
     \draw[kernel1] (centerc) to
     node [sloped,below] {\small }     (center);
      \draw[kernel1] (centerr) to
     node [sloped,below] {\small }     (center);
      \draw[kernel1] (centerl) to
     node [sloped,below] {\small }     (center);
     \end{tikzpicture} +  \XIXitwo \otimes
     \begin{tikzpicture}[scale=0.15,baseline=0.1cm]
        \node at (0,0)  [circ1] (root) {\mbox{\tiny $\beta$}};
         \node at (0,5)  [dot] (center) {};
          \node at (0,8)  [circ] (centerc) {\mbox{\tiny $ $}};
          \node at (-2,7)  [circ] (centerl) {\mbox{\tiny $ $}};
          \node at (2,7)  [circ] (centerr) {\mbox{\tiny $ $}};
     \draw[kernel1] (center) to
     node [rect] {\mbox{\tiny $ i $} }     (root);
     \draw[kernel1] (centerc) to
     node [sloped,below] {\small }     (center);
      \draw[kernel1] (centerr) to
     node [sloped,below] {\small }     (center);
      \draw[kernel1] (centerl) to
     node [sloped,below] {\small }     (center);
     \end{tikzpicture} + 
     3 \IXitwo \! \! \! \IXitwo \otimes \begin{tikzpicture}[scale=0.15,baseline=0.1cm]
        \node at (0,0)  [circ1] (root) {\mbox{\tiny $\alpha$}};
         \node at (0,3)  [circ1] (center) {\mbox{\tiny $\alpha$}};
          \node at (2,5)  [circ] (centerr) {\mbox{\tiny $ $}};
     \draw[kernel1] (center) to
     node [sloped,below] {\small }     (root);
      \draw[kernel1] (centerr) to
     node [sloped,below] {\small }     (center);
     \end{tikzpicture} + 3 \IXitwo  \! \! \! \XIXitwo \otimes \begin{tikzpicture}[scale=0.15,baseline=0.1cm]
        \node at (0,0)  [circ1] (root) {\mbox{\tiny $\alpha$}};
         \node at (0,3)  [circ1] (center) {\mbox{\tiny $\beta$}};
          \node at (2,8)  [circ] (centerr) {\mbox{\tiny $ $}};
     \draw[kernel1] (center) to
     node [sloped,below] {\small }     (root);
      \draw[kernel1] (centerr) to
     node [rect] {\mbox{\tiny $ i $}}     (center);
     \end{tikzpicture} \\ & + 3 \IXitwo  \! \! \! \XIXitwo \otimes \begin{tikzpicture}[scale=0.15,baseline=0.1cm]
        \node at (0,0)  [circ1] (root) {\mbox{\tiny $\beta$}};
         \node at (0,5)  [circ1] (center) {\mbox{\tiny $\alpha$}};
          \node at (2,7)  [circ] (centerr) {\mbox{\tiny $ $}};
     \draw[kernel1] (centerr) to
     node [sloped,below] {\small }     (center);
      \draw[kernel1] (center) to
     node [rect] {\mbox{\tiny $ i $}}     (root);
     \end{tikzpicture} + 3 \XIXitwo  \! \! \! \begin{tikzpicture}[scale=0.15,baseline=0.1cm]
        \node at (0,0)  [circ,label=below:$  $] (root) {\mbox{\tiny $ j $}};
         \node at (1,2)  [circ] (right) {\mbox{\tiny $ $}};
         \node at (-1,2)  [circ] (left) {\mbox{\tiny $ $}};
            \draw[kernel1] (right) to
     node [sloped,below] {\small }     (root); \draw[kernel1] (left) to
     node [sloped,below] {\small }     (root);
     \end{tikzpicture} \otimes \begin{tikzpicture}[scale=0.15,baseline=0.1cm]
        \node at (0,0)  [circ1] (root) {\mbox{\tiny $\beta$}};
         \node at (0,5)  [circ1] (center) {\mbox{\tiny $\beta$}};
          \node at (2,10)  [circ] (centerr) {\mbox{\tiny $ $}};
     \draw[kernel1] (centerr) to
     node [rect] {\mbox{\tiny $ j $}}     (center);
      \draw[kernel1] (center) to
     node [rect] {\mbox{\tiny $ i $}}     (root);
     \end{tikzpicture} + \begin{tikzpicture}[scale=0.15,baseline=0.1cm]
        \node at (0,0)  [dot,label=below:$  $] (root) {};
         \node at (2,2)  [circ] (right) {\mbox{\tiny $ $}};
         \node at (0,3)  [dot] (center) {};
          \node at (-2,5)  [circ] (centerl) {\mbox{\tiny $ $}};
          \node at (2,5)  [circ] (centerr) {\mbox{\tiny $ $}};
         \node at (-2,2)  [circ] (left) {\mbox{\tiny $ $}};
            \draw[kernel1] (right) to
     node [sloped,below] {\small }     (root); \draw[kernel1] (left) to
     node [sloped,below] {\small }     (root);
     \draw[kernel1] (center) to
     node [sloped,below] {\small }     (root);
      \draw[kernel1] (centerr) to
     node [sloped,below] {\small }     (center);
      \draw[kernel1] (centerl) to
     node [sloped,below] {\small }     (center);
     \end{tikzpicture} \otimes \begin{tikzpicture}[scale=0.15,baseline=0.1cm]
        \node at (0,0)  [circ1] (root) {\mbox{\tiny $ \gamma $}};
         \node at (0,3)  [circ] (center) {};
     \draw[kernel1] (center) to
     node [sloped,below] {\small }     (root);
     \end{tikzpicture} + (...)
  \end{equs}
with summation over $i$ and $j$ implied. In $ (...) $, we omit terms of the form
$ \tau^{(1)} \otimes \tau^{(2)} $ where $  \tau^{(1)} $ may contain planted trees or where $ \tau^{(2)} $ has an edge of type $ \CI $ finishing on a leaf.
The planted trees will disappear by applying an element of $ \CG^{\ex}_- $ and the others are put to zero through the evaluation of the smooth model  $ \PPi $ see \cite[Ass.~5.4]{reg} where the kernels $  \{K_{\Labhom}\}_{\Labhom \in \Lab_+}$ are chosen such that they integrate polynomials to zero up to a certain fixed order. If $ g \in \CG^{\ex}_-  $ is the character associated to the BPHZ renormalisation 
for a Gaussian driving noise with a covariance that is symmetric under spatial reflections, we obtain 
\begin{equs}
  M^{\ex}_g \tau  & = (g \otimes \id)\Deltam_{\ex} \tau \\
  & = \IXitwoIIXithree + 3 C_1
     \begin{tikzpicture}[scale=0.15,baseline=0.1cm]
        \node at (0,0)  [dot,label=below:$  $] (root) {};
         \node at (2,2)  [circ] (right) {\mbox{\tiny $ $}};
         \node at (0,3)  [circ1] (center) {\mbox{\tiny $\alpha$}};
          \node at (2,5)  [circ] (centerr) {\mbox{\tiny $ $}};
         \node at (-2,2)  [circ] (left) {\mbox{\tiny $ $}};
            \draw[kernel1] (right) to
     node [sloped,below] {\small }     (root); \draw[kernel1] (left) to
     node [sloped,below] {\small }     (root);
     \draw[kernel1] (center) to
     node [sloped,below] {\small }     (root);
      \draw[kernel1] (centerr) to
     node [sloped,below] {\small }     (center);
     \end{tikzpicture}
      + C_1
     \begin{tikzpicture}[scale=0.15,baseline=0.1cm]
        \node at (0,0)  [circ1] (root) {\mbox{\tiny $\alpha$}};
         \node at (0,3)  [dot] (center) {};
          \node at (0,6)  [circ] (centerc) {\mbox{\tiny $ $}};
          \node at (-2,5)  [circ] (centerl) {\mbox{\tiny $ $}};
          \node at (2,5)  [circ] (centerr) {\mbox{\tiny $ $}};
     \draw[kernel1] (center) to
     node [sloped,below] {\small }     (root);
     \draw[kernel1] (centerc) to
     node [sloped,below] {\small }     (center);
      \draw[kernel1] (centerr) to
     node [sloped,below] {\small }     (center);
      \draw[kernel1] (centerl) to
     node [sloped,below] {\small }     (center);
     \end{tikzpicture} + 
     3 C_1^2 \begin{tikzpicture}[scale=0.15,baseline=0.1cm]
        \node at (0,0)  [circ1] (root) {\mbox{\tiny $\alpha$}};
         \node at (0,3)  [circ1] (center) {\mbox{\tiny $\alpha$}};
          \node at (2,5)  [circ] (centerr) {\mbox{\tiny $ $}};
     \draw[kernel1] (center) to
     node [sloped,below] {\small }     (root);
      \draw[kernel1] (centerr) to
     node [sloped,below] {\small }     (center);
     \end{tikzpicture}  + 3C_2\begin{tikzpicture}[scale=0.15,baseline=0.1cm]
        \node at (0,0)  [circ1] (root) {\mbox{\tiny $ \gamma $}};
         \node at (0,3)  [circ] (center) {};
     \draw[kernel1] (center) to
     node [sloped,below] {\small }     (root);
     \end{tikzpicture} 
\end{equs} 
where 
\begin{equ}
 C_1 = - g^-(\PPi)\Big[ \IXitwo  \Big]\;,\quad C_2 = - g^-(\PPi)\Big[\begin{tikzpicture}[scale=0.1,baseline=0.15cm]
        \node at (0,0)  [dot,label=below:$  $] (root) {};
         \node at (2,2)  [circ] (right) {\mbox{\tiny $ $}};
         \node at (0,3)  [dot] (center) {};
          \node at (-2,5)  [circ] (centerl) {\mbox{\tiny $ $}};
          \node at (2,5)  [circ] (centerr) {\mbox{\tiny $ $}};
         \node at (-2,2)  [circ] (left) {\mbox{\tiny $ $}};
            \draw[kernel1] (right) to
     node [sloped,below] {\small }     (root); \draw[kernel1] (left) to
     node [sloped,below] {\small }     (root);
     \draw[kernel1] (center) to
     node [sloped,below] {\small }     (root);
      \draw[kernel1] (centerr) to
     node [sloped,below] {\small }     (center);
      \draw[kernel1] (centerl) to
     node [sloped,below] {\small }     (center);
     \end{tikzpicture} \Big]\;,
\end{equ}
and all other renormalisation constants vanish. Applying $ \CQ $, we indeed recover the renormalisation map
given in \cite[Sec 9.2]{reg}.
 The main interest of the extended decorations is to shorten some Taylor expansions which allows us to get the co-interaction between the two renormalisations. In the computation below, we show the difference between a term having extended decoration and the same without:
    
  \begin{equs}
  \Deltap_{\ex} \begin{tikzpicture}[scale=0.15,baseline=0.1cm]
        \node at (0,0)  [circ1] (root) {\mbox{\tiny $\alpha$}};
         \node at (0,3)  [circ1] (center) {\mbox{\tiny $\alpha$}};
          \node at (2,5)  [circ] (centerr) {\mbox{\tiny $ $}};
     \draw[kernel1] (center) to
     node [sloped,below] {\small }     (root);
      \draw[kernel1] (centerr) to
     node [sloped,below] {\small }     (center);
     \end{tikzpicture} & = \begin{tikzpicture}[scale=0.15,baseline=0.1cm]
        \node at (0,0)  [circ1] (root) {\mbox{\tiny $\alpha$}};
         \node at (0,3)  [circ1] (center) {\mbox{\tiny $\alpha$}};
          \node at (2,5)  [circ] (centerr) {\mbox{\tiny $ $}};
     \draw[kernel1] (center) to
     node [sloped,below] {\small }     (root);
      \draw[kernel1] (centerr) to
     node [sloped,below] {\small }     (center);
     \end{tikzpicture} \otimes \one + 
     \begin{tikzpicture}[scale=0.15,baseline=-0.1cm]
        \node at (0,0)  [circ1] (root) {\mbox{\tiny $\alpha$}};
     \end{tikzpicture} \otimes \begin{tikzpicture}[scale=0.15,baseline=0.1cm]
        \node at (0,0)  [dot,blue] (root) {};
         \node at (0,3)  [circ1] (center) {\mbox{\tiny $\alpha$}};
          \node at (2,5)  [circ] (centerr) {\mbox{\tiny $ $}};
     \draw[kernel1] (center) to
     node [sloped,below] {\small }     (root);
      \draw[kernel1] (centerr) to
     node [sloped,below] {\small }     (center);
     \end{tikzpicture} \\ 
  \Deltap_{\ex} \begin{tikzpicture}[scale=0.15,baseline=0.1cm]
        \node at (0,0)  [dot] (root) {};
         \node at (0,3)  [dot] (center) {};
          \node at (2,5)  [circ] (centerr) {\mbox{\tiny $ $}};
     \draw[kernel1] (center) to
     node [sloped,below] {\small }     (root);
      \draw[kernel1] (centerr) to
     node [sloped,below] {\small }     (center);
     \end{tikzpicture} & = \begin{tikzpicture}[scale=0.15,baseline=0.1cm]
        \node at (0,0)  [dot] (root) {};
         \node at (0,3)  [dot] (center) {};
          \node at (2,5)  [circ] (centerr) {\mbox{\tiny $ $}};
     \draw[kernel1] (center) to
     node [sloped,below] {\small }     (root);
      \draw[kernel1] (centerr) to
     node [sloped,below] {\small }     (center);
     \end{tikzpicture} \otimes \one + \one \otimes \begin{tikzpicture}[scale=0.15,baseline=0.1cm]
        \node at (0,0)  [dot,blue] (root) {};
         \node at (0,3)  [dot] (center) {};
          \node at (2,5)  [circ] (centerr) {\mbox{\tiny $ $}};
     \draw[kernel1] (center) to
     node [sloped,below] {\small }     (root);
      \draw[kernel1] (centerr) to
     node [sloped,below] {\small }     (center);
     \end{tikzpicture} + X_i \otimes \begin{tikzpicture}[scale=0.15,baseline=0.1cm]
        \node at (0,0)  [dot,blue] (root) {};
         \node at (0,5)  [dot] (center) {};
          \node at (2,7)  [circ] (centerr) {\mbox{\tiny $ $}};
     \draw[kernel1] (center) to
     node [rect] { \mbox{\tiny $ i $}}     (root);
      \draw[kernel1] (centerr) to
     node [sloped,below] {\small }     (center);
     \end{tikzpicture}. 
  \end{equs}

\subsubsection{Construction of extended models}

In general if, for some sequence $\PPi^{(n)} \colon \CT^\ex \to \CC^\infty$, $\CZ^\ex(\PPi^{(n)})\in\MM_\infty^\ex$
converges to a limiting model in $\MM_0^\ex$, it does \textit{not} follow that the 
characters $g_+(\PPi^{(n)})$ of $\hat\CT^\ex_+$ converge to a limiting character. However, we claim that 
the characters $f_x^{(n)}$ of $\CT^\ex_+$ given by \eqref{e:defModel1} \textit{do} converge, which is 
not so surprising since our definition of convergence implies that 
the characters $\gamma_{xy}^{(n)}$ of $\CT^\ex_+$ given by \eqref{e:defModel2}
do converge. More surprising is that the convergence of the characters 
$f_x^{(n)}$ follows already from a seemingly much weaker type of convergence.
Writing $\CD'$ for the space of distributions on $\R^d$, we have the following.

\begin{proposition}\label{prop:conv}
Let $\PPi^{(n)} \colon \CT^\ex \to \CC^\infty$ be an admissible linear map with 
\[
\CZ^\ex(\PPi^{(n)}) = (\Pi^{(n)},\Gamma^{(n)}) \in \MM_\infty^\ex
\]
and assume that there exist linear maps $\Pi_x \colon \CT^\ex \to \CD'(\R^d)$ such that, with the notation
of \eqref{e:defDistModel},
$\|\Pi^{(n)} - \Pi\|_{\ell,\K} \to 0$ for every $\ell \in \R$ and every compact set $\CK$. 
Then, the characters $f_x^{(n)}$ defined as in \eqref{e:defModel1} converge to a limit $f_x$. Furthermore,
defining $\Gamma_{xy}$ by \eqref{e:defModel2}, one has
$\CZ = (\Pi,\Gamma) \in \MM_0^\ex$ and $\CZ^\ex(\PPi^{(n)}) \to \CZ$ in $\MM_0^\ex$. 

Finally,
one has $\PPi \colon \CT^\ex \to \CD'(\R^d)$ such that $\Pi_x = (\PPi \otimes f_x)\Deltap_\ex$ and such that
$\PPi^{(n)}\tau \to \PPi\tau$ in $\CD'(\R^d)$ \, for every $\tau \in \CT^\ex$.
\end{proposition}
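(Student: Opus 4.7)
The strategy is to define the candidate character $f_x$ directly from the limiting distribution $\Pi_x$ by means of the explicit formula \eqref{e:exprfz} and then reconstruct the remaining objects $\Gamma_{xy}$ and $\PPi$ from it. The key observation is that \eqref{e:exprfz} expresses $f_z(\CJ_k^\Labhom(\tau))$ as a finite linear combination of quantities of the form $(D^{k+\ell}K_\Labhom * \Pi_z\tau)(z)$, each of which depends only on the distribution $\Pi_z\tau$ and not on any auxiliary admissible map $\PPi$. Combined with Remark~\ref{rem:A}, which shows that any character of $\CT_+^\ex$ is determined multiplicatively by its values on $X_i$ and on the planted generators $\CJ_k^\Labhom(\tau)$, this provides an unambiguous candidate for $f_x$ built entirely from $\Pi_x$.

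To prove $f_x^{(n)}\to f_x$, I would proceed by induction on the number of edges. On $X_i$ both characters equal $-z_i$ and nothing is to prove; on $\CJ_k^\Labhom(\tau)$ the only non-trivial ingredient is the convergence of each term $(D^{k+\ell}K_\Labhom * \Pi_z^{(n)}\tau)(z) \to (D^{k+\ell}K_\Labhom * \Pi_z\tau)(z)$ in the finite Taylor sum of \eqref{e:exprfz}, obtained from the hypothesis $\|\Pi^{(n)}-\Pi\|_{\ell;\K}\to 0$ combined with the abstract convolution machinery of \cite[Sec.~5]{reg}. With $f_x$ in hand, I would define $\gamma_{xy} = (f_x\CA_+^\ex\otimes f_y)\Deltap_\ex$ and $\Gamma_{xy} = (\id\otimes\gamma_{xy})\Deltap_\ex$ exactly as in \eqref{e:defModel2}; pointwise convergence of $\gamma^{(n)}_{xy}$ and $\Gamma^{(n)}_{xy}$ on each basis element then follows by multiplicativity, and the algebraic identity $\Pi_x\Gamma_{xy}=\Pi_y$ as well as the remaining relations in Definition~\ref{def:model} pass to the limit from their pre-limit counterparts. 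The analytical bounds for $(\Pi,\Gamma)$ are inherited from the uniform bounds on $(\Pi^{(n)},\Gamma^{(n)})$ via lower semicontinuity of the pseudo-metrics $\$\cdot;\cdot\$_{\ell;\K}$, giving $(\Pi,\Gamma)\in\MM_0^\ex$ together with the desired convergence. The map $\PPi \eqdef (\Pi_0 \otimes f_0\CA_+^\ex)\Deltap_\ex$ then inverts \eqref{e:defModel1} thanks to the antipode identity $(f_0\otimes f_0\CA_+^\ex)\Deltap_\ex = \one^\star_2$, and $\PPi^{(n)}\tau \to \PPi\tau$ in $\CD'(\R^d)$ follows by composition of the two established convergences.

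The main obstacle lies in rigorously justifying the convergence of the singular convolutions $(D^{k+\ell}K_\Labhom * \Pi_z^{(n)}\tau)(z)$. Since $\Pi_z\tau$ is only a distribution with prescribed degree of singularity and $K_\Labhom$ itself carries a non-integrable singularity at the origin when $|\Labhom|_\s$ is small, one must decompose $K_\Labhom$ into a telescoping sum of smooth pieces supported at dyadic scales and use the pointwise bound $|\langle \Pi_z\tau,\phi_z^\lambda\rangle|\le C\lambda^{|\tau|_+}$ — which is uniformly controlled along the sequence and preserved under the $\|\cdot\|_{\ell;\K}$ limit — to show absolute summability of the resulting series. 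The strict inequality $|\ell|_\s<|\CJ_k^\Labhom(\tau)|_+$ appearing in the finite Taylor sum is exactly what supplies the extra regularity required for continuity of each convolution evaluation, and is precisely the content of the abstract Schauder-type estimates of \cite[Sec.~5]{reg}.
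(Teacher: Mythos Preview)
Your proposal is correct and follows essentially the same route as the paper: define and converge $f_x$ via the explicit formula \eqref{e:exprfz} from Lemma~\ref{lem:algpropr}, invoke the Schauder-type estimates of \cite[Sec.~5]{reg} (the paper cites \cite[Lem.~5.19]{reg} specifically) for the convergence of the singular convolutions, and then let the algebraic relations and analytical bounds pass to the limit. Your discussion is considerably more explicit than the paper's three-sentence proof, but the strategy and the key analytical input are the same.
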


\begin{proof}
The convergence of the $f_x^{(n)}$ follows immediately from the formula given in 
Lemma~\ref{lem:algpropr}, combined with the convergence of the $\Pi_x^{(n)}$ and
\cite[Lem.~5.19]{reg}. The fact that $(\Pi,\Gamma)$ satisfies the algebraic identities
required for a model follows immediately from the fact that this is true for every $n$. 
The convergence of the $\Gamma_{xy}^{(n)}$ and the analytical bound on the limit then follow 
from \cite[Sec.~5.1]{reg}.
\end{proof}

\begin{remark}
This relies crucially on the fact that the maps $\PPi$ under consideration are admissible and
that the kernels $K_\Labhom$ satisfy the assumptions of \cite[Sec.~5]{reg}. If one considers
different notions of admissibility, as is the case for example in \cite{woKP}, then the
conclusion of Proposition~\ref{prop:conv} may fail.
\end{remark}

For a linear $\PPi \colon \CT \to \CC^\infty$ we define $\PPi^\ex \colon \CT^\ex \to \CC^\infty$ 
by simply setting 
$\PPi^\ex = \PPi \CQ$. Then we say that $\PPi$ is admissible if $\PPi^\ex$ is.
We have the following crucial fact

\begin{theorem}\label{theo:models}
If $\PPi \colon \CT \to \CC^\infty$ is admissible and $\CZ(\PPi^\ex)$ belongs to $\MM_\infty$, 
then $\CZ^\ex(\PPi^\ex)$ belongs to $\MM_\infty^\ex$. Furthermore, the
map $\CZ(\PPi^\ex) \mapsto \CZ^\ex(\PPi^\ex)$ extends to a continuous map from $\MM_0$ to $\MM_0^\ex$.
\end{theorem}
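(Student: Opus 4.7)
The plan is to verify that $(\Pi^\ex,\Gamma^\ex) \eqdef \CZ^\ex(\PPi^\ex)$ satisfies the axioms of a smooth model on $\TT^\ex$ from Definition~\ref{def:model}, and then to extend the resulting assignment by density to all of $\MM_0$. The algebraic identities $\Gamma^\ex_{xx} = \id$, $\Gamma^\ex_{xy}\Gamma^\ex_{yz} = \Gamma^\ex_{xz}$ and $\Pi^\ex_y = \Pi^\ex_x \Gamma^\ex_{xy}$ follow directly from the construction \eqref{e:defModel1}--\eqref{e:defModel2} and the defining properties of $\CA^\ex_+$ and $\tilde\CA^\ex_+$, without using the model hypothesis. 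The core task is then to upgrade the analytic bound $|\Pi^\ex_z \tau(y)| \lesssim \|y-z\|_\s^{|\tau|_+}$ from $\tau \in \CT$ (where it is given by the assumption $\CZ(\PPi^\ex) \in \MM_\infty$) to all of $\CT^\ex$.

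I would induct on the number of edges of $\tau$ using the recursive generation of Remark~\ref{rem:A} by monomials $X^k$, planted operations $\CI^\Labhom_k$, recolouring operations $\CR_\alpha$ and tree products. The cases $\tau = X^k$ and $\tau = \CI^\Labhom_k(\sigma)$ are handled by the explicit Taylor-remainder expression of Lemma~\ref{lem:algpropr} combined with the standard convolution estimates of \cite[Sec.~5]{reg}. The central new case is $\tau = \CR_\alpha(\sigma)$: from $\Deltap_\ex\CR_\alpha = (\CR_\alpha \otimes \id)\Deltap_\ex$ together with $\PPi^\ex\CR_\alpha = \PPi\CQ\CR_\alpha = \PPi^\ex$ (since $\CQ\CR_\alpha = \CQ$), one obtains $\Pi^\ex_z\CR_\alpha(\sigma) = \Pi^\ex_z\sigma$. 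On the other hand, membership $\CR_\alpha(\sigma) \in \CT^\ex$ forces $\alpha \in D(\rho,\CS\sigma)$, and a direct accounting inside Definition~\ref{def:D} (tracking the boundary contribution $\pi\varepsilon_A^F$, the subtracted inner-edge labels $\pi\Labe^A_\emptyset$, and the edge-type contributions $\sum_{E_A}\Labhom$ accumulated by $\CK$ via \eqref{e:defhatn}) gives $|\alpha|_\s = |\tau'|_\s$ for the associated contracted subtree $\tau' \in \Trees_-(R)$, which by definition has strictly negative degree. Hence $|\CR_\alpha(\sigma)|_+ = |\sigma|_+ + |\alpha|_\s < |\sigma|_+$, so the required bound is \emph{weaker} than the inductive bound on $\sigma$. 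The bound on $\Gamma^\ex_{xy}$ is obtained by the parallel induction of \cite[Prop.~8.27]{reg}, using Lemma~\ref{lem:algpropr} and the fact that $\Deltap_\ex$ preserves the $|\cdot|_+$-grading.

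The main obstacle is the tree-product case, since $\PPi^\ex = \PPi\CQ$ need not be multiplicative on $\CT^\ex$ in general. I would handle it by reducing to the case where multiplicativity holds: Proposition~\ref{prop:model} gives $\CZ^\ex(\PPi_0^\ex) \in \MM^\ex_\infty$ whenever $\PPi_0$ is a canonical lift (which is automatically multiplicative), and Theorem~\ref{theo:algebra} shows that the action of $\CG^\ex_-$ preserves $\MM^\ex_\infty$; since, up to density, every model in $\MM_\infty$ is the restriction of such a renormalised canonical lift, the result propagates from this core case. Finally, the continuity of the extension $\MM_0 \to \MM^\ex_0$ is obtained via Proposition~\ref{prop:conv}: convergence of $\Pi^{(n),\ex}\restr \CT$ in the seminorms of $\MM_0$ forces convergence of the characters $f_z^{\ex,(n)}$ through the explicit recentering formula of Lemma~\ref{lem:algpropr}, and hence convergence of the full extended models $\CZ^\ex(\PPi_n^\ex)$ in $\MM^\ex_0$. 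This produces the continuous extension $\iota^\ex$ whose composition with the restriction $\pi^\ex$ is the identity on $\MM_0$, as required in the proof of Theorem~\ref{thm:renorred}.
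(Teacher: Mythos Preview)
Your induction on the tree structure handles $X^k$, $\CI^\Labhom_k$ and $\CR_\alpha$ correctly, and in particular the accounting showing $|\alpha|_\s = |\tau'|_\s < 0$ for $\alpha\in D(\rho,\CS\sigma)$ is right. The genuine gap is the tree-product step. You acknowledge that $\PPi^\ex=\PPi\CQ$ need not be multiplicative, but your proposed fix --- that ``up to density, every model in $\MM_\infty$ is the restriction of a renormalised canonical lift'' --- is not established anywhere in the paper and is not obviously true; there is no reason the $\CG_-^\ex$-orbit of canonical lifts should be dense in $\MM_\infty$. Without this, the inductive bound on $\Pi^\ex_z(\tau\bar\tau)$ simply does not follow from the bounds on the factors. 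Your continuity argument has a parallel circularity: Proposition~\ref{prop:conv} requires convergence of $\Pi^{(n),\ex}_z$ on \emph{all} of $\CT^\ex$, not just on $\CT$, and Lemma~\ref{lem:algpropr} expresses $f^\ex_z(\CJ^\Labhom_k(\tau))$ in terms of $\Pi^\ex_z\tau$ for $\tau\in\CT^\ex$, so you cannot bootstrap from $\MM_0$-convergence alone without already having the result.

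The paper's approach is quite different and avoids induction on tree structure altogether. It constructs explicit ``lift'' maps $L\colon\CT^\ex\to\CT\otimes\CT_+$ and $L_+\colon\CT^\ex_+\to\CT_+$ (defined recursively, with $L$ multiplicative by fiat) and proves the algebraic identity $(\id\otimes\CM_+)(\Delta\otimes\id)L = (\CQ\otimes L_+)\Deltap_\ex$. From this one deduces the key formula
\[
\Pi^\ex_z = (\Pi_z\otimes f_z)\,L\;,\qquad f^\ex_z = f_z\,L_+\;,
\]
expressing the extended model data \emph{directly} in terms of the reduced model data $(\Pi_z,f_z)$. The map $L$ is triangular in the sense that $L\tau = \CQ\tau\otimes\one + \sum_i\tau^{(1)}_i\otimes\tau^{(2)}_i$ with $|\tau^{(1)}_i|_+\ge|\tau|_+$, which immediately yields both the analytic bounds on $\Pi^\ex_z$ and the continuity of the extension $\MM_0\to\MM_0^\ex$, since the right-hand side depends continuously on the reduced model. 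This sidesteps multiplicativity entirely: the product case is handled because $L$ was \emph{defined} to be multiplicative into $\CT\otimes\CT_+$, and the reduced $\Pi_z$ is \emph{not} assumed multiplicative --- the formula simply decomposes $\Pi^\ex_z\tau$ as a finite $f_z$-weighted sum of $\Pi_z\sigma$ for $\sigma\in\CT$ with $|\sigma|_+\ge|\tau|_+$.
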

Before proving this Theorem, we define a linear map $L:\CT^\ex\to\CT\otimes\CT_+$ such that 
\begin{equ}
L \Xi_{k,\ell}^\noise = \Xi_{k,\ell}^\noise\otimes\one\;,\qquad
L X^k = X^k\otimes\one\;,
\end{equ}
and then recursively
\begin{equ}
L \CR_\alpha(\tau) = L\tau\;,\qquad
L(\tau \bar \tau) = L(\tau) L(\bar \tau)\;,
\end{equ}
as well as
\begin{equ}[e::defL]
L \CI_k^\Labhom(\tau) 
=  (\CI_k^\Labhom \otimes \id)L \tau 
- \sum_{|m|_\s \ge |\CI_{k}^\Labhom\tau|_+}
{X^m \over m!} \otimes 
\CM_+ \bigl(  \tilde\CJ_{k+m}^\Labhom \otimes \id\bigr)L\tau\;,
\end{equ}
where $\CM_+$ is the tree product \eqref{odot} on $\CT_+$ and $\tilde\CJ$ is as in \eqref{e:tildeJ}.

Moreover $L_+:\CT^\ex_{+}\to\CT_+$ is the algebra morphism such that 
$L_+ X^k = X^k$ and for $\CJ_k^\Labhom(\tau) \in\CT^\ex_+$ with $|\CJ_k^\Labhom(\tau)|_+>0$
\begin{equ}[e::defL+]
L_+ \tilde\CJ_k^\Labhom(\tau) = 
\CM_+\left(  \tilde\CJ_k^\Labhom \otimes \id \right) L \tau \;. 
\end{equ}
The reason for these definitions is that these map will provide the required injection
$\MM_0 \to \MM_0^\ex$ by \eqref{e:extended} below.
Before we proceed to show this, we state the following preliminary identity. 
\begin{lemma}\label{lem:L}
On $\CT^\ex$
\begin{equ}[e:mainL]
(\id\otimes\CM_+)(\Delta\otimes\id) L  =  (\CQ   \otimes  L_+ )\Deltap_\ex \; .
\end{equ}
\end{lemma}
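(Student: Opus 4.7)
The plan is to prove \eqref{e:mainL} by induction on the recursive construction of $\CT^\ex$ outlined in Remark~\ref{rem:A}, whereby every element of $\CT^\ex$ is obtained from the generators $X^k$ and $\Xi_{k,\ell}^\noise$ by iterated applications of $\CI_k^\Labhom$, $\CR_\alpha$, and the tree product. First I would observe that both sides of \eqref{e:mainL} define multiplicative maps from $\CT^\ex$ to $\CT \otimes \CT_+$: on the left-hand side, $L$ is multiplicative by its defining property, $\Delta$ is multiplicative with respect to the tree product (as a restriction of $\Deltap_\ex$), and $\CM_+$ is the tree product in $\CT_+$; on the right-hand side, $\CQ = \CQ_1 \CK$ is multiplicative (both $\CK$ and $\CQ_1$ respect the tree product, since $\CK$ commutes with disjoint union/root joining and $\CQ_1$ acts diagonally on decorations), $L_+$ is multiplicative by definition, and $\Deltap_\ex$ is multiplicative by Proposition~\ref{prop:Hopf+}. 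It therefore suffices to verify \eqref{e:mainL} on the four classes of generators.

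For $\tau = X^k$ both sides reduce, via \eqref{e:DeltaX}, to $\sum_j \binom{k}{j} X^j \otimes X^{k-j}$. For $\tau = \Xi_{k,\ell}^\noise$ with $\noise \in \Lab_-$ both sides collapse to $\Xi_{k,\ell}^\noise \otimes \one$, because $\CJ_m^\noise$ vanishes under $\proj_+^\ex$ and the truncation sum in \eqref{e::defL} is empty. For $\tau = \CR_\alpha(\sigma)$ I would combine the identities $L\CR_\alpha = L$ and $\CQ \CR_\alpha = \CQ$ (the first by the defining recursion of $L$; the second because $\CR_\alpha$ affects only $\hat T(\rho)$ and $\Labo(\rho)$, both of which are killed by $\CQ_1$) with $\Deltap_\ex \CR_\alpha = (\CR_\alpha \otimes \id)\Deltap_\ex$ (which holds because $\CR_\alpha$ acts only at the root, which is contained in the ``large'' factor of every term of $\Deltap_\ex$). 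Together with the inductive hypothesis on $\sigma$, these three facts yield the $\CR_\alpha$ case directly.

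The heart of the argument is the inductive step $\tau = \CI_k^\Labhom(\sigma)$ with $\Labhom \in \Lab_+$. Writing $L\sigma = \sum \sigma^{(1)} \otimes \sigma^{(2)}$ and $\Delta \sigma^{(1)} = \sum \sigma^{(1,1)} \otimes \sigma^{(1,2)}$, I would expand the left-hand side using first the recursion \eqref{e::defL} for $L\CI_k^\Labhom$ and then \eqref{e:recursiveDelta} for $\Delta\CI_k^\Labhom$, obtaining a ``planted'' part $\sum \CI_k^\Labhom(\sigma^{(1,1)}) \otimes \sigma^{(1,2)}\sigma^{(2)}$, a Taylor-polynomial part involving $\tilde\CJ_{k+\ell+n}^\Labhom(\sigma^{(1)})$, and a truncation part coming from the subtraction in \eqref{e::defL}. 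On the right-hand side, expanding $\Deltap_\ex \CI_k^\Labhom$ by the analogue of \eqref{e:recursiveDelta}, using $\CQ \CI_k^\Labhom = \CI_k^\Labhom \CQ$ (the operator $\CI_k^\Labhom$ adds an uncoloured edge and root, which commute with both $\CK$ and $\CQ_1$), and invoking the inductive hypothesis on $\sigma$, the ``planted'' pieces match, and what remains is to identify $\sum_\ell \tfrac{X^\ell}{\ell!} \otimes L_+\CJ_{k+\ell}^\Labhom(\sigma)$ with the Taylor and truncation pieces from the left. Inverting the change of basis \eqref{e:tildeJ} via the binomial identity $\CJ_k^\Labhom(\sigma) = \sum_{|\ell|_\s < |\CJ_k^\Labhom \sigma|_+} \tfrac{X^\ell}{\ell!}\,\tilde\CJ_{k+\ell}^\Labhom(\sigma)$, then applying \eqref{e::defL+} and the multiplicativity of $L_+$, rearranges this contribution into the required combination.

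The main obstacle will be the careful bookkeeping of the two complementary Taylor truncations: the subtraction in \eqref{e::defL} is indexed by $|m|_\s \ge |\CI_k^\Labhom \sigma|_+$ while the truncation defining $\tilde\CJ$ in \eqref{e:tildeJ} is indexed by $|m|_\s < |\CJ_k^\Labhom \sigma|_+$, and reassembling them requires both a Chu-Vandermonde resummation of the binomial coefficients produced by the various $X^\ell$-factors and the crucial fact (contained in \eqref{e:degreeDelta}) that $\Delta$ preserves the $|\cdot|_+$-degree of the second factor, which ensures that the degree thresholds in the truncations transform coherently when $\Delta$ is applied. Once this compatibility of thresholds is established, the collapse of the remaining telescoping sums is essentially a formal computation with power series in $X$.
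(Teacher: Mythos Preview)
Your proposal is correct and follows the same inductive strategy as the paper's proof: check the base cases, multiplicativity, and then the $\CI_k^\Labhom$-step. Your handling of $\CR_\alpha$ is correct and is actually omitted in the paper's write-up, so you are being more thorough there.

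However, you are over-complicating the $\CI_k^\Labhom$-step. The paper works throughout in the $\tilde\CJ$-basis on \emph{both} sides, using the form \eqref{e:recursiveDelta} of $\Deltap_\ex$ directly (this recursion holds on all of $\CT^\ex$, not just $\CT$). On the right-hand side one then simply applies \eqref{e::defL+} to $\tilde\CJ_{k+\ell+m}^\Labhom(\tau)$, and on the left-hand side the Taylor sum coming from \eqref{e:recursiveDelta} is an \emph{unrestricted} double sum $\sum_{\ell,m}$ (the truncation is carried implicitly by $\tilde\CJ$ itself). The subtraction term from \eqref{e::defL}, after expanding $\Delta X^m$ binomially, is the same expression restricted to $|\ell+m|_\s\ge |\CI_k^\Labhom\tau|_+$, so the two combine to give the sum over $|\ell+m|_\s< |\CI_k^\Labhom\tau|_+$ in a single line. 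No Chu--Vandermonde resummation is needed, and no appeal to degree preservation of the second factor of $\Delta$ is required---indeed, \eqref{e:degreeDelta} does not contain the statement you attribute to it (the third identity there says $\Deltap_\ex$ preserves the \emph{total} $|\cdot|_+$-degree, not that of the second factor). Your detour through the $\CJ$-basis and back would work, but it introduces precisely the extra bookkeeping you worry about; staying in the $\tilde\CJ$-basis avoids it entirely.
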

\begin{proof}
We prove \eqref{e:mainL} by recursion. Both maps  in \eqref{e:mainL} agree on elements of the form 
$\Xi_{k,\ell}^\noise$ or $X^k$ and both maps are
multiplicative for the tree product. Consider  now a tree of the form $ \CI_k^\Labhom(\tau) $ and
assume that \eqref{e:mainL} holds when applied to $\tau$. Then we have by \eqref{e:recursiveDelta}
\begin{equs}
(\id&\otimes\CM_+)(\Delta\otimes\id) L \CI_k^\Labhom(\tau)  = (\id\otimes\CM_+)(\Delta\CI_k^\Labhom \otimes \id)L \tau
\\ & \quad - (\id\otimes\CM_+)\sum_{|m|_\s \ge |\CI_{k}^\Labhom\tau|_+}
{\Delta X^m \over m!} \otimes 
\CM_+ \bigl( \tilde\CJ_{k+m}^\Labhom \otimes \id\bigr)L\tau
\\ & = \left( \CI^{\Labhom}_k \otimes \CM_+ \right)(\Delta \otimes \id) L\tau
+ \sum_{\ell,m}  \frac{X^{\ell}}{\ell !}  \otimes  {X^{m} \over m!}    \CM_+( \tilde\CJ^{\Labhom}_{k+ \ell+m} \otimes \id)L\tau
\\ & \quad - \sum_{|\ell+m|_\s \ge |\CI_{k}^\Labhom\tau|_+}
{X^\ell \over \ell!}\otimes {X^{m} \over m!}   
\CM_+ \bigl( \tilde\CJ_{k+\ell+m}^\Labhom \otimes \id\bigr)L\tau
\\ &= \left( \CI^{\Labhom}_k \otimes \CM_+ \right)(\Delta \otimes \id) L\tau
+ \sum_{|\ell+m|_\s < |\CI_{k}^\Labhom\tau|_+}
{X^\ell \over \ell!}\otimes {X^{m} \over m!}   
\CM_+ \bigl( \tilde\CJ_{k+\ell+m}^\Labhom \otimes \id\bigr)L\tau\;.
\end{equs}
On the other hand
\begin{equs}
(\CQ &  \otimes  L_+ )\Deltap_\ex \CI_k^\Labhom(\tau) = 
\\ & = \left( \CQ\CI^{\Labhom}_k \otimes L_+ \right)  \Deltap_\ex \tau
+ \sum_{|\ell+m|_\s < |\CI_{k}^\Labhom\tau|_+}  \frac{X^{\ell}}{\ell !}  \otimes {X^{m} \over m!} L_+\tilde\CJ^{\Labhom}_{k+ \ell+m}(\tau)
\\ & = \left( \CI^{\Labhom}_k\,\CQ \otimes L_+ \right)  \Deltap_\ex \tau
+ \sum_{|\ell+m|_\s < |\CI_{k}^\Labhom\tau|_+}
{X^\ell \over \ell!}\otimes {X^{m} \over m!}   
\CM_+ \bigl( \tilde\CJ_{k+\ell+m}^\Labhom \otimes \id\bigr)L\tau.
\end{equs}
Comparing both right hand sides and using the induction hypothesis, we
conclude that \eqref{e:mainL} does indeed hold as claimed.
\end{proof}
\begin{proof}[of Theorem~\ref{theo:models}]
Let $\PPi\colon \CT \to \CC^\infty$ be such that $\CZ(\PPi^\ex) = (\Pi,\Gamma)$ is a model of $\TT$
and write $(\Pi^\ex, \Gamma^\ex) = \CZ^\ex(\PPi^\ex)$. In accordance with \eqref{e:defModel1} and \eqref{e:defModel2}, 
we set
\[
f^\ex_z \eqdef g^+_z(\PPi^\ex)\tilde\CA_+^\ex, \qquad \gamma_{z\bar z}^\ex \eqdef
\bigl( f_z^\ex\CA_{+}^{\ex} \otimes f_{\bar z}^\ex\bigr)\Deltap_{\ex}\;,
\]
so that one has
\begin{equ}
\Pi_z^\ex = \bigl(\PPi^\ex \otimes f_z^\ex\bigr)\Deltap_{\ex} \;, \qquad
\Gamma_{\!z\bar z}^\ex = \left( \id \otimes \gamma_{\!z\bar z}^\ex \right) \Deltap_{\ex} \;.
\end{equ}
With the notations introduced in \eqref{e:Deltarestr}, the model $(\Pi, \Gamma) = \CZ(\PPi^\ex)$
is then given by  
\[
\Pi_z\tau=\bigl(\PPi \otimes f_z\bigr)\Delta\tau \;, \qquad
\Gamma_{\!z\bar z}\tau = \left( \id \otimes \gamma_{z\bar z} \right) \Delta\tau \;, \qquad 
\tau\in\CT,
\]
where $f_z=f_z^\ex\restr\CT_+$ and similarly for $\gamma_{z\bar z}$.
Define $\hat\Pi_z:\CT^\ex\to\CC^\infty$ and $\hat f_z\in\CG_+^\ex$ by
\begin{equ}[e:extended]
\hat\Pi_z \eqdef (\Pi_z\otimes f_z)L, \qquad \hat f_z\eqdef f_z L_+\;,
\end{equ}
where $L,L_+$ are defined in \eqref{e::defL}-\eqref{e::defL+}.
We want to show that $\PPi^\ex = (\hat\Pi_z\otimes \hat f_z\CA_+^\ex)\Deltap_\ex$ for all $z$. By the definitions 
\begin{equs}
(\hat\Pi_z\otimes \hat f_z\CA_+^\ex)\Deltap_\ex &= (\Pi_z\otimes f_z\otimes \hat f_z\CA_+^\ex)
(L\otimes \id)\Deltap_\ex
\\ & = (\PPi\otimes f_z\CM_+\otimes \hat f_z\CA_+^\ex)((\Delta\otimes\id)L\otimes \id)\Deltap_\ex.
\end{equs}
By \eqref{e:mainL}
\begin{equs}
(\hat\Pi_z\otimes \hat f_z\CA_+^\ex)\Deltap_\ex &= (\PPi\otimes f_z\otimes \hat f_z\CA_+^\ex)((\CQ\otimes L_+)\Deltap_\ex\otimes \id)\Deltap_\ex
\\ & = (\PPi\CQ\otimes \hat f_z\otimes \hat f_z\CA^\ex_+)(\Deltap_\ex\otimes\id)\Deltap_\ex
\\ & = (\PPi^\ex\otimes \hat f_z\otimes \hat f_z\CA^\ex_+)(\id\otimes\Deltap_\ex)\Deltap_\ex = \PPi^\ex.
\end{equs}
We want now to show that $\hat f_z\equiv f^\ex_z$ on $\CT^\ex_+$. By Remark~\ref{rem:freduced}, for $\CJ_k^\Labhom(\sigma)\in\CT_+$ 
with $|\CJ_k^\Labhom(\sigma)|_+>0$ we have
\begin{equ}
f_z(\tilde\CJ_k^{\Labhom}(\sigma))  = -   \big( D^{k} K_{\Labhom} * \Pi_{z} \sigma \big)(z)\;.
\end{equ}
Therefore, by the definitions of $\hat f_z$ and $L_+$, for all $\CJ_k^\Labhom(\tau)\in\CT_+^\ex$ with $|\CJ_k^\Labhom(\tau)|_+>0$
\begin{equs}
\hat f_z (\tilde\CJ_k^\Labhom(\tau)) \,&= \left( f_z\tilde\CJ_k^\Labhom \otimes f_z \right) L \tau =
-   \big( D^{k} K_{\Labhom} * (\Pi_{z} \otimes f_z)L\tau \big)(z)
\\ & =- \big( D^{k} K_{\Labhom} * \hat\Pi_{z} \tau \big)(z)\;,
\end{equs}
%Setting $(\id\otimes f_z)L\tau=\sum_i c_i \sigma_i$, we then obtain
%\begin{equs}
%\hat f_z& (\tilde\CJ_k^\Labhom(\tau)) =\sum_i c_i f_z\tilde\CJ_k^\Labhom(\sigma_i) = -\sum_i c_i \big( D^{k} K_{\Labhom} * \Pi_{z} \sigma_i \big)(z)
%\\ & = - \big( D^{k } K_{\Labhom} * (\Pi_{z}\otimes f_z)L\tau \big)(z)
% =- \big( D^{k} K_{\Labhom} * \hat\Pi_{z} \tau \big)(z)\;,
%\end{equs}
which is equal to $f^\ex_z(\tilde\CJ_k^\Labhom(\tau))$ by Lemma~\ref{lem:algpropr} and Remark~\ref{rem:freduced}. 
Since $\hat f_z$ and $f^\ex_z$ are multiplicative 
linear functionals on $\CT^\ex_+$ and they coincide on a set which generates $\CT^\ex_+$ as an algebra, we conclude 
that $\hat f_z\equiv f^\ex_z$ on $\CT^\ex_+$ and therefore that
$\hat \Pi_z\equiv\Pi^\ex_z$ on $\CT^\ex$. Finally, we can prove by recurrence that for all $\tau\in\CT^\ex$ and $\bar\tau\in\CT_+^\ex$
\[
L\tau = \CQ\tau\otimes\one + \sum_i \tau^{(1)}_i\otimes\tau^{(2)}_i,
\qquad L_+\bar\tau=\CQ\bar\tau + \sum_i \bar\tau_i,
\]
with $|\tau^{(1)}_i|_+ \ge |\tau|_+$ and $|\bar\tau^{(1)}_i|_+ \ge |\bar\tau|_+$. This implies the required 
analytical estimates for $(\Pi^\ex,\Gamma^\ex)$.
\end{proof}

\subsubsection{Renormalisation group of the reduced structure}\label{sec:6.5}

In this section, we show that the action of the renormalisation group $\CG_-$ on $\MM_0$ 
given by Theorem~\ref{thm:renorred} is indeed given by elements of the ``renormalisation group'' 
${\mathfrak R}$ as defined in \cite[Sec. 8.3]{reg}. This shows in particular 
that the BPHZ renormalisation procedure given in Theorem~\ref{main:renormalisation}
does always fit into the framework developed there.

We recall that, by \cite[Lem.~8.43, Thm~8.44]{reg} and \cite[Thm~B.1]{woKP}, 
${\mathfrak R}$ is the set of linear operators $M:\CT\to\CT$ satisfying the following properties.
\begin{itemize}
\item One has $\CI_k^\Labhom M\tau =M \CI_k^\Labhom\tau$ and $MX^k \tau=X^k M \tau$
for all $\Labhom\in\Lab_+$, $k\in\N^d$, and $\tau \in \CT$.
\item Consider the (unique) linear operators $\DeltaM:\CT\to\CT\otimes\CT_+$ and $\hat M:\CT_+\to\CT_+$ 
such that $\hat M$ is an algebra morphism,  $\hat M X^k=X^k$ for all $k$, and such that,
for every $\tau \in \CT$ and every $\sigma \in \CT$ and $k \in \N^d$ with $|\CJ^\Labhom_k(\sigma)|_+>0$,
\begin{equs}
\hat M \tilde\CJ^\Labhom_k(\sigma) =\CM_+(\tilde\CJ^\Labhom_k&\otimes\id)\DeltaM\sigma\;,  \label{e:hatM} \\
(\id \otimes\CM_+)(\Delta\otimes\id)\DeltaM \tau &=(M\otimes\hat M)\Delta\tau\;, \label{e:hatM2}
\end{equs}
where $\tilde\CJ^\Labhom_k \colon \CT \to \CT_+$ is defined by \eqref{e:tildeJ}.
Then, for all $\tau\in\CT$, one can write
$\DeltaM \tau= \sum \tau^{(1)}\otimes\tau^{(2)}$
with $| \tau^{(1)}|_+ \ge | \tau|_+$.
\end{itemize}

\begin{remark}
Despite what a cursory inspection may suggest, the condition \eqref{e:hatM} is 
\textit{not} equivalent to the same expression with 
$\tilde\CJ^\Labhom_k$ replaced by $\CJ^\Labhom_k$. This is because \eqref{e:hatM} will typically
fail to hold when $|\CJ^\Labhom_k(\sigma)|_+ \le 0$.
\end{remark}

%We recall that for every $M\in{\mathfrak R}$ and every model $\CZ(\PPi)=(\Pi,\Gamma)\in\MM_0$ for some admissible
%$\PPi \colon \CT \to \CC^\infty$, one can construct a model $(\Pi^M,\Gamma^M)\in\MM_0$ by setting
%\[
%\Pi^M_z:=(\Pi_z\otimes f_z)\DeltaM, \qquad \Gamma^M:=(\gamma_{xy}\otimes f_y)\DeltahM,
%\]
%where $\DeltahM:\CT_+\to\CT_+\otimes\CT_+$ is uniquely determined by $\{M,\hat M,\DeltaM\}$, see 
%\cite[Eq.~8.38]{reg} and \cite[Prop.~B.5]{woKP}.
%\[
%(\CA_+\hat M\CA_+\otimes\hat M)\Deltap=(\id\otimes\CM)(\Deltap\otimes\id)\hat \Delta^{\! M}.
%\]
We recall that the group $\CG_-\eqdef\{ g\in \CG^{\ex}_-: 
g( \tau)= g( \CQ \tau) , \, \forall\, \tau \in \CT_{-}^{\ex}  \}$ has beed defined after Remark \ref{after}.
\begin{theorem}\label{thm:cg-r}
Given $g\in\CG_-$, define $M_g^\ex$ on $\CT^\ex$ and $\CT_+^\ex$
as in \eqref{e:defMg} and let $M_g\colon \CT\to\CT$ be given by
$M_g = \CQ M_g^\ex$.
Then $M_g \in \RR$, $g \mapsto M_g$ is a group homomorphism, and one has the identities
\begin{equ}[e:iden]
\hat M_g =  L_+  M_g^\ex \colon \CT_+\to\CT_+\;,\qquad
\DeltaMg = L M^{\ex}_g\colon \CT\to\CT\otimes\CT_+\;, 
\end{equ}
where the maps $L,L_+$ are given in \eqref{e::defL}--\eqref{e::defL+}.
\end{theorem}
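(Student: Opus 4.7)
The plan is to take \eqref{e:iden} as the \emph{definitions} of $\hat M_g$ and $\DeltaMg$ and verify directly that they satisfy the axioms characterising $\RR$ recalled just above the theorem. The single algebraic input feeding the proof is the cointeraction identity
\[
\Deltap_\ex M_g^\ex = (M_g^\ex \otimes M_g^\ex)\Deltap_\ex
\]
on both $\CT^\ex$ and $\CT_+^\ex$, which I would derive by applying $(g \otimes \id \otimes \id)$ to \eqref{e:propWanted1} of Theorem~\ref{CTpmexHopf} and using the multiplicativity $g(a\cdot b) = g(a)g(b)$ to collapse the outer two $\CT_-^\ex$-slots into one.

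The verification of \eqref{e:hatM2} is then immediate from Lemma~\ref{lem:L}: since $\Delta = \Deltap_\ex\restr\CT$, one obtains
\[
(\id\otimes\CM_+)(\Delta\otimes\id)\DeltaMg = (\CQ\otimes L_+)\Deltap_\ex M_g^\ex = (\CQ M_g^\ex\otimes L_+ M_g^\ex)\Delta = (M_g\otimes\hat M_g)\Delta\;.
\]
For \eqref{e:hatM}, the key step is to show $M_g^\ex \tilde\CJ_k^\Labhom(\sigma) = \tilde\CJ_k^\Labhom M_g^\ex \sigma$: arguing as in the proof of Theorem~\ref{theo:algebra}, any admissible subforest extracted by $\Delta_1$ in \eqref{def:Deltabar} applied to $\hat\CJ_k^\Labhom(\sigma)$ that contains the added trunk is a planted subtree with $|\Labhom|_\s>0$ and is killed by $\proj_-^\ex$, yielding $\Deltam_\ex \CJ_k^\Labhom(\sigma) = (\id\otimes \CJ_k^\Labhom)\Deltam_\ex \sigma$; combined with $M_g^\ex X^m = X^m$ this extends to $\tilde\CJ_k^\Labhom$, and applying $L_+$ via \eqref{e::defL+} delivers \eqref{e:hatM}. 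The degree condition on $\DeltaMg\tau$ follows from inspection of the recursion \eqref{e::defL} defining $L$ (both branches produce first factors of $|\cdot|_+$-degree at least that of the input) combined with the fact that $M_g^\ex$ preserves $|\cdot|_+$-degree by the second identity of \eqref{e:degreeDelta}.

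For the commutation axioms, both $M_g^\ex \CI_k^\Labhom = \CI_k^\Labhom M_g^\ex$ (same extraction-and-projection argument as above) and $M_g^\ex(X^k\tau) = X^k M_g^\ex\tau$ (a direct analysis of \eqref{def:Deltabar} shows that the surviving terms in $\Deltam_\ex(X^k\tau)$ after $\proj_-^\ex$ force $\Labn_A(\rho)=\pi\eps_A^F(\rho)=0$ at the root, so the added $X^k$ lands entirely in the second tensor factor) are established at the level of $M_g^\ex$, and the passage to $M_g = \CQ M_g^\ex$ is then trivial from $\CQ \CI_k^\Labhom = \CI_k^\Labhom \CQ$ and $\CQ(X^k\tau) = X^k \CQ\tau$. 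Finally, the group-homomorphism property on $\CT^\ex$ follows from coassociativity of $\Deltam_\ex$ and multiplicativity of characters; its descent to $M_g = \CQ M_g^\ex$ uses $\CQ M_g^\ex \CQ = \CQ M_g^\ex$ for $g\in\CG_-$, itself an immediate consequence of $g\CQ = g$ and the intertwining \eqref{e:intertwiningQDelta}. The main obstacle is the chain underlying \eqref{e:hatM2}: Lemma~\ref{lem:L} was engineered precisely so that $(L,L_+)$ intertwines with $\Deltap_\ex$, and once this lemma and the cointeraction identity are in hand, all remaining steps are essentially bookkeeping.
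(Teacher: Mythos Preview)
Your proposal is correct and follows essentially the same route as the paper's own proof: both take \eqref{e:iden} as the candidate definitions, derive the cointeraction identity $\Deltap_\ex M_g^\ex = (M_g^\ex\otimes M_g^\ex)\Deltap_\ex$ from \eqref{e:propWanted1}, and then combine it with Lemma~\ref{lem:L} to obtain \eqref{e:hatM2}; the triangularity and the group-homomorphism property are handled identically via degree preservation of $M_g^\ex$ and the intertwining \eqref{e:intertwiningQDelta}. You are in fact slightly more explicit than the paper in two places: you spell out the commutation $M_g^\ex \tilde\CJ_k^\Labhom = \tilde\CJ_k^\Labhom M_g^\ex$ underlying \eqref{e:hatM} (the paper just invokes \eqref{e::defL+} and degree preservation), and you verify the axioms $M_g\CI_k^\Labhom = \CI_k^\Labhom M_g$ and $M_g(X^k\tau) = X^k M_g\tau$ directly, whereas the paper's proof is silent on them, implicitly relying on the argument already given in the proof of Theorem~\ref{theo:algebra}.
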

\begin{proof}
In order to check \eqref{e:hatM}, it suffices by \eqref{e:iden} to use \eqref{e::defL+} and the fact that $M_g^\ex$ preserves the
$|\cdot|_+$-degree.
It remains to check \eqref{e:hatM2}. We have on $\CT$ that
\begin{equs}
(\id\otimes\CM_+)(\Delta\otimes\id)\DeltaMg &= (\id\otimes\CM_+)(\Delta\otimes\id) L M^{\ex}_g \;,\\
(M_g\otimes\hat M_g)\Delta  &=(\CQ M^{\ex}_g  \otimes  L_+ M^{\ex}_g)\Delta =
\\ & =(\CQ M^{\ex}_g  \otimes  L_+ M^{\ex}_g)\Deltap_\ex  =  (\CQ   \otimes  L_+ )\Deltap_\ex M^{\ex}_g\;,
\end{equs}
where we have used the co-interaction property in the last line. It follows from \eqref{e:mainL}
that these two terms are indeed equal.
The triangularity of $L$ and $M^{\ex}_g$, combined with \eqref{e:iden},
implies the triangularity of $\DeltaMg$.

The homomorphism property follows from \eqref{e:intertwiningQDelta} and the definition of $\CG_-$ since
\begin{equs}
M_{\bar g} M_g &=  \CQ M_{\bar g}^\ex\CQ M_g^\ex 
= (\bar g \otimes \CQ)\Deltam_\ex \CQ M_g^\ex 
= (\bar g \CQ \otimes \CQ)\Deltam_\ex \CQ M_g^\ex \\
&= (\bar g \CQ \otimes \CQ)\Deltam_\ex M_g^\ex 
= (\bar g \otimes \CQ)\Deltam_\ex M_g^\ex 
= \CQ M_{\bar g}^\ex M_g^\ex
= \CQ M_{\bar g g}^\ex
= M_{\bar g g}\;,
\end{equs}
as required.
\end{proof}

\begin{corollary}\label{final}
The space $\CT_- \eqdef \CT_-^\ex / \ker \CQ$ inherits from $\CT_-^\ex$ a Hopf algebra structure
and its group of characters is isomorphic to $\CG_-$. Furthermore, the map
\begin{equ}
\Deltam \colon \CT \to \CT_- \otimes \CT\;,\qquad
\Deltam \eqdef (\id \otimes \CQ)\Deltam_\ex\;,
\end{equ}
turns $\CT$ into a left comodule for $\CT_-$.
\end{corollary}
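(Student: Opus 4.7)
The plan is to show that $\ker \CQ \cap \CT_-^\ex$ is a bi-ideal of $\CT_-^\ex$; since $\CT_-^\ex$ is commutative, the quotient will then automatically inherit a Hopf algebra structure by the commutative-quotient argument already used in the proof of Proposition~\ref{prop:Hopf+}. First I would check that $\CQ$ is well-defined on the quotient $\CT_-^\ex = \hat \CT_-^\ex / \CJ_+$, which amounts to $\CQ(\CJ_+) \subset \CJ_+$; this is immediate since $\CJ_+$ is defined in terms of the $|\cdot|_-$-degree and the underlying tree structure, both of which are preserved by $\CQ$.

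The algebra ideal property is immediate from the multiplicativity of $\CQ$ with respect to the forest product, and the counit $\one_1^\star$ vanishes on $\ker \CQ \cap \CT_-^\ex$ because $\CQ$ fixes $\one_1$ and sends every other basis element to a non-unit, giving $\one_1^\star \circ \CQ = \one_1^\star$. The key point is the coideal property, which I would derive from the identity
\begin{equ}
(\CQ \otimes \CQ) \Deltam_\ex \CQ = (\CQ \otimes \CQ) \Deltam_\ex
\end{equ}
for the coproduct $\Deltam_\ex \colon \CT_-^\ex \to \CT_-^\ex \otimes \CT_-^\ex$. This is the analog of \eqref{e:intertwiningQDelta}, recorded there only for the coaction on $\CT^\ex$, and has essentially the same proof: $\CQ$ only erases the $\Labo$-decoration (and replaces $\hat F$ by its indicator), while the rearrangements of $\Labo$ produced by the formula \eqref{def:Deltabar} all become invisible once $\CQ$ is applied to both tensor factors. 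Granted this identity, if $\CQ\tau = 0$ then $(\pi \otimes \pi)\Deltam_\ex\tau = 0$, where $\pi \colon \CT_-^\ex \to \CT_-$ is the quotient projection, because $\pi$ factors through $\CQ$.

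For the character group, a character $g$ of $\CT_-^\ex$ descends to $\CT_-$ if and only if it vanishes on $\ker \CQ \cap \CT_-^\ex$; since $\CT_-^\ex = \CQ(\CT_-^\ex) \oplus \ker \CQ$ as a vector space (as $\CQ^2 = \CQ$), this is equivalent to $g = g \circ \CQ$, which is precisely the defining condition of $\CG_-$. The bijection is a group isomorphism because the group law on both sides is convolution against $\Deltam_\ex$. Finally, the map $\Deltam = (\pi \otimes \CQ)\Deltam_\ex$ is well-defined as a map $\CT \to \CT_- \otimes \CT$ because $\CQ$ is idempotent with image $\CT$. Coassociativity reduces to the coassociativity of $\Deltam_\ex$ combined with the intertwining identity in the mixed setting, which is exactly \eqref{e:intertwiningQDelta}, and the counit identity $(\one_1^\star \otimes \id)\Deltam = \id$ follows directly from the corresponding property of $\Deltam_\ex$.

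The main obstacle is verifying the various incarnations of the intertwining identity between $\CQ$ and $\Deltam_\ex$ (both slots in $\CT^\ex$, both in $\CT_-^\ex$, or one in each). All three are driven by the same mechanism---the $\Labo$-modifications produced by $\Deltam_\ex$ are erased once $\CQ$ is applied in both factors---but each requires carefully tracking how the substitutions $\Labo \mapsto \Labo + \Labn_A + \pi(\eps_A^F - \Labe_\emptyset^A)$ and $\Labo \restr N_A$ appearing in \eqref{def:Deltabar} interact with the idempotent $\CQ$.
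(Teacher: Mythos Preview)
Your proposal is correct and follows essentially the same route as the paper: establish that $\ker\CQ$ is a bi-ideal of $\CT_-^\ex$ via multiplicativity of $\CQ$ and the intertwining identity \eqref{e:intertwiningQDelta}, invoke the commutative-quotient result from \cite{Quotients} to get the Hopf structure (the paper cites Proposition~\ref{lem:treeprod} rather than Proposition~\ref{prop:Hopf+}, but both appeal to the same statement), and read off the character-group and comodule claims from the definition of $\CG_-$ together with the intertwining identity. Your explicit observation that \eqref{e:intertwiningQDelta} is stated for the coaction on $\CT^\ex$ and must also be checked for $\Deltam_\ex$ on $\CT_-^\ex$ is a point the paper leaves implicit; it follows by multiplicativity since $\hat\CT_-^\ex$ is freely generated by $B_\circ$ and $\Delta_1$, $\CQ$, $\proj_-^\ex$ all respect the forest product.
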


\begin{proof}
This follows immediately from \eqref{e:intertwiningQDelta}, Theorem~\ref{thm:cg-r}, the 
definition of $\CG_-$, the fact that $\CQ$ is an algebra morphism on $\CT_-^\ex$, and the
same argument as in the proof of Proposition \ref{lem:treeprod}.
\end{proof}

By the Remarks \ref{rem:6.19} and \ref{after}, the renormalisation procedures
of \cite{KPZ,reg,wong,2015arXiv150701237H} 
can be described in this framework.

%In particular we obtain on $\CT$
%\[
%\Pi^M_x = \Pi_{x}^{M^{\ex}_g} = \Pi^{\ex}_{x} M^{\ex}_g =
%\left( \Pi_x \otimes f_x \right) L M^{\ex}_g.
%\]
%
%
%The renormalisation procedures performed in \cite{KPZ,reg,wong,2015arXiv150701237H} 
%are covered by the above construction with $g\in\CG_-^\ex$ the BPHZ functional, see Remark~\ref{rem:6.19} above.

\appendix
\section{Spaces and canonical basis vectors}
\label{sec:diagrams}

The following diagram summarises the relations between the main spaces
appearing in this article.
\begin{equ}
\xymatrix{
&  \langle\Tra_1\rangle \ar@{_{(}->}[dl]  \ar@{->>}[r]^{\CK_1} &  
\CH_{1} &  \hat \CT^{\ex}_{-} \ar@{_{(}->}[l]
\ar@{->>}@/_/[r]_{\proj_{-}^\ex}
&  \CT^{\ex}_{-} \rlap{$\,=\hat \CT_-^\ex / \CJ_+$} \ar@{^{(}->}@/_/[l]_{\inj_{-}^\ex} 
\\
\langle\Tra \rangle & \langle\Tra_{\circ}\rangle \ar@{->>}[r]^{\CK} \ar@{_{(}->}[l]&
\CH_{\circ} & \CT^{\ex} \ar@{_{(}->}[l] \ar@{^{(}->}[u]_{\iota_\circ}
\\
& \langle\Tra_2\rangle \ar@{_{(}->}
[ul] \ar@{->>}[r]^{\JJ \hat \CH_2} & 
\hat \CH_{2}&  \hat \CT^{\ex}_{+} \ar@{_{(}->}[l]\ar@{->>}@/_/[r]_{\proj_{+}^\ex} 
&  \CT^{\ex}_{+}\rlap{$\,=\hat \CT_+^\ex / \CJ_-$} \ar@{^{(}->}@/_/[l]_{\inj_{+}^\ex} 
 }
\end{equ}
The next diagram similarly shows the relations between various
sets of trees / forests. The first four columns in this diagram 
show the canonical basis vectors for the spaces appearing in the
first four columns of the previous diagram.
\begin{equ}
\xymatrix{
&  \Tra_1 \ar@{_{(}->}[dl]  \ar@{->>}[r]^{\CK_1} &  
H_{1} &  B_- \ar@{_{(}->}[l]
\ar@{->>}[r]_{\CS}
&  \Trees_1(R) 
\\
\Tra & \Tra_{\circ} \ar@{->>}[r]^{\CK} \ar@{_{(}->}[l]&
H_{\circ} & B_\circ \ar@{_{(}->}[l] \ar@{^{(}->}[u]_{\iota_\circ}
\ar@{->>}[r]_{\CS}
&  \Trees_\circ(R)  \ar@{^{(}->}[u] \ar@{_{(}->}[d] &  \Trees_-(R) \ar@{_{(}->}[l]
\\
& \Tra_2 \ar@{_{(}->}
[ul] \ar@{->>}[r]^{\JJ \hat \CH_2} & 
\hat H_{2}&  B_{+} \ar@{_{(}->}[l]\ar@{->>}[r]_{\CS} 
&  \Trees_2(R)  
 }
\end{equ}

\section{Symbolic index}
\label{sec:index}

Here, we collect some of the most used symbols of the article, together
with their meaning and the page where they were first introduced.

\begin{center}
\renewcommand{\arraystretch}{1.1}
\begin{longtable}{lll}
\toprule
Symbol & Meaning & Page\\
\midrule
\endfirsthead
\toprule
Symbol & Meaning & Page\\
\midrule
\endhead
\bottomrule
\endfoot
\bottomrule
\endlastfoot
$|\cdot|_\bi$ & Bigrading on coloured decorated forests & \pageref{e:grading0} \\
$|\cdot|_-$ & Degree not taking into account the label $\Labo$ & \pageref{homog} \\
$|\cdot|_+$ & Degree taking into account the label $\Labo$ & \pageref{homog} \\
$\Adm_i$ & Subforests appearing in the definition of $\Delta_i$ & \pageref{ass:1} \\
$\CA_i$ & Antipode of $\CH_i$ & \pageref{prop:Hopf}\\
$\CA_\pm^\ex$ & Antipode of $\CT_\pm^\ex$ & \pageref{prop:Hopf+} \\
$\tilde \CA_\pm^\ex$ & Twisted antipode $\CT_\pm^\ex \to \hat\CT_\pm^\ex$ & \pageref{e:pseudoant} \\
$\hat\CA_2$ & Antipode of $\hat\CH_2$ & \pageref{prop:alg}\\
$B_\circ$ & Elements of $H_\circ$ strongly conforming to the rule $R$ & \pageref{def:CT}\\
$B_\circ^-$ & Elements of $B_\circ$ of negative degree & \pageref{defB0min}\\
$B_\circ^\sharp$ & Elements of $B_\circ^-$ that are not planted & \pageref{defBsharp}\\
$B_-$ & Elements of $H_1$ strongly conforming to the rule $R$ & \pageref{def:CT}\\
$B_+$ & Elements of $\hat H_2$ conforming to the rule $R$ & \pageref{def:CT}\\
$\BB_i$ & Hopf algebra of coloured forests & \pageref{prop:combi}\\
$\mathfrak{C}$ & All coloured forests $(F,\hat F)$ & \pageref{def:colour}\\
$\mathfrak{C}_i$ & All coloured forests compatible with $\Adm_i$ & \pageref{def:tra_i}\\
$\CD_i(\JJ)$ & All roots of colour in $\{0,i\}$ & \pageref{defDJJ}\\
$\hat \CD_i(\JJ)$ & All roots of colour $i$ & \pageref{defDJJhat}\\
$\Delta_i$ & Coproduct on $\scal{\Tra}$ turning the $\scal{\Tra_i}$ into bialgebras & \pageref{def:Deltabar}\\
$\CE$ & Edge types given by $\CE = \Lab \times \N^d$ & \pageref{e:defCN} \\
$f \restr A$ & Restriction of the function $f$ to the set $A$ & \pageref{frestr} \\
$\Tra$ & All decorated forests $(F,\hat F, \Labn,\Labo,\Labe)$ & \pageref{def:decoration} \\
$\Tra_i$ & All decorated forests compatible with $\Adm_i$ & \pageref{def:tra_i}\\
$\Tra_\circ$ & Trees with colours in $\{0,1\}$ & \pageref{eq:CHcirc} \\
$\Phi_i$ & Collapse of factors in $\M_i$ & \pageref{e:bi-ideal}\\
$g^+_z(\PPi)$ & Character on $\hat \CT^{\ex}_+$ defined by $\PPi$ & \pageref{def:modelmap}\\
$g^-_z(\PPi)$ & Character on $\hat \CT^{\ex}_-$ defined by $\PPi$ & \pageref{e:defHeps}\\
$\CG_i$ & Characters of $\CH_i$ & \pageref{e:defG}\\
$\CG_\pm^\ex$ & Character group of $\CT_\pm^\ex$ & \pageref{def:charpm} \\
$\hat\CG_2$ & Characters of $\hat\CH_2$ & \pageref{charGhat2}\\
$\CH_\circ$ & Algebra given by $\scal{\Tra_\circ} / \ker \CK$ & \pageref{eq:CHcirc} \\
$\hat \CH_2$ & Hopf algebra $\CH_2 / \ker (\JJ\hat P_2)$ & \pageref{eq:hatCH2} \\
$\CH_i$ & Hopf algebra $\scal{\Tra_i} / \CI_i$ & \pageref{e:defHi}\\
$H_\circ$ & Representative of $\CH_\circ$ given by $H_\circ = \CK \Tra_\circ$ & \pageref{e:isoH}\\
$\hat H_2$ & Representative of $\hat\CH_2$ given by $\hat H_2 = \JJ\hat \CK_2\Tra_2$ & \pageref{e:isoH}\\
$H_i$ & Representative of $\CH_i$ given by $H_i = \CK_i\Tra_i$ & \pageref{rem:H_i}\\
$\inj_\pm^\ex$ & Canonical injection $\CT_\pm^\ex \hookrightarrow \hat \CT_\pm^\ex$ & \pageref{inj}\\
$\CI_i$ & Kernel of $\CK_i$ & \pageref{lem:biideal}\\
$\hat\CI_i$ & Kernel of $\hat \CK_i$ & \pageref{lem:biideal}\\
$\CI_k^\Labhom$ & Abstract integration map in $\CH_\circ$ & \pageref{e:CI}\\
$\JJ$ & Joins the root of all trees together & \pageref{sec:join}\\
$\CJ_k^\Labhom$ & Abstract integration map $\CH_\circ \to \hat \CH_2$ & \pageref{e:CJ}\\
$\CJ_+$ & Subspace of terms in $\hat \CT_-^\ex$ with a factor of positive degree & \pageref{CT^ex_pm} \\ 
$\CJ_-$ & Subspace of terms in $\hat \CT_+^\ex$ with a factor of negative degree & \pageref{CT^ex_pm} \\ 
$\CK$ & Contraction of coloured portions & \pageref{CKop} \\
$\CK_i$ & Defined by $\CK_i = \Phi_i \circ \CK$ & \pageref{e:bi-ideal}\\
$\hat \CK_i$ & Defined by $\hat \CK_i = P_i \circ \Phi_i \circ \CK$ & \pageref{e:bi-ideal}\\
$|k|$ & Unscaled length of a multi-index $k$ & \pageref{e:grading0} \\
$|k|_\s$ & Scaled length of a multi-index $k$ & \pageref{def:scaling} \\
$\Lab$ & Set of all types & \pageref{lab} \\
$\M_i$ & Elements of $\Tra_i$ completely coloured with $i$ & \pageref{Mi} \\
$\MM$ & Space of all models & \pageref{def:modelspace} \\
$\MM_0$ & Closure of smooth models & \pageref{def:modelspace} \\
$\MM_\infty$ & Space of all smooth models & \pageref{def:modelspace} \\
$\CN$ & Node types given by $\CN = \hat \CP(\CE)$ & \pageref{e:defCN} \\
$\CN(x)$ & Type of the node $x$ & \pageref{e:defNx} \\
$\hat P_i$ & Sets $\Labo$-decoration to $0$ on $i$-coloured roots & \pageref{e:bi-ideal}\\
$\CP(A)$ & Powerset of the set $A$ & \pageref{def:rule} \\
$\hat \CP(A)$ & Multisets with elements from the set $A$ & \pageref{e:defCN} \\
$\proj_\pm^\ex$ & Canonical projection $\hat \CT_\pm^\ex \to \CT_\pm^\ex$ & \pageref{CT^ex_pm} \\
$\PPi$ & Linear map $\CT^\ex \to \CC^\infty$ specifying a model & \pageref{def:modelmap} \\
$R$ & Rule determining a class of trees & \pageref{def:rule}\\
$\CR_\alpha$ & Operator adding $\alpha$ to $\Labo$ at the root & \pageref{e:CJ}\\
$\s$ & Scaling of $\R^d$ & \pageref{def:scaling} \\ 
$\Trees$ & Simple decorated trees & \pageref{def:Trees} \\
$\Trees_\circ(R)$ & Trees strongly conforming to the rule $R$ & \pageref{def_space}\\
$\Trees_1(R)$ & Forests strongly conforming to the rule $R$ & \pageref{def_space}\\
$\Trees_2(R)$ & Trees conforming to the rule $R$ & \pageref{def_space}\\
$\Trees_-(R)$ & Trees strongly conforming to $R$ of negative degree & \pageref{def_space}\\
$\hat\CT_+^\ex$ & Subspace of $\hat\CH_2$ determined by a rule $R$ & \pageref{def:CT} \\
$\hat\CT_-^\ex$ & Subspace of $\CH_1$ determined by a rule $R$ & \pageref{def:CT} \\
$\CT^\ex$ & Subspace of $\CH_\circ$ determined by a rule $R$ & \pageref{def:CT} \\
$\CT_+^\ex$ & Quotient space $\hat \CT_+^\ex / \CJ_-$ & \pageref{CT^ex_pm} \\
$\CT_-^\ex$ & Quotient space $\hat \CT_-^\ex / \CJ_+$ & \pageref{CT^ex_pm} \\
$\Units_i$ & Units of $\scal{\Tra_i}$ & \pageref{e:counit} \\
$\scal{V}$ & Bigraded space generated from a bigraded set $V$ & \pageref{e:defForests}\\
$X^k$ & Shorthand for $(\bullet,i)_0^{k,0}$ with $i \in \{0,2\}$ depending on context & \pageref{defX}\\
$\Xi^\noise$ & Element $\CI_0^\noise(\one)$ representing the noise & \pageref{noise} \\
$\|z\|_\s$ & Scaled distance & \pageref{e:defds}\\
$\CZ^\ex$ & Map turning $\PPi$ into a model & \pageref{e:defModel2} \\
\end{longtable}
\end{center}

\endappendix

\bibliographystyle{Martin}

\bibliography{these}

\end{document}